\newcommand{\titletext}{Bagging in overparameterized learning:\\
Risk characterization and risk monotonization}
\newcommand{\removelinebreaks}[1]{%
      \def\\{\relax}#1}
\def\titleRLB{\removelinebreaks{\titletext}}
\newcommand{\footremember}[2]{%
    \footnote{#2}
    \newcounter{#1}
    \setcounter{#1}{\value{footnote}}%
}
\newcommand{\footrecall}[1]{%
    \footnotemark[\value{#1}]%
}
\title{\texorpdfstring{\vspace{-2em}}{}\titletext}
\author{
Pratik Patil\footremember{equal}{Equal contribution.}\footremember{berkeleystats}{Department of Statistics, University of California, Berkeley, CA 94720, USA.}\\ {\small pratikpatil@berkeley.edu}
\and Jin-Hong Du\footrecall{equal} \footremember{cmustats}{Department of Statistics and Data Science,
Carnegie Mellon University, Pittsburgh, PA 15213, USA.}\footremember{cmumld}{Machine Learning Department, Carnegie Mellon University, Pittsburgh, PA 15213, USA.} \\ {\small jinhongd@andrew.cmu.edu}
\and Arun Kumar Kuchibhotla\footrecall{cmustats} \\ {\small arunku@cmu.edu}
}
\date{\today}
\begin{document}

\maketitle

\begin{abstract}
Bagging is a commonly used ensemble technique in statistics and machine learning to improve the performance of prediction procedures. In this paper, we study the prediction risk of variants of bagged predictors under the proportional asymptotics regime, in which the ratio of the number of features to the number of observations converges to a constant. Specifically, we propose a general strategy to analyze the prediction risk under squared error loss of bagged predictors using classical results on simple random sampling. Specializing the strategy,  we derive the exact asymptotic risk of the bagged ridge and ridgeless predictors with an arbitrary number of bags under a well-specified linear model with arbitrary feature covariance matrices and signal vectors. Furthermore, we prescribe a generic cross-validation procedure to select the optimal subsample size for bagging and discuss its utility to eliminate the non-monotonic behavior of the limiting risk in the sample size (i.e., double or multiple descents). In demonstrating the proposed procedure for bagged ridge and ridgeless predictors, we thoroughly investigate the oracle properties of the optimal subsample size and provide an in-depth comparison between different bagging variants.
\end{abstract}

\setcounter{tocdepth}{2}

\tableofcontents

\section{Introduction}
\label{sec:introduction}

Modern machine learning models often use a large number of parameters relative to the number of observations.
In this regime, several commonly used procedures exhibit a peculiar risk behavior, which is referred to as double or multiple descents in the risk profile \citep{belkin2019reconciling,zhang_bengio_hardt_recht_vinyals_2016,zhang_bengio_hardt_recht_vinyals_2021}.
The precise nature of the double or multiple descent behavior in the generalization error has been studied for various procedures: e.g., linear regression \citep{belkin_hsu_xu_2020,muthukumar_vodrahalli_subramanian_sahai_2020,hastie2022surprises}, logistic regression \citep{deng2022model}, random features regression \citep{mei_montanari_2022}, kernel regression \citep{liu2021kernel}, among others.
We refer the readers to the survey papers by \citet{bartlett_montanari_rakhlin_2021,belkin_2021,dar_muthukumar_baraniuk_2021} for a more comprehensive review and other related references.
In these cases, the asymptotic prediction risk behavior is often studied as a function of the data aspect ratio (the ratio of the number of parameters/features to the number of observations). 
The double descent behavior refers to the phenomenon where the (asymptotic) risk of a sequence of predictors first increases as a function of the aspect ratio, peaks at a certain point (or diverges to infinity), and then decreases with the aspect ratio. 
From a traditional statistical point of view, the desirable behavior as a function of aspect ratio is not immediately obvious. 
We can, however, reformulate this behavior as a function of $\phi = p/n$, in terms of the observation size $n$ with a fixed $p$; imagine a large but fixed $p$ and $n$ changing from $1$ to $\infty$. 
In this reformulation, the double descent behavior translates to a pattern in which the risk first decreases as $n$ increases, then increases, peaks at a certain point, and then decreases again with $n$. 
This is a rather counter-intuitive and sub-optimal behavior for a prediction procedure. 
The least one would expect from a good prediction procedure is that it yields better performance with more information (i.e., more data).
However, the aforementioned works show that many commonly used predictors may not exhibit such ``good'' behavior.
Simply put, the non-monotonicity of the asymptotic risk as a function of the number of observations or the limiting aspect ratio implies that more data may hurt generalization \citep{nakkiran2019more}.

Several ad hoc regularization techniques have been proposed in the literature to mitigate the double/multiple descent behaviors. 
Most of these methods are trial-and-error in nature in the sense that they do not directly target monotonizing the asymptotic risk but instead try a modification and check that it yields a monotonic risk. 
The recent work of \citet{patil2022mitigating} introduces a generic cross-validation framework that directly addresses the problem and yields a modification of any given prediction procedure that provably monotonizes the risk.
In a nutshell, the method works by training the predictor on subsets of the full data (with different subset sizes) and picking the optimal subset size based on the estimated prediction risk computed using testing data. Intuitively, it is clear that this yields a prediction procedure whose risk is a decreasing function of the observation size. In the proportional asymptotic regime, where $p/n\to\phi$ as $n,p\to\infty$, the paper proves that this strategy returns a prediction procedure whose asymptotic risk is monotonically increasing in $\phi$.
The paper theoretically analyzes the case where only one subset is used for each subset size and illustrates via numerical simulations that using multiple subsets of the data of the same size (i.e., subsampling) can yield better prediction performance in addition to monotonizing the risk profile. 
Note that averaging a predictor computed on $M$ different subsets of the data of the same size is referred to in the literature as subagging, a variant of the classical bagging (bootstrap aggregation) proposed by~\cite{breiman_1996}. 
The focus of the current paper is to analyze the properties of bagged predictors in two directions (in the proportional asymptotics regime): (1) what is the asymptotic predictive risk of the bagged predictors with $M$ bags as a function of $M$, and (2) does the cross-validated bagged predictor provably yield improvements over the predictor computed on full data and does it have a monotone risk profile (i.e., the asymptotic risk is a monotonic function of $\phi$)?

In this paper, we investigate several variants of bagging, including subagging as a special case. 
The second variant of bagging, which we call splagging (that stands for \textbf{spl}it-\textbf{agg}regat\textbf{ing}), is the same as the divide-and-conquer or the data-splitting approach~\citep{rosenblatt2016optimality,banerjee2019divide}. 
The divide-and-conquer approach is widely used in distributed learning, although not commonly featured in the bagging literature \citep{dobriban_sheng_2020,dobriban_sheng_2021,mucke_reiss_rungenhagen_klein_2022}.
Formally, splagging splits the data into non-overlapping parts of equal size and averages the predictors trained on these non-overlapping parts. 
We refer to the equal size of each part of the data as subsample size. 
We use the same terminology for subagging also for the sake of simplicity. 
Using classical results from survey sampling and some simple lemmas about almost sure convergence, we are able to analyze the behavior of subagged and splagged predictors\footnote{A note on terminology for the paper: when referring to subagging and \splagging together, we use the generic term bagging. Similarly, when referring to subagged and \splagged predictors together, we simply say bagged~predictors.} with $M$ bags for arbitrary prediction procedures and general $M\ge1$.
In fact, we show that the asymptotic risk of bagged predictors for general $M\ge1$ (or simply, $M$-bagged predictor) can be written in terms of the asymptotic risks of bagged predictors with $M = 1$ and $M = 2$. 
Rather interestingly, we prove that the $ M$-bagged predictor's finite sample predictive risk is uniformly close to its asymptotic limit over all $M\ge1$. 
These results are established in a model-agnostic setting and do not require the proportional asymptotic regime.
Deriving the asymptotic risk behavior of bagged predictors with $M = 1$ and $M = 2$ has to be done on a case-by-case basis, which we perform for ridge and ridgeless prediction procedures. 
In the context of bagging for general predictors, we further analyze the cross-validation procedure with $M$-bagged predictors for arbitrary $M\ge1$ to select the ``best'' subsample size for both subagging and splagging. 
These results show that subagging and splagging for any $M\ge1$ outperform the predictor computed on the full data. 
We further present conditions under which the cross-validated predictor with $M$-bagged predictors has an asymptotic risk monotone in the aspect ratio.
Specializing these results to the ridge and ridgeless predictors leads to somewhat surprising results connecting subagging to optimal ridge regression as well as the advantages of interpolation.

Before proceeding to discuss our specific contributions, we pause to highlight the two most significant take-away messages from our work. 
These messages hold under a well-specified linear model, where the features possess an arbitrary covariance structure, and the response depends on an arbitrary signal vector, both of which are subject to certain bounded norm regularity constraints.

\begin{figure}[!t]
    \centering
    \includegraphics[width=0.85\textwidth]{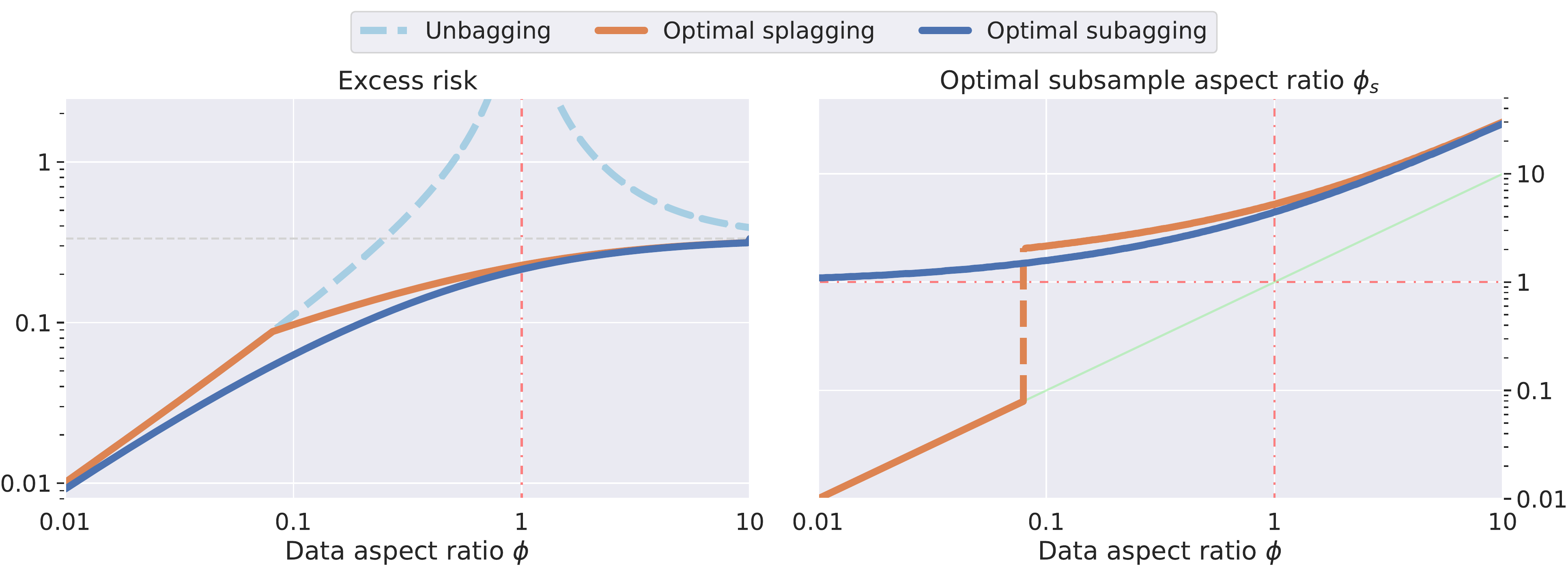}
    \caption{Overview of optimal bagging over both the subsample aspect ratio and the number of bags.
    (a) Optimal asymptotic excess risk curves for ridgeless predictors with and without bagging, under model \eqref{eq:model-ar1} when $\rhoar=0.25$ and $\sigma^2=1$.
    The excess risk is the difference between the prediction risk and the noise level $\sigma^2$.
    The risk for the unbagged ridgeless predictor is represented by a blue dashed line, and the null risk is marked as a gray dotted line.
     (b) The corresponding optimal limiting subsample aspect ratio $\phi_s=p/k$ versus the data aspect ratio $\phi=p/n$ for bagged ridgeless predictors.
     The line $\phi_s=\phi$ is colored in green.
     The optimal subsample aspect ratios are larger than one (above the horizontal red dashed line).
     See \Cref{subsec:comparison} for more details on the setup and further related discussion.
}\label{fig:overview}
\end{figure}

\begin{enumerate}
[\rm(T\arabic*),leftmargin=7mm]
    \item 
    Subagging and splagging (the data-splitting approach) of the ridge and ridgeless predictors, when properly tuned, can significantly improve the prediction risks of these standalone predictors trained on the full data.
    This improvement is most pronounced near the interpolation threshold.
    Importantly, \ul{subagging always outperforms splagging}.
    See the left panel of \Cref{fig:overview} for a numerical illustration and \Cref{prop:improve-with-without-replace} for a formal statement of this result.
    
    \item
    A model-agnostic algorithm exists to tune the subsample size for subagging. This algorithm produces a predictor whose risk matches that of the oracle-tuned subagged predictor.
    Notably, the oracle-tuned subsample size for the ridgeless predictor is always smaller than the number of features.
    As a result, \ul{subagged ridgeless \emph{interpolators} always outperform subagged least squares}, even when the full data has more observations than the number of features.
    The same observation holds true for splagging whenever it provides an improvement.
    See the right panel of \Cref{fig:overview} for numerical illustrations and \Cref{prop:opt-risk-ridgeless} for formal statements of this result.
\end{enumerate}

Intuitively, although bagging may induce bias due to subsampling, it can significantly reduce the prediction risk by reducing the variance for a suitably chosen subsample size that is smaller than the feature size.
This tradeoff arises because of the different rates at which the bias and variance of the ridgeless predictor increase near the interpolation threshold.
This advantage of \emph{interpolation} or \emph{overparameterization} is distinct from other benefits discussed in the literature, such as self-induced regularization \citep{bartlett_montanari_rakhlin_2021}.

\subsection{Summary of main results}

Below we provide a summary of the main results of this paper.

\begin{enumerate}[leftmargin=7mm]
    \item 
    \textbf{General predictors.}
    In \Cref{sec:general-predictors}, we formulate a generic strategy for analyzing the limiting squared \emph{data conditional risk} (expected squared error on a future data point, conditional on the full data) of general $M$-bagged predictors, showing that the existence of the limiting risk for $M = 1$ and $M = 2$ implies the existence of the limiting risk for every $M \ge 1$.
    Moreover, we show that the limiting risk of the $M$-bagged predictor can be written as a linear combination of the limiting risks of $M$-bagged predictors with $M = 1$ and $M = 2$.
    Interestingly, the same strategy also works for analyzing the limit of the \emph{subsample conditional risk}, which considers conditioning on both the full data and the randomly drawn subsamples.
    See \Cref{thm:risk_general_predictor} for a formal statement.
    In this general framework, \Cref{thm:risk_general_predictor} implies that both the data conditional and subsample conditional risks are asymptotically monotone in the number of bags $M$.
    Moreover, for general strongly convex and smooth loss functions, we can sandwich the risks between quantities of the form $\mathfrak{C}_1 + \mathfrak{C}_2 / M$, for some fixed random variables $\mathfrak{C}_1$ and $\mathfrak{C}_2$ (\Cref{prop:convex-sconvex-smooth}).

    \item 
    \textbf{Ridge and ridgeless predictors.}
    In \Cref{sec:ridge-ridgeless-predictors}, we specialize the aforementioned general strategy to characterize the data conditional and subsample conditional risks of $M$-bagged ridge and ridgeless predictors.
    The results are formalized in \Cref{thm:ver-with-replacement} for subagging with and without replacement, and \Cref{thm:ver-without-replacement} for splagging without replacement.
    All these results assume a well-specified linear model, with an arbitrary covariance matrix for the features and an arbitrary signal vector. Notably, we assume neither Gaussian features nor isotropic features nor a randomly generated signal.
    These results reveal that for the three aforementioned bagging strategies, the bias and variance risk components are non-increasing in the number of bags $M$.
    
    \item 
    \textbf{Cross-validation.}
    In \Cref{sec:monotonizing-risk-profiles}, we develop a generic cross-validation strategy to select the optimal subsample or split size (or equivalently, the subsample aspect ratio) and present a general result to understand the limiting risks of cross-validated predictors.
    Our theoretical results provide a way to verify the monotonicity of the limiting risk of the cross-validated predictor in terms of the limiting data aspect ratio $\phi$ (\Cref{thm:cv_general}).
    In \Cref{subsec:cross-validation}, we specialize in the cross-validated ridge and ridgeless predictors to obtain the optimal subsample aspect ratio for every $M$ (\Cref{thm:monotonicity-phi}).
    Moreover, when optimizing over both the subsample aspect ratio and the number of bags, we show that optimal subagging always outperforms 
    optimal \splagging (\Cref{prop:improve-with-without-replace}).
    Rather surprisingly, in our investigation of the oracle choice of the subsample size for optimal subagging with $M = \infty$, we find that the subsample ratio is always large than one (\Cref{prop:opt-risk-ridgeless}).
    In \Cref{sec:isotropic_features}, we also show optimally-tuned subbaged ridgeless predictor yields the same prediction risk as the optimal ridge predictor for isotropic features (\Cref{thm:comparison_optimal_ridge}).
    
\end{enumerate}

From a technical perspective, during the course of our risk analysis of the bagged ridge and ridgeless predictors, we derive novel deterministic equivalents for ridge resolvents with random Tikhonov-type regularization. We extend ideas of conditional asymptotic equivalents and related calculus, which may be of independent interest.
See \Cref{sec:calculus_asymptotic_equivalents}, and in particular \Cref{sec:asympequi-extended-ridge-resolvents}.

\subsection{Related work}
    
        The risk non-monotonicity of commonly used predictors has been well documented in the literature.
    For instance, a recent line of work by \citet{belkin2019reconciling,viering2019open,nakkiran2019more,loog2019minimizers}, among others, illustrates the non-monotonic risk behavior of several prediction procedures.
    See also the survey papers by \citet{belkin_2021,bartlett_montanari_rakhlin_2021,dar_muthukumar_baraniuk_2021,loog_viering_mey_krijthe_tax_2020} for other related references.
    As highlighted by \cite{loog_viering_mey_krijthe_tax_2020}, the phenomenon of multiple descents can be traced back to empirical findings in the 1990s, including earlier papers by \cite{vallet1989linear,hertz1989phase,opper1990ability,hansen1993stochastic,barber1995finite,duin1995small,opper1995statistical,opper1996statistical,raudys1998expected}, among others.
     
    Since non-monotonic risk leads to suboptimal use of the data, several methods have been proposed that modify a given (class of) prediction procedure(s) to construct a new prediction procedure with a monotonic risk profile.
    In particular, \citet{nakkiran2021optimal} investigates the role of optimal tuning in the context of ridge regression and demonstrates that the optimally-tuned $\ell_{2}$ regularization achieves monotonic generalization performance for a class of linear models under isotropic design.
    \citet{mhammedi2021risk} provides an algorithm to monotonize the risk profile for bounded loss functions.
    \citet{patil2022mitigating} propose a general framework to monotonize the prediction risk for general predictors under both bounded and unbounded loss functions, using cross-validation.
    The paper also empirically shows that bagging can further improve the performance of the predictors while achieving a monotonized risk profile.
    In this paper, we characterize the risk behavior of bagging, which was left as an open direction in \cite{patil2022mitigating}.
    Below we provide a brief overview of the literature pertaining to bagging and its relation to our work.
    
    Ensemble methods are widely used in machine learning and statistics and combine several weak predictors to produce one powerful predictor.
    One important class of ensemble methods is bagging \citep{breiman_1996,buhlmann2002analyzing}, and its variants, such as subagging \citep{buhlmann2002analyzing}, that operate by averaging predictors trained on independent subsamples of the data.
    Numerous empirical studies have demonstrated that bagging leads to significant improvements in predictive performance \citep{breiman_1996,strobl2009,fernandez2014we}.
    However, the theoretical analysis of bagging has primarily focused on smooth predictors (predictors that are smooth functions of the empirical data distribution); see \cite{buja2006observations,friedman_hall_2007}.
    For some work on bagging for non-parametric estimators, see \cite{hall_samworth_2005,samworth2012optimal,wu2021chebyshev,buhlmann2002analyzing,athey_tibshirani_wager_2019}.
    In addition to sample-wise bagging, bagging over linear combinations of features has also been considered in \citet{lopes2011more,srivastava2016raptt,cannings_samworth_2017}.
    This approach broadly falls under the umbrella of feature side sketching; we refer readers to \citet{wang2017sketched,derezinski2020debiasing,lopes2018error,derezinski2023algorithmic,lejeune2022asymptotics,patil2023asymptotically}, among others, for related results and further references.
    
    Bagging in the proportional asymptotic regime has also been considered in the literature.
    \cite{lejeune2020implicit} study subagging of both features and observations and derive the limiting risk of the resulting subagged predictor. \citet{dobriban_sheng_2020,dobriban_sheng_2021,mucke_reiss_rungenhagen_klein_2022} consider the divide-and-conquer approach, or splagging, and investigate their properties.
    These works are set in the context of distributed learning.
    Specifically, under proportional asymptotics, \citet{dobriban_sheng_2020} derive the limiting mean squared error of the distributed ridge estimator in the underparameterized regime. 
    On the other hand, \citet{mucke_reiss_rungenhagen_klein_2022} provide finite-sample upper bounds on the prediction risk for ridgeless regression in the overparameterized regime.

    The closest works to ours are those of \citet{lejeune2020implicit} and \cite{mucke_reiss_rungenhagen_klein_2022}.
    \citet{lejeune2020implicit} investigate bagged least squares predictor obtained by subsampling both features and observations in a Gaussian isotropic design.
    They impose a restriction on subsampling such that the final subsampled data always has more observations than the features (so that ordinary least squares are well-defined). 
    Consequently, they do not allow for overparameterized subsampled datasets.
    Similar to our work, they also study the monotonicity of the asymptotic expected squared risk with respect to the number of bags in their restricted setting.
    Further, they study the best subsampling ratios for optimal asymptotic risk, but do not consider the question of how to select the best subsample size.
    The most significant difference between their work and ours is that we subsample observations, and they effectively subsample features, which is only appropriate under isotopic covariance.
    On the other hand, \citet{mucke_reiss_rungenhagen_klein_2022} consider splagging and provide finite-sample upper bounds on the bias and variance components of the squared prediction risk under the assumption of sub-Gaussian features.
    In contrast, our results do not assume sub-Gaussianity for either the feature or response distributions and only impose minimal bounded moment assumptions.

\subsection{Organization}

Below we provide an outline for the rest of the paper.

\begin{itemize}[leftmargin=7mm]
    
    \item
    In \Cref{sec:general-predictors},
    we provide risk decompositions
    conditional on both the full dataset and subsampled datasets
    for different bagging variants for general predictors.
    Based on the form of decompositions,
    we provide a series of reductions
    and a generic strategy for analyzing 
    the squared prediction risk 
    of general bagged predictors.
    
    \item
    In \Cref{sec:ridge-ridgeless-predictors},
    we give risk characterizations
    for bagging ridge and ridgeless predictors.
    We give results for both subagging with and without replacement,
    and splagging without replacement,
    and show monotonicities of the bias and variance components
    in the number of bags.

    \item
    In \Cref{sec:monotonizing-risk-profiles},
    we prescribe a framework for monotonizing
    the risk profile of any given predictor
    based on cross-validation over subsample size.
    The result is then specialized to the ridge and ridgeless predictors.
    Furthermore, we compare the monotonized risk profiles
    of bagged ridgeless and ridge predictors.

    \item
    In \Cref{sec:isotropic_features}, we specialize our results for isotropic features and provide explicit analytic expressions for the risks of bagged ridgeless regression.
    In addition, we present the analysis of the optimal subsample size and the corresponding optimal bagged risk.
    
    \item
    In \Cref{sec:discussion},
    we conclude the paper 
    by providing related questions for future work.
\end{itemize}

In the Supplement to this paper, we give a brief background on simple random sampling, provide proof of all the results, and present additional numerical illustrations.
The organization structure for the Supplement is provided in the first section of the Supplement, which also gives an overview of the general notation employed throughout the paper.
The source code for reproducing all the experimental illustrations in this paper can be found at: \url{https://github.com/jaydu1/Overparameterized-Bagging/tree/main/bagging}.

\section{Bagging general predictors}
\label{sec:general-predictors}

In this section, we will describe different versions of subagged predictors.
But first, let us define the index sets pertinent to our study. Fix any $k\in\{1, 2, \ldots, n\}$ and any permutation $\pi:\{1,2,\ldots,n\}\to\{1, 2, \ldots, n\}$.
Define the sets $\cI_k$ and $\cI_k^\pi$ as follows:
\begin{equation}\label{eq:indices-for-bagging}
    \begin{split}
        \mathcal{I}_k &:= \{\{i_1, i_2, \ldots, i_k\}:\, 1\le i_1 < i_2 < \ldots < i_k \le n\},\\
        \mathcal{I}_k^{\pi} &:= \left\{\{\pi((j-1)k + 1), \pi((j-1)k + 2), \ldots, \pi(jk)\}:\, 1\le j\le \left\lfloor\frac{n}{k}\right\rfloor\right\}.
    \end{split}
\end{equation}
Note that both the sets $\mathcal{I}_k$ and $\mathcal{I}_k^\pi$ technically need to be indexed by $n$, but for notation convenience, we will not explicitly indicate the dependence on $n$.
The set $\mathcal{I}_k$ represents the set of all $k$ subset choices from $\{1, 2, \ldots, n\}$. 
There are $\binom{n}{k}$ many of them. 
The set $\mathcal{I}_k^{\pi}$, on the other hand, represents the set of indices in a non-overlapping split of $\{1, 2, \ldots, n\}$ into blocks of size $k$. 
If we split $\{1, 2, \ldots, n\}$ randomly into different non-overlapping blocks each of size $k$, then this corresponds to choosing a permutation $\pi$ randomly from the set of all permutations and splitting them in order. 
Finally, observe that $\mathcal{I}_k^{\pi} \subseteq \mathcal{I}_k$ for any permutation $\pi$ and $\cup_{\pi} \mathcal{I}_k^{\pi} = \mathcal{I}_k$. 

\subsection{Conditional risk decompositions}
\label{subsec:bagged_general_risk_decom}

Suppose now $\mathcal{D}_n = \{(\bx_1, y_1), \ldots, (\bx_n, y_n)\}$ represents a dataset with random vectors from $\mathbb{R}^p\times\mathbb{R}$.
A prediction procedure $\widetilde{f}(\cdot; \cdot)$ is defined as a map from $\mathbb{R}^p\times \mathscr{P}(\mathcal{D}_n) \to \mathbb{R}$, where $\mathscr{P}(A)$ for any set $A$ represents the power set of $A$. 
For any $I\in\mathcal{I}_k$ (or $I\in\mathcal{I}_k^{\pi}$), let $\mathcal{D}_I$ and the corresponding subsampled predictor be defined as $\mathcal{D}_{I} = \{(\bx_j, y_j):\, j \in I\}$ and $\hf(\bx; \mathcal{D}_I) = \hf(\bx; \{(\bx_j, y_j):\, j\in I\})$.
Given two sets of indices and two types of simple random samplings one can draw, we have four different versions of subagged predictors.
When employing simple random sampling with replacement, the corresponding predictors can be expressed as follows:
\begin{align}
    \widetilde{f}_{M,\mathcal{I}_k}^{\WR}(\bx; \{\mathcal{D}_{I_{\ell}}\}_{\ell = 1}^M) &=  \frac{1}{M}\sum_{\ell = 1}^M \hf(\bx; \mathcal{D}_{I_{\ell}}),\quad \text{where} \quad I_1, \ldots, I_M \overset{\texttt{SRSWR}}{\sim} \begin{cases}
        \mathcal{I}_k & \text{for subagging}\\
        \mathcal{I}_k^{\pi} & \text{for \splagging},
    \end{cases}\label{eq:bagged-predictor-subagging}
\end{align}
and the predictors $\widetilde{f}_{M,\mathcal{I}_k}^{\WOR}$ using simple random sampling without replacement are defined analogously.

Traditionally, bagging (\smash{as in \textbf{b}ootstrap-\textbf{agg}regat\textbf{ing}}) refers to computing predictors multiple times based on bootstrapped data~\citep{breiman_1996}, which can involve repeated observations. 
In this paper, we do not allow for repeated observations and consider only the four versions of bagging mentioned in~\eqref{eq:bagged-predictor-subagging}. \citet[Section 3.2]{buhlmann2002analyzing} call $\tf_{M,\cI_k}^{\WR}$ as subagging (\smash{as in \textbf{sub}sample-\textbf{agg}regat\textbf{ing}}). 
Given that SRSWOR mean estimator has a smaller mean squared error than SRSWR mean estimator, we also consider the variant $\tf_{M,\cI_k}^{\WOR}$ of subagging. 
Because for any fixed $M$, the expectation and variance of $\tf_{M,\cI_k}^{\WR}$ and $\tf_{M,\cI_k}^{\WOR}$ are the same as $N\to\infty$, the asymptotic risk behavior of $\tf_{M,\cI_{k}}^{\WR}$ and $\tf_{M,\cI_k}^{\WOR}$ is the same if $|\cI_k| = \binom{n}{k} \to \infty$ (which holds, for example, if $1 \le k \le n-1$ and $n\to\infty$). 
Given this equivalence and the relative prevalence of subagging (i.e., $\tf_{M,\cI_k}^{\WR}$), in \Cref{sec:bagging-with-replacement}, we focus our results on $\tf_{M,\cI_k}^{\WR}$ although we indicate the implications for $\tf_{M,\cI_k}^{\WOR}$. 
In what follows, we refer to $\tf_{M,\cI_k}^{\WR}$ and $\tf_{M,\cI_k}^{\WOR}$ as {\em subagging with and without replacement}, respectively.

In contrast, the predictors $\tf_{M,\cI_{k}^{\pi}}^{\WR}$ and $\tf_{M,\cI_k^{\pi}}^{\WOR}$ do not frequently appear in the bagging literature. 
Rather, they are more common in distributed learning, where the predictors are trained on different parts of the data and averaged to yield a final predictor. 
We call these versions as ``\splagging'' (as in \smash{\textbf{spl}it-\textbf{agg}regat\textbf{ing}}). 
Among these, the without replacement predictor $\tf_{M,\cI_k^{\pi}}^{\WOR}$ tends to be more prevalent \citep{dobriban_sheng_2020,mucke_reiss_rungenhagen_klein_2022}.
Owing to its popularity and the fact that SRSWOR is superior to SRSWR in general, in \Cref{sec:bagging-without-replacement}, we primarily focus on $\tf_{M,\cI_k^{\pi}}^{\WOR}$. In what follows, we refer to $\tf_{M,\cI_{k}^{\pi}}^{\WR}$ and $\tf_{M,\cI_k^{\pi}}^{\WOR}$ as \emph{\splagging with and without replacement.} 
For the sake of simplicity, we define $\tf_{M, \cI_{k}^\pi}^{\WOR}$ as
$\tf_{\lfloor n / k \rfloor, \cI_k^{\pi}}^{\WOR}$ if $M > \lfloor n / k \rfloor$.
In doing so, we are effectively substituting $M$ with $\min\{M,\lfloor n/k\rfloor\}$.

The results to be discussed below are general and apply to all four versions of the bagged predictors in \eqref{eq:bagged-predictor-subagging}.
Consider the finite population $\{\hf(\bx; \mathcal{D}_I):\,I\in\mathcal{I}_k\}$ or $\{\hf(\bx; \mathcal{D}_I):\, I\in\mathcal{I}_k^{\pi}\}$, but with the data $\mathcal{D}_n$ treated as fixed (non-stochastic). 
We know that $\widetilde{f}_{M,\mathcal{I}_k}^{\WR}(\bx)$ and $\widetilde{f}_{M,\mathcal{I}_k}^{\WOR}(\bx)$ has the same expectation, given by
\[
\widetilde{f}_{\infty, \mathcal{I}_k}(\bx) = \frac{1}{|\mathcal{I}_k|}\sum_{I\in\mathcal{I}_k} \widehat{f}(\bx; \mathcal{D}_I).
\]
However, the variance is smaller for $\widetilde{f}_{M,\mathcal{I}_k}^{\WOR}(\bx)$. 
Using the bias and variance formulas from \citet[Section 2.5]{chaudhuri_2014}, the following result can be derived for the subagged predictors; see \Cref{sec:setup} for a detailed explanation.

\begin{proposition}[Conditional risk decomposition]\label{prop:bagged-predictors-conditional-mse}
    Without any assumptions on the data and the prediction procedure $\hf(\cdot; \cdot)$, we have for every $(\bx, y)\in\mathbb{R}^p\times\mathbb{R}$,
    \begin{align}
        \begin{split}
            \mathbb{E}[(y-\widetilde{f}_{M,\mathcal{I}_k}^{\textup{\WR}}(\bx; \{\mathcal{D}_{I_{\ell}}\}_{\ell = 1}^M) )^2\mid\mathcal{D}_n] &= \mathscr{B}_{\mathcal{I}_k}(\bx, y) + \frac{1}{M}\mathscr{V}_{\mathcal{I}_k}(\bx, y),\\
            \mathbb{E}[(y-\widetilde{f}_{M,\mathcal{I}_k}^{\textup{\WOR}}(\bx; \{\mathcal{D}_{I_{\ell}}\}_{\ell = 1}^M))^2\mid\mathcal{D}_n] &= \mathscr{B}_{\mathcal{I}_k}(\bx, y) + \frac{|\mathcal{I}_k|-M}{|\mathcal{I}_k|-1}\frac{1}{M}\mathscr{V}_{\mathcal{I}_k}(\bx, y),
        \end{split} \label{eq:I_k-bagged-predictors}\\
        \mbox{where} \quad \mathscr{B}_{\mathcal{I}_k}(\bx, y) = (y - \widetilde{f}_{\infty, \mathcal{I}_k}(\bx))^2, \quad \mbox{and} \quad &\mathscr{V}_{\mathcal{I}_k}(\bx, y) = \frac{1}{|\mathcal{I}_k|}\sum_{I\in\mathcal{I}_k} \left(\hf(\bx; \mathcal{D}_{I}) - \widetilde{f}_{\infty,\mathcal{I}_k}(\bx)\right)^2. \label{eq:bias-and-variance-I_k-bagged-predictors}
    \end{align}
    The results still hold by replacing $\cI_k$ with $\cI_k^{\pi}$. Here in~\eqref{eq:I_k-bagged-predictors}, the expectation is with respect to the randomness of $I_1, \ldots, I_M$ only. 
\end{proposition}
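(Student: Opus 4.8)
The plan is to recognize the right-hand sides of \eqref{eq:I_k-bagged-predictors} as the classical bias--variance identities for the sample mean under simple random sampling with and without replacement, applied to the finite population of evaluated subsampled predictors, which becomes deterministic once we condition on $\mathcal{D}_n$.

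First I would fix $(\bx, y)$ and condition on $\mathcal{D}_n$ throughout, so that all randomness lies in the draw of $I_1, \dots, I_M$. Writing $N := |\mathcal{I}_k|$ and introducing the finite population of real numbers $a_I := \widehat{f}(\bx; \mathcal{D}_I)$ for $I \in \mathcal{I}_k$, one has population mean $\bar a := \frac{1}{N}\sum_{I\in\mathcal{I}_k} a_I = \widetilde{f}_{\infty, \mathcal{I}_k}(\bx)$ and population variance (normalized by $1/N$) equal to $\mathscr{V}_{\mathcal{I}_k}(\bx, y)$ exactly as in \eqref{eq:bias-and-variance-I_k-bagged-predictors}. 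In both sampling schemes the marginal law of each individual draw $I_\ell$ is uniform on $\mathcal{I}_k$, so by linearity $\mathbb{E}[\widetilde{f}_{M,\mathcal{I}_k}^{\WR}(\bx)\mid\mathcal{D}_n] = \mathbb{E}[\widetilde{f}_{M,\mathcal{I}_k}^{\WOR}(\bx)\mid\mathcal{D}_n] = \bar a$. The next step is the elementary conditional bias--variance split
\[
\mathbb{E}\!\left[(y - \widetilde{f}_{M}(\bx))^2 \mid \mathcal{D}_n\right] = (y - \bar a)^2 + \operatorname{Var}\!\left(\widetilde{f}_{M}(\bx) \mid \mathcal{D}_n\right),
\]
valid for $\widetilde f_M \in \{\widetilde{f}_{M,\mathcal{I}_k}^{\WR}, \widetilde{f}_{M,\mathcal{I}_k}^{\WOR}\}$ since the cross term vanishes after taking the conditional expectation; the first summand is precisely $\mathscr{B}_{\mathcal{I}_k}(\bx,y)$.

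It then remains only to evaluate the conditional variance of the sample mean $\frac{1}{M}\sum_{\ell=1}^{M} a_{I_\ell}$. For sampling with replacement the $a_{I_\ell}$ are i.i.d.\ with variance $\mathscr{V}_{\mathcal{I}_k}(\bx,y)$, so the variance equals $\frac{1}{M}\mathscr{V}_{\mathcal{I}_k}(\bx,y)$. For sampling without replacement, the standard variance formula for the sample mean of a simple random sample of size $M$ from a population of size $N$ (e.g.\ \citet[Section 2.5]{chaudhuri_2014}) yields the finite-population-corrected value $\frac{N-M}{N-1}\frac{1}{M}\mathscr{V}_{\mathcal{I}_k}(\bx,y)$. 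Substituting these two expressions into the display above gives \eqref{eq:I_k-bagged-predictors}. The argument for $\mathcal{I}_k^{\pi}$ is word-for-word the same: with $\pi$ held fixed, $\{\widehat{f}(\bx;\mathcal{D}_I): I \in \mathcal{I}_k^{\pi}\}$ is again a finite population (of size $\lfloor n/k\rfloor$) and $I_1,\dots,I_M$ are simple random samples from it, so the same two sampling identities apply; the convention $\widetilde{f}_{M,\mathcal{I}_k^{\pi}}^{\WOR} = \widetilde{f}_{\lfloor n/k\rfloor,\mathcal{I}_k^{\pi}}^{\WOR}$ for $M > \lfloor n/k\rfloor$ is consistent with the formula since the correction factor $\frac{|\mathcal{I}_k^{\pi}| - M}{|\mathcal{I}_k^{\pi}|-1}$ vanishes at $M = |\mathcal{I}_k^{\pi}|$, and the case $|\mathcal{I}_k| = 1$ (i.e.\ $k=n$) is trivial since the bagged predictor is then deterministic.

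There is essentially no genuine obstacle here, as everything is an exact identity in the conditionally deterministic population $\{a_I\}$ and so requires no assumptions on the data or on $\widehat f$; the only point needing care — the ``hard part,'' such as it is — is bookkeeping the $1/N$ (rather than $1/(N-1)$) normalization of $\mathscr{V}_{\mathcal{I}_k}$, which is exactly what turns the usual finite-population correction $1 - M/N$ into the stated $(|\mathcal{I}_k| - M)/(|\mathcal{I}_k| - 1)$.
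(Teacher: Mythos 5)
Your proof is correct and follows essentially the same route as the paper: the paper likewise treats $\{\hf(\bx;\cD_I):I\in\cI_k\}$ as a finite (conditionally deterministic) population, applies the classical SRSWR/SRSWOR mean and variance formulas from \citet[Section 2.5]{chaudhuri_2014} (recorded as \eqref{eq:SRSWR-formulae} and \eqref{eq:SRSWOR-formulae} in the supplement), and combines them with the unbiasedness of the sample mean to get the squared-bias-plus-variance split. Your bookkeeping of the $1/N$ normalization of $\mathscr{V}_{\cI_k}$ and the resulting $(|\cI_k|-M)/(|\cI_k|-1)$ correction factor matches the paper exactly.
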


In line with traditional predictive thinking, we care about the performance of our predictors computed on $\cD_n$ on future data from the same distribution $P$.
As we have access to a single dataset $\cD_n$, we consider the behavior of the predictors in terms of the conditional risk, conditional on $\cD_n$.
To be precise, for a predictor $\widehat{f}$ fitted on $\cD_n$ and its subagged predictor $\tf_{M,\cI_k}^{\WR}$ fitted on $\{\mathcal{D}_{I_{\ell}}\}_{\ell=1}^M$, with $I_1,\ldots,I_M$ being $M$ samples of size $k$ from $\cI_k$, the conditional risks (conditional on $\cD_n$) are defined as follows:
\begin{equation}
    \label{eq:unconditional-risk}
    \begin{split}
    R(\widehat{f}; \mathcal{D}_n) 
    &:= 
    \int
    (y-\widehat{f}(\bx; \cD_n))^2
    \, \mathrm{d}P(\bx, y),
    \\
    R(\tf_{M,\cI_k}^{\WR}(\cdot; \{\mathcal{D}_{I_\ell}\}_{\ell=1}^M) ; \, \cD_n) 
    &:= 
    \int
    \EE\left[\left(
    y-\tf_{M, \cI_k}^{\WR}(\bx; \{ \cD_{I_\ell} \}_{\ell = 1}^{M})\right)^2 \mathrel{\Big |} 
    \mathcal{D}_n
    \right]
    \, \mathrm{d}P(\bx, y).
    \end{split}
\end{equation}
The conditional risk of $\tf_{M, \cI_k}^{\WOR}(\cdot; \{ \cD_{I_\ell}\}_{\ell = 1}^{M})$ is defined similarly, and so are the conditional risks for the splagged predictors with and without replacement from $\cI_{k}^{\pi}$ for a fixed permutation $\pi$.
Observe that the conditional risk of the subagged predictor $\tf_{M, \cI_k}^{\WR}(\cdot; \{ \cD_{I_\ell}\}_{\ell = 1}^{M})$ integrates over the randomness of the future observation $(\bx, y)$ as well as the randomness due to the simple random sampling of $I_{\ell}$, $\ell = 1, \dots, M$.
Given that only a single dataset $\cD_n$ is observed in practice and one typically only draws one simple random sample $I_{\ell}$, $\ell = 1, \dots, M$, it is also insightful to consider an alternate version of the conditional risk that ignores the expectation over the simple random sample:
\begin{equation}
    \label{eq:conditional-risk}
    R(\tf_{M,\cI_k}^{\WR}(\cdot; \{\mathcal{D}_{I_\ell}\}_{\ell=1}^M) ; \, \cD_n,
    \{I_{\ell}\}_{\ell = 1}^{M}) 
    := 
    \int
    \left(
    y-\tf_{M, \cI_k}^{\WR}(\bx; \{ \cD_{I_\ell} \}_{\ell = 1}^{M})
    \right)^2 
    \, \mathrm{d}P(\bx, y).
\end{equation}
We call the former type of conditional risk (conditional on $\cD_n$) as \emph{data conditional risk} and the latter type of conditional risk (conditional on $\cD_n$ and $\{ I_{\ell} \}_{\ell=1}^{M}$) as \emph{subsample conditional risk}.

\Cref{prop:bagged-predictors-conditional-mse} implies that the data conditional risks of the predictors $\widetilde{f}_{M,\mathcal{I}_k}^{\WR}(\cdot)$ and $\widetilde{f}_{M,\mathcal{I}_k}^{\WOR}(\cdot)$ can be written as
\begin{equation}\label{eq:representation-conditional-risk}
    \begin{split}
    R(\widetilde{f}_M; \mathcal{D}_n)
    &= 
    \int
    \mathscr{B}_{\mathcal{I}_k}(\bx, y)
    \, \mathrm{d}P(\bx, y)
    + K_{|\mathcal{I}_k|,M}\frac{1}{M}
    \int
    \mathscr{V}_{\mathcal{I}_k}(\bx, y)
    \, \mathrm{d}P(\bx, y) \\
    &=
    R(\tf_{\infty}; \cD_n)
     + \frac{K_{|\mathcal{I}_k|,M}}{M}
    \int
    \mathscr{V}_{\mathcal{I}_k}(\bx, y)
    \, \mathrm{d}P(\bx, y),
    \end{split}
\end{equation}
where for $N \geq 1$, $K_{N, M}$ is defined as
\begin{equation}
    \label{eq:KNM-def}
    K_{N, M}
    =
    \begin{cases}
    1 & \text{if } \tf = \tf_{M, \cI_{k}}^{\WR} \\
    (N-M)_{+}/(N-1) & \text{if } \tf = \tf_{M, \cI_{k}}^{\WOR}. \\
    \end{cases}
\end{equation}
The advantage of the representation~\eqref{eq:representation-conditional-risk} for the data conditional risk of $\widetilde{f}_{M,\mathcal{I}_k}^{\WR}(\cdot)$ and $\widetilde{f}_{M,\mathcal{I}_k}^{\WOR}(\cdot)$ is that it allows us to obtain the limiting behavior of their risks for any $M \ge 1$ by just studying their limiting risk behavior for $M = 1$ and $M = 2$. This is trivially shown by solving a system of linear equations in two variables and is formalized in the following result.

\begin{proposition}
[Data conditional risk for arbitrary $M$]
\label{prop:limiting-risk-for-arbitrary-M}
    Let $R(\tf_M; \mathcal{D}_n)$ be as defined in \eqref{eq:unconditional-risk}.
    For $\tf_M\in\{\tf_{M,\mathcal{I}_k}^{\textup{\WR}},
    \tf_{M,\mathcal{I}_k}^{\textup{\WOR}},
    \tf_{M,\mathcal{I}_k^{\pi}}^{\textup{\WR}},
    \tf_{M,\mathcal{I}_k^{\pi}}^{\textup{\WOR}}\}$,
    suppose there exist non-stochastic numbers $a_1$ and $a_2$ such that as $n\to\infty$,
    \begin{align}
        |R(\tf_M; \mathcal{D}_n) - a_M| \asto 0, \quad \text{for} \quad M = 1, 2,    \label{eq:data_cond_risk_M12}
    \end{align}
    where the almost sure convergence is with respect to the randomness of $\cD_n$.
    Then, we have\footnote{For SRSWOR, supremum over $M\in\mathbb{N}$ should be understood as either $M \le |\cI_k|$ or $M \le |\cI_k^{\pi}|$ depending on whether $\tf_M$ is $\tf_{M,\cI_k}^{\WOR}$ or $\tf_{M,\cI_k^{\pi}}^{\WOR}$.
    The same convention is used for all the other results in this section.}
    \begin{equation}\label{eq:guarantees-for-WR-WOR}
       \sup_{M\in\NN}\left|R(\tf_M; \mathcal{D}_n) - \left[(2a_2 - a_1) + \frac{2(a_1 - a_2)}{M}\right]\right| \asto 0.
    \end{equation}
\end{proposition}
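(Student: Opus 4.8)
The plan is to leverage the exact finite-sample representation \eqref{eq:representation-conditional-risk} of the data conditional risk, which expresses $R(\tf_M; \cD_n)$ in terms of the two data-dependent integrals $b_n := \int \mathscr{B}_{\cI_k}(\bx,y)\,\mathrm{d}P(\bx,y) = R(\tf_\infty;\cD_n)$ and $v_n := \int \mathscr{V}_{\cI_k}(\bx,y)\,\mathrm{d}P(\bx,y)$, via the coefficient $K_{|\cI_k|,M}/M$. For the \WR\ case this coefficient is exactly $1/M$, so $R(\tf_M;\cD_n) = b_n + v_n/M$ holds \emph{identically} in $M$; plugging $M=1,2$ gives $a_1 = b_n + o(1)$ (in the a.s.\ sense) and $a_2 = b_n + v_n/2 + o(1)$, whence $v_n = 2(a_1-a_2) + o(1)$ and $b_n = 2a_2 - a_1 + o(1)$. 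Substituting back, $R(\tf_M;\cD_n) - [(2a_2-a_1) + 2(a_1-a_2)/M]$ equals $(b_n - (2a_2-a_1)) + (v_n - 2(a_1-a_2))/M$ in absolute value, and since $1/M \le 1$ for all $M\ge1$, this is bounded by $|b_n-(2a_2-a_1)| + |v_n - 2(a_1-a_2)|$ uniformly in $M$. Both terms go to $0$ almost surely by the $M=1,2$ hypotheses, giving the claimed uniform convergence.

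The only real subtlety is the \WOR\ cases, where $K_{|\cI_k|,M}/M = (|\cI_k|-M)_+/((|\cI_k|-1)M)$ is not exactly $1/M$. Here I would argue that $K_{|\cI_k|,M}$ is uniformly close to $1$: for $M \le |\cI_k|$ one has $0 \le 1 - K_{|\cI_k|,M} = (M-1)/(|\cI_k|-1) \le M/|\cI_k|$, so $|K_{|\cI_k|,M}/M - 1/M| = (1-K_{|\cI_k|,M})/M \le 1/|\cI_k|$, and more importantly the contribution to the risk is $(K_{|\cI_k|,M}/M)v_n$, which differs from $(1/M)v_n$ by at most $v_n/|\cI_k|$. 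Since $|\cI_k| = \binom{n}{k} \to \infty$ under the standing assumption (e.g.\ $1 \le k \le n-1$), this discrepancy vanishes provided $v_n$ stays bounded a.s.; and $v_n$ is bounded because, using the $M=1$ and $M=2$ representations, $v_n = M(R(\tf_M;\cD_n) - b_n)/K_{|\cI_k|,M}$ at $M=2$ gives $v_n \le 2 R(\tf_2;\cD_n)/K_{|\cI_k|,2}$, and $R(\tf_2;\cD_n) \to a_2$ a.s.\ while $K_{|\cI_k|,2} \to 1$. Thus the same solve-the-linear-system argument goes through after absorbing an error of order $v_n/|\cI_k| \asto 0$ into the bound.

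So the key steps, in order, are: (1) write $R(\tf_M;\cD_n) = b_n + (K_{|\cI_k|,M}/M)\,v_n$ from \eqref{eq:representation-conditional-risk}; (2) in the \WR\ case, solve the two equations at $M=1,2$ to recover $b_n,v_n$ in terms of $a_1,a_2$ up to $o_{\text{a.s.}}(1)$ and substitute, bounding $1/M\le 1$; (3) in the \WOR\ case, first establish $K_{|\cI_k|,M} \to 1$ uniformly in $M$ and the a.s.\ boundedness of $v_n$, then reduce to the \WR\ computation plus a negligible error term of order $v_n/|\cI_k|$. The main obstacle — though a mild one — is handling the \WOR\ correction factor cleanly: one needs the boundedness of $v_n$ (which itself must be extracted from the $M=1,2$ hypotheses rather than assumed) and the fact that $|\cI_k|\to\infty$, which the excerpt has already flagged as holding under the operative regime. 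Everything else is elementary algebra and the triangle inequality.
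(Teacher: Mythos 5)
Your treatment of the with-replacement cases is correct and is essentially the paper's argument: write $R(\tf_M;\cD_n)=b_n+v_n/M$, solve the $2\times 2$ system at $M=1,2$, substitute back, and bound uniformly in $M$ via the triangle inequality and $1/M\le 1$.

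The gap is in your without-replacement argument. You treat $K_{|\cI|,M}$ as a perturbation of $1$ and absorb an error of order $v_n/|\cI|$, which requires the population size to diverge. That is fine for $\tf_{M,\cI_k}^{\WOR}$, where $|\cI_k|=\binom{n}{k}\to\infty$, but the proposition also covers $\tf_{M,\cI_k^{\pi}}^{\WOR}$, for which the population size is $|\cI_k^{\pi}|=\lfloor n/k\rfloor$. In the operative proportional regime this stays \emph{bounded} (it tends to roughly $\phi_s/\phi$; the paper explicitly notes $\limsup_{k,n\to\infty}|\cI_k^{\pi}|<\infty$ in the remark following \Cref{thm:ver-without-replacement}), so your error term $v_n/|\cI_k^{\pi}|$ does not vanish and the reduction to the \WR\ computation fails for that case. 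The fix is to avoid the approximation entirely: for any finite population size $N$ and $M\le N$ the \WOR\ risk is still \emph{exactly} affine in $1/M$, since
\begin{equation*}
R(\tf_M;\cD_n)=b_n+\frac{N-M}{(N-1)M}\,v_n=\Bigl(b_n-\frac{v_n}{N-1}\Bigr)+\frac{1}{M}\cdot\frac{N\,v_n}{N-1},
\end{equation*}
so solving at $M=1,2$ yields the exact identity $R(\tf_M;\cD_n)=-(1-2/M)R(\tf_1;\cD_n)+2(1-1/M)R(\tf_2;\cD_n)$ for all $M\le N$, with no limit on $N$ and no boundedness of $v_n$ needed. This is how the paper handles all four cases uniformly; your argument should be amended accordingly for the splagging-without-replacement predictor.
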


Note that according to \Cref{prop:bagged-predictors-conditional-mse}, we have $a_1 \ge a_2$, irrespective of the prediction procedure. In \Cref{prop:limiting-risk-for-arbitrary-M}, if $a_1>a_2$ (instead of just $a_1 \ge a_2$), then the asymptotic approximations of the conditional risk $R(\tf_M; \mathcal{D}_n)$ are strictly decreasing in $M$.
Similarly, we can also derive the asymptotic subsample conditional risk defined in \eqref{eq:conditional-risk} of subagged predictors with an arbitrary number of bags $M$ if we know the limiting risk for $M=1$ and $M=2$, as summarized in \Cref{prop:limiting-risk-for-arbitrary-M-cond} below.

\begin{proposition}
[Subsample conditional risk for arbitrary $M$]
\label{prop:limiting-risk-for-arbitrary-M-cond}
    Let $R(\tf_M; \mathcal{D}_n,\{I_{\ell}\}_{\ell=1}^M)$ be as defined in \eqref{eq:conditional-risk}.
    For $\tf_M\in\{\tf_{M,\mathcal{I}_k}^{\textup{\WR}},
    \tf_{M,\mathcal{I}_k}^{\textup{\WOR}},
    \tf_{M,\mathcal{I}_k^{\pi}}^{\textup{\WR}},$ $\tf_{M,\mathcal{I}_k^{\pi}}^{\textup{\WOR}}\}$,
    suppose there exist non-stochastic numbers $b_1$ and $b_2$
    such that
    \begin{align}
        |R(\tf_1; \mathcal{D}_n, I^{(n)}) - b_1| &\asto 0,
        \quad \text{for all } I^{(n)} \in \cI_k\text{ or }\cI_k^{\pi},
        \label{eq:subsample_cond_risk_M1} \\
        |R(\tf_2; \mathcal{D}_n, \{ I_{1}^{(n)}, I_{2}^{(n)} \}) - b_2| &\asto 0, 
        \quad \text{for random samples } I_{1}^{(n)}, I_{2}^{(n)}\footnotemark,
        \label{eq:subsample_cond_risk_M12}
    \end{align} 
    where the almost sure convergence is with respect to the randomness of both $\cD_n$ and $I$ (or $I_1$, $I_2$).
    For any $M\in\NN$, suppose $\{ I_{\ell} \}_{\ell = 1}^{M}$ is a simple random sample according to the definition of $\tf_M$.\footnotetext{According to \eqref{eq:bagged-predictor-subagging}, $I_1^{(n)}$ and $I_2^{(n)}$ are drawn using SRSWR if $\tf_{M} \in \{ \tf_{M, \cI_k}^\WR, \tf_{M, \cI_k^\pi}^\WR \}$ and SRSWOR if $\tf_{M} \in \{ \tf_{M, \cI_k}^\WOR, \tf_{M, \cI_k^\pi}^\WOR \}$. From now on, for notational simplicity, we drop the dependence on $n$ and simply write $I_1$ and $I_2$.}
    Then \begin{equation}\label{eq:guarantees-for-WR-WOR-cond}
       \sup_{M\in\NN}\left|R(\tf_M; \mathcal{D}_n, \{ I_{\ell} \}_{\ell = 1}^{M}) - \left[(2b_2 - b_1) + \frac{2(b_1 - b_2)}{M}\right]\right| 
       \pto 0.
    \end{equation}
\end{proposition}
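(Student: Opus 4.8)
The plan is to reduce the claim to a law of large numbers for U-statistics, following the two-variable linear-algebra route behind \Cref{prop:limiting-risk-for-arbitrary-M} but with the extra care forced by the fact that the subsample conditional risk is no longer an \emph{exact} affine function of $1/M$ with $\cD_n$-measurable coefficients; this is precisely why the conclusion is in probability rather than almost sure.

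First I would record the algebra. Write $g_\ell(\bx) := \hf(\bx; \cD_{I_\ell})$, so that $y - \tf_M(\bx; \{\cD_{I_\ell}\}_{\ell=1}^M) = \frac1M \sum_{\ell=1}^M (y - g_\ell(\bx))$; squaring and integrating against $P$ gives
\begin{equation*}
R(\tf_M; \cD_n, \{I_\ell\}_{\ell=1}^M) = \frac{1}{M^2}\sum_{\ell=1}^{M}\sum_{\ell'=1}^{M} \rho(I_\ell, I_{\ell'}),
\end{equation*}
where $\rho(I, I') := \int (y - \hf(\bx; \cD_I))(y - \hf(\bx; \cD_{I'}))\, \mathrm{d}P(\bx, y)$. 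On the diagonal $\rho(I, I) = R(\tf_1; \cD_n, I)$, while expanding the square defining $R(\tf_2; \cD_n, \{I, I'\})$ yields the polarization identity $\rho(I, I') = 2 R(\tf_2; \cD_n, \{I, I'\}) - \tfrac12 R(\tf_1; \cD_n, I) - \tfrac12 R(\tf_1; \cD_n, I')$. Substituting and collecting diagonal and off-diagonal contributions gives the exact identity
\begin{equation*}
R(\tf_M; \cD_n, \{I_\ell\}_{\ell=1}^M) = \Bigl(\tfrac{2}{M} - 1\Bigr) A_M + \tfrac{2(M-1)}{M}\, \bar{W}_M,
\end{equation*}
with $A_M := \tfrac1M \sum_{\ell=1}^M R(\tf_1; \cD_n, I_\ell)$ the average of the $M=1$ subsample risks over the drawn subsamples and $\bar{W}_M := \binom{M}{2}^{-1} \sum_{1 \le \ell < \ell' \le M} R(\tf_2; \cD_n, \{I_\ell, I_{\ell'}\})$ the U-statistic of the pairwise $M=2$ subsample risks. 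Since the target quantity equals $(\tfrac2M - 1) b_1 + \tfrac{2(M-1)}{M} b_2$ while $|\tfrac2M - 1| \le 1$ and $0 \le \tfrac{2(M-1)}{M} \le 2$ for all $M \ge 1$, we obtain the purely deterministic bound
\begin{equation*}
\sup_{M} \Bigl| R(\tf_M; \cD_n, \{I_\ell\}_{\ell=1}^M) - \Bigl[ (2b_2 - b_1) + \tfrac{2(b_1-b_2)}{M} \Bigr] \Bigr| \le \sup_{M \ge 1} | A_M - b_1 | + 2 \sup_{M \ge 2} | \bar{W}_M - b_2 |,
\end{equation*}
so it suffices to show that the two suprema on the right tend to $0$ in probability.

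The diagonal supremum is easy: hypothesis \eqref{eq:subsample_cond_risk_M1}, holding for every index sequence with the common limit $b_1$, is equivalent to the uniform statement $\max_{I} |R(\tf_1; \cD_n, I) - b_1| \asto 0$ (the maximum over $\cI_k$ or $\cI_k^\pi$, as appropriate), and since $A_M$ is, for every realization of the subsamples, an average of such terms, $\sup_{M \ge 1} |A_M - b_1| \le \max_{I} |R(\tf_1; \cD_n, I) - b_1| \asto 0$. The off-diagonal supremum is the real obstacle — and the only place the argument cannot be made almost sure — because \eqref{eq:subsample_cond_risk_M12} controls $R(\tf_2; \cD_n, \{I_1, I_2\})$ only for a single random pair, not uniformly over pairs, so uniformity in $M$ must come from the U-statistic structure of $\bar{W}_M$. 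Condition on $\cD_n$ and set $h(I, I') := R(\tf_2; \cD_n, \{I, I'\})$, a symmetric data-dependent kernel. In the with-replacement variants the draws $I_1, I_2, \dots$ are i.i.d.\ and $\bar{W}_M$ is a genuine two-sample U-statistic; in the without-replacement variants it is the finite-population analogue; in either case $(\bar{W}_M)_{M \ge 2}$ is a reverse martingale (Hoeffding) converging, as $M \to \infty$ with $\cD_n$ fixed, to $\mu_n := \mathbb{E}[R(\tf_2; \cD_n, \{I_1, I_2\}) \mid \cD_n]$. Doob's maximal inequality for reverse submartingales then gives, for every $M_0 \ge 2$,
\begin{equation*}
\mathbb{P}\Bigl( \sup_{M \ge M_0} | \bar{W}_M - \mu_n | \ge \epsilon \,\Big|\, \cD_n \Bigr) \le \frac{\operatorname{Var}(\bar{W}_{M_0} \mid \cD_n)}{\epsilon^2} \le \frac{C}{M_0\, \epsilon^2}\, \mathbb{E}\bigl[ R(\tf_2; \cD_n, \{I_1, I_2\})^2 \mid \cD_n \bigr]
\end{equation*}
by the standard $O(1/M_0)$ bound on the variance of a U-statistic. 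Taking expectations over $\cD_n$ and invoking the uniform moment bound $\sup_n \mathbb{E}[R(\tf_2; \cD_n, \{I_1, I_2\})^2] < \infty$ — which holds under mild moment assumptions on the data, e.g.\ via convexity $R(\tf_2; \cD_n, \{I, I'\}) \le \tfrac12 R(\tf_1; \cD_n, I) + \tfrac12 R(\tf_1; \cD_n, I')$ and integrability of the $M=1$ subsample risk, with a truncation of $h$ if only uniform integrability is available — the right-hand side becomes $O(1/(M_0 \epsilon^2))$, uniformly in $n$.

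Finally I would close the gap for small $M$. By exchangeability each $\bar{W}_M$ with $2 \le M \le M_0$ is a finite convex combination of quantities distributed as $R(\tf_2; \cD_n, \{I_1, I_2\})$, which converges to $b_2$ in $L^1$ (almost sure convergence plus uniform integrability); hence $\max_{2 \le M \le M_0} |\bar{W}_M - b_2| \pto 0$, and in particular $\mu_n = \mathbb{E}[\bar{W}_2 \mid \cD_n] \pto b_2$. Bounding
\begin{equation*}
\sup_{M \ge 2} | \bar{W}_M - b_2 | \le \max_{2 \le M \le M_0} | \bar{W}_M - b_2 | + | \mu_n - b_2 | + \sup_{M > M_0} | \bar{W}_M - \mu_n |,
\end{equation*}
letting $n \to \infty$ (the first two terms vanish in probability) and then $M_0 \to \infty$ (the last term vanishes by the previous display) shows $\sup_{M \ge 2} |\bar{W}_M - b_2| \pto 0$, which together with the diagonal estimate establishes \eqref{eq:guarantees-for-WR-WOR-cond}. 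The one genuinely delicate point is thus this uniform-in-$M$ control of the off-diagonal U-statistic $\bar{W}_M$: \Cref{prop:limiting-risk-for-arbitrary-M} could be argued deterministically because the data conditional risk is exactly affine in $1/M$, whereas here the ``coefficients'' $A_M$ and $\bar{W}_M$ themselves drift with $M$, and it is the reverse-martingale/Doob estimate — backed by the uniform second-moment bound — that upgrades pointwise-in-$M$ convergence to the required uniform convergence.
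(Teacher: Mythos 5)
Your decomposition and overall architecture match the paper's proof: the same exact identity splitting $R(\tf_M;\cD_n,\{I_\ell\}_{\ell=1}^M)$ into the diagonal average $A_M$ of $M=1$ risks and the off-diagonal U-statistic $\bar W_M$ of pairwise $M=2$ risks, the same uniform control of $A_M$ via the maximum over all $I$, and the same reverse-martingale structure (i.i.d.\ Hoeffding for SRSWR, the Sen finite-population analogue for SRSWOR) combined with a maximal inequality to get uniformity in $M$ for $\bar W_M$. However, there is a gap at the maximal-inequality step. You use the $L^2$/Doob--Kolmogorov version, whose bound is $\operatorname{Var}(\bar W_{M_0}\mid\cD_n)/\epsilon^2 \lesssim M_0^{-1}\,\EE[R(\tf_2;\cD_n,\{I_1,I_2\})^2\mid\cD_n]$, and then you invoke $\sup_n \EE[R(\tf_2;\cD_n,\{I_1,I_2\})^2]<\infty$. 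That uniform second-moment bound is not among the proposition's hypotheses — the proposition assumes only the almost sure convergences \eqref{eq:subsample_cond_risk_M1}--\eqref{eq:subsample_cond_risk_M12}, which carry no moment information — so appealing to ``mild moment assumptions on the data'' smuggles in an assumption the statement does not grant. The paper avoids this entirely by applying the $r=1$ (first-moment) maximal inequality for reverse martingales to the \emph{pre-centered} U-statistic with kernel $R(\tf_2;\cD_n,\{I_i,I_j\})-b_2$, which yields the bound $\delta^{-1}\EE\bigl[|R(\tf_2;\cD_n,\{I_1,I_2\})-b_2|\mid\cD_n\bigr]$; this conditional first moment tends to zero almost surely by Pratt/Vitali, using precisely the convexity domination $R(\tf_2;\cD_n,\{I,I'\})\le \tfrac12 R(\tf_1;\cD_n,I)+\tfrac12 R(\tf_1;\cD_n,I')$ together with \eqref{eq:subsample_cond_risk_M1}, and requires no extra moment hypothesis. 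This also removes the need for your separate small-$M$ argument and the double limit $n\to\infty$ then $M_0\to\infty$.

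The gap is repairable within your own framework: the same convexity domination, combined with $\max_{I}|R(\tf_1;\cD_n,I)-b_1|\asto 0$, shows that the kernel $h(I,I')=R(\tf_2;\cD_n,\{I,I'\})$ is \emph{eventually} bounded by $b_1+1$ uniformly over all pairs, almost surely in $\cD_n$; splitting on the event $\{\max_{I,I'}h(I,I')\le b_1+1\}$ (whose probability tends to one) then salvages the variance bound without any unconditional moment assumption. But as written, the step ``taking expectations over $\cD_n$ and invoking the uniform moment bound'' does not follow from the stated hypotheses, and this is the one place your argument, unlike the paper's, does not close.
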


We make a couple of remarks on 
the assumption of \Cref{prop:limiting-risk-for-arbitrary-M-cond} below.

\begin{remark}[On the requirement \eqref{eq:subsample_cond_risk_M1}]
    Requirement \eqref{eq:subsample_cond_risk_M1}
    might on surface seem stronger
    as it requires almost sure convergence to hold for all $I \in \cI_k$.
    However, recall that,
    for any fixed $I \in \cI_k$,
    $\tf_{1, \cI_k}(\cdot; \cD_{I})$ is the same as
    the prediction procedure $\hf$ computed on the subset $\cD_{I}$
    with cardinality $k$.
    This implies that if the original prediction procedure
    satisfies almost sure convergence as the sample size on
    which it is trained goes to $\infty$,
    then as $k \to \infty$,
    the requirement \eqref{eq:subsample_cond_risk_M1}
    is satisfied for every fixed $I \in \cI_k$.
\end{remark}

\begin{remark}[Role of squared loss]
    In \Cref{prop:limiting-risk-for-arbitrary-M,prop:limiting-risk-for-arbitrary-M-cond}, 
    we observed that only the limiting risks for $M = 1$ and $M = 2$ matter.
    This is because the data conditional risk can be decomposed as
    \begin{align*}
        R(\tf_{M,\cI_k} ; \, \cD_n) &= - \left(1-\frac{2}{M}\right) R(\tf_{1,\cI_k}; \, \cD_n) + 
        2
        \left(
        1 - \frac{1}{M}
        \right)
        R(\tf_{2,\cI_k} ; \, \cD_n) .
    \end{align*}
    The subsample conditional risk admits similar decomposition as well.
    See \Cref{sec:app-general-predictor} for the derivations for both of them.
    Essentially, the interaction of subsampled datasets is only up to order two.
    For other loss functions, this may not be true. 
    However, a simple monotonicity property and bounds can be obtained
    for a large class of loss functions as shown in the next proposition.
    It is also worth mentioning
    that while 
    \Cref{prop:limiting-risk-for-arbitrary-M,prop:limiting-risk-for-arbitrary-M-cond} 
    are derived under the assumption that
    the distribution of the out-of-sample test point $(\bx, y)$,
    $P(\bx, y)$, is the same as the distribution
    of the training data,
    it is not difficult to see that the same conclusions
    hold for a test point sampled from any arbitrary distribution.
    The results are thus also applicable to out-of-distribution scenarios.
\end{remark}

\begin{proposition}[Convex, strongly-convex, and smooth loss functions]
    \label{prop:convex-sconvex-smooth}
    For any loss function $L : \RR \times \RR \to \RR$, every $(\bx, y)\in\mathbb{R}^p\times\mathbb{R}$,
    and for $\tf_M\in\{\tf_{M,\mathcal{I}_k}^{\textup{\WR}},
    \tf_{M,\mathcal{I}_k}^{\textup{\WOR}},
    \tf_{M,\mathcal{I}_k^{\pi}}^{\textup{\WR}},
    \tf_{M,\mathcal{I}_k^{\pi}}^{\textup{\WOR}}\}$,
    define
    \[
        R(\tf_M; \cD_n)
        =
        \int
        \EE[L(y, \tf_M(\bx; \{ \cD_{I_{\ell}} \}_{\ell=1}^{M})) \mid \cD_n]
        \, \mathrm{d}P(\bx, y).
    \]
    If $L : \RR \times \RR \to \RR$
    is convex in the second argument\footnote{
    Recall that a function $f : \RR \to \RR$ is convex if
    $f(t x_1 + (1 - t) x_2) \le t f(x_1) + (1 - t) f(x_2)$
    for all $x_1, x_2 \in \RR$ and $t \in [0, 1]$.
    },
    then
    $R(\tf_M, \cD_n)$ is non-increasing in $M \ge 1$,
    i.e., $R(\tf_{M+1}; \cD_n) \le R(\tf_M; \cD_n)$.
    Alternatively, if 
    there exists $\underline{m}, \overline{m} \in \RR$
    such that
    $L(\cdot, \cdot)$ is $\underline{m}$-strongly convex
    and
    $\overline{m}$-smooth in the second argument\footnote{A function $g:\mathbb{R}\to\mathbb{R}$ is said to be $\lambda_1$-strongly convex if $x\mapsto f(x) - \lambda_1/2 x^2$ is convex. It is called a $\lambda_2$-smooth function if the derivative of $f$ is $\lambda_2$-Lipschitz (i.e., $|f'(x_1) - f'(x_2)| \le \lambda_2|x_1 - x_2|$ for all $x_1, x_2$).},
    then for 
    $\tf_M \in 
   \{\tf_{M,\mathcal{I}_k}^{\textup{\WR}},
    \tf_{M,\mathcal{I}_k}^{\textup{\WOR}} \}, 
    $
    \begin{align}
       \frac{\underline{m}K_{|\mathcal{I}_k|,M}}{2M}
    \int
    \mathscr{V}_{\mathcal{I}_k}(\bx, y)
    \, \mathrm{d}P(\bx, y) ~\leq~ R(\tf_{M}; \cD_n) -R(\tf_{\infty}; \cD_n) ~\leq~
     \frac{\overline{m}K_{|\mathcal{I}_k|,M}}{2M}
    \int
    \mathscr{V}_{\mathcal{I}_k}(\bx, y)
    \, \mathrm{d}P(\bx, y) \label{eq:sconvex-smooth-bounds}
    \end{align}
    with $K_{N, M}$ defined in \eqref{eq:KNM-def}.
    The inequalities in \eqref{eq:sconvex-smooth-bounds}
    continue to hold for $\tf_M \in 
     \{ \widetilde{f}_{M,\mathcal{I}_k^{\pi}}^{\textup{\WR}}, \tf_{M,\mathcal{I}_k^{\pi}}^{\textup{\WOR}}\}
    $, with $K_{|\cI_k|, M}$ and $\mathscr{V}_{\cI_k}$
    replaced with $K_{|\cI_k^\pi|, M}$ and $\mathscr{V}_{\cI_k^\pi}$, respectively.
\end{proposition}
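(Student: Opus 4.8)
The plan is to handle the two assertions separately, in both cases exploiting that $\tf_M(\bx;\{\cD_{I_\ell}\}_{\ell=1}^M)$ is an average of $M$ exchangeable summands $\hf(\bx;\cD_{I_\ell})$ whose sampling mean and variance, conditional on $\cD_n$, are pinned down by \Cref{prop:bagged-predictors-conditional-mse}: the mean equals $\tf_{\infty,\cI_k}(\bx)$ (respectively $\tf_{\infty,\cI_k^\pi}(\bx)$ for splagging), and the variance equals $K_{|\cI_k|,M}\,\mathscr{V}_{\cI_k}(\bx,y)/M$ (respectively with $\cI_k^\pi$).

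For the monotonicity under mere convexity I would use the classical leave-one-out / Jensen argument. Fix $(\bx,y)$, condition on $\cD_n$, and set $Z_\ell=\hf(\bx;\cD_{I_\ell})$ for $\ell=1,\dots,M+1$, where $(I_1,\dots,I_{M+1})$ is drawn according to the relevant scheme --- i.i.d.\ from $\cI_k$ or $\cI_k^\pi$ in the with-replacement cases and without replacement (hence exchangeable) in the other two. Then
\[
\tf_{M+1}(\bx)=\frac1{M+1}\sum_{\ell=1}^{M+1}Z_\ell=\frac1{M+1}\sum_{\ell=1}^{M+1}\Bigl(\frac1M\sum_{j\neq\ell}Z_j\Bigr),
\]
and convexity of $t\mapsto L(y,t)$ gives $L(y,\tf_{M+1}(\bx))\le\frac1{M+1}\sum_{\ell}L\bigl(y,\tfrac1M\sum_{j\neq\ell}Z_j\bigr)$. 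Taking the sampling expectation and using that each leave-one-out average has the same law as $\tf_M(\bx)$ (by exchangeability) yields $\EE[L(y,\tf_{M+1}(\bx))\mid\cD_n]\le\EE[L(y,\tf_M(\bx))\mid\cD_n]$; integrating against $P$ gives $R(\tf_{M+1};\cD_n)\le R(\tf_M;\cD_n)$. For splagging without replacement I would additionally note that the convention $\tf_{M,\cI_k^\pi}^{\WOR}=\tf_{\lfloor n/k\rfloor,\cI_k^\pi}^{\WOR}$ for $M>\lfloor n/k\rfloor$ makes $R$ constant past $\lfloor n/k\rfloor$, so the conclusion still holds there.

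For the strong-convexity/smoothness sandwich I would invoke the standard quadratic bounds for a function $g=L(y,\cdot)$ that is $\underline m$-strongly convex and $\overline m$-smooth: for all $u,v$,
\[
g(u)+g'(u)(v-u)+\frac{\underline m}{2}(v-u)^2\;\le\;g(v)\;\le\;g(u)+g'(u)(v-u)+\frac{\overline m}{2}(v-u)^2 .
\]
Setting $u=\tf_{\infty,\cI_k}(\bx)$ and $v=\tf_M(\bx)$ and taking the sampling expectation conditional on $\cD_n$ and $(\bx,y)$, the first-order term vanishes identically because $\EE[\tf_M(\bx)\mid\cD_n]=\tf_{\infty,\cI_k}(\bx)$; what survives is the quadratic term $\EE[(\tf_M(\bx)-\tf_{\infty,\cI_k}(\bx))^2\mid\cD_n]=K_{|\cI_k|,M}\,\mathscr{V}_{\cI_k}(\bx,y)/M$ weighted by a factor between $\underline m/2$ and $\overline m/2$. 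Integrating over $(\bx,y)\sim P$ and identifying $R(\tf_\infty;\cD_n)=\int L(y,\tf_{\infty,\cI_k}(\bx))\,\mathrm dP(\bx,y)$ produces exactly \eqref{eq:sconvex-smooth-bounds}; the splagging case is verbatim with $\cI_k$, $K_{|\cI_k|,M}$, $\mathscr{V}_{\cI_k}$ replaced by $\cI_k^\pi$, $K_{|\cI_k^\pi|,M}$, $\mathscr{V}_{\cI_k^\pi}$.

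There is no deep obstacle here; the work is in the bookkeeping. I would be careful to (i) check that the leave-one-out rewriting is legitimate for each of the four sampling schemes, which reduces to exchangeability together with the $M>\lfloor n/k\rfloor$ convention for splagging without replacement; (ii) record the mild integrability of $(\bx,y)\mapsto\mathscr{V}_{\cI_k}(\bx,y)$ and $(\bx,y)\mapsto L(y,\tf_{\infty,\cI_k}(\bx))$ under $P$ needed to justify the Jensen and Fubini interchanges and keep all integrals finite; and (iii) confirm that $R(\tf_\infty;\cD_n)$ in \eqref{eq:sconvex-smooth-bounds} equals $\int L(y,\tf_{\infty,\cI_k}(\bx))\,\mathrm dP$ --- equivalently $\lim_{M}R(\tf_M;\cD_n)$, which the sandwich itself delivers since $K_{N,M}/M\to0$.
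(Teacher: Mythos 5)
Your proposal is correct and follows essentially the same route as the paper: the leave-one-out/symmetrization rewriting of $\tf_{M+1}$ as an average of $M$-fold averages plus Jensen for the convex case, and the two-sided quadratic bounds for $\underline m$-strong convexity and $\overline m$-smoothness at the anchor $\tf_{\infty}$, with the linear term killed by the sampling mean and the quadratic term evaluated via \Cref{prop:bagged-predictors-conditional-mse}, for the sandwich. Your explicit tracking of $K_{|\cI_k|,M}/M$ in the lower bound is in fact slightly more careful than the paper's displayed intermediate inequality, and matches the stated result.
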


\begin{remark}
    [Comparison with squared risk]
    In \Cref{prop:convex-sconvex-smooth}, $R(\tf_{\infty}; \cD_n)$ is defined with respect to a general loss function $L$.
    Note that the upper and lower bounds of \eqref{eq:sconvex-smooth-bounds} 
    do not depend on the loss function and are of the same form as the second term on the right-hand side of
    \eqref{eq:representation-conditional-risk},
    except for constant multiples of $\underline{m}/2$ and $\overline{m}/2$.
    Furthermore, even when the loss function is $\overline{m}$-smooth but not convex in the second argument, the data conditional risk
    of $\tf_M$ can be sandwiched between
    $c_1 - \overline{m} c_2/ M$
    and $c_1 + \overline{m} c_2 / M$
    for two data-dependent quantities $c_1$ and $c_2$.
    Beyond the squared error loss,
    several popular loss functions
    used in learning
    satisfy the conditions of \Cref{prop:convex-sconvex-smooth};
    for example, 
    the Logistic loss,
    the Huber loss, 
    among others.
\end{remark}

The next lemma connects the data conditional risk with the subsample conditional risk for $M=1,2$.
In practice, the ingredient predictor is fitted on the subsampled datasets, on which the subsample conditional risk is evaluated.
By \Cref{lem:risk_general_predictor_M12}, we are able to infer the data conditional risk based on the subsample conditional risk for the simple case when $M=1,2$.

\begin{lemma}[Transferring from subsample conditional to data conditional risk for $M = 1, 2$]\label{lem:risk_general_predictor_M12}
    Suppose the conditions in \Cref{prop:limiting-risk-for-arbitrary-M-cond} hold, then \eqref{eq:data_cond_risk_M12} holds with $a_M=b_M$ for $M=1,2$.
   Consequently,
  the conclusions of \Cref{prop:limiting-risk-for-arbitrary-M} hold.    
\end{lemma}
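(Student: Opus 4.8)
The plan is to pass from the subsample conditional risk (where the hypotheses of \Cref{prop:limiting-risk-for-arbitrary-M-cond} are phrased) to the data conditional risk by integrating out the subsampling, and then to invoke \Cref{prop:limiting-risk-for-arbitrary-M}. The starting point is that, since the squared‑loss integrand is nonnegative, Tonelli's theorem yields, for every $M\ge 1$ and for each of the four bagging types, $R(\tf_M;\cD_n)=\EE\big[R(\tf_M;\cD_n,\{I_\ell\}_{\ell=1}^{M})\mid\cD_n\big]$. For $M=1$ this is simply $R(\tf_1;\cD_n)=|\cI_k|^{-1}\sum_{I\in\cI_k}R(\tf_1;\cD_n,I)$ (and analogously with $\cI_k^{\pi}$), because a single SRS draw, with or without replacement, is uniform on $\cI_k$ and $\tf_1(\cdot;\cD_I)$ is exactly the base procedure $\hf$ trained on $\cD_I$. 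For $M=2$ one may additionally use the elementary squared‑loss identity $R(\tf_2;\cD_n,\{I_1,I_2\})=\frac{1}{2}R(\tf_1;\cD_n,I_1)+\frac{1}{2}R(\tf_1;\cD_n,I_2)-\frac{1}{4}\int(\hf(\bx;\cD_{I_1})-\hf(\bx;\cD_{I_2}))^2\,\mathrm{d}P(\bx,y)$, which is the subsample‑conditional version of the decomposition in the ``Role of squared loss'' remark, though it is not strictly needed for the argument.

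From here I would identify the limits. By \eqref{eq:subsample_cond_risk_M1}, $R(\tf_1;\cD_n,I_1)\asto b_1$ (jointly in $\cD_n$ and a uniform draw $I_1$), and by \eqref{eq:subsample_cond_risk_M12}, $R(\tf_2;\cD_n,\{I_1,I_2\})\asto b_2$; combined with the two conditional‑expectation identities above, these should force $R(\tf_1;\cD_n)\asto b_1$ and $R(\tf_2;\cD_n)\asto b_2$, i.e.\ $a_1=b_1$ and $a_2=b_2$. (For SRSWOR with $M=2$ we always have $I_1\ne I_2$; for SRSWR the event $I_1=I_2$ has conditional probability $\le |\cI_k|^{-1}\to 0$ and is asymptotically negligible.) Once \eqref{eq:data_cond_risk_M12} is verified with $a_M=b_M$ for $M=1,2$, the conclusion is immediate: apply \Cref{prop:limiting-risk-for-arbitrary-M} verbatim to obtain \eqref{eq:guarantees-for-WR-WOR} with $(a_1,a_2)=(b_1,b_2)$, which is exactly the asserted consequence.

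The one step that is not pure bookkeeping is transferring almost sure convergence through the averaging operator $\EE[\cdot\mid\cD_n]$: pointwise a.s.\ convergence of a sequence of random variables is not in general preserved under taking (conditional) expectations, and here the number of summands $|\cI_k|$ grows with $n$. I expect this to be the main obstacle. The cleanest way to close it is to note that \eqref{eq:subsample_cond_risk_M1} is assumed to hold for \emph{all} subsamples $I^{(n)}\in\cI_k$ — in particular for the (data‑measurable) extremal choices $\arg\max_I R(\tf_1;\cD_n,I)$ and $\arg\min_I R(\tf_1;\cD_n,I)$ — so that the average $R(\tf_1;\cD_n)$ is sandwiched between two quantities both converging a.s.\ to $b_1$; the same sandwich works for $M=2$. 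Alternatively, one can invoke uniform integrability of the family $\{R(\tf_M;\cD_n,\{I_\ell\})\}$, which holds under the bounded‑moment regularity maintained throughout the paper, so that a.s.\ convergence to the deterministic limit upgrades to $L^1$ convergence and the conditional expectation converges; equivalently, one splits $|\cI_k|^{-1}\sum_I$ over $\{I:|R(\tf_1;\cD_n,I)-b_1|\le\varepsilon\}$ and its complement and bounds the latter by the same moment estimate. Everything else — the Tonelli identity, the squared‑loss algebra, and the final appeal to \Cref{prop:limiting-risk-for-arbitrary-M} — is routine.
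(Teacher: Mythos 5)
Your skeleton is the paper's: write $R(\tf_M;\cD_n)=\EE[R(\tf_M;\cD_n,\{I_\ell\}_{\ell=1}^M)\mid\cD_n]$ and exchange the limit with the conditional expectation. For $M=1$ your argument essentially matches the paper's: since \eqref{eq:subsample_cond_risk_M1} is assumed for \emph{every} $I\in\cI_k$, one bounds $|R(\tf_1;\cD_n)-b_1|\le\max_{I\in\cI_k}|R(\tf_1;\cD_n,I)-b_1|$ and shows the right-hand side vanishes almost surely. Note, however, that your "extremal choices" $\arg\max_I$ and $\arg\min_I$ are random indices, while the hypothesis is stated for deterministic index sequences, so the sandwich is not a literal instantiation of the hypothesis; the paper proves the needed implication as a separate triangular-array lemma (\Cref{lem:conv_cond_expectation}) by concatenating the arrays into a single sequence. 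This is a fixable imprecision rather than a gap.

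The genuine gap is at $M=2$. There the hypothesis \eqref{eq:subsample_cond_risk_M12} gives almost sure convergence only jointly in $(\cD_n,I_1,I_2)$ for a \emph{random} pair, not for every pair, so neither a sandwich over extremal pairs nor a max bound over $\cI_k\times\cI_k$ is available. Your fallback, uniform integrability from ``bounded-moment regularity maintained throughout the paper,'' is also unavailable: this lemma lives in the model-agnostic setting of \Cref{sec:general-predictors}, where no moment assumptions on the risks are made. The paper closes this step with Pratt's lemma (a generalized dominated convergence theorem) applied conditionally on $\cD_n$: by convexity of the squared loss, $0\le R(\tf_2;\cD_n,\{I_1,I_2\})\le\tfrac{1}{2}R(\tf_1;\cD_n,I_1)+\tfrac{1}{2}R(\tf_1;\cD_n,I_2)$; the dominating sequence converges almost surely to $b_1$, and its conditional expectation, which equals $R(\tf_1;\cD_n)$, converges to $b_1$ by the $M=1$ step --- exactly the hypotheses Pratt's lemma requires to conclude $\EE[R(\tf_2;\cD_n,\{I_1,I_2\})\mid\cD_n]\to b_2$. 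You in fact write down the squared-loss identity that yields this domination (the cross term enters with a minus sign, so dropping it gives the Jensen upper bound) but then set it aside as ``not strictly needed''; it is precisely the ingredient that lets you pass the limit through $\EE[\cdot\mid\cD_n]$ for $M=2$. Once that is supplied, the final appeal to \Cref{prop:limiting-risk-for-arbitrary-M} goes through as you describe.
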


It is worth noting that in the proof of \Cref{lem:risk_general_predictor_M12}, 
we only use the convexity of the square loss function.
Therefore, analogous results can be obtained for other convex loss functions as long as the limiting subsample conditional risks exist for $M=1,2$.

\subsection{General reduction strategy}

Finally, combining \Cref{prop:limiting-risk-for-arbitrary-M}, \Cref{prop:limiting-risk-for-arbitrary-M-cond}, and \Cref{lem:risk_general_predictor_M12} yields a general strategy for obtaining both limiting subsample and data conditional risks for an arbitrary number $M$ of bags. 
The end-to-end result is presented in the form of \Cref{thm:risk_general_predictor}. 
This theorem establishes that it is sufficient to obtain the limiting subsample conditional risks for $M=1,2$; see \Cref{fig:risk_general}.

\begin{theorem}[Transferring from subsample conditional to data conditional for general $M$]\label{thm:risk_general_predictor}
    Suppose the conditions \eqref{eq:subsample_cond_risk_M1} and \eqref{eq:subsample_cond_risk_M12} hold, then the conclusions in
    \Cref{prop:limiting-risk-for-arbitrary-M-cond,prop:limiting-risk-for-arbitrary-M} hold.
\end{theorem}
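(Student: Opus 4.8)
The plan is to obtain \Cref{thm:risk_general_predictor} purely by chaining together the three results already established in this subsection, so that no new probabilistic estimate is required. First I would observe that the two displayed hypotheses \eqref{eq:subsample_cond_risk_M1} and \eqref{eq:subsample_cond_risk_M12} are literally the hypotheses of \Cref{prop:limiting-risk-for-arbitrary-M-cond}; hence its conclusion \eqref{eq:guarantees-for-WR-WOR-cond}, namely the uniform-in-$M$ (in probability) characterization of the subsample conditional risk $R(\tf_M;\mathcal{D}_n,\{I_\ell\}_{\ell=1}^M)$, follows immediately. This disposes of the first half of the claim.

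For the second half I need to produce the non-stochastic constants $a_1,a_2$ demanded by \Cref{prop:limiting-risk-for-arbitrary-M}. Here I would invoke \Cref{lem:risk_general_predictor_M12}: since the hypotheses of \Cref{prop:limiting-risk-for-arbitrary-M-cond} hold, the lemma gives that \eqref{eq:data_cond_risk_M12} is satisfied with $a_M = b_M$ for $M = 1, 2$, i.e. the $M=1$ and $M=2$ data conditional risks converge almost surely (over the randomness of $\mathcal{D}_n$) to the same limits as the corresponding subsample conditional risks. With \eqref{eq:data_cond_risk_M12} in hand, the hypotheses of \Cref{prop:limiting-risk-for-arbitrary-M} are met, and its conclusion \eqref{eq:guarantees-for-WR-WOR} --- the uniform-in-$M$ almost sure characterization of $R(\tf_M;\mathcal{D}_n)$ --- follows. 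Combining the two halves yields the theorem.

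The only points that require care, rather than genuine difficulty, are bookkeeping ones: (i) tracking the two modes of convergence, since \Cref{prop:limiting-risk-for-arbitrary-M-cond} yields convergence in probability (because the sampling indices $I_\ell$ enter the risk) whereas \Cref{prop:limiting-risk-for-arbitrary-M} and \Cref{lem:risk_general_predictor_M12} yield almost sure convergence over $\mathcal{D}_n$; and (ii) respecting the convention, recorded in the footnote to \Cref{prop:limiting-risk-for-arbitrary-M}, that for the without-replacement variants the supremum over $M\in\NN$ is interpreted as $M\le|\cI_k|$ or $M\le|\cI_k^{\pi}|$, together with the earlier convention that splagging caps $M$ at $\lfloor n/k\rfloor$. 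I expect the substantive work --- the algebraic identity writing $R(\tf_M;\mathcal{D}_n)$ as an affine function of $1/M$ obtained by solving the two-variable linear system, and the Jensen-type comparison underlying \Cref{lem:risk_general_predictor_M12} --- to have already been carried out in the proofs of those component results, so that the proof of \Cref{thm:risk_general_predictor} itself is essentially a one-line composition; there is no single hard step remaining.
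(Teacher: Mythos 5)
Your proposal is correct and matches the paper's own proof, which is exactly the one-line composition of \Cref{prop:limiting-risk-for-arbitrary-M-cond}, \Cref{lem:risk_general_predictor_M12}, and \Cref{prop:limiting-risk-for-arbitrary-M} that you describe. The bookkeeping points you flag (modes of convergence and the $M\le|\cI_k|$ convention) are handled inside those component results, so nothing further is needed.
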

For general predictors, both the data conditional risk and the subsample conditional risk for $M=1$ (required for \eqref{eq:subsample_cond_risk_M1} to hold) are typically available from known results.
In such cases, it remains to first derive limiting subsample conditional risk for $M=2$ (required for \eqref{eq:subsample_cond_risk_M12} to hold) depending on the sampling strategies, and then verify the properties of the limiting conditional risks required in \Cref{thm:risk_general_predictor}.
In this paper, we focus on the asymptotic risk characterization for the bagged ridge and ridgeless predictors and verify the conditions \eqref{eq:subsample_cond_risk_M1} and \eqref{eq:subsample_cond_risk_M12} in the next section.

\begin{figure}[!t]
    \centering
    \begin{tikzpicture}
        \node[draw,
        text width=4.6cm,
        align=center](subsampleM12) at (0,0) {$R(\tf_M; \mathcal{D}_n, \{ I_{\ell} \}_{\ell = 1}^{M})$, $M=1,2$};
        
        \node[right of =subsampleM12,
            text width=4.6cm, 
            align=center,
            node distance=10.0cm, draw](subsample) at (0,0) {$R(\tf_M; \mathcal{D}_n, \{ I_{\ell} \}_{\ell = 1}^{M})$, $M\in\NN$};
        
        \node[below of =subsampleM12, 
            text width=4.6cm, 
            align=center,
            node distance=2.0cm, draw](dataM12) at (0,0) {$R(\tf_M; \mathcal{D}_n)$, $M=1,2$};
        
        \node[right of =dataM12,
            text width=4.6cm,
            align=center,
            node distance=10.0cm, draw](data) at (0,-2) {$R(\tf_M; \mathcal{D}_n)$, $M\in\NN$};
        
        \draw[-triangle 45] (subsampleM12) to node [text width=2.5cm,midway,above,align=center] {\Cref{prop:limiting-risk-for-arbitrary-M-cond}} (subsample);
        
        \draw[-triangle 45] (subsampleM12) to node [text width=2.5cm,midway,right,align=center] {\Cref{lem:risk_general_predictor_M12}} (dataM12);
        
        \draw[-triangle 45] (dataM12) to node [text width=2.5cm,midway,below,align=center] {\Cref{prop:limiting-risk-for-arbitrary-M}} (data);
        
        \draw[-triangle 45] (subsampleM12.east) to node [text width=2.5cm,midway,above right,align=center] {\Cref{thm:risk_general_predictor}} (data.west);

    \end{tikzpicture}
    \caption{A general reduction strategy for obtaining limiting risks of subagged predictors with $M$ bags.}
    \label{fig:risk_general}
\end{figure}
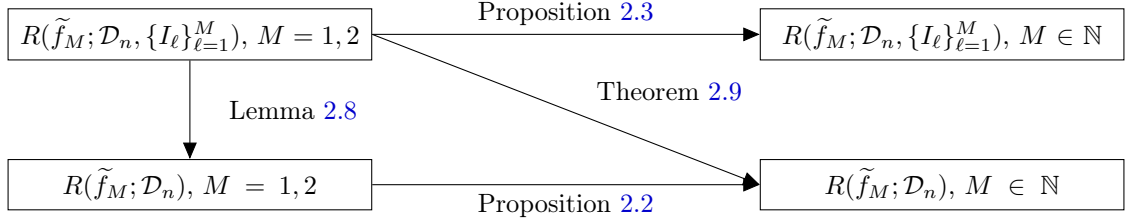
    
\section{Bagging ridge and ridgeless predictors}
\label{sec:ridge-ridgeless-predictors}

In this section, we adopt the reduction strategy proposed in \Cref{sec:general-predictors} to characterize the risk of subagged ridge and ridgeless predictors.
The formal definitions of these predictors and data assumptions imposed for our results are given in \Cref{subsec:predictor_ass}. 
Subsequently, the risk characterizations for subagging and \splagging are presented in \Cref{sec:bagging-with-replacement} and \Cref{sec:bagging-without-replacement}, respectively.

\subsection{Predictors and assumptions}\label{subsec:predictor_ass}

    Consider a dataset $\mathcal{D}_n = \{(\bx_1, y_1), \ldots, (\bx_n, y_n)\}$ consisting of random vectors in $\RR^{p} \times \RR$.
    Let $\bX\in\RR^{n\times p}$ denote the corresponding feature matrix whose $j$-th row contains $\bx_j^\top$, and let $\by\in\RR^n$ denote the corresponding response vector whose $j$-th entry contains $y_j$.
    For any index set $I\subseteq\{1, 2, \ldots, n\}$, let $\mathcal{D}_{I} = \{(\bx_j, y_j):\, j \in I\}$ be a subsampled dataset, and let $\bL \in \RR^{n \times n}$ denote a diagonal matrix such that $L_{jj} = 1$ if and only if $j \in I$. 
    
    Recall that the \emph{ridge} estimator with regularization parameter $\lambda>0$ fitted on $\cD_I$ is defined as
    \begin{align}
        \betaridge(\cD_I) 
        &= \argmin\limits_{\bbeta\in\RR^p}
        \frac{1}{ | I |}
        \sum_{j \in I} (y_j  - \bx_j^\top \bbeta)^2 
        + \lambda \| \bbeta \|_2^2 \notag \\
        &= (\bX^{\top} \bL \bX/ |\cD_I| + \lambda\bI_p)^{-1}(\bX^{\top} \bL \by/|\cD_I|) \notag.
    \end{align}
    The associated ridge predictor is given by
    $
        \hat{f}_{\lambda}(\bx;\cD_I) = \bx^{\top}\betaridge(\cD_I).
    $
    The \emph{ridgeless} estimator is the limiting estimator $\hbeta_{\lambda}(\cD_I)$ as $\lambda \to 0^{+}$.
    When $|\cD_I| \ge p$, and assuming that the $p$ feature vectors are linearly independent in $\RR^{p}$, it is simply the least squares estimator:
    \[
        \hbeta_0(\cD_I)
        = (\bX^\top \bL \bX / |\cD_I|)^{-1} (\bX^\top \bL \bY / |\cD_I|).
    \]
    When $|\cD_I| < p$, it is the minimum $\ell_2$-norm least squares estimator:
    \begin{align}
        \betamntwo(\cD_I) &= \argmin\limits_{\bbeta'\in\RR^p} \left\{\|\bbeta'\|_2
        \mathrel{\Big |}
        \bbeta' \in\argmin\limits_{\bbeta\in\RR^p}\sum_{j\in I}(y_j - \bx_j^{\top}\bbeta)^2\right\} \notag \\
        &= (\bX^{\top} \bL \bX/|\cD_I|)^{+}(\bX^{\top} \bL \by/|\cD_I|)\notag.
    \end{align}
    Here $\bA^+$ denotes the Moore-Penrose inverse of matrix $\bA$.
    Assuming that $\cD_I$ has $|\cD_I|$ linearly independent observation vectors in $\RR^{p}$, this estimator also interpolates the data, i.e., we have $y_j = \bx_j^\top \hbeta_0(\cD_I)$ for $j \in I$, and has the minimum $\ell_2$-norm among all interpolators.
    The associated ridgeless predictor is again given by
    $
        \hat{f}_0(\bx;\cD_n) = \bx^{\top}\betamntwo(\cD_n).
    $
    
    Given their relevance to the subagged predictors studied in the literature, we will primarily focus on only two of the four subagged predictors as defined in \eqref{eq:bagged-predictor-subagging}, although the implications for the other two can be trivially obtained.
    For $\lambda\geq0$, the subagged and splagged predictors 
    respectively are defined as
    \begin{align}
        \begin{split}
            \tilde{f}_{M,\cI_k}^{\textup{\WR}}(\bx;\cD_n) &= \bx^{\top}\tbetaridge{M},\qquad I_{1},\ldots,I_M\overset{\texttt{SRSWR}}{\sim} \cI_k,\\
            \tilde{f}_{M,\cI_k^{\pi}}^{\textup{\WOR}}(\bx;\cD_n) &= \bx^{\top}\tbetaridge{M},\qquad I_{1},\ldots,I_M\overset{\texttt{SRSWOR}}{\sim} \cI_k^{\pi},
        \end{split}
         \label{eq:ingredient-predictor}
    \end{align}
    where $\tbetaridge{M}=M^{-1}\sum_{\ell=1}^M\betaridge(\cD_{I_{\ell}})$.
    For $M > |\cI_k^\pi|$, the splagged predictor is defined to be the predictor with $M = |\cI_k^\pi|$.
    When $\lambda=0$, the base predictors become the ridgeless predictors.

   We impose the following Assumptions~\ref{asm:rmt-feat}-\ref{asm:spectrum-spectrumsignproj-conv} on the dataset $\cD_n$ to characterize the risk. These assumptions are standard in the study of the ridge regression under proportional asymptotics; see, e.g., \citet{hastie2022surprises}.

    \begin{assumption}
        \label{asm:rmt-feat}
        The feature vectors $\bx_i \in \RR^{p}$, $i = 1, \dots, n$,
        multiplicatively decompose as $\bx_i = \bSigma^{1/2} \bz_i$,
        where $\bSigma \in \RR^{p \times p}$ is a positive semidefinite matrix
        and $\bz_i \in \RR^{p}$ is a random vector
        containing i.i.d.\ entries with
        mean $0$, variance $1$,
        and bounded moment of order $4 + \delta$
        for some $\delta > 0$.
    \end{assumption}
    
   \begin{assumption}
        \label{asm:lin-mod}
        The response variables $y_i \in \RR$, $i = 1, \dots, n$, 
        additively decompose as $y_i = \bx_i^\top \bbeta_0 + \epsilon_i$,
        where $\bbeta_0 \in \RR^{p}$ is an unknown signal vector 
        and $\epsilon_i$ is an unobserved error
        that is assumed to be independent of $\bx_i$
        with 
        mean $0$,
        variance $\sigma^2$,
        and bounded moment of order $4 + \delta$
        for some $\delta > 0$.
   \end{assumption}
   
   \begin{assumption}
        \label{asm:signal-bounded-norm}
        The signal vector $\bbeta_0$ has bounded limiting energy, i.e., $\lim_{p \to \infty} \| \bbeta_0 \|_2^2 = \rho^2 < \infty$.
   \end{assumption}
    
    \begin{assumption}
        \label{asm:covariance-bounded-eigvals}
        There exist real numbers $r_{\min}$ and $r_{\max}$
        independent of $p$
        with $0 < r_{\min} \le r_{\max} < \infty$
        such that
        $r_{\min} \bI_{p}~\preceq~\bSigma~\preceq~r_{\max} \bI_{p}$.
    \end{assumption}
    
    \begin{assumption}
        \label{asm:spectrum-spectrumsignproj-conv}
        Let $\bSigma = \bW \bR \bW^\top$
        denote the eigenvalue decomposition of the covariance matrix $\bSigma$,
        where 
        $\bR \in \RR^{p \times p}$ is a diagonal matrix
        containing eigenvalues (in non-increasing order) 
        $r_1 \ge r_2 \ge \dots \ge r_{p} \ge 0$,
        and
        $\bW~\in~\RR^{p \times p}$ is an orthonormal matrix
        containing the associated eigenvectors 
        $\bw_1, \bw_2, \dots, \bw_{p}~\in~\RR^{p}$.
        Let $H_{p}$ denote the empirical spectral distribution
        of $\bSigma$
        (supported on $\RR_{> 0}$)
        whose value at any $r \in \RR$ is given by
        \[
            H_{p}(r)
            = \tfrac{1}{p} \sum_{i=1}^{p} \ind_{\{r_i \le r\}}.
        \]
        Let $G_{p}$ denote a certain distribution (supported on $\RR_{> 0}$)
        that encodes the components of the signal vector $\bbeta_0$ in the eigenbasis of $\bSigma$
        via the distribution of (squared) projection of $\bbeta_0$
        along the eigenvectors $\bw_j, 1 \le j \le p$,
        whose value at any $r \in \RR$ is given by
        \[
            G_{p}(r)
            = \tfrac{1}{\| \bbeta_0 \|_2^2} \sum_{i = 1}^{p} (\bbeta_0^\top \bw_i)^2 \, \ind_{\{ r_i \le r \}}.
        \]
        Assume there exist fixed distributions $H$ and $G$ such that $H_{p} \dto H$ and $G_{p} \dto G$ as $p  \to \infty$.    
    \end{assumption}

\subsection{Subagging with replacement}
\label{sec:bagging-with-replacement}

In this section, we delve into the risk asymptotics and properties for subagging.
In \Cref{subsec:bagging-with-replacement-main}, we provide exact risk characterization of subagged ridge and ridgeless predictors.
The monotonicity properties of the asymptotic bias and variance components of the risk are presented in \Cref{subsec:monotonicity-M-with-replacement}.

\subsubsection{Risk characterization}
\label{subsec:bagging-with-replacement-main}

In preparation for our first result on the risk characterization of subagged ridge and ridgeless predictors, let us establish some notations.
We will analyze the subagged predictors (with $M$ bags) in the proportional asymptotics regime, in which the original data aspect ratio ($p / n$) converges to $\phi \in (0, \infty)$ as $n, p \to \infty$, and the subsample data aspect ratio ($p/k$) converges to $\phi_s$ as $k, p \to \infty$. 
Because $k \le n$, $\phi_s$ is always no less than $\phi$.
    
A fixed-point equation defines one of the key quantities that recurs throughout our analysis of subagged ridge predictors.
Such fixed point equations have appeared in the literature before in the context of risk analysis of regularized estimators under proportional asymptotics regime. 
For instance, see \citet{dobriban_wager_2018,hastie2022surprises,mei_montanari_2022} in the context of ridge regression. 
For other $M$-estimators, see \cite{thrampoulidis_oymak_hassibi_2015,thrampoulidis_abbasi_hassibi_2018}, \cite{sur_chen_candes_2019}, \cite{karoui_2013,elkaroui_2018}, \cite{miolane_montanari_2021}, among others.

For any $\lambda > 0$ and $\theta > 0$, define $v(-\lambda; \theta)$ as the unique nonnegative solution to the fixed-point equation:
\begin{align}
    \frac{1}{v(-\lambda;\theta)} =\lambda+ \theta\int\frac{r}{1 + v(-\lambda;\theta)r} \, \rd H(r), \label{eq:fixed-point-ridge}
\end{align}
and for $\lambda = 0$, $\theta > 1$, we define:
\begin{align}
    v(0; \theta) = 
    \begin{cases}
        \lim\limits_{\lambda \to 0^{+}} v(-\lambda; \theta), & \text{if }\theta>1\\
        +\infty, & \text{if }\theta\in(0,1] .
    \end{cases} \label{eq:v0-def}
\end{align}
    
The fact that the fixed-point equation \eqref{eq:fixed-point-ridge} has a unique nonnegative solution is well known in the random matrix theory literature. See, e.g., \citet{bai2010spectral,couillet2011random}.
For completeness, we also provide a proof in \Cref{append:det-equi-resol}.
The existence of the limit of $v(-\lambda;\theta)$ as $\lambda\rightarrow0^+$ is due to the fact that $v(-\lambda;\theta)$ is monotonically decreasing in $\lambda>0$ \citep[Lemma S.6.15 (4)]{patil2022mitigating}.
Additionally, we define non-negative constants $\tv(-\lambda;\vartheta,\theta)$ and $\tc(-\lambda;\theta)$ via the following equations:
\begin{align}
    \tv(-\lambda;\vartheta,\theta) = \frac{\vartheta \int{r^2}{(1+ v(-\lambda; \theta)r)^{-2}}\,\rd H(r)}{v(-\lambda; \theta)^{-2}-\vartheta \int{r^2}{(1+ v(-\lambda; \theta)r)^{-2}}\,\rd H(r)},\quad 
    \tc(-\lambda;\theta)& =\int \frac{r}{(1 + v(-\lambda; \theta)r)^2} \,\rd G(r). \label{eq:tv_tc}
\end{align}

    \begin{theorem}[Risk characterization of subagged ridge and ridgeless predictors]\label{thm:ver-with-replacement}
        Let $\tfWR{M}{\cI_k}$ be the predictor as defined in \eqref{eq:ingredient-predictor} for $\lambda \ge 0$.
        Suppose 
        Assumptions \ref{asm:rmt-feat}-\ref{asm:spectrum-spectrumsignproj-conv} hold for the dataset $\cD_n$.
        Then,
        as $k,n,p\rightarrow\infty$
        such that
        $p/n\rightarrow\phi\in(0,\infty)$ and $p/k\rightarrow\phi_s\in[\phi,\infty]$
        (and $\phi_s \neq 1$ if $\lambda = 0$),
        there exist deterministic functions $\RlamM{M}{\phi}$ for $M \in \NN$, such that for $I_1, \ldots, I_M \overset{\texttt{\textup{SRSWR}}}{\sim} \mathcal{I}_k$, 
        \begin{equation}
            \label{eq:ridge-wr-guarantee}
            \begin{split}
            \sup_{M\in\NN}| R(\tfWR{M}{\cI_k}; \cD_n,\{I_{\ell}\}_{\ell = 1}^{M}) - \RlamM{M}{\phi}| &\pto 0,\\ 
           \sup_{M\in\NN}
            | R(\tfWR{M}{\cI_k}; \cD_n) - \RlamM{M}{\phi}| &\asto 0.
            \end{split}
        \end{equation}
        The guarantee \eqref{eq:ridge-wr-guarantee} also holds true if $\tf_{M, \cI_k}^{\WR}$ is replaced by $\tf_{M, \cI_k}^{\WOR}$.
        Furthermore, the function $\RlamM{M}{\phi}$ decomposes as
        \begin{align}
            \RlamM{M}{\phi} = \sigma^2 + \BlamM{M}{\phi}   + \VlamM{M}{\phi} , \label{eq:risk-det-with-replacement}
        \end{align}
        where the bias and variance terms are given by
        \begin{align}
            \BlamM{M}{\phi}
            &= M^{-1} B_{\lambda}(\phi_s, \phi_s)
            + (1 -  M^{-1}) B_{\lambda}(\phi,\phi_s),
            \label{eq:bias-component-decomposition}\\
            \VlamM{M}{\phi}
            &= M^{-1} V_{\lambda}(\phi_s, \phi_s)
            + (1 - M^{-1})
            V_{\lambda}(\phi,\phi_s)
            \label{eq:var-component-decomposition},
        \end{align}
        and the functions $B_{\lambda}(\cdot, \cdot)$ and $V_{\lambda}(\cdot, \cdot)$ are defined as
        \begin{align}
            B_{\lambda}(\vartheta, \theta)
            = \rho^2 (1+\tv(-\lambda; \vartheta, \theta)) \tc(-\lambda;\theta)~\mbox{and}~
            V_{\lambda}(\vartheta, \theta)
            =\sigma^2\tv (-\lambda; \vartheta, \theta)~\mbox{for}~\theta \in (0, \infty], \, \vartheta \le \theta. \label{eq:Blam_V_lam}
        \end{align}
    \end{theorem}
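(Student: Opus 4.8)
The plan is to apply the general reduction machinery of \Cref{thm:risk_general_predictor}: it suffices to establish the existence of the limiting subsample conditional risks for $M = 1$ and $M = 2$ (conditions \eqref{eq:subsample_cond_risk_M1} and \eqref{eq:subsample_cond_risk_M12}), identify the limits $b_1, b_2$, and then read off the general-$M$ formula $(2b_2 - b_1) + 2(b_1 - b_2)/M$, which after rearrangement yields the claimed decomposition $\RlamM{M}{\phi} = \sigma^2 + \BlamM{M}{\phi} + \VlamM{M}{\phi}$ with the stated convex-combination structure in $M^{-1}$. Since $\tf_{1, \cI_k}^{\WR}(\cdot; \cD_I)$ is literally the ridge/ridgeless predictor fitted on a subsample of size $k$, the $M = 1$ claim is exactly the known risk characterization for ridge regression under proportional asymptotics (e.g.\ \citet{hastie2022surprises} and the deterministic equivalents recalled in \Cref{append:det-equi-resol}); this gives $b_1 = \sigma^2 + B_\lambda(\phi_s, \phi_s) + V_\lambda(\phi_s, \phi_s)$, matching \eqref{eq:bias-component-decomposition}--\eqref{eq:var-component-decomposition} at $M = 1$.

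The substantive work is the $M = 2$ case. Here I would write the subsample conditional risk of $\tf_{2,\cI_k}^{\WR} = \tfrac12(\betaridge(\cD_{I_1}) + \betaridge(\cD_{I_2}))$, expand the square, and observe that the cross term involves the bilinear form $\mathbb{E}_{(\bx,y)}[(y - \bx^\top \hbeta(\cD_{I_1}))(y - \bx^\top \hbeta(\cD_{I_2}))]$, which reduces under Assumptions \ref{asm:rmt-feat}--\ref{asm:lin-mod} to $\sigma^2$ plus a quadratic form $(\bbeta_0 - \hbeta(\cD_{I_1}))^\top \bSigma (\bbeta_0 - \hbeta(\cD_{I_2}))$ coupling two \emph{independent} (given $\cD_n$, as $k/n \to 0$ or more precisely as the overlap vanishes in the relevant sense) subsampled ridge estimators. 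The key technical ingredient is a deterministic equivalent for the resolvent-type objects $(\bX^\top \bL_1 \bX / k + \lambda \bI)^{-1} \bSigma (\bX^\top \bL_2 \bX / k + \lambda \bI)^{-1}$ with two diagonal sketching matrices $\bL_1, \bL_2$ corresponding to the two subsamples. This is where the "novel deterministic equivalents for ridge resolvents with random Tikhonov-type regularization" advertised in the introduction come in: after conditioning on one subsample, the other subsample's normalized Gram matrix acts like an additional random ridge penalty, and I would invoke the extended asymptotic-equivalent calculus of \Cref{sec:calculus_asymptotic_equivalents,sec:asympequi-extended-ridge-resolvents}. Carrying this through should produce $b_2 = \sigma^2 + \tfrac12[B_\lambda(\phi_s,\phi_s) + B_\lambda(\phi,\phi_s)] + \tfrac12[V_\lambda(\phi_s,\phi_s) + V_\lambda(\phi,\phi_s)]$, exactly the $M = 2$ instance of \eqref{eq:bias-component-decomposition}--\eqref{eq:var-component-decomposition}; the functions $\tv, \tc$ and the fixed point $v$ arise from evaluating these deterministic equivalents against $\bbeta_0$ (giving $\tc$ via $G$) and against $\bSigma$ (giving $\tv$ via the self-consistent correction term in $H$).

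Once $b_1$ and $b_2$ are in hand, \Cref{prop:limiting-risk-for-arbitrary-M-cond} gives the uniform-in-$M$ convergence in probability of the subsample conditional risk, \Cref{lem:risk_general_predictor_M12} transfers this to $a_M = b_M$ for the data conditional risk at $M = 1, 2$, and \Cref{prop:limiting-risk-for-arbitrary-M} upgrades it to uniform-in-$M$ almost sure convergence of the data conditional risk; this is precisely the pair of guarantees in \eqref{eq:ridge-wr-guarantee}. The equivalence of the $\WR$ and $\WOR$ statements follows from the discussion after \Cref{prop:bagged-predictors-conditional-mse}: since $|\cI_k| = \binom{n}{k} \to \infty$, the factor $K_{|\cI_k|, M}$ in \eqref{eq:KNM-def} tends to $1$ uniformly over $M$, so the two risk profiles share the same limit. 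The main obstacle, as flagged, is the two-resolvent deterministic equivalent for the cross term in the $M = 2$ analysis — in particular, handling the ridgeless limit $\lambda \to 0^+$ (hence the exclusion $\phi_s \neq 1$), where one must pass to the limit in the fixed-point equation \eqref{eq:fixed-point-ridge} and control the resolvents uniformly near the interpolation threshold; the rest is bookkeeping with the reduction lemmas already established.
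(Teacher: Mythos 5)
Your overall architecture matches the paper's: reduce to the subsample conditional risks at $M=1,2$ via \Cref{thm:risk_general_predictor}, import the $M=1$ limit from the known single-estimator ridge asymptotics, analyze the $M=2$ cross term with conditional deterministic equivalents, and pass to $\lambda\to 0^+$ by a uniformity argument for the ridgeless case. The handling of the \WR/\WOR equivalence and of the $\phi_s\neq 1$ exclusion is also as in the paper.

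There is, however, a genuine gap in your treatment of the $M=2$ cross term, which is the heart of the proof. You assert that $(\bbeta_0-\hbeta(\cD_{I_1}))^\top\bSigma(\bbeta_0-\hbeta(\cD_{I_2}))$ couples two estimators that are asymptotically independent ``as $k/n\to 0$ or more precisely as the overlap vanishes.'' In the regime of the theorem, $k/n\to\phi/\phi_s$, which is a \emph{positive} constant for every finite $\phi_s$, and the overlap fraction satisfies $|I_1\cap I_2|/k\asto\phi/\phi_s>0$ (a binomial/hypergeometric concentration fact, \Cref{lem:i0_mean}). This non-vanishing overlap is exactly what generates the $\phi$-dependence of $B_\lambda(\phi,\phi_s)$ and $V_\lambda(\phi,\phi_s)$ through $\tv(-\lambda;\phi,\phi_s)$. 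If the overlap truly vanished, the cross term would factor and you would obtain the \emph{splagging} limits ($C_\lambda(\phi_s)$ for the bias cross term and $0$ for the variance cross term, as in \Cref{thm:ver-without-replacement}), not the subagging limits you are trying to prove; so the independence shortcut would yield the wrong answer. Relatedly, ``conditioning on one subsample'' does not restore independence, since the two subsamples share observations. The correct mechanism, and the one the paper uses, is to set $I_0=I_1\cap I_2$, decompose $\bM_j^{-1}=\tfrac{i_0}{k}(\hSigma_0+\lambda\bI_p)+\tfrac{k-i_0}{k}(\hSigma_j^{\mathrm{ind}}+\lambda\bI_p)$, and condition on $\cD_{I_0}$: then $\cD_{I_1'}$ and $\cD_{I_2'}$ are conditionally independent, the \emph{shared} Gram matrix $\hSigma_0+\lambda\bI_p$ plays the role of the random Tikhonov regularizer for each individual-part resolvent (\Cref{lem:deter-approx-ridge-extend}), and a coupled pair of fixed-point equations collapses by symmetry to $v_0=v_1=v(-\lambda;\phi_s)$ while retaining the overlap fraction $\phi/\phi_s$ inside $\tv(-\lambda;\phi,\phi_s)$. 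Without this explicit intersection decomposition, your plan as written cannot produce the correct cross-term limit. A minor further omission: the boundary case $\phi_s=\infty$ requires a separate direct argument (the subsampled estimators' norms tend to zero), which your plan does not mention.
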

        
    \Cref{thm:ver-with-replacement} provides precise asymptotics for the data conditional as well as the subsample conditional risks of subagged ridge and ridgeless predictors.
    We have also derived the bias-variance decomposition for the asymptotic risk in \eqref{eq:risk-det-with-replacement}.
    Interestingly, the individual bias term is a convex combination of $B_{\lambda}(\phi_s,\phi_s)$ and $B_{\lambda}(\phi,\phi_s)$, which correspond to the biases for $M = 1$ and $M = \infty$, respectively.
    The same conclusion also holds for the variance term.
    Although the risk behavior for $M=1$ has been studied by \citet{patil2022mitigating}, the risk characterization for general (data-dependent) $M$ is new.
    As we shall see later in \Cref{sec:isotropic_features}, the risk behavior for $M=\infty$ is significantly different from that for $M=1$.
    
    When $\theta>1$, the parameter $v(0; \theta)$ defined in \eqref{eq:v0-def} can also be seen as the unique nonnegative solution to the following fixed-point equation \citep[Lemma S.6.14]{patil2022mitigating}:
        \begin{align}
           \frac{1}{v(0;\theta)} = \theta\int\frac{r}{1 + v(0;\theta)r}\,\rd H(r). \label{eq:fixed-point-ridgeless}
        \end{align}
        When $\theta\in(0,1]$, since $\lim_{\lambda\rightarrow0^+}v(-\lambda; \theta) = \infty$, we have that $\lim_{\lambda\rightarrow0^+}\tc(-\lambda; \theta) = 0$ and $\lim_{\lambda\rightarrow0^+}\tv(-\lambda; \vartheta, \theta) = \vartheta (1-\vartheta)^{-1}$. Therefore, the bias and variance functions in \eqref{eq:Blam_V_lam} 
        for $\vartheta \le \theta$
        reduce to
        \begin{align}
            B_0(\vartheta, \theta)
            &= 
            \begin{dcases}
            0 
            & \theta \in (0, 1]\\
            \rho^2 
            (1 + \tv(0;\vartheta, \theta)) \tc(0;\theta) & \theta \in (1, \infty]
            \end{dcases},
            \qquad
            V_0(\vartheta,\theta)
            =
            \begin{dcases}
                \sigma^2
                \tfrac{\vartheta}{1  - \vartheta}
                & \theta \in (0, 1)\\
                \infty & \theta = 1 \\
                \sigma^2 \tv(0;\vartheta,\theta)
                & \theta \in (1, \infty].
            \end{dcases}\label{remark:ridgeless}
        \end{align}
    As a sanity check when $\vartheta  = \theta$,
    it is easy to see that the bias and variance components
    collapse to that of the minimum $\ell_2$-norm least squares estimator
    with limiting aspect ratio $\theta$.
    
    A few additional remarks on \Cref{thm:ver-with-replacement} follow.

    \begin{remark}
        [Data conditional versus subsample conditional risks]
        \label{rem:conditional-vs-unconditional-risk}
        \Cref{thm:ver-with-replacement} shows that the data conditional risk
        and the subsample conditional risk
        both converge to the same deterministic limit.
         Intuitively, this is expected because the data conditional risk is the average subsample conditional risks
         over all subsamples.
    \end{remark}

    \begin{remark}[Extending theorem to negative regularization]
        For $\lambda<0$, the fixed-point equation \eqref{eq:fixed-point-ridge} may have more than one solution.
        However, there still exists a solution to \eqref{eq:fixed-point-ridge} with which \Cref{thm:ver-with-replacement} holds whenever $\lambda > - (1-\sqrt{\phi})^2 r_{\min}$ where $r_{\min}$ is the uniform lower bound on the smallest eigenvalue of $\bSigma$.
        In this paper, for simplicity we restrict to the case when $\lambda\geq 0$.
    \end{remark}
    
    \begin{remark}
    [The requirement of $\phi_s \neq 1$ for $\lambda = 0$]
        When $\lambda = 0$, the base predictors are ridgeless predictors.
        In this case, the variance function $\theta\mapsto \sV_{0, M}(\vartheta,\theta)$ is unbounded if $M$ is finite and $\theta\rightarrow1$ because $V_0(\theta,\theta)$ in \eqref{remark:ridgeless} diverges as $\theta\rightarrow1$.
        Empirically, this can be explained by the singularity of the empirical covariance matrices with aspect ratios close to 1.
        However, the asymptotic risk for $M = \infty$ is always bounded.
    \end{remark}

    \begin{figure}[!t]
        \centering
        \includegraphics[width=0.85\textwidth]{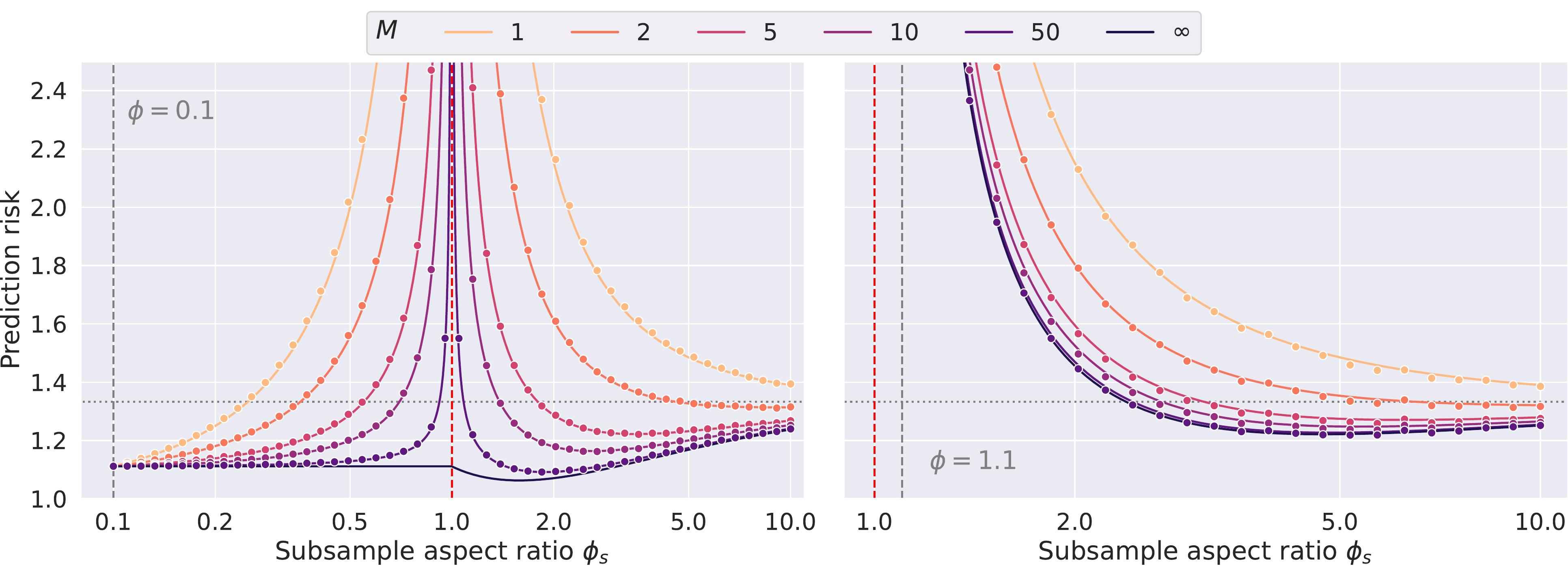}
        \caption{Asymptotic prediction risk curves in \eqref{eq:risk-det-with-replacement} for subagged ridgeless predictors ($\lambda=0$), under model \eqref{eq:model-ar1} when $\rhoar=0.25$ and $\sigma^2=1$, for varying subsample sizes $k=\lfloor p/\phi_s\rfloor$ and numbers of bags $M$.
         The null risk is marked as a dotted line. For each value of $M$, the points denote finite-sample risks averaged over 100 dataset repetitions, with $n=\lfloor p\phi\rfloor$ and $p=500$.
         The left and the right panels correspond to the cases when $p<n$ ($\phi=0.1$) and $p>n$ ($\phi=1.1$), respectively.
        }
        \label{fig:ridgeless-with-replacement-varing-M}
    \end{figure}

    \begin{figure}[!t]
        \centering
        \includegraphics[width=0.85\textwidth]{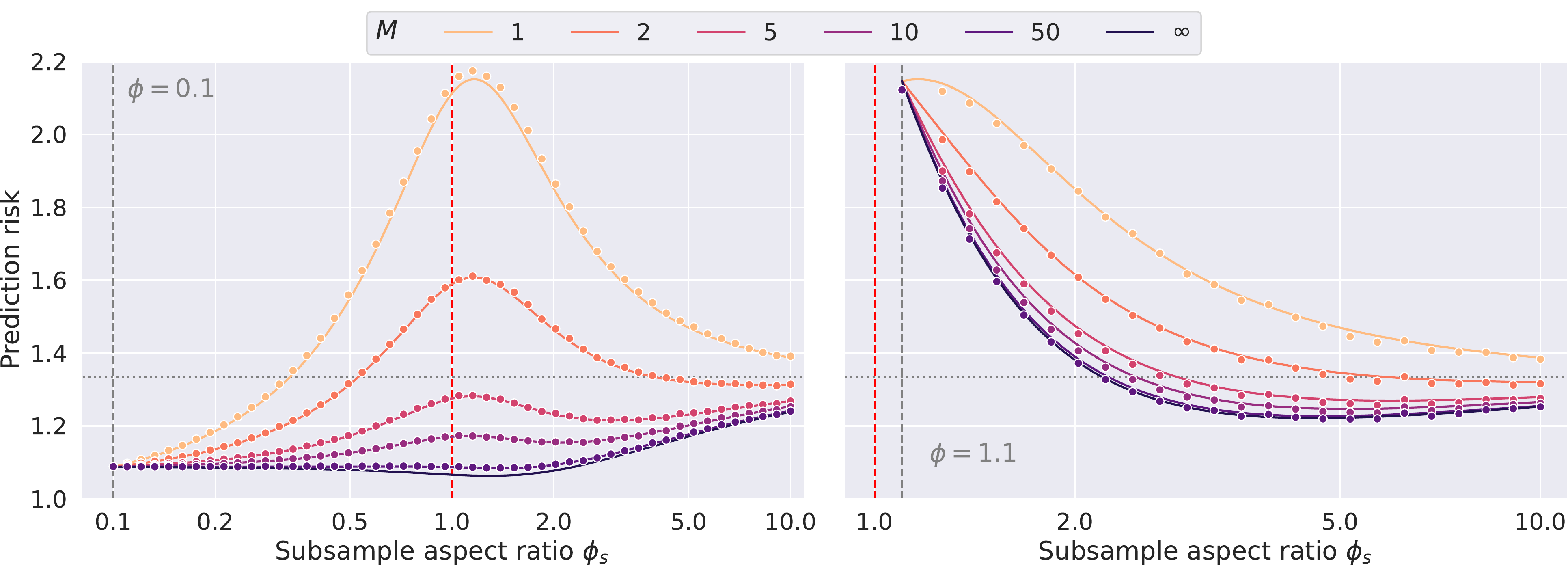}
        \caption{Asymptotic prediction risk curves in \eqref{eq:risk-det-with-replacement} for subagged ridge predictors ($\lambda=0.1$), under model \eqref{eq:model-ar1} when $\rhoar=0.25$ and $\sigma^2=1$, for varying subsample sizes $k=\lfloor p/\phi_s\rfloor$ and numbers of bags $M$.
         The null risk is marked as a dotted line. For each value of $M$, the points denote finite-sample risks averaged over 100 dataset repetitions, with $n=\lfloor p\phi\rfloor$ and $p=500$.
         The left and the right panels correspond to the cases when $p<n$ ($\phi=0.1$) and $p>n$ ($\phi=1.1$), respectively.}
        \label{fig:ridge-with-replacement-varing-M}
    \end{figure}

    \paragraph{Illustration of \Cref{thm:ver-with-replacement}.\hspace{-2mm}}
    Before we delve into the proof outline for \Cref{thm:ver-with-replacement}, we first provide some numerical illustrations under the AR(1) data model. The covariance matrix of an auto-regressive process of order 1 (AR(1)) is denoted by $\bSigma_{\mathrm{ar1}}$, where $(\bSigma_{\mathrm{ar1}})_{ij} = \rhoar^{|i-j|}$ for some parameter $\rhoar\in(0,1)$. The AR(1) data model is defined as follows:
    \begin{align}
        y_i &= \bx_i^{\top}\bbeta_0 + \epsilon_i,
    \quad \bx_i\sim\cN(0,\bSigma_{\mathrm{ar1}}),
    \quad \bbeta_0=\frac{1}{5}\sum_{j=1}^5 \bw_{(j)},
    \quad  \epsilon_i\sim \cN(0,\sigma^2), \tag{M-AR1-LI}\label{eq:model-ar1}
    \end{align}
    where $\bw_{(j)}$ is the eigenvector of $\bSigma_{\mathrm{ar1}}$ associated with the top $j$th eigenvalue $r_{(j)}$.
    From \citet[pp.\ 69-70]{grenander1958toeplitz}, the top $j$-th eigenvalue can be written as $r_{(j)}=(1-\rhoar^2)/(1-2\rhoar\cos \theta_{jp}+\rhoar^2)$ for some $\theta_{jp}\in((j-1)\pi/(p+1), j\pi/(p+1))$.
    Under model \eqref{eq:model-ar1}, the signal strength $\rho^2$ defined in \Cref{asm:signal-bounded-norm} is $5^{-1} (1-\rhoar^2)/(1-\rhoar)^2$, which is the limit of $25^{-1}\sum_{j=1}^5 r_{(j)} $.
    The \eqref{eq:model-ar1} model is thus parameterized by two parameters $\rhoar$ and $\sigma^2$ satisfies \Cref{asm:rmt-feat}-\ref{asm:spectrum-spectrumsignproj-conv}.
    
    \Cref{fig:ridgeless-with-replacement-varing-M,fig:ridge-with-replacement-varing-M} display the limiting risk for the subagged ridgeless predictor and subagged ridge predictor, respectively, with the number of bags $M$ varying from $1$ to $\infty$.
    In the figures, the limiting aspect ratio $\phi$ of the full data is fixed to be either 0.1 or 1.1, corresponding to the cases when $n>p$ and $n<p$, respectively.
    For each case, the limiting aspect ratio $\phi_s$ of each bag takes values in $(\phi, \infty)$.
    We observe that the empirical risks align with the deterministic approximations for both cases, and they are more concentrated around the deterministic approximations as $M$ increases. This is expected as the variance of the subagged predictors reduces with $M$.
    Furthermore, for any fixed $\phi_s$, the asymptotic risk decreases as $M$ increases.

    Due to the non-monotonic risk behavior of the underlying ridge and ridgeless predictors,~\Cref{fig:ridgeless-with-replacement-varing-M,fig:ridge-with-replacement-varing-M} show that the best subsample aspect ratio $(\phi_s)$ in terms of prediction risk might be strictly larger than $\phi$. This holds true for any choice of $M\ge1$. The case of $M = 1$ was already mentioned in~\cite{patil2022mitigating}. 
    This observation is intriguing as it suggests it is better to bag predictors that use even fewer observations than the original data.  
    Similar phenomena are also observed in our simulations with varying signal-to-noise ratios; see \citet[Appendix S.9.1]{patil2022bagging}. 
    We discuss an actionable algorithm for finding the optimal choice of $\phi_s$ in practice in~\Cref{sec:monotonizing-risk-profiles}.
   
   \paragraph{Proof outline of \Cref{thm:ver-with-replacement}.\hspace{-2mm}} 
      The proof of \Cref{thm:ver-with-replacement} employs the reduction strategy discussed in Section 3.
      In particular, we apply \Cref{thm:risk_general_predictor} (subsample conditional for $M = 1$ and $M = 2$ to subsample and data subsample for any $M$) to prove the theorem.
      Below we outline the main steps:
       
  \begin{enumerate}[leftmargin=7mm]
       \item
       The deterministic risk approximation to the subsample conditional risk
       for $M = 1$ can be obtained from the results of \cite{patil2022mitigating}
       that build on those of \cite{hastie2022surprises}.
    
    \item
    Under the linear model, to analyze the subsample conditional risk for $M = 2$, we first decompose it as follows:
    \begin{align}
        &R(\tf_2; \cD_n,\{I_1, I_2\}) \nonumber\\ 
        &= \sigma^2 + \frac{1}{4}\sum_{i=1}^2 (\bbeta_0 - \hbeta(\cD_{I_i}))^{\top}\bSigma (\bbeta_0 - \hbeta(\cD_{I_i})) + \frac{1}{2}
        (\bbeta_0 - \hbeta(\cD_{I_1}))^{\top}\bSigma (\bbeta_0 - \hbeta(\cD_{I_2})) \nonumber \\
        &=
        \frac{\sigma^2}{2}
        + \frac{R(\tf_{1}; \cD_n, I_1) + R(\tf_{1}; \cD_n, I_2)}{4} 
        + \frac{(\bbeta_0 - \hbeta(\cD_{I_1}))^{\top}\Sigma (\bbeta_0 - \hbeta(\cD_{I_2}))}{2}. \label{eq:risk-decomp-m2-wr}
    \end{align}
    The first term in the display above is non-random.
    The asymptotic risk approximation for the second term follows
    from the asymptotics of the subsample conditional risk for $M = 1$.
    The challenging part is the analysis of the final cross term $(\bbeta_0 - \hbeta(\cD_{I_1}))^{\top}\Sigma (\bbeta_0 - \hbeta(\cD_{I_2}))$, due to the non-trivial dependence implied by the overlap between $\cD_{I_1}$ and $\cD_{I_2}$.
    Our strategy to obtain a deterministic approximation for such a term is to write $h(\hbeta(\cD_{I_1}),\hbeta(\cD_{I_2})) = h(\hbeta(\cD_{I_1'}\cup \cD_{I_0}), \hbeta(\cD_{I_2'}\cup \cD_{I_0}))$ for any univariate function $h$.
    Here $I_0 = I_1 \cap I_2$ denotes the indices of the overlap, and $I_j'=I_j\setminus I_0$ for $j=1,2$ are the indices of non-overlapping observations.
    Observe that conditioning on $\cD_{I_0}$, $\cD_{I'_1}$ and $\cD_{I'_2}$ are independent datasets.
    This conditional independence, coupled with the closed-form expression of the ridge predictor, forms a crucial piece in our argument.
    To carry out this program, we derive conditional deterministic equivalence results for ridge resolvents. The resulting new results here are collected in \Cref{sec:asympequi-extended-ridge-resolvents}.  
    
    \item
    To prove the results for the ridgeless predictor, we essentially take the limit as $\lambda \to 0^{+}$ of the deterministic risk approximation for the ridge predictor with regularization $\lambda$.
    This process requires appealing to a uniformity argument in $\lambda$. See \Cref{sec:appendix-with-replacement-ridgeless} for more details.
  
  \end{enumerate}

\subsubsection{Monotonicity of bias and variance in number of bags}\label{subsec:monotonicity-M-with-replacement}

Monotonicity in the number of bags $M$ for both the data conditional risk
$R(\tfWR{M}{\cI_k}; \cD_n)$ and the subsample conditional risk $R(\tfWR{M}{\cI_k}; \cD_n,\{I_{\ell}\}_{\ell = 1}^{M})$
follow from \eqref{eq:representation-conditional-risk}.
In the classical literature of bagging and subagging, however, it has been of interest to better understand the effect of aggregation on not just the risk, but also on the bias and variance.
In this section, we show for the ridge and ridgeless predictors, subagging reduces both the bias and the variance.
Monotonicity of the risk proved in \Cref{thm:ver-with-replacement},
does not imply the monotonicity of asymptotic bias and variance components.
Fortunately, the risk decomposition derived in \Cref{thm:ver-with-replacement} demonstrates that both asymptotic bias and variance components are monotonic in $M$, as summarized below.

\begin{proposition}[Improvement due to subagging]\label{prop:monotonicity-M-with-replacement}
    For all $M=1,2,\ldots$ and $\lambda\in[0,\infty)$, it holds that
    \begin{align}
        \BlamM{\infty}{\phi}\leq \BlamM{M+1}{\phi}\leq \BlamM{M}{\phi} \label{eq:monotonicity-M-B}\\
        \VlamM{\infty}{\phi}\leq \VlamM{M+1}{\phi}\leq \VlamM{M}{\phi}.\label{eq:monotonicity-M-V}
    \end{align}
    The inequalities in \eqref{eq:monotonicity-M-B} are strict whenever $\rho^2>0$ and $\phi_s\in(\phi,\infty)$ (and $\phi_s\neq 1$ when $\lambda=0$), while the inequalities in \eqref{eq:monotonicity-M-V} are strict when $\sigma^2>0$ and $\phi_s\in(\phi,\infty)$ (and $\phi_s\neq 1$ when $\lambda=0$).
    Thus, the asymptotic risk is monotonically decreasing in $M$, i.e., $\RlamM{\infty}{\phi}\leq \RlamM{M+1}{\phi}\leq \RlamM{M}{\phi} $.
\end{proposition}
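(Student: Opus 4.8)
The plan is to read everything off the bias--variance decomposition already supplied by \Cref{thm:ver-with-replacement}. By \eqref{eq:bias-component-decomposition} and \eqref{eq:var-component-decomposition}, for fixed $\phi$ and $\phi_s$ the maps $M \mapsto \BlamM{M}{\phi}$ and $M \mapsto \VlamM{M}{\phi}$ are affine in $1/M$,
\[
\BlamM{M}{\phi} = M^{-1} B_\lambda(\phi_s,\phi_s) + (1 - M^{-1}) B_\lambda(\phi,\phi_s),
\qquad
\VlamM{M}{\phi} = M^{-1} V_\lambda(\phi_s,\phi_s) + (1 - M^{-1}) V_\lambda(\phi,\phi_s),
\]
i.e.\ convex combinations of the endpoint values at $M = 1$ and $M = \infty$, with weight $M^{-1}$ (decreasing in $M$) on the $M = 1$ endpoint. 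Hence all of \eqref{eq:monotonicity-M-B}--\eqref{eq:monotonicity-M-V}, and the risk monotonicity after adding the constant $\sigma^2$ of \eqref{eq:risk-det-with-replacement}, reduce to the two scalar inequalities $B_\lambda(\phi_s,\phi_s) \ge B_\lambda(\phi,\phi_s)$ and $V_\lambda(\phi_s,\phi_s) \ge V_\lambda(\phi,\phi_s)$, together with their strict versions. Since $\phi \le \phi_s$, it suffices to show that for each fixed $\theta$ the maps $\vartheta \mapsto B_\lambda(\vartheta,\theta)$ and $\vartheta \mapsto V_\lambda(\vartheta,\theta)$ are nondecreasing (indeed strictly increasing) on $(0,\theta]$.

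The next step isolates the only $\vartheta$-dependent ingredient. In \eqref{eq:tv_tc}--\eqref{eq:Blam_V_lam} the scalars $v(-\lambda;\theta)$ and $\tc(-\lambda;\theta)$ do not involve $\vartheta$, so $B_\lambda(\vartheta,\theta) = \rho^2\,(1 + \tv(-\lambda;\vartheta,\theta))\,\tc(-\lambda;\theta)$ and $V_\lambda(\vartheta,\theta) = \sigma^2\,\tv(-\lambda;\vartheta,\theta)$ inherit their monotonicity in $\vartheta$ from that of $\vartheta \mapsto \tv(-\lambda;\vartheta,\theta)$. Writing $a := \int r^2 (1 + v(-\lambda;\theta) r)^{-2}\,\rd H(r)$ and $b := v(-\lambda;\theta)^{-2}$, both strictly positive, we have $\tv(-\lambda;\vartheta,\theta) = \vartheta a / (b - \vartheta a)$. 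Non-negativity of $\tv$ forces $b - \vartheta a > 0$ on the relevant range --- this can also be extracted directly from the fixed-point relation \eqref{eq:fixed-point-ridge}, which gives $\theta a < b$ and hence $\vartheta a \le \theta a < b$ for $\vartheta \le \theta$ --- and on that range $\partial_\vartheta \tv(-\lambda;\vartheta,\theta) = ab/(b - \vartheta a)^2 > 0$. Thus $\tv$ is strictly increasing in $\vartheta$, so $B_\lambda(\phi_s,\phi_s) \ge B_\lambda(\phi,\phi_s)$ and $V_\lambda(\phi_s,\phi_s) \ge V_\lambda(\phi,\phi_s)$, strictly when $\phi < \phi_s$ and the prefactors $\rho^2\,\tc(-\lambda;\phi_s)$ and $\sigma^2$ are positive.

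For the ridgeless predictor ($\lambda = 0$) I would either pass to the limit $\lambda \to 0^{+}$ in the above (monotonicity in $\vartheta$ surviving the limit, since $\tv(-\lambda;\vartheta,\theta) \to \tv(0;\vartheta,\theta)$), or argue directly from \eqref{remark:ridgeless}: for $\theta = \phi_s \in (0,1)$ one has $\tv(0;\vartheta,\theta) = \vartheta/(1-\vartheta)$, visibly increasing on $(0,1)$; for $\theta = \phi_s \in (1,\infty]$ the same rational-function computation applies verbatim with $v(0;\theta)$ from \eqref{eq:fixed-point-ridgeless} in place of $v(-\lambda;\theta)$; and $\phi_s = 1$ is excluded because $V_0(\phi_s,\phi_s)$ diverges there for finite $M$. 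Assembling, $\BlamM{M}{\phi} - \BlamM{M+1}{\phi} = \tfrac{1}{M(M+1)}\bigl(B_\lambda(\phi_s,\phi_s) - B_\lambda(\phi,\phi_s)\bigr) \ge 0$ and $\BlamM{M}{\phi} - \BlamM{\infty}{\phi} = \tfrac1M\bigl(B_\lambda(\phi_s,\phi_s) - B_\lambda(\phi,\phi_s)\bigr) \ge 0$, and identically for $V_\lambda$; adding these and $\sigma^2$ yields the risk chain. The strict-inequality regime is then a matter of tracking positivity of the relevant prefactors --- $\sigma^2$ for the variance and $\rho^2\,\tc(-\lambda;\phi_s)$ for the bias --- over $\phi < \phi_s < \infty$, which recovers the stated conditions.

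I do not expect a genuine obstacle: the entire substance is the one-line observation that $\tv(-\lambda;\cdot,\theta)$ is a strictly increasing rational function of $\vartheta$ on the set where it is non-negative. The only parts calling for care are (i) confirming $b - \vartheta a > 0$ throughout $\vartheta \in [\phi,\phi_s]$, which I would obtain from the companion/fixed-point relation \eqref{eq:fixed-point-ridge} (or \eqref{eq:fixed-point-ridgeless}) rather than from mere non-negativity of $\tv$, and (ii) the ridgeless bookkeeping --- separating $\theta \le 1$ from $\theta > 1$, handling $\rho^2 = 0$, the degenerate boundaries $\phi_s \in \{\phi,\infty\}$, and the excluded point $\phi_s = 1$ --- so that the strictness conditions come out exactly as claimed.
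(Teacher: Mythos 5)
Your proposal is correct and follows essentially the same route as the paper's proof: both reduce the monotonicity in $M$ to the endpoint comparison $B_\lambda(\phi_s,\phi_s)\ge B_\lambda(\phi,\phi_s)$ and $V_\lambda(\phi_s,\phi_s)\ge V_\lambda(\phi,\phi_s)$ via the convex-combination structure of \eqref{eq:bias-component-decomposition}--\eqref{eq:var-component-decomposition}, and then invoke the strict monotonicity of $\vartheta\mapsto\tv(-\lambda;\vartheta,\theta)$. Your explicit verification that the denominator $v(-\lambda;\theta)^{-2}-\vartheta\int r^2(1+v(-\lambda;\theta)r)^{-2}\,\rd H(r)$ stays positive via the fixed-point relation is a detail the paper only asserts (citing positivity of the support of $H$), so no gap.
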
 
       
The monotonicity property in \Cref{prop:monotonicity-M-with-replacement} does not immediately follow from the decomposition of $\BlamM{M}{\phi}$ and $\VlamM{M}{\phi}$
in \eqref{eq:bias-component-decomposition} and \eqref{eq:var-component-decomposition}.
All that is implied by \eqref{eq:bias-component-decomposition} and \eqref{eq:var-component-decomposition} is that $\BlamM{M}{\phi}$ and $\VlamM{M}{\phi}$ either monotonically increase or decrease in $M\geq1$.
However, \Cref{prop:monotonicity-M-with-replacement} confirms that they are both decreasing in $M$. 
We establish this by demonstrating that $\BlamM{1}{\phi}\geq \BlamM{\infty}{\phi}$ and $\VlamM{1}{\phi}\geq \VlamM{\infty}{\phi}$.
Moreover, the proposition explicitly distinguishes the cases of non-increasing and strict decreasing of the bias and variance components.

The monotonicity properties claimed in \Cref{prop:monotonicity-M-with-replacement} are supported by \Cref{fig:ridgeless-bias-var-with-replacement}, which shows the bias and variance components for subagged ridgeless predictors under the model \eqref{eq:model-ar1}.
For a similar illustration for subagged ridge predictors, see \citet[Figure S.7]{patil2022bagging}.

\begin{figure}[!t]
    \centering
    \includegraphics[width=0.85\textwidth]{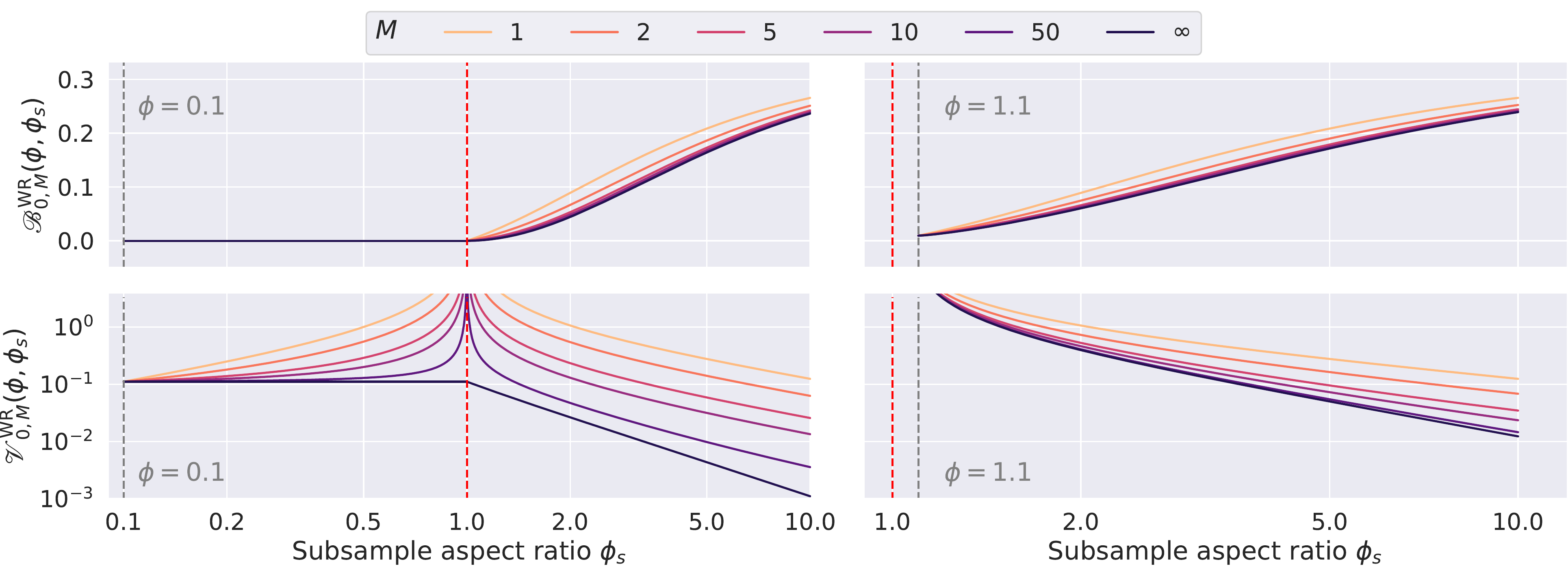}
    \caption{Asymptotic bias and variance curves in \eqref{eq:Blam_V_lam} for subagged ridgeless predictors ($\lambda=0$), under model \eqref{eq:model-ar1} when $\rhoar=0.25$ and $\sigma^2=0.25$, for varying subsample aspect ratio $\phi_s$ and numbers of bags $M$.
    The left and the right panels correspond to the cases when $p<n$ ($\phi=0.1$) and $p>n$ ($\phi=1.1$), respectively.
    The values of $\VzeroM{M}{\phi}$ are shown on a $\log$-10 scale.
    }
    \label{fig:ridgeless-bias-var-with-replacement}
\end{figure}            
    
\subsection{\Splagging without replacement}\label{sec:bagging-without-replacement}

In this section, we focus on analyzing the risk asymptotics and properties for \splagging.
More formally, we consider the risk asymptotics of the \splagged predictor obtained by averaging the predictors computed on $M$ non-overlapping subsets of the data, each of size $k$.
This is precisely the \splagged predictor $\tfWOR{M}{\cI_k^{\pi}}$.
Throughout all the asymptotics below, we consider the permutation $\pi$ to be fixed.
Because the limiting risk below does not depend on the permutation $\pi$, the conclusions continue to hold true even when the data or subsample conditional risk is averaged over all permutations $\pi$.
However, it should be emphasized that this is not the same as the data conditional risk of the \splagged predictor averaged over all permutations $\pi$.
In \Cref{subsec:bagging-without-replacement-main}, we provide exact risk characterization of \splagging without replacement for both ridge and ridgeless predictors.
The monotonicity properties of asymptotic bias and variance are then established in \Cref{subsec:monotonicity-M-without-replacement}.

\subsubsection{Risk characterization}
\label{subsec:bagging-without-replacement-main}

Recall our convention is defining the \splagged predictor $\tf_{M, \cI_k^{\pi}}^{\WOR}$ as $\tf_{\min\{M,\lfloor n/k\rfloor\}, \cI_k^{\pi}}^{\WOR}$, so that the \splagged predictor is well defined for all $M\in\NN$.
    
    \begin{theorem}[Risk characterization for splagged ridge and ridgless predictors] %
    \label{thm:ver-without-replacement}
    Let $\tfWOR{M}{\cI_k^{\pi}}$ be the predictor as defined in \eqref{eq:ingredient-predictor} for $\lambda \ge 0$.
    Suppose Assumptions \ref{asm:rmt-feat}-\ref{asm:spectrum-spectrumsignproj-conv} hold for the dataset $\cD_n$. Then as $k,n,p\rightarrow\infty$, $p/n\rightarrow\phi\in(0,\infty)$, $p/k\rightarrow\phi_s\in[\phi,\infty]$ (and $\phi_s \neq 1$ for $\lambda = 0$), there exist deterministic functions $\bRlamM{M}{\phi}$ for all
    $M \in\NN$, and $\phi_s\geq \phi$, such that for $I_1, \ldots, I_M \overset{\textup{\texttt{SRSWOR}}}{\sim} \mathcal{I}_k^{\pi}$, 
    \begin{align*}
         \sup_{M \in \NN}|R(\tfWOR{M}{\cI_k^{\pi}};\cD_n,\{I_{\ell}\}_{\ell = 1}^{M}) - \bRlamM{M}{\phi}| &\pto 0,\\
        \sup_{M \in \NN}
        |R(\tfWOR{M}{\cI_k^{\pi}};\cD_n) - \bRlamM{M}{\phi}| &\asto 0.
    \end{align*}
    Here $\bRlamM{M}{\phi}
    = \bRlamM{\lfloor \phi_s / \phi \rfloor}{\phi}$ for $M \ge \lfloor \phi_s / \phi \rfloor$,
    and for $M \le \lfloor \phi_s / \phi \rfloor$, 
    the function $\bRlamM{M}{\phi}$ decomposes as
    \begin{align}
        \bRlamM{M}{\phi} &= \sigma^2 + \bBlamM{M}{\phi} + \bVlamM{M}{\phi}, \label{eq:risk-det-without-replacement}
    \end{align}
    where $\bBlamM{M}{\phi}=M^{-1}B_{\lambda}(\phi_s, \phi_s)+(1-M^{-1})C_{\lambda}(\phi_s)$, $\bVlamM{M}{\phi} =M^{-1}V_{\lambda}(\phi_s,\phi_s)$, $C_{\lambda}(\phi_s)= \rho^2 \tc(-\lambda;\phi_s)$, and $B_{\lambda}(\phi_s,\phi_s)$ and $V_{\lambda}(\phi_s,\phi_s)$ are quantities as defined in \Cref{thm:ver-with-replacement}.
\end{theorem}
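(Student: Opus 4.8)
The plan is to follow the reduction strategy of \Cref{thm:risk_general_predictor} exactly as was done for subagging, but now accounting for the key structural difference of \splagging without replacement: the subsampled datasets $\cD_{I_1}, \dots, \cD_{I_M}$ are \emph{non-overlapping} by construction. First I would verify condition \eqref{eq:subsample_cond_risk_M1}, i.e., the limiting subsample conditional risk for $M = 1$. Since $\tf_{1, \cI_k^\pi}(\cdot; \cD_I)$ is just the ridge (or ridgeless) predictor trained on a dataset of size $k$ with $p/k \to \phi_s$, and under Assumptions~\ref{asm:rmt-feat}--\ref{asm:spectrum-spectrumsignproj-conv} the block $\cD_I$ inherits these assumptions, the required limit follows from the $M=1$ results of \citet{patil2022mitigating,hastie2022surprises}, giving $b_1 = \sigma^2 + B_\lambda(\phi_s,\phi_s) + V_\lambda(\phi_s,\phi_s)$.

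Next I would establish condition \eqref{eq:subsample_cond_risk_M12} for $M = 2$. As in \eqref{eq:risk-decomp-m2-wr}, the subsample conditional risk for $M=2$ decomposes into a non-random term $\sigma^2/2$, a term built from the two $M=1$ risks, and the cross term $(\bbeta_0 - \hbeta(\cD_{I_1}))^\top \bSigma (\bbeta_0 - \hbeta(\cD_{I_2}))$. The crucial simplification relative to subagging is that here $I_1 \cap I_2 = \emptyset$, so $\cD_{I_1}$ and $\cD_{I_2}$ are fully \emph{independent} datasets (no conditioning on an overlap is needed). Using the closed form of the ridge estimator together with the deterministic equivalents for ridge resolvents (the standard ones suffice here, rather than the conditional/extended ones needed for subagging), I expect $\hbeta(\cD_{I_j}) - \bbeta_0$ to concentrate in the relevant bilinear form, and since $\EE[\hbeta(\cD_{I_1}) \mid \text{rest}]$ and $\hbeta(\cD_{I_2})$ decouple, the cross term concentrates on its "bias-only" value: $(\bbeta_0 - \EE\hbeta)^\top \bSigma (\bbeta_0 - \EE\hbeta)$, which asymptotically equals $C_\lambda(\phi_s) = \rho^2 \tc(-\lambda;\phi_s)$ (there is no variance contribution because the noise terms in the two blocks are independent with mean zero). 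This yields $b_2 = \sigma^2/2 + \tfrac12 b_1' + \tfrac12 C_\lambda(\phi_s)$ for the appropriate rescaling; matching against the general $M$ formula $\sup_M |R - [(2b_2 - b_1) + 2(b_1-b_2)/M]| \to 0$ from \Cref{prop:limiting-risk-for-arbitrary-M-cond,thm:risk_general_predictor} gives precisely $\bRlamM{M}{\phi} = \sigma^2 + M^{-1} B_\lambda(\phi_s,\phi_s) + (1-M^{-1}) C_\lambda(\phi_s) + M^{-1} V_\lambda(\phi_s,\phi_s)$, i.e., \eqref{eq:risk-det-without-replacement}. Since only $\lfloor n/k \rfloor \to \lfloor \phi_s/\phi \rfloor$ blocks are available, the convention $\tf_{M,\cI_k^\pi}^\WOR = \tf_{\min\{M, \lfloor n/k\rfloor\}, \cI_k^\pi}^\WOR$ caps $M$ at $\lfloor \phi_s/\phi \rfloor$ in the limit, explaining the stated truncation. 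The $M=\infty$-type uniformity and the passage $\lambda \to 0^+$ for the ridgeless case follow the same uniformity arguments flagged in the proof outline of \Cref{thm:ver-with-replacement}.

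The main obstacle I anticipate is the rigorous treatment of the cross term for $M = 2$: even though independence of $\cD_{I_1}$ and $\cD_{I_2}$ removes the overlap-conditioning difficulty present in subagging, one still needs a \emph{joint} concentration statement for the bilinear form $\bu^\top \bSigma \bv$ where $\bu = \bbeta_0 - \hbeta(\cD_{I_1})$, $\bv = \bbeta_0 - \hbeta(\cD_{I_2})$ are independent but each only concentrates as a vector in a weak (quadratic-form) sense, not entrywise. The clean way around this is to condition on one block, say $\cD_{I_1}$, reduce to a linear functional $\bu^\top \bSigma (\bbeta_0 - \hbeta(\cD_{I_2}))$ of the \emph{other} independent estimator, apply a deterministic-equivalent/concentration result for such linear functionals of the ridge estimator (e.g., $\bu^\top \bSigma (\bbeta_0 - \hbeta(\cD_{I_2})) \approx \bu^\top \bSigma \bA \bbeta_0$ for a deterministic matrix $\bA$ depending on $\phi_s$ and $\lambda$), and then handle the remaining $\bu^\top(\bSigma \bA)\bbeta_0$ with another deterministic equivalent in $\cD_{I_1}$; the cross terms involving the noise $\bepsilon_{I_1}, \bepsilon_{I_2}$ vanish in the limit by independence and mean-zero. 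Keeping the error terms uniform over $M$ (so that $\sup_{M \in \NN}$ can be taken inside the limit) and uniform over $\lambda$ near $0$ is the bookkeeping-heavy but conceptually routine part, handled exactly as in the subagging proof.
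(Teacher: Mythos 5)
Your proposal is correct and follows essentially the same route as the paper: reduce to the $M=1,2$ subsample conditional risks via \Cref{thm:risk_general_predictor}, use the known $M=1$ limit, exploit the disjointness of the splits so that the variance cross term vanishes ($\bL_l\bL_m=\zero$) and the bias cross term concentrates on $C_\lambda(\phi_s)=\rho^2\tc(-\lambda;\phi_s)$, and then handle the $M$-truncation at $\lfloor\phi_s/\phi\rfloor$ and the $\lambda\to 0^+$ limit by the same uniformity arguments as for subagging. The only cosmetic difference is that the paper obtains the cross-term equivalent by specializing the subagging formula to overlap size $i_0=0$ rather than re-deriving it from scratch by successive conditioning, but these are the same computation.
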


\begin{remark}
    For every pair $(\phi, \phi_s)$
    satisfying $\phi_s \ge \phi$,
    note that the splagged predictor
    and the risks are defined
    non-trivially only for $M = 1, \dots, \lfloor \phi_s / \phi \rfloor$,
    and is defined as a constant for $M > \lfloor \phi_s / \phi \rfloor$.
    In particular,
    for a fixed pair $(\phi, \phi_s)$,
    the sequence of risks as $M$ changes
    looks like: 
    \[
    \bRlamM{1}{\phi}, \, 
    \bRlamM{2}{\phi}, \, 
    \dots, \,
    \bRlamM{\lfloor \phi_s / \phi \rfloor}{\phi}, \,
    \bRlamM{\lfloor \phi_s / \phi \rfloor}{\phi}, \, 
    \dots.
    \]
\end{remark}

\begin{remark}
    [Dependence on data and subsample aspect ratios]
    Even though \splagging does not formally involve repeated observations like bootstrapping, we will still refer to $\phi_s = p / k$ as the subsample aspect ratio, where $k$ is the number of observations in each split part of the full dataset.
    In \Cref{thm:ver-with-replacement}
    for the subagged predictor with replacement,
    the asymptotic risk depends on 
    both the data aspect ratio $\phi$
    as well as the subsample aspect ratio $\phi_s$.
    In contrast, the asymptotic risk for the \splagged predictor
    without replacement in \Cref{thm:ver-without-replacement}
    does not depend on the data aspect ratio $\phi$.
    This can be seen from the expressions for $\bBlamM{M}{\phi}$ and $\bVlamM{M}{\phi}$.
    However, it is interesting to note that
    the asymptotic risk for $\tf^{\WR}_{M, \cI_k^\pi}$
    depends on both $\phi$ and $\phi_s$
    because $\limsup_{k, n \to \infty} |\cI_k^\pi|$ 
    is finite, which makes the limiting risk of $\tf^{\WR}_{M, \cI_k^\pi}$ and $\tf^{\WOR}_{M, \cI_k^\pi}$
    different.
    Because $K_{N, M}$ defined in \eqref{eq:KNM-def}
    is bounded above by 1 and $\limsup_{k, n \to \infty} K_{|\cI_k^\pi|, M} < 1$ for any $M > 1$, $\tf^{\WOR}_{M, \cI_k^\pi}$ is
    a strictly better predictor then $\tf^{\WR}_{M, \cI_k^\pi}$ in terms of the squared risk.
    In other words, $\tf_{M, \cI_k^\pi}^{\WR}$ is inadmissible, even asymptotically.
\end{remark}

\begin{remark}[Comparison with distributed learning]
    \Cref{thm:ver-with-replacement} considers the simple average of base predictors fitted on non-overlapped samples, which is also closely related to distributed learning \citep{mucke_reiss_rungenhagen_klein_2022} that utilizes multiple computing devices to reduce overall training time.
    \citet{mucke_reiss_rungenhagen_klein_2022} only provide finite-sample upper bounds for the prediction risk of distributed ridgeless predictor, while \Cref{thm:ver-with-replacement} gives exact risk characterization.
    The distributed ridge predictors are also studied in \citet{dobriban_sheng_2020}, though their goal is to obtain the optimal weight and the optimal regularization parameter.
\end{remark}

\paragraph{Illustration of \Cref{thm:ver-without-replacement}.}

In \Cref{fig:ridgeless-without-replacement-varing-M,fig:ridge-without-replacement-varing-M}, we provide numerical illustrations for \Cref{thm:ver-without-replacement} (bagged ridgeless and ridge predictors with $\lambda = 0.1$) under the model \eqref{eq:model-ar1}, with the number of bags $M$ varying from $1$ to $\infty$.
The limiting data aspect ratio is fixed at $0.1$ when $n > p$ and at $1.1$ when $n < p$.
We find that the empirical risks align remarkably well with the deterministic approximations, as stated in \Cref{thm:ver-without-replacement}, for both bagged ridge and ridgeless predictors.
Mirroring the findings in \Cref{fig:ridgeless-with-replacement-varing-M}, for any fixed $M$, the optimal $\phi_s$ may be strictly larger than $\phi$, an implication of the non-monotonic risk behavior.

\begin{figure}[!t]
        \centering
        \includegraphics[width=0.85\textwidth]{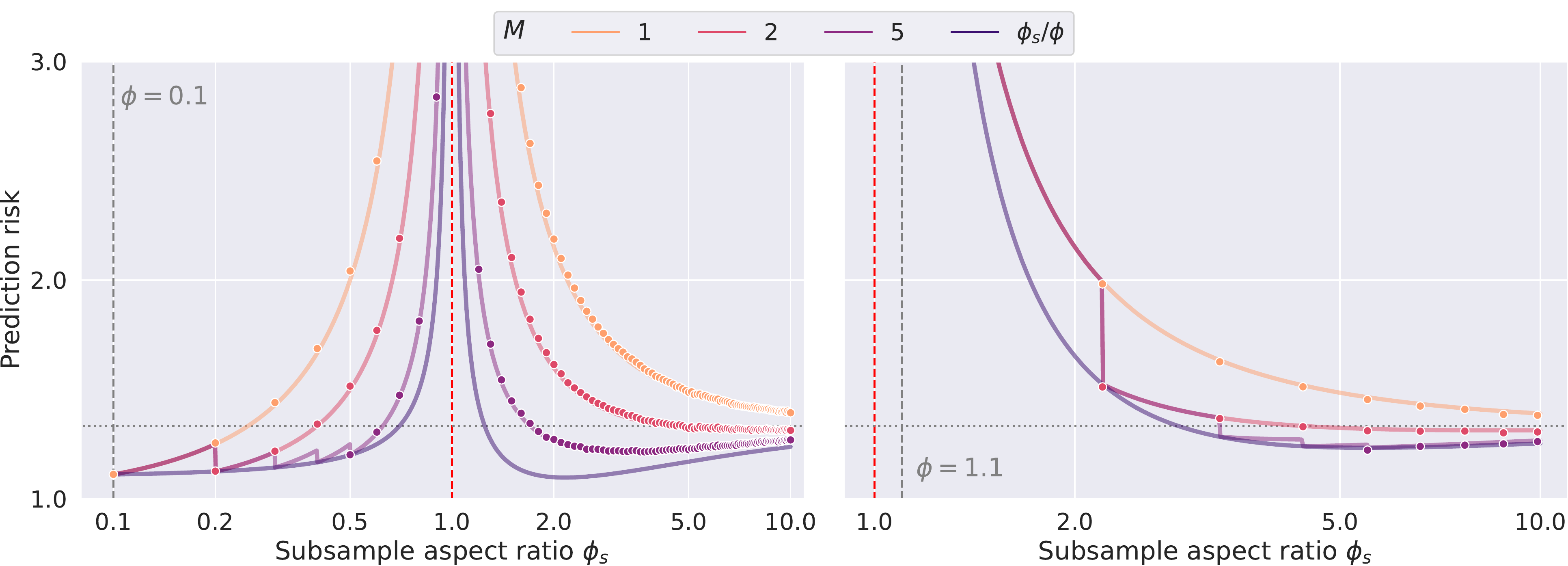}
        \caption{Asymptotic prediction risk curves in \eqref{eq:risk-det-without-replacement} for \splagged ridgeless predictors ($\lambda=0$), under model \eqref{eq:model-ar1} when $\rhoar=0.25$ and $\sigma^2=1$, for varying split sizes $k=\lfloor p/\phi_s\rfloor$ and numbers of bags $M$.
        The left and the right panels correspond to the cases when $p<n$ ($\phi=0.1$) and $p>n$ ($\phi=1.1$), respectively.
        The null risk is marked as a dotted line.
        For each value of $M$, the points denote finite-sample risks averaged over 100 dataset repetitions, with $p=500$ and $n=\lfloor p\phi\rfloor$.}
        \label{fig:ridgeless-without-replacement-varing-M}
\end{figure}

\begin{figure}[!t]
    \centering
    \includegraphics[width=0.85\textwidth]{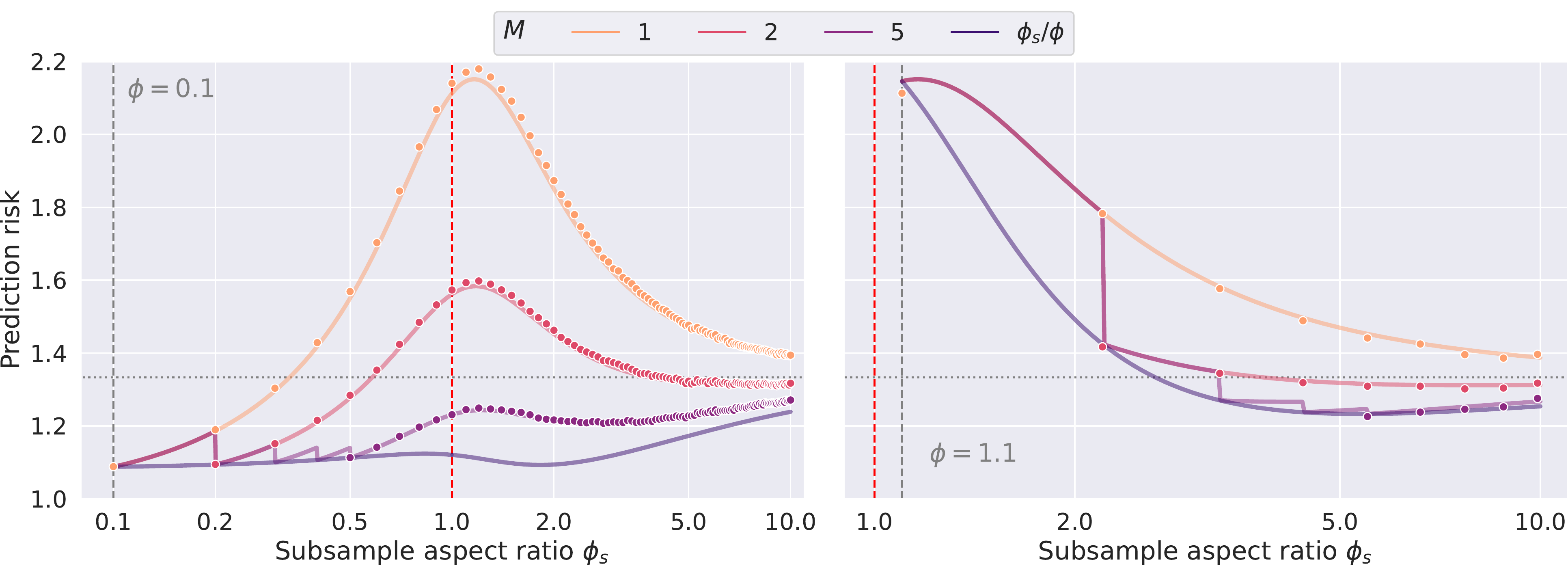}
    \caption{Asymptotic prediction risk curves in \eqref{eq:risk-det-without-replacement} for \splagged ridge predictors ($\lambda=0.1$), under model \eqref{eq:model-ar1} when $\rhoar=0.25$ and $\sigma^2=1$, for varying split sizes $k=\lfloor p/\phi_s\rfloor$ and numbers of bags $M$.
    The left and the right panels correspond to the cases when $p<n$ ($\phi=0.1$) and $p>n$ ($\phi=1.1$), respectively.
     The null risk is marked as a dotted line. For each value of $M$, the points denote finite-sample risks averaged over 100 dataset repetitions, with $p=500$ and $n=\lfloor p\phi\rfloor$.}
    \label{fig:ridge-without-replacement-varing-M}
\end{figure}

\paragraph{Proof outline of \Cref{thm:ver-without-replacement}.\hspace{-2mm}} 

The proof of \Cref{thm:ver-without-replacement} follows a similar reduction strategy as in the proof of \Cref{thm:ver-with-replacement}, where we first analyze the subsample conditional risks for $M = 1$ and $M = 2$, and appeal to \Cref{thm:risk_general_predictor}
to obtain the result for data conditional and subsample conditional
risks for any $M$. Below we briefly outline the main steps:
    
\begin{enumerate}[leftmargin=7mm]
    \item The deterministic risk approximation to the subsample conditional risk for $M = 1$ \splagging is exactly the same as that of subagging.
    
    \item
    Under the linear model, the subsample conditional risk for $M = 2$ decomposes in a similar manner as \eqref{eq:risk-decomp-m2-wr}, except in this case, the datasets $\cD_{I_1}$ and $\cD_{I_2}$ are independent of each other (conditional on $I_1, I_2$), which makes the analysis in this case slightly easier compared to the one for subagging.
    By conditioning on each of the datasets successively and utilizing the closed-form expression of the ridge estimator, we obtain the desired deterministic approximations.
    \item
    Finally, akin to what we did for \Cref{thm:ver-with-replacement}, we prove results for the ridgeless predictor in the form of the limiting risk approximations to the risk of the ridge predictor in the limit as $\lambda \to 0^{+}$, based on uniformity arguments.
\end{enumerate}

\subsubsection{Monotonicity of bias and variance in number of bags}\label{subsec:monotonicity-M-without-replacement} 

Just as with subagging, the asymptotic bias and variance components of the conditional risk for \splagging are also monotonically decreasing in the number of bags $M$. This is formalized below.

    \begin{proposition}[Improvement due to \splagging]\label{prop:monotonicity-M-without-replacement}
        Fix any pair $(\phi, \phi_s)$
        such that $\phi_s \ge \phi$.
        Then for all $M \in \{ 1, \dots, \lfloor \phi_s / \phi \rfloor \}$,
        \begin{align}
            \bBlamM{\lfloor \phi_s/\phi \rfloor}{\phi}\leq \bBlamM{M+1}{\phi}\leq \bBlamM{M}{\phi} \label{eq:monotonicity-M-bB},\\
            \bVlamM{\lfloor \phi_s/\phi \rfloor}{\phi}\leq \bVlamM{M+1}{\phi}\leq \bVlamM{M}{\phi}.\label{eq:monotonicity-M-bV}
        \end{align}
        The inequalities in \eqref{eq:monotonicity-M-bB} are strict whenever $\rho^2>0$ and $\phi_s\in(\phi,\infty)$ (and $\phi_s\neq 1$ when $\lambda=0$), while the inequalities in \eqref{eq:monotonicity-M-bV} are strict when $\sigma^2>0$ and $\phi_s\in(\phi,\infty)$ (and $\phi_s\neq 1$ when $\lambda=0$).
        Thus, the asymptotic risk is monotonically decreasing in $M$, i.e., $\bRlamM{M+1}{\phi}\leq \bRlamM{M}{\phi}$.
   \end{proposition}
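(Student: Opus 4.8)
The plan is to derive everything directly from the closed-form expressions for the bias and variance functionals furnished by \Cref{thm:ver-without-replacement}, in exactly the way \Cref{prop:monotonicity-M-with-replacement} was deduced from \Cref{thm:ver-with-replacement}. Recall that for $M \le \lfloor \phi_s / \phi \rfloor$ the theorem gives $\bBlamM{M}{\phi} = M^{-1} B_{\lambda}(\phi_s, \phi_s) + (1 - M^{-1}) C_{\lambda}(\phi_s)$ and $\bVlamM{M}{\phi} = M^{-1} V_{\lambda}(\phi_s, \phi_s)$, with $C_{\lambda}(\phi_s) = \rho^2 \tc(-\lambda; \phi_s)$, $B_{\lambda}(\phi_s, \phi_s) = \rho^2 (1 + \tv(-\lambda; \phi_s, \phi_s)) \tc(-\lambda; \phi_s)$, and $V_{\lambda}(\phi_s, \phi_s) = \sigma^2 \tv(-\lambda; \phi_s, \phi_s)$, while for $M > \lfloor \phi_s / \phi \rfloor$ both functionals are, by convention, frozen at their value at $M = \lfloor \phi_s / \phi \rfloor$. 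Hence the entire statement is a monotonicity calculation for these two explicit rational-in-$M$ functions, plus the endpoint identifications.

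For the variance, I would simply observe that $\bVlamM{M}{\phi} = V_{\lambda}(\phi_s, \phi_s)/M$ is a nonnegative constant divided by $M$, since $\tv(-\lambda; \phi_s, \phi_s) \ge 0$ by construction in \eqref{eq:tv_tc} (and in its $\lambda \to 0^{+}$ limit recorded in \eqref{remark:ridgeless}). Therefore $M \mapsto \bVlamM{M}{\phi}$ is nonincreasing on $\{1, \dots, \lfloor \phi_s/\phi \rfloor\}$ and constant thereafter, giving \eqref{eq:monotonicity-M-bV}. Strictness reduces to $V_{\lambda}(\phi_s, \phi_s) > 0$, i.e., to $\sigma^2 > 0$ together with $\tv(-\lambda; \phi_s, \phi_s) > 0$; for $\lambda > 0$ the latter holds because the numerator in \eqref{eq:tv_tc} is a strictly positive integral against $H$ (supported on the positive reals), and for $\lambda = 0$ it follows from \eqref{remark:ridgeless}, which gives $\phi_s/(1-\phi_s)$ for $\phi_s \in (0,1)$ and $\tv(0; \phi_s, \phi_s) > 0$ for $\phi_s > 1$, the value $\phi_s = 1$ being excluded from the hypotheses.

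For the bias, the key algebraic step is to rewrite $\bBlamM{M}{\phi} = C_{\lambda}(\phi_s) + M^{-1}\big(B_{\lambda}(\phi_s, \phi_s) - C_{\lambda}(\phi_s)\big)$ and to note $B_{\lambda}(\phi_s, \phi_s) - C_{\lambda}(\phi_s) = \rho^2\, \tv(-\lambda; \phi_s, \phi_s)\, \tc(-\lambda; \phi_s) \ge 0$, where nonnegativity holds because $\tc(-\lambda; \phi_s) = \int r (1 + v(-\lambda; \phi_s) r)^{-2}\, \rd G(r) \ge 0$ and $\tv \ge 0$. Thus $\bBlamM{M}{\phi}$ is a convex combination placing weight $M^{-1}$ on the larger endpoint $B_{\lambda}(\phi_s, \phi_s)$ and weight $1 - M^{-1}$ on $C_{\lambda}(\phi_s)$, hence nonincreasing in $M$; together with the freezing convention for $M \ge \lfloor \phi_s/\phi \rfloor$ this yields \eqref{eq:monotonicity-M-bB}. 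Strictness amounts to $\rho^2\, \tv(-\lambda; \phi_s, \phi_s)\, \tc(-\lambda; \phi_s) > 0$: given $\rho^2 > 0$ and the positivity of $\tv$ already established, what remains is to certify $\tc(-\lambda; \phi_s) > 0$, using that $G$ is a probability measure on the positive reals and $v(-\lambda; \phi_s)$ is finite in the relevant non-degenerate regimes (for $\lambda = 0$ one reads this off the limits in \eqref{remark:ridgeless}). Adding the two monotone pieces together with the constant $\sigma^2$ in \eqref{eq:risk-det-without-replacement} then gives $\bRlamM{M+1}{\phi} \le \bRlamM{M}{\phi}$.

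I do not expect a substantive obstacle here: the argument is essentially bookkeeping on top of \Cref{thm:ver-without-replacement}. The only place requiring a little care is the case analysis certifying strict positivity of $\tv(-\lambda; \phi_s, \phi_s)$ and $\tc(-\lambda; \phi_s)$ across the regimes $\lambda > 0$ versus $\lambda = 0$ and $\phi_s < 1$ versus $\phi_s > 1$, which is handled by the explicit $\lambda \to 0^{+}$ formulas in \eqref{remark:ridgeless} and the support properties of $H$ and $G$.
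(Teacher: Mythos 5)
Your proposal is correct and follows essentially the same route as the paper's proof: both rewrite $\bBlamM{M}{\phi}$ as $C_{\lambda}(\phi_s) + M^{-1}\bigl(B_{\lambda}(\phi_s,\phi_s)-C_{\lambda}(\phi_s)\bigr)$ with the nonnegative gap $\rho^2\,\tv(-\lambda;\phi_s,\phi_s)\,\tc(-\lambda;\phi_s)$, treat $\bVlamM{M}{\phi}=M^{-1}V_{\lambda}(\phi_s,\phi_s)$ as a nonnegative constant over $M$, and handle strictness by the same positivity case analysis on $\tv$ and $\tc$ across the $\lambda>0$ versus $\lambda=0$ and $\phi_s<1$ versus $\phi_s>1$ regimes.
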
 
    
As a concluding remark, because the deterministic risk approximation for splagging is defined as a constant in $M$ for $M \ge \lfloor \phi_s / \phi \rfloor$, \Cref{prop:monotonicity-M-without-replacement} implies that the for every fixed pair $(\phi, \phi_s)$, the optimal splagged predictor utilizes $M = \lfloor \phi_s / \phi \rfloor$ bags.
    
\section{Risk profile monotonization}
\label{sec:monotonizing-risk-profiles}

The results presented in the previous sections provide risk characterizations for different variants of bagged predictors, per \eqref{eq:bagged-predictor-subagging}, for all possible subsample aspect ratios $\phi_s$. 
In practice, the choice of $\phi_s$ is crucial for achieving optimal prediction performance. 
Following the cross-validation strategy discussed in~\cite{patil2022mitigating}, one can apply cross-validation to choose the optimal $\phi_s$ in order to obtain the best possible prediction performance by subagging or \splagging the base predictor across different subsample sizes. 
In~\Cref{subsec:general-predictors-risk-monotonization}, we first describe the risk monotonization results for general predictors, going back to the general setting in~\Cref{sec:general-predictors}. 
In~\Cref{subsec:cross-validation}, we then specialize the general risk monotonization results to the bagged ridge and ridgeless predictors.
In~\Cref{subsec:comparison}, we provide a comparison between the best subagged and the best \splagged predictors, considering all possible choices of both $\phi_s$ and $M$, when the base predictor is either ridge or ridgeless.

\subsection{Bagged general predictors}
\label{subsec:general-predictors-risk-monotonization}

Several commonly used prediction procedures, such as min-$\ell_2$-norm least squares and ridge regression, exhibit a non-monotonic risk behavior as a function of the data aspect ratio $\phi$. 
This is referred to in the literature as double/multiple descents \citep{belkin2019reconciling,hastie2022surprises}. 
The deterministic risk approximation, as a function of the aspect ratio $\phi$, first increases, reaches a peak, and then decreases. 
This can be understood in the context of fixed dimension and changing sample size $n$ as follows:
the risk first decreases as the sample size increases up to a certain threshold, after which it starts increasing with a further increase in sample size. 
This is a counter-intuitive behavior from a conventional statistical viewpoint, as this indicates that more data may hurt performance. 
However, from a theoretical perspective, additional information should only lead to improved performance. 
The underlying issue here lies not in the theory but in the sub-optimality of the prediction procedures when applied as-is on the full data.

There are at least two ways in which one can think of improving a given predictor:
\begin{enumerate}[leftmargin=7mm]
    \item 
    Obtain a new predictor whose risk is the greatest monotone minorant of the risk of the given prediction procedure. 
    This can be achieved by computing the predictor on a smaller sample size if necessary. Such a procedure is referred to as the zero-step procedure (with $M = 1$) in~\cite{patil2022mitigating}; see~\Cref{alg:cross-validation} for details. 
    The zero-step procedure does the bare minimum to achieve monotone risk.
    
    \item 
    The zero-step procedure (with $M = 1$) is not a genuine improvement of the base predictor, as it simply computes the same predictor on a smaller dataset. 
    Building upon the positive effects of subagging or \splagging mentioned in previous sections, we can further improve on the zero-step procedure by aggregating over multiple subsets of the data. 
    This was already hinted at and illustrated in~\cite{patil2022mitigating}. 
    In this section, we delve deeper into this point.
\end{enumerate}

We note from \Cref{thm:ver-with-replacement} and
\Cref{fig:ridgeless-with-replacement-varing-M,fig:ridge-with-replacement-varing-M}
that for each $\phi$, there are essentially infinitely many risk values possible (one for each pair of subsample aspect ratio $\phi_s$ and the number of bags $M$). 
The zero-step procedure (with $M = 1$) improves on the base predictor by optimizing over $\phi_s$, while keeping $M = 1$ fixed. 
Taking a step further, based on our aforementioned results, we can consider optimizing over $\phi_s$ and $M\ge1$ (or just over $\phi_s$, while fixing $M \ge 1$). 
In the following, we present an actionable algorithm to achieve the optimum over $\phi_s$ for any fixed $M\ge1$. 
(It is worth noting that we have already established monotonicity over $M \ge 1$, and one can always choose $M$ to be as large as feasible in practice.) 
We then present \Cref{thm:cv_general}, in which we prove that the general cross-validation attains the optimum over $\phi_s$ (asymptotically). 
\Cref{thm:cv_general} provides theoretical guarantees for the cross-validation procedure for general base predictors, extending the results of \citet{patil2022mitigating} to subagging and \splagging.

\begin{algorithm}[!t]
    \caption{Cross-validation for subagging or splagging}
        \label{alg:cross-validation}
        \begin{algorithmic}[1]
        \REQUIRE 
        A dataset $\cD_n = \{ (\bx_i, y_i) \in \RR^{p} \times \RR : 1 \le i \le n \}$, 
        a positive integer $n_\test < n$ (number of test samples),
        a base prediction procedure $\hf$, 
        a real number $\nu \in (0, 1)$ (bag size unit parameter),
        a natural number $M$ (number of bags),
        a centering procedure $\CEN \in \{ \AVG, \MOM \}$,
        a real number $\eta$ when $\CEN = \MOM$.
        \smallskip
        
        \STATE \textbf{Data splitting:} Randomly split $\cD_n$ into training set $\cD_\train$ and test set $\cD_\test$ as:
        $$\cD_\train = \{ (\bx_i, y_i) : i \in \cS_\train \},\quad \text{and} \quad \cD_\test = \{ (\bx_j, y_j) : j \in \cS_\test \},$$
        where $\cS_\test\subset[n]$ with $| \cS_\test | = n_\test$, and $ \cS_\train=[n]\setminus\cS_\test$.
            
        \STATE\label{algo:2}
        \textbf{Bag sample sizes grid construction:} Let $k_{0}=\lfloor n^\nu \rfloor$ and $\cK_n = \{k_{0}, 2k_{0},\ldots, \lfloor n/k_0 \rfloor k_0\}$.
        
        \STATE \textbf{Subagging or splagging predictors:} 
        For each $k \in \cK_n$, 
        define $\tf_{M, k}$ trained on $\cD_\train$ as:
        \begin{itemize}[leftmargin=7mm]
            \item 
            For subagging,
            let $\tf_{M, k}(\cdot) = \tf_{M}(\cdot; \{ \cD_{I_{k,\ell}} \}_{\ell=1}^{M})$
            denote the subagged predictor 
            as in \eqref{eq:bagged-predictor-subagging}
            with $M$ bags.
            Here, $I_{k,1}, \dots, I_{k,M}$ represent a simple random sample
            with or without replacement from the set of all subsets of
            $\cS_\train$ of size $k$. 
            \item
            For splagging, $\tf_{M, k}(\cdot)$ is the same as above but now $I_{k,1}, \ldots, I_{k,M}$ represent a simple random sample without replacement from a random split of $\cS_{\train}$ into $\lfloor n/k\rfloor$ parts with each part containing $k$ elements.  
            As explained in \Cref{subsec:predictor_ass}, 
            for $M > \lfloor n / k \rfloor$,
            no such splitting exists.
            In this case,
            we return $\tf_{\lfloor n / k \rfloor, k}$.
            Hence in general, we have $\tf_{M, k} = \tf_{\min\{M, \lfloor n / k \rfloor\}, k}$.
        \end{itemize}

        \STATE \textbf{Risk estimation:} For each $k \in \cK_n$, estimate the conditional prediction risk on $\cD_{\test}$ of $\tf_{M,k}$ as:
        \begin{empheq}[left={\widehat{R}(\tf_{M,k}):=\empheqlbrace\,}]{alignat=3}
            &{|\cS_\test|}^{-1} \sum_{j \in \cS_\test}(y_j- \tf_{M,k}(\bx_j))^2 , 
            &&~~\text{ if \CEN=\AVG}\label{eqn:avg-risk-pre} \\
            &\mathrm{median}(\widehat{R}_{1}(\tf_{M, k}), \dots, \widehat{R}_B(\tf_{M, k})), 
            &&~~\text{ if \CEN=\MOM},\label{eqn:mom-risk-pre}
        \end{empheq}
        where $B = \lceil 8 \log(1/\eta) \rceil$,
        and $\widehat{R}_j(\tf_{M, k})$, $1 \le j \le B$ is defined similarly to \eqref{eqn:avg-risk-pre} for $B$ random splits of the test dataset $\cD_\test$.

        \STATE \textbf{Cross-validation}: Set $\hat{k} \in \cK_n$ to be the bagging sample size that minimizes the estimated prediction risk using
            \begin{align}
            \label{eqn:model-selection-xi}
                \hat{k}
                \in \argmin_{k \in \cK_n}
                \hat{R}(\tf_{M,k}).
            \end{align}
                
        \ENSURE Return the predictor 
        $\hf_{M}^\cv(\cdot; \cD_n) 
        = \tf_{M, \widehat{k}}(\cdot)
        = \tf_{M}(\cdot;  \{\cD_{I_{\hat{k},\ell}}\}_{\ell=1}^M)$.
        \end{algorithmic}
    \end{algorithm}
    
\begin{theorem}[Risk monotonization by cross-validation]\label{thm:cv_general}
    Suppose that as $n,p\rightarrow\infty$, $p/n\rightarrow\phi\in(0,\infty)$.
    Let $\cK_n$ be the set of subsample sizes defined in \Cref{alg:cross-validation} and $\cI_k$ be the set of subsets of $\cS_{\train}$ of size $k\in\cK_n$ according to the sampling scheme.
    Suppose that for any $k \in \cK_n$,
    as $n,k, p \to \infty$,
    and $p/k\rightarrow \phi_s\in[\phi,\infty)$, there exists a deterministic function $\mathscr{R} : (0, \infty]^2 \to [0, \infty]$ such that: 
    \begin{enumerate}[(i),leftmargin=7mm]
        \item \label{asm:cv_general-i}
        For any $I\in \cI_k$ and $\{I_{k,1},I_{k,2}\}$
        a simple random sample from $\cI_k$,
        \begin{align*}
                R(\tf_1; \mathcal{D}_n, \{I\}) &\asto \mathscr{R}(\phi_s,\phi_s) ,\quad \text{and} \quad R(\tf_2; \mathcal{D}_n,\{I_{k,1},I_{k,2}\}) \asto \mathscr{R}(\phi,\phi_s).
            \end{align*}

        \item \label{asm:cv_general-ii}
        For any $\phi\in(0,\infty)$, $\phi_s\mapsto \mathscr{R}(\phi,\phi_s)$ is proper and lower semi-continuous over $[\phi,\infty]$, and is continuous on the set $\argmin_{\{\psi:\psi\geq \phi\}}\mathscr{R}(\phi,\psi)$.
        
    \end{enumerate}
    Let $\hf_{M}^{\cv}$ be the cross-validated predictor returned by \Cref{alg:cross-validation} with base predictor $\hf$. 
    If the estimated risk $\widehat{R}(\tf_{M,k})$ defined in \eqref{eqn:avg-risk-pre} or \eqref{eqn:mom-risk-pre} is uniformly (in $k \in \cK_n$) close to the subsample conditional risk $R(\tf_{M, k}; \cD_n, \{ I_{k, \ell} \}_{\ell=1}^{M})$ with probability converging to $1$, then the following conclusions hold.
    For subagging with or without replacement, or splagging without replacement, for all $M \in \NN$, we have
    \begin{align*}
        \left(R(\hf_{M}^\cv;\cD_n,\{I_{\hat{k},\ell}\}_{\ell=1}^M) - \min\limits_{\phi_s\geq\phi}\mathscr{R}_M(\phi,\phi_s)\right)_+ \pto 0,
    \end{align*}
    where the function $\mathscr{R}_M(\phi,\phi_s)$ is defined as
    \begin{align*}
        \mathscr{R}_M(\phi,\phi_s) := (2\mathscr{R}(\phi,\phi_s) - \mathscr{R}(\phi_s,\phi_s)) + \frac{2}{M}(\mathscr{R}(\phi_s,\phi_s) - \mathscr{R}(\phi,\phi_s)).
    \end{align*}
    Furthermore, if for any $\phi_s\in(0,\infty)$, $\phi\mapsto \mathscr{R}(\phi,\phi_s)$ is non-decreasing over $(0,\phi_s]$, then the function $\phi \mapsto \min_{\phi_s \ge \phi} \mathscr{R}_M(\phi, \phi_s)$ is monotonically increasing for every $M$. 
\end{theorem}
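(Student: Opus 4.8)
The plan is to combine the reduction machinery of \Cref{sec:general-predictors} with a standard cross-validation oracle argument, and then to read off the monotonicity from the explicit form of $\mathscr{R}_M$. The first step is to pass from the estimated risk to the deterministic profile on the grid $\cK_n$. For any deterministic sequence $k=k_n\in\cK_n$ with $p/k_n\to\phi_s\in[\phi,\infty)$, hypothesis~\ref{asm:cv_general-i} supplies exactly the two limits $b_1=\mathscr{R}(\phi_s,\phi_s)$ and $b_2=\mathscr{R}(\phi,\phi_s)$ required to invoke \Cref{prop:limiting-risk-for-arbitrary-M-cond} (which applies verbatim to subagging with or without replacement and to splagging without replacement), yielding
\[
R(\tf_{M,k_n};\cD_n,\{I_{k_n,\ell}\}_{\ell=1}^{M})\pto (2b_2-b_1)+\tfrac{2}{M}(b_1-b_2)=\mathscr{R}_M(\phi,\phi_s);
\]
the same computation through \Cref{lem:risk_general_predictor_M12} and \Cref{prop:limiting-risk-for-arbitrary-M} handles the data conditional risk. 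By the hypothesis of the theorem, $\delta_n:=\sup_{k\in\cK_n}|\widehat{R}(\tf_{M,k})-R(\tf_{M,k};\cD_n,\{I_{k,\ell}\}_{\ell=1}^{M})|\pto 0$, so the estimated risk equals $\mathscr{R}_M(\phi,p/k)$ up to $o_P(1)$ along any fixed sequence of grid points.

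The second step is the oracle inequality. Put $m_M^\star:=\inf_{\phi_s\ge\phi}\mathscr{R}_M(\phi,\phi_s)$, fix $\epsilon>0$, and choose $\phi_s^\epsilon\ge\phi$ with $\mathscr{R}_M(\phi,\phi_s^\epsilon)<m_M^\star+\epsilon$; using properness and lower semicontinuity of $\phi_s\mapsto\mathscr{R}(\phi,\phi_s)$ on the compactified interval $[\phi,\infty]$ together with continuity on $\argmin_{\psi\ge\phi}\mathscr{R}(\phi,\psi)$ from hypothesis~\ref{asm:cv_general-ii}, this $\phi_s^\epsilon$ can be taken at a point where the per-sequence limit above applies (in particular away from pathologies such as $\phi_s=1$ in the ridgeless case, where $\mathscr{R}=+\infty$). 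Since $k_0=\lfloor n^\nu\rfloor=o(p)$, the induced grid $\{p/k:k\in\cK_n\}$ has mesh of order $k_0/p$ near any finite value and is eventually dense in $[\phi,\infty)$, so one can fix a deterministic $k^{(n)}\in\cK_n$ with $p/k^{(n)}\to\phi_s^\epsilon$, whence $R(\tf_{M,k^{(n)}};\cD_n,\{I_{k^{(n)},\ell}\})\pto\mathscr{R}_M(\phi,\phi_s^\epsilon)$. Because $\hat{k}$ minimizes $\widehat{R}(\tf_{M,\cdot})$ over $\cK_n$ and $\hf_M^\cv=\tf_{M,\hat{k}}$,
\begin{align*}
R(\hf_M^\cv;\cD_n,\{I_{\hat{k},\ell}\}_{\ell=1}^M)
&\le \widehat{R}(\tf_{M,\hat{k}})+\delta_n
\;\le\; \widehat{R}(\tf_{M,k^{(n)}})+\delta_n \\
&\le\; R(\tf_{M,k^{(n)}};\cD_n,\{I_{k^{(n)},\ell}\}_{\ell=1}^M)+2\delta_n,
\end{align*}
and the right side converges in probability to $\mathscr{R}_M(\phi,\phi_s^\epsilon)<m_M^\star+\epsilon$. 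Hence $\Pr(R(\hf_M^\cv;\cD_n,\{I_{\hat{k},\ell}\})>m_M^\star+2\epsilon)\to 0$, and letting $\epsilon\downarrow 0$ gives $(R(\hf_M^\cv;\cD_n,\{I_{\hat{k},\ell}\})-m_M^\star)_+\pto 0$. No matching lower bound is needed, which is why the positive-part formulation is convenient here.

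For the monotonicity claim, note that $\mathscr{R}_M(\phi,\phi_s)=(2-\tfrac{2}{M})\mathscr{R}(\phi,\phi_s)+(\tfrac{2}{M}-1)\mathscr{R}(\phi_s,\phi_s)$, in which only the first term depends on $\phi$, with coefficient $2-2/M\ge 0$ for every $M\ge 1$. Thus, under the added hypothesis that $\phi\mapsto\mathscr{R}(\phi,\phi_s)$ is non-decreasing on $(0,\phi_s]$, the map $\phi\mapsto\mathscr{R}_M(\phi,\phi_s)$ is non-decreasing for each fixed $\phi_s$. For $\phi_1\le\phi_2$ the feasible set shrinks and the objective increases pointwise, so
\[
\min_{\phi_s\ge\phi_1}\mathscr{R}_M(\phi_1,\phi_s)\;\le\;\min_{\phi_s\ge\phi_2}\mathscr{R}_M(\phi_1,\phi_s)\;\le\;\min_{\phi_s\ge\phi_2}\mathscr{R}_M(\phi_2,\phi_s),
\]
which is exactly the asserted monotonicity of $\phi\mapsto\min_{\phi_s\ge\phi}\mathscr{R}_M(\phi,\phi_s)$.

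The main obstacle is the grid-to-continuum passage inside the oracle step: showing the discrete minimum over $\cK_n$ genuinely reaches $\min_{\phi_s\ge\phi}\mathscr{R}_M(\phi,\phi_s)$. This needs both a counting argument that the subsample-aspect-ratio grid is asymptotically dense (using $k_0=o(p)$) and — more delicately — that an $\epsilon$-near-minimizer $\phi_s^\epsilon$ can be chosen at a point where the per-sequence a.s.\ limits of~\ref{asm:cv_general-i} transfer cleanly; this is exactly what the properness, lower-semicontinuity, and continuity-on-argmin conditions of~\ref{asm:cv_general-ii} are for, since the profile $\mathscr{R}$ can genuinely be discontinuous (e.g.\ $+\infty$ at $\phi_s=1$ for ridgeless). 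A secondary subtlety is that~\ref{asm:cv_general-i} provides a.s.\ convergence only along fixed deterministic sequences of $k$, whereas $\hat{k}$ is random; the bridge is precisely the assumed uniform (in $k\in\cK_n$) closeness of $\widehat{R}$ to the subsample conditional risk, which is why that uniformity is stated as a hypothesis rather than derived here.
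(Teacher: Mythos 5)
Your proof is correct and follows essentially the same route as the paper: reduce to the $M=1,2$ subsample conditional limits via \Cref{prop:limiting-risk-for-arbitrary-M-cond} (equivalently \Cref{thm:risk_general_predictor}) to identify the limit $\mathscr{R}_M(\phi,\phi_s)$, run a cross-validation oracle inequality, and finish the monotonicity claim with the nested-feasible-set argument. The one difference is that the paper outsources the entire oracle step to Lemma~3.8 and Theorem~3.4 of \cite{patil2022mitigating} (noting only that the positive-part formulation avoids the bounded-away-from-zero assumption used there), whereas you re-derive it from scratch: the $\epsilon$-near-minimizer chosen via properness, lower semicontinuity and continuity on the argmin, the density of the grid $\{p/k : k \in \cK_n\}$ coming from $k_0 = \lfloor n^{\nu}\rfloor = o(p)$, and the three-step chain of inequalities through $\delta_n = \sup_{k\in\cK_n}|\widehat{R}(\tf_{M,k}) - R(\tf_{M,k};\cD_n,\{I_{k,\ell}\})|$. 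Your version is self-contained where the paper's is a citation, and your explicit observation that $\mathscr{R}_M(\phi,\phi_s) = (2-\tfrac{2}{M})\mathscr{R}(\phi,\phi_s) + (\tfrac{2}{M}-1)\mathscr{R}(\phi_s,\phi_s)$ has a nonnegative coefficient on the only $\phi$-dependent term actually justifies the monotonicity-in-$\phi$ claim that the paper merely asserts.
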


\begin{remark}
    [Asymptotic risks are different for subagging and splagging.] 
    Although~\Cref{thm:cv_general} presents a unified framework for subagging and splagging, the actual limiting risks can be (and in most cases are) different. This discrepancy arises due to the distinct expressions for assumed asymptotic risks in assumption \ref{asm:cv_general-i} of~\Cref{thm:cv_general}.
\end{remark}

\begin{remark}[Exact risk characterization of the cross-validated predictor with stronger assumptions]
    Note that
    \Cref{thm:cv_general}
    does not exactly characterize the risk of cross-validated
    bagged predictor;
    it only states that the subsample conditional risk
    of $\tf^\cv_M$ is asymptotically no larger than
    $\min_{\phi_s} \mathscr{R}_M(\phi, \phi_s)$.
    Nevertheless, this is an improvement over the results
    of \cite{patil2022mitigating},
    who proved that the subsample conditional risk
    of $\tf_M^\cv$ is asymptotically no larger than
    $\min_{\phi_s} \mathscr{R}_1(\phi, \phi_s)$.
    For the exact risk characterization of $\tf_M^\cv$,
    one can make the stronger assumption that as $n,p\rightarrow\infty$ and $p/n\rightarrow\phi$,
    \begin{align*}
        \sup_{k\leq n}| R(\tf_1; \mathcal{D}_n,\{I_1\overset{\textup{\texttt{SRSWR}}}{\sim}\cI_k\}) - \mathscr{R}(p/k,p/k)| \pto 0,\quad \text{and} \quad \sup_{k\leq n}| R(\tf_2; \mathcal{D}_n,\{I_1,I_2\overset{\textup{\texttt{SRSWR}}}{\sim}\cI_k\}) - \mathscr{R}(\phi,p/k)| \pto 0,
    \end{align*}
    which can be used to conclude
   \[R(\hf_{M}^\cv;\cD_n,\{I_{\hat{k},\ell}\}_{\ell=1}^M) ~\pto~ \min_{\phi_s\geq\phi}\mathscr{R}_M(\phi,\phi_s).
   \]
    The result for bagging without replacement can be extended analogously.
\end{remark}

\begin{remark}
    [Assumption of uniform consistency of the estimated risk]
    \label{rem:uniform-consistency-estimated-risk}
    The assumption of uniform 
    (in $k \in \cK_n$)
    closeness of the 
    estimated risk $\widehat{R}(\tf_{M, k})$
    to the subsample conditional risk
    $R(\tf_{M, k}; \cD_n, \{ I_{k, \ell} \}_{\ell=1}^{M})$
    is meant to represent either
    \[
        \max_{k \in \cK_n}
        |\widehat{R}(\tf_{M, k}) - R(\tf_{M, k}; \cD_n, \{ I_{k,\ell} \}_{\ell=1}^{M})|
        = o_p(1),
        \quad
        \text{or}
        \quad
        \max_{k \in \cK_n}
        \left|
        \frac{\widehat{R}(\tf_{M, k})}{R(\tf_{M, k}; \cD_n, \{ I_{k, \ell} \}_{\ell=1}^{M})}
        - 1
        \right|
        = o_p(1).
    \]
    In Section 2 of \cite{patil2022mitigating},
    the authors have provided several assumptions
    on the data distribution and the predictors
    such that this uniform closeness assumption holds true.
    In \Cref{subsec:cross-validation}, we will apply \Cref{thm:cv_general} for bagged linear predictors which are themselves linear predictors.
    In this specific case, Theorem 2.22 in the aforementioned work shows that uniform closeness holds true under assumptions on the data distribution alone (no matter what linear predictor is, even those that have diverging risks); see \citet[Remarks 2.19 and 2.20]{patil2022mitigating}.
    We do not further discuss this uniform closeness condition but only remark that Assumptions \ref{asm:rmt-feat}-\ref{asm:spectrum-spectrumsignproj-conv} imply the assumptions of Theorem 2.22 with $\CEN = \MOM$ (the median-of-means estimator). With $\CEN = \AVG$, sub-Gaussian features imply the assumptions of Theorem 2.22.
\end{remark}

\subsection{Bagged ridge and ridgeless predictors}
\label{subsec:cross-validation}

    \Cref{thm:cv_general} provides a very general result that describes the risk behavior of cross-validated bagged predictors in general. Following our results in previous sections that verify condition \ref{asm:cv_general-i} of~\Cref{thm:cv_general} for both ridge and ridgeless predictors, we now specialize~\Cref{thm:cv_general} to these predictors under Assumptions \ref{asm:rmt-feat}-\ref{asm:spectrum-spectrumsignproj-conv}.

    \begin{theorem}
    [Risk monotonicity in aspect ratio]\label{thm:monotonicity-phi}
    Suppose that the cross-validated predictor $\hf_{M}^{\cv}$ is returned by \Cref{alg:cross-validation} with base predictor $\hf_{\lambda}$ and $M$ bags, and the conditions in \Cref{thm:ver-with-replacement} (or \Cref{thm:ver-without-replacement}) hold\footnote{The statement as stated holds for $\CEN = \MOM$ in \Cref{alg:cross-validation}. For $\CEN = \AVG$, we need to assume sub-Gaussian features as discussed in \Cref{rem:uniform-consistency-estimated-risk}.} with $\cR_{\lambda,M}(\phi,\phi_s)$ being the limiting risk $\RlamM{M}{\phi}$ (or $\bRlamM{M}{\phi}$).
    Then it holds for all $M \in \NN$,
    \begin{align}
        \left(R(\hf_{M}^\cv;\cD_n,\{I_{\hat{k},\ell}\}_{\ell=1}^M) - \min\limits_{\phi_s\geq\phi}\cR_{\lambda,M}(\phi,\phi_s)\right)_+ \pto 0.
    \end{align}
    Furthermore, $\phi \mapsto \min_{\phi_s \ge \phi} \cR_{\lambda,M}(\phi,\phi_s)$ is a monotonically increasing function of $\phi$ for every $M$.
        
    \end{theorem}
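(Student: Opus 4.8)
The plan is to deduce \Cref{thm:monotonicity-phi} from the general result \Cref{thm:cv_general} by checking its two hypotheses for the base predictor $\hf_{\lambda}$, $\lambda \ge 0$, under Assumptions~\ref{asm:rmt-feat}--\ref{asm:spectrum-spectrumsignproj-conv}, and then verifying the monotonicity-in-$\phi$ condition appearing at the end of that theorem. For condition \ref{asm:cv_general-i} of \Cref{thm:cv_general} we take $\mathscr{R}$ to be the deterministic limit supplied by \Cref{thm:ver-with-replacement} (for subagging) or \Cref{thm:ver-without-replacement} (for splagging): reading off the $M=1$ and $M=2$ specializations of \eqref{eq:ridge-wr-guarantee} (resp.\ its splagging analogue) and using the bias/variance decompositions \eqref{eq:bias-component-decomposition}--\eqref{eq:var-component-decomposition} together with the identity $R(\tf_2)=\tfrac12(R(\tf_1)+R(\tf_\infty))$ yields
\[
\mathscr{R}(\phi_s,\phi_s) = \sigma^2 + B_{\lambda}(\phi_s,\phi_s) + V_{\lambda}(\phi_s,\phi_s),
\qquad
\mathscr{R}(\phi,\phi_s) = \tfrac12 \mathscr{R}(\phi_s,\phi_s) + \tfrac12\bigl(\sigma^2 + B_{\lambda}(\phi,\phi_s) + V_{\lambda}(\phi,\phi_s)\bigr)
\]
in the subagging case, and the same with $B_{\lambda}(\phi,\phi_s)+V_{\lambda}(\phi,\phi_s)$ replaced by $C_{\lambda}(\phi_s)$ in the splagging case; with these choices the function $\mathscr{R}_M$ built by the formula in \Cref{thm:cv_general} coincides with $\RlamM{M}{\phi}$ (resp.\ $\bRlamM{M}{\phi}$, under the splagging capping convention $M\mapsto\min\{M,\lfloor\phi_s/\phi\rfloor\}$). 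The remaining input of \Cref{thm:cv_general}, namely uniform (in $k\in\cK_n$) closeness of $\widehat{R}(\tf_{M,k})$ to the subsample conditional risk, holds under Assumptions~\ref{asm:rmt-feat}--\ref{asm:spectrum-spectrumsignproj-conv} with $\CEN=\MOM$ (and additionally for sub-Gaussian features with $\CEN=\AVG$), as recorded in \Cref{rem:uniform-consistency-estimated-risk}.

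For condition \ref{asm:cv_general-ii} of \Cref{thm:cv_general} I would verify that $\phi_s \mapsto \mathscr{R}(\phi,\phi_s)$ is proper, lower semi-continuous on $[\phi,\infty]$, and continuous on its set of minimizers. When $\lambda>0$ this reduces to continuity of the underlying functionals: $\theta\mapsto v(-\lambda;\theta)$ is continuous on $(0,\infty]$ by the implicit function theorem applied to \eqref{eq:fixed-point-ridge}, and then $\theta\mapsto \tc(-\lambda;\theta)$ and $(\vartheta,\theta)\mapsto\tv(-\lambda;\vartheta,\theta)$ are continuous on their domains by dominated convergence for the integrals against $H$ and $G$ (using Assumption~\ref{asm:covariance-bounded-eigvals}), so $\mathscr{R}(\phi,\cdot)$ is finite and continuous, hence proper and lower semi-continuous, and continuity on the argmin set is automatic. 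When $\lambda=0$ the only extra care is near the interpolation boundary $\phi_s=1$: by \eqref{remark:ridgeless}, $V_0(\phi_s,\phi_s)\to\infty$ as $\phi_s\to 1$, so $\mathscr{R}(\phi,\cdot)$ extends to an $[0,\infty]$-valued lower semi-continuous function that is finite (hence continuous) for $\phi_s\neq1$; it is proper, and since it equals $+\infty$ at $\phi_s=1$ its minimizers avoid that point, where continuity holds. Passing from $\lambda>0$ to $\lambda=0$ uses that $v(-\lambda;\theta)$ is monotone in $\lambda$ (\citep[Lemma S.6.15]{patil2022mitigating}), which supplies the needed uniformity; all these continuity facts are essentially contained in \cite{patil2022mitigating}.

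It then remains to check the monotonicity hypothesis of \Cref{thm:cv_general}: for each fixed $\phi_s$, $\phi\mapsto\mathscr{R}(\phi,\phi_s)$ is non-decreasing on $(0,\phi_s]$. In the subagging case the $\phi$-dependence of $\mathscr{R}(\phi,\phi_s)$ enters only through
\[
B_{\lambda}(\phi,\phi_s)+V_{\lambda}(\phi,\phi_s) = \rho^2\tc(-\lambda;\phi_s)+\bigl(\rho^2\tc(-\lambda;\phi_s)+\sigma^2\bigr)\,\tv(-\lambda;\phi,\phi_s),
\]
which is a non-decreasing affine function of $\tv(-\lambda;\phi,\phi_s)$ since $\tc\ge 0$; writing $a=\int r^2(1+v(-\lambda;\phi_s)r)^{-2}\,\rd H(r)$, which does not depend on $\phi$, \eqref{eq:tv_tc} gives $\tv(-\lambda;\phi,\phi_s)=\phi a/(v(-\lambda;\phi_s)^{-2}-\phi a)$, whose derivative in $\phi$ is $a v(-\lambda;\phi_s)^{-2}(v(-\lambda;\phi_s)^{-2}-\phi a)^{-2}>0$ on the admissible range $\phi\le\phi_s$; for $\lambda=0$, $\phi_s\le1$ the relevant term is instead $\sigma^2\phi/(1-\phi)$, again increasing. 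In the splagging case $\mathscr{R}(\phi,\phi_s)$ does not depend on $\phi$ at all (cf.\ \Cref{thm:ver-without-replacement}), so the hypothesis holds trivially. Invoking \Cref{thm:cv_general} then delivers both the oracle inequality $\bigl(R(\hf_{M}^\cv;\cD_n,\{I_{\hat k,\ell}\}_{\ell=1}^M)-\min_{\phi_s\ge\phi}\cR_{\lambda,M}(\phi,\phi_s)\bigr)_+\pto 0$ and the monotonicity of $\phi\mapsto\min_{\phi_s\ge\phi}\cR_{\lambda,M}(\phi,\phi_s)$.

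The main obstacle I anticipate is the ridgeless ($\lambda=0$) regularity near $\phi_s=1$: one must make the lower-semicontinuity and argmin-continuity claims precise despite the blow-up, and must ensure the $M=2$ cross-term limit in \Cref{thm:ver-with-replacement} — whose analysis was the technical heart of that theorem — behaves well in the $\lambda\to0^+$ limit uniformly over $\phi_s$ bounded away from $1$. By contrast, the algebra behind the $\phi$-monotonicity is routine; only a strict version of the monotonicity would require extra argument near $\phi_s=1$.
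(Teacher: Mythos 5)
Your proposal is correct and follows essentially the same route as the paper's proof: verify conditions (i) and (ii) of \Cref{thm:cv_general} using the limits from \Cref{thm:ver-with-replacement,thm:ver-without-replacement}, invoke the uniform risk-consistency from \Cref{rem:uniform-consistency-estimated-risk}, and deduce monotonicity in $\phi$ from the fact that $\vartheta \mapsto \tv(-\lambda;\vartheta,\theta)$ is increasing (the paper states this monotonicity directly rather than differentiating, but this is the same observation). Your handling of the $\lambda=0$ lower-semicontinuity near $\phi_s=1$ and of the splagging case matches the paper's treatment, so no gap remains.
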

    
    In \Cref{thm:monotonicity-phi}, the monotonicity of $\phi\mapsto\min_{\phi_s \ge \phi} \cR_{\lambda,M}$ implies that for every $M$, for the optimal bagged predictor, more data (i.e, increasing $n$) cannot hurt.
    In the plot of \Cref{fig:ridgeless_cv}, we observe slight non-monotonicity of the empirical risk profile for $M = 1$. This is because of the small sample size which does not allow for the optimal cross-validated predictor to be the null predictor.
    One way to not let this happen (in this specific case) is to always include a perfect ``null'' predictor in the set of predictors tuned with cross-validation in~\Cref{alg:cross-validation}.

    For \splagging without replacement, the simulation results are shown in \Cref{fig:ridgeless_cv}(b). 
    As expected, as the limiting aspect ratio $\phi$ increases, the empirical excess risks are nearly monotone increasing and match with theoretical curves.
    Another pattern we observe in \Cref{fig:ridgeless_cv} (splagging without replacement) is that the asymptotic risk may not be monotonically decreasing in $M$ when $\phi$ is small.
    This is because the subsample aspect ratio $\phi_s$ is restricted by the number of bags $M$ in that it cannot be below $M\phi$, and the differences in the range of $\phi_s$ when using different numbers of bags result in the non-monotonicity when $\phi$ is small.
    While in the overparameterized region when $\phi$ is large enough, the cross-validated risk for bagging without replacement is guaranteed to be monotonically decreasing in $M$.
    Furthermore, the choice of $M=\phi_s/\phi$ guarantees that the risk is always optimal compared to any other value of $M$.
        
    \begin{figure}[!ht]
        \centering
        \includegraphics[width=0.85\textwidth]{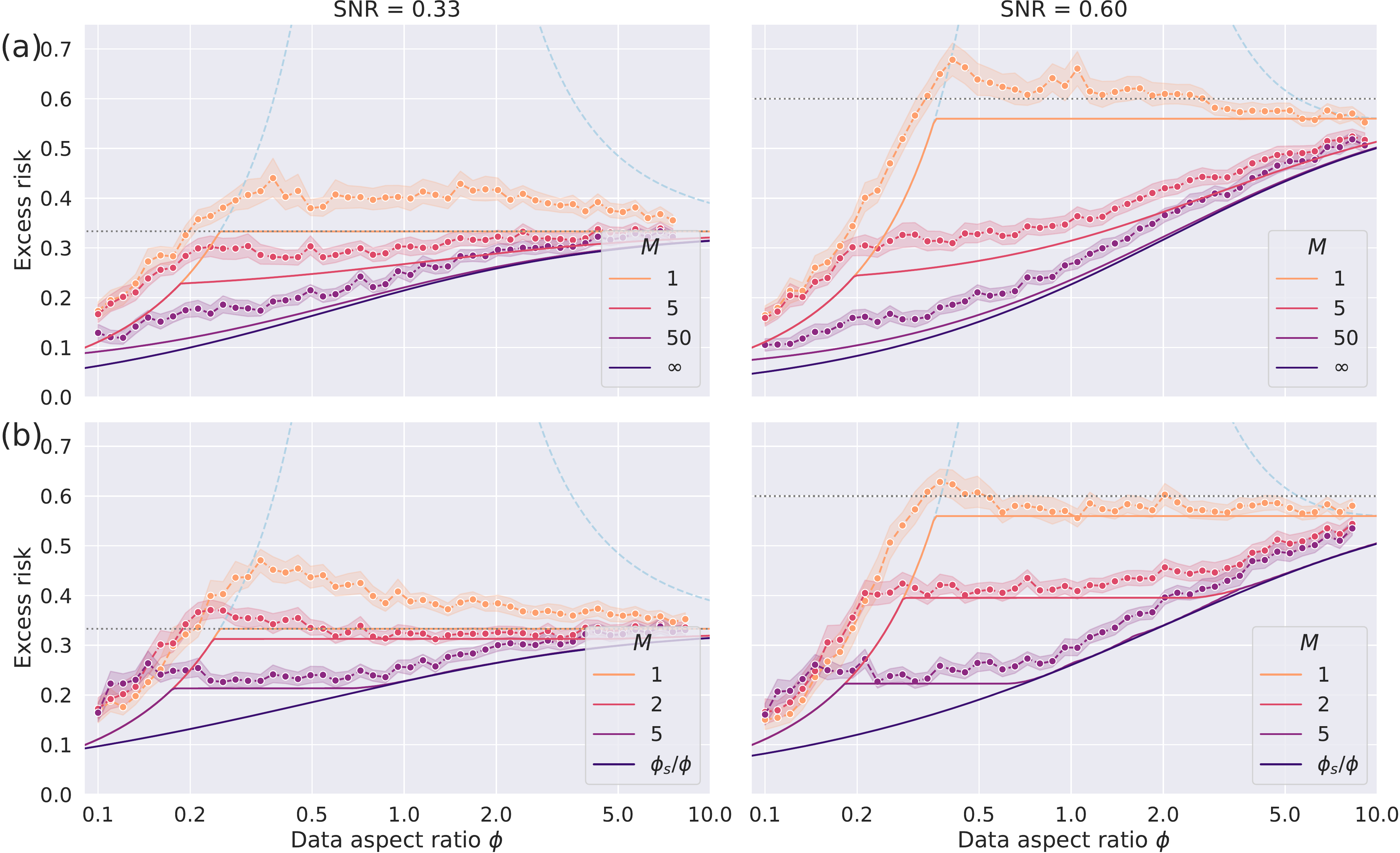}
        \caption{Asymptotic excess risk curves for cross-validated bagged ridgeless predictors ($\lambda=0$) for (a) subagging and (b) \splagging,
        under model \eqref{eq:model-ar1} when $\sigma^2=1$ 
        for varying $\SNR$,
        subsample sizes $k=\lfloor p/\phi_s\rfloor$ and numbers of bags $M$.
        The left and the right panels correspond to the cases when $\SNR = 0.33$ ($\rhoar=0.25$) and 0.6 ($\rhoar=0.5$), respectively.
        The excess null risks and the risks for the unbagged ridgeless predictors are marked as dotted lines and the dashed lines, respectively. For each value of $M$, the points denote finite-sample risks averaged over 100 dataset repetitions and the shaded regions denote the values within one standard deviation, with $n=1000$, $n_{\test}=63$, and $p=\lfloor n\phi\rfloor$.}
        \label{fig:ridgeless_cv}
    \end{figure}

\subsection{Optimal subagging versus optimal splagging}
\label{subsec:comparison}
    The cross-validated predictors discussed previously yield asymptotic optimal risks over subsample aspect ratio $\phi_s$ for every $M$.
    As a step further, we can obtain the optimal subagging and optimal \splagging by jointly optimizing over both $\phi_s$ and $M$.
    From the explicit formulas for the limiting risks for each pair of aspect ratios $(\phi, \phi_s)$ and each $M$, the optimal bagged risks 
    in the two cases can be compared.
\begin{proposition}[Comparison of the optimal risk of subagging and splagging]\label{prop:improve-with-without-replace}
    Under Assumptions \ref{asm:signal-bounded-norm}-\ref{asm:spectrum-spectrumsignproj-conv}, let $\RlamM{M}{\phi}$ and $\bRlamM{M}{\phi}$ be defined as in \Cref{thm:ver-with-replacement} and \Cref{thm:ver-without-replacement}, respectively. Then for any $\lambda\in[0,\infty)$ and $\phi\in(0,\infty)$, the following holds:
    \begin{align}
        \inf_{M\in\NN, \phi_s\in[\phi,\infty]} \RlamM{M}{\phi} \leq \inf_{M\in\NN,\phi_s\in[\phi,\infty]} \bRlamM{M}{\phi} .\label{eq:with_without_optimal}
    \end{align}
    In words, optimal subagging is at least as good as optimal splagging (without replacement) in terms of squared loss for ridge predictors.
\end{proposition}

For any dataset with fixed aspect ratio $\phi$, \Cref{prop:improve-with-without-replace} indicates that the optimal risk for bagged predictor across all possible choices of $M$ and subsample aspect ratio $\phi_s$ is always given by subagging. The optimal subagging and optimal splagging risks in~\Cref{prop:improve-with-without-replace} can be written as
\begin{equation}\label{eq:optimal-risks}
\cR_{\texttt{opt}}^{\sub}(\phi) = \mathscr{R}_{\lambda,\infty}^{\sub}(\phi, \phi_s^{\sub}(\phi)),\quad\mbox{and}\quad \cR_{\texttt{opt}}^{\spl}(\phi) = \mathscr{R}_{\lambda,\phi_s^{\spl}(\phi)/\phi}^{\spl}(\phi, \phi_s^{\spl}(\phi)),
\end{equation}
where the functions $\phi \mapsto \phi_s^{\sub}(\phi)$ and $\phi \mapsto \phi_s^{\spl}(\phi)$ are defined via
\begin{equation}
    \label{eq:optimal-subsample-aspect-ratio}
    \phi_s^{\sub}(\phi) ~:=~ \argmin_{\phi_s \ge \phi} \mathscr{R}_{\lambda,\infty}^{\sub}(\phi, \phi_s),\quad\mbox{and}\quad \phi_s^{\spl}(\phi) ~:=~ \argmin_{\phi_s \ge \phi} \mathscr{R}_{\lambda,\phi_s/\phi}^{\spl}(\phi, \phi_s).
\end{equation}
The fact that the optimal risks shown in~\Cref{prop:improve-with-without-replace} are the same as shown in~\eqref{eq:optimal-risks} follows from the fact that the risks are monotonically decreasing in $M$ for subagging and that the risk at $M = \phi_s/\phi$ is the best for splagging without replacement for any pair $(\phi, \phi_s)$.
The quantities $\phi_s^{\sub}(\cdot)$ and $\phi_s^{\spl}(\cdot)$ represent the best possible subsample aspect ratios for subagging and splagging (without replacement) for every data aspect ratio $\phi$ given.  
(Minimizers of lower semi-continuous functions over compact domains exist,
which is true for the functions in \eqref{eq:optimal-subsample-aspect-ratio} from
\Cref{thm:monotonicity-phi}.)

We calculate and present the theoretical optimal asymptotic risks~\eqref{eq:optimal-risks} for bagged ridgeless predictors in \Cref{fig:comparison_with_without_optimal}. 
The optimal risk $\min_{\phi_s\geq\phi}\RlamM{1}{\phi}=\min_{\phi_s\geq\phi}\bRlamM{1}{\phi}$ of the bagged ridgeless predictor with $M=1$ is also presented as the dashed line, which is the same as the monotone risk  of the zero-step ridgeless predictor of~\cite{patil2022mitigating} with $M = 1$.
As shown in \Cref{fig:comparison_with_without_optimal}(a), the optimal risk for the subagged ridgeless predictor 
is always smaller than the \splagged ridgeless predictor without replacement.
Both of them improve the risk for the ridgeless predictor with optimal subsample aspect ratio $\phi_s$ using only one bag ($M=1$).

\begin{figure}[!ht]
    \centering
    \includegraphics[width=0.85\textwidth]{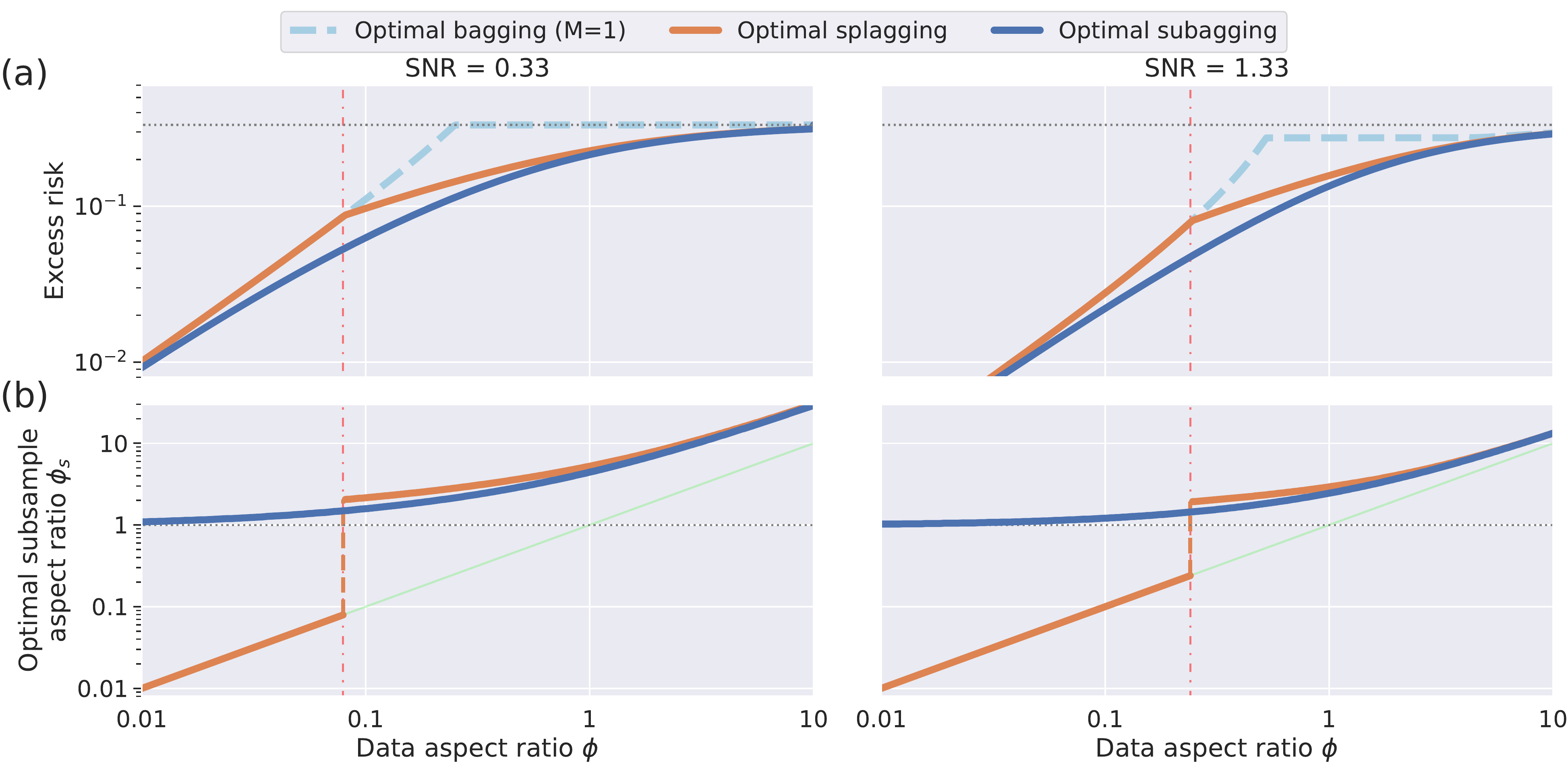}
    \caption{Comparison between optimal subagging and optimal \splagging of ridgeless predictors ($\lambda=0$) for varying limiting aspect ratios $\phi$ of $p/n$ under model \eqref{eq:model-ar1} when $\sigma^2=1$.
    The left and right panels correspond to $\SNR=0.33$ ($\rhoar=0.25$) and $\SNR=0.6$ ($\rhoar=0.5$), respectively.
    The point of phase transition for \splagging is marked as the red dash-dot line in every subplot.
    (a) Optimal asymptotic excess risk curves \eqref{eq:with_without_optimal}.
    The excess null risks are marked as gray dotted lines and the blue dashed lines represent the optimal risks of bagged ridgeless predictor with $M=1$, which are the same as the risks from the zero-step procedure of~\cite{patil2022mitigating}.
    (b) The corresponding optimal subsample aspect ratio $\phi_s$ as a function of data aspect ratio $\phi$. For subagging, the optimal subsample aspect ratio is always larger than one (above the gray dotted line).
    The line $\phi_s=\phi$ is colored in green.}
    \label{fig:comparison_with_without_optimal}
\end{figure}

    \paragraph{Oracle properties of optimal subsample aspect ratios.}
    From the previous section, we see that optimal subagged ridge or ridgeless regression always outperforms the \splagged one in terms of limiting risk.
    Due to the monotonicity in the number of bags $M$ from \Cref{prop:monotonicity-M-with-replacement}, the optimal risk for subagging must be obtained at $M=\infty$ for any given subsample aspect ratio $\phi_s$.
    One question that arises is: what is the optimal subsample aspect ratio $\phi_s$?
    We provide a partial answer to this question in \Cref{prop:opt-risk-ridgeless} specialized to ridgeless regression.
    
    \begin{proposition}[Optimal risk for bagged ridgeless predictor]\label{prop:opt-risk-ridgeless}
        Suppose the conditions in \Cref{thm:ver-with-replacement,thm:ver-without-replacement} hold, and $\sigma^2,\rho^2 \ge 0$ are the noise variance and signal strength from Assumptions \ref{asm:lin-mod} and \ref{asm:signal-bounded-norm}. Let $\SNR=\rho^2/\sigma^2$. For any $\phi\in(0,\infty)$, the properties of the optimal asymptotic risks $\mathscr{R}_{0,\infty}^{\sub}(\phi, \phi_s^{\sub}(\phi))$ and $\mathscr{R}_{0,\phi_s/\phi}^{\spl}(\phi, \phi_s^{\spl}(\phi))$ in terms of $\SNR$ and $\phi$ are characterized as follows:
        \begin{enumerate}[label={(\arabic*)}]
            \item $\SNR=0 \; (\rho^2 = 0, \sigma^2 \neq 0)$:  For all $\phi \ge 0$, the global minimum $\sigma^2$ of both $\mathscr{R}_{0,\infty}^{\sub}(\phi, \phi_s^{\sub}(\phi))$ and $\mathscr{R}_{0,\phi_s/\phi}^{\spl}(\phi, \phi_s^{\spl}(\phi))$ are obtained with $\phi_s^\sub(\phi)=\phi_s^\spl(\phi)=\infty$.
            
            \item $\SNR>0$: For all $\phi \ge 0$, the global minimum of $\phi_s \mapsto \RzeroM{\infty}{\phi}$ is obtained at $\phi_s^\sub(\phi)\in(1,\infty)$.
            For $\phi\geq 1$, the global minimum of $\phi_s \mapsto \bRzeroM{\phi_s/\phi}{\phi}$ is obtained at $\phi_s^\spl(\phi)\in (1,\infty)$; for $\phi\in(0,1)$, the global minimum of $\phi_s \mapsto \bRzeroM{\phi_s/\phi}{\phi}$ is obtained at $\phi_s^\spl(\phi)\in \{\phi\}\cup(1,\infty)$.
            
            \item $\SNR=\infty \; (\rho^2 \neq 0, \sigma^2 = 0)$: If $\phi\in(0,1]$, the global minimum $\mathscr{R}_{0,\infty}^{\sub}(\phi, \phi_s^{\sub}(\phi)) =\mathscr{R}_{0,\phi_s/\phi}^{\spl}(\phi, \phi_s^{\spl}(\phi))= 0$ is obtained with any $\phi_s^\sub(\phi),\phi_s^\spl(\phi)\in[\phi,1]$.
            If $\phi\in(1,\infty)$, then the global minimums $\mathscr{R}_{0,\infty}^{\sub}(\phi, \phi_s^{\sub}(\phi))$ and $\mathscr{R}_{0,\phi_s/\phi}^{\spl}(\phi, \phi_s^{\spl}(\phi))$ are obtained at $\phi_s^\sub(\phi),\phi_s^\spl(\phi)\in[\phi,\infty)$.
        \end{enumerate}
    \end{proposition}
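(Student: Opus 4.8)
The plan is to pass to the explicit deterministic risk formulas and reduce the two-parameter problem to a one-dimensional one. By \Cref{prop:monotonicity-M-with-replacement} the subagged risk $\RlamM{M}{\phi}$ is non-increasing in $M$, so $\inf_{M,\phi_s}\RlamM{M}{\phi}=\inf_{\phi_s\ge\phi}\RzeroM{\infty}{\phi}$; by \Cref{prop:monotonicity-M-without-replacement} together with the convention $\bRlamM{M}{\phi}=\bRlamM{\lfloor\phi_s/\phi\rfloor}{\phi}$ for $M\ge\lfloor\phi_s/\phi\rfloor$, the splagged infimum over $M$ is attained at $M=\phi_s/\phi$, so $\inf_{M,\phi_s}\bRlamM{M}{\phi}=\inf_{\phi_s\ge\phi}\bRzeroM{\phi_s/\phi}{\phi}$ (exactly the reduction recorded after \eqref{eq:optimal-subsample-aspect-ratio}). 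Specializing \Cref{thm:ver-with-replacement,thm:ver-without-replacement} to $\lambda=0$ via \eqref{remark:ridgeless}, write $\RzeroM{\infty}{\phi}=\sigma^2+B_0(\phi,\phi_s)+V_0(\phi,\phi_s)$ and $\bRzeroM{\phi_s/\phi}{\phi}=\sigma^2+\tfrac{\phi}{\phi_s}B_0(\phi_s,\phi_s)+(1-\tfrac{\phi}{\phi_s})C_0(\phi_s)+\tfrac{\phi}{\phi_s}V_0(\phi_s,\phi_s)$ with $C_0(\phi_s)=\rho^2\tc(0;\phi_s)$. Two structural facts organize the analysis: (i) for $\phi_s\in(0,1]$ one has $v(0;\phi_s)=+\infty$, hence $B_0(\phi,\phi_s)=0$, $V_0(\phi,\phi_s)=\sigma^2\phi/(1-\phi)$, $C_0(\phi_s)=0$, so on $[\phi,1)$ the subagged risk is the constant $\sigma^2/(1-\phi)$ and the splagged risk is $\sigma^2(1+\phi/(1-\phi_s))$, strictly increasing there with value $\sigma^2/(1-\phi)$ at $\phi_s=\phi$, and both risks equal $+\infty$ at $\phi_s=1$ (via $V_0(\cdot,1)=\infty$); (ii) for $\phi_s>1$ the quantities $B_0,V_0,C_0$ depend on $\phi_s$ only through $u:=v(0;\phi_s)$, and $\phi_s\mapsto v(0;\phi_s)$ is a strictly decreasing bijection of $(1,\infty)$ onto $(0,\infty)$. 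A minimizer over the compact set $[\phi,\infty]$ exists by lower semicontinuity of $\phi_s\mapsto\cR_{0,M}(\phi,\phi_s)$ (part of what is verified for \Cref{thm:monotonicity-phi}). With this in hand, Case (1) ($\rho^2=0$) is immediate: $B_0\equiv C_0\equiv 0$ and $V_0(\phi,\phi_s)=\sigma^2\tv(0;\phi,\phi_s)\to0$ as $\phi_s\to\infty$ (since $u\to0$ forces $\tv(0;\phi,\phi_s)=O(u^2)$) while being strictly positive at every finite $\phi_s\ge\phi$, and likewise $\tfrac{\phi}{\phi_s}V_0(\phi_s,\phi_s)\to0$; so the global minimum $\sigma^2$ is attained precisely at $\phi_s^\sub(\phi)=\phi_s^\spl(\phi)=\infty$. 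Case (3) with $\phi\le1$ is also immediate: on $\phi_s\in[\phi,1]$ both risks equal $0$, which is the global minimum, attained for any such $\phi_s$.

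It remains to handle Case (2) and the $\phi>1$ part of Case (3), where the point is to exclude the boundary minimizers $\phi_s=\infty$ and (for $\phi<1$) $\phi_s\in[\phi,1]$. To exclude $\phi_s=\infty$ I will expand as $u=v(0;\phi_s)\to0^+$: using $\tc(0;\phi_s)=\int r(1+ur)^{-2}\,\rd G(r)=\int r\,\rd G(r)-2u\int r^2\,\rd G(r)+O(u^2)$ and $\tv(0;\phi,\phi_s)=O(u^2)$, one gets $\RzeroM{\infty}{\phi}=\sigma^2+\rho^2\int r\,\rd G-2u\,\rho^2\int r^2\,\rd G+o(u)$, which is strictly decreasing in $u$ near $0$ because $\rho^2>0$ and $\int r^2\,\rd G\ge r_{\min}^2>0$; for splagging the same conclusion follows after substituting $\phi_s=(u\int r\,\rd H)^{-1}(1+o(1))$ from \eqref{eq:fixed-point-ridgeless} into the analogous expansion of $\bRzeroM{\phi_s/\phi}{\phi}$ (the $\tfrac{\phi}{\phi_s}$-weighted terms are $O(u)$ there too). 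Hence no minimizer sits at $\phi_s=\infty$ once $\rho^2>0$; this already covers the $\phi>1$ part of Case (3) (where $\phi_s>1$ is forced) and, for $\phi>1$, Case (2). To exclude $\phi_s\in[\phi,1]$ for subagging when $\phi<1$ (the case $\phi=1$ gives infinite risk at $\phi_s=1$), I will expand as $u\to\infty$: from $\int r^2(1+ur)^{-2}\,\rd H=u^{-2}-2u^{-3}\int r^{-1}\,\rd H+O(u^{-4})$ one obtains $\tv(0;\phi,\phi_s)=\tfrac{\phi}{1-\phi}-\tfrac{2\phi}{(1-\phi)^2u}\int r^{-1}\,\rd H+o(u^{-1})$ and $\tc(0;\phi_s)=O(u^{-2})$, so $\RzeroM{\infty}{\phi}=\tfrac{\sigma^2}{1-\phi}-\tfrac{2\sigma^2\phi}{(1-\phi)^2u}\int r^{-1}\,\rd H+O(u^{-2})<\tfrac{\sigma^2}{1-\phi}$ for all large $u$ (using $\sigma^2>0$); since the subagged risk is identically $\sigma^2/(1-\phi)$ on $[\phi,1)$ and $+\infty$ at $\phi_s=1$, no point of $[\phi,1]$ can be a minimizer.

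Assembling the pieces: for subagging the minimizer of $\phi_s\mapsto\RzeroM{\infty}{\phi}$ lies in $(1,\infty)$ for every $\phi$ (when $\phi>1$ even the left endpoint $\phi_s=\phi$ lies in $(1,\infty)$), giving Case (2) and, for $\phi>1$, Case (3); for splagging with $\phi\ge1$ the identical reasoning confines the minimizer to $[\phi,\infty)\subseteq(1,\infty)$; and for splagging with $\phi<1$, the monotone segment $[\phi,1)$ contributes only the candidate $\phi_s=\phi$ while the branch on $(1,\infty)$ — continuous, blowing up at $1^+$ (the term $\tfrac{\phi}{\phi_s}V_0(\phi_s,\phi_s)$ diverges while $B_0(\phi_s,\phi_s),C_0(\phi_s)\to0$), and strictly decreasing near $\infty$ — attains an interior minimum, so the overall splagged minimizer lies in $\{\phi\}\cup(1,\infty)$, as claimed. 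The main obstacle is making the two boundary expansions genuinely rigorous: one must track the implicit correspondence $\phi_s\leftrightarrow u=v(0;\phi_s)$ through the fixed-point equations \eqref{eq:fixed-point-ridge} and \eqref{eq:fixed-point-ridgeless}, verify the positivity $u^{-2}-\vartheta\int r^2(1+ur)^{-2}\,\rd H>0$ that makes $\tv(0;\vartheta,\phi_s)$ well-defined for $\vartheta\le\phi_s$ and $\phi_s>1$ (it equals $(1+1/\tv(0;\vartheta,\phi_s))\,\vartheta\int r^2(1+ur)^{-2}\,\rd H$ with $\tv$ finite), and argue that the remainder terms are controlled uniformly enough near the endpoints to upgrade ``the risk is strictly smaller just inside the boundary'' to ``no minimizer on the boundary.'' Everything else is routine manipulation under \Cref{asm:covariance-bounded-eigvals}--\ref{asm:spectrum-spectrumsignproj-conv}, which guarantee that $\int r\,\rd H$, $\int r^{-1}\,\rd H$, $\int r\,\rd G$, and $\int r^2\,\rd G$ are all finite and bounded away from $0$.
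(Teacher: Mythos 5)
Your proposal is correct and follows essentially the same route as the paper's proof (Lemmas S.5.1 and S.5.2): reduce to $M=\infty$ for subagging and $M=\phi_s/\phi$ for splagging, dispose of the degenerate cases $\SNR\in\{0,\infty\}$ directly from the explicit formulas, and for $\SNR>0$ exclude the boundary candidates $\phi_s=\infty$ and $\phi_s\in[\phi,1]$ by a local sign analysis at the two ends of $(1,\infty)$. The only difference is presentational — you Taylor-expand the risk in $u=v(0;\phi_s)$ near $u=0^+$ and $u=\infty$, whereas the paper computes $\partial\mathscr{R}/\partial\phi_s$ and evaluates its sign as $\phi_s\to1^+$ and $\phi_s\to\infty$ — and the leading coefficients you identify (e.g.\ $-2\rho^2\int r^2\,\rd G$ at $u=0^+$) match the paper's limits exactly.
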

    \Cref{prop:opt-risk-ridgeless} implies that the optimal subsample aspect ratio $\phi_s^\sub(\phi)$ for subagging is always in $[1,\infty]$, i.e., the overparameterized regime.
    In other words, 
    subagging interpolators with larger aspect ratios
    (larger than the full data aspect ratio $\phi$)
    helps to reduce the prediction risk, 
    even when $\phi < 1$.
    For \splagging, however, the minimum risk can be obtained either using the full data or 
    \splagging interpolators,
    depending on the data aspect ratio $\phi$ and the signal-to-noise ratio.
    
    It is interesting to note
    that the optimal subsampling aspect ratio for \splagging
    is either $\phi$ or it is in the overparameterized regime $(1, \infty)$.
    This means that either splagging does not help,
    or when it helps, one has to splag interpolators.
    Whenever $\SNR$ is positive, the optimal subsample aspect ratio is finite for any $\phi$.
    Hence we are able to visualize $\phi_s^\sub(\phi)$ and $\phi_s^\spl(\phi)$ in \Cref{fig:comparison_with_without_optimal}(b). 
    As shown in \Cref{fig:comparison_with_without_optimal}, there is a point of non-differentiability of $\phi_s^\spl(\phi)$ for optimal \splagging without replacement.
    Before this point of non-differentiability, $\phi_s^\spl(\phi)=\phi$, 
    which is the same as the optimal bagged ridgeless with $M = 1$.
    This is also the same as the ridgeless predictor trained on the full data.
    After the point of non-differentiability, the optimal risk for \splagging without replacement is obtained in the overparameterized regime, i.e., $\phi_s^{\spl}(\phi)>1$.
    In contrast to \splagging, $\phi_s^{\sub}(\phi) \ge 1$ for all $\phi > 0$, meaning that it is always better to subag interpolators (i.e., the overparameterized regime). 

    These observations indicate that, when the number of bags is sufficiently large enough, splagging without replacement only helps when the limiting aspect ratio $\phi$ of the full dataset is above some threshold, but subagging is always beneficial in reducing the prediction risk, even in the underparameterized regime.
    
        \begin{remark}
        [Guidelines for practical data analysis]
        \label{rem:practical-choice-M}
        \Cref{prop:opt-risk-ridgeless} implies that when using $M = \infty$,
        one should consider bagging interpolators to get better predictive performance, 
        at least when the linear model holds true.
        However, $M = \infty$ is practically infeasible particularly when $n, k \to \infty$. Note from~\Cref{fig:ridgeless-with-replacement-varing-M,fig:ridge-with-replacement-varing-M} that for $M$ large enough, the same phenomenon holds true, i.e., it is better to bag interpolators with a large $M$. How large such an $M$ should be depends on various unknowns related to the linear model and also on how much gap $\delta > 0$ from $M = \infty$ one is willing to allow. 
        Given the form of the limiting risk as a function of $M$, we can figure out
        the necessary value of $M$ as a function of the gap $\delta$, based on the
        cross-validation procedure~(\Cref{alg:cross-validation}). Note that this is completely data-driven and model-agnostic.
        The procedure is as follows:
        (1) Run \Cref{alg:cross-validation} with $M = 1$ and $M = 2$ to obtain the estimators $\widehat{\mathscr{R}_1}(\phi_s, \phi)$ and $\widehat{\mathscr{R}_2}(\phi_s, \phi)$ of the limiting subsample conditional risks $M = 1, 2$, respectively, for a grid of values $\phi_s \ge \phi$.
        Following \Cref{prop:limiting-risk-for-arbitrary-M-cond}, this yields an estimator of the subsample conditional risk for every $M \ge 1$, in particular, for $M = \infty$.
        (2) Find $\widehat{\phi_s^{\sub}}(\phi)$, the minimizer of $\phi_s \mapsto 2 \widehat{\mathscr{R}_2}(\phi_s, \phi) - \widehat{\mathscr{R}_1}(\phi_s, \phi)$.
        Note that this map is an estimator of the limiting risk for $M = \infty$.
        (3) Fix a tolerance level $\delta > 0$, and choose 
        \[
        M 
        = 
        \frac{2}{\delta}
       \left\{
        \widehat{\mathscr{R}_1}(\widehat{\phi_s^{\sub}}(\phi), \phi) 
        - 
        \widehat{\mathscr{R}_2}(\widehat{\phi_s^{\sub}}(\phi), \phi)
        \right\}.
        \]
        Operating at $\widehat{\phi_s^\sub}(\phi)$ with such a value of $M$ will yield an asymptotic risk that is $\delta$ close (in the additive sense) to the optimal risk.
    \end{remark}

\section[Illustrations and insights]{Illustrations and insights}
\label{sec:isotropic_features}

    The results discussed so far are derived under Assumptions \ref{asm:rmt-feat}-\ref{asm:spectrum-spectrumsignproj-conv} that, in particular, allow for features with arbitrary covariance structure $\bSigma$.
    We will shift our attention to a simpler case of isotropic features (i.e., $\bSigma = \bI_p$ in Assumption \ref{asm:rmt-feat}). 
    In this case, the spectral distribution simplifies, enabling us to compute the fixed point solutions analytically.
    Our discussion will primarily revolve around the case of ridgeless predictors for the sake of illustration. 
    While it is possible to obtain similar results for ridge predictors, the resulting expressions would be more involved.
    In \Cref{subsec:isotropic-ridge}, we provide formulas for the fixed-point solutions for $\lambda>0$. 
    From these, one can derive the risk as well as the individual bias and variance numerically for ridge predictors (with arbitrary $\lambda > 0$).
    Generally speaking, these quantities can always be computed numerically for nonisotropic models.
   
    In the case of isotropic features, the bias and variance functions presented in \Cref{thm:ver-with-replacement,thm:ver-without-replacement} take on relatively simple forms, as demonstrated in \Cref{cor:isotropic-ridgeless}.
    Furthermore, the asymptotic bias and variance can be computed for all $M\in\NN$ based on \eqref{remark:ridgeless}.
    
    \begin{corollary}
    [Bias-variance components for isotropic design]\label{cor:isotropic-ridgeless} Assume the conditions in \Cref{thm:ver-with-replacement} or \Cref{thm:ver-without-replacement} hold with $\bSigma=\bI_p$. Then we have
    \begin{align*}
        \begin{aligned}
        B_{0}(\phi,\phi_s)  = \rho^2 \tfrac{(\phi_s-1)^2}{\phi_s^2-\phi}\mathds{1}_{(1,\infty]}(\phi_s),\\
        C(\phi_s)  = \rho^2 \frac{(\phi_s-1)^2}{\phi_s^2} \mathds{1}_{(1,\infty]}(\phi_s),
        \end{aligned}\qquad \qquad
        V_{0}(\phi,\phi_s) = \begin{dcases}
         \sigma^2 \tfrac{\phi}{1-\phi},&\phi_s\in(0,1)\\
        \infty ,& \phi_s=1\\
        \sigma^2 \tfrac{\phi}{\phi_s^2-\phi}, &\phi_s\in(1,\infty].
        \end{dcases}
    \end{align*}
    \end{corollary}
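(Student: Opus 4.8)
The plan is to specialize the deterministic fixed-point quantities to $\bSigma = \bI_p$ and then substitute into the bias and variance formulas already derived in Theorems~\ref{thm:ver-with-replacement} and~\ref{thm:ver-without-replacement}. First I would observe that when $\bSigma = \bI_p$ all eigenvalues equal $1$, so the empirical spectral distribution $H_p$ is the point mass $\delta_1$ at $r=1$ for every $p$, hence $H = \delta_1$ in Assumption~\ref{asm:spectrum-spectrumsignproj-conv}; likewise, since $\sum_i (\bbeta_0^\top \bw_i)^2 = \|\bbeta_0\|_2^2$ for any orthonormal eigenbasis $\{\bw_i\}$, we get $G_p = \delta_1$ as well, so $G = \delta_1$, irrespective of the particular signal vector. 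Consequently every integral against $H$ or $G$ appearing in \eqref{eq:fixed-point-ridge}, \eqref{eq:tv_tc}, and \eqref{eq:fixed-point-ridgeless} collapses to evaluation at $r = 1$.

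Second, I would solve the ridgeless fixed-point equation explicitly. For $\lambda = 0$ and $\theta > 1$, equation \eqref{eq:fixed-point-ridgeless} reads $1/v(0;\theta) = \theta/(1 + v(0;\theta))$, whose unique nonnegative root is $v(0;\theta) = 1/(\theta-1)$, so $1 + v(0;\theta) = \theta/(\theta-1)$; for $\theta \in (0,1]$ we have $v(0;\theta) = +\infty$ by \eqref{eq:v0-def}. Substituting into \eqref{eq:tv_tc} with $H = G = \delta_1$ gives, for $\theta > 1$,
\[
\tc(0;\theta) = \frac{1}{(1+v(0;\theta))^2} = \frac{(\theta-1)^2}{\theta^2}, \qquad \tv(0;\vartheta,\theta) = \frac{\vartheta\, v(0;\theta)^2}{(1+v(0;\theta))^2 - \vartheta\, v(0;\theta)^2} = \frac{\vartheta}{\theta^2 - \vartheta},
\]
while for $\theta \in (0,1]$ one has $\tc(0;\theta) = 0$ and, as recorded in the paragraph after \eqref{eq:fixed-point-ridgeless}, $\lim_{\lambda\to 0^+} \tv(-\lambda;\vartheta,\theta) = \vartheta/(1-\vartheta)$.

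Third, I would plug these into the ridgeless bias/variance expressions \eqref{remark:ridgeless}, together with $C_\lambda(\phi_s) = \rho^2\tc(-\lambda;\phi_s)$ from Theorem~\ref{thm:ver-without-replacement} evaluated at $\lambda = 0$, and simplify, using $\vartheta = \phi$, $\theta = \phi_s$. For $\phi_s > 1$ this yields
\begin{align*}
B_0(\phi,\phi_s) &= \rho^2\Bigl(1 + \tfrac{\phi}{\phi_s^2-\phi}\Bigr)\tfrac{(\phi_s-1)^2}{\phi_s^2} = \rho^2\tfrac{(\phi_s-1)^2}{\phi_s^2-\phi}, \\
V_0(\phi,\phi_s) &= \sigma^2\tfrac{\phi}{\phi_s^2-\phi}, \qquad C(\phi_s) = \rho^2\tfrac{(\phi_s-1)^2}{\phi_s^2};
\end{align*}
for $\phi_s \in (0,1)$ all three vanish except $V_0(\phi,\phi_s) = \sigma^2\phi/(1-\phi)$, and for $\phi_s = 1$ the variance diverges, matching the case split in \eqref{remark:ridgeless}.

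This is a direct computation with essentially no analytic subtlety, so there is no real obstacle; the one place that warrants a little care is the boundary $\phi_s = 1$, where $v(0;\theta) \to \infty$ as $\theta \downarrow 1$, so $\tc(0;\theta) \to 0$ while the variance term $\tv(-\lambda;\vartheta,\theta)$ blows up, which is precisely what produces the $V_0 = \infty$ entry. I would also note in passing the consistency check that setting $\vartheta = \theta$ recovers $B_0(\theta,\theta) = \rho^2(1 - 1/\theta)$ and $V_0(\theta,\theta) = \sigma^2/(\theta-1)$, the known bias and variance of the minimum-$\ell_2$-norm least squares interpolator at aspect ratio $\theta > 1$, and that all three formulas extend continuously to $\phi_s = \infty$ (with $B_0, C \to \rho^2$ and $V_0 \to 0$).
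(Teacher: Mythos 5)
Your proposal is correct and follows essentially the same route as the paper: specialize $H = G = \delta_1$, solve the ridgeless fixed-point equation to get $v(0;\phi_s) = 1/(\phi_s - 1)$ for $\phi_s > 1$, compute $\tc(0;\phi_s)$ and $\tv(0;\phi,\phi_s)$, and substitute into the bias and variance formulas, with the $\phi_s \le 1$ and $\phi_s = 1$ cases handled exactly as in \eqref{remark:ridgeless}. The sanity check at $\vartheta = \theta$ and the boundary discussion at $\phi_s = 1$ are nice additions but not needed.
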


    \paragraph{Subagging with replacement.\hspace{-2mm}} 
    Based on \Cref{cor:isotropic-ridgeless}, we are equipped to evaluate the closed-form asymptotic risk under model \eqref{eq:model}:
    \begin{align}
        y_i &= \bx_i^{\top}\bbeta_0 + \epsilon_i,
        \quad \bx_i\sim\cN(0,\bI_p),
        \quad \bbeta_0\sim\cN(0,p^{-1}\rho^2\bI_p),
        \quad  \epsilon_i\sim \cN(0,\sigma^2). \tag{M-ISO-LI}\label{eq:model}
    \end{align}
    Additional experimental results under model \eqref{eq:model} can be found in \citet[Appendix S.9.1]{patil2022bagging}.
    It is worth noting that while the Gaussianity of the noise $\epsilon_i$ in model \eqref{eq:model} simplifies numerical evaluation, it is not a requirement for \Cref{cor:isotropic-ridgeless}.
    It suffices to have the first and second moments match as above.
    For $M\in\NN$, the bias term is always increasing, while the variance term will blow up when the subsample aspect ratio $\phi_s$ approaches one.
    However, the variance for $M=\infty$ is different;
    it is decreasing in $\phi_s$ and continuous at $\phi_s=1$.
    Consequently, one might be interested in the optimal subsample aspect ratio $\phi_s^\sub(\phi)$, that best trades off the bias and variance, and minimizes the risk for a given value of $\phi$ and $M=\infty$.
    
    \begin{proposition}[Optimal risk for subagged ridgeless predictors with isotropic features]\label{thm:ridgeless-isotipic-optrisk}
        Suppose the conditions in \Cref{cor:isotropic-ridgeless} hold, and $\sigma^2,\rho^2\ge 0$ are the noise variance and signal strength from Assumptions \ref{asm:lin-mod} and \ref{asm:signal-bounded-norm}.
        Let $\SNR=\rho^2/\sigma^2$. For any $\phi\in(0,\infty)$, the properties of the asymptotic risk $\RzeroM{\infty}{\phi}$ as a function of $\phi_s$ are characterized as follows:
        \begin{enumerate}[label={(\arabic*)},leftmargin=7mm]
            \item $\SNR=0\; (\rho^2=0, \sigma^2\neq0)$: The global minimum $\mathscr{R}_{0,\infty}^{\sub}(\phi, \phi_s^{\sub}(\phi)) = \sigma^2$ is obtained at $\phi_s^\sub(\phi)=~\infty$.
            
            \item $\SNR>0$: The global minimum
            \begin{align} \RzeroMe{\infty}{\phi}{\phi_s^\sub(\phi)} %
               &= \frac{\sigma^2}{2}\left[ 1 + \frac{\phi-1}{\phi} \SNR + 
             \sqrt{\left(1 - \frac{\phi-1}{\phi}\SNR\right)^2 + 4 \SNR}\right] \label{eq:opt_phis}
            \end{align}
            is obtained at $\phi_s^\sub(\phi)=A + \sqrt{A^2-\phi}\in(1,\infty)$ where $A=(\phi+1 + \phi/\SNR)/2$.

            \item $\SNR=\infty\; (\rho^2\neq0, \sigma^2=0)$: If $\phi\in(0,1]$, then the global minimum is $\mathscr{R}_{0,\infty}^{\sub}(\phi, \phi_s^{\sub}(\phi))=0$ is attained at any $\phi_s\in[\phi,1]$.
            If $\phi\in(1,\infty)$, then the global minimum $\mathscr{R}_{0,\infty}^{\sub}(\phi, \phi_s^{\sub}(\phi)) = \sigma^2+\rho^2(\phi-1)/\phi$ is attained at $\phi_s^\sub(\phi)=\phi$. 
        \end{enumerate}
    \end{proposition}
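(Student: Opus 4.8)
\emph{Overall plan.} I would reduce the claim to a one-variable calculus problem. By \Cref{thm:ver-with-replacement} and the decompositions \eqref{eq:bias-component-decomposition}--\eqref{eq:var-component-decomposition}, letting $M \to \infty$ annihilates the $M^{-1}$-weighted terms, so $\mathscr{R}_{0,\infty}^{\sub}(\phi, \phi_s) = \sigma^2 + B_0(\phi, \phi_s) + V_0(\phi, \phi_s)$. Substituting the isotropic formulas from \Cref{cor:isotropic-ridgeless}, the map $\phi_s \mapsto \mathscr{R}_{0,\infty}^{\sub}(\phi, \phi_s)$ on $[\phi, \infty]$ consists of three pieces: it is the constant $\sigma^2/(1-\phi)$ on $[\phi, 1)$ (a nonempty piece only when $\phi < 1$); it equals $+\infty$ at $\phi_s = 1$; and on $(1, \infty]$ it equals
\[
    h(\phi_s) := \sigma^2 + \frac{\rho^2(\phi_s - 1)^2 + \sigma^2 \phi}{\phi_s^2 - \phi},
\]
with $h(1^{+}) = \sigma^2/(1-\phi)$ and $h(\infty) = \sigma^2 + \rho^2$; on $(1,\infty]$ the denominator $\phi_s^2 - \phi$ stays strictly positive, so $h$ is smooth there. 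The problem then becomes: minimize $h$ and compare with the flat piece.

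\emph{Degenerate cases.} If $\SNR = 0$ (so $\rho^2 = 0$), then $h(\phi_s) = \sigma^2 + \sigma^2 \phi/(\phi_s^2 - \phi)$ is strictly decreasing on $(1, \infty]$ with infimum $\sigma^2$ attained at $\phi_s = \infty$, while the flat piece $\sigma^2/(1-\phi) \ge \sigma^2$; hence the global minimum $\sigma^2$ is attained at $\phi_s^{\sub}(\phi) = \infty$. If $\SNR = \infty$ (so $\sigma^2 = 0$), then $V_0 \equiv 0$ and $\mathscr{R}_{0,\infty}^{\sub}(\phi, \phi_s) = B_0(\phi, \phi_s) = \rho^2 (\phi_s - 1)^2/(\phi_s^2 - \phi)\,\mathds{1}_{(1,\infty]}(\phi_s)$, which vanishes on $[\phi, 1]$; so for $\phi \le 1$ the global minimum $0$ is attained on all of $[\phi, 1]$. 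For $\phi > 1$ the admissible values are $\phi_s \ge \phi > 1$, and since $\phi_s \mapsto (\phi_s - 1)^2/(\phi_s^2 - \phi)$ has derivative of the sign of $(\phi_s - 1)(\phi_s - \phi) \ge 0$ there, it is nondecreasing on $[\phi, \infty)$ and the minimum is attained at $\phi_s^{\sub}(\phi) = \phi$, with value $\rho^2(\phi - 1)/\phi = \sigma^2 + \rho^2(\phi-1)/\phi$.

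\emph{Main case $\SNR \in (0, \infty)$.} I would differentiate $h$ on $(1, \infty)$; using the identity $(\phi_s^2 - \phi) - \phi_s(\phi_s - 1) = \phi_s - \phi$, the numerator of $h'$ is a positive multiple of
\[
    q(\phi_s) := \rho^2(\phi_s - 1)(\phi_s - \phi) - \sigma^2 \phi\, \phi_s = \rho^2 \phi_s^2 - \bigl(\rho^2(1 + \phi) + \sigma^2 \phi\bigr)\phi_s + \rho^2 \phi .
\]
Since $\rho^2 > 0$, $q$ is an upward parabola whose roots are $A \pm \sqrt{A^2 - \phi}$ with $A = (\phi + 1 + \phi/\SNR)/2$, and $A^2 > \phi$ because $q(1) = -\sigma^2 \phi < 0$ forces two distinct real roots. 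As $q(1) = -\sigma^2\phi < 0$ and $q(\phi) = -\sigma^2\phi^2 < 0$, both $1$ and $\phi$ lie strictly between the two roots, so the smaller root is below $\min\{1, \phi\}$ and the larger root, $\phi_s^{\sub}(\phi) = A + \sqrt{A^2 - \phi} > \max\{1,\phi\}$, lies in the admissible domain. Since $h' = 2q/(\phi_s^2 - \phi)^2$, $h$ strictly decreases up to $\phi_s^{\sub}(\phi)$ and strictly increases afterward on the admissible interval, so $\phi_s^{\sub}(\phi) \in (1, \infty)$ is the unique minimizer of $h$; when $\phi < 1$ one moreover has $h(\phi_s^{\sub}(\phi)) < h(1^{+}) = \sigma^2/(1-\phi)$, which rules out the flat piece, while for $\phi \ge 1$ there is no flat piece, so in all cases $\phi_s^{\sub}(\phi)$ is the global minimizer over $[\phi, \infty]$. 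To get the minimal value \eqref{eq:opt_phis}, I would use $q(\phi_s^{\sub}) = 0$, i.e.\ $\rho^2(\phi_s^{\sub} - 1)(\phi_s^{\sub} - \phi) = \sigma^2 \phi\, \phi_s^{\sub}$, to simplify
\[
    \frac{\rho^2(\phi_s^{\sub} - 1)^2 + \sigma^2\phi}{(\phi_s^{\sub})^2 - \phi}
    = \frac{\sigma^2\phi}{\phi_s^{\sub} - \phi},
    \qquad\text{hence}\qquad
    \mathscr{R}_{0,\infty}^{\sub}(\phi, \phi_s^{\sub}(\phi)) = \sigma^2\,\frac{\phi_s^{\sub}(\phi)}{\phi_s^{\sub}(\phi) - \phi},
\]
and then substitute $\phi_s^{\sub}(\phi) = A + \sqrt{A^2 - \phi}$, using the product-of-roots relation (which gives $\phi_s^{\sub}(\phi) - \phi = \phi_s^{\sub}(\phi)\,(1 - A + \sqrt{A^2-\phi})$) together with $2A - 1 - \phi = \phi/\SNR$, to reduce the right-hand side to the claimed closed form.

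\emph{Expected main obstacle.} The analysis is conceptually light — a single-variable study of the rational function $h$ — so the bottleneck is bookkeeping: correctly tracking which of the three pieces is active as $\phi$ ranges over $(0,1)$ versus $[1,\infty)$ (including the removable $+\infty$ at $\phi_s = 1$), verifying that the interior critical point lands strictly above $\max\{1,\phi\}$, and carrying out the final (elementary but somewhat lengthy) algebraic simplification of $\mathscr{R}_{0,\infty}^{\sub}(\phi, \phi_s^{\sub}(\phi))$ into \eqref{eq:opt_phis}.
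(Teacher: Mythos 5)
Your proposal is correct and follows essentially the same route as the paper: the same reduction to the explicit rational function of $\phi_s$, the same quadratic $q(\phi_s)=\rho^2(\phi_s-1)(\phi_s-\phi)-\sigma^2\phi\,\phi_s$ from the derivative, the same roots $A\pm\sqrt{A^2-\phi}$, and the same use of $q(\phi_s^{\sub})=0$ to collapse the optimal value to $\sigma^2\phi_s^{\sub}/(\phi_s^{\sub}-\phi)$ before the final algebra. The one place you genuinely diverge is in ruling out the underparameterized flat piece when $\phi<1$: the paper writes the optimal value as $\sigma^2+\sigma^2 h(\SNR)-\sigma^2\delta(\SNR)$ and proves $h$ is monotone increasing in $\SNR$ with limit $\phi/(1-\phi)$ (via L'H\^opital), whereas you simply observe that the flat value equals $\lim_{\phi_s\to1^+}h(\phi_s)=\sigma^2/(1-\phi)$ and that $h$ is strictly decreasing on $(1,\phi_s^{\sub})$ because $q(1)=-\sigma^2\phi<0$ places $1$ strictly between the two roots. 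Your version of this step is shorter, avoids the auxiliary functions and limit computations entirely, and is just as rigorous; the remaining bookkeeping (signs of $q$ at $1$ and $\phi$, the degenerate $\SNR\in\{0,\infty\}$ cases, and the closed-form simplification) checks out.
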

    
    As a specific application of \Cref{prop:opt-risk-ridgeless}, \Cref{thm:ridgeless-isotipic-optrisk} provides the analytic expression of the optimal risk attainable through optimization over all choices of the number of bags $M$ and the subsample aspect ratio $\phi_s$.
    Additionally, it elucidates the relationship between the optimal risk and the $\SNR$, which is further visualized in \Cref{fig:risk_varying_SNR_phi}.
    Particularly, the optimal subagged risk is monotonically decreasing in $\SNR$ when $\sigma^2$ is fixed, which is an intuitive behavior as one would expect a larger $\SNR$ results in a smaller prediction risk.
    In contrast, such a property is not satisfied by the ridge or ridgeless predictor computed on the full data \citep[Figure 2]{hastie2022surprises}.
    It can be shown that the gap between the optimal risk, given in \Cref{thm:ridgeless-isotipic-optrisk}, and the underparameterized excess risk $\sigma^2\phi/(1-\phi)$, obtained with the full dataset, gets larger when $\SNR$ gets smaller.
    Most importantly, it benefits more when the $\SNR$ gets smaller, with a higher overparameterized aspect ratio $\phi_s^\sub(\phi)$.
    
    \begin{figure}[!t]
        \centering
        \includegraphics[width=0.85\textwidth]{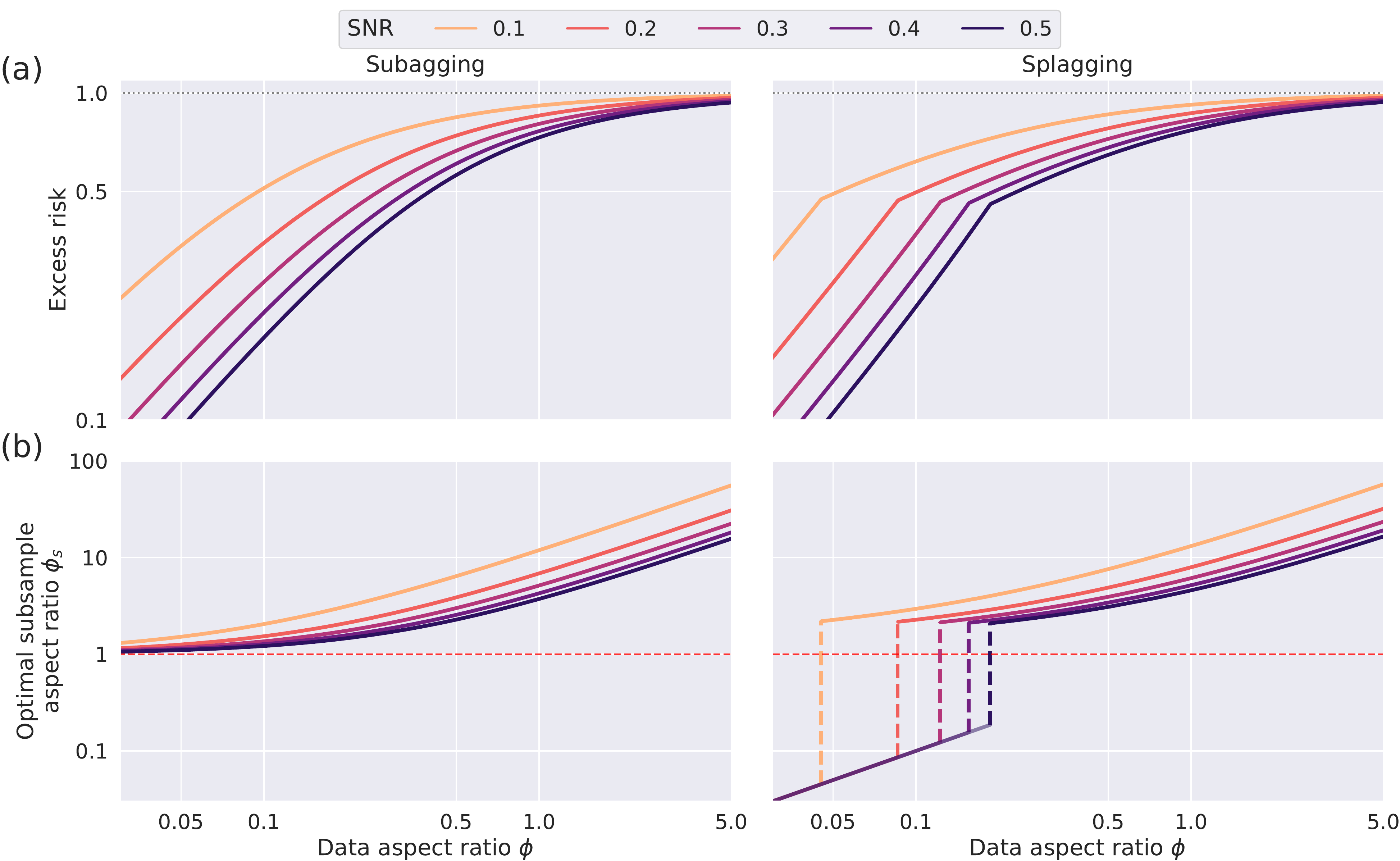}
        \caption{Properties of optimal bagged ridgeless predictors ($\lambda=0$) under model \eqref{eq:model} when $\rho^2=1$, for varying signal noise ratio $(\SNR=\rho^2/\sigma^2)$. (a) Optimal asymptotic excess risk curves of subagging (left panel) and \splagging (right panel) over the number of bags $M$ and subsample aspect ratio $\phi_s$. 
        The optimal numbers of bags are $M=\infty$ and $M=\phi_s/\phi$ for subagging and \splagging, respectively.
        The gray dotted lines represent the excess null risk.
        (b) The corresponding optimal subsample aspect ratio $\phi_s$ as a function of data aspect ratio $\phi$. For subagging, the optimal subsample aspect ratio is always larger than one (above the red dashed line).}
        \label{fig:risk_varying_SNR_phi}
    \end{figure}
    
    \begin{theorem}
    [Optimal subagged ridgeless risk versus optimal ridge risk]
    \label{thm:comparison_optimal_ridge}
        Under the conditions in \Cref{cor:isotropic-ridgeless},
        we have that for all $\phi\in(0,\infty)$,
        \begin{align*}
            \min_{\phi_s\geq \phi}\RzeroM{\infty}{\phi} ~=~ \min_{\lambda\geq 0}\RlamMe{1}{\phi}{\phi}. %
        \end{align*}
        In words,
        the optimal limiting risk of the subagged ridgeless predictors equals the optimal ridge predictors trained on the full data.
    \end{theorem}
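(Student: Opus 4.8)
The plan is to show that, in the isotropic regime, the two minimization problems in the statement are in fact the \emph{same} one-dimensional problem written in two different parametrizations, the dictionary being $\phi_s - 1 = 1/v(-\lambda;\phi)$, where $v(-\lambda;\phi)$ is the fixed-point scalar of \eqref{eq:fixed-point-ridge}.

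First I would record the explicit isotropic formulas. Since $\bSigma = \bI_p$ gives $H = G = \delta_1$ (a point mass at $1$), \Cref{cor:isotropic-ridgeless} together with the $M\to\infty$ limit of the decomposition \eqref{eq:bias-component-decomposition}--\eqref{eq:var-component-decomposition} of \Cref{thm:ver-with-replacement} (that is, $\mathscr{R}_{0,\infty}^{\sub}(\phi,\phi_s) = \sigma^2 + B_0(\phi,\phi_s) + V_0(\phi,\phi_s)$) yields
\[
\mathscr{R}_{0,\infty}^{\sub}(\phi,\phi_s)
= \sigma^2 + \frac{\rho^2(\phi_s-1)^2 + \sigma^2\phi}{\phi_s^2 - \phi}
\quad\text{for } \phi_s \in (1,\infty],
\]
while $\mathscr{R}_{0,\infty}^{\sub}(\phi,\phi_s) = \sigma^2/(1-\phi)$ for $\phi_s\in[\phi,1)$ (possible only when $\phi<1$) and $\mathscr{R}_{0,\infty}^{\sub}(\phi,1) = +\infty$. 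For the right-hand side, specializing \eqref{eq:fixed-point-ridge} to $H=\delta_1$ shows that $v := v(-\lambda;\phi)$ is the unique nonnegative root of $\lambda v^2 + (\lambda + \phi - 1)v - 1 = 0$; evaluating $\tv(-\lambda;\phi,\phi)$ and $\tc(-\lambda;\phi)$ (both in closed form since $H=G=\delta_1$) inside $B_\lambda(\phi,\phi) = \rho^2(1+\tv(-\lambda;\phi,\phi))\tc(-\lambda;\phi)$ and $V_\lambda(\phi,\phi) = \sigma^2\tv(-\lambda;\phi,\phi)$ gives
\[
\mathscr{R}_{\lambda,1}(\phi,\phi) = \sigma^2 + \frac{\rho^2 + \sigma^2\phi v^2}{(1+v)^2 - \phi v^2}.
\]

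The key step is then purely algebraic: under the substitution $\phi_s = 1 + 1/v$ one has $(\phi_s-1)^2 = v^{-2}$ and $\phi_s^2 - \phi = v^{-2}\big((1+v)^2 - \phi v^2\big)$, so the displayed formula for $\mathscr{R}_{0,\infty}^{\sub}(\phi,\phi_s)$ is \emph{literally} $\mathscr{R}_{\lambda,1}(\phi,\phi)$ whenever $v = v(-\lambda;\phi) = 1/(\phi_s-1)$. It remains only to match the two feasible sets. As $\lambda$ decreases over $[0,\infty)$, $v(-\lambda;\phi)$ increases over $\big(0, v(0;\phi)\big]$, where $v(0;\phi) = 1/(\phi-1)$ for $\phi>1$ and $v(0;\phi)=+\infty$ for $\phi\le1$; correspondingly $\phi_s = 1 + 1/v$ ranges over $[\phi,\infty)$ when $\phi>1$ and over $(1,\infty)$ when $\phi\le1$. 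Hence for $\phi>1$, $\lambda\mapsto\phi_s$ is a bijection from $[0,\infty)$ onto $[\phi,\infty)$, so the two minima are the minimum of the same function over the same set. For $\phi\le1$, the parametrization covers $\phi_s\in(1,\infty)$; the remaining left-hand values $\phi_s\in[\phi,1]$ contribute only $\sigma^2/(1-\phi)$ (on $[\phi,1)$, when $\phi<1$) or $+\infty$ (at $\phi_s=1$), each of which already appears in the right-hand minimization as the ridgeless/OLS value $\mathscr{R}_{0,1}(\phi,\phi) = \lim_{\lambda\to0^+}\mathscr{R}_{\lambda,1}(\phi,\phi)$, so neither infimum is affected. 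Combining these cases yields $\min_{\phi_s\ge\phi}\mathscr{R}_{0,\infty}^{\sub}(\phi,\phi_s) = \min_{\lambda\ge0}\mathscr{R}_{\lambda,1}(\phi,\phi)$ for every $\phi\in(0,\infty)$; the degenerate cases $\SNR\in\{0,\infty\}$ follow from the same correspondence (or by comparing the closed forms in \Cref{prop:opt-risk-ridgeless} or \Cref{thm:ridgeless-isotipic-optrisk} with the $\lambda\to\infty$, resp.\ $\lambda\to0^+$, limits of $\mathscr{R}_{\lambda,1}(\phi,\phi)$).

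I expect the main obstacle to be bookkeeping at the two ends of the parametrization rather than anything conceptual: one must verify that the ridgeless limit $\lambda\to0^+$ corresponds to $\phi_s=\phi$ when $\phi>1$ but only to $\phi_s\downarrow1$ when $\phi\le1$, and hence treat the ``extra'' left-hand region $\phi_s\in[\phi,1]$ separately (easy, since the risk is constant and matches the full-data ridgeless value there). An alternative route, bypassing the reparametrization, uses the classical fact that the Bayes-optimal ridge penalty for the isotropic prior in \eqref{eq:model} is $\lambda^\star = \phi/\SNR$: one checks that $v(-\lambda^\star;\phi)$ solves the same quadratic as the first-order condition for minimizing $v\mapsto\mathscr{R}_{\lambda,1}(\phi,\phi)$, evaluates the resulting minimum in closed form, and matches it against the closed form for $\min_{\phi_s\ge\phi}\mathscr{R}_{0,\infty}^{\sub}(\phi,\phi_s)$ recorded in \Cref{thm:ridgeless-isotipic-optrisk}.
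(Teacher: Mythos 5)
Your proof is correct, but it takes a genuinely different route from the paper's. The paper's argument is a two-line closed-form comparison: it invokes the known optimal isotropic ridge penalty $\lambda^* = \phi\sigma^2/\rho^2$ from \citet{dobriban_wager_2018}, writes down the resulting value of $\min_{\lambda\ge0}\mathscr{R}_{\lambda,1}(\phi,\phi)$ in closed form, and observes that it coincides with the expression for $\min_{\phi_s\ge\phi}\mathscr{R}^{\sub}_{0,\infty}(\phi,\phi_s)$ already computed in \Cref{thm:ridgeless-isotipic-optrisk} --- i.e., exactly the ``alternative route'' you sketch in your last paragraph. Your primary argument is stronger and more structural: the change of variables $\phi_s = 1 + 1/v(-\lambda;\phi)$ identifies the two risk curves \emph{pointwise} (not merely their minima), reducing the theorem to the algebraic identity $\bigl(\rho^2(\phi_s-1)^2+\sigma^2\phi\bigr)/(\phi_s^2-\phi) = \bigl(\rho^2+\sigma^2\phi v^2\bigr)/\bigl((1+v)^2-\phi v^2\bigr)$ plus a matching of feasible sets, and your endpoint bookkeeping (the $\lambda\to0^+$ limit landing at $\phi_s=\phi$ when $\phi>1$ but at $\phi_s\downarrow 1$ when $\phi\le1$, with the region $\phi_s\in[\phi,1]$ contributing only the least-squares value already present at $\lambda=0$) is handled correctly. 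What each approach buys: the paper's is shorter because it can lean on the explicit optimum from \Cref{thm:ridgeless-isotipic-optrisk}, whereas yours explains \emph{why} the equality holds and is the form that extends beyond isotropic features --- indeed, the paper's own footnote notes that the non-isotropic extension in \citet{du2023gcv} proceeds precisely by relating the fixed-point equations, which is your mechanism. The only caveat worth recording is that when $\rho^2=0$ the right-hand ``min'' is attained only in the limit $\lambda\to\infty$ (matching $\phi_s=\infty$ on the left), so it should be read as an infimum there; this degeneracy affects the paper's statement equally and you flag it appropriately.
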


    \Cref{thm:comparison_optimal_ridge} reveals a rather surprising connection between subagging and ridge regression. This result implies that subagging a ridge predictor with $\lambda = 0$ and optimizing over the subsample size is ``same'' as using the ridge predictor with $\lambda \ge 0$ and optimizing over $\lambda$. Consequently, this suggests that subsampling and optimizing over subsample size is a form of regularization. A similar connection between subsampling features and ridge regression was made by~\citet[Theorem 3.6]{lejeune2020implicit}.

    Compared to Theorem 3.6 of \cite{lejeune2020implicit}, our \Cref{thm:comparison_optimal_ridge} provides the following three key improvements: 
    (1) \textbf{Subsampling scope}. The former theorem focuses solely on the subsampling of features, whereas our theorem considers the sampling of observations.
    Moreover, in the approach by \citet{lejeune2020implicit}, sampling is restricted to ensure that the final optimal ensemble comprises only least squares estimators. Specifically, they maintain the number of observations in the subsample greater than the number of features, ensuring the existence of a least squares solution for the subsampled data. In contrast, our method permits arbitrary subsample sizes, which means the optimal ensemble can encompass both subsampled least squares and ridgeless interpolators. This distinction is crucial, as there can be scenarios where the optimal subsample might contain more features than observations, a phenomenon highlighted in \Cref{thm:ridgeless-isotipic-optrisk}.
    (2) \textbf{Signal constraints.} The previous theorem limits itself to isotropic random signals $\bbeta_0$.
    We broaden this scope to incorporate any arbitrary deterministic signals with bounded norms.
    (3) \textbf{Distributional assumptions.} 
    \citet{lejeune2020implicit} assumes strong distributional assumptions on the features, noise, and signal, particularly requiring all of them to follow a Gaussian distribution.
    In comparison, our results do not require such strong distributional assumptions on either the features or the noise and accommodate any deterministic signal with bounded norms.
    
    \begin{remark}
        [Difference between optimal subagged ridgeless and optimal ridge predictors]

        While Theorem \ref{thm:comparison_optimal_ridge} suggests that the two optimal limiting risks coincide under the isotropic model,
        it is important to note the difference in their risk monotonicity properties in the data aspect ratio $\phi$.
        The optimal risk of the subagged ridgeless predictor is expected to remain monotonically decreasing in $\phi$, as shown in \Cref{thm:monotonicity-phi}. In contrast, it is yet to be ascertained whether the optimal ridge predictor has the same property under general models.
        In the isotropic case, the fixed point parameter can be explicitly solved in terms of the parameters $(\lambda,\phi,\phi_s)$.
        The explicit formula enables direct analysis of the monotonicity properties of the asymptotic risk and subsequently facilitates the derivation of the optimal risks.        
        However, in the non-isotropic case, such an explicit formula is not available.
        This lack of an explicit formula calls for a different strategy to extend \Cref{thm:comparison_optimal_ridge} to non-isotropic features.\footnote{Subsequent to finishing work, \Cref{thm:comparison_optimal_ridge} has now been extended for non-isotropic cases in \citet{du2023gcv} by establishing connections between the fixed-point equations involved and utilizing their monotonicity properties.}
    \end{remark}

    \paragraph{\Splagging without replacement.\hspace{-2mm}}
    Unlike subagging, it is possible, though very cumbersome to obtain the optimal sub-sampling ratio $\phi_s^\spl(\phi)$ in this case. It involves solving a cubic equation (for a fixed $M$) or a quartic equation (for the optimal $M$).
    Consequently, we resort to numerical computation for $\phi_s^\star$ and provide a qualitative behavior for $\phi_s$ next.
    We observe that as $\SNR$ increases, the point of phase transition occurs at a larger value of $\phi$.
    This indicates that when there are much more features than samples in the full dataset and the $\SNR$ is relatively large, then \splagging does not help to reduce the prediction risk.
    However, when the $\SNR$ is small, \splagging interpolators is beneficial, even when $n$ is much larger than $p$ in the full data set.

    \paragraph{Subagging versus \splagging.\hspace{-2mm}} 
    The comparison between subagging and splagging methods shows interesting findings in terms of prediction risks.
    Next we briefly summarize these findings concerning the similarities and differences between the two types of bagging strategies for ridgeless predictors.
        From \Cref{fig:risk_varying_SNR_phi}, we observe that for any data aspect ratio $\phi$ and any $\SNR$, subagging can help to reduce the risk with a suitable subsample aspect ratio in the overparameterized regime, if we have enough bags.
        In contrast, \splagging may not help when $\phi<1$ and $\SNR$ is large, even if we optimize over all possible numbers of bags and subsample aspect ratios jointly.
        For the cases when subagging or \splagging is beneficial, the maximal gain compared to the predictor computed on the full data increases as the $\SNR$ decreases.
        When the full data aspect ratio $\phi$ is near $1$, both subagging and \splagging substantially reduce the prediction risk; see
        \Cref{fig:ridgeless-with-replacement-varing-M,fig:ridge-with-replacement-varing-M,fig:ridgeless-without-replacement-varing-M,fig:ridge-without-replacement-varing-M}.
        Most surprisingly, even if the original dataset is heavily underparameterized, overparameterized subagging always helps, as shown in \Cref{fig:comparison_with_without_optimal}(b).
        For example, recall in \Cref{fig:ridgeless-with-replacement-varing-M} when $n = 5000$ and $p = 500$ (which is a favorable case in classical statistics), subagged ridgeless predictors trained on overparameterized subsampled datasets (e.g., with $n = 50$ and $p = 500$) with $M=50$ bags have smaller prediction risk than least squares fitted on the original data.
    
\section{Discussion}
\label{sec:discussion}

In this paper, we provide a generic reduction strategy for characterizing the prediction risk of general bagged predictors (for two bagging strategies of subagging and splagging).
As a function of the number of bags $M$, we show that the asymptotic risk of the $M$-bagged predictor under squared error loss can be expressed as $M^{-1} \mathfrak{R}_1 + (1 - M^{-1}) \mathfrak{R}_{\infty}$, where $\mathfrak{R}_1$ and $\mathfrak{R}_\infty$ represent the asymptotic squared risks of the $M$-bagged predictor with $M = 1$ and $M = \infty$, respectively.
More generally, for a smooth loss function, we show that the risk of the $M$-bagged predictor is sandwiched between similar convex combinations.
In addition, we prescribe a generic cross-validation method to tune the subsample size that aims at obtaining the best subagged predictor, which also serves to monotonize the risk profile of any given prediction procedure.

Following this general strategy, along with certain novel derivations from random matrix theory (to analyze conditional resolvents), we obtain explicit risk characterization for bagged ridge and ridgeless predictors.
The risk expressions reveal bias and variance monotonicity in the number of bags. Comparing different variants of bagging for ridge and ridgeless predictors, we show that subagging (with optimal subsample size) improves upon the divide-and-conquer or the data-splitting approach of averaging the predictors computed on different non-overlapping splits of data (with optimal split size).
This is especially notable in the overparameterized regime, where the latter data-splitting has been recently observed to improve upon the ridgeless predictor computed on the entire data \citep{mucke_reiss_rungenhagen_klein_2022} under sub-Gaussian features.

Surprisingly, our results show that, under a well-specified linear model, subagging on properly chosen ridgeless interpolators always improves upon the ridgeless predictor trained on the complete data, even when the entire data has more observations than the number of features. Moreover, our generic and model-agnostic cross-validation procedure provably yields the best ridgeless interpolators for subagging. Further specializing to the case of isotropic features, we prove that the optimal subagged predictor has the asymptotic risk that matches the unbagged ridge predictor with optimally-tuned regularization parameter.

    \begin{figure}[!t]
        \centering
        \includegraphics[width=0.85\textwidth]{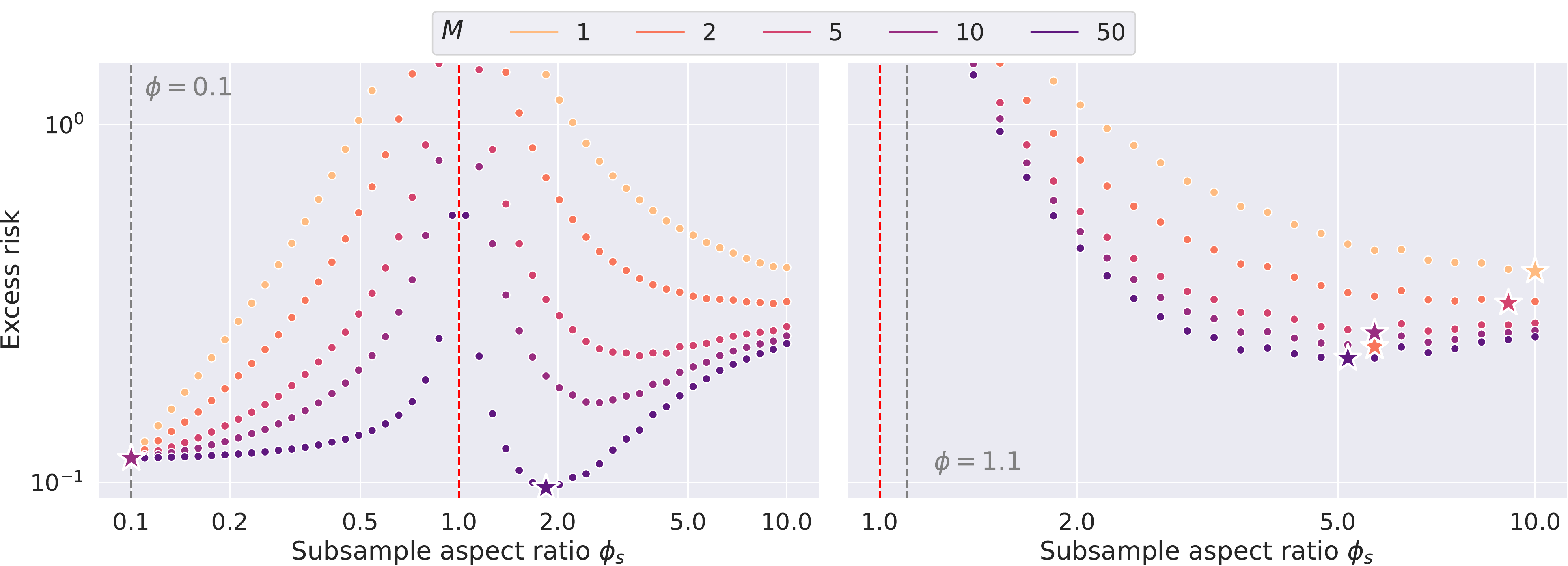}
        \caption{Finite-sample prediction risks for subagged ridgeless predictors ($\lambda=0$) under a nonlinear model, averaged over 100 dataset repetitions, for varying bag size $k=[p/\phi_s]$ and number of bags $M$ with replacement, with $n=[p/\phi]$ and $p=500$.
         The left and the right panels correspond to the cases when $p<n$ ($\phi=0.1$) and $p>n$ ($\phi=1.1$), respectively.
        We generated data from a nonlinear model where the response $y_i$ for $i\in[n]$ is generated from a nonlinear function of $\bx_i$ with additive noise:
        $y_i= \bx_i^{\top}\bbeta_0 + \tfrac{1}{p}(\|\bx_i\|_2^2-\tr(\bSigma_{\mathrm{ar1}})) + \epsilon_i$ and $\bbeta_0,\bX,\bepsilon$ are generated as in \eqref{eq:model-ar1} with $\rhoar=0.25$ and $\sigma^2=1$.
        We observe a similar pattern as in \Cref{fig:ridgeless-with-replacement-varing-M} that the risk of the subagged ridgeless predictor with $M=50$ and $\phi_s\approx 1.5$ is smaller than the risk of the ridgeless predictor fitted on the full data.
        Consequently, it is likely that the key results on subagging continue to hold under more general response models. 
        }
        \label{fig:ridgeless-with-replacement-varing-M-ar1-misspecified}
    \end{figure}
    
Several natural extensions of the current work can be considered going forward.
We briefly discuss two of them below.

First, although our proposed general strategy for analyzing bagged predictors can be helpful for other prediction procedures, we have only derived the precise bagged risk expressions for the ridge and ridgeless regression. In the context of the ridge and ridgeless predictors, we had to develop new random matrix theory tools related to conditional asymptotic equivalents.
It may be necessary to develop similar new tools to analyze other predictors based on our strategy.
A natural prediction procedure to analyze next for bagging is the lasso or lassoless regression.
An empirical investigation of the bagged lassoless predictor has already been conducted by \cite{patil2022mitigating} (see Figure 8, for example).
The traditional analysis of this predictor trained on the full data is performed via approximate message passing (AMP) techniques \citep{li2021minimum}.
It would be interesting to see if our general strategy can be combined with AMP, the convex Gaussian min-max theorem, or the leave-one-out perturbation analysis to yield a more encompassing strategy for bagging analysis.

Second, we have analyzed the bagged ridge and ridgeless predictors under a well-specified linear model. It is interesting to extend the analysis to a general data-distributional setting for two main reasons: (1) to make the results more relevant for practical data analysis, and (2) to investigate whether bagging interpolators can still improve upon the ridgeless predictor trained on the full data.
Regarding (1) above, the techniques developed by \cite{bartlett_montanari_rakhlin_2021} may prove useful in relaxing the linear model assumptions.
Regarding (2) above, we performed a simple simulation study that suggests that even in the misspecified nonlinear model, bagging properly selected interpolators can improve the unbagged ridgeless predictor.
See \Cref{fig:ridgeless-with-replacement-varing-M-ar1-misspecified} for more details. Making these empirical observations more precise presents an exciting avenue for future work.

\section*{Acknowledgements}

We are indebted to Ryan J.\ Tibshirani
for his constant encouragement and advice throughout this work.
We are also indebted to Alessandro Rinaldo, Yuting Wei, Matey Neykov,
and other members of the Operational Overparameterized Statistics (OOPS)
Working Group at Carnegie Mellon University,
for many insightful conversations.
We warmly thank the participants of the Theory of Overparameterized Learning Workshop (TOPML) 2022, in particular Daniel LeJeune, 
for many enlightening discussions.

P.\ Patil was partially supported by ONR grant N00014-20-1-2787. 
A.\ K.\ Kuchibhotla was partially supported by NSF grant DMS-2113611.

\clearpage
\bibliographystyle{apalike}
\bibliography{main}

\appendix      

\setcounter{section}{-1}
\setcounter{equation}{0}
\setcounter{figure}{0}
\renewcommand{\thesection}{S.\arabic{section}}
\renewcommand{\theequation}{E.\arabic{equation}}
\renewcommand{\thefigure}{S.\arabic{figure}}
\clearpage

\begin{center}
\Large
{\bf
Supplement for ``\titletext''
}
\end{center}

This document acts as a supplement to the paper ``\titleRLB.''
The section numbers in this supplement begin with the letter ``S'' and the equation numbers begin with the letter ``E'' to differentiate them from those appearing in the main paper.

\subsection*{Notation}
\label{sec:notation}

Below we provide an overview of some general notation used in the main paper and the supplement.
    
    We denote scalars in non-bold lower or upper case (e.g., $a$), vectors in bold lower case (e.g., $\bb$), and matrices in bold upper case (e.g., $\bC$).
    We denote sets using calligraphic letters (e.g., $\cD$) and use blackboard letters to denote some special sets: $\NN$ denotes the set of positive integers, $\RR$ denotes the set of real numbers, $\RR_{\ge 0}$ denotes the set of non-negative real numbers, $\RR_{> 0}$ denotes the set of positive real numbers, $\CC$ denotes the set of complex numbers, $\CC^{>0}$ denotes the set of complex numbers with positive imaginary part, and $\CC_{<0}$ denotes the set of complex numbers with negative imaginary part.
    For a natural number $n$, we use $[n]$ to denote the set $\{ 1, \dots, n \}$.

    For a real number $a$, $(a)_{+}$ denotes its positive part, $\lfloor a \rfloor$ its floor, and $\lceil a \rceil$ its ceiling.
    For a vector $\bb$, $\| \bb \|_{2}$ denotes its Euclidean norm.
    For a pair of vectors $\bc$ and $\bd$, $\langle \bc, \bd \rangle$ denotes their inner product.
    For an event $E$, $\ind_E$ denotes the associated indicator random variable.
    For a matrix $\bF \in \RR^{n \times p}$, $\bF^\top \in \RR^{p \times n}$ denotes its transpose, and $\bF^{+} \in \RR^{p \times n}$ denotes its Moore-Penrose inverse.
    For a square matrix $\bG \in \RR^{p \times p}$, $\tr[\bG]$ denotes its trace, and $\bG^{-1} \in \RR^{p \times p}$ denotes its inverse, provided it is invertible.
    For a positive semidefinite matrix $\bH$, $\bH^{1/2}$ denotes its principal square root.
    A $p \times p$ identity matrix is denoted $\bI_p$, or simply by $\bI$ when it is clear from the context.
    
    For a real matrix $\bA$, its operator norm with respect to $\ell_2$ vector norm is denoted by $\| \bA \|_{\mathrm{op}}$, and its trace norm is denoted by $\| \bA \|_{\mathrm{tr}}$ (recall that $\| \bA \|_{\mathrm{tr}} = \tr[(\bA^\top \bA)^{1/2}]$).
    For a positive semidefinite matrix $\bB \in \RR^{p \times p}$ with eigenvalue decomposition $\bB = \bC \bD \bC^{-1}$ (where $\bC \in \RR^{p \times p}$ is an orthonormal matrix and $\bD \in \RR^{p \times p}$ is a diagonal matrix with non-negative entries), and a function $f: \RR_{\ge 0} \to \RR_{\ge 0}$, $f(\bB)$ denotes the $p \times p$ positive semidefinite matrix $\bC f(\bD) \bC^{-1}$. 
    The notation $f(\bD)$ refers to a $p \times p$ diagonal matrix obtained by applying the function $f$ to each diagonal entry of $\bD$.
    
    For symmetric matrices $\bA$ and $\bB$, $\bA \preceq \bB$ denotes the Loewner ordering.
    For sequences of matrices $\bC_n$ and $\bD_n$, $\bC_n \asympequi \bD_n$ denotes a certain notion of asymptotic equivalence. For more details, see \Cref{def:deterministic-equivalent,def:deterministic-equivalent-D}.
    We use $O_p$ and $o_p$ to denote probabilistic big-O and little-o notation, respectively.
    We denote convergence in probability by ``$\pto$'', almost sure convergence by ``$\asto$'', and convergence in distribution by ``$\dto$''.

\subsection*{Organization}

Below we outline the structure of the rest of the supplement.

\begin{itemize}[leftmargin=7mm]

    \item 
    In \Cref{sec:setup}, we collect basic background and results from simple random sampling to analyze bagged predictors.
    
    \item In \Cref{sec:app-general-predictor}, 
    we present proofs of results related to general subagged predictors from \Cref{sec:general-predictors}.
    
    \item In \Cref{sec:appendix-with-replacement-ridge,sec:appendix-with-replacement-ridgeless}, we present proof of \Cref{thm:ver-with-replacement} related to subagging from \Cref{sec:bagging-with-replacement} for ridge and ridgeless predictors, respectively.
    The proofs for the two cases are separated due to length.
    However, the proof architecture for the two is similar.
    
    \item In \Cref{sec:appendix-without-replacement}, we present proof of \Cref{thm:ver-without-replacement} related to \splagging from \Cref{sec:bagging-without-replacement} for ridge and ridgeless predictors.
    Because some of this proof builds on that of \Cref{thm:ver-with-replacement}, we can combine the two cases of the ridge and ridgeless predictors, unlike the split cases for
    \Cref{thm:ver-with-replacement}.
    
    \item In \Cref{sec:appendix-risk-properties}, we present proofs of results related to the bias-variance component monotonicity properties in \Cref{prop:monotonicity-M-with-replacement,prop:monotonicity-M-without-replacement} for subagging and \splagging, respectively.
    In this section, we also provide proof of results related to cross-validation and profile monotonicity and those related to oracle properties of optimized bagging from \Cref{sec:monotonizing-risk-profiles}.
    
    \item In \Cref{sec:appendix-isotopic}, we present proofs of specialized results related to subagging and \splagging under isotopic features from \Cref{sec:isotropic_features}.
    
    \item In \Cref{sec:calculus_asymptotic_equivalents}, we formalize several calculus rules for a certain notion of conditional asymptotic equivalence of sequences of matrices that are used in the proofs of constituent lemmas in
    \Cref{sec:appendix-with-replacement-ridge,sec:appendix-with-replacement-ridgeless,sec:appendix-without-replacement}.
    
    \item In \Cref{sec:appendix-concerntration}, we collect various technical helper lemmas related to concentrations and convergences, along with their proofs that are used in proofs in \Crefrange{sec:app-general-predictor}{sec:appendix-without-replacement}.
    
    \item In \Cref{sec:appendix-additional-numerical-result}, we present additional numerical illustrations for \Cref{thm:ver-with-replacement,thm:ver-without-replacement,thm:monotonicity-phi}, and for specialized isotropic results from \Cref{sec:isotropic_features}.
\end{itemize}

\section{Background on simple random sampling}\label{sec:setup}
In~\cite{patil2022mitigating}, the authors have proposed a generic algorithm to improve the risk monotonicity behavior of general predictors using strategies analogous to bagging and boosting.
Specifically in the context of bagging, they have considered bagged predictors obtained by averaging ingredient predictors trained on $M$ subsets of the original data.
In that paper, the authors have characterized the risk of the bagged predictor for $M = 1$ and crudely bounded the risk of $M$-bagged predictor with that of $M = 1$.
However, the numerical simulations presented therein indicate that the $M$-bagged predictor for $M > 1$ can perform significantly better than that for $M = 1$, especially around the interpolation threshold.
In this paper, our primary goal is to develop tools that can help characterize the risk of the $M$-bagged predictor for any $M \ge 1$, and also to analyze the corresponding risk monotonization procedure.
The $M$-bagged predictor considered in the aforementioned work is obtained by simple random sampling with replacement.
In this paper, we also extend our analysis to other versions of bagged predictors to be described shortly.
The discussion in this section and that follows (\Cref{sec:general-predictors}) pertains to developing different versions of general bagged predictors and their risk characterization.

Because the $M$-bagged predictor is defined through simple random sampling with replacement, the well-known results from survey sampling \citep{chaudhuri_2014} are insightful for understanding its risk behavior.
They also provide other versions of the bagged predictors that one can consider.
For this reason, we find it helpful to collect and summarize some results from survey sampling about simple random sampling with and without replacement from an appropriate finite population.
We briefly mention \emph{simple random sampling with replacement} (SRSWR) and \emph{simple random sampling without replacement} (SRSWOR) on an abstract finite population below.

\paragraph{Sampling with replacement.\hspace{-2mm}}
Suppose we have $N$ numbers $\mathcal{N} = \{a_{1}, a_{2}, \ldots, a_{N}\}$, a finite population. Let the set of indices be $\mathcal{I} := \{1, \ldots, N\}$, a finite population. An SRSWR of size $M$ from $\mathcal{I}$ is an i.i.d.\ draw from $\cI$ with the uniform distribution. An unbiased estimator of the average of elements in the finite population $\mathcal{N}$ is given by
\[
\widehat{\mu}^{\WR}_{M,\mathcal{I}} = \frac{1}{M}\sum_{\ell = 1}^M a_{I_{\ell}},
\]
where $\{I_1, I_2, \ldots, I_M\}$ is an SRSWR sample of size $M$ from $\mathcal{I}$. It is very important to stress here that $a_{1}, \ldots, a_{N}$ are all fixed numbers, and only $I_1, \ldots, I_M$ are random. Standard results from survey sampling~\citep[Section 2.5]{chaudhuri_2014} imply that
\begin{equation}\label{eq:SRSWR-formulae}
\mathbb{E}[\widehat{\mu}^{\WR}_{M,\mathcal{I}}] = \frac{1}{N}\sum_{\ell=1}^N a_{\ell} =: \mu,\quad\mbox{and}\quad \mbox{Var}(\widehat{\mu}^{\WR}_{M,\mathcal{I}}) = \frac{1}{M}\left(\frac{1}{N}\sum_{\ell = 1}^N (a_{\ell} - \mu)^2\right).
\end{equation}

\paragraph{Sampling without replacement.\hspace{-2mm}}
An SRSWOR of size $M$ from $\cI$ is a sample drawn without replacement from $\cI$, i.e., $I_1$ is drawn from $\cI$ with each element of $\cI$ being equally likely, $I_2$ is drawn from $\cI\setminus \{I_1\}$ with each element being equally likely, and so on. Define
\[
\widehat{\mu}^{\WOR}_{M,\mathcal{I}} = \frac{1}{M}\sum_{\ell = 1}^M a_{I_\ell},
\]
where $I_1, \ldots, I_M$ are drawn sequentially without replacement from $\cI$. Once again the only randomness in $\widehat{\mu}^{\WOR}_{M,\mathcal{I}}$ stems from the randomness of $I_1, \ldots, I_M$. The results from~\citet[Section 2.5]{chaudhuri_2014} imply that
\begin{equation}\label{eq:SRSWOR-formulae}
\mathbb{E}[\widehat{\mu}^{\WOR}_{M,\mathcal{I}}] = \frac{1}{N}\sum_{\ell=1}^N a_{\ell} =: \mu,\quad\mbox{and}\quad \mbox{Var}(\widehat{\mu}^{\WOR}_{M,\mathcal{I}}) = \frac{N - M}{N-1}\frac{1}{M}\left(\frac{1}{N}\sum_{\ell = 1}^N (a_{\ell} - \mu)^2\right).
\end{equation}

Comparing formulas~\eqref{eq:SRSWR-formulae} and~\eqref{eq:SRSWOR-formulae}, one can note that both the averages are unbiased estimators of the mean of elements in the finite population, the variance of SRSWOR estimator is smaller than SRSWR (whenever $M > 1$)\footnote{Note that $(N-M)/(N-1) = 1 - (M-1)/(N-1) < 1$, if $M > 1$.}, and that the variance of SRSWOR estimator becomes zero if $M = N$. The last fact can be understood by noting that if we draw $N$ elements without replacement from a set of $N$ elements, we end up with the whole set, and no randomness is left.
Note $\widehat{\mu}_{M, \cI}^{\WOR}$
is not well-defined if $M > N$.
This particular restriction of $M > N$
becomes notationally cumbersome 
in the context of bagging and risk monotonization
to be discussed subsequently.
To avoid this,
we define $\widehat{\mu}^\WOR_{M, \cI}
= \widehat{\mu}^\WOR_{N, \cI}$ for $M > N$.
This is natural in the sense that
for $M \ge N$,
when sampling without replacement,
there is no randomness left in the estimator
$\widehat{\mu}^\WOR_{N, \cI}$.
By definition, %
the variance of the estimator $\widehat{\mu}^\WOR_{N, \cI}$ is 0.

\section{Proofs in \Cref{sec:general-predictors} (general bagged predictors)}\label{sec:app-general-predictor}

\subsection[Proof of Proposition \ref{prop:limiting-risk-for-arbitrary-M}
]
{Asymptotic data conditional risk, squared loss}

\begin{proof}[Proof of \Cref{prop:limiting-risk-for-arbitrary-M}]
    The key idea in the proof is to use 
    the conditional risk decomposition from \Cref{prop:bagged-predictors-conditional-mse}.
    Below we present the proof for sampling from $\cI_k$. The proof for sampling from $\cI_k^{\pi}$ is analogous.
    
    \paragraph{SRSWR.\hspace{-2mm}}
    We will do the case of SRSWR from $\cI_k$ first.
    From \Cref{prop:bagged-predictors-conditional-mse}, we have
    \begin{align}
         R(\tf_M;\cD_n) &= \EE_{(\bx,y)}[\mathbb{E}[(\tf_M - y)^2 \mid \mathcal{D}_n, (\bx,y)]] \notag\\
        &= \EE_{(\bx,y)}\left[\mathscr{B}_{\mathcal{I}_k}(\bx, y)\,\mid\, \cD_n\right] + \frac{1}{M}\EE_{(\bx,y)}\left[\mathscr{V}_{\mathcal{I}_k}(\bx, y) \,\mid\, \cD_n\right] \notag\\
        &=  R(\tf_\infty;\cD_n) + \frac{1}{M}C_n, \label{eq:sqauredrisk_decomposition_WR}
    \end{align}
    where $C_n=\EE_{(\bx,y)}\left[\frac{1}{|\mathcal{I}_k|}\sum_{I\in\mathcal{I}_k} \left(\hf(\bx; \mathcal{D}_{I}) - \widetilde{f}_{\infty,\mathcal{I}_k}(\bx)\right)^2 \,\mid\, \cD_n\right]$.

    Since for $M = 1$ and $M = 2$,
    we have
   \begin{align*}
        R(\tf_1;\cD_n)
        &= R(\tf_\infty;\cD_n)
        + C_n, \\
        R(\tf_2;\cD_n)
        &= R(\tf_\infty;\cD_n)
        + \tfrac{C_n}{2}.
   \end{align*}
   We can thus write $R(\tf_\infty;\cD_n)$ and $C_n$
   in terms of $R(\widetilde{f}_{1,\mathcal{I}_k}^{\textup{\texttt{WR}}};\cD_n)$ and $R(\widetilde{f}_{2,\mathcal{I}_k}^{\textup{\texttt{WR}}};\cD_n)$ as
   \begin{align*}
       R(\tf_\infty;\cD_n)
       &= 2 R(\tf_{2};\cD_n) - R(\tf_{1};\cD_n), \\
       C_n 
       &= 2 R(\tf_{1};\cD_n) - 2 R(\tf_{2};\cD_n).
   \end{align*}
   Substituting in \eqref{eq:sqauredrisk_decomposition_WR},
   we obtain
   \begin{align*}
        R(\tf_{M};\cD_n)
        &= 
        2 R(\tf_{2};\cD_n) - R(\tf_{1};\cD_n)
        + \tfrac{1}{M}
        \left( 2 R(\tf_{1};\cD_n) - 2 R(\tf_{2};\cD_n) \right) \\
        &=
        - \left( 1 -  \tfrac{2}{M} \right)
        R(\tf_{1};\cD_n)
        + \left( 2 - \tfrac{2}{M} \right)
        R(\tf_{2};\cD_n).
   \end{align*}
   Thus, subtracting the desired target in \eqref{eq:guarantees-for-WR-WOR} for with replacement from both sides,
   we get
   \begin{align*}
        R(\tf_{M};\cD_n)
        - \left[(2a_2-a_1)+ \frac{2(a_1-a_2)}{M}\right] =& -\left( 1 - \tfrac{2}{M} \right) \left(R(\tf_{1};\cD_n) - a_1\right)
        + \left( 2 - \tfrac{2}{M} \right) \left(R(\tf_{2};\cD_n) - a_2\right).
    \end{align*}
    Taking absolute values on both sides
    and using triangle inequality yields
    \begin{align*}
        \left|
            R(\tf_{M};\cD_n)
            -
            \left[(2a_2-a_1)+ \frac{2(a_1-a_2)}{M}\right]
        \right| 
        \le&
        \left| 1 - \tfrac{2}{M} \right|
        \left| R(\tf_{1};\cD_n) - a_1 \right|
        + \left( 2 - \tfrac{2}{M} \right)
        \left| R(\tf_{2};\cD_n) - a_2 \right|.
   \end{align*}
   Taking supremum over $M$,
   we have
    \begin{align*}
        \sup_{M\in\NN}
        \left|
            R(\tf_{M};\cD_n)
            -\left[(2a_2-a_1)+ \frac{2(a_1-a_2)}{M}\right]
        \right|
        \le&
        \left| R(\tf_{1};\cD_n) - a_1 \right|
        + 2
        \left| R(\tf_{2};\cD_n) - a_2 \right|.
   \end{align*}
    Finally, since we have
    \begin{align*}
        R(\tf_{1};\cD_n)&\asto a_1,\qquad R(\tf_{2};\cD_n)\asto a_2,
    \end{align*}
    the desired claim in \eqref{eq:guarantees-for-WR-WOR} for with replacement follows.

    \paragraph{SRSWOR.\hspace{-2mm}}
    For SRSWOR from $\cI_k$, similarly we have
    \begin{align}
         R(\tf_M;\cD_n) &= \EE_{(\bx,y)}[\mathbb{E}[(\tf_M - y)^2|\mathcal{D}_n, (\bx,y)]] \notag\\
        &= \EE_{(\bx,y)}\left[\mathscr{B}_{\mathcal{I}_k}(\bx, y)\,\mid\, \cD_n\right] + \frac{|\cI_k|-M}{|\cI_k|-1}\frac{1}{M}\EE_{(\bx,y)}\left[\mathscr{V}_{\mathcal{I}_k}(\bx, y) \,\mid\, \cD_n\right] \notag\\
        &=  R(\tf_\infty;\cD_n) + \frac{|\cI_k|-M}{|\cI_k|-1}\frac{1}{M}C_n\notag\\
        &= R(\tf_\infty;\cD_n) - \frac{C_n}{|\cI_k|-1} + \frac{1}{M}\cdot\frac{|\cI_k|C_n}{|\cI_k|-1}, \label{eq:sqauredrisk_decomposition_WOR}
    \end{align}
    where $C_n=\EE_{(\bx,y)}\left[\frac{1}{|\mathcal{I}_k|}\sum_{I\in\mathcal{I}_k} \left(\hf(\bx; \mathcal{D}_{I}) - \widetilde{f}_{\infty,\mathcal{I}_k}(\bx)\right)^2 \,\mid\, \cD_n\right]$.
    Since for $M = 1$ and $M = 2$,
   \begin{align*}
        R(\tf_1;\cD_n)
        &= R(\tf_\infty;\cD_n)
         - \frac{C_n}{|\cI_k|-1} + \frac{|\cI_k|C_n}{|\cI_k|-1}, \\
        R(\tf_2;\cD_n)
        &= R(\tf_\infty;\cD_n)
        -\frac{C_n}{|\cI_k|-1}+ \frac{1}{2}\cdot\frac{|\cI_k|C_n}{|\cI_k|-1}.
   \end{align*}
   We can thus write $R(\tf_\infty;\cD_n) - C_n/(|\cI_k|-1)$ and $|\cI_k|C_n/(|\cI_k|-1)$
   in terms of $R(\widetilde{f}_{1,\mathcal{I}_k}^{\textup{\texttt{WR}}};\cD_n)$ and $R(\widetilde{f}_{2,\mathcal{I}_k}^{\textup{\texttt{WR}}};\cD_n)$ as
   \begin{align*}
       R(\tf_\infty;\cD_n)
        -\frac{C_n}{|\cI_k|-1}
       &= 2 R(\tf_{2};\cD_n) -  R(\tf_{1};\cD_n), \\
       \frac{|\cI_k|C_n}{|\cI_k|-1} 
       &= 2 ( R(\tf_{1};\cD_n) -  R(\tf_{2};\cD_n) ).
   \end{align*}
   Substituting in \eqref{eq:sqauredrisk_decomposition_WOR},
   we obtain
   \begin{align*}
        R(\tf_{M};\cD_n) &=
        2 R(\tf_{2};\cD_n) -  R(\tf_{1};\cD_n)  + \frac{1}{M}\cdot 2 ( R(\tf_{1};\cD_n) -  R(\tf_{2};\cD_n) )\\
        &=- \left( 1- \frac{2}{M} \right)
        R(\tf_{1};\cD_n)
        + 2\left( 1  - \frac{1}{M}  \right)
        R(\tf_{2};\cD_n).
   \end{align*}
   Thus, subtracting the desired target in \eqref{eq:guarantees-for-WR-WOR} for with replacement from both sides,
   we get
   \begin{align*}
        R(\tf_{M};\cD_n)
        - \left[(2a_2-a_1)+ \frac{2(a_1-a_2)}{M}\right] =& -\left( 1 - \tfrac{2}{M} \right) \left(R(\tf_{1};\cD_n) - a_1\right)
        + \left( 2 - \tfrac{2}{M} \right) \left(R(\tf_{2};\cD_n) - a_2\right).
    \end{align*}
    Taking absolute values on both sides
    and using triangle inequality yields
    \begin{align*}
        \left|
            R(\tf_{M};\cD_n)
            -
            \left[(2a_2-a_1)+ \frac{2(a_1-a_2)}{M}\right]
        \right| 
        \le&
        \left| 1 - \tfrac{2}{M} \right|
        \left| R(\tf_{1};\cD_n) - a_1 \right|
        + \left( 2 - \tfrac{2}{M} \right)
        \left| R(\tf_{2};\cD_n) - a_2 \right|.
   \end{align*}
   Taking supremum over $M$,
   we have
    \begin{align*}
        \sup_{M\in\NN}
        \left|
            R(\tf_{M};\cD_n)
            -\left[(2a_2-a_1)+ \frac{2(a_1-a_2)}{M}\right]
        \right|
        \le& \left| R(\tf_{1};\cD_n) - a_1 \right|
        + 2
        \left| R(\tf_{2};\cD_n) - a_2 \right|.
   \end{align*}
    Finally, since we have
    \begin{align*}
        R(\tf_{1};\cD_n)&\asto a_1,\qquad R(\tf_{2};\cD_n)\asto a_2,
    \end{align*}
    the desired claim in \eqref{eq:guarantees-for-WR-WOR} for the case of sampling without replacement follows.
\end{proof}

\subsection[Proof of Proposition \ref{prop:limiting-risk-for-arbitrary-M-cond}
]
{Asymptotic subsample conditional risk, squared loss}

Before we present the proof for \Cref{prop:limiting-risk-for-arbitrary-M-cond}, we first show the upper bound of the squared subsample conditional risk for general $M$.

\begin{lemma}[Bounding the squared subsample conditional risk]\label{lem:squared_risk_decom}
    The subsample conditional prediction
    risk defined in \eqref{eq:unconditional-risk}
    for the bagged predictor $\hf_{M, \cI_k}$ can be bounded as:
    \begin{equation}\label{eq:prop:limiting-risk-for-arbitrary-M-cond-1}
        \begin{split}
            &\left|R(\tf_{M,\cI_k}; \cD_n, \{I_{\ell}\}_{\ell = 1}^M) - \left\{(2b_2 - b_1) + \frac{2(b_1 - b_2)}{M}\right\}\right|\\
            &\le \left|\frac{1}{M}\sum_{\ell = 1}^M R(\tf_{1,\cI_k}; \cD_n, \{I_{\ell}\}) - b_1\right| + 2\left|\frac{1}{M(M-1)}\sum_{i,j\in[M],i\neq j} R(\tf_{2,\cI_k}; \cD_n, \{I_i, I_j\}) - b_2\right|.
        \end{split}
    \end{equation}
\end{lemma}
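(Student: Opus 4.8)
The plan is to expand the bagged predictor $\tf_{M,\cI_k}(\bx; \{\cD_{I_\ell}\}_{\ell=1}^M) = M^{-1}\sum_{\ell=1}^M \hf(\bx; \cD_{I_\ell})$ inside the squared loss and exploit that squared error is a quadratic form, so only pairwise interactions of the constituent predictors appear. Concretely, writing $g_\ell(\bx) := \hf(\bx;\cD_{I_\ell})$ and $e_\ell(\bx,y) := y - g_\ell(\bx)$, we have $y - \tf_M(\bx) = M^{-1}\sum_{\ell} e_\ell(\bx,y)$, hence
\begin{align*}
    (y-\tf_M(\bx))^2
    = \frac{1}{M^2}\sum_{\ell=1}^M e_\ell(\bx,y)^2 + \frac{1}{M^2}\sum_{i\neq j} e_i(\bx,y) e_j(\bx,y).
\end{align*}
Integrating against $\rd P(\bx,y)$ gives $R(\tf_M;\cD_n,\{I_\ell\}) = M^{-2}\sum_\ell R(\tf_1;\cD_n,\{I_\ell\}) + M^{-2}\sum_{i\neq j}\int e_i e_j\, \rd P$. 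The key algebraic observation is that the cross term $\int e_i e_j\,\rd P$ can be recovered from the $M=2$ subsample conditional risks: by the same polarization identity applied to a pair, $R(\tf_2;\cD_n,\{I_i,I_j\}) = \tfrac14 R(\tf_1;\cD_n,\{I_i\}) + \tfrac14 R(\tf_1;\cD_n,\{I_j\}) + \tfrac12\int e_i e_j\,\rd P$, so $\int e_i e_j\,\rd P = 2R(\tf_2;\cD_n,\{I_i,I_j\}) - \tfrac12 R(\tf_1;\cD_n,\{I_i\}) - \tfrac12 R(\tf_1;\cD_n,\{I_j\})$.

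Substituting this back and collecting the $M^2$ in the denominators with the $M(M-1)$ off-diagonal pairs, one finds after routine bookkeeping that
\begin{align*}
    R(\tf_{M,\cI_k};\cD_n,\{I_\ell\}_{\ell=1}^M)
    = -\Big(1-\tfrac{2}{M}\Big)\,\frac{1}{M}\sum_{\ell=1}^M R(\tf_1;\cD_n,\{I_\ell\})
    + 2\Big(1-\tfrac{1}{M}\Big)\,\frac{1}{M(M-1)}\sum_{i\neq j} R(\tf_2;\cD_n,\{I_i,I_j\}),
\end{align*}
which is exactly the $M=1,2$ decomposition analogue of the remark on the role of squared loss, but now at the level of the empirical (subsample-indexed) risks rather than their deterministic limits. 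From here I would subtract the target quantity $(2b_2-b_1) + 2(b_1-b_2)/M$, noting that $-(1-2/M) + 2(1-1/M) = 1$ so the constant coefficients on the $b$-side line up with those on the risk side, to write the difference as $-(1-2/M)\big(\text{avg of }R(\tf_1)-b_1\big) + 2(1-1/M)\big(\text{avg of }R(\tf_2)-b_2\big)$. Applying the triangle inequality and bounding $|1-2/M|\le 1$ and $2(1-1/M)\le 2$ for all $M\ge 1$ then yields \eqref{eq:prop:limiting-risk-for-arbitrary-M-cond-1}.

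There is no serious obstacle here; the only thing to be careful about is the combinatorial bookkeeping when regrouping the $M^{-2}\sum_{i\neq j}$ terms — one has to track that the diagonal contributions $M^{-2}\sum_\ell R(\tf_1)$ get combined with the $-\tfrac12$ pieces coming out of the cross terms, and that the number of ordered pairs is $M(M-1)$, to arrive cleanly at the averages $M^{-1}\sum_\ell R(\tf_1;\cD_n,\{I_\ell\})$ and $\{M(M-1)\}^{-1}\sum_{i\neq j} R(\tf_2;\cD_n,\{I_i,I_j\})$. I would also remark (as the paper does immediately after) that the only property of the loss used is that it is quadratic in the prediction, so nothing beyond the squared-error structure is needed. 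The passage from \Cref{lem:squared_risk_decom} to \Cref{prop:limiting-risk-for-arbitrary-M-cond} is then immediate: under \eqref{eq:subsample_cond_risk_M1} and \eqref{eq:subsample_cond_risk_M12}, each of the two averages on the right-hand side of \eqref{eq:prop:limiting-risk-for-arbitrary-M-cond-1} converges in probability to $0$ (the first because every summand does and the average of finitely many — or a growing number of identically-distributed-in-the-limit — terms inherits the convergence; the second similarly over the $M(M-1)$ pairs), uniformly in $M$ since the bound does not depend on $M$, giving the stated $\pto 0$.
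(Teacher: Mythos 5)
Your proposal is correct and follows essentially the same route as the paper: expand the squared residual of the $M$-bag average into diagonal and cross terms, recover each cross term $\int e_i e_j\,\rd P$ from the pairwise $M=2$ risk via the polarization identity (the paper writes this as $ab = \{4(a/2+b/2)^2 - a^2 - b^2\}/2$), regroup to obtain $R(\tf_M) = -(1-\tfrac{2}{M})\bar R_1 + 2(1-\tfrac{1}{M})\bar R_2$, and conclude by the triangle inequality with $|1-\tfrac{2}{M}|\le 1$ and $2(1-\tfrac{1}{M})\le 2$. The closing paragraph about passing to \Cref{prop:limiting-risk-for-arbitrary-M-cond} is outside the scope of this lemma (and glosses over the reverse-martingale argument the paper needs for the supremum over $M$ of the pairwise average), but the proof of the stated inequality itself is complete and matches the paper's.
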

\begin{proof}[Proof of \Cref{lem:squared_risk_decom}]
    We start by expanding the squared risk as:
    \begin{align*}
        &R(\tf_{M,\cI_k} ; \, \cD_n,\{I_{\ell}\}_{\ell = 1}^{M}) \\
        &= \int\left(y
        -\frac{1}{M}\sum\limits_{\ell=1}^M\hf(\bx;\cD_{I_\ell})\right)^2 \,\rd P(\bx,y) \\
        &=
        \int
        \left(
        \frac{1}{M}
        \sum_{\ell = 1}^{M}
        \big(y - \hf(\bx; \cD_{I_\ell})\big)
        \right)^2
        \, \mathrm{d}P(\bx, y) \\
        &=
        \frac{1}{M^2}
        \sum_{\ell=1}^{M}
        \int \big(y - \hf(\bx; \cD_{I_\ell})\big)^2
        \, \mathrm{d}P(\bx, y)  + 
        \frac{1}{M^2}
        \sum_{i = 1}^{M}
       \sum\limits_{\substack{j=1 \\ j \neq i}}^{M}
        \int
        \big(y - \hf(\bx; \cD_{I_i})\big) \big(y - \hf(\bx; \cD_{I_j})\big)
        \, \mathrm{d}P(\bx, y) \\
        &=
        \frac{1}{M^2}
        \sum_{\ell=1}^{M}
        R(\tf_{1, \cI_k}; \cD_n, I_\ell) + 
        \frac{1}{M^2}
        \sum_{i = 1}^{M}
       \sum\limits_{\substack{j=1 \\ j \neq i}}^{M}
        \int
        (y - \hf(\bx; \cD_{I_i})) (y - \hf(\bx; \cD_{I_j}))
        \, \mathrm{d}P(\bx, y) \\
        &\stackrel{(i)}{=}
        \frac{1}{M^2}
        \sum_{\ell=1}^{M}
        R(\tf_{1, \cI_k}; \cD_n, I_\ell) \\
        &\qquad
        + 
        \frac{1}{M^2}
        \sum_{i = 1}^{M}
       \sum\limits_{\substack{j=1 \\ j \neq i}}^{M}
        \int
        \frac{1}{2}
        \left\{
        4
        \Big(y - \frac{1}{2} \big(\hf(\bx; \cD_{I_i}) + \hf(\bx; \cD_{I_j})\big)  \Big)^2
        - \big(y - \hf(\bx; \cD_{I_i})\big)^2
        - \big(y - \hf(\bx; \cD_{I_j})\big)^2
        \right\}
        \, \mathrm{d}P(\bx, y) \\
        &=
        \frac{1}{M^2}
        \sum_{\ell=1}^{M}
        R(\tf_{1, \cI_k}; \cD_n, I_\ell) \\
        &\qquad
        + 
        \frac{1}{M^2}
        \sum_{i = 1}^{M}
       \sum\limits_{\substack{j=1 \\ j \neq i}}^{M}
       \frac{1}{2}
       \left\{
       4 R(\hf_{2, \cI_k}; \cD_n; I_{i}, I_{j})
       - R(\tf_{1, \cI_k}; \cD_{n}; I_i)
       - R(\tf_{1, \cI_k}; \cD_{n}; I_j)
       \right\} \\
        &=
        \frac{1}{M^2}
        \sum_{\ell=1}^{M}
        R(\tf_{1, \cI_k}; \cD_n, I_\ell)
        - \frac{1}{2M^2}
        \sum_{i=1}^{M} 
        \sum\limits_{\substack{j=1 \\ j \neq i}}^{M}
        R(\tf_{1,\cI_k}; I_i)
        - \frac{1}{2M^2}
        \sum_{i=1}^{M} 
        \sum\limits_{\substack{j=1 \\ j \neq i}}^{M}
        R(\tf_{1,\cI_k}; I_j)
        + 
        \frac{1}{M^2}
        \sum_{i=1}^{M}
        \sum\limits_{\substack{j = 1 \\j \neq i}}^{M}
       2 R(\hf_{2, \cI_k}; \cD_n; I_{i}, I_{j}) \\
       &=
       \frac{1}{M^2} \sum_{\ell = 1}^{M} R(\tf_{1, \cI_k}; \cD_n; I_{\ell})
       - \frac{1}{2M^2} 
       \cdot 2  \cdot (M - 1) \sum_{\ell=1}^{M}
       R(\tf_{1, \cI_k}; I_{\ell})
       + \frac{2}{M^2}
       \sum\limits_{\substack{i, j \in [M] \\ i \neq j}}
       R(\hf_{2, \cI_k}; \cD_n; I_i, I_j) \\
       &=
       \left(
       \frac{1}{M^2} 
       - \frac{(M - 1)}{M^2} 
       \right)
       \sum_{\ell = 1}^{M} R(\tf_{1, \cI_k}; \cD_n; I_{\ell})
       + \frac{2}{M^2}
       \sum\limits_{\substack{i, j \in [M] \\ i \neq j}}
       R(\hf_{2, \cI_k}; \cD_n; I_i, I_j) \\
        &= -\left(\frac{1}{M}-\frac{2}{M^2}\right)\sum_{\ell=1}^M R(\tf_{1,\cI_k}; \, \cD_n,\{I_{\ell}\}) + \frac{2}{M^2}\sum\limits_{\substack{i,j\in[M]\\i\neq j}}  R(\tf_{2,\cI_k} ; \, \cD_n, \{I_{i},I_j\}).
    \end{align*}
    In the expansion above,
    for equality $(i)$,
    we used the fact that $ab = \{ 4(a/2 + b/2)^2 - a^2 - b^2 \} /2$.
    
    Now, subtracting the desired limit on both sides yields
    \begin{align*}
        &\left|R(\tf_{M,\cI_k} ; \, \cD_n,\{I_{\ell}\}_{\ell = 1}^{M})- \left\{(2b_2-b_1)+ \frac{2(b_1-b_2)}{M}\right\}\right|  \nonumber \\
        &= \left|-\left(\frac{1}{M}-\frac{2}{M^2}\right)\sum_{\ell=1}^M (R(\tf_{1,\cI_k}; \, \cD_n, \{I_{\ell}\}) - b_1) + \frac{2}{M^2}\sum\limits_{\substack{i,j\in[M]\\i\neq j}} (R(\tf_{2,\cI_k} ; \, \cD_n, \{I_{i},I_j\}) - b_2) \right|  \nonumber \\
        &\leq \left|1 - \frac{2}{M}\right|\cdot  \left|\frac{1}{M}\sum_{\ell=1}^M R(\tf_{1,\cI_k}; \, \cD_n, \{I_{\ell}\}) - b_1\right| +  \frac{2(M-1)}{M} \left|\frac{1}{M(M-1)}\sum\limits_{\substack{i,j\in[M] \nonumber \\i\neq j}}R(\tf_{2,\cI_k} ; \, \cD_n, \{I_{i},I_j\}) - b_2\right| \nonumber \\
        &\leq   \left|\frac{1}{M}\sum_{\ell=1}^M R(\tf_{1,\cI_k}; \, \cD_n, \{I_{\ell}\}) - b_1\right| +  2\left|\frac{1}{M(M-1)}\sum\limits_{\substack{i,j\in[M]\\i\neq j}}R(\tf_{2,\cI_k} ; \, \cD_n, \{I_{i},I_j\}) - b_2\right|.
    \end{align*}
    This completes the proof of the upper bound.

\end{proof}

Next, we present the proof of \Cref{prop:limiting-risk-for-arbitrary-M-cond}.

\begin{proof}[Proof of \Cref{prop:limiting-risk-for-arbitrary-M-cond}]

\Cref{lem:risk_general_predictor_M12} implies the asymptotics for the data conditional risk.
Now, consider the asymptotics for the subsample conditional risk of the bagged predictors. 
From \eqref{eq:prop:limiting-risk-for-arbitrary-M-cond-1}
of \Cref{lem:squared_risk_decom}, 
it holds that
\begin{equation}
    \begin{split}
        &\left|R(\tf_{M,\cI_k}; \cD_n, \{I_{\ell}\}_{\ell = 1}^M) - \left\{(2b_2 - b_1) + \frac{2(b_1 - b_2)}{M}\right\}\right|\\
        &\le \left|\frac{1}{M}\sum_{\ell = 1}^M R(\tf_{1,\cI_k}; \cD_n, \{I_{\ell}\}) - b_1\right| + 2\left|\frac{1}{M(M-1)}\sum_{i,j\in[M],i\neq j} R(\tf_{2,\cI_k}; \cD_n, \{I_i, I_j\}) - b_2\right|.
    \end{split}
\end{equation}
This implies that
\begin{align*}
&\sup_{M\in\mathbb{N}}\left|R(\tf_{M,\cI_k}; \cD_n, \{I_{\ell}\}_{\ell = 1}^M) - \left\{(2b_2 - b_1) + \frac{2(b_1 - b_2)}{M}\right\}\right| \\
&\le \sup_{I\in\cI_k}|R(\tf_{1,\cI_k}; \cD_n, \{I\}) - b_1| + 2\sup_{M\ge2}\left|\frac{1}{M(M-1)}\sum_{i,j\in[M],i\neq j} R(\tf_{2,\cI_k}; \cD_n, \{I_i, I_j\}) - b_2\right|.
\end{align*}
The first term on the right hand side converges almost surely to zero by \Cref{lem:conv_cond_expectation}~\ref{lem:conv_cond_expectation-max}. To prove that the second term converges to zero, we start by noting that
\[
U_M = \frac{1}{M(M-1)}\sum_{i,j\in[M],i\neq j} \left\{R(\tf_{2,\cI_k}; \cD_n, \{I_i, I_j\}) - b_2\right\},
\]
is a $U$-statistics based on either an SRSWR or an SRSWOR sample $I_1, \ldots, I_M$ conditional on $\cD_n$. Theorem 2 in Section 3.4.2 of~\cite{lee2019u} implies that $\{U_M\}_{M\ge2}$ is a reverse martingale conditional on $\cD_n$ with respect to some filtration, when we have an SRSWR sample (which is the same as an i.i.d.\ sample). Lemma 2.1 of~\cite{sen1970hajek} proves the same result when we have an SRSWOR sample. This, combined with Theorem 3 (maximal inequality for reverse martingales) in Section 3.4.1 of~\cite{lee2019u} (for $r = 1$\footnote{Theorem 3 of Section 3.4.1 is only stated with $r > 1$, but from the proof, it is clear that $r = 1$ is a valid choice. }) yields
\begin{align*}
\mathbb{P}\left(\sup_{M\ge2}|U_M| \ge \delta \mathrel{\big|} \cD_n\right) &\le \frac{1}{\delta}\mathbb{E}\left[|U_2| \mathrel{\big|} \cD_n\right]\\ 
&= \frac{1}{\delta}\mathbb{E}\left[|R(\tf_{2,\cI_k}; \cD_n, \{I_1, I_2\}) - b_2| \mathrel{\big|} \cD_n\right].
\end{align*}
The right-hand side we know converges to zero almost surely. To see this, we first write as before the right-hand side as $\mathbb{E}[|R(\tf_{2,\cI_k}; \cD_n, \{I_1, I_2\})-b_2| \mid \cD_n = \cD_n(\omega)] = \mathbb{E}[|R(\tf_{2,\cI_k}; \cD_n(\omega), \{I_1, I_2\})-b_2|]$. We know that for all $\omega\in\mathcal{A}$, $R(\tf_{2,\cI_k}; \cD_n(\omega), \{I_1, I_2\}) \asto b_2$ as $n\to\infty$ (from the given assumption). Also, we know~\eqref{eq:every-omega-inequality} and that the right hand side of~\eqref{eq:every-omega-inequality} converges in $L_1$ to its probability limit. Hence, Vitali's theorem \citep[Theorem 4.5.4]{bogachev2007measure} implies that $\mathbb{E}[|R(\tf_{2,\cI_k}; \cD_n, \{I_1, I_2\}) - b_2| \mid \cD_n = \cD_n(\omega)]$ converges to zero for all $\omega\in\mathcal{A}$ as $n\to\infty$. Therefore, as $n\to\infty$, for all $\omega\in\mathcal{A}$,
\[
\mathbb{P}\left(\sup_{M\ge2}|U_M| \ge \delta \mathrel{\big|} \cD_n = \cD_n(\omega)\right) \to 0.
\]
Because probabilities are bounded by one, dominated convergence theorem implies that
\[
\mathbb{P}\left(\sup_{M\ge2}|U_M| \ge \delta\right) \to 0,\quad\mbox{as}\quad n\to\infty.
\]
Therefore,
\[
\sup_{M\in\mathbb{N}}\left|R(\tf_{M,\cI_k}; \cD_n, \{I_{\ell}\}_{\ell = 1}^M) - \left\{(2b_2 - b_1) + \frac{2(b_1 - b_2)}{M}\right\}\right| \pto 0.
\]
\end{proof}

\subsection[Proof of \Cref{prop:convex-sconvex-smooth} 
]
{Conditional risk bounds for convex, strongly-convex, and smooth losses}

\begin{proof}[Proof of \Cref{prop:convex-sconvex-smooth}]
We split the proof into two parts, 
depending on the assumption imposed on the loss function $L$.

    \paragraph{Part (1).\hspace{-2mm}}
    For any loss function $L:\mathbb{R}\times\mathbb{R}\to\mathbb{R}$ convex in the second argument,
    one can trivially obtain
    \begin{equation}
    \begin{split}
        R(\tf_{M, \cI_{k}}; \cD_n) &= \mathbb{E}[L(y, \tf_{M,\cI_k}(\bx)) \mid \cD_n]\\ &= \mathbb{E}[\mathbb{E}[L(y, \tf_{M,\cI_k}(\bx)) \mid \{I_{\ell}\}_{\ell = 1}^M] \mid \cD_n]\\ 
        &\ge \mathbb{E}[L(y, \mathbb{E}[\tf_{M,\cI_k}(\bx) \mid \{I_{\ell}\}_{\ell = 1}^M]) \mid \cD_n].
    \end{split}
    \end{equation}
    Here the last inequality follows from Jensen's inequality. Because $\mathbb{E}[\tf_{M,\cI_k}(\bx) \mid \{I_{\ell}\}_{\ell = 1}^M] = \tf_{\infty,\cI_k}(\bx)$, we get for any $M \ge 1$,
    \[
    R(\tf_{M,\cI_k}; \cD_n) \ge R(\tf_{\infty, \cI_k}; \cD_n).
    \]
    On the other hand, we have by Jensen's inequality
    \[
    R(\tf_{M,\cI_k}; \cD_n) = \mathbb{E}\left[L\left(y, \frac{1}{M}\sum_{\ell = 1}^M \tf(\bx; \cD_{I_{\ell}})\right) \mathrel{\Big|} \cD_n \right] 
    \le \mathbb{E}\left[\frac{1}{M}\sum_{\ell = 1}^M L(y, \tf(\bx; \cD_{I_{\ell}})) \mathrel{\Big|} \cD_n\right] = R(\tf_{1,\cI_k}; \cD_n).
    \]
    Summarizing, we get that for any $M \ge 1$,
    \[
    R(\tf_{1,\cI_k}; \cD_n) \geq R(\tf_{M,\cI_k}; \cD_n) \geq R(\tf_{\infty, \cI_k}; \cD_n).
    \]
    One can further obtain the monotonicity property by noting that for any $M \ge 1$, 
    \[
    \tf_{M+1,\mathcal{I}_k}(\bx, \{\cD_{I_{\ell}}\}_{\ell = 1}^{M+1}) = \frac{1}{M+1}\sum_{\ell = 1}^{M+1} \tf(\bx; \cD_{I_{\ell}}) = \frac{1}{(M+1)!}\sum_{\pi'}\left(\frac{1}{M}\sum_{\ell = 1}^M \tf(\bx; \cD_{I_{\pi'(\ell)}})\right),
    \]
    where $\pi'$ represents a permutation of $\{1, 2, \ldots, M+1\}.$ Therefore, for any loss function $L:\mathbb{R}\times \mathbb{R}\to\mathbb{R}$ that is convex in the second argument, we get
    \[
    L(y, \tf_{M+1,\cI_k}(\bx; \{\cD_{I_{\ell}}\}_{\ell = 1}^{M+1})) \le \frac{1}{(M+1)!}\sum_{\pi'} L\left(y, \tf(\bx; \{\cD_{I_{\pi'(\ell)}}\}_{\ell = 1}^M)\right).
    \]
    Because any (non-random) subset of a simple random sample with/without replacement is itself a simple random sample with/without replacement, taking conditional expectation on both sides conditional on $\cD_n$ yields
    \[
    R(\tf_{M+1,\cI_k}; \cD_n) \le R(\tf_{M, \cI_k}; \cD_n).
    \]
    This, in particular, implies that $R(\tf_{\infty,\cI_k}; \cD_n) \le R(\tf_{M,\cI_k}; \cD_n) \le R(\tf_{1, \cI_k}; \cD_n)$ for any $M\ge1$. 
    This finishes the proof of the first part of the statement.
    
    \paragraph{Part (2).\hspace{-2mm}}
    If
    we assume that the loss function is strongly convex and differentiable in the second argument, then we can improve the lower bound of Part 1 in terms of $\tf_{\infty}$. Formally, if $L:\mathbb{R}\times\mathbb{R}\to\mathbb{R}$ is $\underline{m}$-strongly convex, i.e., $L(a, b) - \underline{m} / 2 b^2$ is convex in $b$ (for every $a$), then
    \[
    L(y, \tf_{M,\cI_k}(\bx)) \ge L(y, \tf_{\infty, \cI_k}(\bx)) + \frac{\partial L(y, \tf_{\infty, \cI_k}(\bx))}{\partial b}(\tf_{M,\cI_k}(\bx) - \tf_{\infty, \cI_k}(\bx)) + \frac{\underline{m}}{2}(\tf_{M,\cI_k}(\bx) - \tf_{\infty, \cI_k}(\bx))^2.
    \]
    Applying~\Cref{prop:bagged-predictors-conditional-mse} and taking the expectation $(\bx,y)$ conditional on $\cD_n$, we obtain
    \begin{equation}
    \label{eq:sconvex-risk-lb}
    R(\tf_{M,\cI_k}; \cD_n) \ge R(\tf_{\infty, \cI_k}; \cD_n) + 
    \frac{\underline{m}}{2}
    \frac{1}{M}\int \frac{1}{|\cI_k|}\sum_{I\in\cI_k} (\widehat{f}(\bx; \cD_I) - \tf_{\infty, \cI_k}(\bx))^2\,\rd P(\bx, y).
    \end{equation}
    
    On the other hand,
    if we assume that the loss function $L : \RR \times \RR \to \RR$
    is $\overline{m}$ smooth in the second argument, then
    \[
        L(a, b)
        \le
        L(a, b')
        + \frac{\partial L(a, b')}{\partial b}
         (b - b')
        + \frac{\overline{m}}{2} (b - b')^2.
    \]
    It follows that
    \begin{equation}
    \label{eq:smooth-risk-ub}
        R(\tf_{M, \cI_k}; \cD_n)
        \le R(\tf_{\infty, \cI_k}; \cD_n)
        + \frac{\overline{m}}{2}
        \frac{K_{|\cI_k|,M}}{M}
        \int 
        \sum_{I \in \cI_k}
        (\hf(\bx; \cD_I) - \tf_{\infty, \cI_k}(\bx))^2 \, \rd P(\bx, y).
    \end{equation}
    Combining 
    the lower bound from \eqref{eq:sconvex-risk-lb}
    and the upper bound from \eqref{eq:smooth-risk-ub}
    finishes the proof of the second part of the statement.
\end{proof}

\subsection[Proof of Lemma \ref{lem:risk_general_predictor_M12}
]
{From subsample conditional to data conditional risk, $M = 1, 2$}
\begin{proof}[Proof of \Cref{lem:risk_general_predictor_M12}]
    Let us first prove the result when sampling with/without replacement from $\cI_k$. The proof for $\cI_k^{\pi}$ would be analogous.
    Note that $R(\tf_1; \cD_n) = \mathbb{E}[R(\tf_{1,\cI_k}; \cD_n, \{I_1\}) \mid \cD_n]$ where the expectation is taken over a random draw $I_1$ from $\cI_k$.
    We are given that $R(\tf_{1,\cI_k}; \cD_n, \{I\}) - b_1 \asto 0$
    for every $I\in\cI_k$.
    Although not explicitly highlighted, for clarity it is worth reminding that $I$ is a sequence implicitly indexed by $n$.
    Under this condition, let us note that 
    \begin{align*}
    \left|\mathbb{E}[R(\tf_{1,\cI_k}; \cD_n, \{I_1\}) \mid \cD_n] - b_1\right| &= \left|\frac{1}{|\cI_k|}\sum_{I\in\cI_k} R(\tf_{1,\cI_k}; \cD_n, \{I\}) - b_1\right|\\ 
    &\le \frac{1}{|\cI_k|}\sum_{I\in\cI_k} |R(\tf_{1,\cI_k}; \cD_n, \{I\}) - b_1|\\
    &\le \max_{I\in\cI_k}|R(\tf_{1,\cI_k}; \cD_n, \{I\}) - b_1|\\
    &\asto 0,
    \end{align*}
    by \Cref{lem:conv_cond_expectation}~\ref{lem:conv_cond_expectation-max}, since again the conditional risk for $M = 1$ converges for any sequences of indices.
    To be explicit, the underlying triangular array invoked in \Cref{lem:conv_cond_expectation}~(1) is as described follows: 
    For each $n$, recall $\cI_k = \{I^{(1)},\ldots,I^{(N_n)}\}$ where $N_n=|\cI_k|= \binom{n}{k}$.
    Note that the random quantity
    $R_{n, \ell} := R(\tf_{1,\cI_k}; \cD_n, I^{(\ell)})$ is indexed by $n$ and $\ell \in [N_n]$.
    For $I$ drawn from $\cI_k$, let $p_n$ be the index such that $I=I^{(p_n)}$.
    The convergence then follows from applying \Cref{lem:conv_cond_expectation}~(1) to the triangular array $R_{n,p_n}$.
    Hence, we proved that
    \begin{equation}\label{eq:one-bag-almost-sure-convergence-expected-value}
    R(\tf_{1,\cI_k}; \cD_n) \asto b_1,\quad\mbox{as}\quad n\to\infty.
    \end{equation}
    Now, observe that
    \begin{equation}\label{eq:Inequality-relating-M=2-and-M=1}
    R(\tf_{2,\cI_k}; \cD_n, \{I_{\ell}\}_{\ell = 1}^2) \le \frac{1}{2}R(\tf_{1,\cI_k}; \cD_n, \{I_1\}) + \frac{1}{2}R(\tf_{1,\cI_k}; \cD_n, \{I_2\}).
    \end{equation}
    We will now apply Pratt's lemma (see, e.g., Theorem 5.5 of \cite{gut_2005} or Chapter 5 Exercise 30 of \cite{resnick_2019}) to prove almost sure convergence of $\mathbb{E}[R(\tf_{2,\cI_k}; \cD_n, \{I_{\ell}\}_{\ell = 1}^2) \mid \cD_n]$. Usually, Pratt's lemma is applied unconditionally, and here we apply it conditional on $\cD_n$. For an easier understanding of the proof, let us write $\cD_n(\omega)$ in place of $\cD_n$ in order to make it clear that we are conditioning on $\cD_n$. Recall that $\cD_n$ is independent of the subsamples $\{I_{\ell}\}_{\ell = 1}^M$ for any $M\ge1$. In this notation, inequality~\eqref{eq:Inequality-relating-M=2-and-M=1} becomes
    \begin{equation}\label{eq:every-omega-inequality}
    0 \le R(\tf_{2,\cI_k}; \cD_n(\omega), \{I_{\ell}\}_{\ell = 1}^2) \le \frac{1}{2}R(\tf_{1,\cI_k}; \cD_n(\omega), \{I_1\}) + \frac{1}{2} R(\tf_{1,\cI_k}; \cD_n(\omega), \{I_2\}).
    \end{equation}
    Because $R(\tf_{1,\cI_k}; \cD_n, \{I\})\asto b_1$ for every $I\in\cI_k$, there exists a set $\mathcal{A}\subseteq\Omega$ such that $\mathbb{P}(\mathcal{A}) = 1$ and for all $\omega\in\mathcal{A}$, $R(\tf_{1,\cI_k}; \cD_n(\omega), \{I\})\asto b_1$ for every $I\in\cI_k$. Applying Pratt's lemma for every $\omega\in \mathcal{A}$, as $n\to\infty$ and using the fact~\eqref{eq:one-bag-almost-sure-convergence-expected-value} as well as the assumption $R(\tf_{2,\cI_k}; \cD_n(\omega), \{I_{\ell}\}_{\ell = 1}^2) \asto b_2$, we get that
    \[
    \mathbb{E}[R(\tf_{2,\cI_k}; \cD_n(\omega), \{I_{\ell}\}_{\ell = 1}^2)] ~\to~ b_2,\quad\mbox{for all}\quad \omega\in\mathcal{A}.
    \]
    Note that $R(\tf_{2,\cI_2}; \cD_n(\omega)) = \mathbb{E}[R(\tf_{2,\cI_k}; \cD_n, \{I_{\ell}\}_{\ell = 1}^2) \mid \cD_n = \cD_n(\omega)]$. Therefore, we conclude 
    \begin{equation}\label{eq:two-bag-almost-sure-convergence-expected-value}
    R(\tf_{2,\cI_2}; \cD_n) \asto b_2,\quad\mbox{as}\quad n\to\infty.
    \end{equation}
    Therefore, \eqref{eq:data_cond_risk_M12} applies to yield asymptotics for the data conditional risk uniformly over $M\in\mathbb{N}$.
\end{proof}

\subsection[Proof of Theorem \ref{thm:risk_general_predictor}
]
{From subsample conditional to data conditional risk, general $M$}
\begin{proof}[Proof of \Cref{thm:risk_general_predictor}]
    The proof follows by combining 
    \Cref{prop:limiting-risk-for-arbitrary-M,prop:limiting-risk-for-arbitrary-M-cond},
    and \Cref{lem:risk_general_predictor_M12}.
\end{proof}

\section
[Proof of \Cref{thm:ver-with-replacement} (subagging with replacement, ridge predictor)]
{Proof of \Cref{thm:ver-with-replacement} (subagging with replacement, ridge predictor)}
\label{sec:appendix-with-replacement-ridge}

    For $\tfWR{M}{\cI_k}$ defined in \Cref{thm:ver-with-replacement}, we present the proof for ridge and ridgeless predictors in \Cref{thm:ver-ridge,thm:ver-ridgeless}.
    For $\tfWOR{M}{\cI_k}$ defined in \Cref{thm:ver-with-replacement}, the conclusion still holds since the limits of the proportions of intersection between two SRSWR and SRSWOR draws from $\cI_k$ are the same from \Cref{lem:i0_mean}.
    For proving the asymptotic conditional risks, we will treat $\cI_k$ as fixed and use $\tfWR{\lambda}{M}$ to denote the ingredient predictor associated with regularization parameter $\lambda$.
    
    \subsection{Proof assembly}
    Before we present the proof, recall the nonnegative constants defined in \eqref{eq:fixed-point-ridge} and \eqref{eq:v0-def}: $v(-\lambda;\theta)\geq 0$ is the unique solution to the fixed-point equation
    \begin{align}
        v(-\lambda;\theta)^{-1} &= \lambda + \theta\int r(1 + v(-\lambda;\theta)r)^{-1}\,\rd H(r) \label{eq:v_ridge},
    \end{align}
    and the nonnegative constants $\tv(-\lambda;\vartheta,\theta)$, and $\tc(-\lambda;\theta)$ are defined via the following equations
    \begin{align}
        \tv(-\lambda,\vartheta,\theta) 
        &= \frac{\vartheta\int r^2 (1+ v(-\lambda; \theta)r)^{-2}\,\rd H(r)}{v(-\lambda; \theta)^{-2}-\vartheta \int r^2 (1+ v(-\lambda; \theta)r)^{-2}\,\rd H(r)},\ 
        \tc(-\lambda;\theta)=\int   r (1+v(-\lambda; \theta))r)^{-2} \,\rd G(r). \label{eq:tv_tc_ridge}
    \end{align}
    It helps to first slightly rewrite the statement of \Cref{thm:ver-with-replacement} for $\lambda > 0$ as follows.
    Though it suffices to analyze the case $M=2$ according to \Cref{thm:risk_general_predictor}, below we will do the risk decomposition for general $M$.
    
    \begin{theorem}[Risk characterization of subagged ridge predictor]\label{thm:ver-ridge}
        Let $\tfWR{\lambda}{M}$ be the ingredient predictor as defined in \eqref{eq:ingredient-predictor} for $\lambda > 0$.
        Suppose that Assumptions \ref{asm:rmt-feat}-\ref{asm:spectrum-spectrumsignproj-conv} hold, then for $M=\{1,2,3,\ldots\}$, as $k,n,p\rightarrow\infty$, $p/n\rightarrow\phi\in[0,\infty)$ and $p/k\rightarrow\phi_s\in[\phi,\infty]$, there exists a deterministic function $\RlamM{M}{\phi}$ such that for $I_1, \ldots, I_M \overset{\texttt{\textup{SRSWR}}}{\sim} \mathcal{I}_k$, 
        \begin{align*}
            \sup_{M\in\NN}| R(\tfWR{\lambda}{M}; \cD_n,\{{I_{\ell}}\}_{\ell=1}^M) - \RlamM{M}{\phi} | \pto 0,
        \end{align*}
        and
        $$\sup_{M\in\NN}|R(\tfWR{\lambda}{M};\cD_n) - \RlamM{M}{\phi}| \asto 0.$$
        Furthermore, $\RlamM{M}{\phi}$ decomposes as $$ \RlamM{M}{\phi}:= \sigma^2 + \BlamM{M}{\phi}  + \VlamM{M}{\phi},$$ 
        where $\BlamM{M}{\phi} = M^{-1}  B_{\lambda}(\phi,\phi_s) + (1-M^{-1}) B_{\lambda}(\phi,\phi_s)$, 
        and $\VlamM{M}{\phi} = M^{-1} V_{\lambda}(\phi_s,\phi_s) + (1 - M^{-1}) V_{\lambda}(\phi,\phi_s) $ with
        \begin{align*}
            B_{\lambda}(\vartheta, \theta)
            = \rho^2  (1+\tv(-\lambda; \vartheta, \theta)) \tc(-\lambda; \theta),\qquad
            V_{\lambda}(\vartheta, \theta)
            =\sigma^2\tv (-\lambda; \vartheta, \theta),\qquad \theta \in (0, \infty], \; \vartheta \le \theta,
        \end{align*}
        where $\tv(-\lambda; \vartheta, \theta) $ and $\tc(-\lambda; \theta)$ are as defined in \eqref{eq:tv_tc_ridge}.
    \end{theorem}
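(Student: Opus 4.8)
The plan is to invoke the reduction machinery of \Cref{sec:general-predictors}: by \Cref{thm:risk_general_predictor} it suffices to identify the limiting \emph{subsample conditional} risks for $M=1$ and $M=2$ and to check that the resulting limits are non-stochastic; the uniform-in-$M$ statements for both the subsample conditional and the data conditional risks then follow from \Cref{prop:limiting-risk-for-arbitrary-M-cond}, \Cref{lem:risk_general_predictor_M12}, and \Cref{prop:limiting-risk-for-arbitrary-M}. Since $|\cI_k|=\binom{n}{k}\to\infty$ under the scaling considered, the \WOR{} case reduces to the \WR{} case (the overlap statistics of the two sampling schemes have the same limits, cf.\ \Cref{lem:i0_mean}), so I only treat $\tfWR{\lambda}{M}$.

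\textbf{Step 1: the $M=1$ risk.} For a fixed sequence of index sets $I\in\cI_k$ with $|I|=k$ and $p/k\to\phi_s$, the predictor $\tf_1(\cdot;\cD_I)$ is just the base ridge predictor $\hf_\lambda$ trained on $k$ i.i.d.\ observations obeying Assumptions~\ref{asm:rmt-feat}--\ref{asm:spectrum-spectrumsignproj-conv}. Its prediction risk has a known deterministic equivalent from \cite{hastie2022surprises} (and \cite{patil2022mitigating}): $R(\tf_1;\cD_n,\{I\})\asto \sigma^2 + B_\lambda(\phi_s,\phi_s) + V_\lambda(\phi_s,\phi_s)=\RlamM{1}{\phi}$, and because this only depends on $|I|$, it holds for every such sequence, which is exactly \eqref{eq:subsample_cond_risk_M1} with $b_1=\RlamM{1}{\phi}$.

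\textbf{Step 2: the $M=2$ risk (main obstacle).} Under the linear model, expanding the square and integrating over a fresh test point gives the decomposition \eqref{eq:risk-decomp-m2-wr}:
\[
R(\tf_2;\cD_n,\{I_1,I_2\}) = \frac{\sigma^2}{2} + \frac{R(\tf_1;\cD_n,I_1)+R(\tf_1;\cD_n,I_2)}{4} + \frac{1}{2}(\bbeta_0-\hbeta(\cD_{I_1}))^\top\bSigma(\bbeta_0-\hbeta(\cD_{I_2})).
\]
The first term is deterministic, the second converges by Step~1, so the whole difficulty is the bilinear cross term, which is nontrivial because the two ridge estimators share the observations indexed by $I_0:=I_1\cap I_2$. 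The plan is: (i) show via concentration for SRSWR draws (\Cref{lem:i0_mean}) that $|I_0|/k$ concentrates at a deterministic constant, so $\cD_{I_0}$ has a well-defined limiting aspect ratio, and note that conditional on $\cD_{I_0}$ the blocks $\cD_{I_1'}$ and $\cD_{I_2'}$ with $I_j'=I_j\setminus I_0$ are independent; (ii) using the closed form of the ridge estimator, write the resolvent associated with $\cD_{I_j}$ as the resolvent built from $\cD_{I_0}$ perturbed by the independent increment from $\cD_{I_j'}$, and apply the conditional asymptotic-equivalence calculus for ridge resolvents with random Tikhonov-type regularization developed in \Cref{sec:asympequi-extended-ridge-resolvents}; conditionally on $\cD_{I_0}$, this expresses each $\bbeta_0-\hbeta(\cD_{I_j})$ as a deterministic ($\cD_{I_0}$-measurable) quantity plus a fluctuation that is asymptotically uncorrelated across $j$, and a further averaging over $\cD_{I_0}$ collapses everything onto the fixed-point quantity $v(-\lambda;\cdot)$ and the functionals $\tv,\tc$ of \eqref{eq:tv_tc_ridge}. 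Tracking which parts of the limit are governed by $\phi_s$ (the ``self'' contribution) versus $\phi$ (the genuine overlap contribution) yields
\[
R(\tf_2;\cD_n,\{I_1,I_2\}) \asto \sigma^2 + \tfrac12\big(B_\lambda(\phi_s,\phi_s)+V_\lambda(\phi_s,\phi_s)\big) + \tfrac12\big(B_\lambda(\phi,\phi_s)+V_\lambda(\phi,\phi_s)\big) = \RlamM{2}{\phi},
\]
i.e.\ \eqref{eq:subsample_cond_risk_M12} with $b_2=\RlamM{2}{\phi}$.

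\textbf{Step 3: assembly.} With $b_1=\RlamM{1}{\phi}$ and $b_2=\RlamM{2}{\phi}$ in hand, \Cref{thm:risk_general_predictor} gives $\sup_{M}|R(\tf_M;\cD_n,\{I_\ell\}_{\ell=1}^M) - ((2b_2-b_1)+\tfrac{2(b_1-b_2)}{M})|\pto 0$ and the analogous almost-sure statement for $R(\tf_M;\cD_n)$. A one-line algebraic check shows $(2b_2-b_1)+\tfrac{2(b_1-b_2)}{M}$ equals $\sigma^2 + M^{-1}(B_\lambda(\phi_s,\phi_s)+V_\lambda(\phi_s,\phi_s)) + (1-M^{-1})(B_\lambda(\phi,\phi_s)+V_\lambda(\phi,\phi_s))$, which delivers both the convergence and the bias--variance decomposition claimed in the theorem. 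The hard part is Step~2, and specifically the need for a \emph{conditional} deterministic-equivalent theory: existing single-resolvent ridge equivalents do not see the dependence induced by the shared block $\cD_{I_0}$, so establishing that the two estimators' fluctuations decouple conditionally on $\cD_{I_0}$ --- and setting up the associated calculus rules --- is the technical heart of the argument.
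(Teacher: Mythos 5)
Your proposal is correct and follows essentially the same route as the paper: reduce to the subsample conditional risks for $M=1,2$ via \Cref{thm:risk_general_predictor}, import the $M=1$ limit from known ridge asymptotics, and handle the $M=2$ cross term by conditioning on the shared block $\cD_{I_0}$ and invoking the conditional deterministic-equivalent calculus for extended ridge resolvents of \Cref{sec:asympequi-extended-ridge-resolvents}. The only detail you omit is the boundary case $\phi_s=\infty$, which the paper treats separately via a direct norm bound on the ridge estimator (\Cref{prop:Rdet-ridge-infinity}) to establish continuity of $\RlamM{M}{\phi}$ at $\phi_s=\infty$.
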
    
    \begin{proof}[Proof of \Cref{thm:ver-ridge}]
        In what follows, we will prove the results for $n,k,p$ being a sequence of integers $\{n_m\}_{m=1}^{\infty}$, $\{k_m\}_{m=1}^{\infty}$, $\{p_m\}_{m=1}^{\infty}$.
        For simplicity, we drop the subscript when it is clear from the context.
        
        For any $m\in[M]$, let $I_m$ be a sample from $\cI_k$, and $\bL_m\in\RR^{n\times n}$ be a diagonal matrix with $(\bL_{m})_{ll}=1$ if $l\in I_m$ and 0 otherwise.
        An illustration of these notations for $M=2$ is shown in Figure \ref{fig:dataset}.
        The proof will reduce to analyze the individual terms concerning one dataset $\cD_{I_m}$, or the cross terms concerning $\cD_{I_m}$ and $\cD_{I_l}$ for $m\neq l$.
        \begin{figure}[!ht]
            \centering
            \begin{tikzpicture}
        
                \node[rectangle,draw,
                    fill = lightgray, 
                    minimum width = 1cm, 
                    minimum height = 1cm] (rect10) at (0,0) {};
                    
                \node[below of =rect10,
                    node distance=1.0cm,
                    rectangle, draw,
                    fill = lightgray,
                    minimum width = 1cm, 
                    minimum height = 1.2cm] (rect11) at (0,0) {};
                
                \node[right of =rect10,
                    node distance=2cm,
                    rectangle, draw,
                    fill = lightgray,
                    minimum width = 1cm, 
                    minimum height = 1cm] (rect20) {};
                    
                \node[below of =rect20,
                    node distance=2.2cm,
                    rectangle, draw,
                    fill = lightgray,
                    minimum width = 1cm, 
                    minimum height = 1.2cm] (rect22) {};

                \node[above of =rect10,
                        node distance=1cm
                        ] (text1) {$p$};
                        
                \node[below of =rect10,
                        node distance=3.2cm
                        ] (textdata1) {$\cD_{I_1}$};
                        
                \node[above of =rect20,
                        node distance=1cm
                        ] (text2) {$p$};
                        
                \node[below of =rect20,
                        node distance=3.2cm
                        ] (textdata2) {$\cD_{I_2}$};
                        
                \node[left of =rect10,
                        node distance=1.5cm
                        ] (textsample1) {$i_0$};
                        
                \node[left of =rect11,
                        node distance=1.5cm
                        ] (textsample2) {$k-i_0$};
                        
                \node[left of =rect22,
                        node distance=3.5cm
                        ] (textsample2) {$k-i_0$};
             
                \draw[] (7.4,0.5)--(4,0.5)--(4,-2.9)--node[below]{$\bL_1$}(7.4,-2.9)--cycle;
                
                \draw[very thick](4,0.5)--(6.2,-1.7);
                
                \draw[] (11.8,0.5)--(8.4,0.5)--(8.4,-2.9)--node[below]{$\bL_2$}(11.8,-2.9)--cycle;
                
                \draw[very thick](8.4,0.5)--(9.4,-0.5);
                
                \draw[very thick](10.6,-1.7)--(11.8,-2.9);
                
            \end{tikzpicture}
               
            \caption{Illustration of subsampled datasets $\cD_{I_1}$ and $\cD_{I_2}$ from $\cD_n$.
            The design matrix of each of them can be represented as $\bL_j\bX$ ($j=1,2$), where $\bX\in\RR^{n\times p}$ is the full design matrix.}
            \label{fig:dataset}
        \end{figure}
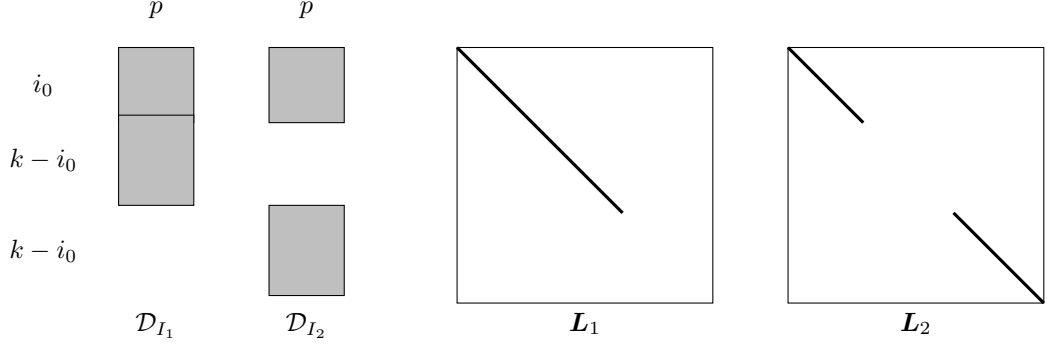
        
        The ingredient estimator takes the form:
        \begin{align*}
            \tbeta_{\lambda,M}(\{\cD_{I_\ell}\}_{\ell=1}^M) &=\frac{1}{M}\sum_{m=1}^M\betaridge(\cD_{I_m})\\
            &= \frac{1}{M}\sum_{m=1}^M (\bX^{\top}\bL_m\bX/k+\lambda\bI_p)^{-1}(\bX^{\top}\bL_m\by/k)\\
            &= \frac{1}{M}\sum_{m=1}^M \left[
            \left(\frac{\bX^{\top}\bL_m\bX}{k}+\lambda\bI_p\right)^{-1}\frac{\bX^{\top}\bL_m}{k} \bbeta_0  + \left(\frac{\bX^{\top}\bL_m\bX}{k}+\lambda\bI_p\right)^{-1}\frac{\bX^{\top}\bL_m}{k} \bepsilon\right].
        \end{align*}
        Denote $\tbetaridge{M}$ by $\tilde{\bbeta}_{\lambda,M}$ for simplicity. Let $\bM_m= (\bX^{\top}\bL_m\bX/k+\lambda\bI_p)^{-1}$ for $m\in[M]$, we have
        \begin{align*}
            \tilde{\bbeta}_{\lambda,M}&=\frac{1}{M}\sum_{m=1}^M (\bI_p-\lambda \bM_m) \bbeta_0  + \frac{1}{M}\sum_{m=1}^M \bM_m(\bX^{\top}\bL_m/k)\bepsilon,
        \end{align*}
        which yields
        \begin{align*}
            \bbeta_0-\tilde{\bbeta}_{\lambda,M}&=\frac{1}{M}\sum_{m=1}^M \lambda \bM_m \bbeta_0  - \frac{1}{M}\sum_{m=1}^M \bM_m(\bX^{\top}\bL_m/k)\bepsilon.
        \end{align*}
        Thus, the conditional risk is given by
        \begin{align*}
            R(\tilde{f}_{M,\lambda};\cD_n,\{I_\ell\}_{\ell=1}^M) &= \EE_{(\bx_0,y_0)}[(y_0-\bx_0^{\top}\tilde{\bbeta}_{\lambda,M})^2] \\
            &= \sigma^2 + (\bbeta_0-\tbeta_{\lambda,M})^{\top}\bSigma(\bbeta_0-\tbeta_{\lambda,M}) \\
            &= \sigma^2 + T_C + T_B + T_V, 
        \end{align*}
        where the constant term $T_C$, bias term $T_B$, and the variance term $T_V$ are given by
        \begin{align}
            T_C&= -\tfrac{2\lambda}{M^2}\cdot \bepsilon^{\top}\left(\sum_{m=1}^M\bM_m\tfrac{\bX^{\top}\bL_m}{k}\right)^{\top} \bSigma \left(\sum_{m=1}^M\bM_m\right) \bbeta_0, \label{eq:ridge-C0}\\
            T_B &= \tfrac{\lambda^2}{M^2}\cdot \bbeta_0^{\top}\left(\sum_{m=1}^M\bM_m\right)\bSigma\left(\sum_{m=1}^M\bM_m\right)\bbeta_0, \label{eq:ridge-B0}\\
            T_V&= \tfrac{1}{M^2}\cdot \bepsilon^{\top}\left(\sum_{m=1}^M\bM_m\tfrac{\bX^{\top}\bL_m}{k}\right)^{\top} \bSigma \left(\sum_{m=1}^M\bM_m\tfrac{\bX^{\top}\bL_m}{k}\right) \bepsilon. \label{eq:ridge-V0}
        \end{align}
        
        Next we analyze the three terms separately for $M\in\{1,2\}$.
        From Lemmas \ref{lem:ridge-conv-C0} and \ref{lem:ridge-conv-V0}, we have that $T_C\asto0$, and
        \begin{align*}
            T_V &=\frac{1}{M^2} \sum_{m=1}^M \bepsilon^{\top}\bM_m\tfrac{\bX^{\top}\bL_m}{k} \bSigma \bM_m\tfrac{\bX^{\top}\bL_m}{k} \bepsilon + \frac{1}{M^2}\sum_{m=1}^M\sum_{l=1}^M\bepsilon^{\top}\bM_m\tfrac{\bX^{\top}\bL_m}{k} \bSigma \bM_l\tfrac{\bX^{\top}\bL_l}{k} \bepsilon\\
            &\asto \frac{1}{M^2}\sum_{m=1}^M \tfrac{\sigma^2}{k}\tr( \bM_m\hSigma_m\bM_m \bSigma) + \frac{1}{M^2}\sum_{m\neq l} \tfrac{\sigma^2}{k^2}\tr( \bM_l\bX^{\top}\bL_l\bL_m\bX \bM_m \bSigma) := T_V'.
        \end{align*}
        Thus, it remains to obtain the deterministic equivalent for the bias term $T_B$ and the trace term $T_V'$.
        From Lemma \ref{lem:ridge-B0} and Lemma \ref{lem:ridge-V0}, we have that for all $I_1\in\cI_k$ when $M=1$ and for all $I_m,I_l\overset{\texttt{\textup{SRSWR}}}{\sim}\cI_k$ when $M=2$, it holds that
        \begin{align*}
            T_B &= \frac{\lambda^2}{M^2}\sum_{m=1}^M \bbeta_0^{\top}\bM_m \bSigma \bM_m\bbeta_0 + \frac{\lambda^2}{M^2}\sum_{m=1}^M\sum_{l=1}^M \bbeta_0^{\top}\bM_m \bSigma \bM_l\bbeta_0\\
            &\asto \frac{\rho^2}{M}(1+\tv(-\lambda;\phi_s,\phi_s))\tc(-\lambda;\phi_s) + \frac{\rho^2(M-1)}{M}(1+\tv(-\lambda;\phi,\phi_s))\tc(-\lambda;\phi_s)\\
            T_V'&\asto\tfrac{\sigma^2}{M}\tv(-\lambda;\phi_s,\phi_s)+ \tfrac{\sigma^2(M-1)}{M}\tv(-\lambda;\phi,\phi_s),
        \end{align*}
        as $n,k,p\rightarrow\infty$, $p/n\rightarrow\phi\in(0,\infty)$, and $p/k\rightarrow\phi_s\in[\phi,\infty)$, where the nonnegative constants $\tv(-\lambda;\phi,\phi_s)$ and $\tc(-\lambda;\phi_s)$ are as defined in \eqref{eq:tv_tc_ridge}.
        Therefore, we have shown that for all $I\in\cI_k$,
        $$R(\tilde{f}_{\lambda,1};\cD_n,\{I\}) \asto \RlamM{1}{\phi},$$
        and for all $I_1,I_2\overset{\texttt{\textup{SRSWR}}}{\sim} \cI_k$,
        $$R(\tilde{f}_{\lambda,2};\cD_n,\{I_\ell\}_{\ell=1}^2) \asto \RlamM{2}{\phi},$$
        where
        \begin{align*}
            \RlamM{M}{\phi} &= \sigma^2 +\tfrac{1}{M}(B_{\lambda}(\phi_s,\phi_s)+V_{\lambda}(\phi_s,\phi_s)) + \tfrac{M-1}{M}(B_{\lambda}(\phi,\phi_s)+V_{\lambda}(\phi,\phi_s)),
        \end{align*}
        and the components are:
        \begin{align*}
            B_{\lambda}(\phi,\phi_s) &= \rho^2 (1+\tv(-\lambda;\phi,\phi_s))\tc(-\lambda;\phi_s),&&
            V_{\lambda}(\phi,\phi_s) = \sigma^2\tv(-\lambda;\phi,\phi_s).
        \end{align*}
        The proof for the boundary case when $\phi_s=\infty$ follows from \Cref{prop:Rdet-ridge-infinity}.
        Then, we have that the function $\RlamM{M}{\phi}$ is continuous on $[\phi,\infty]$.
        
        Finally, the risk expression for general $M$ and the uniformity claim over $M\in\NN$ follow from \Cref{thm:risk_general_predictor}. 
    \end{proof}

    \subsection{Component concentrations}
        In this subsection, we will show that the cross-term $T_C$ converges to zero and the variance term $T_V$ converge to its corresponding trace expectation.
    
        \subsubsection{Convergence of the cross term}
        \begin{lemma}[Convergence of the cross term]\label{lem:ridge-conv-C0}
            Under Assumptions \ref{asm:rmt-feat}-\ref{asm:spectrum-spectrumsignproj-conv},
            for $T_C$ as defined in \eqref{eq:ridge-C0},
            we have $T_C\asto 0$ as $k,p\rightarrow\infty$ and $p/k\rightarrow\phi_s$.
        \end{lemma}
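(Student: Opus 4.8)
\emph{Proof sketch.} The starting point is that, by Assumption~\ref{asm:lin-mod}, the noise vector $\bepsilon$ is independent of the design matrix $\bX$ and of the subsampling matrices $\bL_1,\dots,\bL_M$ (equivalently, of the index sets $I_1,\dots,I_M$). Hence, conditionally on $(\bX,\bL_1,\dots,\bL_M)$, the quantity $T_C$ in \eqref{eq:ridge-C0} is a \emph{linear} form in the i.i.d., mean-zero entries of $\bepsilon$: writing $T_C=\bepsilon^\top\bu$ with
\[
\bu \;:=\; -\frac{2\lambda}{M^2}\Bigl(\sum_{m=1}^M\bM_m\tfrac{\bX^\top\bL_m}{k}\Bigr)^{\!\!\top}\!\bSigma\Bigl(\sum_{m=1}^M\bM_m\Bigr)\bbeta_0 \;\in\;\RR^n ,
\]
one has $\EE[T_C\mid\bX,\{\bL_m\}]=0$ and $\mathrm{Var}(T_C\mid\bX,\{\bL_m\})=\sigma^2\|\bu\|_2^2$. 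So the lemma reduces to showing $\|\bu\|_2\asto 0$ and then upgrading this to a.s.\ convergence of the linear form itself.

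For the (conditional) bound on $\|\bu\|_2$ I would use submultiplicativity of the operator norm together with the uniform spectral controls in the setup. Since $\bX^\top\bL_m\bX/k\succeq 0$, each resolvent satisfies $\|\bM_m\|_{\mathrm{op}}=\|(\bX^\top\bL_m\bX/k+\lambda\bI_p)^{-1}\|_{\mathrm{op}}\le 1/\lambda$; Assumption~\ref{asm:covariance-bounded-eigvals} gives $\|\bSigma\|_{\mathrm{op}}\le r_{\max}$; and Assumption~\ref{asm:signal-bounded-norm} gives $\|\bbeta_0\|_2\to\rho$, hence bounded. The only genuinely random factor is $\|\bL_m\bX/k\|_{\mathrm{op}}$, the scaled operator norm of the $k\times p$ submatrix $\bX_{I_m}$ of rows indexed by $I_m$, whose rows are $\bSigma^{1/2}\bz_j$, $j\in I_m$, with $\bz_j$ having i.i.d.\ coordinates of bounded $(4+\delta)$-moment (Assumption~\ref{asm:rmt-feat}). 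A Bai--Yin-type bound on largest singular values (one of the helper lemmas in \Cref{sec:appendix-concerntration}) yields $\|\bX_{I_m}\|_{\mathrm{op}}\le C\, r_{\max}^{1/2}(\sqrt{k}+\sqrt{p})$ eventually almost surely, so $\|\bL_m\bX\|_{\mathrm{op}}/k\le C'(1+\sqrt{p/k})/\sqrt{k}\to 0$ using $p/k\to\phi_s<\infty$. Multiplying the bounds, $\|\bu\|_2\le \tfrac{2Mr_{\max}}{\lambda}\|\bbeta_0\|_2\max_m\|\bL_m\bX\|_{\mathrm{op}}/k=O(k^{-1/2})\asto 0$.

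It then remains to convert $\mathrm{Var}(T_C\mid\cdot)\to 0$, with vanishing conditional mean, into $T_C\asto 0$. Here I would invoke a concentration inequality for linear forms of i.i.d.\ variables with bounded $(4+\delta)$-moment --- a Marcinkiewicz--Zygmund/Rosenthal-type bound, again from \Cref{sec:appendix-concerntration} --- giving $\EE[\,|T_C|^{4+\delta}\mid\bX,\{\bL_m\}]\le C\|\bu\|_2^{4+\delta}$; since $\|\bu\|_2=O(k^{-1/2})$ decays at a polynomial rate along the integer sequence $(n_m,k_m,p_m)$, Markov's inequality together with the Borel--Cantelli lemma yields $T_C\asto 0$. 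The boundary case $\phi_s=\infty$ is not needed here, as it is handled separately via \Cref{prop:Rdet-ridge-infinity} in the proof of \Cref{thm:ver-ridge}.

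The main obstacle is not the algebra --- which amounts to a routine variance computation --- but the two technical inputs: (i) the largest-singular-value bound $\|\bX_{I_m}\|_{\mathrm{op}}=O(\sqrt{k}+\sqrt{p})$ holding \emph{almost surely} under only a $(4+\delta)$-moment assumption, which requires a truncation argument rather than sub-Gaussian concentration; and (ii) obtaining almost sure, rather than merely in-probability, convergence of the linear form, for which the $(4+\delta)$-moment control on $\bepsilon$ (not just its variance) is what makes the Borel--Cantelli step go through. Both ingredients are packaged into the helper lemmas of \Cref{sec:appendix-concerntration}, so the body of the argument is short.
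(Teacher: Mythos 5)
Your argument is correct and follows essentially the same route as the paper: bound the coefficient vector of the linear form in $\bepsilon$ via operator-norm submultiplicativity (using $\|\bM_m\|_{\oper}\le 1/\lambda$, $\|\bSigma\|_{\oper}\le r_{\max}$, $\|\bbeta_0\|_2$ bounded), then conclude almost sure convergence from a moment bound for linear forms of i.i.d.\ variables with $(4+\delta)$ moments plus Borel--Cantelli, which is exactly what \Cref{lem:concen-linform} packages. The only cosmetic difference is that you control $\|\bL_m\bX\|_{\oper}/k$ via a Bai--Yin singular-value bound, whereas the paper avoids any random-matrix input at this step by using the deterministic identity $\bM_m(\bX^\top\bL_m\bX/k)\bM_m=\bM_m(\bI_p-\lambda\bM_m)$, whose operator norm is at most $1/\lambda$; both yield the same $O(k^{-1/2})$ rate for the coefficient vector.
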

        \begin{proof}[Proof of Lemma \ref{lem:ridge-conv-C0}]
            Note that
        \begin{align*}
            T_C &= - \tfrac{2\lambda}{M^2} \cdot \tfrac{1}{k} \left\langle \left(\sum_{m=1}^M\bM_m\bX^{\top}\bL_m\right)^{\top} \bSigma \left(\sum_{m=1}^M\bM_m\right) \bbeta_0
            , \bepsilon\right\rangle.
        \end{align*}
        We next bound the squared norm
        \begin{align*}
            &\tfrac{1}{k}\norm{\frac{1}{M}\left(\sum_{m=1}^M\bM_m \bX^{\top}\bL_m\right)^{\top} \bSigma \left(\sum_{m=1}^M\bM_m\right) \bbeta_0}_2^2\\
            & \leq  \sum_{j=1}^M\sum\limits_{l=1}^M\tfrac{1}{M^2k}\norm{( \bM_j\bX^{\top}\bL_j)^{\top} \bSigma \bM_l \bbeta_0}_2^2\\
            & \leq 
            \frac{\norm{\bbeta_0}_2^2}{M^2}\cdot \sum_{j=1}^M\sum\limits_{l=1}^M\tfrac{1}{k}
            \norm{\bM_l \bSigma  \bM_j  \bX^{\top} \bL_j\bX\bM_j \bSigma \bM_l}_{\oper} \\
            & \leq \frac{\norm{\bbeta_0}_2^2}{M^2}\cdot \sum_{j=1}^M\sum\limits_{l=1}^M
            \norm{\bM_l}_{\oper}^2 \norm{\bSigma}_{\oper}^2\norm{ \bM_j (\bX^{\top} \bL_j\bX/k)\bM_j }_{\oper} \\
            & = \frac{\norm{\bbeta_0}_2^2}{M^2}\cdot \sum_{j=1}^M\sum\limits_{l=1}^M
            \norm{\bM_l}_{\oper}^2 \norm{\bSigma}_{\oper}^2\norm{\bM_j}_{\oper}\norm{ \bI_p - \lambda\bM_j }_{\oper} \\
            & \leq \tfrac{\norm{\bbeta_0}_2^2r_{\max}^2}{\lambda^3}, 
        \end{align*}
        where the last inequality is due to Assumption \ref{asm:covariance-bounded-eigvals} and the fact that $\|\bM_j\|_{\oper}\leq1/\lambda$.
        By Assumption \ref{asm:signal-bounded-norm}, the above quantity is uniformly bounded in $p$.
        Applying 
        \Cref{lem:concen-linform},
        we thus have that $T_C\asto 0$.
        \end{proof}

        \subsubsection{Convergence of the variance term}
        \begin{lemma}[Convergence of the variance term]\label{lem:ridge-conv-V0}
            Under Assumptions \ref{asm:rmt-feat}-\ref{asm:spectrum-spectrumsignproj-conv}, let $M\in\NN$. For all $m\in[M]$ and $I_m\in \cI_k$, let $\hSigma_m=\bX^{\top}\bL_m\bX/k$, $\bL_m\in\RR^{n\times n}$ be a diagonal matrix with $(\bL_{m})_{ll}=1$ if $l\in I_m$ and 0 otherwise, and $\bM_m= (\bX^{\top}\bL_m\bX/k+\lambda\bI_p)^{-1}$.
            Then, for all
            $m,l\in[M]$ and $m\neq l$, it holds that
            \begin{align*}
                \tfrac{1}{k^2} \bepsilon^{\top}\bL_m\bX\bM_m \bSigma \bM_m\bX^{\top}\bL_m\bepsilon-\tfrac{\sigma^2}{k}\tr(  \bM_m\hSigma_m\bM_m\bSigma) \asto 0,\\
                \tfrac{1}{k^2} \bepsilon^{\top}\bL_m\bX\bM_m \bSigma \bM_l\bX^{\top}\bL_l\bepsilon-\tfrac{\sigma^2}{k^2}\tr( \bM_l\bX^{\top}\bL_l\bL_m\bX \bM_m \bSigma)\asto 0,
            \end{align*}
            as $n,k,p\rightarrow\infty$, $p/n\rightarrow\phi\in(0,\infty)$, and $p/k\rightarrow\phi_s\in[\phi,\infty)$.
        \end{lemma}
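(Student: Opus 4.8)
The plan is a conditioning-plus-quadratic-form-concentration argument that exploits the independence of $\bepsilon$ from $\bX$ (Assumption \ref{asm:lin-mod}). Both displays have the common shape $\bepsilon^\top \bA_n \bepsilon - \sigma^2 \tr(\bA_n) \asto 0$: for the first claim $\bA_n = k^{-2}\,\bL_m \bX \bM_m \bSigma \bM_m \bX^\top \bL_m$, and for the second $\bA_n = k^{-2}\,\bL_m \bX \bM_m \bSigma \bM_l \bX^\top \bL_l$. The latter is not symmetric, but $\bepsilon^\top \bA_n \bepsilon = \bepsilon^\top \tfrac12(\bA_n + \bA_n^\top)\bepsilon$ and $\tr$ is unaffected, so I may replace $\bA_n$ by its symmetrization. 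Using $\bL_m^2 = \bL_m$ (and $\bL_l\bL_m$ being the diagonal selector of $I_m \cap I_l$) together with the cyclic invariance of the trace, one checks that $\tr(\bA_n)$ equals \emph{exactly} the stated centering constants $k^{-1}\tr(\bM_m\hSigma_m\bM_m\bSigma)$ and $k^{-2}\tr(\bM_l\bX^\top\bL_l\bL_m\bX\bM_m\bSigma)$, so the task reduces to showing each centered quadratic form vanishes almost surely.

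Next I would control $\bA_n$ deterministically on a probability-one event. Since $\|\bM_m\|_{\oper}\le\lambda^{-1}$, $\|\bSigma\|_{\oper}\le r_{\max}$ by Assumption \ref{asm:covariance-bounded-eigvals}, and $\|\bX^\top\bL_m\bX/k\|_{\oper}\le (n/k)\,\|\bX^\top\bX/n\|_{\oper}$ with $n/k\to\phi_s/\phi<\infty$ (this is where the restriction $\phi_s\in[\phi,\infty)$ enters) and $\|\bX^\top\bX/n\|_{\oper}$ bounded almost surely by the standard largest-eigenvalue bound under the bounded $(4+\delta)$-th moment of Assumption \ref{asm:rmt-feat}, I obtain $\|\bA_n\|_{\oper}=O(1/k)$ almost surely, uniformly over $m,l\in[M]$ (a finite index set) and over the random draws $I_1,\dots,I_M$ --- the uniformity holds because everything is bounded through $\|\bX^\top\bX/n\|_{\oper}$, which does not depend on which rows are selected. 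Since moreover $\mathrm{rank}(\bA_n)\le 2k$, this gives $\|\bA_n\|_F^2\le 2k\,\|\bA_n\|_{\oper}^2=O(1/k)\to 0$.

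With these bounds in place I would condition on $(\bX, I_1,\dots,I_M)$ and apply a Hanson--Wright-type concentration inequality for quadratic forms in the independent, bounded-$(4+\delta)$-moment entries of $\bepsilon$ (one of the helper lemmas collected in \Cref{sec:appendix-concerntration}), which yields $|\bepsilon^\top\bA_n\bepsilon-\sigma^2\tr(\bA_n)|\le C\bigl(\|\bA_n\|_F + \text{lower-order terms}\bigr)$ with probability at least $1-n^{-(1+c)}$ for some $c>0$; a Borel--Cantelli argument along the integer sequences $\{n_m\},\{k_m\},\{p_m\}$ then upgrades this to almost sure convergence to $0$, and intersecting with the almost-sure boundedness event for $\|\bX^\top\bX/n\|_{\oper}$ closes the proof. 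The argument carries no genuine difficulty beyond bookkeeping; the one point to be careful about is arranging the deterministic control of $\bA_n$ on an event independent of the random subsamples $I_1,\dots,I_M$, which the reduction through $\|\bX^\top\bX/n\|_{\oper}$ takes care of.
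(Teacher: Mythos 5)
Your proposal is correct and follows essentially the same route as the paper: condition on the design and the subsample indices, bound the operator norm of the middle matrix almost surely via $\|\bM_m\|_{\oper}\le\lambda^{-1}$, $\|\bSigma\|_{\oper}\le r_{\max}$, and a Bai--Yin-type bound on the sample covariance, and then invoke the quadratic-form concentration helper (\Cref{lem:concen-quadform}) together with Borel--Cantelli. The only quibble is terminological: the entries of $\bepsilon$ only have bounded $(4+\delta)$-th moments, so the relevant tool is the Bai--Silverstein moment bound underlying \Cref{lem:concen-quadform} (which needs only the almost-sure operator-norm control you establish) rather than a genuine Hanson--Wright inequality, but that is exactly the helper lemma you point to.
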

        \begin{proof}[Proof of Lemma \ref{lem:ridge-conv-V0}]
            Note that the first term is the same as the variance term for ridge predictor trained on $k$ i.i.d.\ samples $(\bL_{m}\bX,\bL_{m}\by)$.
            Notice that $\bL_m\bepsilon$ is independent of $\bL_m\bX\bM_m \bSigma \bM_m\bX^{\top}\bL_m$, and
            \begin{align*}
                \tfrac{1}{k}\norm{\bL_m\bX\bM_m \bSigma \bM_m\bX^{\top}\bL_m}_{\oper} &\leq \norm{\hSigma_m}_{\oper}^{\tfrac{1}{2}}\norm{\bM_m}_{\oper}\norm{\bSigma }_{\oper}\norm{ \bM_m}_{\oper}\norm{\hSigma_m}_{\oper}^{\tfrac{1}{2}} \\
                & = \norm{\hSigma_m}_{\oper}\norm{\bM_m}_{\oper}^2\norm{\bSigma }_{\oper}\\
                &\leq \tfrac{r_{\max}}{\lambda^2}\norm{\hSigma_m}_{\oper}.
            \end{align*}
            Now, we have $\limsup\norm{\hSigma_m}_{\oper}\leq \limsup\max_{1\leq i\leq p} s_i^2\leq r_{\max}(1+\sqrt{\phi_s})^2$ almost surely as $k,p\rightarrow\infty$ and $p/k\rightarrow\phi_s\in(0,\infty)$ from \citet{bai2010spectral}.
            From 
            \Cref{lem:concen-quadform},
            it follows that
            \begin{align*} \bepsilon^{\top}\tfrac{\bL_m^{\top}\bX}{k}\bM_m \bSigma \bM_m\tfrac{\bX^{\top}\bL_m}{k}\bepsilon - \tfrac{\sigma^2}{k^2}\tr(\bL_m\bX\bM_m \bSigma \bM_m\bX^{\top}\bL_m) \asto 0.
            \end{align*}
            Since $\tr(\bL_m\bX\bM_m \bSigma \bM_m\bX^{\top}\bL_m)/k^2=\tr(  \bM_m\hSigma_m\bM_m\bSigma)/k=\tr(  \bM_m^2\hSigma_m\bSigma)/k$, we further have $\forall\ m\in[M]$,
            \begin{align} \bepsilon^{\top}\tfrac{\bL_m^{\top}\bX}{k}\bM_m \bSigma \bM_m\tfrac{\bX^{\top}\bL_m}{k}\bepsilon - \tfrac{\sigma^2}{k}\tr(  \bM_m\hSigma_m\bM_m\bSigma) \asto 0. \label{eq:conv-V0-2}
            \end{align}
            
            The second term involves the cross-term $\bM_m\bSigma\bM_l$.
            Note that
            \begin{align*}
                \tfrac{1}{n}\norm{\bL_m\bX\bM_m \bSigma \bM_l\bX^{\top}\bL_l}_{\oper } &\leq \tfrac{k}{n}\norm{\hSigma_m}_{\oper}^{\tfrac{1}{2}}\norm{\hSigma_l}_{\oper}^{\tfrac{1}{2}}\norm{\bM_m}_{\oper}\norm{\bM_l}_{\oper}\norm{\bSigma }_{\oper} \leq \tfrac{r_{\max}}{\lambda^2}\tfrac{k}{n}\norm{\hSigma_m}_{\oper}^{\tfrac{1}{2}}\norm{\hSigma_l}_{\oper}^{\tfrac{1}{2}}.
            \end{align*}
            Because $\norm{\hSigma_m}_{\oper}$ for $m \in [M]$ are uniformly bounded almost surely, again by 
            \Cref{lem:concen-quadform},
            it follows that
            \begin{align*}
                \tfrac{1}{n}\bepsilon^{\top}\bL_m\bX\bM_m \bSigma \bM_l\bX^{\top}\bL_l\bepsilon - \tfrac{\sigma^2}{n}\tr(\bL_m\bX\bM_m \bSigma \bM_l\bX^{\top}\bL_l)\asto0.
            \end{align*}
            Since $k/n\rightarrow\phi_s/\phi$, we have
            \begin{align}
                \tfrac{1}{k^2}\bepsilon^{\top}\bL_1\bX\bM_1 \bSigma \bM_2\bX^{\top}\bL_2\bepsilon - \tfrac{\sigma^2}{k^2}\tr( \bM_l\bX^{\top}\bL_l\bL_m\bX \bM_m \bSigma)\asto0. \label{eq:conv-V0-3}
            \end{align}
        \end{proof}

    \subsection{Component deterministic approximations}
    
        \subsubsection{Deterministic approximation of the bias functional}
        \begin{lemma}[Deterministic approximation of the bias functional]\label{lem:ridge-B0}
            Under Assumptions \ref{asm:rmt-feat}-\ref{asm:spectrum-spectrumsignproj-conv}, for all $m\in[M]$ and $I_m\in \cI_k$, let $\hSigma_m=\bX^{\top}\bL_m\bX/k$, $\bL_m\in\RR^{n\times n}$ be a diagonal matrix with $(\bL_{m})_{ll}=1$ if $l\in I_m$ and 0 otherwise, and $\bM_m= (\bX^{\top}\bL_m\bX/k+\lambda\bI_p)^{-1}$.
            Then, it holds that
            \begin{enumerate}[leftmargin=7mm]
                \item for all $m\in[M]$ and $I_m\in\cI_k$,
                \begin{align*}
                    \lambda^2\bbeta_0^{\top}\bM_m \bSigma \bM_m\bbeta_0 &\asto  \rho^2(1+\tv(-\lambda;\phi_s,\phi_s)) \tc(-\lambda;\phi_s),
                \end{align*}
                
                \item for all $m,l\in[M]$, $m\neq l$ and $I_m,I_l\overset{\texttt{\textup{SRSWR}}}{\sim}\cI_k$,
                \begin{align*}
                    \lambda^2\bbeta_0^{\top}\bM_m \bSigma \bM_l\bbeta_0 &\asto \rho^2(1+\tv(-\lambda;\phi,\phi_s)) \tc(-\lambda;\phi_s),
                \end{align*}
            \end{enumerate}
            as $n,k,p\rightarrow\infty$, $p/n\rightarrow\phi\in(0,\infty)$, and $p/k\rightarrow\phi_s\in[\phi,\infty)$, where $\phi_0=\phi_s^2/\phi$, 
            $T_B$ is as defined in \eqref{eq:ridge-B0}, and the nonnegative constants $\tv(-\lambda  ;\phi,\phi_s)$ and $\tc(-\lambda;\phi_s)$ are as defined in \eqref{eq:tv_tc_ridge}.
        \end{lemma}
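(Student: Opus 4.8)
I would prove the two claims separately. Claim~(1) is essentially the known anisotropic ridge deterministic equivalent applied to a single subsample, whereas claim~(2) is where the conditional resolvent calculus of \Cref{sec:asympequi-extended-ridge-resolvents} is genuinely needed. By symmetry it suffices to treat $m=1$ in claim~(1) and $(m,l)=(1,2)$ in claim~(2).

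\textbf{Claim (1).} For a fixed sequence of index sets $I_1\in\cI_k$, the matrix $\bL_1\bX$ has $k$ i.i.d.\ rows $\bSigma^{1/2}\bz$, so $\bM_1=(\bX^\top\bL_1\bX/k+\lambda\bI_p)^{-1}$ is the ridge resolvent of a sample of size $k$ with population covariance $\bSigma$ and aspect ratio $p/k\to\phi_s$. First I would invoke the first-order deterministic equivalent $\bM_1\asympequi v(-\lambda;\phi_s)\,(v(-\lambda;\phi_s)\bSigma+\bI_p)^{-1}$ (as in \citet{hastie2022surprises} and \citet{patil2022mitigating}, or directly from the calculus rules collected in \Cref{sec:calculus_asymptotic_equivalents}), then pass to the generalized equivalent for the product $\bM_1\bSigma\bM_1$ appearing in the quadratic form (equivalently, differentiate the first-order equivalent in the regularization direction), which introduces the derivative-type enhancement factor $1+\tv(-\lambda;\phi_s,\phi_s)$. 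Using \Cref{asm:spectrum-spectrumsignproj-conv} to send the $\bbeta_0$-weighted quadratic forms to integrals against $G$, the limit $\rho^2\bigl(1+\tv(-\lambda;\phi_s,\phi_s)\bigr)\tc(-\lambda;\phi_s)$ follows after matching to the definitions \eqref{eq:tv_tc_ridge}. The convergence is almost sure in $\bX$ and holds for every deterministic sequence $I_1$, as stated.

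\textbf{Claim (2).} Write $I_0=I_1\cap I_2$, $I_1'=I_1\setminus I_0$, $I_2'=I_2\setminus I_0$, with diagonal selectors $\bL_{I_0},\bL_{I_1'},\bL_{I_2'}$, so that $\bL_1=\bL_{I_0}+\bL_{I_1'}$ and $\bL_2=\bL_{I_0}+\bL_{I_2'}$. By \Cref{lem:i0_mean}, under SRSWR the overlap obeys $|I_0|/p\to\phi/\phi_s^2$ almost surely, so $\cD_{I_0}$ behaves like a dataset of aspect ratio $\phi_0=\phi_s^2/\phi$, and---since the rows are i.i.d.\ and $I_1',I_2',I_0$ are pairwise disjoint---$\cD_{I_1'}$ and $\cD_{I_2'}$ are independent conditional on $\cD_{I_0}$. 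The key structural observation is $\bX^\top\bL_1\bX/k=\bX^\top\bL_{I_1'}\bX/k+\bA$ with $\bA:=\lambda\bI_p+\bX^\top\bL_{I_0}\bX/k$, which exhibits $\bM_1$ (and likewise $\bM_2$) as a ridge resolvent with \emph{random matrix-valued} Tikhonov regularization $\bA$, the same $\bA$ for both. The plan is then: (i) apply the conditional deterministic equivalent of \Cref{sec:asympequi-extended-ridge-resolvents}, obtaining $\bM_1\asympequi\bM_1^{\mathrm{det}}(\cD_{I_0})$ and $\bM_2\asympequi\bM_2^{\mathrm{det}}(\cD_{I_0})$ conditional on $\cD_{I_0}$, where each $\bM_j^{\mathrm{det}}(\cD_{I_0})$ is deterministic given $\cD_{I_0}$ and governed by a conditional fixed-point parameter; (ii) peel off the two resolvents one at a time using the conditional independence---since, conditional on $\cD_{I_0}$, $\bM_2$ is a resolvent of data independent of $\bM_1$, the bilinear form $\bbeta_0^\top\bM_1\bSigma\bM_2\bbeta_0$ concentrates around $\bbeta_0^\top\bM_1\bSigma\bM_2^{\mathrm{det}}(\cD_{I_0})\bbeta_0$ and then around $\bbeta_0^\top\bM_1^{\mathrm{det}}(\cD_{I_0})\bSigma\bM_2^{\mathrm{det}}(\cD_{I_0})\bbeta_0$, a function of $\cD_{I_0}$ alone; (iii) run a further, now unconditional, round of deterministic equivalents in $\cD_{I_0}$ (using that $\bX^\top\bL_{I_0}\bX/k$ concentrates and that $\cD_{I_0}$ has aspect ratio $\phi_0$) to collapse the nested expression to a deterministic limit in the fixed points $v(-\lambda;\cdot)$, and finally simplify algebraically. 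In the simplification the $\bbeta_0$-dependence localizes in $\tc(-\lambda;\phi_s)$ exactly as in claim~(1) (because $\bM_1$ alone is still a resolvent of $k$ i.i.d.\ samples at aspect ratio $\phi_s$), while the cross-correlation correction carries $p$ times the per-sample overlap rate $|I_0|/k^2\to 1/n$, i.e.\ the full-data aspect ratio $\phi$, in the first slot of $\tv$, producing $\rho^2\bigl(1+\tv(-\lambda;\phi,\phi_s)\bigr)\tc(-\lambda;\phi_s)$.

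\textbf{Main obstacle.} The hard part is the conditional deterministic equivalent for ridge resolvents with random matrix-valued regularization, together with the bookkeeping needed to chain steps (i)--(iii) rigorously: controlling all error terms uniformly so that substituting $\bM_1^{\mathrm{det}},\bM_2^{\mathrm{det}}$ and then averaging over $\cD_{I_0}$ does not accumulate error, handling the conditioning and measurability carefully (the regularization $\bA$ is random and shared between the two resolvents), and verifying that the resulting layered fixed-point system simplifies to the claimed closed form. A secondary technical point is establishing the almost sure convergence of the overlap aspect ratio, so that the limit equals the stated constant for a.e.\ realization of the SRSWR draws jointly with a.s.\ convergence in the data.
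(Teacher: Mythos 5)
Your proposal follows essentially the same route as the paper's proof: claim (1) via the standard generalized ridge bias equivalent contracted against $\bbeta_0$, and claim (2) via the decomposition $\bM_j^{-1}=\tfrac{i_0}{k}(\hSigma_0+\lambda\bI_p)+\tfrac{k-i_0}{k}(\hSigma_j^{\mathrm{ind}}+\lambda\bI_p)$, conditional deterministic equivalents for the resulting Tikhonov-type resolvents given the overlap data, sequential peeling of the two resolvents using conditional independence, collapse of the coupled fixed-point system to $v(-\lambda;p/k)$, and the overlap limit $i_0/k\asto\phi/\phi_s$ from \Cref{lem:i0_mean}. The approach and the key lemmas you identify (the extended resolvent equivalents of \Cref{sec:asympequi-extended-ridge-resolvents} and the conditional calculus) are exactly those used in the paper.
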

        \begin{proof}[Proof of Lemma \ref{lem:ridge-B0}]
            From Lemma \ref{lem:deter-approx-generalized-ridge}~\ref{eq:detequi-ridge-genbias} we have that for $m\in[M]$,
            \begin{align}
                \lambda^2
                \bM_m \bSigma \bM_m
                \asympequi 
                (\tv_b(-\lambda; \phi_s)+1) \cdot (v(-\lambda; \phi_s) \bSigma + \bI_p)^{-1}
                 \bSigma 
                (v(-\lambda; \phi_s) \bSigma + \bI_p)^{-1}.\label{eq:lem-det-approx-B0-0}
            \end{align}
            By the definition of deterministic equivalent, we have
            \begin{align}
                \lambda^2\bbeta_0^{\top} \bM_m \bSigma \bM_m\bbeta_0 &\asto \lim\limits_{p\rightarrow\infty}(1+\tv_b(-\lambda; \phi_s))\sum\limits_{i=1}^p \tfrac{r_i}{(1 + r_i v(-\lambda; \phi_s))^2} (\bbeta_0^{\top}w_i)^2 \notag\\
                &= \lim\limits_{p\rightarrow\infty}\|\bbeta_0\|_2^2(1+\tv_b(-\lambda; \phi_s))\int \tfrac{r}{(1 + v(-\lambda; \phi_s)r)^2} \,\rd G_{p}(r)\notag\\
                &= \rho^2(1+\tv_b(-\lambda; \phi_s)) \int \tfrac{r}{(1 + v(-\lambda; \phi_s)r)^2} \,\rd G(r),\label{eq:lem-det-approx-B0-1}
            \end{align}
            where the last equality holds since $G_p$ and $G$ have compact supports and Assumptions \ref{asm:signal-bounded-norm} and \ref{asm:spectrum-spectrumsignproj-conv}.
            
        For the cross term, it suffices to derive the deterministic equivalent of $\bbeta_0^{\top}\bM_1\bSigma\bM_2\bbeta_0/2$. 
        We begin with analyze the deterministic equivalent of $\bM_1\bSigma\bM_2$.
        Let $i_0=\tr(\bL_1\bL_2)$ be the number of shared samples between $\cD_{I_1}$ and $\cD_{I_2}$, we use the decomposition
        \begin{align*}
            \bM_j^{-1}&=\tfrac{i_0}{k}(\hat{\bSigma}_0+\lambda\bI_p) +  \tfrac{k-i_0}{k}(\hat{\bSigma}_j^{\text{ind}} + \lambda\bI_p),\qquad j=1,2,
        \end{align*} where $\hat{\bSigma}_0=\bX^{\top}\bL_1\bL_2\bX/i_0$ and $\hat{\bSigma}_{j}^{\text{ind}}=\bX^{\top}(\bL_j-\bL_1\bL_2)\bX/(k-i_0)$ are the common and individual covariance estimators of the two datasets.
        Let $\bN_0=(\hat{\bSigma}_0+\lambda\bI_p)^{-1}$ and $\bN_j=(\hat{\bSigma}_j^{\text{ind}}+\lambda\bI_p)^{-1}$ for $j=1,2$. Then
        \begin{align}
            \bM_j&= \left(\tfrac{i_0}{k}\bN_0^{-1}+\tfrac{k-i_0}{k}\bN_j^{-1}\right)^{-1}, \qquad j=1,2, \label{eq:ridge-bias-term1}
        \end{align}
        where the equalities hold because $\bN_0$ is invertible when $\lambda>0$.
        Conditioning on $i_0$, we will show a sequence of deterministic equivalents
        \begin{align*}
            \lambda^2\bM_1\bSigma\bM_2 \overset{(a)}{\asympequi} \lambda\bM^{\det}_{\bN_0,i_0}\bSigma\bM_2\overset{(b)}{\asympequi}\bM^{\det}_{\bN_0,i_0}\bSigma\bM^{\det}_{\bN_0,i_0} \overset{(c)}{\asympequi} (\lambda^2\bM_1\bSigma\bM_2)^{\det}_{i_0},
        \end{align*}
        where in each step, we consider randomness from $\bN_1$, $\bN_2$, and $\bN_0$, respectively, since they are independent of each other conditioning on $i_0$. The subscript of the deterministic equivalent indicates the dependence on the corresponding random variables, and we will specify each deterministic equivalent in the following proof.
        
        When $i_0=k$, we have $\bM_1=\bM_2$ and the above asymptotic equation reduces to \eqref{eq:lem-det-approx-B0-0}.
        We next prove the case when $i_0<k$.

        \paragraph{Part (a).\hspace{-2mm}} Since $\bN_1$ is independent of $\bN_0$ conditioning on $i_0$, from \Cref{def:cond-deterministic-equivalent} and \Cref{lem:deter-approx-ridge-extend}~\ref{eq:lem:deter-approx-ridge-extend} we have
        \begin{align*} 
            \lambda\bM_1 \asympequi\bM^{\det}_{\bN_0,i_0}&:= \tfrac{k}{k-i_0}\left(v_1\bSigma +\bI_p+ \bC_1\right)^{-1} \,\Big|\, i_0,
        \end{align*}
        where $v_1=v(-\lambda; \gamma_1,\bSigma_{\bC_1})$, $\bSigma_{\bC_1}=(\bI_p+\bC_1)^{-\tfrac{1}{2}}\bSigma (\bI_p+\bC_1)^{-\tfrac{1}{2}}$, $\bC_1=i_0(\lambda(k-i_0))^{-1}\bN_0^{-1}$, and $\gamma_1=p/(k-i_0)$. Here the subscripts of $v_1$ and $\bC_1$ are related to the aspect ratio $\gamma_1$. 
        Because of the sub-multiplicativity of operator norm, we have
        \begin{align*}
            \norm{\bSigma\bM_2}_{\oper} \leq \norm{\bSigma}_{\oper}\norm{\bM_2}_{\oper} \leq \tfrac{r_{\max} }{\lambda}.
        \end{align*}
        By \Cref{prop:cond-calculus-detequi}~\ref{lem:cond-calculus-detequi-item-product}, we have $\lambda^2\bM_1\bSigma\bM_2 \asympequi \lambda\bM^{\det}_{\bN_0,i_0}\bSigma\bM_2\mid i_0$.
        
        \paragraph{Part (b).\hspace{-2mm}} Analogously, we have
        \begin{align*}
            &\lambda\bM^{\det}_{\bN_0,i_0}\bSigma\bM_2 \asympequi\bM^{\det}_{\bN_0,i_0}\bSigma\bM^{\det}_{\bN_0,i_0}\\
            \asympequi&\left(\tfrac{k}{k-i_0}\right)^2\left( v_1\bSigma +\bI_p+\bC_1\right)^{-1}\bSigma \left( v_1\bSigma +\bI_p+ \bC_1\right)^{-1}\mid i_0,
        \end{align*}
        as $\norm{\bM^{\det}_{\bN_0,i_0}}_{\oper}\leq 1$.
        
        \paragraph{Part (c).\hspace{-2mm}} As we have symmetrized the expression, we have
        \begin{align*}
            \lambda^2\bM_1\bSigma\bM_2 \asympequi \bM^{\det}_{\bN_0,i_0}\bSigma\bM^{\det}_{\bN_0,i_0}= \tfrac{k^2}{i_0^2}\lambda^2 (\bN_0^{-1}+\lambda\bC_0)^{-1}\bSigma(\bN_0^{-1}+\lambda\bC_0)^{-1} \mid i_0,
        \end{align*}
        where $\bC_0=(k-i_0)/i_0\cdot(v_1\bSigma+\bI_p)$.
        Define $\bSigma_{\bC_0}=(\bI+\bC_0)^{-\tfrac{1}{2}}\bSigma(\bI+\bC_0)^{-\tfrac{1}{2}}$.
        Conditioning on $i_0$, by \Cref{lem:deter-approx-ridge-extend}~\ref{eq:lem:deter-approx-ridge-extend}, 
        we have
        \begin{align*}
            \tr[\bSigma_{\bC_1}(v_1\bSigma_{\bC_1}+\bI_p)^{-1}] &= \tr[\bSigma(v_1\bSigma+\bI_p+\bC_1)^{-1}]\\
            &= \tfrac{\lambda(k-i_0)}{i_0}\tr\left[\bSigma\left(\bN_0^{-1}+ \tfrac{\lambda(k-i_0)}{i_0}( v_1\bSigma+\bI_p)\right)^{-1}\right]\\
            &\stackrel{a.s.}{=} \tfrac{k-i_0}{i_0}\tr\left[\bSigma\left(v_0\bSigma+\bI_p+ \tfrac{k-i_0}{i_0}( v_1\bSigma+\bI_p)\right)^{-1}\right]\\
            &= \tr\left[\bSigma\left(
            \left(\tfrac{i_0}{k-i_0}v_0 + v_1 \right)\bSigma+ \tfrac{k}{k-i_0}\bI_p\right)^{-1}\right],
        \end{align*}
        where $v_0=v(-\lambda;\gamma_0,\bSigma_{\bC_0})$and $\gamma_0=p/i_0$.
        Note that the fixed-point solution $v_0$ depends on $v_1$. The fixed-point equations reduce to
        \begin{align*}
            \tfrac{1}{v_0} &=\lambda + \gamma_0 \tr[\bSigma_{\bC_0}(v_0\bSigma_{\bC_0}+\bI_p)^{-1}]/p= \lambda + \tfrac{p}{k}\tr \left[\bSigma\left(\left(\tfrac{i_0}{k}v_0+\tfrac{k-i_0}{k}v_1\right)\bSigma + \bI_p\right)^{-1}\right]/p \\
            \tfrac{1}{v_1} &= \lambda + \gamma_1 \tr[\bSigma_{\bC_1}(v_1\bSigma_{\bC_1}+\bI_p)^{-1}]/p= \lambda + \tfrac{p}{k} \tr\left[\bSigma\left(
            \left(\tfrac{i_0}{k}v_0 +\tfrac{k-i_0}{k} v_1\right)\bSigma+ \bI_p\right)^{-1}\right]/p
        \end{align*}
        almost surely.
        Note that the solution $(v_0, v_1)$ to the above equations is a pair of positive numbers and does not depend on samples.
        If $(v_0, v_1)$ is a solution to the above system, then $(v_1, v_0)$ is also a solution. Thus, any solution to the above equations must be unique.
        On the other hand, since $v_0=v_1=v(-\lambda;p/k)$ satisfies the above equations, it is the unique solution.
        By \Cref{lem:substitue}, we can replace $v(-\lambda;\gamma_1,\bSigma_{\bC_1})$ by the solution $v_0=v_1=v(-\lambda; p/k)$ of the above system, which does not depend on samples. Thus,
        \begin{align}
            \lambda^2\bM_1\bSigma\bM_2\asympequi  \tfrac{k^2}{i_0^2}\lambda^2 (\bN_0^{-1}+\lambda\bC^*)^{-1}\bSigma(\bN_0^{-1}+\lambda\bC^*)^{-1} \mid i_0,\label{eq:ridge-B0-cross-term}
        \end{align}
        where $\bC^*= (k-i_0)/i_0\cdot(v(-\lambda; p/k)\bSigma+\bI_p)$.
        By Lemma \ref{lem:deter-approx-ridge-extend}~\ref{eq:lem:deter-approx-ridge-extend-gbias}, we have
        \begin{align}
             \bM^{\det}_{\bN_0,i_0}\bSigma\bM^{\det}_{\bN_0,i_0} \asympequi (\lambda^2\bM_1\bSigma\bM_2)^{\det}_{i_0}:=\tfrac{k^2}{i_0^2}(\tv_b(-\lambda; \gamma_0,\bC^*)+1) (v(-\lambda; \gamma_0,\bC^*) \bSigma + \bI_p+ \bC^*)^{-2}\bSigma\mid i_0,\label{eq:lem-ridge-B0-term-1}
        \end{align} where $\gamma_0=p/i_0$, and $v(-\lambda; \gamma_0,\bC^*)$ and $\tv_b(-\lambda; \gamma_0,\bC^*)$ are defined through the following equations:
        \begin{align*}
            \tfrac{1}{v(-\lambda; \gamma_0,\bC^*)} &= \lambda + \gamma_0 \tr[\bSigma(v(-\lambda; \gamma_0,\bC^*)\bSigma+\bI_p+\bC^*)^{-1}]/p\\
             \tfrac{1}{\tv_b(-\lambda; \gamma_0,\bC^*)} &=\tfrac{\gamma_0 \tr[\bSigma^2(v(-\lambda; \gamma_0,\bC^*)\bSigma+\bI_p+\bC^*)^{-2}]/p}{v(-\lambda; \gamma_0,\bC^*)^{-2}-\gamma_0 \tr[\bSigma^2(v(-\lambda; \gamma_0,\bC^*)\bSigma+\bI_p+\bC^*)^{-2}]/p}.
        \end{align*}
        From Parts (a) to (c), we have shown that $\lambda^2\bM_1\bSigma\bM_2\asympequi (\lambda^2\bM_1\bSigma\bM_2)^{\det}_{i_0}\mid i_0$ for $i_0<k$.
        Note that the above equivalence also holds for $i_0=k$. That is, this holds for all $i_0\in\{0,1,\cdots,k\}$.
        By Proposition \ref{prop:cond-calculus-detequi}~\ref{lem:cond-calculus-detequi-item-uncond}, we can obtain the unconditioned asymptotic equivalence $\bM^{\det}_{\bN_0,i_0}\bSigma\bM^{\det}_{\bN_0,i_0} \asympequi (\lambda^2\bM_1\bSigma\bM_2)^{\det}_{i_0}$.

        Note that from \Cref{lem:ridge-fixed-point-v-properties} $\tilde{v}_b(-\lambda;\gamma)$ and $v(-\lambda;\gamma)$ are continuous on $\gamma$, and from Lemma \ref{lem:i0_mean}, $i_0/k\asto \phi/\phi_s$, where $\phi_s\in(0,\infty)$ is the limiting ratio such that $p/k\rightarrow\phi_s$ as $k,p\rightarrow\infty$.
        We have
        \begin{align*}
            (\lambda^2\bM_1\bSigma\bM_2)^{\det}_{i_0}
            \asympequi& \tfrac{\phi_s^2}{\phi^2}(\tv_b(-\lambda; \phi_0,\bSigma_{\bC'})+1) (v(-\lambda; \phi_0,\bSigma_{\bC'}) \bSigma + \bI_p+ \bC')^{-2}\bSigma,
        \end{align*}
        where $\bC'=(\phi_s-\phi)/\phi\cdot( v(-\lambda; \phi_s)\bSigma +\bI_p)$ and $\phi_0=\phi_s^2/\phi$.
        Note that
        \begin{align*}
            \tfrac{1}{v(-\lambda; \phi_0,\bSigma_{\bC'})} &= \lambda+ \phi_0 \int \tfrac{r}{1 + r v(-\lambda; \phi_0,\bSigma_{\bC'})}\,\rd H(r;\bSigma_{\bC'})\\
            &= \lambda+ \phi_s \lim\limits_{p\rightarrow\infty}\tr\left[\bSigma\left(\tfrac{\phi}{\phi_s}(v(-\lambda; \phi_0,\bSigma_{\bC'})\bSigma+\bI_p) + \left(1-\tfrac{\phi}{\phi_s}\right)(v(-\lambda;\phi_s)\bSigma+\bI_p )\right)^{-1}\right]/p\\
            \tfrac{1}{v(-\lambda; \phi_s)} &= \lambda+ \phi_s \lim\limits_{p\rightarrow\infty}\tr\left[\bSigma(v(-\lambda;\phi_s)\bSigma+\bI_p )^{-1}\right]/p.
        \end{align*}
        We have
        \begin{align}
            v(-\lambda; \phi_0,\bSigma_{\bC'})=v(-\lambda;\phi_s) \label{eq:vc}
        \end{align}is a solution to the first fixed-point equation.
        From \Cref{lem:properties-sol}~\ref{lem:properties-sol-item-f-ridge}, this solution is also unique.
        Then, we have
        \begin{align*}
            1+\tv_b(-\lambda; \phi_0,\bSigma_{\bC'}) &= \lim\limits_{p\rightarrow\infty} \tfrac{v(-\lambda; \phi_0,\bC')^{-2}}{v(-\lambda; \phi_0,\bC')^{-2}-\phi_0 \tr[\bSigma^2(v(-\lambda; \phi_0,\bC')\bSigma+\bI_p+\bC')^{-2}]/p}\\
            &= \lim\limits_{p\rightarrow\infty} \tfrac{v(-\lambda; \phi_s)^{-2}}{v(-\lambda; \phi_s)^{-2}-\phi \tr[\bSigma^2(v(-\lambda; \phi_s)\bSigma+\bI_p)^{-2}]/p}\\
            &=\ddfrac{v(-\lambda; \phi_s)^{-2}}{v(-\lambda; \phi_s)^{-2}-\phi \int\tfrac{r^2}{(1+ v(-\lambda; \phi_s)r)^2}\,\rd H(r)}:= 1+\tv(-\lambda;\phi,\phi_s). 
        \end{align*}
        From \Cref{lem:properties-sol}~\ref{lem:properties-sol-item-f'-ridge}, we have that $1+\tv(-\lambda;\phi,\phi_s)>0$.
        To conclude, we have shown that
        \begin{align}
            \lambda^2\bM_1\bSigma\bM_2 \asympequi (1+\tv(-\lambda;\phi,\phi_s)) \left(v(-\lambda; \phi_s) \bSigma + \bI_p\right)^{-2}\bSigma. \label{eq:lem-ridge-B0-term-det}
        \end{align}
        By the definition of deterministic equivalent, we have
        \begin{align}
            \lambda^2\bbeta_0^{\top} \bM_1 \bSigma \bM_2\bbeta_0
            &\asto \lim\limits_{p\rightarrow\infty}\sum\limits_{i=1}^p \tfrac{(1+\tv(-\lambda;\phi,\phi_s)) r_i}{\left(1+v(-\lambda; \phi_s)r_i\right)^2} (\bbeta_0^{\top}w_i)^2 \notag\\
            &= \lim\limits_{p\rightarrow\infty}\|\bbeta_0\|_2^2\int \tfrac{(1+\tv(-\lambda;\phi,\phi_s)) r}{\left(1+v(-\lambda; \phi_s)r\right)^2} \,\rd G_{p}(r)\notag\\
            &= \rho^2 \int \tfrac{(1+\tv(-\lambda;\phi,\phi_s)) r}{\left(1+v(-\lambda; \phi_s)r\right)^2} \,\rd G(r), \label{eq:lem-det-approx-B0-2}
        \end{align}
        where in the last line we used the fact that $G_p$ and $G$ have compact supports and Assumptions \ref{asm:signal-bounded-norm} and \ref{asm:spectrum-spectrumsignproj-conv}. The conclusion follows by combining \eqref{eq:lem-det-approx-B0-1} and \eqref{eq:lem-det-approx-B0-2}.
        \end{proof}

        \subsubsection{Deterministic approximation of the variance functional}
        \begin{lemma}[Deterministic approximation of the variance functional]\label{lem:ridge-V0}
            Under Assumptions \ref{asm:rmt-feat}-\ref{asm:spectrum-spectrumsignproj-conv}, for all $m\in[M]$ and $I_m\in \cI_k$, let $\hSigma_m=\bX^{\top}\bL_m\bX/k$, $\bL_m\in\RR^{n\times n}$ be a diagonal matrix with $(\bL_{m})_{ll}=1$ if $l\in I_m$ and 0 otherwise, and $\bM_m= (\bX^{\top}\bL_m\bX/k+\lambda\bI_p)^{-1}$.
            Then, it holds that
            \begin{enumerate}[leftmargin=7mm]
                \item for all $m\in[M]$ and $I_m\in\cI_k$,
                \begin{align*}
                    \tfrac{1}{k}\tr(  \bM_m\hSigma_m\bM_m\bSigma) \asto  \tv(-\lambda;\phi_s,\phi_s),
                \end{align*}
                
                \item for all $m,l\in[M]$, $m\neq l$ and $I_m,I_l\overset{\texttt{\textup{SRSWR}}}{\sim}\cI_k$,
                \begin{align*}
                    \tfrac{1}{k^2}\tr( \bM_l\bX^{\top}\bL_l\bL_m\bX \bM_m \bSigma) \asto \tv(-\lambda;\phi,\phi_s) , %
                \end{align*}
            \end{enumerate}
            as $n,k,p\rightarrow\infty$, $p/n\rightarrow\phi\in(0,\infty)$, and $p/k\rightarrow\phi_s\in[\phi,\infty)$, where the nonnegative constant $\tv(\lambda  ;\phi,\phi_s)$ is as defined in \eqref{eq:tv_tc_ridge}.
        \end{lemma}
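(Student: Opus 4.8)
The plan is to handle the two displays separately, as in the proof of \Cref{lem:ridge-B0}: the first is (a relabelling of) the classical ridge variance functional, while the second is the genuinely new cross term that forces the $i_0$-conditioning argument. Both will be run through the deterministic-equivalent calculus for generalized ridge resolvents from \Cref{sec:calculus_asymptotic_equivalents,sec:asympequi-extended-ridge-resolvents}. For Part~(1), note that $\bX^{\top}\bL_m$, restricted to its nonzero rows, is a $k\times p$ matrix with i.i.d.\ rows $\bSigma^{1/2}\bz_l$, so $\hSigma_m$ has the law of a sample covariance built from $k$ i.i.d.\ observations with population covariance $\bSigma$ and aspect ratio $p/k\to\phi_s$; consequently $\tfrac{\sigma^2}{k}\tr(\bM_m\hSigma_m\bM_m\bSigma)$ is exactly the conditional variance of the ridge predictor trained on $k$ i.i.d.\ samples, and its limit $\sigma^2\tv(-\lambda;\phi_s,\phi_s)$ can be read off from \citet{hastie2022surprises,patil2022mitigating}. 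Alternatively, one can give a self-contained derivation by writing $\bM_m\hSigma_m\bM_m=\bM_m-\lambda\bM_m^2=\bM_m+\lambda\,\partial_\lambda\bM_m$, substituting the deterministic equivalent for $\bM_m$ from \Cref{lem:deter-approx-generalized-ridge} with trivial reference matrix, passing it through $\partial_\lambda$ via the differentiation rule for asymptotic equivalents, and simplifying with the fixed-point relation \eqref{eq:v_ridge}; this collapses to $-\partial_\lambda v(-\lambda;\phi_s)\cdot\phi_s\int r^2(1+v(-\lambda;\phi_s)r)^{-2}\,\rd H(r)=\tv(-\lambda;\phi_s,\phi_s)$ after differentiating the fixed-point equation.

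For Part~(2), I would first rewrite $\bX^{\top}\bL_l\bL_m\bX=i_0\hSigma_0$ with $i_0=\tr(\bL_l\bL_m)$ and $\hSigma_0=\bX^{\top}\bL_l\bL_m\bX/i_0$, so the target becomes $\tfrac{i_0}{k^2}\tr(\bM_l\hSigma_0\bM_m\bSigma)$, and then condition on $i_0$. Using the decomposition $\bM_j^{-1}=\tfrac{i_0}{k}(\hSigma_0+\lambda\bI_p)+\tfrac{k-i_0}{k}(\hSigma_j^{\mathrm{ind}}+\lambda\bI_p)$ from the proof of \Cref{lem:ridge-B0}, together with the conditional independence of $\bN_0,\bN_1,\bN_2$ given $i_0$, I would run the same three-step chain of conditional deterministic equivalences used there for the bias cross term: successively replace $\bM_1$ and $\bM_2$ by their conditional deterministic counterparts via \Cref{lem:deter-approx-ridge-extend} and \Cref{prop:cond-calculus-detequi}, symmetrize the resulting expression, and then apply \Cref{lem:substitue} after verifying (exactly as in \Cref{lem:ridge-B0}, using \Cref{lem:properties-sol}) that the coupled fixed-point system has a unique solution that collapses to $v(-\lambda;p/k)$. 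Feeding in $i_0/k\asto\phi/\phi_s$ from \Cref{lem:i0_mean} and the continuity of $v(-\lambda;\cdot)$ and $\tv_b(-\lambda;\cdot)$ from \Cref{lem:ridge-fixed-point-v-properties}, and using the fixed-point equation once more as in \eqref{eq:vc}, the deterministic equivalent reduces to a scalar multiple of $(v(-\lambda;\phi_s)\bSigma+\bI_p)^{-2}\bSigma$; taking its normalized trace against $\bSigma$ and matching with \eqref{eq:tv_tc_ridge} yields $\tv(-\lambda;\phi,\phi_s)$.

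The routine inputs are the operator-norm controls needed to license the product and trace rules of the equivalence calculus (using $\|\bM_j\|_{\oper}\le 1/\lambda$, $\limsup\|\hSigma_m\|_{\oper}\le r_{\max}(1+\sqrt{\phi_s})^2$ almost surely, and Assumption~\ref{asm:covariance-bounded-eigvals}) and the concentration of the relevant quadratic and trace forms around their deterministic limits. The main obstacle is Part~(2): the careful bookkeeping along the conditional-equivalence chain and, in particular, re-establishing uniqueness of the nested fixed-point system in the variance functional (as opposed to the bias functional treated in \Cref{lem:ridge-B0}), so that the intermediate data-dependent fixed points may be replaced by $v(-\lambda;p/k)$. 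This is precisely the step where the conditional-asymptotic-equivalence machinery of \Cref{sec:asympequi-extended-ridge-resolvents} does the real work.
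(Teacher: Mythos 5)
Your proposal is correct and follows essentially the same route as the paper: Part (1) is handled by invoking the standard ridge variance equivalent of \Cref{lem:deter-approx-generalized-ridge} (your alternative via $\bM_m\hSigma_m\bM_m=\bM_m+\lambda\,\partial_\lambda\bM_m$ and differentiating the fixed-point equation also checks out numerically, since $-\partial_\lambda v(-\lambda;\phi_s)=\tv_v(-\lambda;\phi_s)$), and Part (2) uses exactly the paper's conditioning on $i_0$, the three-step chain of conditional deterministic equivalences via \Cref{lem:deter-approx-ridge-extend} and \Cref{prop:cond-calculus-detequi}, the collapse of the coupled fixed-point system to $v(-\lambda;p/k)$ via \Cref{lem:substitue}, and the passage $i_0/k\asto\phi/\phi_s$ from \Cref{lem:i0_mean}. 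The only cosmetic difference is that the fixed-point system for the variance cross term turns out to be identical to the one already analyzed in \Cref{lem:ridge-B0}, so no separate uniqueness argument is actually needed there.
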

        \begin{proof}[Proof of Lemma \ref{lem:ridge-V0}]
            From Lemma \ref{lem:deter-approx-generalized-ridge}~\ref{eq:detequi-ridge-genvar}, we have that for $j\in[M]$,
            \begin{align}
                \bM_j\hSigma_j\bM_j\bSigma \asympequi \tv_v(-\lambda; \phi_s) (v(-\lambda; \phi_s) \bSigma + \bI_p)^{-2} \bSigma^2. \label{eq:lem-V0-term-1}
            \end{align}
            By the trace rule \Cref{lem:calculus-detequi}~\ref{lem:calculus-detequi-item-trace} , we have
            \begin{align}
                \tfrac{1}{k}\tr(  \bM_j\hSigma_j\bM_j\bSigma)&\asto \lim\limits_{p\rightarrow\infty}\tfrac{p}{k}\cdot\tfrac{1}{p}\tr(\tv_v(-\lambda; \phi_s) (v(-\lambda; \phi_s) \bSigma + \bI_p)^{-2} \bSigma^2)\notag\\
                &= \phi_s\tv_v(-\lambda; \phi_s) \lim\limits_{p\rightarrow\infty}\tfrac{1}{p}\sum\limits_{i=1}^p\tfrac{r_i^2}{(v(-\lambda; \phi_s) r_i + 1)^2}\notag\\
                &=\phi_s\tv_v(-\lambda; \phi_s)\lim\limits_{p\rightarrow\infty}\int \tfrac{r^2}{(v(-\lambda; \phi_s) r + 1)^2}\,\rd H_p(r)\notag\\
                &=\phi_s\tv_v(-\lambda; \phi_s)\int \tfrac{r^2}{(v(-\lambda; \phi_s) r + 1)^2}\,\rd H(r),\qquad j=1,2, \label{eq:lem:ridge-V0-1}
            \end{align}
            where in the last line we used the fact that $H_p$ and $H$ have compact supports and Assumption \ref{asm:spectrum-spectrumsignproj-conv}.
            
            For the cross term, it suffices to derive the deterministic equivalent of $\bM_1 \hSigma_0 \bM_2 \bSigma$ where $\hSigma_0=\bX^{\top}\bL_1\bL_2\bX/i_0$ and $i_0=\tr(\bL_1\bL_2)$. We again show a sequence of deterministic equivalents as in the proof for \Cref{lem:ridge-B0}:
            \begin{align*}
                \bM_1 \hSigma_0 \bM_2 \bSigma \overset{(a)}{\asympequi} \bM^{\det}_{\bN_0,i_0} \hSigma_0 \bM_2 \bSigma \overset{(b)}{\asympequi}\bM^{\det}_{\bN_0,i_0} \hSigma_0 \bM^{\det}_{\bN_0,i_0} \bSigma \overset{(c)}{\asympequi} (\bM_1 \hSigma_0 \bM_2 \bSigma)^{\det}_{i_0}\mid i_0.
            \end{align*}
            When $i_0=k$, this reduces to \eqref{eq:lem-V0-term-1}.
            We next show the case when $i_0<k$.
            
            \paragraph{Part (a).\hspace{-2mm}} We use \Cref{lem:deter-approx-ridge-extend}~\ref{eq:lem:deter-approx-ridge-extend} to obtain
            \begin{align}
                \bM_1 & \asympequi \bM^{\det}_{\bN_0,i_0}:= \tfrac{ k}{\lambda(k-i_0)}\left(v(-\lambda;\gamma_1,\bSigma_{\bC_1})\bSigma +  \bI_p+ \bC_1\right)^{-1} \mid i_0
            \end{align}
            where $\bSigma_{\bC_1}=(\bI_p+\bC_1)^{-\tfrac{1}{2}}\bSigma (\bI_p+\bC_1)^{-\tfrac{1}{2}}$, $\bC_1=i_0(\lambda(k-i_0))^{-1}\bN_0^{-1}$, and $\gamma_1=p/(k-i_0)$.
            Let $\gamma_0=p/i_0$. Note that conditioning on $i_0$, $\limsup\norm{\hSigma_0}_{\oper}\leq r_{\max}(1+\sqrt{\phi_0})^2$ almost surely as $i_0,p\rightarrow\infty$ and $\gamma_0\rightarrow\phi_0\in(0,\infty)$ from \citet{bai2010spectral}.
            Then $\hSigma_0\bM_2\bSigma$ has bounded operator norm and we have $\bM_1 \hSigma_0 \bM_2 \bSigma\asympequi \bM^{\det}_{\bN_0,i_0} \hSigma_0 \bM_2\bSigma\mid i_0$ by \Cref{prop:cond-calculus-detequi}~\ref{lem:cond-calculus-detequi-item-product}.

            \paragraph{Part (b).\hspace{-2mm}} Similarly, we have $\bM_2\asympequi \bM^{\det}_{\bN_0,i_0}\mid i_0$ and $\bM_1 \hSigma_0 \bM_2 \bSigma\asympequi \bM^{\det}_{\bN_0,i_0} \hSigma_0 \bM^{\det}_{\bN_0,i_0}\bSigma\mid i_0$.
            
            \paragraph{Part (c).\hspace{-2mm}}
            Note that
            \begin{align*}
                \bM^{\det}_{\bN_0,i_0} \hSigma_0 \bM^{\det}_{\bN_0,i_0} \bSigma &= \tfrac{k^2}{\lambda^2(k-i_0)^2}\left(  v(-\lambda;\gamma_1,\bSigma_{\bC_1})\bSigma +  \bI_p+ \bC_1\right)^{-1} \hSigma_0 \left( v(-\lambda;\gamma_1,\bSigma_{\bC_1})\bSigma +  \bI_p+ \bC_1\right)^{-1}\bSigma\\
                &= \tfrac{k^2}{i_0^2}\left( \bN_0^{-1}+\lambda\bC_0\right)^{-1} \hSigma_0 \left( \bN_0^{-1} + \lambda\bC_0\right)^{-1}\bSigma,
            \end{align*}
            where $\bC_0= (k-i_0)/i_0\cdot(v(-\lambda; \gamma_1,\bSigma_{\bC_1})\bSigma +\bI_p)$.
            Define $\bSigma_{\bC_0}=(\bI+\bC_0)^{-\tfrac{1}{2}}\bSigma(\bI+\bC_0)^{-\tfrac{1}{2}}$.
            Conditioning on $i_0$, by \Cref{lem:deter-approx-ridge-extend}~\ref{eq:lem:deter-approx-ridge-extend} we have
            \begin{align*}
                \tr[\bSigma_{\bC_1}(v_1\bSigma_{\bC_1}+\bI_p)^{-1}] &= \tr[\bSigma(v_1\bSigma+\bI_p+\bC_1)^{-1}]\\
                &= \tfrac{\lambda(k-i_0)}{i_0}\tr\left[\bSigma\left(\bN_0^{-1}+ \tfrac{\lambda(k-i_0)}{i_0}( v_1\bSigma+\bI_p)\right)^{-1}\right]\\
                &\stackrel{a.s.}{=} \tfrac{k-i_0}{i_0}\tr\left[\bSigma\left(v_0\bSigma+\bI_p+ \tfrac{k-i_0}{i_0}( v_1\bSigma+\bI_p)\right)^{-1}\right]\\
                &= \tr\left[\bSigma\left(
                \left(\tfrac{i_0}{k-i_0}v_0 + v_1 \right)\bSigma+ \tfrac{k}{k-i_0}\bI_p\right)^{-1}\right]
            \end{align*}
            where $v_0=v(-\lambda;\gamma_0,\bSigma_{\bC_0})$ and $\gamma_0=p/i_0$.
            Note that the fixed-point solution $v_0$ depends on $v_1$. The fixed-point equations reduce to
            \begin{align*}
                \tfrac{1}{v_0} &=\lambda + \gamma_0 \tr[\bSigma_{\bC_0}(v_0\bSigma_{\bC_0}+\bI_p)^{-1}]/p= \lambda + \tfrac{p}{k}\tr \left[\bSigma\left(\left(\tfrac{i_0}{k}v_0+\tfrac{k-i_0}{k}v_1\right)\bSigma + \bI_p\right)^{-1}\right]/p\\
                \tfrac{1}{v_1} &= \lambda + \gamma_1 \tr[\bSigma_{\bC_1}(v_1\bSigma_{\bC_1}+\bI_p)^{-1}]/p= \lambda + \tfrac{p}{k} \tr\left[\bSigma\left(
                \left(\tfrac{i_0}{k}v_0 +\tfrac{k-i_0}{k} v_1\right)\bSigma+ \bI_p\right)^{-1}\right]/p
            \end{align*}
            almost surely.
            By the same argument as in the proof for \Cref{lem:ridge-B0}, we have that the solution $v_0=v_1=v(-\lambda; p/k)$ of the above system does not depend on samples and equals to $v(-\lambda;\gamma_1,\bSigma_{\bC_1})$ or $v(-\lambda;\gamma_0,\bSigma_{\bC_0})$ almost surely. Thus, by \Cref{lem:substitue},
            \begin{align*}
                 \bM^{\det}_{\bN_0,i_0} \hSigma_0 \bM^{\det}_{\bN_0,i_0} \bSigma\asympequi  \tfrac{k^2}{i_0^2} (\bN_0^{-1}+\lambda\bC^*)^{-1}\hSigma_0(\bN_0^{-1}+\lambda\bC^*)^{-1} \mid i_0,
            \end{align*}
            where $\bC^*= (k-i_0)/i_0\cdot(v(-\lambda; p/k)\bSigma+\bI_p)$.
            From \Cref{lem:deter-approx-ridge-extend}~\ref{eq:lem:deter-approx-ridge-extend-gvar}, we have
            \begin{align*}
                \bM^{\det}_{\bN_0,i_0} \hSigma_0 \bM^{\det}_{\bN_0,i_0} \bSigma \asympequi (\bM_1 \hSigma_0 \bM_2 \bSigma)^{\det}_{i_0}&:= \tfrac{k^2}{i_0^2}\tv_v(-\lambda; \gamma_0,\bSigma_{\bC^*})(  v(-\lambda; \gamma_0,\bSigma_{\bC^*}) \bSigma + \bI_p+\bC^* )^{-2}\bSigma^2\mid i_0,
            \end{align*}
            where $\gamma_0=p/i_0$.
            
            From Parts (a) to (c), we have shown that $\bM_1 \hSigma_0 \bM_2 \bSigma\asympequi(\bM_1 \hSigma_0 \bM_2 \bSigma)^{\det}_{i_0} \mid i_0$ for $i_0<k$.
            Note that this also holds for $i_0=k$.
            Then by \Cref{prop:cond-calculus-detequi}, $\bM_1 \hSigma_0 \bM_2 \bSigma\asympequi(\bM_1 \hSigma_0 \bM_2 \bSigma)^{\det}_{i_0}$.
            
            Note that from \Cref{lem:ridge-fixed-point-v-properties}, $\tilde{v}_b(-\lambda;\gamma)$ and $v(-\lambda;\gamma)$ are continuous on $\gamma$, and from Lemma \ref{lem:i0_mean}, $i_0/k\asto \phi/\phi_s$ where $\phi_s\in(0,\infty)$ is the limiting ratio such that $p/k\rightarrow\phi_s$ as $k,p\rightarrow\infty$.
            We have
            \begin{align*}
                \bM_1 \hSigma_0 \bM_2 \bSigma&\asympequi \tfrac{\phi_s^2}{\phi^2}\tv_v(-\lambda; \phi_0,\bSigma_{\bC'})( v(-\lambda; \phi_0,\bSigma_{\bC'}) \bSigma + \bI_p+\bC')^{-2}\bSigma^2,
            \end{align*}
            where $\phi_0=\phi_s^2/\phi$, $\bSigma_{\bC'}=(\bI_p+\bC')^{-\tfrac{1}{2}}\bSigma(\bI_p+\bC')^{-\tfrac{1}{2}}$, and $\bC'=(\phi_s-\phi)/\phi\cdot(v(-\lambda; \phi_s)\bSigma +\bI_p)$.
            From \eqref{eq:vc}, we have that $v(-\lambda; \phi_0;\bSigma_{\bC'})=v(-\lambda; \phi_s)$, and
            \begin{align*}
                \phi\tv_v(-\lambda;\phi_0,\bSigma_{\bC'}) &=  \lim\limits_{p\rightarrow\infty} \tfrac{\phi}{v(-\lambda; \phi_0,\bC')^{-2}-\phi_0 \tr[\bSigma^2(v(-\lambda; \phi_0,\bC')\bSigma+\bI_p+\bC')^{-2}]/p}\\
            &= \lim\limits_{p\rightarrow\infty} \tfrac{\phi}{v(-\lambda; \phi_s)^{-2}-\phi \tr[\bSigma^2(v(-\lambda; \phi_s)\bSigma+\bI_p)^{-2}]/p}\\
            &=\ddfrac{\phi}{v(-\lambda; \phi_s)^{-2}-\phi \int\tfrac{r^2}{(1+ v(-\lambda; \phi_s)r)^2}\,\rd H(r)}:= v_v(-\lambda;\phi,\phi_s). 
            \end{align*}
            From \Cref{lem:properties-sol}~\ref{lem:properties-sol-item-f'-ridge}, we have that $v_v(-\lambda;\phi,\phi_s)>0$.
            Then we have
            \begin{align}
                \bM_1 \hSigma_0 \bM_2 \bSigma&\asympequi \phi^{-1}v_v(-\lambda;\phi,\phi_s)( v(-\lambda; \phi_s) \bSigma + \bI_p)^{-2}\bSigma^2,\label{eq:lem-ridge-V0-term-1}
            \end{align}
        and thus, we have
           \begin{align}
                \tfrac{i_0}{k^2}\tr( \bM_1 \hSigma_0 \bM_2 \bSigma))& \asto \lim\limits_{p\rightarrow\infty}\tfrac{i_0p}{k^2}\tfrac{1}{\phi}\cdot \tfrac{1}{p}\tr(v_v(-\lambda; \phi,\phi_s)(v(-\lambda; \phi_s) \bSigma + \bI_p)^{-2}\bSigma^2)\notag\\
                &= \lim\limits_{p\rightarrow\infty}\tfrac{1}{p}\sum_{i=1}^p \tfrac{v_v(-\lambda; \phi,\phi_s) r_i^2 }{(1+v(-\lambda; \phi_s) r_i)^2}\notag\\
                &= \lim\limits_{p\rightarrow\infty}\int \tfrac{v_v(-\lambda; \phi,\phi_s) r^2 }{(1+v(-\lambda; \phi_s)r)^2}\,\rd H_p(r)\notag\\
                &= \int \tfrac{ v_v(-\lambda; \phi,\phi_s) r^2 }{(1+v(-\lambda; \phi_s) r)^2}\,\rd H(r):= \tv(-\lambda; \phi,\phi_s) , \label{eq:lem:ridge-V0-2}
            \end{align}
            where in the last line we used the fact that $H_p$ and $H$ have compact supports and Assumption \ref{asm:spectrum-spectrumsignproj-conv}.
        \end{proof}

    \subsection{Boundary case: diverging subsample aspect ratio}
    \begin{proposition}[Risk approximation when $\phi_s\rightarrow+\infty$]\label{prop:Rdet-ridge-infinity}
    Under Assumptions \ref{asm:rmt-feat}-\ref{asm:spectrum-spectrumsignproj-conv}, it holds for all $M\in\NN$ $$R(\tfWR{\lambda}{M};\cD_n,\{I_\ell\}_{\ell=1}^M) \asto \RlamMe{M}{\phi}{\infty},$$
    as $k,n,p\rightarrow\infty$, $p/n\rightarrow\phi\in(0,\infty)$ and $p/k\rightarrow\infty$, where
    \begin{align}
        \RlamMe{M}{\phi}{\infty}:=\lim\limits_{\phi_s\rightarrow+\infty}\RlamM{M}{\phi} = \sigma^2 + \rho^2\int r\,\rd G(r) \label{eq:Rdet-ridge-infinity}
    \end{align}
    and $\RlamM{M}{\phi}$ is defined in \Cref{thm:ver-ridge}.
    \end{proposition}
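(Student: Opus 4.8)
The plan is to bypass the resolvent calculus entirely: in the regime $\phi_s = p/k \to \infty$ the subsample Gram matrices become so ill-conditioned on their range that each ingredient ridge predictor is asymptotically the null predictor, so the limiting risk is the ``null risk'' $\sigma^2 + \lim_p \bbeta_0^\top \bSigma \bbeta_0 = \sigma^2 + \rho^2 \int r\,\rd G(r)$. Since the proof of \Cref{thm:ver-ridge} invokes this proposition, I cannot appeal to \Cref{thm:ver-ridge}; instead I will establish the subsample conditional risk limit for $M = 1$ and $M = 2$ directly and then invoke the model-agnostic reduction \Cref{thm:risk_general_predictor} to obtain all $M$ uniformly (and the data conditional version).

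\textbf{Step 1 (the deterministic limit).} I would first verify $\lim_{\phi_s \to \infty} \RlamM{M}{\phi} = \sigma^2 + \rho^2 \int r\,\rd G(r)$. Because $H$ is supported in $[r_{\min}, r_{\max}]$, the fixed-point equation \eqref{eq:fixed-point-ridge} forces $v(-\lambda;\theta) \to 0$, in fact $v(-\lambda;\theta)^{-1} \sim \theta \int r\,\rd H(r)$, as $\theta \to \infty$. Dominated convergence then gives $\tc(-\lambda;\theta) \to \int r\,\rd G(r)$, and $\tv(-\lambda;\vartheta,\theta) \to 0$ both when $\vartheta$ is held fixed and when $\vartheta = \theta \to \infty$ (the denominator $v^{-2} - \vartheta \int r^2(1+vr)^{-2}\,\rd H$ dominates the numerator in either case). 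Hence $B_\lambda(\vartheta,\theta) \to \rho^2 \int r\,\rd G(r)$ and $V_\lambda(\vartheta,\theta) \to 0$, so \eqref{eq:bias-component-decomposition}--\eqref{eq:var-component-decomposition} give the claimed limit for every $M$. That this equals the null risk follows from $\bbeta_0^\top \bSigma \bbeta_0 = \|\bbeta_0\|_2^2 \int r\,\rd G_p(r) \to \rho^2 \int r\,\rd G(r)$ under Assumptions \ref{asm:signal-bounded-norm} and \ref{asm:spectrum-spectrumsignproj-conv}.

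\textbf{Step 2 (key spectral fact).} The crux is: for any size-$k$ index set $I_m$, the smallest \emph{nonzero} eigenvalue $\lambda_{\min}^{+}(\hSigma_m)$ of $\hSigma_m = \bX^\top\bL_m\bX/k$ satisfies $\lambda_{\min}^{+}(\hSigma_m) \asto \infty$. Writing the subsampled feature matrix as $\bZ_m\bSigma^{1/2}$ with $\bZ_m \in \RR^{k \times p}$ having i.i.d.\ entries with bounded $4+\delta$ moments (\Cref{asm:rmt-feat}), the nonzero spectrum of $\hSigma_m$ coincides with that of $\bZ_m\bSigma\bZ_m^\top/k \succeq r_{\min}\bZ_m\bZ_m^\top/k$ (using \Cref{asm:covariance-bounded-eigvals}); Bai--Yin's law \citep{bai2010spectral}, valid under finite fourth moment, gives $\lambda_{\min}(\bZ_m\bZ_m^\top/p) \asto (1-\sqrt{k/p})^2 \to 1$, whence $\lambda_{\min}^{+}(\hSigma_m) \ge r_{\min}(p/k)(1-\sqrt{k/p})^2(1+o(1)) \to \infty$. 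Since the law of $\bZ_m$ does not depend on which $k$ rows are selected, this holds along fixed sequences of subsamples and, conditionally (hence unconditionally), for randomly drawn ones.

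\textbf{Step 3 (risk for $M = 1, 2$).} Decompose $\hbeta_\lambda(\cD_{I_m}) = (\bI_p - \lambda\bM_m)\bbeta_0 + \bM_m(\bX^\top\bL_m/k)\bepsilon$ with $\bM_m = (\hSigma_m + \lambda\bI_p)^{-1}$, so that $\bbeta_0 - \hbeta_\lambda(\cD_{I_m}) = \bbeta_0 - e_m - b_m$ with $e_m := \bM_m\hSigma_m\bbeta_0$ and $b_m := \bM_m(\bX^\top\bL_m/k)\bepsilon$. For $e_m$: since $(\bM_m\hSigma_m)^2 \preceq \Pi_m$ (the orthogonal projection onto $\mathrm{range}(\hSigma_m)$) and $\Pi_m \preceq \lambda_{\min}^{+}(\hSigma_m)^{-1}\hSigma_m$, we get $\|e_m\|_{\bSigma}^2 \le r_{\max}\,\bbeta_0^\top\Pi_m\bbeta_0 \le (r_{\max}/\lambda_{\min}^{+}(\hSigma_m))\,\bbeta_0^\top\hSigma_m\bbeta_0$, and $\bbeta_0^\top\hSigma_m\bbeta_0 = k^{-1}\sum_{i\in I_m}(\bx_i^\top\bbeta_0)^2 \asto \bbeta_0^\top\bSigma\bbeta_0 = O(1)$, so $\|e_m\|_{\bSigma} \asto 0$ by Step 2. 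For $b_m$: the matrix of the $\bepsilon$-quadratic form has operator norm $\le r_{\max}\|\bM_m\hSigma_m\bM_m\|_{\oper}/k \le r_{\max}/(4\lambda k)$, so \Cref{lem:concen-quadform} gives $\|b_m\|_{\bSigma}^2 - \sigma^2 k^{-1}\tr(\bM_m^2\hSigma_m\bSigma) \asto 0$ with $\sigma^2 k^{-1}\tr(\bM_m^2\hSigma_m\bSigma) \le r_{\max}\sigma^2 k^{-1}\tr(\bM_m^2\hSigma_m) \le r_{\max}\sigma^2/\lambda_{\min}^{+}(\hSigma_m) \to 0$ (bounding each nonzero eigenvalue $s/(s+\lambda)^2 \le 1/s \le 1/\lambda_{\min}^{+}(\hSigma_m)$ and using that at most $k$ of them are nonzero), hence $\|b_m\|_{\bSigma} \asto 0$. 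Therefore, for $M=1$, $R(\tfWR{\lambda}{1};\cD_n,\{I_1\}) = \sigma^2 + \|\bbeta_0 - e_1 - b_1\|_{\bSigma}^2 \asto \sigma^2 + \|\bbeta_0\|_{\bSigma}^2$ by the triangle inequality for $\|\cdot\|_{\bSigma}$, and this holds for every sequence of subsamples; since $\bbeta_0 - \tilde{\bbeta}_{\lambda,2} = \bbeta_0 - \tfrac12(e_1+e_2) - \tfrac12(b_1+b_2)$, the identical argument yields $R(\tfWR{\lambda}{2};\cD_n,\{I_1,I_2\}) \asto \sigma^2 + \|\bbeta_0\|_{\bSigma}^2 = \sigma^2 + \rho^2\int r\,\rd G(r)$.

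\textbf{Step 4 (general $M$) and the main obstacle.} Conditions \eqref{eq:subsample_cond_risk_M1}--\eqref{eq:subsample_cond_risk_M12} now hold with $b_1 = b_2 = \sigma^2 + \rho^2\int r\,\rd G(r)$, so \Cref{thm:risk_general_predictor} delivers $\sup_{M\in\NN}|R(\tfWR{\lambda}{M};\cD_n,\{I_\ell\}_{\ell=1}^M) - \RlamMe{M}{\phi}{\infty}| \to 0$ and its data conditional analogue, since $(2b_2 - b_1) + 2(b_1 - b_2)/M$ collapses to the constant $\sigma^2 + \rho^2\int r\,\rd G(r)$ matching Step 1; the pointwise-in-$M$ claim is a special case, and the statement for $\tf^{\WOR}_{M,\cI_k}$ follows because the two draws' intersection proportions share the same limit (\Cref{lem:i0_mean}). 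The main obstacle is Step 2: although morally immediate, it must be pushed through under moment-only (non-sub-Gaussian) assumptions and one must check that the divergence of $\lambda_{\min}^{+}(\hSigma_m)$ applies to the relevant sequences and random draws of subsamples; the rest is routine bookkeeping. A secondary point worth emphasizing is that, unlike its finite-$\phi_s$ counterpart, the $M=2$ analysis needs \emph{no} conditional resolvent equivalents, because both signal errors $e_1,e_2$ vanish in $\bSigma$-norm and Cauchy--Schwarz closes all cross terms.
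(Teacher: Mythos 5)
Your proposal is correct and takes essentially the same route as the paper: the paper likewise shows the subsample ridge estimator vanishes because the smallest nonzero singular value of the subsampled design diverges as $p/k \to \infty$ (it bounds $\|\tilde{\bbeta}_{\lambda,M}\|_2 \le M^{-1}\sum_m \|(\hSigma_m+\lambda\bI_p)^{-1}\bX^\top\bL_m/\sqrt{k}\|_{\oper}\,\|\bL_m\by/\sqrt{k}\|_2 \le C/s_{\min} \to 0$ in one shot rather than splitting into your $e_m$ and $b_m$), concludes the risk tends to the null risk $\sigma^2 + \bbeta_0^\top\bSigma\bbeta_0$, and then verifies the formula's limit exactly as in your Step 1. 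One small point: routing general $M$ through \Cref{thm:risk_general_predictor} only yields convergence in probability, whereas the proposition asserts almost sure convergence for each fixed $M$ — but your Step 3 bound applies verbatim to the average $M^{-1}\sum_{m}(e_m+b_m)$ for any fixed $M$, which is precisely how the paper handles it.
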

    \begin{proof}[Proof of \Cref{prop:Rdet-ridge-infinity}]
        Note that 
        \begin{align*}
            R(\tfWR{\lambda}{M};\cD_n,\{I_\ell\}_{\ell=1}^M) &= \EE_{(\bx_0,y_0)}[(y_0-\bx_0^{\top}\tbetaridge{M})^2] \\
            &= \EE_{(\bx_0,y_0)}[(\bepsilon_0
            +\bx_0^{\top}(\bbeta_0-\tbetaridge{M}))^2] \\
            &= \sigma^2 + \EE_{(\bx_0,y_0)}[(\bbeta_0-\tbetaridge{M})^{\top}\bx_0\bx_0^{\top}(\bbeta_0-\tbetaridge{M}) ] \\
            &= \sigma^2 + (\bbeta_0-\tbetaridge{M})^{\top}\bSigma(\bbeta_0-\tbetaridge{M}).
        \end{align*}
        Then, by the Cauchy-Schwarz inequality, we have
        \begin{align*}
            R(\tfWR{\lambda}{M};\cD_n,\{I_\ell\}_{\ell=1}^M) - (\bbeta_0^{\top}\bSigma\bbeta_0 + \sigma^2) &= \|\bSigma^{\tfrac{1}{2}}\tbetaridge{M}\|_2^2 - 2 \tbetaridge{M}^{\top}\bSigma\bbeta_0\\
            &\leq \tfrac{1}{r_{\min}} \|\tbetaridge{M}\|_2^2 + 2 \|\tbetaridge{M}\|_2 \|\bSigma\|_2 \\
            &\leq \tfrac{1}{r_{\min}} \|\tbetaridge{M}\|_2^2 + 2 r_{\max}\rho \|\tbetaridge{M}\|_2,
        \end{align*}
        almost surely as $k,n,p\rightarrow$ and $p/k\rightarrow\infty$.
        Thus, we have the following holds almost surely:
        \begin{align*}
            \|\tbetaridge{M}(\{\cD_{I_\ell}\}_{\ell=1}^M)\|_2 &\leq \tfrac{1}{M}\sum_{m=1}^M \|(\bX^{\top}\bL_m\bX/k+\lambda\bI_p)^{-1}(\bX^{\top}\bL_m\by/k)\|_2 \\
            &\leq \tfrac{1}{M}\sum_{m=1}^M \|(\bX^{\top}\bL_m\bX/k+\lambda\bI_p)^{-1}\bX^{\top}\bL_m/\sqrt{k}\|\cdot\|\bL_m\by/\sqrt{k}\|_2\\
            &\leq C\sqrt{\rho^2+\sigma^2}\cdot \tfrac{1}{M}\sum_{m=1}^M\|(\bX^{\top}\bL_m\bX/k+\lambda\bI_p)^{-1}\bX^{\top}\bL_m/\sqrt{k}\|,
        \end{align*}
        where the last inequality holds eventually almost surely since Assumptions \ref{asm:rmt-feat}-\ref{asm:signal-bounded-norm} imply that the entries of $\by$ have bounded $4$-th moment, and thus from the strong law of large numbers, $\| \bL_m \by / \sqrt{k} \|_2$ is eventually almost surely
        bounded above by $C\sqrt{\EE[y_1^2]} = C\sqrt{\rho^2 + \sigma^2}$ for some constant $C$.
        Observe that operator norm of the matrix $(\bX^\top \bL_m\bX / k+ \lambda\bI_p)^{-1} \bX\bL_m / \sqrt{k}$ is upper bounded $\max_i s_i/(s_i^2+\lambda)\leq 1/s_{\min}$ where $s_i$'s are the singular values of $\bX$ and $s_{\min}$ is the smallest nonzero singular value.
        As $k, p \to \infty$ such that $p / k \to \infty$, $s_{\min} \to \infty$ almost surely (e.g., from results in \cite{bloemendal2016principal}) and therefore, $\| \tbetaridge{M} \|_2 \to 0$ almost surely.
        Thus, we have shown that
        $$R(\tfWR{\lambda}{M};\cD_n,\{I_\ell\}_{\ell=1}^M) \asto \sigma^2+\bbeta_0^{\top}\bSigma\bbeta_0 .$$
        or equivalently 
        $$R(\tfWR{\lambda}{M};\cD_n,\{I_\ell\}_{\ell=1}^M) \asto \sigma^2+ \rho^2\int r\,\rd G(r).$$
        
        From \Cref{lem:ridge-fixed-point-v-properties}, we have $$\lim_{\phi_s\rightarrow+\infty}v(-\lambda;\phi_s) = \lim_{\phi_s\rightarrow+\infty}\tv_b(-\lambda;\phi_s)=\lim_{\phi_s\rightarrow+\infty}\tv_v(-\lambda;\phi_s).$$ Thus, $$\lim_{\phi_s\rightarrow+\infty}V_{\lambda}(\phi,\phi_s)=\lim_{\phi_s\rightarrow+\infty}V_{\lambda}(\phi,\phi_s) = 0$$
        and $$\lim_{\phi_s\rightarrow+\infty}B_{\lambda}(\phi,\phi_s)=\lim_{\phi_s\rightarrow+\infty}B_{\lambda}(\phi,\phi_s) = \rho^2\int r\,\rd G(r).$$
        Therefore, we have $\RlamMe{M}{\phi}{\infty}:=\lim_{\phi_s\rightarrow+\infty}\RlamM{M}{\phi} = \sigma^2 + \rho^2\int r\,\rd G(r)$.
        Thus, $\RlamMe{M}{\phi}{\infty}$ is well defined and $\RlamM{M}{\phi}$ is right continuous at $\phi_s=+\infty$.
    \end{proof}
    
\section
[Proof of \Cref{thm:ver-with-replacement} (subagging with replacement, ridgeless predictor)]
{Proof of \Cref{thm:ver-with-replacement} (subagging with replacement, ridgeless predictor)}
\label{sec:appendix-with-replacement-ridgeless}

   As done in \Cref{sec:appendix-with-replacement-ridge},
   for proving the asymptotic conditional risks, we will treat $\cI_k$ or $\cI_k^{\sigma}$ as fixed.
   We will use $\tfWR{0}{M}$ to denote the ingredient predictor associated with regularization parameter $\lambda = 0$. 
    
    \subsection{Proof assembly}
    
    We first explicitly write out the statement of \Cref{thm:ver-with-replacement}
    for the ridgeless case of $\lambda = 0$.
    As in \Cref{sec:appendix-with-replacement-ridge}, we obtain the risk decomposition for general $M$ though it suffices to analyze the case $M=2$ according to \Cref{thm:risk_general_predictor}.
    
    For ridgeless predictors ($\lambda=0$) and $\theta>1$, 
    the scalar $v(0;\theta)$ is the unique fixed-point solution 
    to the following equation:
    \begin{align}
        v(0;\theta)^{-1} = \theta\int r(1 + v(0;\theta)r)^{-1}\,\rd H(r). \label{eq:v_ridgeless}
    \end{align}
    and the nonnegative constants $\tv(0;\vartheta,\theta)$ and $\tc(0;\theta)$ are defined via the following equations:
    \begin{align}
        \tv(0;\vartheta,\theta) = \frac{\vartheta \int r^2(1+ v(0; \theta)r)^{-2}\,\rd H(r)}{v(0; \theta)^{-2}-\vartheta \int r^2 (1+ v(0; \theta)r)^{-2}\,\rd H(r)},\qquad 
        \tc(0;\theta)& =\int  r (1 + v(0; \theta)r)^{-2} \,\rd G(r). \label{eq:tv_tc_ridgeless}
    \end{align}
    When $\theta\leq 1$, the quantities defined in \eqref{eq:v_ridgeless} and \eqref{eq:tv_tc_ridgeless} are interpreted as $\lim_{\lambda\rightarrow0^+}v(-\lambda; \theta) = \infty$, $\lim_{\lambda\rightarrow0^+}\tc(-\lambda; \theta) = 0$ and $\lim_{\lambda\rightarrow0^+}\tv(-\lambda; \vartheta, \theta) = \vartheta (1-\vartheta)^{-1}$.
    
    \begin{theorem}[Risk characterization of subagged ridgeless predictor]\label{thm:ver-ridgeless}
        Let $\tfWR{0}{M}$ be the ingredient predictor as defined in \eqref{eq:ingredient-predictor}.
        Suppose 
        Assumptions \ref{asm:rmt-feat}-\ref{asm:spectrum-spectrumsignproj-conv} hold for the dataset $\cD_n$.
        Then,
        as
        $k,n,p\rightarrow\infty$
        such that
        $p/n\rightarrow\phi\in(0,\infty)$ and $p/k\rightarrow\phi_s\in[\phi,\infty]$
        and $\phi_s \neq 1$,
        there exists a deterministic function $\RzeroM{M}{\phi}$, $M \in \NN$,
        such that for $I_1, \ldots, I_M \overset{\texttt{\textup{SRSWR}}}{\sim} \mathcal{I}_k$, 
        \begin{align*}
            \sup_{M\in\NN}| R(\tfWR{0}{M}; \{\cD_{I_{\ell}}\}_{\ell=1}^M) - \RzeroM{M}{\phi} | \pto 0,
        \end{align*}
        and
        $$ \sup_{M\in\NN}
        | R(\tf_{0, M}; \cD_n) - \RzeroM{M}{\phi} | \asto 0.$$
        Furthermore, the function $\RzeroM{M}{\phi}$
        decomposes as
        $\RzeroM{M}{\phi} = \sigma^2 + \BzeroM{M}{\phi} + \VzeroM{M}{\phi}$,
        where 
        the terms are given by
        $
            \BzeroM{M}{\phi}
            = M^{-1} B_0(\phi_s, \phi_s)
            + (1 -  M^{-1}) B_{0}(\phi,\phi_s),
        $
        and 
        $
            \VzeroM{M}{\phi}
            = M^{-1} V_{0}(\phi_s, \phi_s)
            + (1 - M^{-1})
            V_{0}(\phi,\phi_s),
        $
        and the functions $B_{0}(\cdot, \cdot)$
        and $V_{0}(\cdot, \cdot)$ are defined as
        \begin{align*}
            B_0(\vartheta, \theta)
            &= 
            \begin{dcases}
            0 
            & \theta \in (0, 1), \, \vartheta \le \theta \\
            \rho^2 
            (1 + \tv(0;\vartheta, \theta)\tc(0;\theta)& \theta \in (1, \infty], \, \vartheta \le \theta
            \end{dcases}, \qquad
            V_0(\vartheta,\theta)
            =
            \begin{dcases}
                \sigma^2
                \tfrac{\vartheta}{1  - \vartheta}
                & \theta \in (0, 1), \, \vartheta \le \theta \\
                \sigma^2 \tv(0;\vartheta,\theta)
                & \theta \in (1, \infty], \, \vartheta \le \theta
            \end{dcases},
        \end{align*}
        where the nonnegative constants $\tv(0;\vartheta,\theta)$ and $\tc(0;\theta)$ are as defined in \eqref{eq:tv_tc_ridgeless}.
    \end{theorem}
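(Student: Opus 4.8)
\textbf{Proof proposal for \Cref{thm:ver-ridgeless}.}
The plan is to obtain the ridgeless result as the $\lambda \to 0^{+}$ limit of \Cref{thm:ver-ridge}, mirroring the strategy already used for the $M = 1$ case in \citet{patil2022mitigating}. First, I would fix $M = 2$ and recall from the proof of \Cref{thm:ver-ridge} that the subsample conditional risk $R(\tfWR{\lambda}{2}; \cD_n, \{ I_1, I_2 \})$ decomposes as in \eqref{eq:risk-decomp-m2-wr} into a constant term, a term built from the $M = 1$ risks, and the cross term $(\bbeta_0 - \hbeta_\lambda(\cD_{I_1}))^\top \bSigma (\bbeta_0 - \hbeta_\lambda(\cD_{I_2}))$. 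For each of these, the $M = 1$ pieces converge to the already-established ridgeless limits (Theorem 2.xx / the $M=1$ specialization in \citet{patil2022mitigating}), so the work concentrates on the cross term. The key object is the deterministic equivalent \eqref{eq:lem-ridge-B0-term-det} and its variance analogue \eqref{eq:lem-ridge-V0-term-1}, both expressed through $v(-\lambda; \phi_s)$, $\tv(-\lambda; \phi, \phi_s)$ and $\tc(-\lambda; \phi_s)$.

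Next I would establish that these deterministic quantities have the correct limits as $\lambda \to 0^{+}$: this is exactly what \eqref{eq:v_ridgeless}--\eqref{eq:tv_tc_ridgeless} and the interpretation for $\theta \le 1$ record, and it follows from the monotonicity of $\lambda \mapsto v(-\lambda; \theta)$ (\citet[Lemma S.6.15(4)]{patil2022mitigating}) together with dominated convergence in the fixed-point integrals, splitting into the cases $\phi_s > 1$ (where $v(0;\phi_s)$ is finite and solves \eqref{eq:fixed-point-ridgeless}) and $\phi_s < 1$ (where $v(-\lambda;\phi_s) \to \infty$, forcing $\tc \to 0$ and $\tv \to \vartheta/(1-\vartheta)$). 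I would then justify interchanging the $n,k,p \to \infty$ limit with the $\lambda \to 0^{+}$ limit: this is the uniformity argument flagged in the proof outline of \Cref{thm:ver-with-replacement}. Concretely, one needs a bound of the form $\sup_{\lambda \in (0, \lambda_0]} | R(\tfWR{\lambda}{2}; \cD_n, \{I_1, I_2\}) - \sR_{\lambda, 2}(\phi) | \pto 0$ plus equicontinuity of $\lambda \mapsto \sR_{\lambda, 2}(\phi)$ on $(0, \lambda_0]$ extending to $\lambda = 0$, which lets us send $\lambda \downarrow 0$ at a rate depending on $n$; the condition $\phi_s \neq 1$ is precisely what keeps the smallest nonzero eigenvalue of the subsampled sample covariance bounded away from $0$ (via \citet{bloemendal2016principal}-type edge estimates), making the $\ell_2$ norm of $\hbeta_0(\cD_{I_j})$ and hence the risk uniformly controlled for small $\lambda$. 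The details of this step are collected in \Cref{sec:appendix-with-replacement-ridgeless}.

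Once the subsample conditional risks for $M = 1$ and $M = 2$ are pinned down at $\lambda = 0$, giving $R(\tfWR{0}{1}; \cD_n, \{I\}) \asto \RzeroMe{1}{\phi}{}$ for every $I \in \cI_k$ and $R(\tfWR{0}{2}; \cD_n, \{I_1, I_2\}) \asto \RzeroMe{2}{\phi}{}$ for SRSWR draws, I would invoke \Cref{thm:risk_general_predictor} to lift these to the full conclusion: the subsample conditional risk for all $M$ (in probability, uniformly in $M$), the data conditional risk for all $M$ (almost surely, uniformly in $M$), and the convex-combination decomposition $\RzeroM{M}{\phi} = \sigma^2 + \BzeroM{M}{\phi} + \VzeroM{M}{\phi}$ with the stated bias and variance formulas obtained by substituting $a_1 = \sigma^2 + B_0(\phi_s,\phi_s) + V_0(\phi_s,\phi_s)$ and $a_2 = $ the $M=2$ limit, then reading off $2a_2 - a_1$ and $2(a_1 - a_2)$. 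The SRSWOR claim follows because, by \Cref{lem:i0_mean}, the limiting overlap fraction $i_0/k \to \phi/\phi_s$ is the same for SRSWR and SRSWOR. The boundary case $\phi_s = \infty$ is handled exactly as in \Cref{prop:Rdet-ridge-infinity}, noting $\| \tbetamntwo(\cD_{I_m}) \|_2 \to 0$ when the subsample is heavily overparameterized.

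The main obstacle is the interchange-of-limits step: \Cref{thm:ver-ridge} is stated for fixed $\lambda > 0$, so one cannot merely take $\lambda \to 0^{+}$ in the conclusion without re-examining the proof with $\lambda$-uniform control of every error term (the concentration bounds in \Cref{lem:ridge-conv-C0,lem:ridge-conv-V0}, which currently carry explicit $1/\lambda$ powers, and the deterministic-equivalent approximations in \Cref{lem:ridge-B0,lem:ridge-V0}). Making those bounds uniform in $\lambda$ down to $0$ is exactly where the $\phi_s \neq 1$ hypothesis is essential, since near $\phi_s = 1$ the edge of the spectrum of $\hSigma_m$ collapses toward the origin and $\| \bM_m \|_{\oper}$ is no longer controlled by anything better than $1/\lambda$; away from $\phi_s = 1$ the relevant operator norms are controlled by the spectral edge $r_{\min}(1 - \sqrt{\phi_s})^2$ uniformly in small $\lambda$, which is what makes the argument go through.
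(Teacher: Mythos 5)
Your high-level plan coincides with the paper's announced strategy (reduce to the subsample conditional risks for $M=1,2$, pass to $\lambda \to 0^{+}$ with a uniformity argument, then lift via \Cref{thm:risk_general_predictor} and handle $\phi_s = \infty$ separately), but the execution differs from the paper's in two respects worth noting, one of which hides a real issue. First, the paper does not take the $\lambda \to 0^{+}$ limit of the full risk statement of \Cref{thm:ver-ridge}; it writes the ridgeless estimator directly through Moore--Penrose pseudoinverses, treats $\phi_s \in (0,1)$ by an entirely $\lambda$-free argument (there $\hSigma_m^{+}\hSigma_m = \bI_p$ almost surely, so the bias vanishes identically and the variance cross term is handled by a direct deterministic equivalent for $\hSigma_1^{-1}\hSigma_0\hSigma_2^{-1}\bSigma$), and invokes the $\lambda \to 0^{+}$ interchange only for $\phi_s > 1$, and only at the level of the individual trace functionals $\tr[\lambda^2 \bM_1 \bSigma \bM_2 \bT]$ and $\tr[\bM_1\hSigma_0\bM_2\bSigma]$, where equicontinuity in $\lambda$ and Moore--Osgood apply cleanly. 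Second, and more substantively, your stated reason that the interchange works -- that away from $\phi_s = 1$ ``the relevant operator norms are controlled by the spectral edge $r_{\min}(1-\sqrt{\phi_s})^2$ uniformly in small $\lambda$'' -- is false as written in the overparameterized regime: for $\phi_s > 1$ the matrix $\hSigma_m$ is rank deficient, so $\|\bM_m\|_{\oper} = \|(\hSigma_m + \lambda\bI_p)^{-1}\|_{\oper} = 1/\lambda$ exactly, and no spectral-edge bound rescues it. What actually makes the argument go through is that every term in the risk decomposition involves only the $\lambda$-compensated combinations $\lambda\bM_m$ (operator norm at most $1$), $\bM_m\bX^\top\bL_m/\sqrt{k}$ (operator norm at most $1/s_{\min}$ with $s_{\min}$ the smallest nonzero singular value, which the edge estimate does control), and $\bM_1\hSigma_0\bM_2$ (bounded by $k^2/i_0^2$ uniformly in $\lambda$); your write-up should bound these specific objects rather than the bare resolvents, otherwise the uniform-in-$\lambda$ control you need for the interchange does not follow. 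With that correction your route is viable and is essentially a risk-level repackaging of the paper's functional-level interchange; the paper's split treatment of $\phi_s<1$ versus $\phi_s>1$ buys a shorter argument in the underparameterized case at the cost of two separate lemmas.
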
    
    \begin{proof}[Proof of Theorem \ref{thm:ver-ridgeless}]
        We use the same notations as in the proof for Theorem \ref{thm:ver-ridge} and let $\hSigma_m=\bX^{\top}\bL_m\bX/k$ for all $m\in[M]$.
        Note that
        \begin{align*}
            \bbeta_0-\tbeta(\{\cD_{I_\ell}\}_{\ell=1}^M)  &=
            \tfrac{1}{M}\sum\limits_{m=1}^M(\bI_p-\hSigma_m^{+}\hSigma_m)\bbeta_0 -            \tfrac{1}{M}\sum\limits_{m=1}^M\hSigma_m^{+}\tfrac{\bX^{\top}\bL_m\bepsilon}{k} .
        \end{align*}
        We have 
        \begin{align*}
         R(\tfWR{0}{M};\cD_n,\{I_\ell\}_{\ell=1}^M) &= \sigma^2 + (\bbeta_0-\tbeta(\{\cD_{I_\ell}\}_{\ell=1}^M))^{\top}\bSigma(\bbeta_0-\tbeta(\{\cD_{I_\ell}\}_{\ell=1}^M)) \\
         &= \sigma^2 + T_B + T_V + T_C,
        \end{align*}
        where
        \begin{align}
            T_C &= - \tfrac{2}{M^2}\bepsilon ^{\top}\left(\sum_{m=1}^M\hat{\bSigma}_m^{+}\tfrac{\bX^{\top}\bL_m}{k}\right)^{\top}\bSigma\left(\sum_{m=1}^M(\bI_p - \hat{\bSigma}_m^{+}\hat{\bSigma}_m)\right)   \bbeta_0,\label{eq:ridgeless-C0}\\
            T_B &= \tfrac{1}{M^2}\bbeta_0^{\top} \left(\sum_{m=1}^M(\bI_p - \hat{\bSigma}_m^{+}\hat{\bSigma}_m)\right) \bSigma \left(\sum_{m=1}^M(\bI_p - \hat{\bSigma}_m^{+}\hat{\bSigma}_m)\right) \bbeta_0,\label{eq:ridgeless-B0}\\
            T_V&=\tfrac{1}{M^2} \bepsilon^{\top} \left(\sum_{m=1}^M\hat{\bSigma}_m^{+}\tfrac{\bX^{\top}\bL_m}{k}\right)^{\top}
            \bSigma \left(\sum_{m=1}^M\hat{\bSigma}_m^{+}\tfrac{\bX^{\top}\bL_m}{k}\right)
            \bepsilon. \label{eq:ridgeless-V0}
        \end{align}
        
        Next we analyze the three term separately for $M\in\{1,2\}$.
        From \Cref{lem:ridgeless-conv-C0}, we have that $T_C\asto0$.
        Further, from \Cref{lem:ridgeless-B0}, \Cref{lem:ridgeless-V0-phis_lt1}, and
        \Cref{lem:ridgeless-V0-phis-gt1}, for all $I_1\in\cI_k$ when $M=1$ and for all $I_m,I_l\overset{\texttt{\textup{SRSWR}}}{\sim}\cI_k$ when $M=2$, it holds that $$R(\tilde{f}_{M,\lambda};\{\cD_{I_{\ell}}\}_{\ell=1}^M) \asto \RzeroM{M}{\phi}$$ as $n,k,p\rightarrow\infty$, $p/n\rightarrow\phi\in(0,\infty)$, and $p/k\rightarrow\phi_s\in[\phi,\infty)\setminus\{1\}$, where
        \begin{align*}
            \RzeroM{M}{\phi} &= \sigma^2 + \tfrac{1}{M}(B_{0}(\phi_s,\phi_s)+V_{0}(\phi_s,\phi_s)) + \tfrac{M-1}{M}(B_{0}(\phi,\phi_s) +V_{0}(\phi,\phi_s)).
        \end{align*}
        Here, the components are:
        \begin{align*}
            B_{0}(\phi,\phi_s) &= \begin{cases}
            0,&\phi_s\in(0,1)\\
            \rho^2(1+\tv(0;\phi,\phi_s))\tc(0;\phi_s),&\phi_s\in(1,\infty)
            \end{cases},
            &&
            V_{0}(\phi,\phi_s) = \begin{cases}
            \sigma^2 \tfrac{\phi}{1-\phi},&\phi_s\in(0,1)\\
            \sigma^2\tv(0;\phi,\phi_s), &\phi_s\in(1,\infty)
            \end{cases},
        \end{align*}
        and the nonnegative constants $\tv(0;\phi,\phi_s)$ and $\tc(0;\phi_s)$ 
        are as defined in \eqref{eq:tv_tc_ridgeless}.
        The proof for the boundary case when $\phi_s=\infty$ follows from \Cref{prop:Rdet-ridgeless-infinity}.
        Then, we have that the function $\RzeroM{M}{\phi}$ is continuous on $[\phi,\infty]\setminus\{1\}$ and lower-semi continuous on $[\phi,\infty]$.
        
        Finally, the risk expression for general $M$ and the uniformity claim over $M\in\NN$ follow from \Cref{thm:risk_general_predictor}. 
    \end{proof}

    \subsection{Component concentrations}
        In this subsection, we will show that the cross-term $C_0$ converges to zero 
        and the variance term $T_V$ converge to its corresponding trace expectation.
    
        \subsubsection{Convergence of the cross term}
        \begin{lemma}[Convergence of the cross term]\label{lem:ridgeless-conv-C0}
            Under Assumptions \ref{asm:rmt-feat}-\ref{asm:spectrum-spectrumsignproj-conv}, 
            for $T_C$ as defined in \eqref{eq:ridgeless-C0},
            we have $T_C\asto 0$ as $k,p\rightarrow\infty$ and $p/k\rightarrow\phi_s\in(0,1)\cup(1,\infty)$, 
        \end{lemma}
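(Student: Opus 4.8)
\textbf{Proof plan for \Cref{lem:ridgeless-conv-C0}.}
The plan is to mirror the argument used for the ridge case in \Cref{lem:ridge-conv-C0}: express $T_C$ as an inner product of $\bepsilon$ with a vector that does not depend on $\bepsilon$, bound the (normalized) squared Euclidean norm of that vector by a quantity that is uniformly bounded in $p$ (almost surely, eventually), and then invoke the concentration-of-linear-forms lemma (\Cref{lem:concen-linform}) to conclude $T_C \asto 0$. The key difference from the ridge case is that the resolvent $\bM_m = (\hSigma_m + \lambda \bI_p)^{-1}$ is replaced by the pseudoinverse $\hSigma_m^{+}$, whose operator norm is governed not by $1/\lambda$ but by $1/s_{\min,m}^2$, where $s_{\min,m}$ is the smallest nonzero singular value of $\bL_m \bX / \sqrt{k}$. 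So the main point is to control these pseudoinverse operator norms uniformly.

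Concretely, first I would write
\begin{align*}
    T_C = -\frac{2}{M^2} \left\langle \left(\sum_{m=1}^M \hSigma_m^{+} \frac{\bX^\top \bL_m}{k}\right)^\top \bSigma \left(\sum_{m=1}^M (\bI_p - \hSigma_m^{+}\hSigma_m)\right) \bbeta_0, \, \bepsilon \right\rangle,
\end{align*}
so that the relevant deterministic vector is $\bu := \frac{1}{M}(\sum_m \hSigma_m^{+}\bX^\top \bL_m / k)^\top \bSigma (\frac{1}{M}\sum_m (\bI_p - \hSigma_m^{+}\hSigma_m))\bbeta_0 \in \RR^n$. Using $\|\bI_p - \hSigma_m^{+}\hSigma_m\|_{\oper} \le 1$ (it is an orthogonal projection), $\|\bSigma\|_{\oper}\le r_{\max}$, and $\|\bbeta_0\|_2^2 \to \rho^2$, it suffices to bound $\frac{1}{k}\|\hSigma_m^{+}\bX^\top \bL_m / k\|_{\oper}^2$ for each $m$. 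Writing $\bL_m \bX / \sqrt{k} = \bU_m \bS_m \bV_m^\top$ in (thin) SVD form, one has $\hSigma_m^{+}(\bX^\top \bL_m/k) = \bV_m \bS_m^{+} \bU_m^\top / \sqrt{k}$ so that $\|\hSigma_m^{+}\bX^\top \bL_m/k\|_{\oper} = s_{\min,m}^{-1}/\sqrt{k}$; hence $\frac{1}{k}\|\hSigma_m^{+}\bX^\top \bL_m/k\|_{\oper}^2 = (k s_{\min,m})^{-2}$. Here $s_{\min,m}$ is the smallest nonzero singular value of $\bL_m \bX / \sqrt{k}$, i.e. of a matrix of $k$ i.i.d.\ rows of the form $\bSigma^{1/2}\bz_i$ scaled by $1/\sqrt{k}$.

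The main obstacle is therefore the uniform (in $m\in[M]$, and eventual almost sure) lower bound on $s_{\min,m}$ away from zero, which requires invoking standard bounds on the extreme singular values of random matrices with independent rows (e.g., \cite{bai2010spectral} in the regime $p/k\to\phi_s\in(0,1)$, and \cite{bloemendal2016principal} or analogous results for the smallest nonzero singular value in the regime $p/k\to\phi_s\in(1,\infty)$). When $\phi_s \in (0,1)$, $\hSigma_m$ is eventually invertible and $s_{\min,m}^2 \ge r_{\min}(1-\sqrt{\phi_s})^2 + o(1)$ almost surely; when $\phi_s \in (1,\infty)$, the smallest nonzero singular value of $\bL_m\bX/\sqrt{k}$ is eventually bounded below by $r_{\min}^{1/2}(\sqrt{\phi_s}-1)/\sqrt{\phi_s} + o(1)$ almost surely (the excluded case $\phi_s = 1$ is exactly where this bound degenerates, hence the hypothesis $\phi_s\neq 1$). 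Since $M$ is fixed for the $M=1,2$ cases that we actually need (and more generally one takes a union bound or uses that the law is the same across $m$), these bounds hold simultaneously for all $m\in[M]$ with probability one eventually. Consequently $\frac{1}{k}\|\bu\|_2^2 \le \big(\frac{1}{M}\sum_m (k s_{\min,m})^{-1} r_{\max}\|\bbeta_0\|_2\big)^2$ is eventually almost surely bounded by a finite constant $C(\phi_s, r_{\min}, r_{\max}, \rho)$, uniformly in $p$. Applying \Cref{lem:concen-linform} to the linear form $\langle \bu, \bepsilon\rangle / \sqrt{n}$ (using that $\bepsilon$ has i.i.d.\ entries with mean zero and bounded $4+\delta$ moment by \Cref{asm:lin-mod}, and is independent of $\bu$) then yields $T_C \asto 0$, completing the proof.
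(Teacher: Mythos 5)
Your proposal is correct and follows essentially the same route as the paper's proof: rewrite $T_C$ as a linear form in $\bepsilon$, bound the relevant vector's normalized squared norm via $\|\bI_p - \hSigma_m^{+}\hSigma_m\|_{\oper}\le 1$, $\|\bSigma\|_{\oper}\le r_{\max}$, \Cref{asm:signal-bounded-norm}, and an almost-sure lower bound on the smallest nonzero singular value of $\bL_m\bX/\sqrt{k}$ (equivalently, an upper bound on $\|\hSigma_m^{+}\|_{\oper}$, which is how the paper phrases it), and then invoke \Cref{lem:concen-linform}. The only nit is bookkeeping: to match the hypotheses of \Cref{lem:concen-linform} you should apply it to $\langle k\bu,\bepsilon\rangle/k$ with $\|k\bu\|_2^2/k = k\|\bu\|_2^2 = O(1)$, rather than to $\langle\bu,\bepsilon\rangle/\sqrt{n}$, but your norm estimates already deliver exactly this.
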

        \begin{proof}[Proof of Lemma \ref{lem:ridgeless-conv-C0}]
            Note that
        \begin{align*}
            T_C &= - \tfrac{2}{M^2} \cdot \tfrac{1}{k} \left\langle \left(\sum_{m=1}^M\hat{\bSigma}_m^{+}\tfrac{\bX^{\top}\bL_m}{k}\right)^{\top}\bSigma\left(\sum_{m=1}^M(\bI_p - \hat{\bSigma}_m^{+}\hat{\bSigma}_m)\right)   \bbeta_0, \bepsilon \right\rangle.
        \end{align*}
        We next bound the norm
        \begin{align*}
            &\tfrac{1}{k}\norm{\frac{1}{M}\left(\sum_{m=1}^M\hat{\bSigma}_m^{+}\tfrac{\bX^{\top}\bL_m}{k}\right)^{\top}\bSigma\left(\sum_{m=1}^M(\bI_p - \hat{\bSigma}_m^{+}\hat{\bSigma}_m)\right)   \bbeta_0}_2^2\\
            &\leq  \frac{1}{M^2}\sum_{m=1}^M\sum\limits_{l=1}^M\tfrac{1}{k}\norm{( \hat{\bSigma}_m^{+}\bX^{\top}\bL_m)^{\top} \bSigma (\bI_p - \hat{\bSigma}_l^{+}\hat{\bSigma}_l) \bbeta_0}_2^2\\
            &\leq 
            \frac{\norm{\bbeta_0}_2^2}{M^2}\cdot \sum_{m=1}^M\sum\limits_{l=1}^M
            \norm{(\bI_p - \hat{\bSigma}_l^{+}\hat{\bSigma}_l) \bSigma  \bSigma_m^{+}\bSigma_m \hSigma_m^{+}  \bSigma(\bI_p - \hat{\bSigma}_l^{+}\hat{\bSigma}_l)}_{\oper} \\
            &\leq \frac{\norm{\bbeta_0}_2^2}{M^2}\cdot \sum_{j=1}^M\sum\limits_{l=1}^M
            \norm{\bI_p - \hat{\bSigma}_l^{+}\hat{\bSigma}_l}_{\oper}^2 \norm{\bSigma}_{\oper}^2\norm{\hSigma_j^{+}\bSigma_j \hSigma_j^{+}}_{\oper} \\
            &= \frac{\norm{\bbeta_0}_2^2}{M^2}\cdot \sum_{j=1}^M\sum\limits_{l=1}^M
            \norm{\bI_p - \hat{\bSigma}_l^{+}\hat{\bSigma}_l}_{\oper}^2 \norm{\bSigma}_{\oper}^2\norm{ \hSigma_j^{+}}_{\oper}\\
            &\leq \norm{\bbeta_0}_2^2r_{\max}^2 \cdot \frac{1}{M}\sum_{j=1}^M\norm{ \hSigma_m^{+}}_{\oper},
        \end{align*}
        where the last inequality is due to Assumption \ref{asm:covariance-bounded-eigvals} and the fact that $ \norm{\bI_p - \hat{\bSigma}_l^{+}\hat{\bSigma}_l}_{\oper}\leq 1$.
        By Assumption \ref{asm:signal-bounded-norm}, $\|\bbeta_0\|^2_2$ is uniformly bounded in $p$.
        From \citet{bai2010spectral}, $\liminf\min_{1\leq i\leq p}s_i^2\geq r_{\min}(1-\sqrt{\phi_s})^2$ almost surely for $\phi_s\in(0,1)\cup(1,\infty)$. Thus, $\limsup\norm{\hSigma_m^{+}}_{\oper}\leq C$ for some constant $C<\infty$ almost surely.
        Applying 
        \Cref{lem:concen-linform},
        we thus have that $T_C\asto 0$.
        \end{proof}

        \subsubsection{Convergence of the variance term}
        
        \begin{lemma}[Convergence of the variance term]\label{lem:ridgeless-conv-V0}
            Under Assumptions \ref{asm:rmt-feat}-\ref{asm:spectrum-spectrumsignproj-conv}, for all $m\in[M]$ and $I_m\in \cI_k$, let $\hSigma_m=\bX^{\top}\bL_m\bX/k$ and $\bL_m\in\RR^{n\times n}$ be a diagonal matrix with $(\bL_{m})_{ll}=1$ if $l\in I_m$ and 0 otherwise.
            Then, it holds that
            \begin{enumerate}[leftmargin=7mm]
                \item for all $m\in[M]$ and $I_m\in\cI_k$,
                \begin{align*}
                    \tfrac{1}{k^2} \bepsilon^{\top}\bL_m\bX\hSigma_m^{+} \bSigma \hSigma_m^{+}\bX^{\top}\bL_m\bepsilon -\tfrac{\sigma^2}{k}\tr(  \hSigma_j^{+}\bSigma) \asto 0,
                \end{align*}
                
                \item for all $m,l\in[M]$, $m\neq l$ and $I_m,I_l\overset{\texttt{\textup{SRSWR}}}{\sim}\cI_k$,
                \begin{align*}
                    \tfrac{1}{k^2} \bepsilon^{\top}\bL_m\bX\hSigma_m^{+} \bSigma \hSigma_l^{+}\bX^{\top}\bL_l\bepsilon-\tfrac{\sigma^2}{k^2}\tr( \hSigma_l^{+} \bX^{\top}\bL_l\bL_m \hSigma_m^{+} \bSigma) \asto 0,
                \end{align*}
            \end{enumerate}
            as $n,k,p\rightarrow\infty$, $p/n\rightarrow\phi\in(0,\infty)$, and $p/k\rightarrow\phi_s\in[\phi,\infty)\setminus\{1\}$.
        \end{lemma}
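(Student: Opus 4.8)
The plan is to transcribe the argument of \Cref{lem:ridge-conv-V0}, with the resolvent $\bM_m=(\hSigma_m+\lambda\bI_p)^{-1}$ replaced by the Moore--Penrose inverse $\hSigma_m^{+}$ and the deterministic bound $\norm{\bM_m}_{\oper}\le1/\lambda$ replaced by an almost sure bound on $\norm{\hSigma_m^{+}}_{\oper}$ that is available precisely because $\phi_s\neq1$. Both displays in the statement are concentration statements for a quadratic form $\bepsilon^{\top}A\bepsilon$ in the noise vector, where the (random) matrix $A$ is a measurable function of $\bX$ alone and hence independent of $\bepsilon$. Provided one controls $\norm{A}_{\oper}$, \Cref{lem:concen-quadform}, which applies under the bounded $(4+\delta)$-moment conditions of Assumptions~\ref{asm:rmt-feat}--\ref{asm:lin-mod}, delivers $\bepsilon^{\top}A\bepsilon-\sigma^2\tr(A)\asto0$; it then remains only to identify $\sigma^2\tr(A)$ with the claimed limit target, which is pure trace bookkeeping.

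For the diagonal term I would take $A_m:=k^{-2}\bL_m\bX\hSigma_m^{+}\bSigma\hSigma_m^{+}\bX^{\top}\bL_m$; writing $\bU_m:=\bL_m\bX/\sqrt{k}$ so that $\bU_m^{\top}\bU_m=\hSigma_m$, one gets $\norm{A_m}_{\oper}\le k^{-1}r_{\max}\norm{\hSigma_m}_{\oper}\norm{\hSigma_m^{+}}_{\oper}^{2}$. As in the ridge case, $\limsup\norm{\hSigma_m}_{\oper}\le r_{\max}(1+\sqrt{\phi_s})^2$ almost surely by \cite{bai2010spectral}, and --- exactly as already used in \Cref{lem:ridgeless-conv-C0} --- when $\phi_s\in(0,1)\cup(1,\infty)$ the smallest nonzero singular value of $\bU_m$ stays bounded away from zero almost surely, so $\limsup\norm{\hSigma_m^{+}}_{\oper}<\infty$ almost surely; hence $\norm{A_m}_{\oper}=O(k^{-1})$ almost surely and \Cref{lem:concen-quadform} applies. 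Finally, $\bL_m^2=\bL_m$, the Penrose identity $\hSigma_m^{+}\hSigma_m\hSigma_m^{+}=\hSigma_m^{+}$, and cyclicity of the trace give $\sigma^2k^{-2}\tr(\bL_m\bX\hSigma_m^{+}\bSigma\hSigma_m^{+}\bX^{\top}\bL_m)=\sigma^2k^{-1}\tr(\hSigma_m^{+}\bSigma\hSigma_m^{+}\hSigma_m)=\sigma^2k^{-1}\tr(\hSigma_m^{+}\bSigma)$, which is the stated target.

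For the cross term ($m\neq l$) the same recipe applies to $B_{m,l}:=k^{-2}\bL_m\bX\hSigma_m^{+}\bSigma\hSigma_l^{+}\bX^{\top}\bL_l$, which is neither symmetric nor positive semidefinite; I would symmetrize via $\bepsilon^{\top}B_{m,l}\bepsilon=\bepsilon^{\top}\tfrac{1}{2}(B_{m,l}+B_{m,l}^{\top})\bepsilon$ and bound $\norm{B_{m,l}}_{\oper}\le k^{-1}r_{\max}\norm{\hSigma_m}_{\oper}^{1/2}\norm{\hSigma_l}_{\oper}^{1/2}\norm{\hSigma_m^{+}}_{\oper}\norm{\hSigma_l^{+}}_{\oper}=O(k^{-1})$ almost surely with the same edge estimates (whether one normalizes by $k$ or by $n$ is immaterial since $k/n\to\phi/\phi_s\in(0,1]$). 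After \Cref{lem:concen-quadform}, cyclicity gives $\sigma^2k^{-2}\tr(\bL_m\bX\hSigma_m^{+}\bSigma\hSigma_l^{+}\bX^{\top}\bL_l)=\sigma^2k^{-2}\tr(\hSigma_l^{+}\bX^{\top}\bL_l\bL_m\bX\hSigma_m^{+}\bSigma)$, the stated target (here $\bL_l\bL_m$ is the indicator matrix of $I_l\cap I_m$, so $\bX^{\top}\bL_l\bL_m\bX=i_0\hSigma_0$ with $i_0=|I_l\cap I_m|$).

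The one genuinely delicate ingredient --- shared with \Cref{lem:ridgeless-conv-C0} rather than new to this lemma --- is the almost sure lower bound on the smallest nonzero singular value of $\bL_m\bX/\sqrt{k}$. For $\phi_s<1$ this is the hard-edge bound $\liminf\lambda_{\min}(\hSigma_m)\ge r_{\min}(1-\sqrt{\phi_s})^2>0$ of \cite{bai2010spectral}; for $\phi_s>1$, the nonzero spectrum of $\hSigma_m$ coincides with that of the $k\times k$ matrix $\bL_m\bX\bX^{\top}\bL_m/k$, a sample-covariance-type matrix of aspect ratio $1/\phi_s<1$, whose smallest eigenvalue stays bounded below almost surely by a positive constant, using Assumption~\ref{asm:covariance-bounded-eigvals} together with results of \cite{bloemendal2016principal}. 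Both bounds degenerate precisely at $\phi_s=1$, which is why that case is excluded from the hypotheses; with these edge estimates in hand, everything else is a mechanical adaptation of the ridge argument, and \Cref{thm:risk_general_predictor} then lifts the $M=1,2$ facts to all $M\in\NN$.
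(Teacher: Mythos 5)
Your proposal is correct and follows essentially the same route as the paper: bound the operator norm of the random matrix in the quadratic form via $\|\hSigma_m^{+}\|_{\oper}$, $\|\hSigma_m\|_{\oper}$, and $\|\bSigma\|_{\oper}$ using the Bai--Silverstein edge estimates (valid precisely because $\phi_s\neq1$), invoke \Cref{lem:concen-quadform}, and finish with the trace identities $\hSigma_m^{+}\hSigma_m\hSigma_m^{+}=\hSigma_m^{+}$ and $k/n\to\phi/\phi_s$. Your extra care in separating the $\phi_s<1$ hard-edge bound from the $\phi_s>1$ case (via the $k\times k$ gram matrix) is a slightly more explicit justification than the paper gives, and the symmetrization of the cross term is unnecessary since \Cref{lem:concen-quadform} does not require symmetry, but neither point changes the argument.
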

        \begin{proof}[Proof of Lemma \ref{lem:ridge-conv-V0}]
            Note that the term is the same as the variance terms for ridge predictor trained on $k$ i.i.d.\ samples $(\bL_{m}\bX,\bL_{m}\by)$.
            Notice that $\bL_m\bepsilon$ is independent of $\bL_m\bX\hSigma_m^{+} \bSigma \hSigma_m^{+}\bX^{\top}\bL_m$, and
            \begin{align*}
                \tfrac{1}{k}\norm{\bL_m\bX\hSigma_m^{+} \bSigma \hSigma_m^{+}\bX^{\top}\bL_m}_{\oper} &\leq \norm{\hSigma_m^{+}}_{\oper}^2\norm{\hSigma_m}_{\oper} \norm{\bSigma }_{\oper}\leq r_{\max}\norm{\hSigma_m^{+}}_{\oper}^2\norm{\hSigma_m}_{\oper}.
            \end{align*}
            Observe that $\liminf\norm{\hSigma_m}_{\oper}\geq \liminf\min_{1\leq i\leq p} s_i^2\geq r_{\max}(1-\sqrt{\phi_s})^2$ and $\limsup\norm{\hSigma_m}_{\oper}\leq \limsup\max_{1\leq i\leq p} s_i^2\leq r_{\max}(1+\sqrt{\phi_s})^2$ almost surely as $k,p\rightarrow\infty$ and $p/k\rightarrow\phi_s\in(0,1)\cup(1,\infty)$ from \citet{bai2010spectral}.
            We have $\limsup\norm{\hSigma_m^{+}}_{\oper}\leq C$ and $\limsup\norm{\hSigma_m}_{\oper}\leq C$ for some constant $C<\infty$ almost surely.
            From 
            \Cref{lem:concen-quadform},
            it follows that
            \begin{align*} 
            \tfrac{1}{k^2} \bepsilon^{\top}\bL_m\bX\hSigma_m^{+} \bSigma \hSigma_m^{+}\bX^{\top}\bL_m\bepsilon -\tfrac{\sigma^2}{k^2}\tr(  \bL_m\bX\hSigma_m^{+} \bSigma \hSigma_m^{+}\bX^{\top}\bL_m)\asto 0.
            \end{align*}
            Since $\tr(\bL_m\bX\hSigma_m^{+} \bSigma \hSigma_m^{+}\bX^{\top}\bL_m)/k^2=\tr(  \hSigma_m^{+}\hSigma_m\hSigma_m^{+} \bSigma )/k=\tr(  \hSigma_m^{+}\bSigma)/k$, we further have
            \begin{align} \tfrac{1}{k^2} \bepsilon^{\top}\bL_m\bX\hSigma_m^{+} \bSigma \hSigma_m^{+}\bX^{\top}\bL_m\bepsilon  - \tfrac{\sigma^2}{k}\tr(  \hSigma_m^{+}\bSigma) \asto 0. \label{eq:ridgeless-conv-V0-2}
            \end{align}
            
            The second term involves the cross term $\bM_m\bSigma\bM_l$.
            Note that
            \begin{align*}
                \tfrac{1}{n}\norm{\bL_m\bX\hSigma_m^{+} \bSigma \hSigma_l^{+}\bX^{\top}\bL_l}_{\oper } &\leq \tfrac{k}{n}r_{\max}\norm{\hSigma_m}_{\oper}^{\tfrac{1}{2}}\norm{\hSigma_m}_{\oper}^{\tfrac{1}{2}}\norm{\hSigma_l^{+}}_{\oper}\norm{\hSigma_l^{+}}_{\oper},
            \end{align*}
            because $\norm{\hSigma_m^{+}}_{\oper}$ and $\norm{\hSigma_m}_{\oper}$
            for $m \in [M]$
            are uniformly bounded almost surely.
            By 
            \Cref{lem:concen-quadform},
            it follows that
            \begin{align*}
                \tfrac{1}{n}\bepsilon^{\top}\bL_m\bX\hSigma_m^{+} \bSigma \hSigma_l^{+}\bX^{\top}\bL_l\bepsilon - \tfrac{\sigma^2}{n}\tr(\bL_m\bX\hSigma_m^{+} \bSigma \hSigma_l^{+}\bX^{\top}\bL_l)\asto0.
            \end{align*}
            Since $k/n\rightarrow\phi_s/\phi$, we have
            \begin{align*}
                \tfrac{1}{k^2}\bepsilon^{\top}\bL_m\bX\hSigma_m^{+} \bSigma \hSigma_l^{+}\bX^{\top}\bL_l\bepsilon - \tfrac{\sigma^2}{k^2}\tr(\bL_m\bX\hSigma_m^{+} \bSigma \hSigma_l^{+}\bX^{\top}\bL_l)\asto0.
            \end{align*}
        \end{proof}

    \subsection{Component deterministic approximations}
    
        \subsubsection{Deterministic approximation of the bias functional}
        \begin{lemma}[Deterministic approximation of the bias functional]\label{lem:ridgeless-B0}
            Under Assumptions \ref{asm:rmt-feat}-\ref{asm:spectrum-spectrumsignproj-conv}, for all $m\in[M]$ and $I_m\in \cI_k$, let $\hSigma_m=\bX^{\top}\bL_m\bX/k$ and $\bL_m\in\RR^{n\times n}$ be a diagonal matrix with $(\bL_{m})_{ll}=1$ if $l\in I_m$ and 0 otherwise.
            Then, it holds that
            \begin{enumerate}[leftmargin=7mm]
                \item for all $m\in[M]$ and $I_m\in\cI_k$,
                \begin{align*}
                    \bbeta_0^{\top} (\bI_p - \hat{\bSigma}_m^{+}\hat{\bSigma}_m) \bSigma(\bI_p - \hat{\bSigma}_m^{+}\hat{\bSigma}_m)\bbeta_0 \asto \begin{cases}
                    0 &\phi_s\in(0,1)\\
                    \rho^2(1+\tv(0;\phi_s,\phi_s))\tc(0;\phi_s) &\phi_s\in(1,\infty),
                    \end{cases}
                \end{align*}
                
                \item for all $m,l\in[M]$, $m\neq l$ and $I_m,I_l\overset{\texttt{\textup{SRSWR}}}{\sim}\cI_k$,
                \begin{align*}
                    \bbeta_0^{\top} (\bI_p - \hat{\bSigma}_m^{+}\hat{\bSigma}_m) \bSigma(\bI_p - \hat{\bSigma}_l^{+}\hat{\bSigma}_l)\bbeta_0 \asto \begin{cases}
                    0 &\phi_s\in(0,1)\\
                    \rho^2(1+\tv(0;\phi,\phi_s))\tc(0;\phi_s) &\phi_s\in(1,\infty),
                    \end{cases}
                \end{align*}
            \end{enumerate}
            as $n,k,p\rightarrow\infty$, $p/n\rightarrow\phi\in(0,\infty)$, and $p/k\rightarrow\phi_s\in[\phi,\infty)\setminus\{1\}$, where the nonnegative constants $\tv(0; \phi,\phi_s)$ and $\tc(0;\phi_s)$ are as defined in \eqref{eq:tv_tc_ridgeless}.
        \end{lemma}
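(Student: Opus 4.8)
The plan is to obtain both statements as the $\lambda \to 0^{+}$ specialization of the ridge result \Cref{lem:ridge-B0}, treating the underparameterized subsample regime $\phi_s \in (0,1)$ and the overparameterized regime $\phi_s \in (1,\infty)$ separately. In the first regime the claim is immediate: by \citet{bai2010spectral} one has $\liminf \min_i s_i^{2} \ge r_{\min}(1-\sqrt{\phi_s})^{2} > 0$ almost surely, where $s_i$ are the singular values of $\bL_m\bX/\sqrt{k}$, so $\hSigma_m$ is eventually almost surely invertible and $\bI_p - \hSigma_m^{+}\hSigma_m = \mathbf{0}$. Both quadratic forms then equal $0$ for all large $k$, matching the stated limits (and part 1 reduces to the classical ridgeless bias characterization of \citet{hastie2022surprises} on $k$ samples with aspect ratio $\phi_s$, taken from \citet{patil2022mitigating}).

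For $\phi_s \in (1,\infty)$, fix $n,k,p$ and write $\bM_m(\lambda) = (\hSigma_m + \lambda\bI_p)^{-1}$. Spectral calculus gives $\lambda\bM_m(\lambda) \to \bI_p - \hSigma_m^{+}\hSigma_m =: \bP_m$ as $\lambda \to 0^{+}$, so the finite-sample quantity $\lambda^{2}\bbeta_0^{\top}\bM_m(\lambda)\bSigma\bM_l(\lambda)\bbeta_0$ converges to $\bbeta_0^{\top}\bP_m\bSigma\bP_l\bbeta_0$; more quantitatively, $\|\lambda\bM_m(\lambda) - \bP_m\|_{\oper} \le \lambda/\mu_m^{+}$, where $\mu_m^{+}$ is the smallest nonzero eigenvalue of $\hSigma_m$. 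Using $\|\lambda\bM_m(\lambda)\|_{\oper} \le 1$, $\|\bP_m\|_{\oper} \le 1$, $\|\bSigma\|_{\oper} \le r_{\max}$, and the triangle inequality,
\begin{align*}
    \bigl| \lambda^{2}\bbeta_0^{\top}\bM_m(\lambda)\bSigma\bM_l(\lambda)\bbeta_0 - \bbeta_0^{\top}\bP_m\bSigma\bP_l\bbeta_0 \bigr|
    \;\le\; \|\bbeta_0\|_2^{2}\, r_{\max}\, \lambda\,\bigl( (\mu_m^{+})^{-1} + (\mu_l^{+})^{-1} \bigr).
\end{align*}
Since $\phi_s \neq 1$, the Marchenko--Pastur hard-edge bounds (\citet{bai2010spectral,bloemendal2016principal}) yield a deterministic $c > 0$ with $\mu_m^{+}, \mu_l^{+} \ge c$ eventually almost surely, while $\|\bbeta_0\|_2^{2} \to \rho^2$ by \Cref{asm:signal-bounded-norm}.

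The proof is then completed by a sandwich across the two limits. By \Cref{lem:ridge-B0}, for every fixed $\lambda > 0$, $\lambda^{2}\bbeta_0^{\top}\bM_m(\lambda)\bSigma\bM_l(\lambda)\bbeta_0 \asto B_\lambda$ with $B_\lambda = \rho^{2}(1 + \tv(-\lambda;\vartheta,\phi_s))\tc(-\lambda;\phi_s)$, where $\vartheta = \phi_s$ for the diagonal term ($m=l$) and $\vartheta = \phi$ for the cross term ($m \neq l$). As $\lambda \to 0^{+}$, since $\phi_s > 1$ we have $v(-\lambda;\phi_s) \to v(0;\phi_s) < \infty$ by \eqref{eq:v0-def}, hence $\tc(-\lambda;\phi_s) \to \tc(0;\phi_s)$ and $\tv(-\lambda;\vartheta,\phi_s) \to \tv(0;\vartheta,\phi_s)$ because $H$ and $G$ have compact support (the needed continuity and $\lambda\to 0^{+}$ limits of the fixed-point quantities are recorded in \citet[Lemmas S.6.14 and S.6.15]{patil2022mitigating}); thus $B_\lambda \to B_0 := \rho^{2}(1 + \tv(0;\vartheta,\phi_s))\tc(0;\phi_s)$. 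Combining the three facts, for every $\lambda \in (0,\lambda_0]$,
\begin{align*}
    \limsup_{n\to\infty}\bigl| \bbeta_0^{\top}\bP_m\bSigma\bP_l\bbeta_0 - B_0 \bigr|
    \;\le\; \frac{2\,r_{\max}\,\rho^2\,\lambda}{c} \;+\; |B_\lambda - B_0|,
\end{align*}
and letting $\lambda \to 0^{+}$ on the right-hand side forces the left-hand side, which does not depend on $\lambda$, to vanish; this is precisely the assertion of the lemma. The only genuinely delicate point is this exchange of the limits $n \to \infty$ and $\lambda \to 0^{+}$: it rests on the uniform (in $\lambda$ near $0$) Lipschitz control displayed above, which requires the almost-sure separation of the nonzero spectrum of $\hSigma_m$ from $0$ --- valid exactly because $\phi_s = 1$ is excluded --- together with the regularity of the fixed-point quantities at $\lambda = 0$; all remaining steps are routine.
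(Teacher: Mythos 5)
Your proof is correct, and it follows the same skeleton as the paper's: the underparameterized case $\phi_s\in(0,1)$ is dispatched identically (Bai--Yin forces $\hSigma_m^{+}\hSigma_m=\bI_p$ eventually a.s.), and the overparameterized case is obtained as the $\lambda\to0^{+}$ limit of \Cref{lem:ridge-B0} with a justification for interchanging $\lim_{p\to\infty}$ and $\lim_{\lambda\to0^{+}}$. Where you genuinely diverge is in the mechanism for that interchange. The paper bounds the trace functionals $\lambda\mapsto\tr[\lambda^{2}\bM_1\bSigma\bM_2\bT]$ and their deterministic equivalents \emph{together with their $\lambda$-derivatives} to get an equicontinuous family on $[0,\lambda_{\max}]$, and then invokes the Moore--Osgood theorem. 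You instead prove a direct quantitative estimate $\|\lambda\bM_m(\lambda)-(\bI_p-\hSigma_m^{+}\hSigma_m)\|_{\oper}\le\lambda/\mu_m^{+}$ and use the hard-edge separation $\mu_m^{+}\ge c>0$ (valid exactly because $\phi_s\neq1$) to show the ridgeless quadratic form is within $O(\lambda)$ of the ridge one uniformly in $n$, eventually almost surely; the three-term sandwich then closes the argument. Your route is more elementary and gives an explicit rate, at the cost of leaning on the spectrum separation rather than on regularity of the fixed-point quantities for the random side (you still need the latter for $B_\lambda\to B_0$, as does the paper). One small point to make airtight: the inequality ``for every $\lambda\in(0,\lambda_0]$'' holds on a $\lambda$-dependent full-measure event, so you should run the sandwich along a fixed countable sequence $\lambda_j\downarrow0$ and intersect the events; since the left-hand side does not depend on $\lambda$, this suffices and is routine.
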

        \begin{proof}[Proof of Lemma \ref{lem:ridgeless-B0}]
            For the first term, we have that for $m\in[M]$,
            \begin{align}
                \bbeta_0^{\top} (\bI_p - \hat{\bSigma}_m^{+}\hat{\bSigma}_m) \bSigma(\bI_p - \hat{\bSigma}_m^{+}\hat{\bSigma}_m)\bbeta_0 \asto \begin{cases}   
                0 &\text{if }\phi_s\in(0,1) \\
                \rho^2(1+\tv_b(0; \phi_s)) \int \tfrac{r}{(1+ v(0; \phi_s)  r)^2} \,\rd G(r)&\text{if }\phi_s\in(1,\infty). \end{cases}\label{eq:lem-ridgeless-det-approx-B0-1}
            \end{align}
            
            Next we analyze the second term, by considering the following two cases separately for $(m,l)=(1,2)$.
            
            \noindent 
            \underline{(1) $\phi_s\in(0,1)$}.
            Since the singular values of $\hSigma_j$'s are almost surely lower bounded away from 0, we have $\hSigma_j^{+}\hSigma_j=\bI_p$ almost surely.
            Then $\bbeta_0^{\top}(\bI_p-\hat{\bSigma}_1^{+}\hat{\bSigma}_1) \bSigma(\bI_p - \hat{\bSigma}_2^{+}\hat{\bSigma}_2)\bbeta_0\asto 0$ when $k,p\rightarrow\infty$ and $p/k\rightarrow\phi_s\in(0,1)$.

            \noindent 
            \underline{(2) $\phi_s\in(1,\infty)$}.
            We begin with analyzing the deterministic equivalent of $(\bI_p-\hat{\bSigma}_1^{+}\hat{\bSigma}_1) \bSigma(\bI_p - \hat{\bSigma}_2^{+}\hat{\bSigma}_2)$.
            Recall that $i_0$ is the number of shared samples between $\cD_{I_1}$ and $\cD_{I_2}$, and $\hat{\bSigma}_0=\bX^{\top}\bL_1\bL_2\bX^{\top}/i_0$ and $\hat{\bSigma}_{j}^{\text{ind}}=\bX^{\top}(\bL_j-\bL_1\bL_2)\bX^{\top}/(k-i_0)$ are the common and individual covariance estimators of the two datasets.
            Also note that from \eqref{eq:lem-ridge-B0-term-det}, we have $\lambda^2\bM_1\bSigma\bM_2 \asympequi (\lambda^2\bM_1\bSigma\bM_2)^{\det}$, where
            \begin{align}  (\lambda^2\bM_1\bSigma\bM_2)^{\det}=(1+\tv(-\lambda;\phi_s,\phi) )\left(v(-\lambda; \phi_s)\bSigma + \bI_p\right)^{-2}\bSigma>0,\label{eq:lem-ridgeless-B0-term-1}
            \end{align}
            and $\tv(-\lambda;\phi_s,\phi)$ is as defined in \eqref{eq:tv_tc_ridgeless}.
            Let $\lambda\in\Lambda=[0,\lambda_{\max}]$ where $\lambda_{\max}<\infty$. For any matrix $\bT\in\RR^{p\times p}$ with trace norm uniformly bounded by $M$,
            \begin{align*}
                |\tr[\lambda^2\bM_1\bSigma\bM_2\bT]| & \leq \lambda^2\norm{\bM_1}_{\oper}\norm{\bM_2}_{\oper}\norm{\bSigma}_{\oper} |\tr[(\bT^{\top}\bT)^{\tfrac{1}{2}}]| \leq Mr_{\max}\norm{\bSigma}_{\oper}
            \end{align*}
            where the second inequality holds because $\norm{\bM_1}_{\oper}\leq \lambda^{-1}$ and $\norm{\bSigma}_{\oper}\leq r_{\max}$.
            Since $\phi_0\geq\phi_s>1$, it follows from \citet[Lemma S.6.14]{patil2022mitigating} that, there exists $M'>0$ such that  the magnitudes of $v(-\lambda; \phi_s)$ and $v_b(\lambda,\phi_s,\phi)-1$, and their derivatives with respect to $\lambda$ are continuous and bounded by $M'$ for all $\lambda\in\Lambda$.
            Thus,
            we get
            \begin{align*}
                \left|\tr\left[(\lambda^2\bM_1\bSigma\bM_2)^{\det}\bT\right]\right|
                &\leq (1+M') \norm{v(-\lambda; \phi_s)\bSigma + \bI_p}_{\oper}^{-2}\norm{\bSigma}_{\oper} |\tr[(\bT^{\top}\bT)^{\tfrac{1}{2}}]|\\
                &\leq  (1+M')Mr_{\max}.
            \end{align*}
            Similarly, in the same interval the derivatives of $\tr\left[\lambda^2\bM_1\bSigma\bM_2\bT\right]$ and $\tr\left[(\lambda^2\bM_1\bSigma\bM_2)^{\det}\bT\right]$ with respect to $\lambda$ also have bounded magnitudes for $\lambda\in\Lambda$.
            Therefore, the family of functions 
            $$\tr[\lambda^2\bM_1\bSigma\bM_2\bT]-\tr\left[(\lambda^2\bM_1\bSigma\bM_2)^{\det}\bT\right]$$
            forms an equicontinuous family in $\lambda$ over $\lambda \in \Lambda$.
            Thus, the convergence in Part 1 of \Cref{lem:deter-approx-generalized-ridge} is uniform in $\lambda$. 
            We can now use the Moore-Osgood theorem and the continuity property from \Cref{lem:fixed-point-v-lambda-properties} to interchange the limits to obtain
            \begin{align*}
                &\lim\limits_{p\rightarrow\infty}\tr\left[(\bI_p-\hat{\bSigma}_1^{+}\hat{\bSigma}_1) \bSigma(\bI_p - \hat{\bSigma}_2^{+}\hat{\bSigma}_2)\bT\right] - \tr\left[((\bI_p-\hat{\bSigma}_1^{+}\hat{\bSigma}_1) \bSigma(\bI_p - \hat{\bSigma}_2^{+}\hat{\bSigma}_2))^{\det}\bT\right]\\
                &= \lim\limits_{p\rightarrow\infty}\lim\limits_{\lambda\rightarrow0^+}\tr\left[\lambda^2\bM_1\bSigma\bM_2\bT\right]-\tr\left[ (\lambda^2\bM_1\bSigma\bM_2)^{\det}\bT\right]\\ &=\lim\limits_{\lambda\rightarrow0^+}\lim\limits_{p\rightarrow\infty}\tr\left[\lambda^2\bM_1\bSigma\bM_2\bT\right]-\tr\left[ (\lambda^2\bM_1\bSigma\bM_2)^{\det}\bT\right]\\
                &=0,
            \end{align*}
            where $$((\bI_p-\hat{\bSigma}_1^{+}\hat{\bSigma}_1) \bSigma(\bI_p - \hat{\bSigma}_2^{+}\hat{\bSigma}_2))^{\det}= (1+\tv(0;\phi,\phi_s) ) (v(0; \phi_s) \bSigma + \bI_p)^{-2}\bSigma.$$
            As $p\rightarrow\infty$, replacing the empirical distribution $G_p(r)$ by limiting distribution $G(r)$ yields the desired results.
        \end{proof}
        
        \subsubsection{Deterministic approximation of the variance functional}

        \begin{lemma}[Deterministic approximation of the variance functional when $\phi_s<1$]\label{lem:ridgeless-V0-phis_lt1}
            Under Assumptions \ref{asm:rmt-feat}-\ref{asm:spectrum-spectrumsignproj-conv}, for all $m\in[M]$ and $I_m\in \cI_k$, let $\hSigma_m=\bX^{\top}\bL_m\bX/k$ and $\bL_m\in\RR^{n\times n}$ be a diagonal matrix with $(\bL_{m})_{ll}=1$ if $l\in I_m$ and 0 otherwise.
            Then, it holds that
            \begin{enumerate}[leftmargin=7mm]
                \item for all $m\in[M]$ and $I_m\in\cI_k$,
                \begin{align*}
                    \tfrac{1}{k}\tr(  \hSigma_m^{+}\bSigma) \asto \tfrac{\phi_s}{1-\phi_s},
                \end{align*}
                
                \item for all $m,l\in[M]$, $m\neq l$ and $I_m,I_l\overset{\texttt{\textup{SRSWR}}}{\sim}\cI_k$,
                \begin{align*}
                    \tfrac{1}{k}\tr( \hSigma_l^{+} \bX^{\top}\bL_l\bL_m \hSigma_m^{+} \bSigma)  \asto \tfrac{\phi}{1-\phi} %
                \end{align*}
            \end{enumerate} 
            as $n,k,p\rightarrow\infty$, $p/n\rightarrow\phi\in(0,\infty)$, and $p/k\rightarrow\phi_s\in[\phi,\infty)\cap (0,1)$.
        \end{lemma}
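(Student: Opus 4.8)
The plan is to exploit the fact that the hypothesis $\phi_s \in [\phi,\infty)\cap(0,1)$ makes the subsampled Gram matrices well-conditioned, so that the ridgeless quantities are exact $\lambda\to 0^{+}$ limits of the ridge quantities already treated in Lemma~\ref{lem:ridge-V0}. Concretely, since $p/k\to\phi_s<1$, the Bai--Yin-type bound (as in \citet{bai2010spectral}, used already in Lemma~\ref{lem:ridgeless-conv-C0}) together with Assumption~\ref{asm:covariance-bounded-eigvals} gives $\liminf_{n\to\infty}\lambda_{\min}(\hSigma_m)\ge r_{\min}(1-\sqrt{\phi_s})^2>0$ almost surely, so eventually $\hSigma_m^{+}=\hSigma_m^{-1}$ and $\norm{\hSigma_m^{-1}}_{\oper}$ is uniformly bounded in $n,k,p$. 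Then $\tfrac1k\tr(\hSigma_m^{-1}\bSigma)=\lim_{\lambda\to 0^{+}}\tfrac1k\tr(\bM_m\hSigma_m\bM_m\bSigma)$ and $\tfrac1{k^2}\tr(\hSigma_l^{-1}\bX^{\top}\bL_l\bL_m\bX\hSigma_m^{-1}\bSigma)=\lim_{\lambda\to 0^{+}}\tfrac1{k^2}\tr(\bM_l\bX^{\top}\bL_l\bL_m\bX\bM_m\bSigma)$, where $\bM_m=(\hSigma_m+\lambda\bI_p)^{-1}$ as in Lemma~\ref{lem:ridge-V0}.

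The first step is to make these interchanges of limits uniform. Writing $\bM_m=\hSigma_m^{-1}+\bE_m$ with $\norm{\bE_m}_{\oper}\le\lambda\norm{\hSigma_m^{-1}}_{\oper}^2=O(\lambda)$ uniformly in $n,k,p$, and using $\tfrac1k\norm{\hSigma_m\bSigma}_{\mathrm{tr}}=O(1)$ and $\tfrac1{k^2}\norm{\bX^{\top}\bL_l\bL_m\bX}_{\mathrm{tr}}=O(1)$ (the last since $\bX^{\top}\bL_l\bL_m\bX$ has rank $O(k)$ and, by \citet{bai2010spectral}, operator norm $O(k)$), the inequality $|\tr(\bA\bB)|\le\norm{\bA}_{\oper}\norm{\bB}_{\mathrm{tr}}$ and the triangle inequality show that each ridgeless trace differs from its ridge counterpart by at most $C\lambda$, with $C$ independent of $\lambda$ and of $n,k,p$. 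For every fixed $\lambda>0$, Lemma~\ref{lem:ridge-V0} gives almost sure convergence of the ridge traces to $\tv(-\lambda;\phi_s,\phi_s)$ and $\tv(-\lambda;\phi,\phi_s)$; and since $\phi_s<1$, equation~\eqref{remark:ridgeless} (equivalently Lemmas~S.6.14--S.6.15 of \citet{patil2022mitigating}) gives $\lim_{\lambda\to 0^{+}}\tv(-\lambda;\vartheta,\phi_s)=\vartheta(1-\vartheta)^{-1}$. Letting $\lambda\to 0^{+}$ along a countable sequence and combining the three error terms (intersecting the countably many almost sure events) yields $\tfrac1k\tr(\hSigma_m^{-1}\bSigma)\asto\phi_s/(1-\phi_s)$ and $\tfrac1{k^2}\tr(\hSigma_l^{-1}\bX^{\top}\bL_l\bL_m\bX\hSigma_m^{-1}\bSigma)\asto\phi/(1-\phi)$, which are the two asserted limits.

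For the diagonal claim it is worth recording a self-contained alternative that bypasses the ridge machinery entirely: writing $\bZ\in\RR^{n\times p}$ for the matrix with rows $\bz_i^{\top}$ and $\bZ_m\in\RR^{k\times p}$ for the rows indexed by $I_m$ (which are i.i.d.), one has $\hSigma_m=\bSigma^{1/2}\bW_m\bSigma^{1/2}$ with $\bW_m=\bZ_m^{\top}\bZ_m/k$, hence $\tr(\hSigma_m^{-1}\bSigma)=\tr(\bW_m^{-1})$ and $\tfrac1k\tr(\hSigma_m^{-1}\bSigma)=\tfrac pk\cdot\tfrac1p\tr(\bW_m^{-1})$. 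The Marchenko--Pastur law with ratio $\phi_s<1$ (valid under the $4+\delta$ moment bound of Assumption~\ref{asm:rmt-feat}, see \citet{bai2010spectral}) gives $\tfrac1p\tr(\bW_m^{-1})\asto\int x^{-1}\,\mathrm{d}F_{\phi_s}(x)=(1-\phi_s)^{-1}$, whence the limit $\phi_s/(1-\phi_s)$. This route does not obviously extend to the cross term, because $\hSigma_l^{-1}$ and $\hSigma_m^{-1}$ involve the common rows indexed by the overlap $I_0=I_l\cap I_m$.

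The main obstacle is precisely this overlap-induced dependence in the cross term of the second claim; it is the same difficulty overcome inside Lemma~\ref{lem:ridge-V0} via the conditional deterministic-equivalent calculus (conditioning on $i_0=\tr(\bL_l\bL_m)$, splitting $\bM_m^{-1}$ into a shared piece $\hSigma_0$ and an individual piece $\hSigma_m^{\mathrm{ind}}$, and iterating the equivalences as in Parts (a)--(c) there, together with $i_0/k\asto\phi/\phi_s$ from Lemma~\ref{lem:i0_mean}). The present lemma inherits all of that work for free by passing to $\lambda\to 0^{+}$; the only genuinely new point is the uniformity of this limit, which, as in the argument above, is straightforward because $\phi_s<1$ keeps every resolvent and Gram matrix uniformly well-conditioned. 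Finally, note that $\phi\le\phi_s<1$ forces $\phi<1$, so both limiting values $\phi_s/(1-\phi_s)$ and $\phi/(1-\phi)$ are finite, as they must be.
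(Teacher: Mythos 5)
Your proposal is correct, but for the cross term it takes a genuinely different route from the paper. The paper's own proof works directly at $\lambda=0$: since $\phi_s<1$ makes $\hSigma_m$ invertible almost surely, it invokes \Cref{lem:det-equiv-subsample} to replace each $\hSigma_j^{-1}=(\tfrac{i_0}{k}\hSigma_0+\tfrac{k-i_0}{k}\hSigma_j^{\mathrm{ind}})^{-1}$ by the conditional equivalent $(\tfrac{i_0}{k}\hSigma_0+(1-\phi_s)\tfrac{k-i_0}{k}\bSigma)^{-1}$ given $(\hSigma_0,i_0)$, then applies a second deterministic equivalent to the resulting $\hSigma_0$-resolvent (whose fixed-point solutions come out explicitly as $v=1-\phi_s$ and $\tv_v=(1-\phi_s)^2/(1-\phi)$), and finishes with $i_0/k\asto\phi/\phi_s$. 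You instead reduce everything to \Cref{lem:ridge-V0} by a uniform $\lambda\to 0^{+}$ limit, using the well-conditioning to get an $O(\lambda)$ operator-norm bound on $\bM_m-\hSigma_m^{-1}$ and hence an $O(\lambda)$ bound on the trace discrepancy, combined with $\lim_{\lambda\to0^{+}}\tv(-\lambda;\phi,\phi_s)=\phi/(1-\phi)$ from \eqref{remark:ridgeless}; this is essentially the strategy the paper reserves for the $\phi_s>1$ case (\Cref{lem:ridgeless-V0-phis-gt1}), and it is in fact easier here because the uniform-in-$n$ $O(\lambda)$ bound replaces the equicontinuity/Moore--Osgood argument needed when $\hSigma_m$ is singular. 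Your argument buys economy (the overlap-dependence analysis is inherited wholesale from \Cref{lem:ridge-V0} rather than redone), at the cost of a limit interchange that you do justify carefully (eventual a.s.\ boundedness of $\|\hSigma_m^{-1}\|_{\oper}$ via Bai--Yin, the trace-norm bound $\tfrac{1}{k^2}\|\bX^{\top}\bL_l\bL_m\bX\|_{\tr}=O(1)$, and intersection of countably many a.s.\ events along a sequence $\lambda\downarrow 0$); the Marchenko--Pastur shortcut you give for the diagonal term is also fine and matches the spirit of the external result the paper cites. One small point: the lemma as printed has typos in the cross term (a missing $\bX$ and a $1/k$ rather than $1/k^2$ normalization); you correctly work with $\tfrac{1}{k^2}\tr(\hSigma_l^{-1}\bX^{\top}\bL_l\bL_m\bX\hSigma_m^{-1}\bSigma)$, which is the quantity actually used in the variance decomposition and in the paper's own proof.
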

        \begin{proof}[Proof of \Cref{lem:ridgeless-V0-phis_lt1}]
        For the first term, from \citet[Proposition S.3.2]{patil2022mitigating} we have that for $m\in[M]$,
            \begin{align}
                \tfrac{1}{k}\tr(  \hSigma_m^{+}\bSigma) \asto \begin{dcases}   
                \tfrac{\phi_s}{1-\phi_s} &\text{if }\phi_s\in(0,1) \\
                \phi_sv_v(0; \phi,\phi_s) \int \tfrac{r^2}{(1+ v(0; \phi_s)  r)^2} \,\rd H(r)&\text{if }\phi_s\in(1,\infty) \end{dcases}.\label{eq:lem-ridgeless-det-approx-V0-1}
            \end{align}

            Next we analyze the second term for $\phi_s\in(0,1)$. It suffices to analyze the case when $(m,l)=(1,2)$.
            From \citet{bai2010spectral}, we have
            \begin{align*}
                r_{\min}(1-\sqrt{\phi_s})^2\leq \liminf\norm{\hSigma_j}_{\oper}&\leq \limsup\norm{\hSigma_j}_{\oper}\leq r_{\max}(1+\sqrt{\phi_s})^2 ,\quad j=1,2.
            \end{align*}
            Then $\hSigma_j$'s are invertible almost surely.
            From \Cref{lem:det-equiv-subsample}, we have that for $j=1,2$,
            \[
                \hSigma_j^{-1}=\left(\tfrac{i_0}{k} \hSigma_0 + \tfrac{k-i_0}{k} \hSigma_1^{\mathrm{ind}}\right)^{-1}
                \asympequi \left(\tfrac{i_0}{k} \hSigma_0 +  (1 - \phi_s)\tfrac{k-i_0}{k} \bSigma\right)^{-1},
            \]
            where $\hSigma_0=\bX^{\top}\bL_1\bL_2\bX/i_0$ and $\hSigma_j^{\text{ind}}=\bX^{\top}\bL_j\bX/(k-i_0)$ for $j=1,2$, defined analogously as in the proof for \Cref{thm:ver-ridge}.
            Thus, conditional on $\hSigma_0$ and $i_0$, we have
            \begin{align*}
                \hSigma_1^{-1}
                \hSigma_0
                \hSigma_2^{-1}\bSigma
                &\asympequi
               \left(\tfrac{i_0}{k} \hSigma_0 +  (1 - \phi_s)\tfrac{k-i_0}{k} \bSigma\right)^{-1}
                \hSigma_0
                \left(\tfrac{i_0}{k} \hSigma_0 +  (1 - \phi_s)\tfrac{k-i_0}{k} \bSigma\right)^{-1}
                \\
                &= \tfrac{i_0^2}{k^2} \left(\hSigma_0 + (1-\phi_s)\tfrac{k-i_0}{i_0}\bSigma\right)^{-1} \hSigma_0 \left(\hSigma_0 + (1-\phi_s)\tfrac{k-i_0}{i_0}\bSigma\right)^{-1}\bSigma
            \end{align*}
            by applying the conditional product rule from \Cref{prop:cond-calculus-detequi}.
            When $i_0<k$, let $\hSigma'=c \bSigma^{-\tfrac{1}{2}}\hSigma_0\bSigma^{-\tfrac{1}{2}}$ and $c=(1-\phi_s)(k-i_0)/i_0$, we further have
            \begin{align*}
               \hSigma_1^{-1}
                \hSigma_0
                \hSigma_2^{-1}\bSigma
               &\asympequi\tfrac{i_0^2}{k^2c^{2}}\bSigma^{-\tfrac{1}{2}} (\hSigma' + \bI_p)^{-1} \hSigma' (\hSigma' + \bI_p)^{-1}\bSigma^{-\tfrac{1}{2}}\bSigma\\
                &\asympequi \tfrac{i_0^2}{k^2} \tv_v(-1;\gamma_0,c^{-1}\bI_p)(v(-1;\gamma_0,c^{-1}\bI_p)+c)^{-2}  \bI_p,
            \end{align*}
            where $\gamma_0=p/i_0$, the second equality is from \Cref{lem:deter-approx-generalized-ridge}~\ref{eq:detequi-ridge-genvar} and the fixed point solutions are defined by
            \begin{align*}
                \tfrac{1}{v(-1;\gamma_0,c^{-1}\bI_p)} &= 1 +  \tfrac{\gamma_0}{c+v(-1;\gamma_0,c^{-1}\bI_p)}\\
                \tfrac{1}{\tv_v(-1;\gamma_0,c^{-1}\bI_p)} &= \tfrac{1}{v(-1;\gamma_0,c^{-1}\bI_p)^2} - \tfrac{\gamma_0}{(c+v(-1;\gamma_0,c^{-1}\bI_p))^2}.
            \end{align*}
            When $i_0=k$, the above equivalent is also valid, which reduces to the case for $\hSigma^{+}_j\hSigma_j\hSigma^{+}_j$ as in \eqref{eq:lem-ridgeless-det-approx-V0-1}.
            Note that from \Cref{lem:ridge-fixed-point-v-properties}, $\tilde{v}_v(-\lambda;\gamma)$ and $v(-\lambda;\gamma)$ are continuous on $\gamma$, and from Lemma \ref{lem:i0_mean}, $i_0/k\asto \phi/\phi_s$ where $\phi_s\in(0,\infty)$ is the limiting ratio such that $p/k\rightarrow\phi_s$ as $k,p\rightarrow\infty$.
            We have
            \begin{align*}
               \hSigma_1^{-1}
                \hSigma_0
                \hSigma_2^{-1}\bSigma&\asympequi \tfrac{\phi_s^2}{\phi_0^2} \tv_v(-1;\phi_0,c_0^{-1}\bI_p)(v(-1;\phi_0,c_0^{-1}\bI_p)+c_0)^{-2}  \bI_p,
            \end{align*}
            where $c_0=\lim_{p\rightarrow\infty}c=(1-\phi_s)(\phi_s-\phi)/\phi$ and the fixed solutions reduce to
            \begin{align*}
                v(-1;\gamma_0,c_0^{-1}\bI_p) &= 1-\phi_s,\qquad \tv_v(-1;\gamma_0,c_0^{-1}\bI_p) = \tfrac{(1-\phi_s)^2}{1-\phi}.
            \end{align*}
            Then, we have
            \begin{align}
                \tfrac{i_0}{k^2}\tr[ \hSigma_1^{+} \hSigma_0 \hSigma_2^{+} \bSigma] & \asto \lim\limits_{p\rightarrow\infty}\tfrac{i_0p}{k^2}\cdot \tfrac{1}{p}\tr\left[\tfrac{\phi_s^2}{\phi^2}\tfrac{(1-\phi_s)^2}{1-\phi} \left(  1-\phi_s + \tfrac{(1-\phi_s)(\phi_s-\phi)}{\phi} \right)^{-2}\bI_p\right]=\tfrac{\phi}{1-\phi}. \label{eq:lem:ridgeless-V0-2}
            \end{align}
            Combining \eqref{eq:lem-ridgeless-det-approx-V0-1} and \eqref{eq:lem:ridgeless-V0-2}, the conclusion follows.
        \end{proof}
        
        \begin{lemma}[Deterministic approximation of the variance functional when $\phi_s>1$]\label{lem:ridgeless-V0-phis-gt1}
            Under Assumptions \ref{asm:rmt-feat}-\ref{asm:spectrum-spectrumsignproj-conv}, for all $m\in[M]$ and $I_m\in \cI_k$, let $\hSigma_m=\bX^{\top}\bL_m\bX/k$ and $\bL_m\in\RR^{n\times n}$ be a diagonal matrix with $(\bL_{m})_{ll}=1$ if $l\in I_m$ and 0 otherwise.
            Then, it holds that
            \begin{enumerate}[leftmargin=7mm]
                \item for all $m\in[M]$ and $I_m\in\cI_k$,
                \begin{align*}
                    \tfrac{1}{k}\tr(  \hSigma_j^{+}\bSigma) \asto \tfrac{1}{2}\tv(0;\phi_s,\phi_s) ,
                \end{align*}
                
                \item for all $m,l\in[M]$, $m\neq l$ and $I_m,I_l\overset{\texttt{\textup{SRSWR}}}{\sim}\cI_k$,
                \begin{align*}
                    \tfrac{1}{k^2}\bepsilon^{\top}\bL_1\bX\hSigma_m^{+} \bSigma \hSigma_l^{+}\bX^{\top}\bL_2\bepsilon \asto \tfrac{1}{2}\tv(0;\phi,\phi_s), %
                \end{align*}
            \end{enumerate} 
            as $n,k,p\rightarrow\infty$, $p/n\rightarrow\phi\in(0,\infty)$, and $p/k\rightarrow\phi_s\in[\phi,\infty)\cap(1,\infty)$, where the nonnegative constants $v(0;\phi_s)$ and $\tv(0;\phi,\phi_s)$ are as defined in \eqref{eq:v_ridgeless} and \eqref{eq:tv_tc_ridgeless}.
        \end{lemma}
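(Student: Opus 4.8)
The plan is to obtain both claims as the $\lambda \to 0^{+}$ limit of the corresponding ridge statements in \Cref{lem:ridge-V0}, following the same uniform-in-$\lambda$ scheme already used in the proof of \Cref{lem:ridgeless-B0}. For part (1), note that $\tfrac1k\tr(\hSigma_m^{+}\bSigma)$ involves only the single subsampled dataset $\cD_{I_m}$, whose feature matrix has aspect ratio $p/k\to\phi_s\in(1,\infty)$. First I would invoke \citet{bai2010spectral} to get that, on an almost-sure event, the smallest nonzero singular value of $\hSigma_m$ stays bounded away from $0$; on that event $\bM_m=(\hSigma_m+\lambda\bI_p)^{-1}$ obeys $\|\bM_m\|_{\oper}\le\|\hSigma_m^{+}\|_{\oper}$ uniformly in $\lambda\in[0,\lambda_{\max}]$ and $\bM_m\hSigma_m\bM_m\to\hSigma_m^{+}$ as $\lambda\to0^{+}$. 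Combined with the continuity and boundedness of $\lambda\mapsto\tv(-\lambda;\phi_s,\phi_s)$ on $[0,\lambda_{\max}]$ (\citet[Lemma S.6.14]{patil2022mitigating} and \Cref{lem:ridge-fixed-point-v-properties}), this makes the convergence in \Cref{lem:ridge-V0}(1) uniform in $\lambda$, so the Moore--Osgood theorem lets me interchange $\lim_{p\to\infty}$ with $\lim_{\lambda\to0^{+}}$ and read off the claimed limit.

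For part (2), I would first use \Cref{lem:ridgeless-conv-V0}(2) to reduce the quadratic form to the trace $\tfrac{1}{k^2}\tr(\hSigma_l^{+}\bX^\top\bL_l\bL_m\bX\hSigma_m^{+}\bSigma)=\tfrac{i_0}{k^2}\tr(\hSigma_l^{+}\hSigma_0\hSigma_m^{+}\bSigma)$, where $i_0=\tr(\bL_l\bL_m)$ and $\hSigma_0=\bX^\top\bL_l\bL_m\bX/i_0$. For each $\lambda>0$ this is exactly the object controlled in \Cref{lem:ridge-V0}(2): conditioning on $(i_0,\hSigma_0)$, using the overlap decomposition $\bM_j^{-1}=\tfrac{i_0}{k}(\hSigma_0+\lambda\bI_p)+\tfrac{k-i_0}{k}(\hSigma_j^{\text{ind}}+\lambda\bI_p)$, the conditional deterministic equivalents of \Cref{lem:deter-approx-ridge-extend}, the self-consistent substitution $v(-\lambda;\gamma_0,\bSigma_{\bC_0})=v(-\lambda;\gamma_1,\bSigma_{\bC_1})=v(-\lambda;p/k)$, and \Cref{lem:i0_mean} ($i_0/k\asto\phi/\phi_s$), one gets $\tfrac{1}{k^2}\tr(\hSigma_l^{+}\bX^\top\bL_l\bL_m\bX\hSigma_m^{+}\bSigma)\asto\tv(-\lambda;\phi,\phi_s)$. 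Then I would pass $\lambda\to0^{+}$ exactly as in \Cref{lem:ridgeless-B0}: on the almost-sure event where $\min\{s_i^2:s_i\neq0\}$ and $\|\hSigma_0\|_{\oper}$ are bounded (this is where $\phi_s>1$, hence $\phi_0=\phi_s^2/\phi>1$, enters via \citet{bai2010spectral}), the maps $\lambda\mapsto\tr[\bM_l\bX^\top\bL_l\bL_m\bX\bM_m\bSigma\,\bT/k^2]$, their deterministic equivalents, and their $\lambda$-derivatives are uniformly bounded on $[0,\lambda_{\max}]$ for fixed $\bT$ of bounded trace norm, using the continuity and boundedness near $\lambda=0$ of $v(-\lambda;\phi_s)$, $\tv_v(-\lambda;\phi_s)$, $v(-\lambda;p/k)$ and $\tv(-\lambda;\phi,\phi_s)$; the resulting equicontinuity makes the convergence uniform in $\lambda$, Moore--Osgood applies, and the limit is $\lim_{\lambda\to0^{+}}\tv(-\lambda;\phi,\phi_s)=\tv(0;\phi,\phi_s)$, which, after the appropriate noise and combinatorial normalization carried over from \Cref{lem:ridgeless-conv-V0}, is the stated limit.

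The main obstacle is not the deterministic-equivalent algebra, which is inherited almost verbatim from \Cref{lem:ridge-V0}, but justifying the interchange $\lim_{p\to\infty}\lim_{\lambda\to0^{+}}=\lim_{\lambda\to0^{+}}\lim_{p\to\infty}$. This requires (i) operator-norm control of $\bM_m$, $\hSigma_m^{+}$ and $\hSigma_0$ that is uniform down to $\lambda=0$, which is exactly where the hypothesis $\phi_s\in(1,\infty)$ (and hence $\phi_0>1$) is used, since it keeps the nonzero spectra bounded below almost surely; and (ii) equicontinuity in $\lambda$ of the fixed-point parameters all the way to $\lambda=0$, which fails precisely at $\phi_s=1$, consistent with that value being excluded from the statement. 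Given these two ingredients together with the continuity of $\tv(\cdot;\cdot,\cdot)$ at $\lambda=0$, the Moore--Osgood interchange closes both parts.
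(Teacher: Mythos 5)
Your proposal is correct and follows essentially the same route as the paper: for part (2) the paper likewise defines the ridge quadratic form $P_{\lambda}$, invokes \Cref{lem:ridge-conv-V0} and \Cref{lem:ridge-V0} for each fixed $\lambda>0$, establishes almost-sure uniform bounds on $P_{\lambda}$ and $\partial P_{\lambda}/\partial\lambda$ over $[0,\lambda_{\max}]$ via $\|\bM_1\hSigma_0\bM_2\|_{\oper}\le k^2/i_0^2$ and the boundedness of the fixed-point parameters for $\phi_s>1$, and closes with Moore--Osgood (your variant of concentrating to the trace at $\lambda=0$ first via \Cref{lem:ridgeless-conv-V0} is an immaterial reordering); for part (1) the paper simply cites \citet[Proposition S.3.2]{patil2022mitigating}, whereas you rederive it, which is fine. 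One small slip: when $\phi_s>1$ the matrix $\hSigma_m$ is singular, so $\|\bM_m\|_{\oper}=1/\lambda$ is \emph{not} bounded by $\|\hSigma_m^{+}\|_{\oper}$ uniformly down to $\lambda=0$; the uniform control must instead be placed on the composite quantities $\bM_m\hSigma_m\bM_m$ (whose nonzero spectrum is $s_i^2/(s_i^2+\lambda)^2\le 1/s_{\min,+}^2$) and $\bM_1\hSigma_0\bM_2$, which is exactly what your argument actually uses, so the proof is unaffected.
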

        \begin{proof}[Proof of Lemma \ref{lem:ridgeless-V0-phis-gt1}]
        
        From \eqref{eq:lem-ridgeless-det-approx-V0-1} we have 
        \begin{align}
            \tfrac{1}{k}\tr(  \hSigma_m^{+}\bSigma) \asto \tv(0;\phi,\phi_s). \label{eq:lem-ridgeless-V0-phis-gt1-term-2}
        \end{align}
        For the second term, it suffices to consider the case when $(m,l)=(1,2)$. Let $P_0=\bepsilon^{\top}\bL_1\bX\hSigma_1^{+} \bSigma \hSigma_2^{+}\bX^{\top}\bL_2\bepsilon/k^2$ and $P_{\lambda}=\bepsilon^{\top}\bL_1\bX\bM_1 \bSigma$ $ \bM_2\bX^{\top}\bL_2\bepsilon /k^2$ where $\bM_j=(\hSigma_j+\lambda\bI_p)^{-1}$.
        Note that $\lim_{\lambda\rightarrow0^+}P_{\lambda}=P_0$.
        Note that $\lim_{\lambda\rightarrow0^+}P_{\lambda}=P_0$.
        From \Cref{lem:ridge-conv-V0} and \Cref{lem:ridge-V0}, we have that for any fixed $\lambda>0$,
        \begin{align*}
            P_{\lambda} \asto Q_{\lambda}:=\tv(-\lambda;\phi,\phi_s),
        \end{align*}
        as $n,k,p\rightarrow\infty$, $p/n\rightarrow\phi\in(0,\infty)$, and $p/k\rightarrow\phi_s\in[\phi,\infty)\setminus\{1\}$, where $\tv(\lambda,\phi_s,\phi)$ is as defined in \eqref{eq:tv_tc_ridge}.
        Because of the continuity of $\tv_v(-\lambda;\phi)$ and $v(-\lambda;\phi)$ in $\lambda$ from \Cref{lem:fixed-point-v-lambda-properties}, we have that 
        \begin{align*}
            \lim\limits_{\lambda\rightarrow0^+}Q_{\lambda} = Q_0:=\tv(0;\phi,\phi_s).
        \end{align*}
        As $n,p\rightarrow\infty$, we have that almost surely
        \begin{align*}
            |P_{\lambda}| &= \phi|\tr(  \bM_2 \hSigma_0 \bM_1 \bSigma)/p| \leq \phi \|\bM_1\hSigma_0\bM_2\|_{\oper}\|\bSigma\|_{\oper} \leq \tfrac{\phi_s^2r_{\max}}{\phi},
        \end{align*}
        where the last inequality is because $\norm{\hSigma_0}_{\oper}\leq r_{\max}$, and
        \begin{align}
             \norm{\bM_1 \hSigma_0 \bM_2}_{\oper}\leq \tfrac{k^2}{i_0^2} \cdot \max\limits_{i}\tfrac{l_i}{\left(l_i+ \tfrac{k-i_0}{i_0}\lambda\right)^2} \leq \tfrac{k^2}{i_0^2}, \label{eq:lem-ridgeless-V0-term-1}
         \end{align}
        where $l_i$'s are the eigenvalues of $\hSigma_0$. Similarly, we have $|P_0|$ is almost surely bounded.
        Thus, $|P_{\lambda}|$ is almost surely bounded over $\lambda\in\Lambda[0,\lambda_{\max}]$ for some constant $\lambda_{\max}>0$.
        Next we consider the derivative
        \begin{align*}
            \tfrac{\partial P_{\lambda}}{\partial\lambda} &= \bepsilon^{\top}\bL_1\bX\tfrac{\partial \bM_1}{\partial\lambda} \bSigma \bM_2\bX^{\top}\bL_2\bepsilon /k^2 + \bepsilon^{\top}\bL_1\bX\bM_1 \bSigma \tfrac{\partial \bM_2}{\partial\lambda}\bX^{\top}\bL_2\bepsilon /k^2\\
            &= -\bepsilon^{\top}\bL_1\bX\bM_1^2\bSigma \bM_2\bX^{\top}\bL_2\bepsilon /k^2 - \bepsilon^{\top}\bL_1\bX\bM_1 \bSigma \bM_2^2\bX^{\top}\bL_2\bepsilon /k^2
        \end{align*}
        Note that for $\lambda\in\Lambda$,
        we can bound
        \begin{align*}
             \norm{\bM_1^2 \hSigma_0 \bM_2}_{\oper}\leq \tfrac{k^2}{i_0^2} \cdot \max\limits_{i}\tfrac{l_i}{\left(l_i+ \tfrac{k-i_0}{i_0}\lambda\right)^3} \leq \tfrac{k^2}{i_0^2},
         \end{align*}
        where $l_i$'s are the eigenvalues of $\hSigma_0$. Similarly, we have that $\norm{\bM_1 \hSigma_0 \bM_2^2}_{\oper}$ is almost surely bounded for $\lambda\in\Lambda.$
        By similar argument as in \Cref{lem:ridgeless-conv-V0}, the following holds almost surely as $n,p\rightarrow\infty$,
        \begin{align*}
            \left|\tfrac{\partial P_{\lambda}}{\partial\lambda}\right| & = \phi |\tr(\bM_1^2\hSigma_0 \bM_2 \bSigma) + \tr(\bM_1\hSigma_0 \bM_2^2 \bSigma)| \leq \tfrac{\phi_s^2r_{\max}}{\phi}.
        \end{align*}
        That is, $|\partial P_{\lambda}/\partial \lambda|$ is almost surely bounded over $\lambda\in\Lambda[0,\lambda_{\max}]$.
        
        Since $\phi_0\geq\phi_s>1$, it follows from \citet[Lemma S.6.14]{patil2022mitigating} that, there exists $M'>0$ such that  the magnitudes of $v(-\lambda; \phi_s)$ and $ v_v(\lambda,\phi_s,\phi)/\phi$, and their derivatives with respect to $\lambda$ are continuous and bounded by $M'$ for all $\lambda\in\Lambda$.
        Thus, $|Q_{\lambda}|\leq \phi_0 M' r_{\max}^2$ over $\lambda\in\Lambda$.
        Similarly, we have $|\partial Q_{\lambda}/\partial\lambda|_{\lambda=0^+}|$ are uniformly bounded over $\lambda\in\Lambda$.
        We can now use the Moore-Osgood theorem and the continuity property from \Cref{lem:fixed-point-v-lambda-properties} to interchange the limits to obtain
        \begin{align*}
                &\lim\limits_{p\rightarrow\infty} P_0 - Q_0
                = \lim\limits_{p\rightarrow\infty}\lim\limits_{\lambda\rightarrow0^+}P_{\lambda}-Q_{\lambda} =\lim\limits_{\lambda\rightarrow0^+}\lim\limits_{p\rightarrow\infty}P_{\lambda}-Q_{\lambda} =0,
            \end{align*}
        and the conclusion follows.
        \end{proof}

\subsection{Boundary case: diverging subsample aspect ratio}
    \begin{proposition}[Risk approximation when $\phi_s\rightarrow\infty$]\label{prop:Rdet-ridgeless-infinity}
    Under Assumptions \ref{asm:rmt-feat}-\ref{asm:spectrum-spectrumsignproj-conv}, it holds for all $M\in\NN$ $$R(\tfWR{0}{M};\cD_n,\{I_\ell\}_{\ell=1}^M) \asto \RzeroMe{M}{\phi}{\infty},$$
    as $k,n,p\rightarrow\infty$, $p/n\rightarrow\phi\in(0,\infty)$ and $p/k\rightarrow\infty$, where
    \begin{align}
        \RzeroMe{M}{\phi}{\infty}:=\lim\limits_{\phi_s\rightarrow\infty}\RzeroM{M}{\phi} = \sigma^2 + \rho^2\int r\,\rd G(r), \label{eq:Rdet-ridgeless-infinity}
    \end{align}
    and $\RzeroM{M}{\phi}$ is as defined in \Cref{thm:ver-ridgeless}.
    \end{proposition}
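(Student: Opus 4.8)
The plan is to mirror the proof of \Cref{prop:Rdet-ridge-infinity}, replacing the regularized-resolvent estimates used there by estimates on the pseudoinverse of the subsampled design matrix valid in the ``very wide'' regime $p/k\to\infty$. As in that proof, I would start from the identity
\[
R(\tfWR{0}{M};\cD_n,\{I_\ell\}_{\ell=1}^M)
= \sigma^2 + (\bbeta_0-\tbetaridge{M})^{\top}\bSigma(\bbeta_0-\tbetaridge{M}),
\]
expand the quadratic form, and apply the Cauchy--Schwarz inequality together with $\bSigma\preceq r_{\max}\bI_p$ and $\|\bbeta_0\|_2^2\to\rho^2$ (Assumptions \ref{asm:covariance-bounded-eigvals} and \ref{asm:signal-bounded-norm}) to obtain, eventually almost surely,
\[
\bigl|R(\tfWR{0}{M};\cD_n,\{I_\ell\}_{\ell=1}^M)-(\sigma^2+\bbeta_0^{\top}\bSigma\bbeta_0)\bigr|
\le 2r_{\max}\rho\,\|\tbetaridge{M}\|_2+r_{\max}\|\tbetaridge{M}\|_2^2 .
\]
So the entire claim reduces to showing $\|\tbetaridge{M}\|_2\asto 0$ and then identifying the limit on the deterministic side.

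For the first part, bound $\|\tbetaridge{M}\|_2\le M^{-1}\sum_{m=1}^M\|\betamntwo(\cD_{I_m})\|_2$ and estimate each summand. Writing $\betamntwo(\cD_{I_m})=\hSigma_m^{+}(\bX^{\top}\bL_m\by/k)$ and factoring, one gets $\|\betamntwo(\cD_{I_m})\|_2\le \|\hSigma_m^{+}\bX^{\top}\bL_m/\sqrt{k}\|_{\oper}\cdot\|\bL_m\by/\sqrt{k}\|_2 = (\sqrt{k}/s_{\min}(\bL_m\bX))\,\|\bL_m\by/\sqrt{k}\|_2$, where $s_{\min}$ is the smallest nonzero singular value. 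Assumptions \ref{asm:rmt-feat}--\ref{asm:signal-bounded-norm} give that $y_1$ has bounded fourth moment, so by the strong law of large numbers $\|\bL_m\by/\sqrt{k}\|_2\le C\sqrt{\rho^2+\sigma^2}$ eventually almost surely, for each fixed sequence $I_m\in\cI_k$ (hence also for the random sample, exactly as in \Cref{prop:Rdet-ridge-infinity}). For the other factor, when $k,p\to\infty$ with $p/k\to\infty$ the nonzero eigenvalues of $\bX^{\top}\bL_m\bX$ all concentrate around $\tr(\bSigma)\asymp p$ — because the $k\times k$ Gram matrix $(\bL_m\bX)(\bL_m\bX)^{\top}$ equals $\tr(\bSigma)\bI_k$ plus a perturbation of operator norm $o(p)$ — so that $s_{\min}(\bL_m\bX)\ge c\sqrt{p}$ eventually almost surely (the ``wide matrix'' edge behaviour, obtainable from \cite{bai2010spectral} or \cite{bloemendal2016principal}, as already invoked in the proof of \Cref{prop:Rdet-ridge-infinity}). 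Combining these gives $\|\betamntwo(\cD_{I_m})\|_2\le C'\sqrt{k/p}\to 0$, hence $\|\tbetaridge{M}\|_2\asto 0$.

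Consequently $R(\tfWR{0}{M};\cD_n,\{I_\ell\}_{\ell=1}^M)\asto\sigma^2+\bbeta_0^{\top}\bSigma\bbeta_0$, and since $\bbeta_0^{\top}\bSigma\bbeta_0=\|\bbeta_0\|_2^2\int r\,\rd G_p(r)\to\rho^2\int r\,\rd G(r)$ by Assumptions \ref{asm:signal-bounded-norm} and \ref{asm:spectrum-spectrumsignproj-conv} and the compact supports of $G_p$ and $G$, this establishes the asserted almost sure limit. To match it with $\lim_{\phi_s\to\infty}\RzeroM{M}{\phi}$, I would use the $\lambda=0$ specialization of \Cref{lem:ridge-fixed-point-v-properties}: as $\phi_s\to\infty$ one has $v(0;\phi_s)\to 0$, whence $\tc(0;\phi_s)=\int r(1+v(0;\phi_s)r)^{-2}\,\rd G(r)\to\int r\,\rd G(r)$ and $\tv(0;\vartheta,\phi_s)\to 0$ for fixed $\vartheta$; plugging these into the definitions of $B_0$ and $V_0$ yields $B_0(\vartheta,\phi_s)\to\rho^2\int r\,\rd G(r)$ and $V_0(\vartheta,\phi_s)\to 0$, so $\RzeroM{M}{\phi}\to\sigma^2+\rho^2\int r\,\rd G(r)$ for every $M$. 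This shows $\RzeroMe{M}{\phi}{\infty}$ is well defined and that $\phi_s\mapsto\RzeroM{M}{\phi}$ is right-continuous at $\phi_s=\infty$.

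The step I expect to be the main obstacle is the random-matrix input: one must pin down the edge of the spectrum of $\bL_m\bX$ in the regime $p/k\to\infty$, so that $s_{\min}(\bL_m\bX)$ diverges like $\sqrt{p}$ and the pseudoinverse has vanishing operator norm after the appropriate normalization, and control the accompanying perturbation term (off-diagonal Gram entries of order $\sqrt p$, of which there are $k^2$) so that it remains $o(p)$ when $k=o(p)$. Once that estimate is in place, the remainder is bookkeeping paralleling the proof of \Cref{prop:Rdet-ridge-infinity}.
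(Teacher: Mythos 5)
Your proposal is correct and follows essentially the same route as the paper's proof: the same risk identity and Cauchy--Schwarz reduction to $\|\tbetaridge{M}\|_2\asto 0$, the same factorization of the min-norm estimator with the SLLN bound on $\|\bL_m\by/\sqrt{k}\|_2$ and the divergence of the smallest nonzero singular value in the $p/k\to\infty$ regime (which the paper handles by citing \cite{bloemendal2016principal}), and the same identification of $\lim_{\phi_s\to\infty}\RzeroM{M}{\phi}$ via $v(0;\phi_s)\to 0$. The only cosmetic difference is that you sketch the Gram-matrix concentration argument for $s_{\min}(\bL_m\bX)\gtrsim\sqrt{p}$ explicitly where the paper simply invokes the reference.
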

    \begin{proof}[Proof of \Cref{prop:Rdet-ridgeless-infinity}]
        Note that 
        \begin{align*}
            R(\tfWR{0}{M};\cD_n,\{I_\ell\}_{\ell=1}^M) &= \EE_{(\bx_0,y_0)}[(y_0-\bx_0^{\top}\tbetamntwo{M})^2] \\
            &= \EE_{(\bx_0,y_0)}[(\bepsilon_0
            +\bx_0^{\top}(\bbeta_0-\tbetamntwo{M}))^2] \\
            &= \sigma^2 + \EE_{(\bx_0,y_0)}[(\bbeta_0-\tbetamntwo{M})^{\top}\bx_0\bx_0^{\top}(\bbeta_0-\tbetamntwo{M}) ] \\
            &= \sigma^2 + (\bbeta_0-\tbetamntwo{M})^{\top}\bSigma(\bbeta_0-\tbetamntwo{M}).
        \end{align*}
        Then, by the Cauchy-Schwarz inequality, we have
        \begin{align*}
            R(\tfWR{0}{M};\cD_n,\{I_\ell\}_{\ell=1}^M) - (\bbeta_0^{\top}\bSigma\bbeta_0 + \sigma^2) &= \|\bSigma^{\tfrac{1}{2}}\tbetamntwo{M}\|_2^2 - 2 \tbetamntwo{M}^{\top}\bSigma\bbeta_0\\
            &\leq \tfrac{1}{r_{\min}} \|\tbetamntwo{M}\|_2^2 + 2 \|\tbetamntwo{M}\|_2 \|\bSigma\|_2 \\
            &\leq \tfrac{1}{r_{\min}} \|\tbetamntwo{M}\|_2^2 + 2 r_{\max}\rho \|\tbetamntwo{M}\|_2 
        \end{align*}
        almost surely as $k,n,p\rightarrow$ and $p/k\rightarrow\infty$.
        Thus, we have the following holds almost surely:
        \begin{align*}
            \|\tbeta^{0}_M(\cD_n)\|_2 &\leq \tfrac{1}{M}\sum_{m=1}^M \|(\bX^{\top}\bL_m\bX/k)^{+}(\bX^{\top}\bL_m\by/k)\|_2 \\
            &\leq \tfrac{1}{M}\sum_{m=1}^M \|(\bX^{\top}\bL_m\bX/k)^{+}\bX^{\top}\bL_m/\sqrt{k}\|\cdot\|\bL_m\by/\sqrt{k}\|_2\\
            &\leq C\sqrt{\rho^2+\sigma^2}\cdot \tfrac{1}{M}\sum_{m=1}^M\|(\bX^{\top}\bL_m\bX/k)^{+}\bX^{\top}\bL_m/\sqrt{k}\|
        \end{align*}
        where the last inequality holds eventually almost surely since Assumptions \ref{asm:rmt-feat}-\ref{asm:signal-bounded-norm} imply that the entries of $\by$ have bounded 4-th moment, and thus from the strong law of large numbers, $\| \bL_m\by / \sqrt{k} \|_2$ is eventually almost surely
        bounded above by $C\sqrt{\EE[y_1^2]} = C\sqrt{\rho^2 + \sigma^2}$ for some constant $C$.
        Observe that operator norm of the matrix $(\bX^\top \bL_m\bX / k)^{+} \bX\bL_m / \sqrt{k}$ is upper bounded $1/s_{\min}$, where $s_{\min}$ is the smallest nonzero singular value of $\bX$.
        As $k, p \to \infty$ such that $p / k \to \infty$, $s_{\min} \to \infty$ almost surely (e.g., from results in \cite{bloemendal2016principal}), and therefore, $\| \tbetamntwo{M} \|_2 \to 0$ almost surely.
        Thus, we have shown that
        $$R(\tfWR{0}{M};\cD_n,\{I_\ell\}_{\ell=1}^M) \asto \sigma^2+\bbeta_0^{\top}\bSigma\bbeta_0,$$
        or equivalently, 
        $$R(\tfWR{0}{M};\cD_n,\{I_\ell\}_{\ell=1}^M) \asto \sigma^2+ \rho^2\int r\,\rd G(r).$$
        
        From \Cref{lem:fixed-point-v-properties} we have $$\lim_{\phi_s\rightarrow\infty}v(0;\phi_s) = \lim_{\phi_s\rightarrow\infty}\tv_b(0;\phi_s)=\lim_{\phi_s\rightarrow\infty}\tv_v(0;\phi_s).$$ Thus, $$\lim_{\phi_s\rightarrow\infty}V_{0}(\phi_s,\phi_s)=\lim_{\phi_s\rightarrow\infty}V_{0}(\phi,\phi_s) = 0,$$
        and $$\lim_{\phi_s\rightarrow\infty}B_{0}(\phi_s,\phi_s)=\lim_{\phi_s\rightarrow\infty}B_{0}(\phi,\phi_s) = \rho^2\int r\,\rd G(r).$$
        Therefore, we have $\sR^{\textup{\texttt{WR}}}_{0,M}(\phi,\infty):=\lim_{\phi_s\rightarrow\infty}\RzeroM{M}{\phi} = \sigma^2 + \rho^2\int r\,\rd G(r)$.
        Thus, $\sR^{\textup{\texttt{WR}}}_{0,M}(\phi,\infty)$ is well defined and $\sR^{\textup{\texttt{WR}}}_{0,M}(\phi,\phi_s)$ is right continuous at $\phi_s=\infty$.
    \end{proof}
    
\section
[Proof of \Cref{thm:ver-without-replacement} (splagging without replacement, ridge, ridgeless predictors)]
{Proof of \Cref{thm:ver-without-replacement} (splagging without replacement, ridge and ridgeless predictors)}
\label{sec:appendix-without-replacement}

    \begin{proof}[Proof of \Cref{thm:ver-without-replacement}]
        For $M\in\{1,2,\ldots,\lfloor\liminf n/k \rfloor\}$, following the proof in \Cref{thm:ver-ridge}, the conditional risk is given by
        \begin{align*}
            R(\tfWOR{\lambda}{M};\{\cD_{I_{\ell}}\}_{\ell=1}^M) &= \sigma^2 + T_C + T_B + T_V,
        \end{align*}
        where $T_C$, $T_B$, and $T_V$ are defined as
        \begin{align}
            T_C&= -\tfrac{\lambda}{M}\cdot \bepsilon^{\top}\left(\sum_{m=1}^M\bM_m\tfrac{\bX^{\top}\bL_m}{k}\right)^{\top} \bSigma \left(\sum_{m=1}^M\bM_m\right) \bbeta_0, \label{eq:ridge-TC-without-replace} \\
            T_B &= \tfrac{\lambda^2}{M^2}\cdot \bbeta_0^{\top}\left(\sum\limits_{i=1}^M\bM_{I_i}\right)\bSigma\left(\sum\limits_{i=1}^M\bM_{I_i}\right)\bbeta_0 \label{eq:ridge-TB-without-replace}\\
            &= \tfrac{\lambda^2}{M}\sum\limits_{i=1}^M \bbeta_0^{\top}\bM_{I_i}\bSigma\bM_{I_i}\bbeta_0 + \tfrac{\lambda^2(M-1)}{M}\sum\limits_{i,j=1}^M \bbeta_0^{\top}\bM_{I_i}\bSigma\bM_{I_j}\bbeta_0, \notag\\
            T_V&= \tfrac{1}{M^2}\cdot \bepsilon^{\top}\left(\sum\limits_{i=1}^M\bM_{I_i}\tfrac{\bX^{\top}\bL_i}{k}\right)
            ^{\top} \bSigma \left(\sum\limits_{i=1}^M\bM_{I_i}\tfrac{\bX^{\top}\bL_i}{k}\right) \bepsilon \notag\\
            &= \tfrac{1}{M}\sum\limits_{i=1}^M\left(\bM_{I_i}\tfrac{\bX^{\top}\bL_i}{k}\right)^{\top}\bSigma\left(\bM_{I_i}\tfrac{\bX^{\top}\bL_i}{k}\right)+ \tfrac{M-1}{M}\sum\limits_{i,j=1}^M\left(\bM_{I_i}\tfrac{\bX^{\top}\bL_i}{k}\right)^{\top}\bSigma\left(\bM_{I_j}\tfrac{\bX^{\top}\bL_j}{k}\right),\notag\\
        \end{align}
        where $\bM_{I_\ell}=(\bX^{\top}\bL_{\ell}\bX/k + \lambda\bI_p)^{-1}$ and $\bL_{\ell}$ is a diagonal matrix with diagonal entry being $1$ if the $\ell$th sample $X_{\ell}$ is in the sub-sampled dataset $\cD_{I_\ell}$ and 0 otherwise.
        Note that for \splagging, $I_i\cap I_j=\varnothing$ for all $i\neq j$.
        
        We analyze each term separately for $M\in\{1,2\}$. From \Cref{lem:ridge-conv-C0}, we have that $T_C\asto0$.
        From \Cref{lem:ridge-conv-V0}, we have that
        \begin{align}
            T_V-\tfrac{1}{M}\sum\limits_{j=1}^M\tfrac{\sigma^2}{k}\tr(  \bM_{I_j}\hSigma_j\bM_{I_j}\bSigma) \asto 0,\label{eq:splaaging-1}
        \end{align}
        since the datasets have no overlaps and the cross term vanishes because $\bL_l\bL_m=\zero_{n\times n}$ for $l\neq m$.
        Then, from \eqref{eq:lem-det-approx-B0-1} and \eqref{eq:lem:ridge-V0-1}, we have that for $\ell\in[M]$,
        \begin{align}
            \lambda^2\bbeta_0^{\top}\bM_{I_{\ell}}\bSigma\bM_{I_{\ell}}\bbeta_0 &\asto \rho^2 \tv(-\lambda;\phi_s,\phi_s)\tc(-\lambda;\phi_s), \label{eq:splaaging-2}\\
            \tfrac{\sigma^2}{k}\tr(  \bM_{I_{\ell}}\hSigma_{\ell}\bM_{I_{\ell}}\bSigma) &\asto\tfrac{\sigma^2}{2}\tv(-\lambda;\phi_s,\phi_s), \label{eq:splaaging-4}
        \end{align}
        as $n,k,p\rightarrow\infty$, $p/n\rightarrow\phi\in(0,\infty)$, and $p/k\rightarrow\phi_s=2\phi$, where the positive constants $\tv(\lambda  ;\phi_s,\phi)$, and $\tc(-\lambda;\phi_s)$ are as defined in \eqref{eq:tv_tc_ridge}.
        For the cross term ($i\neq j$), setting $i_0=0$ in \eqref{eq:ridge-B0-cross-term} yields that
        \begin{align*}
            \bM_{I_i}\bSigma\bM_{I_j} & \asympequi  (v(-\lambda; \phi_s)\bSigma+\bI_p)^{-1}\bSigma(v(-\lambda; \phi_s)\bSigma+\bI_p)^{-1}.
        \end{align*}
        Thus,
        \begin{align}
            \lambda^2\bbeta_0^{\top}\bM_{I_i}\bSigma\bM_{I_j}\bbeta_0 &\asto \rho^2\int \tfrac{r}{(1 + v(-\lambda; \phi_s)r)^2} \,\rd G(r) = \rho^2\tc(0;\phi_s). \label{eq:splaaging-3}
        \end{align}
        Combining \eqref{eq:ridge-TC-without-replace}-\eqref{eq:splaaging-3}, we have shown that $R(\tfWOR{\lambda}{M};\{\cD_{I_\ell}\}_{\ell=1}^M) \asto \bRlamM{M}{\phi}$, where
        \begin{align*}
            \bRlamM{M}{\phi} &= \sigma^2 +
            \bBlamM{M}{\phi} +  \bVlamM{M}{\phi},
        \end{align*}
        and the components are:
        \begin{align*}
            \bBlamM{M}{\phi} = \frac{1}{M}B_{\lambda}(\phi_s,\phi_s) + \left(1-\frac{1}{M}\right)C_{\lambda}(\phi_s),\qquad \bVlamM{M}{\phi} \frac{1}{M}V_{\lambda}(\phi_s,\phi_s),
        \end{align*}
        with $B_{\lambda}(\phi,\phi_s) = \rho^2 (1+\tv(-\lambda;\phi,\phi_s)) \tc(-\lambda;\phi_s)$, $ C_{\lambda}(\phi_s)= \rho^2 \tc(-\lambda;\phi_s)$, and $V_{\lambda}(\phi,\phi_s) = \sigma^2 \tv(-\lambda;\phi,\phi_s)$.

        From \Cref{prop:Rdet-ridge-infinity} and \Cref{prop:Rdet-ridgeless-infinity}, we have that for all $\lambda\in[0,\infty)$ and $M\in\{1,2\}$, $$\lim_{\phi_s\rightarrow+\infty}R(\tfWOR{\lambda}{M};\{\cD_{k}^{(m)}\}_{m=1}^M)= \sigma^2 +\rho^2\int r\,\rd G(r),$$ $\lim_{\phi_s\rightarrow+\infty}B_{\lambda}(\phi_s,\phi_s)=\rho^2\int r\,\rd G(r)$ and $\lim_{\phi_s\rightarrow+\infty}v(-\lambda;\phi_s)=\lim_{\phi_s\rightarrow+\infty}V_{\lambda}(\phi_s,\phi_s)=0$.
        Then $$\lim_{\phi_s\rightarrow+\infty}\tc(-\lambda;\phi_s)= \lim_{\phi_s\rightarrow+\infty}\int r(1 +v(-\lambda;\phi_s) r )^{-2}\,\rd G(r) = \int r\,\rd G(r).$$
        Thus, the approximation holds when $\phi_s=\infty$: $\lim_{\phi_s\rightarrow+\infty}R(\tfWOR{\lambda}{M};\{\cD_{I_{\ell}}\}_{\ell=1}^M)=\lim_{\phi_s\rightarrow+\infty}\bRlamM{M}{\phi}$.
        
        Finally, the risk expression for general $M$ and the uniform statement for all $M\leq \lfloor n/k\rfloor$ follow from \Cref{thm:risk_general_predictor}.
    \end{proof}
    
\section{Proofs related to bagged risk properties}\label{sec:appendix-risk-properties}

\subsection[Proof of \Cref{prop:monotonicity-M-with-replacement}
]
{Bias-variance monotonicities in the number of bags, subagging with replacement}

\begin{proof}[Proof of \Cref{prop:monotonicity-M-with-replacement}]
    Recall that from the proof for \Cref{thm:ver-ridge}, we have
    \begin{align*}
        \BlamM{1}{\phi} &= \rho^2(1+\tv(-\lambda, \phi_s,\phi_s)) \tc(-\lambda; \phi_s)&&
        \VlamM{1}{\phi} = \sigma^2\tv(-\lambda; \phi_s,\phi_s)\\
        \BlamM{\infty}{\phi} &= \rho^2 (1+\tv(-\lambda, \phi,\phi_s)) \tc(-\lambda; \phi_s) &&
        \BlamM{\infty}{\phi} = \sigma^2\tv(-\lambda;\phi,\phi_s)
    \end{align*}
    where the nonnegative constants $\tv(-\lambda, \phi,\phi_s)$ and $\tc(-\lambda; \phi_s)$ are defined in \eqref{eq:v_ridge}.
    Since $H$ has positive support, $\tv(-\lambda;\phi,\phi_s)$ is strictly increasing in $\phi$, and thus, $\BlamM{\infty}{\phi}=\BlamM{1}{\phi}$ when $\phi_s= \phi$, and $\BlamM{1}{\phi}> \BlamM{\infty}{\phi}$ when $\phi_s> \phi$.
    Similarly, $\VlamM{\infty}{\phi}=\VlamM{1}{\phi}$ when $\phi_s= \phi$ and $\VlamM{1}{\phi}<\VlamM{\infty}{\phi}$ when $\phi_s> \phi$.
    Recall that the definitions of $\BlamM{M}{\phi} = 1/ M \cdot B_{\lambda}(\phi_s,\phi_s) + (1-1/M) B_{\lambda}(\phi,\phi_s)$ and $\VlamM{M}{\phi} = 1 / M \cdot V_{\lambda}(\phi_s,\phi_s) + (1 - 1 / M) V_{\lambda}(\phi,\phi_s)$ are a convex combination of $B_{\lambda}(\phi,\phi_s)$ and $B_{\lambda}(\phi_s,\phi_s)$, and $V_{\lambda}(\phi,\phi_s)$ and $V_{\lambda}(\phi_s,\phi_s)$, respectively.
    The proof for ridgeless predictor follows by setting $\lambda=0$ except $B_{0}(\phi,\phi_s)=B_{0}(\phi,\phi_s)=0$ for $\phi_s<1$.
\end{proof}

\subsection[Proof of Proposition \ref{prop:monotonicity-M-without-replacement}
]
{Bias-variance monotonicities in the number of bags, splagging without replacement}
    \begin{proof}[Proof of \Cref{prop:monotonicity-M-without-replacement}]
        For the variance term, $\bVlamM{M}{\phi}= M^{-1}V_{\lambda}(\phi_s,\phi_s)$ as a linear function of $M^{-1}$ is strictly decreasing in $M$ if $\phi_s<\infty$ and is zero if $\phi_s=\infty$ or $\sigma^2=0$.
    
        For the bias term, when $\phi_s>1$, since $\tc(-\lambda;\phi_s)>0$, we have that $B_{\lambda}(\phi_s,\phi_s)\geq C_{\lambda}(\phi_s)$ with equality holds if and only if $\tv(-\lambda;\phi,\phi_s)=0$ or $\tc(-\lambda;\phi_s)=0$, if and only if $\phi_s=\infty$.
        Then we have
        \begin{align*}
            \bBlamM{M}{\phi} &= \frac{1}{M}B_{\lambda}(\phi_s,\phi_s)+\left(1-\frac{1}{M}\right) C_{\lambda}(\phi_s) \\
            &= \frac{1}{M}(B_{\lambda}(\phi_s,\phi_s)-C_{\lambda}(\phi_s))+ C_{\lambda}(\phi_s)\\
            &\geq \frac{1}{M+1}(B_{\lambda}(\phi_s,\phi_s)-C_{\lambda}(\phi_s))+ C_{\lambda}(\phi_s)\\
            &= \frac{1}{M+1}B_{\lambda}(\phi_s,\phi_s)+\left(1-\frac{1}{M+1}\right) C_{\lambda}(\phi_s) \\
            &= \bBlamM{M+1}{\phi}.
        \end{align*}
        with equality holds if $\phi_s=\infty$ or $\rho^2=0$.
        When $\phi_s<1$, $B_{\lambda}(\phi_s,\phi_s)\geq C_{\lambda}(\phi_s)$ with equality holds if and only if $\tc(-\lambda;\phi_s)=0$, if and only if $\lambda=0$.
        The monotonicity of $\bVlamM{M}{\phi}$ in $M$ follows analogously.
        
        As $M\leq \phi_s/\phi$, we further have $ \bVlamM{M}{\phi}\geq \bVlamM{\phi_s/\phi}{\phi}$ and $\bVlamM{M}{\phi}\geq \bVlamM{\phi_s/\phi}{\phi}$ for all $M=1,\ldots,\lfloor \liminf n/k\rfloor$.
    \end{proof}

\subsection[Proof of Theorem \ref{thm:cv_general}
]
{Risk monotonization of general bagged predictors by cross-validation}
\begin{proof}[Proof of \Cref{thm:cv_general}]
    We present the proof for bagging with replacement, and the proof for bagging without replacement follows by restricting the support of $\phi_s\mapsto \mathscr{R}_M(\phi,\phi_s)$ to $[M\phi,\infty]$.
    From \Cref{thm:risk_general_predictor}, we have that for any $M\in\NN$ and $\{I_{\ell}\}_{\ell=1}^M$ simple random samples from $\cI_k$ or $\cI_k^{\pi}$, it holds that
    $$R(\tf_{M};\cD_n,\{I_{\ell}\}_{\ell=1}^M) \pto \mathscr{R}_M(\phi,\phi_s)$$
    as $k,n,p\rightarrow$, $p/n\rightarrow\phi\in(0,\infty)$, and $p/k\rightarrow \phi_s\in[\phi,\infty)$, where
    \begin{align*}
        \mathscr{R}_M(\phi,\phi_s) := (2\mathscr{R}(\phi,\phi_s) - \mathscr{R}(\phi_s,\phi_s)) + \frac{2}{M}(\mathscr{R}(\phi_s,\phi_s) - \mathscr{R}(\phi,\phi_s)).
    \end{align*}
    From \citet[Lemma 3.8 and Theorem 3.4]{patil2022mitigating}, we have that
    \begin{align*}
        \left(R(\hf_{M,\cI_{\hat{k}}}^\cv; \cD_n)-\mathscr{R}_M(\phi,\phi_s)\right)_{+}
        \pto 0.
    \end{align*}
    In \cite{patil2022mitigating},
    we have assumed that the risk is bounded away from 0
    in order to conclude that
    the relative error converges to 0.
    But in \Cref{thm:cv_general},
    we conclude only the positive part
    of the absolute error converges to 0,
    for which we do not require the risk to be bounded away from 0.

    Since $\mathscr{R}_M(\phi,\phi_s)$ is increasing in $\phi$ for any fixed $\phi_s$.
    For $0<\phi_1\leq \phi_2<\infty$,
    \begin{align*}
        \min_{\phi_s \ge \phi_1}
        \mathscr{R}_M(\phi_1,\phi_s)
        &\le
        \min_{\phi_s \ge \phi_2}
        \mathscr{R}_M(\phi_1,\phi_s) \le
        \min_{\phi_s \ge \phi_2}
        \mathscr{R}_M(\phi_2,\phi_s)
    \end{align*}
    where the first inequality follows because $\{ \phi_s: \phi_s \ge \phi_1 \} \supseteq \{ \phi_s : \phi_s \ge \phi_2 \}$,
    and the second inequality follows because $\mathscr{R}_M(\phi,\phi_s)$ is increasing in $\phi$ for a fixed $\phi_s$.
    Thus, $\min_{\phi_s \ge \phi}\mathscr{R}_M(\phi,\phi_s)$ is a monotonically increasing function in $\phi$.
\end{proof}

\subsection[Proof of \Cref{thm:monotonicity-phi}
]
{Risk monotonization of ridge bagged predictors by cross-validation}

\begin{proof}[Proof of \Cref{thm:monotonicity-phi}]
    It suffices to verify the two conditions (i) and (ii) in \Cref{thm:cv_general}.
    From \Cref{thm:ver-with-replacement} and \Cref{thm:ver-without-replacement}, condition (i) holds naturally with $\mathscr{R}_{M}(\phi,\phi_s)$ being the limiting risk $\RlamM{M}{\phi}$ (or $\bRlamM{M}{\phi}$) for fixed $\lambda\geq 0$.
    For condition (ii), note that when $\lambda>0$, $\mathscr{R}_{M}(\phi,\phi_s)$ is continuous over $[\phi,\infty]$. 
    When $\lambda=0$, $\mathscr{R}_{M}(\phi,\phi_s)$ is continuous over $[\phi,\infty]\setminus \{1\}$ and can takes value infinity when $\phi_s$ tends to 1 from both sides.
    Thus, $\mathscr{R}_{M}(\phi,\phi_s)$ is lower semi-continuous over $[\phi,\infty]$ and continuous on the set $\argmin_{\psi:\psi\geq\phi}\mathscr{R}_{M}(\phi,\psi)\subseteq[\phi,\infty]\setminus \{1\}$.

    Following \Cref{rem:uniform-consistency-estimated-risk},
    the uniform risk closeness condition for $k\in\cK_n$ holds.
    Then by \Cref{thm:cv_general}, we have that 
    \begin{align*}
        \left(R(\hf_{M}^\cv;\cD_n,\{I_{\hat{k},\ell}\}_{\ell=1}^M)-\min_{\phi_s\geq\phi}\RlamM{M}{\phi}\right)_+ \pto 0.
    \end{align*}
    
    Recall that for any fixed $\theta$, the function
    \begin{align*}
        \tv(-\lambda;\vartheta,\theta) = \tfrac{\vartheta \int{r^2}{(1+ v(-\lambda; \theta)r)^{-2}}\,\rd H(r)}{v(-\lambda; \phi_s)^{-2}-\vartheta \int{r^2}{(1+ v(-\lambda; \theta)r)^{-2}}\,\rd H(r)} \geq 0
    \end{align*}
    is increasing in $\vartheta$.
    Then $\RlamM{M}{\phi}$ as a function of $\tv(-\lambda;\vartheta,\theta)$ through \eqref{eq:risk-det-with-replacement} and \eqref{eq:Blam_V_lam} is also increasing in $\phi$ for any fixed $\phi_s$.
    For $0<\phi_1\leq \phi_2<\infty$,
    \begin{align*}
        \min_{\phi_s \ge \phi_1}
        \RlamM{M}{\phi_1}
        &\le
        \min_{\phi_s \ge \phi_2}
        \RlamM{M}{\phi_1} \le
        \min_{\phi_s \ge \phi_2}
        \RlamM{M}{\phi_2}
    \end{align*}
    where the first inequality follows because $\{ \phi_s: \phi_s \ge \phi_1 \} \supseteq \{ \phi_s : \phi_s \ge \phi_2 \}$,
    and the second inequality follows because $\RlamM{M}{\phi}$ is increasing in $\phi$ for a fixed $\phi_s$.
    Thus, $\min_{\phi_s \ge \phi}\RlamM{M}{\phi}$ is a monotonically increasing function in $\phi$.
\end{proof}
    
\subsection
[Proof of Proposition \ref{prop:improve-with-without-replace}
]
{Optimal subagging versus optimal splagging}

    \begin{proof}[Proof of \Cref{prop:improve-with-without-replace}]
        Recall that
        \begin{align*}
            \RlamM{M}{\phi} &=
            \tfrac{1}{M}(B_{\lambda}(\phi_s,\phi_s) + V_{\lambda}(\phi_s,\phi_s)) +\left(1-\tfrac{1}{M}\right)(B_{\lambda}(\phi,\phi_s) + V_{\lambda}(\phi,\phi_s)),\qquad M\in\NN\\
            \bRlamM{M}{\phi} &=\tfrac{1}{M}(B_{\lambda}(\phi_s,\phi_s) + V_{\lambda}(\phi_s,\phi_s))
            + \left(1 - \tfrac{1}{M} \right)C_{\lambda}( \phi_s),\qquad M=1,\ldots,\lfloor\tfrac{n}{k}\rfloor.
        \end{align*}
        From \Cref{prop:monotonicity-M-with-replacement}, we have that
        \begin{align}
            \RlamM{M}{\phi} &\geq \RlamM{\infty}{\phi} \notag\\
            &= B_{\lambda}(\phi,\phi_s) + V_{\lambda}(\phi,\phi_s)\notag\\
            &=\rho^2(1+\tv(-\lambda;\phi,\phi_s))\tc(-\lambda;\phi_s) + \sigma^2\tv(-\lambda;\phi,\phi_s) \notag\\
            &= \rho^2
            \tc(-\lambda;\phi_s) + \tv(-\lambda;\phi,\phi_s)) ( \rho^2 \tc(-\lambda;\phi_s) + \sigma^2).\label{eq:improve-with-without-replace-ridge-1}
        \end{align}
        where $\tc(-\lambda;\phi_s)=\int r/{(1 + v(-\lambda; \phi_s)r)^2} \,\rd G(r) $.
        From \Cref{prop:monotonicity-M-without-replacement}, we have that for $M\in\NN$,
        \begin{align}
            \bRlamM{M}{\phi} &\geq \bRlamM{\phi_s/\phi}{\phi} = \tfrac{\phi}{\phi_s}
            (B_{\lambda}(\phi_s,\phi_s) + V_{\lambda}(\phi_s,\phi_s))
            + \left(1 - \tfrac{\phi}{\phi_s} \right)C_{\lambda}( \phi_s). \label{eq:improve-with-without-replace-ridge-2}
        \end{align}
        On the other hand,
        \begin{align}
            \bRlamM{\phi_s/\phi}{\phi} &= \frac{\phi}{\phi_s}\rho^2(1+\tv(-\lambda;\phi_s,\phi_s))\tc(-\lambda;\phi_s) + \frac{\phi}{\phi_s}\sigma^2\tv(-\lambda;\phi_s,\phi_s))+\left(1-\frac{\phi}{\phi_s}\right)\rho^2\tc(-\lambda;\phi_s)\notag\\
            &= \rho^2\tc(-\lambda;\phi_s) + \tfrac{\phi}{\phi_s}\tv(-\lambda;\phi_s,\phi_s)) ( \rho^2 \tc(-\lambda;\phi_s) + \sigma^2).\label{eq:improve-with-without-replace-ridge-3}
        \end{align}
        Since $v(-\lambda;\phi_s)$ is strictly decreasing in $\phi_s$ from \Cref{lem:ridge-fixed-point-v-properties} and $G$ has nonnegative support from Assumption \ref{asm:spectrum-spectrumsignproj-conv}, we have that $\tc(-\lambda;\phi_s)$ is nonnegative and increasing in $\phi_s$.
        Also note that
        \begin{align}
            \tfrac{\phi}{\phi_s}\tv(-\lambda;\phi_s,\phi_s)) &= \ddfrac{\phi\int\tfrac{r^2}{(1+ v(-\lambda; \phi_s)r)^2}\,\rd H(r)}{v(-\lambda; \phi_s)^{-2}-\phi_s \int\tfrac{r^2}{(1+ v(-\lambda; \phi_s)r)^2}\,\rd H(r)}\notag \\
            &\geq \ddfrac{\phi\int\tfrac{r^2}{(1+ v(-\lambda; \phi_s)r)^2}\,\rd H(r)}{v(-\lambda; \phi_s)^{-2}-\phi \int\tfrac{r^2}{(1+ v(-\lambda; \phi_s)r)^2}\,\rd H(r)}\notag\\
            &= \tv(-\lambda;\phi,\phi_s). \label{eq:improve-with-without-replace-ridge-4}
        \end{align}
        Suppose that $\phi^*\in\argmin_{\inf_{\phi_s\in[\phi,\infty]}}\bRlamM{M}{\phi}$, we have
        \begin{align*}
             \inf_{M\in\NN,\phi_s\in[\phi,\infty]}\RlamM{M}{\phi} &= \inf_{\phi_s\in[\phi,\infty]}\RlamM{\infty}{\phi} \\
             &\leq \RlamMe{\infty}{\phi}{\phi^*}\\
             &=\rho^2
            \tc(-\lambda;\phi_s^*) + \tv(-\lambda;\phi_s^*,\phi)) ( \rho^2 \tc(-\lambda;\phi_s^*) + \sigma^2) \\
            &\leq \rho^2\tc(-\lambda;\phi_s) + \tfrac{\phi}{\phi_s}\tv(-\lambda;\phi_s,\phi_s)) ( \rho^2 \tc(-\lambda;\phi_s) + \sigma^2)\\ 
             &= \bRlamMe{M}{\phi_s}{\phi_s^*}  \\
             &= \inf_{\phi_s\in[\phi,\infty]}\bRlamM{\phi_s/\phi}{\phi} 
             \\
             &\leq \inf_{M\in\NN,\phi_s\in[\phi,\infty]}\bRlamM{M}{\phi} 
        \end{align*}
        where in the second inequality we use \eqref{eq:improve-with-without-replace-ridge-4} and the last inequality is from \eqref{eq:improve-with-without-replace-ridge-2}.
    \end{proof}

\subsection[Proof of \Cref{prop:opt-risk-ridgeless}
]
{Optimal bag size for ridgeless predictors}

    \begin{proof}[Proof of \Cref{prop:opt-risk-ridgeless}]
        The proof of \Cref{prop:opt-risk-ridgeless} follows by combining results from \Cref{lem:opt-risk-subagged-ridgeless} and \Cref{lem:opt-risk-splagged-ridgeless} for subagged and \splagged ridgeless predictors, respectively.
    \end{proof}

    \begin{lemma}[Optimal risk for subagged ridgeless predictor]\label{lem:opt-risk-subagged-ridgeless}
        Suppose the conditions in \Cref{thm:ver-with-replacement} hold, and $\sigma^2,\rho^2 \ge 0$ are the noise variance and signal strength from Assumptions \ref{asm:lin-mod} and \ref{asm:signal-bounded-norm}. Let $\SNR=\rho^2/\sigma^2$. For any $\phi\in(0,\infty)$, the properties of the optimal asymptotic risk $\mathscr{R}_{0,\infty}^{\sub}(\phi, \phi_s^{\sub}(\phi))$ in terms of $\SNR$ and $\phi$ are characterized as follows:
        \begin{enumerate}[label={(\arabic*)},leftmargin=7mm]
            \item $\SNR=0 \; (\rho^2 = 0, \sigma^2 \neq 0)$:  For all $\phi \ge 0$, the global minimum $\mathscr{R}_{0,\infty}^{\sub}(\phi, \phi_s^{\sub}(\phi)) = \sigma^2$ is obtained with $\phi_s^\sub(\phi)=\infty$.
            
            \item $\SNR>0$: For all $\phi \ge 0$, the global minimum of $\phi_s \mapsto \RzeroM{\infty}{\phi}$ is obtained at $\phi_s^\sub(\phi)\in(1,\infty)$.
            
            \item $\SNR=\infty \; (\rho^2 \neq 0, \sigma^2 = 0)$: If $\phi\in(0,1]$, the global minimum $\mathscr{R}_{0,\infty}^{\sub}(\phi, \phi_s^{\sub}(\phi)) = 0$ is obtained with any $\phi_s^\sub(\phi)\in[\phi,1]$.
            If $\phi\in(1,\infty)$, then the global minimum $\mathscr{R}_{0,\infty}^{\sub}(\phi, \phi_s^{\sub}(\phi))$ is obtained at $\phi_s^\sub(\phi)\in[\phi,\infty)$.
        \end{enumerate}
    \end{lemma}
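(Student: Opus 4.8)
To prove \Cref{lem:opt-risk-subagged-ridgeless}, the plan is to study the scalar map $\phi_s\mapsto\RzeroM{\infty}{\phi}$ directly via its closed form from \Cref{thm:ver-ridgeless}. By \eqref{remark:ridgeless} this map equals the constant $\sigma^2/(1-\phi)$ on $\phi_s\in[\phi,1)$ (a non-empty range only when $\phi<1$) and equals
\[
\RzeroM{\infty}{\phi}=\sigma^2+\rho^2\,\tc(0;\phi_s)+\tv(0;\phi,\phi_s)\big(\rho^2\,\tc(0;\phi_s)+\sigma^2\big)
\]
on $\phi_s\in(1,\infty]$, with value $\RzeroMe{\infty}{\phi}{\infty}=\sigma^2+\rho^2\int r\,\rd G(r)$ by \Cref{prop:Rdet-ridgeless-infinity}. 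Using $v(0;\phi_s)\to\infty$ as $\phi_s\to1^+$ (so $\tc(0;\phi_s)\to0$ and $\tv(0;\phi,\phi_s)\to\phi/(1-\phi)$), the two branches agree in the limit $\phi_s\to1$, so the map is continuous in the extended reals on the compact set $[\phi,\infty]$ (with value $+\infty$ at $\phi_s=1$ when $\phi=1$) and hence attains its minimum; the remaining task is to pin down where every minimizer lies. Throughout I will use that, on $(1,\infty]$, $\phi_s\mapsto\tc(0;\phi_s)$ is nonnegative, strictly increasing, with limit $\int r\,\rd G(r)$, and $\phi_s\mapsto\tv(0;\phi,\phi_s)$ is nonnegative, strictly decreasing, with limit $0$ — all consequences of the monotonicity/continuity of $v(0;\cdot)$ and the sign conditions in \Cref{lem:ridge-fixed-point-v-properties,lem:fixed-point-v-properties,lem:properties-sol}. (It is legitimate to restrict to $M=\infty$ because of the monotonicity in $M$ from \Cref{prop:monotonicity-M-with-replacement}.)

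The crux will be two local expansions obtained by reparametrizing the overparameterized branch through $v=v(0;\phi_s)$, the unique solution of $v^{-1}=\phi_s\int r(1+vr)^{-1}\,\rd H(r)$ from \eqref{eq:v_ridgeless}; by \Cref{lem:ridge-fixed-point-v-properties} this is a strictly decreasing bijection from $(1,\infty)$ onto $(0,\infty)$ with $\phi_s\to\infty\Leftrightarrow v\to0^+$ and $\phi_s\to1^+\Leftrightarrow v\to\infty$. Expanding $\tc(0;\phi_s)=\int r(1+vr)^{-2}\,\rd G(r)$ and $\tv(0;\phi,\phi_s)$ — recall the latter is the quotient of $\phi\int r^2(1+vr)^{-2}\,\rd H(r)$ by $v^{-2}-\phi\int r^2(1+vr)^{-2}\,\rd H(r)$ — and using that $H$ and $G$ are supported on $[r_{\min},r_{\max}]\subset(0,\infty)$ by \Cref{asm:covariance-bounded-eigvals,asm:spectrum-spectrumsignproj-conv}, one gets, as $v\to0^+$,
\[
\RzeroM{\infty}{\phi}=\RzeroMe{\infty}{\phi}{\infty}-2\rho^2 v\int r^2\,\rd G(r)+O(v^2),
\]
and, as $v\to\infty$ (only relevant when $\phi\le1$, where $\phi_s$ can approach $1$),
\[
\RzeroM{\infty}{\phi}=\frac{\sigma^2}{1-\phi}-\frac{2\phi\sigma^2}{(1-\phi)^2}\cdot\frac{\int r^{-1}\,\rd H(r)}{v}+o(1/v).
\]
The first expansion says $\RzeroM{\infty}{\phi}$ drops strictly below $\RzeroMe{\infty}{\phi}{\infty}$ for all large finite $\phi_s$ whenever $\rho^2>0$; the second says $\RzeroM{\infty}{\phi}$ drops strictly below the plateau $\sigma^2/(1-\phi)$ for $\phi_s$ slightly above $1$ whenever $\sigma^2>0$.

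The three cases then assemble as follows. If $\SNR=0$ then $\rho^2=0$, so on $(1,\infty]$ the risk is $\sigma^2\big(1+\tv(0;\phi,\phi_s)\big)$, which strictly exceeds $\sigma^2$ for every finite $\phi_s$ and decreases to $\sigma^2$ as $\phi_s\to\infty$, while on $[\phi,1)$ it equals $\sigma^2/(1-\phi)>\sigma^2$; hence the global minimum $\sigma^2$ is attained only at $\phi_s^\sub(\phi)=\infty$. If $\SNR=\infty$ then $\sigma^2=0$ and the risk is $B_0(\phi,\phi_s)$: for $\phi\in(0,1]$ this is $0$ on $[\phi,1]$ and $\rho^2\big(1+\tv(0;\phi,\phi_s)\big)\tc(0;\phi_s)>0$ for $\phi_s>1$, so the minimum $0$ is attained exactly on $[\phi,1]$; for $\phi\in(1,\infty)$ the first expansion (with $\rho^2>0$) shows the infimum is not attained at $\infty$, so by compactness a minimizer lies in $[\phi,\infty)$. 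If $0<\SNR<\infty$, the first expansion again rules out a minimizer at $\phi_s=\infty$; when $\phi<1$ the second expansion shows the risk dips strictly below $\sigma^2/(1-\phi)$ just above $\phi_s=1$, so no minimizer lies in the plateau $[\phi,1]$, and when $\phi\ge1$ the constraint forces $\phi_s\ge\phi\ge1$ while for $\phi=1$ the risk diverges as $\phi_s\to1^+$; in every sub-case, therefore, every minimizer lies in $(1,\infty)$.

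The only real obstacle is establishing the two expansions of the second paragraph with the correct signs of their leading coefficients: one must verify (i) that near $\phi_s=\infty$ the $\Theta(v)$ increase of the bias term $\rho^2\tc(0;\phi_s)$ dominates the $\Theta(v^2)$ decrease of the variance term $\sigma^2\tv(0;\phi,\phi_s)$, and (ii) that near $\phi_s=1^+$ the $\Theta(1/v)$ decrease of $\sigma^2\tv(0;\phi,\phi_s)$ toward $\sigma^2\phi/(1-\phi)$ is not overridden by the $\Theta(1/v^2)$ motion of $\rho^2\tc(0;\phi_s)$. Both amount to Taylor-expanding $\int r^j(1+vr)^{-2}\,\rd H(r)$ and $\int r(1+vr)^{-2}\,\rd G(r)$ in $v$, using \eqref{eq:v_ridgeless} to tie $v$ to $\phi_s$ and the strictly positive, bounded support of $H$ and $G$ to control the orders in $v$; this is routine but must be carried out carefully, whereas everything else is bookkeeping with the monotonicity facts already recorded in \Cref{lem:ridge-fixed-point-v-properties,lem:fixed-point-v-properties,lem:properties-sol}.
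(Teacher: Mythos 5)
Your proposal is correct and follows essentially the same route as the paper's proof: reduce to $M=\infty$, reparametrize the overparameterized branch through the fixed point $v(0;\phi_s)$, and locate the minimizer by showing that when $\rho^2>0$ the risk dips strictly below its value at $\phi_s=\infty$ for large finite $\phi_s$, and when $\sigma^2>0$ it dips strictly below the plateau $\sigma^2/(1-\phi)$ just above $\phi_s=1$, before assembling the three $\SNR$ cases. The only difference is presentational — you establish the boundary behavior via first-order expansions of the risk in $v$ near $v\to0^+$ and $v\to\infty$, whereas the paper computes the sign of $\partial\RzeroM{\infty}{\phi}/\partial\phi_s$ at the two boundaries via the chain rule through $v$; your leading coefficients ($-2\rho^2 v\int r^2\,\rd G$ and $-2\phi\sigma^2(1-\phi)^{-2}v^{-1}\int r^{-1}\,\rd H$) agree with the paper's derivative limits.
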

    \begin{proof}[Proof of \Cref{lem:opt-risk-subagged-ridgeless}]
        From \Cref{thm:ver-with-replacement}, the limiting risk for bagged ridgeless with $M=\infty$ is given by
        \begin{align*}
            \RzeroM{\infty}{\phi} &= \rho^2(1+\tv(0;\phi,\phi_s))\tc(0;\phi_s) + \sigma^2 (1+\tv(0;\phi,\phi_s)).
        \end{align*}
        Defined in \eqref{eq:v_ridgeless}-\eqref{eq:tv_tc_ridgeless},  $\tv(0;\phi,\phi_s)\geq 0$ and $\tc(0;\phi_s)\geq 0$ are continuous functions of $v(0;\phi_s)$, which is strictly decreasing over $\phi_s\in(1,\infty)$ 
        and satisfies $\lim_{\phi_s\rightarrow\infty}v(0;\phi_s)=0$ from \Cref{lem:fixed-point-v-properties}. Then we have $\tv(0;\phi,\phi_s)$ is decreasing in $\phi_s$ over $(1,\infty)$, $\tc(0;\phi_s)$ is increasing in $\phi_s$ over $(1,\infty)$, and
        \begin{align*}
            \lim_{\phi_s\rightarrow\infty}\tv(0;\phi,\phi_s)=0,\qquad \lim_{\phi_s\rightarrow\infty}\tc(0;\phi_s)=\int r \,\rd G(r).
        \end{align*}
        Also, $\tv(0;\phi,\phi_s)=\phi/(1-\phi)$ and $\tc(0;\phi_s)=0$ remain constant for $\phi_s\in(0,1]$ from \eqref{remark:ridgeless}.
        Then to determine the global minimum, it suffices to consider the case when $\phi_s\in[1,\infty)$.
        Next, we consider various cases depending on the value of $\SNR$.
        
        \begin{itemize}[leftmargin=*]
            \item 
            We first consider the case $\SNR>0$.
            We consider further sub-cases depending the value of the pair $(\phi, \phi_s)$.
            
            \begin{enumerate}[leftmargin=*]
                
                \item
                When $\phi\in(0,1)$ and $\phi_s\in(1,\infty]$, 
                \begin{align*}
                    \frac{\partial \RzeroM{\infty}{\phi}}{\partial \phi_s} &= \frac{\partial \RzeroM{\infty}{\phi}}{\partial v(0;\phi_s)} \frac{\partial v(0;\phi_s)}{\partial \phi_s} \\
                    &=  \rho^2 \ddfrac{\phi \int \frac{v(0;\phi_s)r^2}{(1+ v(0;\phi_s) r)^3}\,\rd H(r)}{\left(1 - \phi \int \left(\frac{v(0;\phi_s)r}{(1+ v(0;\phi_s) r)}\right)^2\,\rd H(r)\right)^2} \int \frac{r}{(1 + v(0;\phi_s) r)^2} \,\rd G(r) \cdot \frac{\partial v(0;\phi_s)}{\partial \phi_s} \\
                    &\qquad - 2\rho^2 \ddfrac{\frac{1}{v(0;\phi_s)^2}}{\frac{1}{v(0;\phi_s)^2} - \phi \int \frac{r^2}{(1+ v(0;\phi_s) r)^2}\,\rd H(r)} \int \frac{r^2}{(1 + v(0;\phi_s) r)^3} \,\rd G(r) \cdot \frac{\partial v(0;\phi_s)}{\partial \phi_s} \\
                    &\qquad \qquad + \sigma^2 \ddfrac{\phi \int \frac{v(0;\phi_s)r^2}{(1+ v(0;\phi_s) r)^3}\,\rd H(r)}{\left(1 - \phi \int \left(\frac{v(0;\phi_s)r}{(1+ v(0;\phi_s) r)}\right)^2\,\rd H(r)\right)^2}\cdot \frac{\partial v(0;\phi_s)}{\partial \phi_s} .
                \end{align*}
                Note that from \Cref{lem:ridge-fixed-point-v-properties} $v(0;\phi_s)$ is differentiable in $\phi_s\in(0,\infty]$ with
                $$\frac{\partial v(0;\phi_s)}{\partial \phi_s} = - \ddfrac{\int\frac{r}{1+v(0;\phi_s)r} \,\rd H(r)}{\frac{1}{v(0;\phi_s)^2} - \phi_s \int \frac{r^2}{(1+ v(0;\phi_s) r)^2}\,\rd H(r)}$$ 
                being negative over $\phi_s\in(1,\infty)$ and continuous in $\phi_s\in(1,\infty]$, and $$\lim_{\phi_s\rightarrow 1^+}\frac{\partial v(0;\phi_s)}{\partial \phi_s} = -\infty,\qquad \lim_{\phi_s\rightarrow \infty}\frac{\partial v(0;\phi_s)}{\partial \phi_s} = -\lim_{\phi_s\rightarrow\infty}\tv_v(0;\phi_s)\int\frac{r}{1+v(0;\phi_s)r} \,\rd H(r) = 0$$ 
                by \Cref{lem:fixed-point-v-properties} with $\tv_v$ defined therein.
                We have that $\partial \RzeroM{\infty}{\phi}/\partial \phi_s$ is continuous over $\phi_s\in(1,\infty]$.
                Since $\lim_{\phi_s\rightarrow\infty}v(0;\phi_s)=0$ from \Cref{lem:fixed-point-v-properties}, we have that
                \begin{align}
                    \lim_{\phi_s\rightarrow\infty} \ddfrac{\phi \int \frac{v(0;\phi_s)r^2}{(1+ v(0;\phi_s) r)^3}\,\rd H(r)}{\left(1 - \phi \int \left(\frac{v(0;\phi_s)r}{(1+ v(0;\phi_s) r)}\right)^2\,\rd H(r)\right)^2}%
                    &= 0 \label{eq:thm:opt-risk-ridgeless-1}\\
                    \lim_{\phi_s\rightarrow\infty}\ddfrac{\frac{1}{v(0;\phi_s)^2}}{\frac{1}{v(0;\phi_s)^2} - \phi \int \frac{r^2}{(1+ v(0;\phi_s) r)^2}\,\rd H(r)} \int \frac{r^2}{(1 + v(0;\phi_s) r)^3} \,\rd G(r) &=  \frac{1}{1-\phi}\int r^2\,\rd G(r)>0.\label{eq:thm:opt-risk-ridgeless-2}
                \end{align}
                Since $\partial v(0;\phi_s)/\partial \phi_s$ is negative over $(1,\infty)$ and $\lim_{\phi_s\rightarrow\infty0}\partial v(0;\phi_s)/\partial \phi_s=0$, we have
                \begin{align}
                    \frac{\partial \RzeroM{\infty}{\phi}}{\partial \phi_s}\big|_{\phi_s=\infty} = -2\rho^2\int r^2\,\rd G(r) \cdot \lim_{\phi_s\rightarrow\infty}\frac{\partial v(0;\phi_s)}{\partial \phi_s}  = 0. \label{eq:thm:opt-risk-ridgeless-3}
                \end{align}
                Combining \eqref{eq:thm:opt-risk-ridgeless-1}-\eqref{eq:thm:opt-risk-ridgeless-3}, we have that when $\phi_s$ is large, $\partial \RzeroM{\infty}{\phi}/\partial \phi_s$ approaching zero from above as $\phi_s$ tends to $\infty$.
                On the other hand, since for $k=1,2$,
                \begin{align*}
                    &\lim_{\phi_s\rightarrow1^+}\int \frac{r^{k}}{(1 + v(0;\phi_s) r)^{k+1}} \,\rd G(r) \cdot \frac{\partial v(0;\phi_s)}{\partial \phi_s} \\
                    =& \lim_{\phi_s\rightarrow1^+}\int \frac{v(0;\phi_s)r^{k}}{(1 + v(0;\phi_s) r)^{k+1}} \,\rd G(r) \cdot \lim_{\phi_s\rightarrow1^+} \frac{1}{v(0;\phi_s)}\frac{\partial v(0;\phi_s)}{\partial \phi_s}=0,
                \end{align*}
                we have
                \begin{align}
                    \frac{\partial \RzeroM{\infty}{\phi}}{\partial \phi_s}\big|_{\phi_s=1^+} = \sigma^2 \frac{\phi}{1-\phi} \cdot \lim_{\phi_s\rightarrow1^+}\frac{\partial v(0;\phi_s)}{\partial \phi_s}  < 0. \notag
                \end{align}
                Thus, there exists $\phi^*\in(1,\infty)$ such that
                \begin{align*}
                    \RzeroMe{\infty}{\phi}{\phi^*} < \RzeroMe{\infty}{\phi}{1} = \RzeroMe{\infty}{\phi}{\phi}.
                \end{align*}
                
                \item
                When $\phi=1$, $\RzeroMe{\infty}{1}{1}=\infty$ while $\RzeroMe{\infty}{\phi}{\phi_s}<\infty$ for all $\phi_s\in(1,\infty]$.
                Since $\RzeroMe{\infty}{\phi}{\phi_s}$ is continuous and finite in $(1,\infty]$, by continuity and \eqref{eq:thm:opt-risk-ridgeless-1}-\eqref{eq:thm:opt-risk-ridgeless-3} we have $\phi^*\in(1,\infty)$.

                \item
                When $\phi\in(1,\infty)$, the optimal $\phi^*\geq\phi>1$ must be obtained in $[\phi,\infty)$ because of \eqref{eq:thm:opt-risk-ridgeless-1}-\eqref{eq:thm:opt-risk-ridgeless-3}.
            \end{enumerate}
            
            \item
            Next consider the case when $\SNR=0$, i.e., $\rho^2=0$ and $\sigma^2\neq 0$,
            since $\RzeroM{\infty}{\phi}=\sigma^2+\sigma^2\tv(0;\phi,\phi_s)>0$ is increasing in $v(0;\phi_s)$ and $v(0;\phi_s)\geq 0$ is decreasing in $\phi_s$, we have that $\RzeroM{\infty}{\phi}$ is decreasing in $\phi_s$. Thus, the global minimum $\RzeroM{\infty}{\infty}=\sigma^2$ is obtained at $\phi_s^*=\infty$.
            
            \item
            Finally,
            consider the case
            when $\SNR=\infty$, i.e. $\rho^2\neq 0$ and $\sigma^2=0$, $\RzeroM{\infty}{\phi}=\rho^2(1+\tv(0;\phi,\phi_s))\tc(0;\phi_s)$.
            As the bias term is zero when $\phi_s\in(0,1]$ and positive when $\phi_s\in(1,\infty]$, we have that $\RzeroM{\infty}{\phi}\geq \RzeroMe{\infty}{\phi}{\phi^*}=0$ for all $\phi_s^*\in[\phi,1]$ when $\phi\in(0,1]$.
            If $\phi\in(1,\infty)$, since the risk is continuous over $[\phi,\infty]$, the global minimum exists.
            Since the derivative $\partial \RzeroM{\infty}{\phi}/\partial \phi_s$ is continuous over $\phi_s\in(1,\infty]$ and \eqref{eq:thm:opt-risk-ridgeless-1}-\eqref{eq:thm:opt-risk-ridgeless-3}, the minimizer satisfies $\phi^*\in[\phi,\infty)$.
        \end{itemize}
    \end{proof}

\begin{lemma}[Optimal \splagged ridgeless]\label{lem:opt-risk-splagged-ridgeless}
    Suppose the conditions in \Cref{thm:ver-without-replacement} hold, and $\sigma^2,\rho^2 \ge 0$ are the noise variance and signal strength from Assumptions \ref{asm:lin-mod} and \ref{asm:signal-bounded-norm}. Let $\SNR=\rho^2/\sigma^2$. For any $\phi\in(0,\infty)$, the properties of the optimal asymptotic risk $\mathscr{R}_{0,\infty}^{\spl}(\phi, \phi_s^{\spl}(\phi))$ in terms of $\SNR$ and $\phi$ are characterized as follows:
    \begin{enumerate}[label={(\arabic*)},leftmargin=7mm]
        \item $\SNR=0 \; (\rho^2 = 0, \sigma^2 \neq 0)$:  For all $\phi \ge 0$, the global minimum $\mathscr{R}_{0,\infty}^{\spl}(\phi, \phi_s^{\spl}(\phi)) = \sigma^2$ is obtained with $\phi_s^\spl(\phi)=\infty$.
        
        \item $\SNR>0$: For $\phi \ge 1$, there exists global minimum of $\phi_s \mapsto \bRzeroM{\infty}{\phi}$ in $(1,\infty)$. For $\phi\in(0,1)$, the global minimum is in $\{\phi\}\cup(1,\infty)$.
        
        \item $\SNR=\infty \; (\rho^2 \neq 0, \sigma^2 = 0)$: If $\phi\in(0,1]$, the global minimum $\mathscr{R}_{0,\infty}^{\spl}(\phi, \phi_s^{\spl}(\phi)) = 0$ is obtained with any $\phi_s^\spl(\phi)\in[\phi,1]$.
        If $\phi\in(1,\infty)$, then the global minimum $\mathscr{R}_{0,\infty}^{\spl}(\phi, \phi_s^{\spl}(\phi))$ is obtained at $\phi_s^\spl(\phi)\in[\phi,\infty)$.
    \end{enumerate}
\end{lemma}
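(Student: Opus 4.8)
The plan is to mimic the proof of \Cref{lem:opt-risk-subagged-ridgeless}, the only structural difference being that for \splagging the oracle number of bags is $M = \phi_s/\phi$ rather than $M = \infty$, so the object to study is the map $\phi_s \mapsto \bRzeroM{\phi_s/\phi}{\phi}$ on the compact domain $[\phi,\infty]$ (this is the same as $\bRzeroM{\infty}{\phi}$, since the \splagged risk is constant in $M$ beyond $\lfloor\phi_s/\phi\rfloor$). First I would make this map explicit: combining \Cref{thm:ver-without-replacement} with the ridgeless specializations in \eqref{remark:ridgeless}, for $\phi_s \in (1,\infty]$,
\[
\bRzeroM{\phi_s/\phi}{\phi} = \sigma^2 + \rho^2\,\tc(0;\phi_s) + \tfrac{\phi}{\phi_s}\,\tv(0;\phi_s,\phi_s)\bigl(\rho^2\,\tc(0;\phi_s) + \sigma^2\bigr),
\]
while for $\phi_s \in (\phi,1)$ (possible only if $\phi < 1$) the bias and $C_0(\cdot)$ pieces vanish and the expression collapses to $\sigma^2\bigl(1 + \phi/(1-\phi_s)\bigr)$, which is strictly increasing in $\phi_s$ and diverges as $\phi_s \to 1^-$; hence no minimizer lies in the open interval $(\phi,1)$, and over $[\phi,1)$ the minimum is at $\phi_s = \phi$ with value $\sigma^2/(1-\phi)$. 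At $\phi_s = \phi$ the map equals the limiting risk of the unbagged ridgeless predictor.

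Next I would import the properties of $\phi_s \mapsto v(0;\phi_s)$ from \Cref{lem:fixed-point-v-properties,lem:ridge-fixed-point-v-properties}: it is continuous and differentiable on $(1,\infty]$, strictly decreasing there, with $v(0;\phi_s) \to \infty$ as $\phi_s \to 1^+$ and $v(0;\phi_s) \to 0$ as $\phi_s \to \infty$, and identically $+\infty$ on $(0,1]$. Composing gives that $\phi_s \mapsto \tc(0;\phi_s)$ and $\phi_s \mapsto \tv(0;\phi_s,\phi_s)$ are continuous on $(1,\infty]$ with $\tc(0;\phi_s) \to 0$, $\tv(0;\phi_s,\phi_s) \to \infty$ as $\phi_s \to 1^+$, and $\tc(0;\phi_s) \to \int r\,\rd G(r)$, $\tv(0;\phi_s,\phi_s) \to 0$ as $\phi_s \to \infty$; moreover a short asymptotic expansion of the fixed point shows $\tfrac{\phi}{\phi_s}\tv(0;\phi_s,\phi_s) \sim \phi/(\phi_s-1)$ near $\phi_s = 1^+$ and $\tfrac{\phi}{\phi_s}\tv(0;\phi_s,\phi_s) = O(\phi_s^{-2})$ near $\phi_s = \infty$, while $\tc(0;\phi_s) \to \tc(0;\infty) > 0$ with a $\Theta(\phi_s^{-2})$ derivative of the correct sign. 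From these, $\phi_s \mapsto \bRzeroM{\phi_s/\phi}{\phi}$ is lower semicontinuous on $[\phi,\infty]$ (taking value $+\infty$ at $\phi_s = 1$ when $\sigma^2 > 0$) and continuous on $[\phi,\infty]\setminus\{1\}$, so minimizers over the compact domain exist.

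The core step is locating the minimizer. I would differentiate $\bRzeroM{\phi_s/\phi}{\phi}$ through $v(0;\phi_s)$ via the chain rule, exactly as the analogous derivative is handled in \Cref{lem:opt-risk-subagged-ridgeless} in equations of the type \eqref{eq:thm:opt-risk-ridgeless-1}--\eqref{eq:thm:opt-risk-ridgeless-3}, and establish two facts: (i) whenever $\rho^2 > 0$ the derivative is strictly positive for all large $\phi_s$, because the leading $\Theta(\phi_s^{-2})$ contribution comes from the increasing term $\rho^2\tc(0;\phi_s)$ and dominates the $O(\phi_s^{-3})$ contribution of the $\tfrac{\phi}{\phi_s}\tv$ term, so no minimizer sits at $\phi_s = \infty$; and (ii) when $\sigma^2 > 0$ the risk blows up as $\phi_s \to 1^+$ (from the $\tfrac{\phi}{\phi_s}\sigma^2\tv$ term), so no minimizer over $[1,\infty]$ approaches $\phi_s = 1$. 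Then I split on $\SNR$. For $\SNR = 0$ we have $\rho^2 = 0$, the risk equals $\sigma^2 + \tfrac{\phi}{\phi_s}\sigma^2\tv(0;\phi_s,\phi_s)$, which is strictly larger than $\sigma^2$ for every finite $\phi_s$ (using $\phi_s \in (\phi,1)$ gives $\sigma^2(1+\phi/(1-\phi_s)) > \sigma^2$ as well) and tends to $\sigma^2$, so the global minimum $\sigma^2$ is attained only at $\phi_s = \infty$. For $0 < \SNR < \infty$, facts (i)--(ii) force any minimizer over $[1,\infty]$ into $(1,\infty)$; comparing its value with $\sigma^2/(1-\phi)$ (the minimum over $[\phi,1)$, present only when $\phi < 1$) yields $\phi_s^\spl(\phi) \in (1,\infty)$ when $\phi \ge 1$ and $\phi_s^\spl(\phi) \in \{\phi\}\cup(1,\infty)$ when $\phi \in (0,1)$. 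For $\SNR = \infty$ we have $\sigma^2 = 0$, so the risk is $0$ on $[\phi,1]$ (at $\phi_s = 1$ taking the continuous extension, where $\tc(0;\phi_s) \to 0$ fast enough to annihilate the diverging $\tfrac{\phi}{\phi_s}\tv$) and strictly positive on $(1,\infty]$; thus when $\phi \le 1$ the minimum $0$ is attained on $[\phi,1]$, while when $\phi > 1$ the domain is $[\phi,\infty] \subset (1,\infty]$, and continuity together with (i) places the minimizer in $[\phi,\infty)$.

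The main obstacle I anticipate is the derivative computation near $\phi_s = \infty$: relative to the subagged case, the variance-type contribution carries the extra factor $\phi/\phi_s$, so one must carefully track the asymptotics of $v(0;\phi_s)$, $\tc(0;\phi_s)$, $\tv(0;\phi_s,\phi_s)$ and their $\phi_s$-derivatives both as $\phi_s \to \infty$ and as $\phi_s \to 1^+$, in order to confirm the sign in (i) and that the $\rho^2\tc$ term genuinely dominates. The $0 \cdot \infty$ indeterminacy at $\phi_s = 1$ in the $\SNR = \infty$ case requires the same kind of expansion; everything else is routine once the explicit form above and the $v(0;\cdot)$ properties are in hand.
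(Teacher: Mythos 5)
Your proposal follows essentially the same route as the paper's proof: write out the explicit form of $\phi_s \mapsto \bRzeroM{\phi_s/\phi}{\phi}$ (noting it reduces to $\sigma^2(1+\phi/(1-\phi_s))$ on $(\phi,1)$ and hence is minimized at $\phi_s=\phi$ there), import the continuity/monotonicity and boundary limits of $v(0;\cdot)$, $\tc(0;\cdot)$, $\tv(0;\cdot,\cdot)$ from the fixed-point lemmas, show via the chain-rule derivative that the risk blows up as $\phi_s\to 1^+$ when $\sigma^2>0$ and is increasing near $\phi_s=\infty$ when $\rho^2>0$ (the paper normalizes the derivative by $\tv_v(0;\phi_s)^{-1}$ and takes a limit where you phrase the same fact as an order-of-magnitude comparison), and then split on $\SNR\in\{0\}\cup(0,\infty)\cup\{\infty\}$ exactly as the paper does. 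The argument is correct and matches the paper's proof of this lemma in all essential respects.
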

    
    \begin{proof}[Proof of \Cref{lem:opt-risk-splagged-ridgeless}]
        From \Cref{thm:ver-without-replacement}, the limiting risk for bagged ridgeless with $M=\phi_s/\phi$ is given by
        \begin{align*}
            \bRzeroM{\phi_s/\phi}{\phi} &= \sigma^2+\frac{\phi}{\phi_s}\left[\rho^2(1+\tv(0;\phi_s,\phi_s))\tc(0;\phi_s)  + \sigma^2 \tv(0;\phi_s,\phi_s)\right] + \left(  1-\frac{\phi}{\phi_s}\right)\rho^2\tc(0;\phi_s)\\
            &=\sigma^2+\rho^2\tc(0;\phi_s) + \phi\tfrac{\tv(0;\phi_s,\phi_s)}{\phi_s} ( \rho^2 \tc(0;\phi_s) + \sigma^2).
        \end{align*}
        Defined in \eqref{eq:v_ridgeless}-\eqref{eq:tv_tc_ridgeless},  $\tv(0;\phi_s,\phi_s)\geq 0$ and $\tc(0;\phi_s)\geq 0$ are continuous functions of $v(0;\phi_s)$, which is strictly decreasing over $\phi_s\in(1,\infty)$ 
        and satisfies $\lim_{\phi_s\rightarrow\infty}v(0;\phi_s)=0$ from \Cref{lem:fixed-point-v-properties}.
        Then $\tc(0;\phi_s)$ is increasing in $\phi_s$ over $(1,\infty)$ and $\lim_{\phi_s\rightarrow\infty}\tc(0;\phi_s)=\int r \,\rd G(r)$.

        \begin{itemize}[leftmargin=*]
            \item 
            We first consider the case $\SNR>0$.
            We consider further sub-cases depending the value of the pair $(\phi, \phi_s)$.

        \begin{enumerate}[leftmargin=*]
            \item When $\phi\in(0,1)$ and $\phi_s\in(1,\infty]$,

        Define functions $h_1$ and $h_2$ as follows:
        \begin{align}
            h_1(\phi_s)=\SNR \cdot \tc(0;\phi_s),\qquad h_2(\phi_s) = \frac{\tv(0;\phi_s,\phi_s)}{\phi_s} &= \tv_v(0;\phi_s) \int \left(\frac{r}{1+v(0;\phi_s)r} \right)^2\,\rd H(r),
        \end{align}
        where $\tv_v$ is defined in \Cref{lem:fixed-point-v-properties}. 
        Then $\bRzeroM{\phi_s/\phi}{\phi}=\sigma^2+\sigma^2(h_1(\phi_s) + \phi h_2(\phi_s) (1+h_1(\phi_s)))$, with $h_1$ increasing in $\phi_s$ and
        $$\lim_{\phi_s\rightarrow1^+}h_1(\phi_s)= 0,\qquad \lim_{\phi_s\rightarrow\infty}h_1(\phi_s)= \SNR \int r\,\rd G(r), \qquad \lim_{\phi_s\rightarrow1^+}h_2(\phi_s) = +\infty,\qquad\lim_{\phi_s\rightarrow\infty}h_2(\phi_s) = 0.$$
        Next we study the property of $h_2$.
        Simple calculation yields that
        \begin{align*}
            &\frac{\partial h_2(\phi_s)}{\partial \phi_s} \notag\\
            =&\tv_v(0;\phi_s)^2 \left[\frac{2}{v(0;\phi_s)^3}\int\frac{r^2}{(1+v(0;\phi_s)r)^3}\,\rd H(r)\cdot \frac{\partial v(0;\phi_s)}{\partial\phi_s} + \left(\int\frac{r^2}{(1+v(0;\phi_s)r)^2}\,\rd H(r)\right)^2\right]\notag\\
            =& \tv_v(0;\phi_s)^2 \left[ \left(\int\frac{r^2}{(1+v(0;\phi_s)r)^2}\,\rd H(r)\right)^2 - \frac{2\tv_v(0;\phi_s)}{v(0;\phi_s)^3}\int\frac{r^2}{(1+v(0;\phi_s)r)^3}\,\rd H(r)\int\frac{r}{1+v(0;\phi_s)r}\,\rd H(r) \right].
        \end{align*}
        From \Cref{lem:fixed-point-v-properties}~\ref{lem:fixed-point-v-properties-item-vb-properties}, we have that $\lim_{\phi_s\rightarrow\infty}\tv_v(0;\phi_s)/v(0;\phi_s)^2 \lim_{\phi_s\rightarrow\infty}[1+\tv_b(0;\phi_s)] = 1$ where $\tv_b(0;\phi_s)$ is defined in \Cref{lem:fixed-point-v-properties}.
        Analogously, $\lim_{\phi_s\rightarrow1^+}\tv_v(0;\phi_s)/v(0;\phi_s)^2= +\infty$.
        Then as in the proof of \Cref{prop:opt-risk-ridgeless}, one can verify that 
        \begin{align*}
            \frac{\partial \bRzeroM{\phi_s/\phi}{\phi}}{\partial\phi_s} &= -\sigma^2\tv_v(0;\phi_s) \left[\SNR(1 + \phi h_2(\phi_s)) \int \frac{r^2}{(1+v(0;\phi_s)r)^3}\,\rd G(r) \cdot \int\frac{r}{1+v(0;\phi_s)r}\,\rd H(r)\right. \\
            &\quad + \phi  (1+h_1(\phi_s)) \frac{2\tv_v(0;\phi_s)^2}{v(0;\phi_s)^3}\int\frac{r^2}{(1+v(0;\phi_s))^3}\,\rd H(r) \cdot \int\frac{r}{1+v(0;\phi_s)r}\,\rd H(r) \\
            &\quad \left.- \tv_v(0;\phi_s) \phi  (1+h_1(\phi_s)) \left(\int\frac{r^2}{(1+v(0;\phi_s)r)^2}\,\rd H(r)\right)^2\right]
        \end{align*}
        satisfies $\lim_{\phi_s\rightarrow1^+}\partial \bRzeroM{\phi_s/\phi}{\phi}/\partial \phi_s=-\infty$ and $\lim_{\phi_s\rightarrow\infty}\partial \bRzeroM{\phi_s/\phi}{\phi}/\partial \phi_s=0$ by utilizing properties in \Cref{lem:fixed-point-v-properties}.
        Furthermore, as 
        \begin{align}
            \lim_{\phi_s\rightarrow\infty } \tv_v(0;\phi_s) ^{-1} \frac{\partial \bRzeroM{\phi_s/\phi}{\phi}}{\partial\phi_s} = -\rho^2 \int r^2\,\rd G(r) \cdot \int r\,\rd H(r) <0, \label{eq:prop:opt-risk-splagged-ridgeless-1}
        \end{align}
        we have that when $\phi_s$ is large, $\partial \bRzeroM{\phi_s/\phi}{\phi}/\partial \phi_s$ approaching zero from above as $\phi_s$ tends to $\infty$.
        Thus, the minimum of $\bRzeroM{\phi_s/\phi}{\phi}$ over $[1,\infty]$ is obtained in the open interval $(1,\infty)$.
        
        \item When $\phi<1$ and $\phi_s\in[\phi,1)$, since the term $\tc(0;\phi_s)$ is zero, $\bRzeroM{\phi_s/\phi}{\phi}=\sigma^2+\sigma^2\phi(1-\phi_s)^{-1}$ is increasing in $\phi_s$.
        So the minimum over $[\phi,1]$ is obtained at $\phi_s=\phi$.
        
        \item
        When $\phi=1$, $\bRzeroMe{\phi_s/\phi}{1}{1}=\infty$ while $\bRzeroMe{\phi_s/\phi}{\phi}{\phi_s}<\infty$ for all $\phi_s\in(1,\infty]$.
        Since $\bRzeroMe{\phi_s/\phi}{\phi}{\phi_s}$ is continuous and finite in $(1,\infty]$, by continuity and \eqref{eq:prop:opt-risk-splagged-ridgeless-1} we have $\phi_s^*\in(1,\infty)$.

        \item
        When $\phi\in(1,\infty)$, the optimal $\phi_s^*\geq\phi>1$ must be obtained in $[\phi,\infty)$ because of \eqref{eq:prop:opt-risk-splagged-ridgeless-1}.
        \end{enumerate}
        
        \item Next consider the case when $\SNR=0$, i.e., $\rho^2=0$ and $\sigma^2\neq 0$.
        Then $h_1\equiv 0$ and $\bRzeroM{\infty}{\phi_s/\phi}=\sigma^2+\sigma^2\phi\tv(0;\phi_s,\phi_s)/\phi_s$.
        When $\phi_s\in(0,1),$ $\tv(0;\phi_s,\phi_s)/\phi_s=(1-\phi_s)^{-1}$ is increasing in $\phi_s$; when $\phi_s>1$, $\tv(0;\phi_s,\phi_s)/\phi_s\geq 0=\lim_{\phi_s\rightarrow\infty }\tv(0;\phi_s,\phi_s)/\phi_s = 0.$
        Therefore, the global minimum $\RzeroM{\infty}{\infty}=\sigma^2$ is obtained at $\phi_s^*=\infty$.
        
        \item
        Finally,
        consider the case
        when $\SNR=\infty$, i.e. $\rho^2\neq 0$ and $\sigma^2=0$, $\bRzeroM{\infty}{\phi_s/\phi}=\rho^2\tc(0;\phi_s)+\rho^2 \phi\phi_s^{-1}\tv(0;\phi,\phi_s)\tc(0;\phi_s)$.
        As the term $\tc(0;\phi_s)$ is zero when $\phi_s\in(0,1]$ and positive when $\phi_s\in(1,\infty]$, we have that $\RzeroM{\infty}{\phi}\geq \RzeroMe{\infty}{\phi}{\phi_s^*}=0$ for all $\phi_s^*\in[\phi,1]$ when $\phi\in(0,1]$.
        If $\phi\in(1,\infty)$, since the risk is continuous over $[\phi,\infty]$, the global minimum exists.
        Since the derivative $\partial \RzeroM{\infty}{\phi}/\partial \phi_s$ is continuous over $\phi_s\in(1,\infty]$ and \eqref{eq:prop:opt-risk-splagged-ridgeless-1}, the minimizer satisfies $\phi_s^*\in[\phi,\infty)$.
        \end{itemize}
    \end{proof}
    
\section{Proofs in \Cref{sec:isotropic_features} (isotropic features)}\label{sec:appendix-isotopic}

    \subsection[Proof of \Cref{cor:isotropic-ridgeless}
    ]
    {Bagged risk for ridgeless regression}
    \begin{proof}[Proof of \Cref{cor:isotropic-ridgeless}]
        Since $\bSigma=\bI_p$, we have that $\rd G=\rd H=\delta_1$.
        Then, $v(0;\phi_s)$, $\tv(0;\phi,\phi_s)$ and $\tc(0;\phi_s)$ defined in \eqref{eq:v_ridgeless} and \eqref{eq:tv_tc_ridgeless} for $\phi_s>1$ reduce to
        \begin{align*}
            v(0;\phi_s) &= \tfrac{1}{\phi_s-1},\qquad
            \tv(0; \phi,\phi_s) = %
            \tfrac{\phi}{\phi_s^2-\phi},\qquad \tc(0;\phi_s) = \frac{(\phi_s-1)^2}{\phi_s^2}.
            \end{align*}
            Thus, we have
            \begin{align*}
            B_{0}(\phi,\phi_s) &= \begin{dcases}
            0 ,&\phi_s\in(0,1)\\
            \rho^2 \tfrac{\phi_s-1}{\phi_s}, &\phi_s\in(1,\infty)
            \end{dcases},
            \qquad
            V_{0}(\phi,\phi_s) = \begin{dcases}
            \sigma^2\tfrac{\phi_s}{1-\phi_s} ,&\phi_s\in(0,1)\\
            \sigma^2\tfrac{1}{\phi_s-1}, &\phi_s\in(1,\infty)
            \end{dcases},
        \end{align*}
        and 
        \begin{align*}
            C_{0}(\phi_s) = \begin{dcases}
            0 ,&\phi_s\in(0,1)\\
             \rho^2\frac{(\phi_s-1)^2}{\phi_s^2}, &\phi_s\in(1,\infty)
            \end{dcases}.
        \end{align*}
    \end{proof}

    From \Cref{cor:isotropic-ridgeless}, we are able to derive the asymptotic bias and variance for $M=1$ and $M=\infty$ for ridgeless regression with replacement:
    \begin{align*}
        \BzeroM{1}{\phi} &= \begin{dcases}
        0 ,&\phi_s\in(0,1)\\
        \rho^2 \tfrac{\phi_s-1}{\phi_s}, &\phi_s\in(1,\infty)
        \end{dcases}&&
        \VzeroM{1}{\phi} = \begin{dcases}
        \sigma^2\tfrac{\phi_s}{1-\phi_s} ,&\phi_s\in(0,1)\\
        \sigma^2\tfrac{1}{\phi_s-1}, &\phi_s\in(1,\infty)
        \end{dcases}\\
        \BzeroM{\infty}{\phi} &= \begin{dcases}
        0,&\phi_s\in(0,1)\\
        \rho^2\tfrac{(\phi_s-1)^2}{\phi_s^2-\phi},&\phi_s\in(1,\infty)
        \end{dcases}&&
        \VzeroM{\infty}{\phi} = \begin{dcases}
        \sigma^2 \tfrac{\phi}{1-\phi},&\phi_s\in(0,1)\\
        \sigma^2\tfrac{\phi}{\phi_s^2-\phi}, &\phi_s\in(1,\infty)
        \end{dcases}.
    \end{align*}
    Then the asymptotic bias and variance for general $M$ would be convex combinations of the above quantities.
    
    On the other hand, the asymptotic bias and variance for \splagging without replacement are given by
    \begin{align*}
       \bBlamM{M}{\phi}&=M^{-1}B_{\lambda}(\phi_s, \phi_s)+(1-M^{-1})C_{\lambda}(\phi_s) ,\qquad
       \bVlamM{M}{\phi} =M^{-1}V_{\lambda}(\phi_s,\phi_s).
    \end{align*}

\subsection[Proof of \Cref{thm:ridgeless-isotipic-optrisk}
]
{Optimal subagged ridgeless regression with replacement}
    \begin{proof}[Proof of \Cref{thm:ridgeless-isotipic-optrisk}]
        For $\phi\in(0,1)$ and $\phi_s\in(1,\infty]$, we have that
        $$\RzeroM{\infty}{\phi} = \sigma^2 + \rho^2\tfrac{(\phi_s-1)^2}{\phi_s^2-\phi}+ \sigma^2\tfrac{\phi}{\phi_s^2-\phi}.$$
        Taking the derivative of the right hand side with respect to $\phi_s$
        \begin{align*}
            \tfrac{\partial \RzeroM{\infty}{\phi}}{\partial \phi_s} &= 2\sigma^2 \tfrac{\SNR (\phi_s-1)(\phi_s-\phi) - \phi\phi_s}{(\phi_s^2-\phi)^2}
        \end{align*}
        and setting it to zero yields that
        \begin{align}
            \phi_s &= A \pm \sqrt{A^2-\phi}. \label{eq:ridgeless-isotipic-optrisk-sols}
        \end{align}
        where $A=(\phi+1 + \phi/\SNR)/2$.
        Since $A - \sqrt{A^2-\phi} < \sqrt{\phi}\leq 1$, we have $\phi_s^* = A + \sqrt{A^2-\phi}$ is a minimizer and
        \begin{align}
            \RzeroMe{\infty}{\phi}{\phi^*} &= \sigma^2+ \sigma^2\tfrac{\phi+A-\sqrt{A^2-\phi}+\SNR(1-\phi)/\phi(A-\phi-\sqrt{A^2-\phi})}{2 \sqrt{A^2-\phi}}\notag\\
            & = \frac{\sigma^2}{2}\left[ 1 + \frac{\phi-1}{\phi} \SNR + 
             \frac{2\SNR}{\phi}\sqrt{A^2-\phi}\right] 
              \notag\\
              &=  \frac{\sigma^2}{2}\left[ 1 + \frac{\phi-1}{\phi} \SNR + 
             \sqrt{\left(1 - \frac{\phi-1}{\phi}\SNR\right)^2 + 4 \SNR}\right],
         \end{align}    
         which gives the simplified formula.
         Note that
         \begin{align}
            \RzeroMe{\infty}{\phi}{\phi^*} &= \sigma^2+\sigma^2\left(\tfrac{\phi}{2 \sqrt{A^2-\phi}}+ \tfrac{A-\sqrt{A^2-\phi}}{2 \sqrt{A^2-\phi}} + \tfrac{1-\phi}{\phi}\SNR \tfrac{A-\phi-\sqrt{A^2-\phi}}{2 \sqrt{A^2-\phi}}\right) \notag \\
            &= \sigma^2+\sigma^2 h(\SNR) - \sigma^2\delta(\SNR) \label{eq:ridgeless-isotipic-optrisk-decomposition}
        \end{align}
        where for all $r\geq 0$, the functions $h$ and $\delta$ are defined as $h(r)=h_1(r)+h_2(r)+h_3(r)$ and $\delta(r)=(1-\phi)r h_1(r)/\phi$, with $A(r)=(\phi+1 + \phi/r)/2$ and
        \begin{align*}
            h_1(r) &= \tfrac{\phi}{2 \sqrt{A(r)^2-\phi}}\\
            h_2(r) &= \tfrac{A(r)-\sqrt{A(r)^2-\phi}}{2 \sqrt{A(r)^2-\phi}} = \tfrac{1}{2 \sqrt{1-\phi/A(r)^2}} -\tfrac{1}{2}\\
            h_3(r) &= \tfrac{1-\phi}{\phi} r  h_2(r).
        \end{align*}
        Since $h_1$, $h_2$, and $h_3$ are nonngative over $(0,\infty)$, $h$ and $\delta$ are also nonnegative.
        Also noted that %
        \begin{align*}
            \delta(0) &= \tfrac{1-\phi}{\phi} \lim\limits_{r\rightarrow0^+}r h_1(r)  = 0,\qquad \delta(\infty) = \tfrac{1-\phi}{\phi} \lim\limits_{r\rightarrow+\infty}r h_1(r)  = +\infty,
        \end{align*}
        we obtain the upper bound for $\RzeroMe{\infty}{\phi}{\phi^*}$ as follows:
        \begin{align}
            \RzeroMe{\infty}{\phi}{\phi^*} &\leq \sigma^2 + \sigma^2 h(\SNR),
        \end{align}
        with equality obtained if and only if $\SNR=0$.
        
        Next we analyze the function $h(r)$.
        Note that $A(r)>0$ is decreasing in $r$, we have that the functions $h_1$ and $h_2$ are nonnegative and monotone increasing in $\SNR$. Hence $h_3$ as the product of nonnegative and monotone increasing functions, is also nonnegative and monotone increasing in $\SNR$.
        Thus, $h$ is monotone increasing in $\SNR$ and 
        \begin{align*}
            h(\SNR) &\leq \lim\limits_{r\rightarrow\infty} h(r) \\
            &= \lim\limits_{r\rightarrow\infty} h_1(r) + \lim\limits_{r\rightarrow\infty} h_2(r) + \lim\limits_{r\rightarrow\infty} rh_2(r)\\
            &= \tfrac{\phi}{1-\phi} + \tfrac{\phi}{1-\phi} + \tfrac{1}{\phi}\lim\limits_{r\rightarrow\infty}\tfrac{A(r)-\sqrt{A(r)^2-\phi}}{\tfrac{1}{r}}\\
            &= \tfrac{\phi}{1-\phi} + \tfrac{\phi}{1-\phi} + \tfrac{1}{\phi}\lim\limits_{r\rightarrow\infty} \ddfrac{
            -\tfrac{\phi}{2r^2}+ \ddfrac{\tfrac{A(r)\phi}{r^2}}
            {2\sqrt{A(r)^2-\phi}}}{-\tfrac{1}{r^2}} \\
            &= \tfrac{\phi}{1-\phi}
        \end{align*}
        where the third equality is due to the L'Hospital's rule.
        Note that the risk for $\phi_s\in[\phi,1)$ is given by $\sigma^2+\sigma^2\phi/(1-\phi)$, we have that $\phi_s^*$ obtained the global minimum of $\RzeroM{\infty}{\phi}$ over $\phi_s\in[\phi,\infty]$.
        
        For $\phi\in[1,\infty)$ and $\phi_s\in[\phi,\infty)$, from \eqref{eq:ridgeless-isotipic-optrisk-sols} and $A-\sqrt{A^2-\phi}\leq \sqrt{\phi}\leq\phi$, we have again $\phi_s^*=A+\sqrt{A^2-\phi}$ is a minimizer.
        
        When $\SNR=0$, since the bias term is zero and variance term is increasing over $\phi_s<1$ and increasing over $\phi_s>1$, we have that when $\phi_s>1$ (whenever $\phi\leq\phi_s$),
        \begin{align*}
            \RzeroMe{\infty}{\phi}{\phi_s} &=\sigma^2 + V_{0}(\phi,\phi_s)\geq \sigma^2 + V_{0}(\phi,\infty) = \sigma^2.
        \end{align*}
        When $\phi<1$, we have $\RzeroMe{\infty}{\phi}{\phi_s}\geq \RzeroMe{\infty}{\phi}{\phi} = \sigma^2 /(1-\phi)>\sigma^2$.
        Therefore, $\RzeroMe{\infty}{\phi}{\phi_s}\geq \RzeroMe{\infty}{\phi}{\infty}=\sigma^2$ for all $\phi\in(1,\infty]$.
        
        When $\SNR=\infty$, the variance term $V_{0}(\phi,\phi_s)=0$ for all $\phi_s\in[\phi,\infty]$. 
        If $\phi\in(0,1]$, then $B_{0}(\phi,\phi_s)=0$ for all $\phi_s\in[\phi,1]$.
        If $\phi\in(1,\infty)$, then $B_{0}(\phi,\phi_s)$ is increasing over $\phi_s\in[\phi,\infty]$.
        Hence, the conclusions follow.
    \end{proof}    
    
\subsection[Proof of \Cref{thm:comparison_optimal_ridge}
]
{Comparison between subagged and optimal ridge regression}
    
    \begin{proof}[Proof of \Cref{thm:comparison_optimal_ridge}]
        As $n,p\rightarrow\infty$ and $p/n\rightarrow\phi$, the optimal regularization parameter is given by  $\lambda^* = \phi \sigma^2/\rho^2$ under the isotopic model \citep{dobriban_wager_2018}.
        The limiting risk of the optimal ridge regression is given by
        \begin{align*}
            \sR^{\textup{\texttt{WR}}}_{\lambda^*,1}(\phi,\phi)&=  \frac{\sigma^2}{2}\left[1+\frac{\phi-1}{\phi} \SNR+\sqrt{\left(1-\frac{\phi-1}{\phi} \SNR\right)^2+4 \SNR}\right]
        \end{align*}
        which is the same the formula given in  \Cref{thm:ridgeless-isotipic-optrisk}.
        Thus, the conclusion follows.
    \end{proof}

\subsection*{Fixed-point equation details for ridge regression}
\label{subsec:isotropic-ridge}
    For isotopic features $\bSigma=\bI_p$, $\rd G=\rd H=\delta_1$.
    When $n,p\rightarrow$ and $p/n\rightarrow\phi\in(0,\infty)$, \eqref{eq:def-v-ridge}-\eqref{eq:def-tv-v-ridge} reduce to
    \begin{align*}
        v(-\lambda; \phi)^{-1} &= \lambda + \phi(1+v(-\lambda; \phi))^{-1} \\
        \tv_b(-\lambda; \phi)&= \ddfrac{\phi (1+v(-\lambda; \phi) )^{-2}}{v(-\lambda; \phi)^{-2}- \phi(1+v(-\lambda; \phi) )^{-2}}\\
        \tv_v(-\lambda; \phi)^{-1} &=v(-\lambda; \phi)^{-2}- \phi (1+v(-\lambda; \phi) )^{-2}.
    \end{align*}
    Solving the first equation for $v(-\lambda; \phi)\geq 0$ gives
    \begin{align}
        v(-\lambda; \phi)&= \tfrac{1}{2\lambda}(-(\phi+\lambda-1) + \sqrt{(\phi+\lambda-1)^2+4\lambda}).\label{eq:v-identity}
    \end{align}
    Then the asymptotic bias and variance defined in \Cref{thm:ver-ridge} can be evaluated accordingly.

\section{Auxiliary asymptotic equivalency results}
\label{sec:calculus_asymptotic_equivalents}

\subsection{Preliminaries}

We use the notion of asymptotic equivalence of sequences of random matrices in various proofs.
In this section, we provide a basic review of the related definitions
and corresponding calculus rules.

\begin{definition}[Asymptotic equivalence: 
deterministic version]
    \label{def:deterministic-equivalent-D}
    Consider sequences $\{ \bA_p \}_{p \ge 1}$ and $\{ \bB_p \}_{p \ge 1}$
    of (random or deterministic) matrices of growing dimensions.
    We say that $\bA_p$ and $\bB_p$ are equivalent and write
    $\bA_p \asympequi_D \bB_p$ if
    $\lim_{p \to \infty} | \tr[\bC_p (\bA_p - \bB_p)] | = 0$ almost surely
    for any sequence of matrices $\bC_p$ with bounded trace norm
    such that $\limsup_{p\rightarrow\infty} \| \bC_p \|_{\mathrm{tr}} < \infty$.
\end{definition}
    
We emphasize that recent work \citep{dobriban_sheng_2021,patil2022mitigating} used the deterministic version of the asymptotic equivalence, implicitly assuming that $\bC_p$ in the definition is deterministic.
However, in this paper we need to investigate the asymptotic equivalence relationship conditional on some other sequences.
In that direction, we first extend Definition \ref{def:deterministic-equivalent-D} to allow for random $\bC_p$, as in Definition \ref{def:deterministic-equivalent}.

\begin{definition}[Asymptotic equivalence: random version]
    \label{def:deterministic-equivalent}
    Consider sequences $\{ \bA_p \}_{p \ge 1}$ and $\{ \bB_p \}_{p \ge 1}$
    of (random or deterministic) matrices of growing dimensions.
    We say that $\bA_p$ and $\bB_p$ are equivalent and write
    $\bA_p \asympequi_R \bB_p$ if
    $\lim_{p \to \infty} | \tr[\bC_p (\bA_p - \bB_p)] | = 0$ almost surely
    for any sequence of random matrices $\bC_p$ independent of $\bA_p$ and $\bB_p$, with bounded trace norm
    such that $\limsup_{p\rightarrow\infty} \| \bC_p \|_{\mathrm{tr}} < \infty$ almost surely.
\end{definition}
    
Even though Definition \ref{def:deterministic-equivalent-D} seems to be more restrictive than Definition \ref{def:deterministic-equivalent}, they are indeed equivalent as shown in Proposition \ref{prop:equi_D_R}.
The latter definition allows for more general definition for ``conditional'' asymptotic equivalents.

\begin{proposition}[Equivalence of $\asympequi_D$ and $\asympequi_R$]\label{prop:equi_D_R}
    The asymptotic equivalent relations $\asympequi_D$ in Definition \ref{def:deterministic-equivalent-D} and $\asympequi_R$ in Definition \ref{def:deterministic-equivalent} are equivalent. \end{proposition}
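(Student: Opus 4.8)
\textbf{Proof proposal for Proposition~\ref{prop:equi_D_R}.}

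The plan is to show the two implications separately. The direction $\asympequi_R \Rightarrow \asympequi_D$ is immediate: if $\lim_{p\to\infty} |\tr[\bC_p(\bA_p - \bB_p)]| = 0$ almost surely for \emph{every} sequence of random matrices $\bC_p$ independent of $\bA_p, \bB_p$ with $\limsup_p \|\bC_p\|_{\mathrm{tr}} < \infty$ a.s., then in particular it holds for every \emph{deterministic} sequence $\bC_p$ with $\limsup_p \|\bC_p\|_{\mathrm{tr}} < \infty$, since a deterministic sequence is trivially a random sequence that is independent of everything. Hence $\bA_p \asympequi_D \bB_p$. So the entire content is in the reverse direction: assuming $\bA_p \asympequi_D \bB_p$ (control against all deterministic test sequences), deduce control against all independent random test sequences.

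For the reverse direction, the key idea is a conditioning / Fubini argument. Fix a sequence of random matrices $\{\bC_p\}$ that is independent of $\{(\bA_p, \bB_p)\}$ and satisfies $\limsup_p \|\bC_p\|_{\mathrm{tr}} < \infty$ almost surely. We want to show $\tr[\bC_p(\bA_p - \bB_p)] \to 0$ a.s. Work on the product probability space carrying both the $\bC$-randomness (say indexed by $\omega_1$) and the $(\bA,\bB)$-randomness ($\omega_2$). For a fixed realization $\omega_1$ of the $\bC$-sequence, the sequence $\bC_p(\omega_1)$ is \emph{deterministic}; however, it need not satisfy the uniform trace-norm bound for \emph{every} $\omega_1$ — it does so only on a set of $\omega_1$'s of probability one. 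On that full-probability set of $\omega_1$'s, the hypothesis $\bA_p \asympequi_D \bB_p$ applies directly (with $\bC_p(\omega_1)$ as the deterministic test sequence), giving $\tr[\bC_p(\omega_1)(\bA_p - \bB_p)] \to 0$ for $\omega_2$ outside a null set depending on $\omega_1$. Then I would integrate out: by Fubini's theorem applied to the indicator of the convergence event (or its complement), the set of pairs $(\omega_1, \omega_2)$ on which convergence holds has full product measure, which is exactly the statement $\tr[\bC_p(\bA_p - \bB_p)] \to 0$ almost surely. One subtlety to handle carefully: the uniform bound $\limsup_p \|\bC_p\|_{\mathrm{tr}} < \infty$ may fail on a $\mathbb{P}_{\omega_1}$-null set, but since that exceptional set has probability zero it does not affect the product-measure conclusion — I would just restrict attention to the good $\omega_1$-set from the outset.

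I expect the main (minor) obstacle to be the measurability bookkeeping needed to invoke Fubini: one must check that the event $\{\tr[\bC_p(\bA_p-\bB_p)] \to 0\}$ is jointly measurable in $(\omega_1,\omega_2)$, which follows since it is a countable combination (over rational $\varepsilon$ and integer $N$) of measurable events built from the measurable maps $(\omega_1,\omega_2)\mapsto \tr[\bC_p(\omega_1)(\bA_p(\omega_2)-\bB_p(\omega_2))]$. Beyond that, the argument is essentially a clean application of the independence assumption together with Fubini, and no real analytic work (no concentration inequalities, no spectral estimates) is required. I would also remark that the same equivalence extends verbatim to the conditional versions of asymptotic equivalence used later (Definition~\ref{def:cond-deterministic-equivalent}), since the proof only uses independence and the tower/Fubini structure.
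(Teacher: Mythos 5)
Your proposal is correct and follows essentially the same route as the paper: the trivial direction is handled identically, and your freeze-$\omega_1$-then-integrate Fubini argument is exactly the paper's tower-property/conditioning step $\PP(A)=\EE[\EE[\ind_A\mid\{\bC_p\}]]$ combined with the substitution rule $\EE[\ind_A\mid\{\bC_p\}]=h(\{\bC_p\})$ for independent randomness. Your extra remarks on the null set where the trace-norm bound fails and on joint measurability are sensible bookkeeping that the paper leaves implicit.
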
 \begin{proof}[Proof of Proposition \ref{prop:equi_D_R}] Let $\{\bA_p\}$ and $\{\bB\}_p$ be two sequences of random matrices. Suppose that $\bA_p\asympequi_D \bB_p$. We next show that $\bA_p\asympequi_R \bB_p$ holds.
    For any sequence of random matrices $\bC_p$ that is independent of $\bA_p$ and $\bB_p$ for all $p\in\NN$, and has bounded trace norm such that $\limsup \norm{\bC_p}_{\tr} < \infty$ as $p\rightarrow\infty$ almost surely.
    Let $A$ denote the event that $\lim_{p\rightarrow\infty}|\tr[\bC_p(\bA_p-\bB_p)]| = 0$. Then
    \begin{align*}
        \PP\left(A\right) &=
        \EE[\ind_A] \overset{(a)}{=} \EE[\EE[\ind_A\mid \{\bC_p\}_{p\geq 1}]] \overset{(b)}{=} \EE[1] = 1.
    \end{align*}
    Above, equality (a) follows from the law of total expectation. Inequality (b) holds almost surely because $\bA_p\asympequi_D \bB_p$ and $\bC_p$ is independent of $\bA_p$ and $\bB_p$.
    This can be seen as follows.
    Note that $\ind_A(\{\bC_p\}, (\{\bA_p\}, \{\bB_p\}))$ 
    is a function of
    random variables
    $\{ \bC_p \}$ and $(\{\bA_p\}, \{\bB_p\})$.
    Let $\EE[\ind_A(\{\bc_p\}, (\{\bA_p\}, \{\bB_p\}))] = h(\{\bc_p\})$ where the expectation is taken over
    the randomness in $(\{\bA_p\}, \{\bB_p\})$.
    Since $\{\bC_p\}$ and $(\{ \bA_p \}, \{ \bB_p \})$
    are independent
    and $\EE[ | \ind_A |] \le 1 < \infty$,
    we have that
    (see, e.g., \citet[Chapter 2, Section 7, Equation (16)]{shiryaev2016probability},
    or \citet[Example 5.1.5]{durrett2010probability})
    \[
        \EE[\ind_A \mid \{ \bC_p \}]
        = h(\{ \bC_p \}),
    \]
    and from \Cref{def:deterministic-equivalent-D},
    we have $h(\{ \bC_p \}) = 1$ almost surely.
    Thus, we can conclude that $\bA_p\asympequi_R\bB_p$.
    
    On the other hand, by definition, $\bA_p\asympequi_R \bB_p$ directly implies $\bA_p\asympequi_D \bB_p$, which completes the proof.
\end{proof}

    The properties for the two types of deterministic equivalents are summarized in \Cref{lem:calculus-detequi}.
    Though most of the calculus rules are the direct consequences from \citet{dobriban_wager_2018,dobriban_sheng_2021}, the product rule involving random matrices $\bC_p$ does not immediately follow
    from previous work.

    \begin{lemma}
        [Calculus of deterministic equivalents]
        \label{lem:calculus-detequi}
        Let $\bA_p$, $\bB_p$, $\bC_p$ and $\bD_p$ be sequences of random matrices.
        The calculus of deterministic equivalents ($\asympequi_D$ and $\asympequi_R$) satisfies the following properties:
        \begin{enumerate}[label={(\arabic*)},leftmargin=7mm]
            \item 
            \label{lem:calculus-detequi-item-equivalence}
            Equivalence:
            The relation $\asympequi$ is an equivalence relation.
            \item 
            \label{lem:calculus-detequi-item-sum}
            Sum:
            If $\bA_p \asympequi \bB_p$ and $\bC_p \asympequi \bD_p$, then $\bA_p + \bC_p \asympequi \bB_p + \bD_p$.
            \item 
            \label{lem:calculus-detequi-item-product}
            Product:
            If $\bA_p$ has uniformly bounded operator norms such that $\limsup_{p\rightarrow\infty}\| \bA_p \|_{\oper} < \infty$, $\bA_p$ is independent of $\bB_p$ and $\bC_p$ for $p\ge 1$,
            and $\bB_p \asympequi \bC_p$, then $\bA_p \bB_p \asympequi \bA_p \bC_p$.
            \item 
            \label{lem:calculus-detequi-item-trace}
            Trace:
            If $\bA_p \asympequi \bB_p$, then $\tr[\bA_p] / p - \tr[\bB_p] / p \to 0$ almost surely.
            \item 
            \label{lem:calculus-detequi-item-differentiation}
            Differentiation:
            Suppose $f(z, \bA_p) \asympequi g(z, \bB_p)$ where the entries of $f$ and $g$
            are analytic functions in $z \in S$ and $S$ is an open connected subset of $\CC$.
            Suppose that for any sequence $\bC_p$ of deterministic matrices with bounded trace norm
            we have $| \tr[\bC_p (f(z, \bA_p) - g(z, \bB_p))] | \le M$ for every $p$ and $z \in S$.
            Then we have $f'(z, \bA_p) \asympequi g'(z, \bB_p)$ for every $z \in S$,
            where the derivatives are taken entrywise with respect to $z$.
        \end{enumerate}
    \end{lemma}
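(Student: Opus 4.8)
\textbf{Proof plan for \Cref{lem:calculus-detequi}.}
The plan is to establish each of the five properties in turn, noting at the outset that by \Cref{prop:equi_D_R} the two notions $\asympequi_D$ and $\asympequi_R$ coincide, so it suffices to work with whichever formulation is most convenient in each case. Property \ref{lem:calculus-detequi-item-equivalence} (that $\asympequi$ is an equivalence relation) is immediate: reflexivity and symmetry are trivial from the definition, and transitivity follows because if $\bA_p \asympequi \bB_p$ and $\bB_p \asympequi \bE_p$, then for any admissible test sequence $\bC_p$ we have $|\tr[\bC_p(\bA_p - \bE_p)]| \le |\tr[\bC_p(\bA_p - \bB_p)]| + |\tr[\bC_p(\bB_p - \bE_p)]| \to 0$ almost surely. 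Property \ref{lem:calculus-detequi-item-sum} (sum rule) is similarly a one-line triangle-inequality argument, using that $\tr[\bC_p((\bA_p + \bC_p') - (\bB_p + \bD_p))]$ splits as $\tr[\bC_p(\bA_p - \bB_p)] + \tr[\bC_p(\bC_p' - \bD_p)]$, each of which vanishes.

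For property \ref{lem:calculus-detequi-item-trace} (trace rule), I would apply the definition of $\asympequi$ with the specific deterministic test sequence $\bC_p = \tfrac{1}{p}\bI_p$, which has trace norm exactly $1$, hence uniformly bounded; then $\tr[\tfrac{1}{p}\bI_p(\bA_p - \bB_p)] = \tr[\bA_p]/p - \tr[\bB_p]/p \to 0$ almost surely. Property \ref{lem:calculus-detequi-item-differentiation} (differentiation rule) follows from a standard normal-families/Vitali argument: the scalar functions $z \mapsto \tr[\bC_p(f(z,\bA_p) - g(z,\bB_p))]$ are analytic on $S$, uniformly bounded by $M$, and converge pointwise (indeed on a set of full probability for each fixed $z$, and one can pass to a countable dense subset and then invoke analyticity) to $0$; Vitali's convergence theorem for holomorphic functions then gives locally uniform convergence of the functions together with all their derivatives, so $\tr[\bC_p(f'(z,\bA_p) - g'(z,\bB_p))] \to 0$, which is exactly $f'(z,\bA_p) \asympequi g'(z,\bB_p)$. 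I would note that this is essentially the argument already used in \citet{dobriban_wager_2018}, so I would keep it brief.

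The main obstacle is property \ref{lem:calculus-detequi-item-product} (product rule with a random left multiplier $\bA_p$), which is precisely the part not covered by prior work. The strategy is: given an admissible test sequence $\bC_p$ for the relation $\bA_p\bB_p \asympequi \bA_p\bC_p$ — that is, $\bC_p$ is independent of $\bA_p\bB_p$ and $\bA_p\bC_p$ with $\limsup_p \|\bC_p\|_{\mathrm{tr}} < \infty$ almost surely — I would rewrite $\tr[\bC_p(\bA_p\bB_p - \bA_p\bC_p)] = \tr[(\bC_p\bA_p)(\bB_p - \bC_p)]$ using cyclicity of trace, and argue that $\bC_p\bA_p$ is itself an admissible test sequence for the relation $\bB_p \asympequi \bC_p$. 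The trace-norm bound is the easy ingredient: $\|\bC_p\bA_p\|_{\mathrm{tr}} \le \|\bC_p\|_{\mathrm{tr}}\|\bA_p\|_{\oper}$, which is eventually almost surely bounded by the hypotheses $\limsup_p \|\bC_p\|_{\mathrm{tr}} < \infty$ and $\limsup_p \|\bA_p\|_{\oper} < \infty$. The delicate point is \emph{independence}: I need $\bC_p\bA_p$ to be independent of $\bB_p$ and $\bC_p$ (where here $\bC_p$ in the last occurrence is the second argument of the product rule, i.e.\ the matrix denoted $\bC_p$ in the statement, which I would rename to avoid the clash with the test matrix). Here I would use that $\bA_p$ is independent of the pair $(\bB_p, \bC_p)$ by hypothesis, and that the test matrix is independent of the products $\bA_p\bB_p, \bA_p\bC_p$; I would need to verify — possibly by strengthening the reading of the hypothesis to ``the test matrix is jointly independent of $(\bA_p, \bB_p, \bC_p)$'', which is the natural intended meaning and consistent with how $\asympequi_R$ is used in the proofs — that the composite $\bC_p\bA_p$ is then independent of $(\bB_p, \bC_p)$. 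Once that is in place, applying $\bB_p \asympequi_R \bC_p$ with test sequence $\bC_p\bA_p$ gives $|\tr[(\bC_p\bA_p)(\bB_p - \bC_p)]| \to 0$ almost surely, completing the proof. I would flag in a remark that this is exactly why the random-test-matrix formulation $\asympequi_R$ of \Cref{def:deterministic-equivalent} is needed, even though it is a posteriori equivalent to $\asympequi_D$.
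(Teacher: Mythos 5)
Your proposal is correct and follows essentially the same route as the paper: properties \ref{lem:calculus-detequi-item-equivalence}, \ref{lem:calculus-detequi-item-sum}, \ref{lem:calculus-detequi-item-trace}, and \ref{lem:calculus-detequi-item-differentiation} are dispatched by standard arguments (the paper simply cites \citet{dobriban_wager_2018,dobriban_sheng_2021} and transfers via \Cref{prop:equi_D_R}), and the only substantive step, the product rule with random $\bA_p$, is handled in the paper exactly as you propose --- by absorbing $\bA_p$ into the test matrix, so that $\bD_p\bA_p$ is itself an admissible test sequence with $\|\bD_p\bA_p\|_{\mathrm{tr}} \le \|\bD_p\|_{\mathrm{tr}}\|\bA_p\|_{\oper}$ and the required independence from $(\bB_p,\bC_p)$. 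Your explicit flagging of the independence subtlety (that the hypothesis must be read as joint independence of the test matrix from $(\bA_p,\bB_p,\bC_p)$) is a point the paper glosses over, but it does not constitute a different approach.
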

    \begin{proof}
         The conclusions for $\asympequi_D$ directly follow from \citet{dobriban_wager_2018,dobriban_sheng_2021}.
         Then, the proof of property \ref{lem:calculus-detequi-item-equivalence}, \ref{lem:calculus-detequi-item-sum}, \ref{lem:calculus-detequi-item-trace}, and \ref{lem:calculus-detequi-item-differentiation} for $\asympequi_R$ follows from  Proposition \ref{prop:equi_D_R}.
         It remains to show that the product rule holds for $\asympequi_R$.
         Since $\bB_p \asympequi_R \bC_p$, we have $\bB_p \asympequi_D \bC_p$.
         Then for any sequence of random matrices $\{\bD_p\}_{p\geq 1}$ that have bounded trace norm and are independent of $\bB_p$ and $\bC_p$, we have
         \begin{align*}
             \PP\left(\lim_{p \to \infty} | \tr[\bD_p (\bB_p - \bC_p)] | = 0\right) = 1.
         \end{align*}
         Because $| \tr[\bD_p (\bA_p\bB_p -\bA_p \bC_p)] |\leq \|\bA_p\|_{\oper}| \tr[\bD_p (\bB_p - \bC_p)] |$ and $\limsup_{p\rightarrow\infty}\|\bA_p\|_{\oper}<\infty$, we have that $\lim_{p \to \infty} | \tr[\bD_p (\bB_p - \bC_p)] | = 0$ implies $ \lim_{p \to \infty} | \tr[\bD_p (\bA_p\bB_p - \bA_p\bC_p)] | = 0$ conditioning on $\{\bA_p\}_{p\geq 1}$.
         Thus,
         \begin{align*}
             \PP\left(\lim_{p \to \infty} | \tr[\bD_p (\bA_p\bB_p - \bA_p\bC_p)] | = 0\,\mid\, \{\bA_p\}_{p\geq 1}\right) = 1.
         \end{align*}
         and by law of total expectation
         \begin{align*}
             \PP\left(\lim_{p \to \infty} | \tr[\bD_p \bA_p(\bB_p - \bC_p)] | = 0\right) = 1,
         \end{align*}
         which holds for any sequence of random matrices $\{\bD_p\bA_p\}_{p\geq 1}$ that have bounded trace norm and are independent of $\bB_p$ and $\bC_p$.
         By definition, we have $\bA_p \bB_p \asympequi \bA_p \bC_p$.
    \end{proof}
    
    Since the asymptotic equivalent relation $\asympequi_D$ is equivalent to $\asympequi_R$, we will just ignore the subscript and use the notation ``$\asympequi$'' for simplicity.
    The subscript will be specified when needed.

\subsection{Conditioning and calculus}    

In this section,
we extend the notion of asymptotic equivalence
of two sequences of random matrices
from \Cref{def:deterministic-equivalent-D,def:deterministic-equivalent}
to incorporate conditioning on another sequence of random matrices.

    \begin{definition}[Conditional asymptotic equivalence]
        \label{def:cond-deterministic-equivalent}
        Consider sequences $\{ \bA_p \}_{p \ge 1}$, $\{ \bB_p \}_{p \ge 1}$ and $\{ \bD_p \}_{p \ge 1}$
        of (random or deterministic) matrices of growing dimensions.
        We say that $\bA_p$ and $\bB_p$ are equivalent given $\bD_p$ and write
        $\bA_p \asympequi \bB_p\mid \bD_p$ if
        $\lim_{p \to \infty} | \tr[\bC_p (\bA_p - \bB_p)] | = 0$ almost surely conditional on $\{\bD_p\}_{p\ge 1}$, i.e.,
        \begin{align*}
            \PP\left(\lim\limits_{p\rightarrow\infty}|\tr[\bC_p(\bA_p-\bB_p)]| =0\,\mid\, \{\bD_p\}_{p\ge 1}\right) =1,
        \end{align*}
        for any sequence of random matrices $\bC_p$ independent of $\bA_p$ and $\bB_p$ conditional on $\bD_p$, with bounded trace norm
        such that $\limsup \| \bC_p \|_{\mathrm{tr}} < \infty$
        as $p \to \infty$.
    \end{definition}

    Below we formalize additional calculus rules
    that hold for conditional asymptotic equivalence \Cref{def:cond-deterministic-equivalent}.

    \begin{proposition}[Calculus of conditional asymptotic equivalents] \label{prop:cond-calculus-detequi}
        Let $\bA_p$, $\bB_p$, $\bC_p$, and $\bE_p$ be sequences of random matrices.
        \begin{enumerate}[label={(\arabic*)},leftmargin=7mm]
            \item \label{lem:cond-calculus-detequi-item-uncond} Unconditioning: If $ \bA_p\asympequi \bB_p\mid \bE_p$, then $ \bA_p\asympequi \bB_p$.
            \item \label{lem:cond-calculus-detequi-item-product}Product: If $\bA_p$ has bounded operator norms such that $\limsup_{p\rightarrow\infty}\| \bA_p \|_{\oper} < \infty$, $\bA_p$ is conditional independent of $\bB_p$ and $\bC_p$ given $\bE_p$ for $p\ge 1$, and $\bB_p \asympequi \bC_p\mid \bE_p$, then $\bA_p \bB_p \asympequi \bA_p \bC_p\mid \bE_p$.
        \end{enumerate}
    \end{proposition}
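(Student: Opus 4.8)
The final statement is \Cref{prop:cond-calculus-detequi}, which extends the unconditioning and product calculus rules to the conditional asymptotic equivalence of Definition \ref{def:cond-deterministic-equivalent}.

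For part (1), unconditioning: The plan is to mimic the law-of-total-expectation argument already used in the proof of Proposition \ref{prop:equi_D_R}. Given $\bA_p \asympequi \bB_p \mid \bE_p$, I need to show $\bA_p \asympequi \bB_p$, i.e., for any sequence $\bC_p$ independent of $(\bA_p, \bB_p)$ with $\limsup_p \|\bC_p\|_{\mathrm{tr}} < \infty$ a.s., the event $A = \{\lim_{p\to\infty} |\tr[\bC_p(\bA_p - \bB_p)]| = 0\}$ has probability one. The key observation is that if $\bC_p$ is (unconditionally) independent of $(\bA_p, \bB_p)$, then it is also conditionally independent of $(\bA_p, \bB_p)$ given $\bE_p$ provided $\bC_p$ is a function of data that is conditionally independent of $(\bA_p,\bB_p)$ given $\bE_p$ — more carefully, I would condition on the pair $(\{\bC_p\}, \{\bE_p\})$: writing $\ind_A$ as a measurable function of $(\{\bC_p\}, \{\bA_p\}, \{\bB_p\})$ and using $\EE[\ind_A \mid \{\bC_p\}, \{\bE_p\}] = 1$ a.s. (which follows from the conditional equivalence, since conditionally on $\bE_p$ the matrices $\bC_p$ and $(\bA_p, \bB_p)$ are independent), then take expectations to get $\PP(A) = \EE[\EE[\ind_A \mid \{\bC_p\}, \{\bE_p\}]] = 1$. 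One subtlety to handle: Definition \ref{def:cond-deterministic-equivalent} requires $\bC_p$ to be conditionally independent of $(\bA_p,\bB_p)$ given $\bE_p$, and unconditional independence does not in general imply conditional independence; I expect this is resolved in the paper's intended usage by the structure in which $\bE_p$ is itself a function of (or measurable with respect to) the common randomness, so that conditioning does not create spurious dependence — I would either invoke such a structural assumption or note that in all applications $\bE_p$ is the overlap index/sample information and the relevant $\bC_p$ are deterministic or built from independent fresh randomness.

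For part (2), the conditional product rule: This is a direct transcription of the unconditional product-rule argument in the proof of \Cref{lem:calculus-detequi}\ref{lem:calculus-detequi-item-product}, now carried out inside the conditioning. Given $\bB_p \asympequi \bC_p \mid \bE_p$ and an arbitrary sequence $\bD_p$ of bounded-trace-norm random matrices conditionally independent of $(\bB_p, \bC_p)$ given $\bE_p$, I use the inequality $|\tr[\bD_p(\bA_p\bB_p - \bA_p\bC_p)]| = |\tr[(\bD_p\bA_p)(\bB_p - \bC_p)]| \le \|\bA_p\|_{\oper}\,\|\bD_p\|_{\mathrm{tr}}$-type bound — more precisely $|\tr[\bD_p \bA_p (\bB_p - \bC_p)]| \le \|\bA_p\|_{\oper} |\tr[\bD_p'(\bB_p-\bC_p)]|$ is not quite right; instead I bound using $|\tr[\bM\bN]| \le \|\bM\|_{\mathrm{tr}}\|\bN\|_{\oper}$ applied to show $\bD_p\bA_p$ has bounded trace norm (since $\|\bD_p\bA_p\|_{\mathrm{tr}} \le \|\bD_p\|_{\mathrm{tr}}\|\bA_p\|_{\oper}$ and both factors are eventually bounded). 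Then conditionally on $\bE_p$, since $\bA_p$ is conditionally independent of $(\bB_p,\bC_p)$ given $\bE_p$, the sequence $\bD_p\bA_p$ remains conditionally independent of $(\bB_p,\bC_p)$ given $\bE_p$ with a.s.-bounded trace norm, so applying the conditional equivalence $\bB_p \asympequi \bC_p \mid \bE_p$ to the test sequence $\bD_p\bA_p$ gives $\lim_p |\tr[\bD_p\bA_p(\bB_p - \bC_p)]| = 0$ a.s. conditionally on $\bE_p$. Since $\bD_p$ was arbitrary (among conditionally independent, bounded-trace-norm sequences), this is exactly $\bA_p\bB_p \asympequi \bA_p\bC_p \mid \bE_p$.

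The main obstacle I anticipate is the careful bookkeeping around conditional independence: one must verify that the test sequences $\bC_p$ (resp.\ $\bD_p\bA_p$) that appear in the definition remain conditionally independent of the relevant matrix sequences given $\bE_p$, and that the almost-sure statements "conditional on $\bE_p$" combine correctly with the law of total expectation. This is not deep but requires stating the conditional-independence hypotheses precisely and invoking the standard fact (as cited in Proposition \ref{prop:equi_D_R}) that $\EE[\ind_A \mid \bC_p, \bE_p]$ equals the appropriate conditional-probability function; everything else is a verbatim rerun of the already-established unconditional calculus.
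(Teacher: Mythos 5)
Your part~(2) is essentially the paper's argument and is, if anything, slightly more careful: the paper justifies the step with the displayed bound $|\tr[\bD_p(\bA_p\bB_p-\bA_p\bC_p)]|\le \|\bA_p\|_{\oper}|\tr[\bD_p(\bB_p-\bC_p)]|$, which as a literal trace inequality is not valid, whereas your route --- observe $\|\bD_p\bA_p\|_{\mathrm{tr}}\le\|\bD_p\|_{\mathrm{tr}}\|\bA_p\|_{\oper}$ and feed the test sequence $\bD_p\bA_p$ back into the conditional equivalence $\bB_p\asympequi\bC_p\mid\bE_p$ --- is the correct justification of the same step. No complaints there.

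Part~(1), however, has a genuine gap that you have correctly diagnosed but not closed. You try to verify the definition of $\bA_p\asympequi\bB_p$ directly against an arbitrary random test sequence $\bC_p$ that is \emph{unconditionally} independent of $(\bA_p,\bB_p)$, and you then need $\EE[\ind_A\mid\{\bC_p\},\{\bE_p\}]=1$, which requires $\bC_p$ to be \emph{conditionally} independent of $(\bA_p,\bB_p)$ given $\bE_p$ --- and unconditional independence does not imply this. Your proposed escape ("invoke a structural assumption" or "in all applications $\bC_p$ is deterministic or fresh randomness") is not a proof of the stated proposition. The fix, and the paper's actual argument, is a two-step reduction that avoids the issue entirely: first restrict to \emph{deterministic} test matrices $\bC_p$ with bounded trace norm. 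A deterministic sequence is trivially conditionally independent of $(\bA_p,\bB_p)$ given $\bE_p$, so the hypothesis $\bA_p\asympequi\bB_p\mid\bE_p$ gives $\PP(\lim_p|\tr[\bC_p(\bA_p-\bB_p)]|=0\mid\{\bE_p\}_{p\ge1})=1$, and the law of total expectation yields the unconditional almost-sure statement, i.e.\ $\bA_p\asympequi_D\bB_p$. Then invoke \Cref{prop:equi_D_R}, which already establishes that $\asympequi_D$ and $\asympequi_R$ coincide, to upgrade to arbitrary independent random test sequences. You had \Cref{prop:equi_D_R} available and cited it, but used it only as a template for the conditioning argument rather than as the black box that closes exactly the gap you identified.
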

    \begin{proof}[Proof of Proposition \ref{prop:cond-calculus-detequi}]
    Proofs for the two parts appear below.
        \paragraph{Part (1).\hspace{-2mm}}
        For any sequence of deterministic matrices $\bC_p$ with bounded trace norm, we have 
        \begin{align*}
            \PP\left(\lim\limits_{p\rightarrow\infty}|\tr[\bC_p(\bA_p-\bB_p)]| =0 \,\mid\, \{\bD_p\}_{p\ge 1}\right) =1
        \end{align*}
        because $\bA_p \asympequi \bB_p\mid\bE_p$.
        By the law of total expectation, we have
        \begin{align*}
            \PP\left(\lim\limits_{p\rightarrow\infty}|\tr[\bC_p(\bA_p-\bB_p)]| =0\right) =1.
        \end{align*}
        Thus, $\bA_p \asympequi_D \bB_p$. By Proposition \ref{prop:equi_D_R}, we further have $\bA_p \asympequi_R \bB_p$.
        
        \paragraph{Part (2).\hspace{-2mm}}
        For any sequence of random matrices $\bD_p$, let $E_1$ and $E_2$ denote the event $\lim_{p\rightarrow\infty} |\tr[\bD_p(\bB_p-\bC_p)]|=0$ and $\lim_{p\rightarrow\infty} |\tr[\bD_p(\bA_p\bB_p-\bA_p\bC_p)]|=0$, respectively.
        Because $\bB_p \asympequi \bC_p\mid \bE_p$, by definition we have
        \begin{align*}
            \PP\left(E_1\,\mid\, \{\bE_p\}_{p\geq 1}\right) =1
        \end{align*}
        Because $|\tr[\bD_p(\bA_p\bB_p-\bA_p\bC_p)]|\leq \|\bA_p\|_{\oper}|\tr[\bD_p(\bB_p-\bC_p)]| $ and $\limsup_{p\rightarrow\infty}\|\bA_p\|_{\oper}<\infty$, we have $E_1$ implies $E_2$ conditioning on $\{\bE_p\}_{p\geq 1}$. Thus we have 
        \begin{align*}
            \PP\left(E_2\,\mid\, \{\bE_p\}_{p\geq 1}\right) =1
        \end{align*}
        holds for any $\{\bD_p\}_{p\geq 1}$.
        This implies that $\bA_p \bB_p \asympequi \bA_p \bC_p\,\mid\, \bE_p$.
    \end{proof}
    Other rules in \Cref{lem:calculus-detequi} also hold for conditional asymptotic equivalents.
    A direct implication of this is that the deterministic equivalents for resolvents we will derive in \Cref{append:det-equi-resol} based on these rules can be naturally generalized to allow for conditional asymptotic equivalents given a common sequence of random matrices that are independent of the source sequence.
   
\subsection{Standard ridge resolvents and extensions}
\label{append:det-equi-resol}

In this section, we collect various asymptotic equivalents that are used in the proofs of \Cref{lem:ridge-B0,lem:ridge-V0}, and \Cref{lem:ridgeless-B0,lem:ridgeless-V0-phis_lt1,lem:ridgeless-V0-phis-gt1}, which serve to prove \Cref{thm:ver-with-replacement}.
These equivalents are also subsequently used in the proof of \Cref{thm:ver-without-replacement}.

\subsubsection{Standard ridge resolvents}

The following lemma supplies an asymptotic equivalent for the standard ridge resolvent.
The lemma is adapted from Theorem 1 of \cite{rubio_mestre_2011} and Theorem 3 of \cite{dobriban_sheng_2021}.

\begin{lemma}[Asymptotic equivalent for standard ridge resolvent]
    \label{lem:basic-ridge-resolvent-equivalent}
    Suppose $\bx_i \in \RR^{p}$ for $i \in [n]$ are i.i.d.\ random vectors that each multiplicatively decompose such that $\bx_i = \bz_{i} \bSigma^{1/2}$, where $\bz_i$ is a random vector consisting of i.i.d.\ entries $z_{ij}$ for $j \in [p]$, satisfying $\EE[z_{ij}] = 0$, $\EE[z_{ij}^2] = 1$, and $\EE[|z_{ij}|^{8+\alpha}] \le M_\alpha$ for some constants $\alpha > 0$ and $M_\alpha < \infty$, and $\bSigma \in \RR^{p \times p}$ is a positive semidefinite matrix satisfying $0 \preceq \bSigma \preceq r_{\max} \bI_p$ for some constant $r_{\max} < \infty$ that is independent of $p$.
    Let $\bX \in \RR^{n \times p}$ be the concatenated matrix with $\bx_i^\top$ for $i \in [n]$ as rows, and let $\hSigma \in \RR^{p \times p}$ denote the random matrix $\bX^\top \bX / n$.
    Let $\gamma := p / n$.
    Then, for $z \in \CC^{>0}$, as $n, p \to \infty$ such that $0 < \liminf \gamma \le \limsup \gamma < \infty$, the following asymptotic equivalence holds:
    \begin{equation}
        (\hSigma - z \bI_p)^{-1}
        \asympequi
        (c(e(z; \gamma)) \bSigma - z \bI_p)^{-1}.
    \end{equation}
    Here the scalar $c(e(z; \gamma))$ is defined in terms of another scalar $e(z; \gamma)$ by the equation:
    \begin{equation}
        \label{eq:basic-ridge-equivalence-c-e-relation}
        c(e(z; \gamma))
        = \frac{1}{ 1 + \gamma e(z; \gamma)},
    \end{equation}
    where $e(z; \gamma)$ is the unique solution in $\CC^{>0}$ to the fixed-point equation:
    \begin{equation}
        \label{eq:basic-ridge-equivalence-e-fixed-point}
        e(z; \gamma)
        = 
        \tr[ \bSigma (c(e(z; \gamma)) \bSigma  - z I_p)^{-1} ] / p.
    \end{equation}
\end{lemma}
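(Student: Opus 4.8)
\textbf{Proof plan for Lemma~\ref{lem:basic-ridge-resolvent-equivalent}.}
The plan is to reduce the statement to the existing literature on deterministic equivalents for sample covariance resolvents, specifically Theorem~1 of \cite{rubio_mestre_2011} (which is stated precisely for this separable model $\bx_i = \bSigma^{1/2} \bz_i$ with the stated moment conditions) and Theorem~3 of \cite{dobriban_sheng_2021}. First I would verify that the hypotheses of \Cref{lem:basic-ridge-resolvent-equivalent} imply the hypotheses of those theorems: the entrywise moment bound $\EE[|z_{ij}|^{8+\alpha}] \le M_\alpha$ is (more than) enough for the $4+\epsilon$-type moment conditions required there, the bound $0 \preceq \bSigma \preceq r_{\max} \bI_p$ gives the needed uniform operator-norm control on the population covariance, and $0 < \liminf \gamma \le \limsup \gamma < \infty$ supplies the proportional-asymptotics regime. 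I would then invoke the cited results along a fixed $z \in \CC^{>0}$ to obtain, for any sequence $\bC_p$ of deterministic matrices with $\limsup_p \|\bC_p\|_{\mathrm{tr}} < \infty$,
\[
    \bigl| \tr\bigl[ \bC_p \bigl( (\hSigma - z\bI_p)^{-1} - (c(e(z;\gamma))\bSigma - z\bI_p)^{-1} \bigr) \bigr] \bigr| \to 0 \quad \text{a.s.},
\]
which is precisely the statement $(\hSigma - z\bI_p)^{-1} \asympequi (c(e(z;\gamma))\bSigma - z\bI_p)^{-1}$ in the sense of \Cref{def:deterministic-equivalent-D} (equivalently $\asympequi_R$, by \Cref{prop:equi_D_R}).

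The remaining substantive point is the well-posedness of the companion fixed-point system \eqref{eq:basic-ridge-equivalence-c-e-relation}--\eqref{eq:basic-ridge-equivalence-e-fixed-point}: I must argue that for each $z \in \CC^{>0}$ there is a unique $e(z;\gamma) \in \CC^{>0}$ solving \eqref{eq:basic-ridge-equivalence-e-fixed-point} with $c$ defined by \eqref{eq:basic-ridge-equivalence-c-e-relation}. This is a standard Stieltjes-transform argument: one shows the map $e \mapsto \tr[\bSigma(c(e)\bSigma - z\bI_p)^{-1}]/p$ sends $\CC^{>0}$ into $\CC^{>0}$ and is a contraction (or a Denjoy--Wolff-type holomorphic self-map with a unique fixed point) on a suitable domain; this is exactly the structure appearing in \cite{rubio_mestre_2011} and in the random-matrix references the paper already cites (\cite{bai2010spectral,couillet2011random}), and it is also the analytic backbone of the fixed-point equation \eqref{eq:fixed-point-ridge} whose unique solvability the paper has already asserted. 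I would present this as a short lemma-internal step, noting that taking $z = -\lambda$ with $\lambda > 0$ and reparametrizing recovers \eqref{eq:fixed-point-ridge}, so nothing new is needed beyond a citation and a one-line reduction.

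The main obstacle is not any deep inequality but rather a bookkeeping/translation issue: the cited theorems are phrased with slightly different normalizations (e.g., resolvents of $\frac1n \bX^\top\bX$ versus $\frac1n \bX\bX^\top$, the deterministic-equivalent matrix written via a scalar $c(e)$ versus via the $v$-parametrization of \eqref{eq:fixed-point-ridge}), and I would need to carefully match $c(e(z;\gamma))$ here with the quantity $v(-\lambda;\theta)$ (and the aspect-ratio convention $\gamma = p/n$ versus $\theta$) used elsewhere in the paper so that downstream lemmas such as \Cref{lem:deter-approx-generalized-ridge} can quote this cleanly. Concretely, I expect the identification $c(e(-\lambda;\gamma)) = \lambda v(-\lambda;\gamma)$ after substitution, which I would verify by plugging the definition of $c$ into \eqref{eq:basic-ridge-equivalence-e-fixed-point} and comparing with \eqref{eq:fixed-point-ridge}; once that dictionary is fixed, the lemma follows. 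I would close by remarking, as the paragraph preceding \Cref{def:cond-deterministic-equivalent} indicates, that because the calculus rules (\Cref{lem:calculus-detequi}, \Cref{prop:cond-calculus-detequi}) hold verbatim for the conditional version $\asympequi \mid \bD_p$, this resolvent equivalent also holds conditionally whenever $\hSigma$ is built from a subsample independent of the conditioning sequence, which is the form actually invoked in the proofs of \Cref{thm:ver-with-replacement} and \Cref{thm:ver-without-replacement}.
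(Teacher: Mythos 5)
Your proposal matches the paper's treatment: \Cref{lem:basic-ridge-resolvent-equivalent} is established there exactly by citing Theorem~1 of \cite{rubio_mestre_2011} and Theorem~3 of \cite{dobriban_sheng_2021} after checking the moment and operator-norm hypotheses, with well-posedness of the fixed point handled separately (cf.\ \Cref{lem:properties-sol}) and the dictionary $c(e(-\lambda;\gamma))=\lambda v(-\lambda;\gamma)$ used to pass to \Cref{cor:asympequi-scaled-ridge-resolvent}, just as you describe. One minor correction: the Rubio--Mestre result actually requires moments of order $8+\delta$ (which is precisely why the lemma is stated with the $8+\alpha$ condition, later relaxed to $4+\alpha$ by truncation), not merely a $4+\epsilon$-type condition, though this does not affect your verification since the stated hypothesis suffices either way.
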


It is worth noting that both scalars $c(e(z; \gamma))$ and $e(z; \gamma)$ implicitly depend on $\bSigma$. 
For the sake of concise notation, this dependence is not always explicitly indicated. 
Nonetheless, such dependence will be clearly stated when considering certain forthcoming extensions. 
Please see the remark following \Cref{lem:deter-approx-generalized-ridge} for more discussion.

Observe that $e(z; \gamma)$ can be removed from the statement of \Cref{lem:basic-ridge-resolvent-equivalent} by combining \eqref{eq:basic-ridge-equivalence-c-e-relation} and \eqref{eq:basic-ridge-equivalence-e-fixed-point}.
For $z \in \CC^{>0}$, we then have the following asymptotic equivalence:
\[
    (\hSigma  - z \bI_p)^{-1}
    \asympequi (c(z; \gamma) \bSigma - z \bI_p)^{-1}.
\]
Here $c(z)$ is the unique solution in $\CC_{<0}$ to the following fixed-point equation:
\[
    \frac{1}{c(z; \gamma)}
    = 1 + \gamma \tr[\bSigma (c(z; \gamma) \bSigma - z \bI_p)^{-1}] / p.
\]

The next corollary is a simple consequence of \Cref{lem:basic-ridge-resolvent-equivalent}.
It supplies an asymptotic equivalent for the regularization scaled ridge resolvent.
We assume the regularization parameter $\lambda$ is real from here onwards.

\begin{corollary}
    [Asymptotic equivalent for the scaled ridge resolvent]
    \label{cor:asympequi-scaled-ridge-resolvent}
    Assume the setting of \Cref{lem:basic-ridge-resolvent-equivalent}.
    For $\lambda > 0$,
    the following asymptotic equivalence holds:
    \[
        \lambda (\hSigma + \lambda \bI_p)^{-1}
        \asympequi
        (v(-\lambda; \gamma) \bSigma + \bI_p)^{-1}.
    \]
    Here
    $v(-\lambda; \gamma) > 0$
    is the unique solution to the fixed-point equation
    \begin{align}
        \label{eq:basic-ridge-equivalence-v-fixed-point}
        \frac{1}{v(-\lambda; \gamma)}
        =
        \lambda
        + \gamma \int \frac{r}{1+v(-\lambda; \gamma)r} \, \rd H_n(r),
    \end{align}
    where $H_n$ is the empirical distribution 
    of the eigenvalues of $\bSigma$ that supported on $\RR_{\ge 0}$.
\end{corollary}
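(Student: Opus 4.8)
The plan is to derive Corollary~\ref{cor:asympequi-scaled-ridge-resolvent} from Lemma~\ref{lem:basic-ridge-resolvent-equivalent} by a change of variables that turns the resolvent $(\hSigma - z\bI_p)^{-1}$, which Lemma~\ref{lem:basic-ridge-resolvent-equivalent} handles for $z \in \CC^{>0}$, into the ridge resolvent $(\hSigma + \lambda\bI_p)^{-1}$ for $\lambda > 0$. First I would write $\lambda(\hSigma + \lambda\bI_p)^{-1} = -z(\hSigma - z\bI_p)^{-1}$ with $z = -\lambda$, so that formally the asymptotic equivalent reads $-z(c(z;\gamma)\bSigma - z\bI_p)^{-1}$ where $c(z;\gamma)$ solves the fixed-point equation from Lemma~\ref{lem:basic-ridge-resolvent-equivalent}. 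Substituting $z = -\lambda$ and dividing through, $-z(c(z;\gamma)\bSigma - z\bI_p)^{-1} = (\, (c(-\lambda;\gamma)/\lambda)\bSigma + \bI_p\,)^{-1}$, so I would \emph{define} $v(-\lambda;\gamma) := c(-\lambda;\gamma)/\lambda$ and check that the fixed-point equation for $c$ becomes exactly \eqref{eq:basic-ridge-equivalence-v-fixed-point}: starting from $1/c = 1 + \gamma\tr[\bSigma(c\bSigma + \lambda\bI_p)^{-1}]/p$, multiply and divide appropriately by $\lambda$ to get $1/v = \lambda + \gamma\tr[\bSigma(v\bSigma + \bI_p)^{-1}]/p = \lambda + \gamma\int r(1+vr)^{-1}\,\rd H_n(r)$, which is the claimed equation. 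This is a routine algebraic manipulation once the analytic continuation is justified.

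The substantive issue — and the main obstacle — is that Lemma~\ref{lem:basic-ridge-resolvent-equivalent} is stated only for $z$ in the open upper half-plane $\CC^{>0}$, whereas $z = -\lambda$ lies on the negative real axis, outside that domain. I would handle this by an analyticity/continuity argument: for fixed $\bSigma$ (hence fixed sample), both $z \mapsto \tr[\bC_p(\hSigma - z\bI_p)^{-1}]$ and $z \mapsto \tr[\bC_p(c(z;\gamma)\bSigma - z\bI_p)^{-1}]$ extend analytically to a neighborhood of the negative real axis (since $\hSigma \succeq 0$ and $-\lambda < 0$ keeps the resolvents bounded, and $c(z;\gamma)$ extends to a positive real value $c(-\lambda;\gamma) > 0$ by the monotonicity of the Stieltjes-transform fixed point). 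One then invokes the differentiation/continuity rule for asymptotic equivalents, Lemma~\ref{lem:calculus-detequi}\ref{lem:calculus-detequi-item-differentiation}, together with a uniform trace-norm bound $|\tr[\bC_p(\hSigma - z\bI_p)^{-1} - (c(z;\gamma)\bSigma - z\bI_p)^{-1})]| \le M$ on an open connected set $S \subseteq \CC$ containing both $\CC^{>0}$ and an interval of the negative real axis, to propagate the equivalence from $\CC^{>0}$ down to $z = -\lambda$. Alternatively, since this exact extension to $\lambda > 0$ is already invoked implicitly in the literature the excerpt cites (\citet{dobriban_sheng_2021,patil2022mitigating}), one could cite it directly, but I would prefer to spell out the continuity argument for completeness.

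The remaining steps are bookkeeping: verifying that the hypotheses of Lemma~\ref{lem:basic-ridge-resolvent-equivalent} (bounded $8+\alpha$ moments of the $z_{ij}$, $0 \preceq \bSigma \preceq r_{\max}\bI_p$, $0 < \liminf\gamma \le \limsup\gamma < \infty$) are exactly the hypotheses assumed in the corollary; confirming uniqueness of the nonnegative solution $v(-\lambda;\gamma)$ to \eqref{eq:basic-ridge-equivalence-v-fixed-point} (which follows because the map $v \mapsto (\lambda + \gamma\int r(1+vr)^{-1}\rd H_n(r))^{-1}$ is a strict contraction on $\RR_{>0}$, or monotone with a unique fixed point — a standard random-matrix-theory fact the paper already references via \citet{bai2010spectral,couillet2011random}); and noting that $v(-\lambda;\gamma) > 0$ since $c(-\lambda;\gamma) > 0$ and $\lambda > 0$. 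Putting these together yields $\lambda(\hSigma + \lambda\bI_p)^{-1} \asympequi (v(-\lambda;\gamma)\bSigma + \bI_p)^{-1}$, as claimed. I expect the entire argument to be short, with essentially all the content concentrated in the analytic-continuation step.
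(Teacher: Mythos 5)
Your proposal is correct and follows exactly the derivation the paper leaves implicit when it calls the corollary ``a simple consequence'' of \Cref{lem:basic-ridge-resolvent-equivalent}: the substitution $z=-\lambda$ with $v(-\lambda;\gamma)=c(-\lambda;\gamma)/\lambda$ turns the fixed-point equation for $c$ into \eqref{eq:basic-ridge-equivalence-v-fixed-point}, and the extension from $\CC^{>0}$ to the negative real axis is handled by the standard Vitali-type continuation argument on $\CC\setminus[0,\infty)$ (where the resolvents remain uniformly bounded on compacts since $\hSigma\succeq 0$), which is also how the cited sources (\citealp{rubio_mestre_2011}; \citealp{dobriban_sheng_2021}) justify real negative $z$. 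Your uniqueness/positivity remarks for $v(-\lambda;\gamma)$ match what the paper establishes separately in \Cref{lem:properties-sol}, so nothing is missing.
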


A side remark:
the parameter $v(-\lambda; \gamma)$ that appears in \Cref{cor:asympequi-scaled-ridge-resolvent} is the companion Stieltjes transform of the spectral distribution of the sample covariance matrix $\hSigma$.
It is also the Stieltjes transform of the spectral distribution of the gram matrix $\bX \bX^\top / n$.

The subsequent lemma utilizes \Cref{cor:asympequi-scaled-ridge-resolvent}, in conjunction with the calculus of deterministic equivalents as outlined in \Cref{lem:calculus-detequi}).
It provides deterministic equivalents for resolvents required to derive limiting bias and variance of standard ridge regression.
The lemma is adapted from Lemma S.6.10 of \cite{patil2022mitigating}.
These equivalents are standard and well-established in the prior literature. For example, the variance resolvent in \Cref{lem:deter-approx-generalized-ridge} can be obtained from results in \citet{dobriban_sheng_2021}, while the bias resolvent in \Cref{lem:deter-approx-generalized-ridge} can be obtained from results in \citet{hastie2022surprises}.

\begin{lemma}[Asymptotic equivalents for ridge resolvents associated with generalized bias and variance]
    \label{lem:deter-approx-generalized-ridge}
    Suppose $\bx_i \in \RR^{p}$ for $i \in [n]$ are i.i.d.\ random vectors with each decomposing multiplicatively into $\bx_i = \bz_{i} \bSigma^{1/2}$, where $\bz_i \in \RR^{p}$ is a random vector that contains i.i.d.\ random variables $z_{ij}$, $j \in [p]$, each with $\EE[z_{ij}] = 0$, $\EE[z_{ij}^2] = 1$, and $\EE[|z_{ij}|^{8+\alpha}] \le M_\alpha$ for some constants $\alpha > 0$ and $M_\alpha < \infty$,
    and $\bSigma \in \RR^{p \times p}$ is a positive semidefinite matrix with $r_{\min} \bI_p \preceq \bSigma \preceq r_{\max} \bI_p$
    for some constants $r_{\min} > 0$ and $r_{\max} < \infty$ that is independent of $p$.
    Let $\bX \in \RR^{n \times p}$ be the concatenated random matrix with $\bx_i$ for $i \in [n]$ as its rows, and define $\hSigma:=\bX^\top \bX / n \in \RR^{p \times p}$.
    Let $\gamma:= p / n$.
    Then, for $\lambda > 0$,
    as $n, p \to \infty$
    with $0 < \liminf \gamma \le \limsup \gamma < \infty$,
    the following asymptotic equivalents hold:
    \begin{enumerate}[label={(\arabic*)},leftmargin=7mm]
        \item\label{eq:detequi-ridge-genbias}  Bias of ridge regression:
        \begin{equation}
            \lambda^2
            (\hSigma + \lambda \bI_p)^{-1} \bSigma (\hSigma + \lambda \bI_p)^{-1}
            \asympequi 
            (v(-\lambda; \gamma) \bSigma + \bI_p)^{-1}
            (\tv_b(-\lambda; \gamma) \bSigma + \bSigma)
            (v(-\lambda; \gamma) \bSigma + \bI_p)^{-1}.
        \end{equation}

        \item\label{eq:detequi-ridge-genvar} Variance of ridge regression:
        \begin{equation}
            (\hSigma + \lambda \bI_p)^{-2} \hSigma \bSigma
            \asympequi
            \tv_v(-\lambda; \gamma) (v(-\lambda; \gamma) \bSigma + \bI_p)^{-2} \bSigma\bSigma.
        \end{equation}
    \end{enumerate}
    Here $v(-\lambda; \gamma,\bSigma) > 0$
        is the unique solution to the fixed-point equation:
        \begin{equation}
            \label{eq:def-v-ridge}
            \frac{1}{v(-\lambda; \gamma,\bSigma)}
            = \lambda+\int \frac{\gamma r}{1+v(-\lambda; \gamma,\bSigma) r} \, \rd H_n(r;\bSigma),
        \end{equation}
        and $\tv_b(-\lambda; \gamma,\bSigma)$ and $\tv_v(-\lambda; \gamma,\bSigma)$
        are defined through $v(-\lambda; \gamma,\bSigma)$ by the following equations:
        \begin{align}
            \tv_b(-\lambda; \gamma,\bSigma)
            &=
            \ddfrac{\int \gamma r^2(1+v(-\lambda; \gamma,\bSigma) r)^{-2} \, \rd H_n(r;\bSigma)}{v(-\lambda; \gamma,\bSigma)^{-2}- \int \gamma r^2(1+v(-\lambda; \gamma,\bSigma) r)^{-2} \, \rd H_n(r;\bSigma)}
            , \label{eq:def-v-b-ridge}\\
            \tv_v(-\lambda; \gamma,\bSigma)^{-1}
           & = 
                v(-\lambda; \gamma,\bSigma)^{-2}
                - \int \gamma r^2(1+v(-\lambda; \gamma,\bSigma) r)^{-2} \, \rd H_n(r;\bSigma),
                \label{eq:def-tv-v-ridge}
        \end{align}
    where $H_n(\cdot;\bSigma)$ is the empirical distribution 
    of the eigenvalues of $\bSigma$ that is supported on $[r_{\min}, r_{\max}]$.
\end{lemma}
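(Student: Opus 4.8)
\textbf{Proof plan for \Cref{lem:deter-approx-generalized-ridge}.}

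The plan is to derive both asymptotic equivalents from \Cref{cor:asympequi-scaled-ridge-resolvent} using the calculus rules in \Cref{lem:calculus-detequi}, in particular the differentiation rule \ref{lem:calculus-detequi-item-differentiation}. First I would establish the base equivalence $\lambda (\hSigma + \lambda \bI_p)^{-1} \asympequi (v(-\lambda; \gamma) \bSigma + \bI_p)^{-1}$ from \Cref{cor:asympequi-scaled-ridge-resolvent}. The key observation is that both the bias resolvent $\lambda^2 (\hSigma + \lambda \bI_p)^{-1} \bSigma (\hSigma + \lambda \bI_p)^{-1}$ and the variance resolvent $(\hSigma + \lambda \bI_p)^{-2} \hSigma \bSigma$ can be obtained by differentiating a suitable scaled resolvent in the parameter $\lambda$ (or more precisely, by combining such a derivative with algebraic manipulations). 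For instance, writing $\bR(\lambda) := (\hSigma + \lambda \bI_p)^{-1}$, one has $\partial_\lambda \bR(\lambda) = -\bR(\lambda)^2$, so differentiating $\lambda \bR(\lambda) \bT$ (for a test matrix $\bT$) in $\lambda$ produces terms involving $\bR(\lambda)^2$, which after inserting $\bSigma = (\hSigma + \lambda \bI_p) - \lambda \bI_p$ can be rearranged into the bias and variance quantities of interest.

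The main steps, in order: (1) cite \Cref{cor:asympequi-scaled-ridge-resolvent} for the base scaled-resolvent equivalence, noting that the required moment and spectrum conditions are implied by the hypotheses here (the $8+\alpha$ moment bound and $r_{\min} \bI_p \preceq \bSigma \preceq r_{\max} \bI_p$); (2) verify the uniform-boundedness hypothesis of the differentiation rule \ref{lem:calculus-detequi-item-differentiation} — namely that $|\tr[\bC_p(\lambda \bR(\lambda) - (v(-\lambda;\gamma)\bSigma + \bI_p)^{-1})]|$ is bounded uniformly over $p$ and over $\lambda$ in a neighborhood, using $\|\bR(\lambda)\|_{\oper} \le \lambda^{-1}$ and the boundedness of $v(-\lambda;\gamma)$ and its derivative (the latter from \citet[Lemma S.6.15]{patil2022mitigating} or by differentiating the fixed-point equation \eqref{eq:def-v-ridge}); (3) apply the differentiation rule to transfer the equivalence to $\partial_\lambda$, then use the sum and product rules (items \ref{lem:calculus-detequi-item-sum} and \ref{lem:calculus-detequi-item-product}) together with $\bSigma = (\hSigma + \lambda\bI_p) - \lambda\bI_p$ to isolate $\lambda^2 \bR(\lambda)\bSigma\bR(\lambda)$ and $\bR(\lambda)^2 \hSigma$; (4) on the deterministic side, differentiate $(v(-\lambda;\gamma)\bSigma + \bI_p)^{-1}$ in $\lambda$, which introduces $v'(-\lambda;\gamma)$; then differentiate the fixed-point equation \eqref{eq:def-v-ridge} to express $v'(-\lambda;\gamma)$ in closed form in terms of $\int \gamma r^2 (1 + v r)^{-2}\,\rd H_n(r)$, and match terms to recognize the quantities $\tv_b(-\lambda;\gamma)$ and $\tv_v(-\lambda;\gamma)$ defined in \eqref{eq:def-v-b-ridge} and \eqref{eq:def-tv-v-ridge}; (5) collect the pieces to obtain the two stated equivalents, noting that $\hSigma\bSigma$ in the variance statement equals $\bSigma\bSigma$ up to the equivalence after substituting the deterministic form.

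The main obstacle I anticipate is step (2): carefully verifying the uniform boundedness needed to invoke the differentiation rule. This requires controlling $\tr[\bC_p \lambda\bR(\lambda)]$ and $\tr[\bC_p (v(-\lambda;\gamma)\bSigma+\bI_p)^{-1}]$ uniformly over both $p$ and $\lambda$ — the former is straightforward from $\|\lambda\bR(\lambda)\|_{\oper}\le 1$ and $\limsup\|\bC_p\|_{\mathrm{tr}} < \infty$, but establishing that $v(-\lambda;\gamma)$ stays bounded away from $0$ and $\infty$ (and that its derivative is bounded) on compact $\lambda$-intervals bounded away from problematic values requires the monotonicity and continuity properties of the fixed-point solution, which I would cite from \citet{patil2022mitigating}. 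A secondary subtlety is bookkeeping: making sure the algebraic rearrangement using $\bSigma = (\hSigma+\lambda\bI_p) - \lambda\bI_p$ produces exactly the claimed forms rather than equivalent-but-differently-grouped expressions, and confirming the deterministic-side computation of $v'(-\lambda;\gamma)$ yields precisely the denominators $v(-\lambda;\gamma)^{-2} - \int \gamma r^2(1+v(-\lambda;\gamma)r)^{-2}\,\rd H_n(r)$ appearing in $\tv_b$ and $\tv_v$. Once these are in hand, the lemma follows; since the result is essentially a repackaging of known equivalents from \citet{dobriban_sheng_2021} and \citet{hastie2022surprises}, I would also remark that an alternative route is to cite those works directly and merely translate notation, which sidesteps the differentiation argument entirely.
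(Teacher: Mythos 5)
First, note that the paper does not actually prove this lemma: it is stated as adapted from Lemma S.6.10 of \citet{patil2022mitigating}, with the variance equivalent attributed to \citet{dobriban_sheng_2021} and the bias equivalent to \citet{hastie2022surprises}. Your fallback remark --- cite those works and translate notation --- is therefore exactly the route the paper takes, and is acceptable. The issue is with your from-scratch derivation.

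There is a concrete error in step (3): the identity $\bSigma = (\hSigma + \lambda\bI_p) - \lambda\bI_p$ is false; that difference equals $\hSigma$, the \emph{sample} covariance, not the population covariance $\bSigma$. For the variance functional this is only a mislabeling and your plan survives, since $(\hSigma + \lambda\bI_p)^{-2}\hSigma = \partial_\lambda\bigl[\lambda(\hSigma+\lambda\bI_p)^{-1}\bigr]$, and on the deterministic side differentiating the fixed-point equation \eqref{eq:def-v-ridge} gives $\partial_\lambda v(-\lambda;\gamma) = -\tv_v(-\lambda;\gamma)$, so $\partial_\lambda(v\bSigma+\bI_p)^{-1} = \tv_v(-\lambda;\gamma)(v\bSigma+\bI_p)^{-2}\bSigma$, matching item \ref{eq:detequi-ridge-genvar} after right-multiplying by $\bSigma$. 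But for the bias functional $\lambda^2(\hSigma+\lambda\bI_p)^{-1}\bSigma(\hSigma+\lambda\bI_p)^{-1}$, the population matrix $\bSigma$ sandwiched between the two resolvents cannot be manufactured by differentiating in $\lambda$ at all: $\lambda$-derivatives only ever insert $\bI_p$ or $\hSigma$ between resolvent factors. The standard fix --- and what the cited works and the paper's own extended machinery in \Cref{lem:deter-approx-ridge-extend} do --- is to introduce an auxiliary Tikhonov perturbation, e.g.\ $\rho \mapsto (\hSigma + \lambda\bI_p + \rho\bSigma)^{-1}$, use the base equivalence for this perturbed resolvent (with regularizer $\lambda(\bI_p + \rho\bSigma/\lambda)$), and differentiate in $\rho$ at $\rho = 0$; the derivative of the perturbed fixed-point solution in $\rho$ is what produces the factor $\tv_b(-\lambda;\gamma)$ with the denominator $v^{-2} - \gamma\int r^2(1+vr)^{-2}\,\rd H_n(r)$. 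Without this auxiliary-parameter step your bias derivation does not go through. Your step (2) (uniform boundedness for the differentiation rule) is fine and is handled as you describe via $\|\lambda(\hSigma+\lambda\bI_p)^{-1}\|_{\oper}\le 1$ and the boundedness of $v$ and $\partial_\lambda v$ on compact $\lambda$-intervals.
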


Two remarks on \Cref{lem:deter-approx-generalized-ridge} follow: 

\begin{enumerate}[(a)]
    \item 
    The dependency of various scalar parameters appearing in \Cref{lem:deter-approx-generalized-ridge}
    on the matrix $\bSigma$ is explicitly highlighted the statement.
    This is because when we extend the current results later in \Cref{lem:deter-approx-ridge-extend}, 
    these parameters depend
    on the distributions of eigenvalues of matrices other than $\bSigma$.
    In places where it is clear from context,
    we will write $H_n(r)$, $v(-\lambda; \gamma)$, $\tv_b(-\lambda; \gamma)$, and $\tv_v(-\lambda; \gamma)$ to denote $H_n(r;\bSigma)$, $v(-\lambda; \gamma,\bSigma)$, $\tv_b(-\lambda; \gamma,\bSigma)$, and $\tv_v(-\lambda; \gamma,\bSigma)$, respectively, for notational simplicity.
    \item
    \Cref{lem:basic-ridge-resolvent-equivalent,lem:deter-approx-generalized-ridge} assume existence of moments of order $8 + \alpha$ for some $\alpha > 0$
    on the entries of $\bz_i$, $1 \le i \le k_m$, mentioned in assumption \ref{asm:lin-mod}.
    As done in the proof of Theorem 6 of \cite{hastie2022surprises} in Appendix A.4 of that paper, this can be relaxed to only requiring the existence of moments of order $4 + \alpha$ by a truncation argument. 
    We omit the details.
\end{enumerate}

\subsubsection{Extended ridge resolvents}
\label{sec:asympequi-extended-ridge-resolvents}

The subsequent lemma provides an extension to the asymptotic equivalents presented in \Cref{lem:deter-approx-generalized-ridge}. 
It supplies asymptotic equivalents for Tikhonov resolvents, in which the regularization matrix $\lambda \bI_p$ is substituted by $\lambda(\bI_p+ \bC)$.
Here $\bC\in\RR^{p\times p}$ represents an arbitrary positive semidefinite random matrix.
While the derivation of extended resolvents in \Cref{lem:deter-approx-ridge-extend} naturally follows from \Cref{lem:deter-approx-generalized-ridge}, we have specifically isolated these extensions. This abstraction facilitates their repeated application in our conditioning arguments, especially in the proofs of \Cref{thm:ver-with-replacement,thm:ver-without-replacement}.

\begin{lemma}[Asymptotic equivalents for Tikhonov resolvents]\label{lem:deter-approx-ridge-extend}
    Suppose the conditions in Lemma \ref{lem:deter-approx-generalized-ridge} holds.
    Let $\bC\in\RR^{p\times p}$ be any symmetric and positive semidefinite random matrix with uniformly bounded operator norm in $p$ that is independent of $\bX$ for all $n,p\in\NN$. Let $\bN=(\hSigma + \lambda \bI_p)^{-1}$.
    Then the following asymptotic equivalences hold:
    \begin{enumerate}[label={(\arabic*)},leftmargin=7mm]
        \item \label{eq:lem:deter-approx-ridge-extend}Tikhonov resolvent:
        \begin{align}
            \lambda(\bN^{-1} + \lambda\bC )^{-1} &\asympequi \tilde{\bSigma}_{\bC}^{-1}. 
        \end{align}
        
        \item \label{eq:lem:deter-approx-ridge-extend-gbias}Bias of Tikhonov regression:
        \begin{align}
            \lambda^2 (\bN^{-1} + \lambda\bC )^{-1}\bSigma (\bN^{-1} + \lambda\bC )^{-1} &\asympequi    \tilde{\bSigma}_{\bC}^{-1} (\tv_b(-\lambda; \gamma,\bSigma_{\bC})\bSigma+\bSigma)\tilde{\bSigma}_{\bC}^{-1}.
        \end{align}
        
        \item \label{eq:lem:deter-approx-ridge-extend-gvar}Variance of Tikhonov regression:
        \begin{align}
            (\bN^{-1} + \lambda\bC )^{-1}\hSigma(\bN^{-1} + \lambda\bC )^{-1}\bSigma  &\asympequi \tv_v(-\lambda; \gamma,\bSigma_{\bC})\tilde{\bSigma}_{\bC}^{-1}\bSigma\tilde{\bSigma}_{\bC}^{-1}\bSigma,
        \end{align}
        
    \end{enumerate}
    where $\bSigma_{\bC}= (\bI_p+ \bC)^{-\tfrac{1}{2}}\bSigma  (\bI_p+ \bC)^{-\tfrac{1}{2}}$, $\tilde{\bSigma}_{\bC}=v(-\lambda; \gamma,\bSigma_{\bC}) \bSigma + \bI_p+\bC $.
    Here $v(-\lambda; \gamma,\bSigma_{\bC})$, $\tilde{v}_b(-\lambda; \gamma,\bSigma_{\bC}),$ and $\tilde{v}_v(-\lambda; \gamma,\bSigma_{\bC})$ defined by \eqref{eq:def-v-ridge}-\eqref{eq:def-tv-v-ridge} simplify to the following:
    \begin{align}
        \tfrac{1}{v(-\lambda; \gamma,\bSigma_{\bC})}
             &= \lambda
            + \gamma \tr[ (v(-\lambda; \gamma,\bSigma_{\bC}) \bSigma + \bI_p +\bC)^{-1}\bSigma] / p, \label{eq:def-v-c-ridge} \\
            \tfrac{1}{\tilde{v}_v(-\lambda; \gamma,\bSigma_{\bC})} &= \tfrac{1}{v(-\lambda; \gamma,\bSigma_{\bC})^2} - \gamma \tr[ (v(-\lambda; \gamma,\bSigma_{\bC}) \bSigma + \bI_p +\bC)^{-2}\bSigma^2] / p, \label{eq:def-tv-v-c-ridge}\\
            \tilde{v}_b(-\lambda; \gamma,\bSigma_{\bC}) &=  \gamma \tr[ (v(-\lambda; \gamma,\bSigma_{\bC}) \bSigma + \bI_p +\bC)^{-2}\bSigma^2] / p \cdot \tilde{v}_v(-\lambda; \gamma,\bSigma_{\bC}). \label{eq:def-tv-b-c-ridge}
    \end{align}
    If $\gamma\rightarrow\phi\in(0,\infty)$, then $\gamma$ in \ref{eq:lem:deter-approx-ridge-extend}-\ref{eq:lem:deter-approx-ridge-extend-gvar} can be replaced by $\phi$, with the empirical distribution $H_n$ of eigenvalues of $\bSigma$ being replaced by the limiting distribution $H$.
    \end{lemma}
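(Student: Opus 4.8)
The plan is to reduce all three asymptotic equivalences to the standard ridge-resolvent equivalents of \Cref{lem:deter-approx-generalized-ridge} by a symmetric factorization, carried out conditionally on $\bC$. First I would rewrite $\bN^{-1}+\lambda\bC = \hSigma+\lambda\bI_p+\lambda\bC = \hSigma+\lambda(\bI_p+\bC)$ and peel off $(\bI_p+\bC)^{1/2}$ from both sides: $\bN^{-1}+\lambda\bC = (\bI_p+\bC)^{1/2}(\hat{\bSigma}_{\bC}+\lambda\bI_p)(\bI_p+\bC)^{1/2}$, where $\hat{\bSigma}_{\bC} := (\bI_p+\bC)^{-1/2}\hSigma(\bI_p+\bC)^{-1/2} = \tilde{\bX}^\top\tilde{\bX}/n$ with $\tilde{\bX} := \bX(\bI_p+\bC)^{-1/2}$. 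Conditional on $\bC$, the rows of $\tilde{\bX}$ are i.i.d.\ across $i$ with population covariance $(\bI_p+\bC)^{-1/2}\bSigma(\bI_p+\bC)^{-1/2} = \bSigma_{\bC}$, and \Cref{asm:covariance-bounded-eigvals} together with $\limsup_{p\to\infty}\|\bC\|_{\oper}<\infty$ gives $r_{\min}(1+\|\bC\|_{\oper})^{-1}\bI_p \preceq \bSigma_{\bC} \preceq r_{\max}\bI_p$, so for a.e.\ realization of $\bC$ the matrix $\bSigma_{\bC}$ satisfies the regularity hypotheses of \Cref{lem:basic-ridge-resolvent-equivalent} and \Cref{lem:deter-approx-generalized-ridge}, with empirical spectral distribution $H_n(\cdot;\bSigma_{\bC})$ in place of $H_n$. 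Since $\bC$ is independent of $\bX$, conditioning on $\bC$ does not change the distribution of $\bX$, so these lemmas apply to $\tilde{\bX}$ almost surely for a.e.\ realization of $\bC$, which is precisely a conditional asymptotic equivalence given $\bC$ in the sense of \Cref{def:cond-deterministic-equivalent}.

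Given this reduction, part \ref{eq:lem:deter-approx-ridge-extend} would follow by applying \Cref{cor:asympequi-scaled-ridge-resolvent} to $\tilde{\bX}$ to get $\lambda(\hat{\bSigma}_{\bC}+\lambda\bI_p)^{-1} \asympequi (v(-\lambda;\gamma,\bSigma_{\bC})\bSigma_{\bC}+\bI_p)^{-1}\mid\bC$, then conjugating by $(\bI_p+\bC)^{-1/2}$ on both sides; since $(\bI_p+\bC)^{-1/2}$ is $\bC$-measurable with operator norm at most $1$, the conditional product rule \Cref{prop:cond-calculus-detequi}~\ref{lem:cond-calculus-detequi-item-product} applies, and the identity $v\bSigma_{\bC}+\bI_p = (\bI_p+\bC)^{-1/2}(v\bSigma+\bI_p+\bC)(\bI_p+\bC)^{-1/2} = (\bI_p+\bC)^{-1/2}\tilde{\bSigma}_{\bC}(\bI_p+\bC)^{-1/2}$ collapses the conjugated equivalent to $\tilde{\bSigma}_{\bC}^{-1}$. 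The fixed-point equation \eqref{eq:def-v-c-ridge} would then come from the defining equation \eqref{eq:def-v-ridge} of $v(-\lambda;\gamma,\bSigma_{\bC})$ by rewriting $\gamma\int r(1+vr)^{-1}\,\rd H_n(r;\bSigma_{\bC}) = \gamma\tr[\bSigma_{\bC}(v\bSigma_{\bC}+\bI_p)^{-1}]/p = \gamma\tr[\bSigma(v\bSigma+\bI_p+\bC)^{-1}]/p$ via the same conjugation identity and cyclicity of trace. For parts \ref{eq:lem:deter-approx-ridge-extend-gbias} and \ref{eq:lem:deter-approx-ridge-extend-gvar}, the same factorization turns the sandwiched $\bSigma$ into $\bSigma_{\bC}$ and $\hSigma$ into $(\bI_p+\bC)^{1/2}\hat{\bSigma}_{\bC}(\bI_p+\bC)^{1/2}$, so the two left-hand sides reduce to $(\bI_p+\bC)^{-1/2}$ times the generalized-bias resolvent $\lambda^2(\hat{\bSigma}_{\bC}+\lambda\bI_p)^{-1}\bSigma_{\bC}(\hat{\bSigma}_{\bC}+\lambda\bI_p)^{-1}$ times $(\bI_p+\bC)^{-1/2}$, and to $(\bI_p+\bC)^{-1/2}$ times the generalized-variance resolvent $(\hat{\bSigma}_{\bC}+\lambda\bI_p)^{-2}\hat{\bSigma}_{\bC}\bSigma_{\bC}$ times $(\bI_p+\bC)^{1/2}$, respectively; \Cref{lem:deter-approx-generalized-ridge}~\ref{eq:detequi-ridge-genbias} and \ref{eq:detequi-ridge-genvar} (with $\bSigma$ replaced by $\bSigma_{\bC}$) give their equivalents, and conjugating back with the identities above yields the stated forms, with \eqref{eq:def-tv-v-c-ridge} and \eqref{eq:def-tv-b-c-ridge} following from the definitions of $\tv_b$ and $\tv_v$ by the same trace rewriting. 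I would also note that in the applications (\Cref{lem:ridge-B0}, \Cref{lem:ridge-V0}) parts \ref{eq:lem:deter-approx-ridge-extend-gbias}--\ref{eq:lem:deter-approx-ridge-extend-gvar} are invoked only with $\bC = \bC^{*}$ affine in $\bSigma$ (hence commuting with $\bSigma$), so all these matrices commute and the final forms are unambiguous; \Cref{prop:cond-calculus-detequi}~\ref{lem:cond-calculus-detequi-item-uncond} drops the conditioning when an unconditional statement is wanted, and if $\gamma\to\phi$ one passes to $\phi$ and $H$ exactly as in \Cref{lem:deter-approx-generalized-ridge} (in the commuting applications all functionals reduce to functionals of $\bSigma$ alone, which converge by \Cref{asm:spectrum-spectrumsignproj-conv}).

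The step I expect to be the main obstacle is making the reduction to \Cref{lem:deter-approx-generalized-ridge} rigorous when $\bC$ does not commute with $\bSigma$ --- the relevant case for part \ref{eq:lem:deter-approx-ridge-extend} inside \Cref{lem:ridge-B0} and \Cref{lem:ridge-V0}, where $\bC$ is a scalar multiple of a sample resolvent. The difficulty is that $\tilde{\bX} = \bX(\bI_p+\bC)^{-1/2} = \bZ\bSigma^{1/2}(\bI_p+\bC)^{-1/2}$ is not literally of the form $\bZ\bSigma_{\bC}^{1/2}$ demanded by \Cref{lem:deter-approx-generalized-ridge}: via the polar decomposition $\bSigma^{1/2}(\bI_p+\bC)^{-1/2} = \bW\bSigma_{\bC}^{1/2}$ with $\bW$ orthogonal, one has $\tilde{\bX} = \bZ\bW\bSigma_{\bC}^{1/2}$, so one needs that the standard ridge-resolvent equivalents are insensitive to replacing $\bZ$ by the rotated matrix $\bZ\bW$. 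This is a known robustness of these results: their proofs (via Rubio--Mestre) use only that the rows are i.i.d.\ across $i$ with mean zero, identity covariance, and uniformly (in $p$) bounded moments of the relevant order --- all inherited by $\bZ\bW$ from $\bZ$ (bounded moments of the rotated rows following from Rosenthal's inequality, since $\bW$ has orthonormal columns) --- while within-row independence of the entries plays no role. I would make this robustness explicit in the write-up, or equivalently restate \Cref{lem:deter-approx-generalized-ridge} for data of the form $\bZ\bT$ with deterministic $\bT$ and equivalents depending on $\bT^\top\bT$, so that the factorization argument goes through verbatim.
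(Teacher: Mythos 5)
Your proposal follows essentially the same route as the paper's proof: the symmetric factorization $\bN^{-1}+\lambda\bC=(\bI_p+\bC)^{1/2}(\hSigma_{\bC}+\lambda\bI_p)(\bI_p+\bC)^{1/2}$, an application of \Cref{lem:basic-ridge-resolvent-equivalent} and \Cref{lem:deter-approx-generalized-ridge} with population covariance $\bSigma_{\bC}$, the (conditional) product rule with $(\bI_p+\bC)^{-1/2}$ to conjugate back, and cyclicity of the trace to rewrite the fixed-point equations. The one place where you go beyond the paper --- noting that $(\bI_p+\bC)^{-1/2}\hSigma(\bI_p+\bC)^{-1/2}$ is $\bSigma_{\bC}^{1/2}(\bW^{\top}\bZ^{\top}\bZ\bW/n)\bSigma_{\bC}^{1/2}$ for an orthogonal $\bW$ from the polar decomposition rather than literally $\bSigma_{\bC}^{1/2}(\bZ^{\top}\bZ/n)\bSigma_{\bC}^{1/2}$, and that the resolvent equivalents are insensitive to this rotation of $\bZ$ --- is a sound refinement of a step the paper writes down without comment.
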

    \begin{proof}[Proof of Lemma \ref{lem:deter-approx-ridge-extend}]
    Proofs for the different parts are separated below. 
    
        \paragraph{Part (1).\hspace{-2mm}} Note that
        \begin{align}
            \lambda(\bN^{-1} + \lambda\bC )^{-1} &= \lambda (\hSigma + \lambda(\bI_p+ \bC) )^{-1}
            = (\bI_p+ \bC)^{-\tfrac{1}{2}}
            \lambda(\hSigma_{\bC}+\lambda\bI_p)^{-1}
            (\bI_p+ \bC)^{-\tfrac{1}{2}}, \label{eq:lem:deter-approx-ridge-extend-1}
        \end{align}
        where $\hSigma_{\bC}=\bSigma_{\bC}^{\tfrac{1}{2}}(\bZ^{\top}\bZ/n)\bSigma_{\bC}^{\tfrac{1}{2}}$, and $\bSigma_{\bC} = (\bI_p+ \bC)^{-\tfrac{1}{2}}\bSigma  (\bI_p+ \bC)^{-\tfrac{1}{2}}$.
        Using \Cref{lem:basic-ridge-resolvent-equivalent}, we have
        \begin{align}
            \label{eq:lem:deter-approx-ridge-extend-2}
             \lambda (\hSigma_{\bC} + \lambda \bI_p)^{-1}
            \asympequi
            (v(-\lambda; \gamma,\bSigma_{\bC}) \bSigma_{\bC} + \bI_p)^{-1},
        \end{align}
        where $v(-\lambda; \gamma,\bSigma_{\bC})$ is the unique solution to the fixed point equation \eqref{eq:basic-ridge-equivalence-v-fixed-point} such that
        $$\tfrac{1}{v(-\lambda; \gamma,\bSigma_{\bC})}
            =\lambda
            + \gamma \tr[\bSigma_{\bC} (v(-\lambda; \gamma,\bSigma_{\bC}) \bSigma_{\bC} + \bI_p)^{-1}] / p = \lambda
            + \gamma \tr[\bSigma (v(-\lambda; \gamma,\bSigma_{\bC}) \bSigma + \bI_p +\bC)^{-1}] / p .$$
        Note that $\norm{(\bI_p+ \bC)^{-1}}_{\oper}\leq 1$. 
        We can apply the product rule from \Cref{lem:calculus-detequi}~\ref{lem:calculus-detequi-item-product} and get
        \begin{align*}
            \lambda(\bN^{-1} + \lambda\bC )^{-1} & \asympequi  (\bI_p+ \bC)^{-\tfrac{1}{2}}
            (v(-\lambda; \gamma,\bSigma_{\bC}) \bSigma_{\bC} + I_p)^{-1}
            (\bI_p+ \bC)^{-\tfrac{1}{2}} = (v(-\lambda; \gamma,\bSigma_{\bC})\bSigma +\bI_p+ \bC)^{-1},
        \end{align*}
        by combining  \eqref{eq:lem:deter-approx-ridge-extend-1}-\eqref{eq:lem:deter-approx-ridge-extend-2}.
        
        \paragraph{Part (2).\hspace{-2mm}} 
        From \Cref{lem:deter-approx-generalized-ridge}~\ref{eq:detequi-ridge-genbias}, we have
        \begin{align*}
            &\lambda^2 (\bN^{-1} + \lambda\bC )^{-1}\bSigma (\bN^{-1} + \lambda\bC )^{-1} 
            \\
            &= \lambda^2 (\bI_p+ \bC)^{-\tfrac{1}{2}}
            \cdot [(\hSigma_{\bC}+\lambda\bI_p)^{-1}
            (\bI_p+ \bC)^{-\tfrac{1}{2}}\cdot \bSigma \cdot  (\bI_p+ \bC)^{-\tfrac{1}{2}}
            (\hSigma_{\bC}+\lambda\bI_p)^{-1}
            ]\cdot(\bI_p+ \bC)^{-\tfrac{1}{2}}\\
            &\asympequi (\bI_p+ \bC)^{-\tfrac{1}{2}}\cdot [(v(-\lambda; \gamma,\bSigma_{\bC}) \bSigma_{\bC} + \bI_p)^{-1}\\
            &\qquad 
            \cdot (\tv_b(-\lambda; \gamma,\bSigma_{\bC}) \bSigma_{\bC} + (\bI_p+ \bC)^{-\tfrac{1}{2}}\bSigma (\bI_p+ \bC)^{-\tfrac{1}{2}})
            \cdot (v(-\lambda; \gamma,\bSigma_{\bC}) \bSigma_{\bC} + \bI_p)^{-1}]\cdot (\bI_p+ \bC)^{-\tfrac{1}{2}}\\
            &= (v(-\lambda; \gamma,\bSigma_{\bC}) \bSigma + \bI_p + \bC)^{-1} (\tv_b(-\lambda; \gamma,\bSigma_{\bC})\bSigma+\bSigma) (v(-\lambda; \gamma,\bSigma_{\bC}) \bSigma + \bI_p + \bC)^{-1}.
        \end{align*}
        
        \paragraph{Part (3).\hspace{-2mm}}
        Similar to Part (2),
        from \Cref{lem:deter-approx-generalized-ridge} ~\ref{eq:detequi-ridge-genvar}, we have
        \begin{align*}
            &(\bN^{-1} + \lambda\bC )^{-1}\hSigma(\bN^{-1} + \lambda\bC )^{-1}\bSigma \\
            &=  (\bI_p+ \bC)^{-\tfrac{1}{2}}\cdot 
             (\hSigma_{\bC}+\lambda\bI_p)^{-1} \hSigma_{\bC} (\hSigma_{\bC}+\lambda\bI_p)^{-1}\cdot(\bI_p+ \bC)^{-\tfrac{1}{2}}\bSigma \\
            &\asympequi
             (\bI_p+ \bC)^{-\tfrac{1}{2}} \cdot \tv_v(-\lambda; \gamma,\bSigma_{\bC}) (v(-\lambda; \gamma,\bSigma_{\bC}) \bSigma_{\bC} + \bI_p)^{-1} \bSigma_{\bC} (v(-\lambda; \gamma,\bSigma_{\bC}) \bSigma_{\bC} + \bI_p)^{-1} \cdot(\bI_p+ \bC)^{-\tfrac{1}{2}}\bSigma\\
            &= \tv_v(-\lambda; \gamma,\bSigma_{\bC})(v(-\lambda; \gamma,\bSigma_{\bC}) \bSigma + \bI_p+\bC )^{-1}\bSigma(v(-\lambda; \gamma,\bSigma_{\bC}) \bSigma + \bI_p+\bC )^{-1}\bSigma.
        \end{align*}
        Note that the distribution of $\bSigma_{\bC}$'s eigenvalue has positive support.
        By the continuity of $v(-\lambda;\cdot,\bSigma_{\bC})$, $\tilde{v}_b(-\lambda;\cdot,\bSigma_{\bC})$, and $\tilde{v}_v(-\lambda;\cdot,\bSigma_{\bC})$ from Lemma \ref{lem:ridge-fixed-point-v-properties}~\ref{lem:ridge-fixed-point-v-properties-item-v-properties}, \ref{lem:ridge-fixed-point-v-properties-item-vb-properties} and \ref{lem:ridge-fixed-point-v-properties-item-vv-properties}, $\gamma$ can by replaced by its limit $\phi$ as $n,p \to \infty$. 
    \end{proof}
    
The following lemma concerns the deterministic equivalents of the precision matrix as the weighted average of two sample covariance matrices of subsamples, when the full sample covariance matrix is invertible almost surely.
It is useful when we aim to condition on one of the subsampled covariance matrix, which is used in the proof of \Cref{lem:ridgeless-V0-phis_lt1}.

\begin{lemma}[Deterministic equivalent of subsamples in the underparameterized regime]\label{lem:det-equiv-subsample}
    Suppose the conditions in Lemma \ref{lem:deter-approx-generalized-ridge} holds.
    Let $\hSigma_0$ be the sample covariance matrix computed using $i$ observations of $\bX$,  
    and $\hSigma_1$ be the sample covariance matrix computed using the remaining $n-i$ samples.
    Let $\pi_0 = i / n$ and $\pi_1 = (n - i) / n$.
    Suppose that $p/n\rightarrow\phi\in(0,1)$ as $n,p\rightarrow\infty$.
    Then, we have
    \[
        (\pi_0 \hSigma_0 + \pi_1 \hSigma_1)^{-1}
        \asympequi (\pi_0\hSigma_0 + (1-\phi)\pi_1 \bSigma)^{-1}.
    \]
\end{lemma}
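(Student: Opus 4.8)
The plan is to condition on the first block of $i$ observations (i.e., on $\hSigma_0$ and the index set defining the split) and apply the extended Tikhonov resolvent equivalents from \Cref{lem:deter-approx-ridge-extend} to the remaining $n-i$ fresh observations that generate $\hSigma_1$. Concretely, write $(\pi_0 \hSigma_0 + \pi_1 \hSigma_1)^{-1} = \pi_1^{-1}(\hSigma_1 + \pi_1^{-1}\pi_0 \hSigma_0)^{-1}$, so that $\pi_1^{-1}\pi_0\hSigma_0$ plays the role of the deterministic-conditionally-on-$\hSigma_0$ Tikhonov regularizer $\lambda \bC$ with $\lambda = \pi_1^{-1}\pi_0$ and $\bC$ proportional to $\hSigma_0$. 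Since $p/n \to \phi \in (0,1)$, the aspect ratio of the second block is $\gamma_1 = p/(n-i) \to \phi/\pi_1 < 1$ provided $\pi_1$ stays bounded below $1-\phi$; I will handle the boundary/degenerate cases of $i$ near $0$ or $n$ separately, but for the generic case $\hSigma_1$ has smallest eigenvalue bounded away from $0$ almost surely by \citet{bai2010spectral}, so the inverse is well-defined.

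First I would invoke \Cref{lem:deter-approx-ridge-extend}\ref{eq:lem:deter-approx-ridge-extend}, conditionally on $\hSigma_0$ (which is independent of the fresh data generating $\hSigma_1$), to obtain
\begin{align*}
    \lambda\bigl((\hSigma_1 + \lambda \bI_p)^{-1}{}^{-1} + \lambda \bC\bigr)^{-1} = \lambda(\hSigma_1 + \lambda(\bI_p + \bC))^{-1} \asympequi \bigl(v(-\lambda;\gamma_1,\bSigma_{\bC})\bSigma + \bI_p + \bC\bigr)^{-1} \mid \hSigma_0,
\end{align*}
with $\lambda = \pi_1^{-1}\pi_0$ and $\bC = \pi_1^{-1}\pi_0 \hSigma_0 / \lambda - \bI_p$... here a cleaner route is to not force the $\bC$ decomposition but instead directly use the ridgeless version: as $\lambda \to 0^+$ the regularizer becomes exactly $\pi_0\hSigma_0/\pi_1$, and the relevant equivalent reduces to $(\pi_0\hSigma_0 + \pi_1\hSigma_1)^{-1} \asympequi (\pi_0\hSigma_0 + \pi_1 v \bSigma)^{-1}$ where $v = v(0;\gamma_1)$ solves the fixed-point equation for the isotropic-regularized resolvent. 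The key computation is then to identify this fixed-point constant: in the proportional regime with $\gamma_1 < 1$ the scalar $v(0;\gamma_1)$ for the pure ridgeless resolvent (regularizer contribution vanishing) evaluates to $1 - \phi/\pi_1$ by the companion Stieltjes transform identity — equivalently, the effective multiplier on $\pi_1\bSigma$ is $1 - \phi$. I would verify this by plugging into \eqref{eq:def-v-c-ridge} and using that $p/(n-i) \cdot \pi_1 = \phi$, so that the $\gamma_1$-weighted trace term collapses to the stated constant.

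The main obstacle I anticipate is the careful bookkeeping of the conditioning argument and the continuity/uniformity needed to pass from the conditional equivalent (given $\hSigma_0$) to the unconditional one via \Cref{prop:cond-calculus-detequi}\ref{lem:cond-calculus-detequi-item-uncond}. In particular, one must check that the effective regularization $\pi_0\hSigma_0/\pi_1$ has operator norm uniformly bounded in $p$ almost surely (which follows from $\|\hSigma_0\|_{\oper} \le r_{\max}(1+\sqrt{p/i})^2$ being bounded when $i/n$ stays bounded away from $0$, again by \citet{bai2010spectral}), so that the product rule \Cref{prop:cond-calculus-detequi}\ref{lem:cond-calculus-detequi-item-product} applies and the fixed-point scalar $v$ does not itself depend on the realization of $\hSigma_0$. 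A secondary subtlety is ensuring $\hSigma_1$ is invertible, i.e., $\gamma_1 < 1$: since $\pi_1 \le 1$ and $\phi < 1$ we have $\gamma_1 = \phi/\pi_1 \ge \phi$, which could exceed $1$ if $\pi_1$ is small; but in that case the split is dominated by $\hSigma_0$ and one symmetrizes the roles of the two blocks (condition on $\hSigma_1$ instead), or more simply uses that the original statement only needs $(\pi_0\hSigma_0 + \pi_1\hSigma_1)$ invertible, which holds because the union of the two blocks is the full sample with $p/n \to \phi < 1$. Once these points are in place, replacing $H_n$ by $H$ via the continuity of $v(-\lambda;\cdot)$ from \Cref{lem:ridge-fixed-point-v-properties} and taking $\lambda \to 0^+$ with the uniformity arguments already used in \Cref{sec:appendix-with-replacement-ridgeless} completes the proof.
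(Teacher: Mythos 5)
Your high-level strategy — condition on the first block, view $\pi_0\hSigma_0$ as a (random, but conditionally fixed) Tikhonov regularizer for the resolvent of $\hSigma_1$, and apply the extended equivalents — is the same skeleton the paper uses. But there is a genuine gap at the decisive step: the identification of the fixed-point constant. You assert that the constant is $v(0;\gamma_1)=1-\phi/\pi_1$, "the pure ridgeless resolvent constant with the regularizer contribution vanishing," and then claim this is "equivalently" a multiplier of $1-\phi$ on $\pi_1\bSigma$. These two statements are inconsistent: with your $v$, the coefficient of $\bSigma$ would be $\pi_1 v=\pi_1-\phi$, not $(1-\phi)\pi_1$ as the lemma asserts. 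More importantly, the premise is false: the regularizer does \emph{not} drop out of the fixed-point equation. After whitening by $\bA=\pi_1^{-1}(\pi_0\hSigma_0+\mu\bI_p)$, the constant $c$ satisfies $1/c=1+\phi\,\tr[\bSigma(c\pi_1\bSigma+\pi_0\hSigma_0+\mu\bI_p)^{-1}]/p$, which explicitly involves $\hSigma_0$. The paper closes this equation by a self-consistency argument: the matrix inside the trace is itself asymptotically equivalent to the full-data resolvent $(\hSigma+\mu\bI_p)^{-1}\asympequi(c'\bSigma+\mu\bI_p)^{-1}$, so $c$ and $c'$ satisfy the same equation, uniqueness gives $c=c'$, and $c'\to 1-\phi$ as $\mu\to 0^{+}$. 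That the answer depends only on the \emph{full-data} aspect ratio $\phi$ (and not on $\gamma_1=\phi/\pi_1$) is precisely the nontrivial content of the lemma; your proposal replaces it with the second-block-alone constant and never evaluates the $\hSigma_0$-dependent trace.

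Two secondary points. First, your concern about invertibility of $\hSigma_1$ when $\gamma_1>1$ is resolved in the paper not by symmetrizing the blocks (which would not produce the asymmetric target $(\pi_0\hSigma_0+(1-\phi)\pi_1\bSigma)^{-1}$) but by perturbing to $(\pi_0\hSigma_0+\mu\bI_p+\pi_1\hSigma_1)^{-1}$ and sending $\mu\to 0^{+}$ at the end; only invertibility of the full $\hSigma$, guaranteed by $\phi<1$, is needed. Second, your route through \Cref{lem:deter-approx-ridge-extend} with $\lambda=\pi_0/\pi_1$ and $\bC\propto\hSigma_0/\lambda$ followed by $\lambda\to 0^{+}$ is a degenerate scaling ($\bC$ blows up) that the cited lemma does not cover; the clean application is the $z=-1$ resolvent of the whitened sample covariance $\bA^{-1/2}\hSigma_1\bA^{-1/2}$, with $\lambda$ playing no role.
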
     
\begin{proof}[Proof of \Cref{lem:det-equiv-subsample}]
    We first note that when $\phi\in(0,1)$, the eigenvalues of $\hSigma = \pi_0\hSigma_0 + \pi_1 \hSigma_1$ are bounded away from zero almost surely \citep{bai2010spectral} and hence the inverse is well defined almost surely as $n,p\rightarrow\infty$.

    The idea for the proof is to consider the perturbed resolvent
    $(\pi_0\hSigma_0 + \mu \bI_p + \pi_1 \hSigma_1)^{-1}$ for $\mu>0$.
    Note that since the matrix $(\pi_0\hSigma_0 + \pi_1 \hSigma_1)$ is almost surely invertible.
    Then, 
    \[
    \lim_{\mu \to 0^+} (\pi_0\hSigma_0 + \mu \bI_p + \pi_1 \hSigma_1)^{-1} 
    = (\pi_0\hSigma_0 + \pi_1 \hSigma_1)^{-1}.
    \]
    
    Conditioned on $(\pi_0\hSigma_0 + \mu \bI_p)$, we have
    \begin{align*}
        (\pi_0\hSigma_0 + \mu\bI_p + \pi_1 \hSigma_1)^{-1}
        &= a (\bA + \hSigma_1)^{-1} \\
        &= a \bA^{-\tfrac{1}{2}} (\bI_p + \bA^{-\tfrac{1}{2}} \hSigma_1 \bA^{-\tfrac{1}{2}}) \bA^{-\tfrac{1}{2}} \\
        &= a \bA^{-\tfrac{1}{2}} (\bI_p + \hSigma_{1, \bA})^{-1} \bA^{-\tfrac{1}{2}} \\
        &\asympequi a \bA^{-\tfrac{1}{2}} (\bI_p + c \bSigma_{\bA}) \bA^{-\tfrac{1}{2}} \\
        &= a (\bA + c \bSigma)^{-1} \\
        &= (\pi_0\hSigma_0 + \mu\bI_p + c \pi_1 \bSigma)^{-1},
    \end{align*}
    where the intermediate constants are $a = \pi_1^{-1}$, $\bA = a \pi_0\hSigma_0 + a \mu\bI_p$,
    $\hSigma_{1, \bA} = \bA^{-\tfrac{1}{2}} \hSigma \bA^{-\tfrac{1}{2}}$, $\bSigma_{\bA} = \bA^{-\tfrac{1}{2}} \bSigma \bA^{-\tfrac{1}{2}}$,
    and $c$ satisfy the fixed-point equation
    \begin{align*}
        \tfrac{1}{c}
        &= 1 + \tfrac{p}{n - i} \tr[\bSigma_{\bA} (c \bSigma_{\bA} + \bI_p)^{-1}] / p \\
        &= 1 + \tfrac{p}{k} \tfrac{k}{n - i} \tr[\bA^{-1/2} \bSigma \bA^{-1/2} (c \bA^{-1/2} \bSigma \bA^{-1/2} + \bI_p)^{-1}] / p \\
        &= 1 + \phi a \tr[\bSigma (c \bSigma + \bA)^{-1}] / p \\
        &= 1 + \phi \tr[\bSigma (c \pi_1 \bSigma + \pi_0\hSigma_0 + \mu\bI_p)^{-1}] / p \\
        &= 1 + \phi  \tr[\bSigma (\pi_0\hSigma_0 + \mu\bI_p + \pi_1 \hSigma_1)^{-1}] / p,
    \end{align*}
    where in final equality, we used the trace property of the asymptotic equivalence
    \[
    (\pi_0\hSigma_0 + \mu\bI_p + c \pi_1 \bSigma)^{-1} \asympequi (\pi_0\hSigma_0 + \mu\bI_p + \pi_1 \hSigma_1)^{-1}.
    \]
    Now note that
    \[
        (\pi_0\hSigma_0 + \mu\bI_p + \pi_1 \hSigma_1)^{-1}
        = (\hSigma + \mu\bI_p)^{-1}
        \asympequi (c' \bSigma + \mu\bI_p)^{-1}
    \]
    where $c'$ solves the fixed-point equation
    \[
        \tfrac{1}{c'} = 1 + \phi \tr[\bSigma (c' \bSigma + \mu\bI_p)^{-1}] / p.
    \]
    Thus, the fixed-point in $c$ can be written as
    \[
        \tfrac{1}{c}
        = 1 + \phi \tr[\bSigma (c' \bSigma + \mu\bI_p)^{-1}] / p.
    \]
    We note that $c = c'$ satisfy the fixed-point equation for $c$
    (from the fixed-point equation for $c'$).
    Since $c$ is a unique solution, this must be the solution.
    Letting $\mu \to 0^{+}$, we observe that $c' = 1 - \phi$ is the solution
    for the fixed-point equation in $c'$.
    Thus, we also have $c = 1 - \phi$.
\end{proof}

\subsection{Analytic properties of associated fixed-point equations}

In this section, we compile results regarding analytical properties of the fixed-point solution $v(-\lambda;\phi)$ as defined in \eqref{eq:basic-ridge-equivalence-v-fixed-point}.

The subsequent lemma affirms the existence and uniqueness of the solution $v(-\lambda;\phi)$. It establishes a connection between the properties of the derivatives described in \Cref{lem:properties-sol} and the properties of $\tv_v(-\lambda;\phi)$ as defined in \eqref{eq:def-tv-v-ridge}.
Note the latter equals $-f'(x)$, where the function $f$ is as defined in \eqref{eq:lem:properties-sol:fx}

\begin{lemma}[Properties of the solution of the fixed-point equation]\label{lem:properties-sol}
    Let $\lambda,\phi,a > 0$ and $b < \infty$ be real numbers.
    Let $P$ be a probability measure supported
    on $[a, b]$.
    Define the function as follows:
    \begin{align}
        f(x) = \tfrac{1}{x} - \phi \int\tfrac{r}{1 + r x}\rd P(r) - \lambda .\label{eq:lem:properties-sol:fx}
    \end{align}
    Then the following properties hold:
    \begin{enumerate}[label={(\arabic*)},leftmargin=7mm]
        \item \label{lem:properties-sol-item-f-ridgeless}For $\lambda=0$ and $\phi\in(1,\infty)$, there is a unique $x_0\in(0,\infty)$ such that $f(x_0)=0$. The function $f$ is positive and strictly decreasing
        over $(0, x_0)$ and negative
        over $(x_0,\infty)$, with $\lim_{x \to 0^{+}} f(x) = \infty$ and $\lim_{x \to \infty} f(x) = 0$.
        
        \item \label{lem:properties-sol-item-f-ridge}For $\lambda>0$ and $\phi\in(0,\infty)$, there is a unique $x_0^{\lambda}\in(0,\infty)$ such that $f(x_0^{\lambda})=0$. The function $f$ is positive and strictly decreasing
        over $(0, x_0^{\lambda})$ and negative
        over $(x_0^{\lambda},\infty)$, with $\lim_{x \to 0^{+}} f(x) = \infty$ and $\lim_{x \to \infty} f(x) = -\lambda$.

        \item \label{lem:properties-sol-item-f'-ridgeless}For $\lambda=0$ and $\phi\in(1,\infty)$, $f$ is differentiable on $(0,\infty)$ and its derivative $f'$ is strictly increasing over $(0, x_0)$,
        with $\lim_{x \to 0^{+}} f'(x) = - \infty$
        and $f'(x_0) < 0$.
        
        \item \label{lem:properties-sol-item-f'-ridge}For $\lambda>0$ and $\phi\in(0,\infty)$, $f$ is differentiable on $(0,\infty)$ and its derivative $f'$ is strictly increasing over $(0, \infty)$,
        with $\lim_{x \to 0^{+}} f'(x) = - \infty$
        and $f'(x_0^{\lambda}) < 0$.
        
    \end{enumerate}
\end{lemma}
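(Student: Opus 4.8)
\textbf{Proof plan for Lemma~\ref{lem:properties-sol}.}
The plan is to analyze the scalar function $f(x) = 1/x - \phi\int r(1+rx)^{-1}\,\rd P(r) - \lambda$ on $(0,\infty)$ directly via elementary calculus, treating the four parts as variations on the same argument. First I would record the two derivatives:
\begin{align*}
    f'(x) &= -\frac{1}{x^2} + \phi\int\frac{r^2}{(1+rx)^2}\,\rd P(r), \\
    f''(x) &= \frac{2}{x^3} - 2\phi\int\frac{r^3}{(1+rx)^3}\,\rd P(r).
\end{align*}
Since $P$ is supported on the compact interval $[a,b]\subset(0,\infty)$, all integrals are finite and differentiation under the integral sign is justified by dominated convergence on any compact subinterval of $(0,\infty)$. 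The boundary behavior is then immediate: as $x\to 0^+$, the $1/x$ term dominates so $f(x)\to+\infty$, while $f'(x)\to-\infty$ because $-1/x^2\to-\infty$ and the integral term stays bounded by $\phi b^2$. As $x\to\infty$, $1/x\to 0$ and $\int r(1+rx)^{-1}\,\rd P(r)\to 0$ (the integrand is dominated by $b$ and tends to $0$ pointwise), so $f(x)\to -\lambda$; this gives $\lim_{x\to\infty}f(x)=0$ in the $\lambda=0$ case and $=-\lambda$ in the $\lambda>0$ case.

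Next I would establish monotonicity. For part~\ref{lem:properties-sol-item-f'-ridge}--\ref{lem:properties-sol-item-f'-ridgeless}, I claim $f'$ is strictly increasing wherever $f\ge 0$. The cleanest route is to observe that on the set where $f(x)\ge 0$ we have $1/x\ge \lambda + \phi\int r(1+rx)^{-1}\,\rd P(r)\ge \phi\int r(1+rx)^{-1}\,\rd P(r)$, and then to bound $\phi\int r^3(1+rx)^{-3}\,\rd P(r)$ against $(1/x)\cdot\phi\int r^2(1+rx)^{-2}\,\rd P(r)$ using $r/(1+rx)\le 1/x$; combining these with $f(x)\ge0$ forces $f''(x)>0$ on that region. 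Hence $f'$ is strictly increasing on $(0,x_0)$ (resp.\ on $(0,x_0^\lambda)$), and combined with $\lim_{x\to0^+}f'(x)=-\infty$ we get $f'<0$ on all of $(0,x_0)$, so $f$ is strictly decreasing there; in particular $f'(x_0)<0$ (resp.\ $f'(x_0^\lambda)<0$) follows by continuity since $f'$ is negative up to $x_0$ and $x_0$ is a limit of such points. Since $f$ is continuous, strictly decreasing from $+\infty$ on $(0,x_0)$, this also gives existence and uniqueness of the zero $x_0$ (resp.\ $x_0^\lambda$): by the intermediate value theorem there is a zero in $(0,\infty)$, it lies in the region where $f$ is strictly decreasing, and $f$ cannot return to $0$ afterward because $f$ stays negative for $x\ge x_0$ (for $\lambda>0$, $f'$ is increasing on all of $(0,\infty)$ but $f(x)\to-\lambda<0$, so no second zero; for $\lambda=0$, $f$ is negative on $(x_0,\infty)$ by the sign analysis below). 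This handles parts \ref{lem:properties-sol-item-f-ridgeless}--\ref{lem:properties-sol-item-f-ridge} and \ref{lem:properties-sol-item-f'-ridgeless}--\ref{lem:properties-sol-item-f'-ridge} simultaneously.

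The one genuine subtlety — which I expect to be the main obstacle — is the behavior of $f$ on $(x_0,\infty)$ in the ridgeless case $\lambda=0$, $\phi>1$: I need $f<0$ there, but the argument above only controls $f'$ on $\{f\ge0\}$, not beyond. The resolution is a sign-of-derivative argument at $x_0$ together with a monotonicity-of-$-f'$ continuation: once $f$ dips below $0$ just past $x_0$, I must rule out $f$ climbing back up. I would do this by noting that $xf(x) = 1 - \phi\int rx(1+rx)^{-1}\,\rd P(r)$ is strictly decreasing in $x$ on $(0,\infty)$ (the integrand $rx/(1+rx)$ is strictly increasing in $x$ for each $r>0$), so $xf(x)$ has a unique zero, which must coincide with $x_0$, and $xf(x)<0$ for all $x>x_0$; since $x>0$, this gives $f(x)<0$ on $(x_0,\infty)$. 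This monotone-$xf(x)$ trick in fact re-proves existence/uniqueness of $x_0$ cleanly (the limits being $\lim_{x\to0^+}xf(x)=1>0$ and $\lim_{x\to\infty}xf(x)=1-\phi<0$ when $\phi>1$), and I would present it as the backbone of parts \ref{lem:properties-sol-item-f-ridgeless} and \ref{lem:properties-sol-item-f-ridge} (for $\lambda>0$ one works with $x f(x) = 1 - \lambda x - \phi\int rx(1+rx)^{-1}\,\rd P(r)$, again strictly decreasing, with limits $1$ and $-\infty$). Finally I would check that $f$ is positive on $(0,x_0)$, which is immediate from $f(0^+)=+\infty$, continuity, and the uniqueness of the zero $x_0$.
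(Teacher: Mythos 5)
Your route is essentially the paper's. The observation that $xf(x) = 1-\lambda x-\phi\int\tfrac{rx}{1+rx}\,\rd P(r)$ is strictly decreasing with limits $1$ and $1-\phi$ (resp.\ $-\infty$ when $\lambda>0$) is precisely the paper's factorization $f = g_1h_1-\lambda$ with $g_1(x)=1/x$ and $h_1(x)=1-\phi\int\tfrac{xr}{xr+1}\,\rd P(r)$, and it carries parts (1)--(2) in the same way. For the derivative, the paper factors $-f'(x)=\tfrac{1}{x^2}\bigl(1-\phi\int(\tfrac{xr}{xr+1})^2\,\rd P(r)\bigr)$ and shows the second factor is positive and strictly decreasing up to $x_0$, whereas you show $f''>0$ on $\{f\ge0\}$; these are equivalent elementary computations, and both deliver $f'$ strictly increasing on $(0,x_0)$. (Neither your argument nor, as far as I can tell, the paper's actually proves the literal claim of part (4) that $f'$ increases on all of $(0,\infty)$ when $\phi>1$ --- indeed $f'(x)\sim(\phi-1)/x^2\to 0^{+}$ there, so it cannot be increasing near infinity --- but only the region $\{f\ge 0\}$ is ever used downstream.)

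The one genuine gap is the claim $f'(x_0)<0$. You derive $f'<0$ on the open interval $(0,x_0)$ and then assert the strict inequality at the endpoint ``by continuity''; continuity only yields $f'(x_0)\le 0$, and ruling out $f'(x_0)=0$ requires a separate argument. Since $f'(x_0)<0$ is exactly what is invoked elsewhere in the paper to conclude that $\tv_v>0$ and $1+\tv_b>0$ (positivity of $v^{-2}-\phi\int r^2(1+vr)^{-2}\,\rd H(r)$), this cannot be left at $\le 0$. The paper closes it with the strict chain $\phi\int(\tfrac{x_0 r}{x_0r+1})^2\,\rd P(r)\le\tfrac{x_0b}{x_0b+1}\,\phi\int\tfrac{x_0r}{x_0r+1}\,\rd P(r)<\phi\int\tfrac{x_0r}{x_0r+1}\,\rd P(r)=1$, the last equality being the fixed-point identity at $x_0$. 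Within your own framework the cleanest fix is to differentiate $h(x):=xf(x)$: one has $h'(x)=-\lambda-\phi\int\tfrac{r}{(1+rx)^2}\,\rd P(r)<0$ strictly, and $h'(x_0)=f(x_0)+x_0f'(x_0)=x_0f'(x_0)$, whence $f'(x_0)<0$. A smaller imprecision of the same flavor: ``$f'$ strictly increasing plus $f'(0^{+})=-\infty$ implies $f'<0$ on $(0,x_0)$'' is not valid on its own (an increasing function starting at $-\infty$ can cross zero); it must be combined with $f>0$ on $(0,x_0)$ and $f(x_0)=0$, which you do have available from the $xf(x)$ analysis, so this one is only a matter of wording.
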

\begin{proof}[Proof of \Cref{lem:properties-sol}]
    We consider different parts separately below.
    
    \paragraph{Part (1).\hspace{-2mm}}
    Observe that
        \[
            f(x)
            =
            \tfrac{1}{x}
            - \phi \int \tfrac{r}{xr + 1}
            \, \mathrm{d}P(r)
            = g_1(x)h_1(x),
        \]
        where $$g_1(x)=\tfrac{1}{x},\qquad h_1(x)= 1 - \phi \int \tfrac{xr}{xr + 1}
            \, \mathrm{d}P(r).$$
        Note that $g_1$ is positive and strictly decreasing over $(0, \infty)$
        with $\lim_{x \to 0^{+}} g_1(x) = \infty$ and $\lim_{x \to \infty} g_1(x) = 0$,
        while $h_1$ is strictly decreasing
        over $(0, \infty)$ with $h_1(0) = 1$ and $\lim_{x \to \infty} h_1(x) = 1 - \phi < 0$.
        Thus, there is a unique $0 < x_0 < \infty$ such that $h_1(x_0) = 0$,
        and consequently $f(x_0) = 0$.
        Because $h_1$ is positive over
        $(0, x_0)$, and negative over $(x_0,\infty)$, $f$ is positive strictly decreasing over $(0, x_0)$ and negative over $(x_0, \infty)$, with $\lim_{x \to 0^{+}} f(x) = \infty$ and $\lim_{x \to \infty} f(x) = 0$.
       
       \paragraph{Part (2).\hspace{-2mm}} 
        Note that $f(x)=g_1(x)h_1(x)-\lambda$. Since from (1) $\lim_{x\rightarrow0}g_1(x)h_1(x)=\infty$ and $\lim_{x\rightarrow0}g_1(x)h_1(x)=0$, we have that $\lim_{x \to 0^+} f(x) = +\infty$ and $\lim_{x \to \infty} f(x) = -\lambda<0$.
        
        For $\phi>1$, since $g_1(x)h_1(x)$ is positive and strictly decreasing over $(0,x_0)$ and negative over $(x_0,\infty)$, and $\lim_{x\rightarrow0^+}g_1(x)h_1(x)=\infty$, we have that there exists $x_0^{\lambda}\in(0,x_0)$ such that $f(x_0^{\lambda})=0$.
        The properties of $f$ over $(0,x_0^{\lambda})$ and $(x_0^{\lambda},\infty)$ follow analogously as in (1).
        
        For $\phi\in(0,1]$, since $g_1h_1$ is continuous, positive and strictly decreasing over $(0,\infty)$, by intermediate value theorem, there exists $x_0^{\lambda}\in(0,\infty)$ such that $f(x_0^{\lambda})=0$, $f$ is positive and strictly decreasing for $x<x_0^{\lambda}$ and negative for $x>x_0^{\lambda}$, with $\lim_{x \to 0^{+}} f(x) = \infty$ and $\lim_{x \to \infty} f(x) = -\lambda$.

       \paragraph{Part (3).\hspace{-2mm}} 
        Since $f$ is monotone and continuous, it is differentiable. The derivative $f'$ at $x$ is given by
    \[
        f'(x)
        = - \tfrac{1}{x^2}
        + \phi \int \tfrac{r^2}{(x r + 1)^2}
        \, \mathrm{d}P(r) =- g_2(x)h_2(x),
    \]
    where
    \begin{align*}
        g_2(x)=\tfrac{1}{x^2},\qquad h_2(x)=
        \left(
            1 - \phi \int \left(\tfrac{xr}{xr + 1}\right)^2
            \, \mathrm{d}P(r).
        \right)
    \end{align*}
    Note that the function $g_2$ is positive and strictly decreasing over $(0, \infty)$ with $\lim_{x \to 0^{+}} g_2(x) = \infty$ and $\lim_{x \to \infty} g_2(x) = 0$.
    On the other hand, the function $h_2$ is strictly decreasing over $(0, \infty)$ with $h_2(0) = 1$ and $h_2(x_0) > 0$.
    This follows because for $x \in (0, x_0]$,
    \begin{align*}
            \label{eq:bound-deriv-v-in-lambda-part-2}
            \phi \int \left( \tfrac{xr}{xr + 1} \right)^2 \, \mathrm{d}P(r)
            &\le \tfrac{x_0 b}{x_0 b + 1}
            \phi 
            \int \left( \tfrac{xr}{xr + 1} \right) \, \mathrm{d}P(r) <
             \int \tfrac{\phi xr}{xr + 1} \, \mathrm{d}P(r)
            \le
             \int \tfrac{\phi x_0 r}{x_0 r + 1} \, \mathrm{d}P(r)
            =
            1,
    \end{align*}
    where the first inequality in the chain above
    follows as the support of $P$ is $[a, b]$,
    and the last inequality follows since $f(x_0) = 0$
    and $x_0 > 0$,
    which implies that
    \[
        \tfrac{1}{x_0}
        = \phi \int \tfrac{r}{x_0 r + 1} \, \mathrm{d}P(r),
        \quad
        \text{ or equivalently that}
        \quad
        1
        = \phi \int \tfrac{x_0 r}{x_0 r + 1} \, \mathrm{d}P(r).
    \]
    Thus, $-f'$, a product of two positive strictly decreasing functions,
    is strictly decreasing, and in turn, $f'$ 
    is strictly increasing.
    Moreover, $\lim_{x \to 0^{+}} f'(x) = -\infty$ and $f'(x_0) < 0$.
    
    \paragraph{Part (4).\hspace{-2mm}}
    The conclusion follows by noting that $h_2(x_0^{\lambda})> h_2(x_0) >0$ from (3).
\end{proof}

\Cref{lem:ridge-fixed-point-v-properties} provides the continuity and limiting behavior of the function $\phi\mapsto v(-\lambda;\phi)$ for ridge regression ($\lambda>0$). 
\Cref{lem:fixed-point-v-properties} does the same for ridgeless regression ($\lambda=0$).

\begin{lemma}[Continuity properties in the aspect ratio for ridge regression]
    \label{lem:ridge-fixed-point-v-properties}
    Let $\lambda,a > 0$ and $b < \infty$ be real numbers.
    Let $P$ be a probability measure supported
    on $[a, b]$.
    Consider the function $v(-\lambda; \cdot) : \phi \mapsto v(-\lambda; \phi)$,
    over $(0, \infty)$,
    where $v(-\lambda; \phi) > 0$ is the unique solution to the following fixed-point equation:
    \begin{equation}
        \label{eq:ridge-fixed-point-gen-phi}
        \tfrac{1}{v(-\lambda; \phi)}
         = \lambda + \phi \int \tfrac{r}{1+rv(-\lambda; \phi)} \rd P(r)
    \end{equation}
    Then the following properties hold:
    \begin{enumerate}[label={(\arabic*)},leftmargin=7mm]
        \item 
        \label{lem:ridge-fixed-point-v-properties-item-v-bound}
        The range of the function $v(-\lambda; \cdot)$ is a subset of $(0,\lambda^{-1})$.
        
        \item 
        \label{lem:ridge-fixed-point-v-properties-item-v-properties}
        The function $v(-\lambda; \cdot)$ is continuous and strictly decreasing over $(0, \infty)$. Furthermore, $\lim_{\phi \to 0^{+}} v(-\lambda; \phi) = \lambda^{-1}$, and $\lim_{\phi \to \infty} v(-\lambda; \phi) = 0$.

        \item
        \label{lem:ridge-fixed-point-v-properties-item-vv-properties}
        The function 
        $\tv_v(-\lambda; \cdot) : \phi \mapsto \tv_v(-\lambda; \phi)$,
        where
        \[
           \tv_v(-\lambda; \phi)
           =
           \left(
                v(-\lambda; \phi)^{-2}
                - \int \phi  r^2(1 + r v(-\lambda; \phi))^{-2} 
                \, \mathrm{d}P(r)
           \right)^{-1},
        \]
        is positive and continuous over $(0, \infty)$.
        Furthermore,
        $\lim_{\phi \to 0^{+}} \tv_v(-\lambda; \phi) = \lambda^{-2}$,
        and $\lim_{\phi \to \infty} \tv_v(-\lambda; \phi) = 0$.
        
        \item
        \label{lem:ridge-fixed-point-v-properties-item-vb-properties}
        The function 
        $\tv_b(-\lambda; \cdot) : \phi \mapsto \tv_b(-\lambda; \phi)$,
        where
        \[
            \tv_b(-\lambda; \phi)
            = \tv_v(-\lambda; \phi)
            \int
            \phi r^2(1 + v(-\lambda; \phi) r)^{-2}
            \, \mathrm{d}P(r),
        \]
        is positive and continuous over $(0, \infty)$.
        Furthermore,
        $\lim_{\phi \to 0^{+}} \tv_b(-\lambda; \phi) =\lim_{\phi \to \infty} \tv_b(-\lambda; \phi) = 0$.
    \end{enumerate}
\end{lemma}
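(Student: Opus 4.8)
\textbf{Proof proposal for \Cref{lem:ridge-fixed-point-v-properties}.}
The plan is to treat each part in turn, using the analytic structure of the defining function $f(x) = 1/x - \phi \int r/(1+rx)\,\rd P(r) - \lambda$ studied in \Cref{lem:properties-sol}~\ref{lem:properties-sol-item-f-ridge}. For part \ref{lem:ridge-fixed-point-v-properties-item-v-bound}, I would observe directly from the fixed-point equation \eqref{eq:ridge-fixed-point-gen-phi} that, since $\phi > 0$ and $P$ is supported on $[a,b] \subset (0,\infty)$, the second term on the right is strictly positive; hence $1/v(-\lambda;\phi) > \lambda$, giving $v(-\lambda;\phi) < \lambda^{-1}$, and positivity is built into the definition. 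For part \ref{lem:ridge-fixed-point-v-properties-item-v-properties}, the key tool is the implicit function theorem: define $F(v, \phi) = 1/v - \phi \int r/(1+rv)\,\rd P(r) - \lambda$, note $F(v(-\lambda;\phi),\phi) = 0$, and compute $\partial F / \partial v = -1/v^2 + \phi \int r^2/(1+rv)^2\,\rd P(r)$, which by the argument already carried out in \Cref{lem:properties-sol}~\ref{lem:properties-sol-item-f'-ridge} (the function $-f'$ is a product of two positive quantities, hence $f'(x) < 0$ throughout) is strictly negative at the solution. Thus $\phi \mapsto v(-\lambda;\phi)$ is continuously differentiable, and since $\partial F / \partial \phi = -\int r/(1+rv)\,\rd P(r) < 0$ while $\partial F/\partial v < 0$, implicit differentiation gives $\partial v / \partial \phi < 0$, so $v(-\lambda;\cdot)$ is strictly decreasing. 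The two limits follow by a monotone-limit argument: as $\phi \to 0^+$ the defining equation forces $1/v \to \lambda$, and as $\phi \to \infty$, boundedness of $v$ in $(0,\lambda^{-1})$ forces the integral term to stay bounded below (the integrand is at least $a/(1+b\lambda^{-1})$ on the support), so $\phi \int \cdots \to \infty$ unless $v \to 0$.

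For parts \ref{lem:ridge-fixed-point-v-properties-item-vv-properties} and \ref{lem:ridge-fixed-point-v-properties-item-vb-properties}, I would express $\tv_v$ and $\tv_b$ as explicit continuous functions of $v(-\lambda;\phi)$ and $\phi$, and then invoke composition of continuous maps using part \ref{lem:ridge-fixed-point-v-properties-item-v-properties}. Positivity of $\tv_v$ is exactly the statement $-f'(x_0^\lambda) > 0$ proved in \Cref{lem:properties-sol}~\ref{lem:properties-sol-item-f'-ridge}, since $\tv_v(-\lambda;\phi)^{-1} = v^{-2} - \phi \int r^2(1+rv)^{-2}\,\rd P(r) = -f'(v(-\lambda;\phi))$; continuity then follows because the denominator never vanishes. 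For the limiting behavior, I would substitute the limits of $v$ from part \ref{lem:ridge-fixed-point-v-properties-item-v-properties}: as $\phi \to 0^+$, $v \to \lambda^{-1}$ and the $\phi$-weighted integral vanishes, so $\tv_v^{-1} \to \lambda^2$; as $\phi \to \infty$, I need the more delicate fact that $v^{-2}$ and $\phi \int r^2(1+rv)^{-2}\,\rd P(r)$ both blow up but their difference blows up as well at the right rate — this is where I would rewrite $\tv_v^{-1}$ using the fixed-point relation to control the cancellation, showing $\tv_v^{-1} \to \infty$ hence $\tv_v \to 0$. For $\tv_b$, continuity and positivity on $(0,\infty)$ are immediate from those of $\tv_v$ and the positive continuous factor $\phi \int r^2(1+rv)^{-2}\,\rd P(r)$; the limit at $\phi \to 0^+$ is $0$ because that factor is $O(\phi)$, and the limit at $\phi \to \infty$ is $0$ because $\tv_b = \tv_v \cdot \phi \int \cdots$ and one shows the product's denominator grows strictly faster than the numerator (again using the fixed-point identity to cancel leading terms).

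The main obstacle I anticipate is the $\phi \to \infty$ asymptotics in parts \ref{lem:ridge-fixed-point-v-properties-item-vv-properties} and \ref{lem:ridge-fixed-point-v-properties-item-vb-properties}: naively both $v^{-2}$ and $\phi \int r^2(1+rv)^{-2}\,\rd P(r)$ diverge, so establishing that $\tv_v^{-1} = v^{-2} - \phi \int r^2(1+rv)^{-2}\,\rd P(r) \to \infty$ requires quantifying the gap. The clean way around this is to use the fixed-point equation to write $v^{-1} = \lambda + \phi \int r(1+rv)^{-1}\,\rd P(r)$, so $v^{-2} = v^{-1}\lambda + \phi v^{-1}\int r(1+rv)^{-1}\,\rd P(r)$, and then compare term-by-term with $\phi \int r^2 (1+rv)^{-2}\,\rd P(r)$; since $v^{-1}(1+rv)^{-1} = (v + rv^2)^{-1} \geq (1+rv)^{-2}\cdot v^{-1} \cdot (1 + rv)$, careful bookkeeping shows the difference is at least $\lambda v^{-1} \to \infty$. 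I would present this comparison as the one genuinely computational step; everything else is a routine application of continuity, monotonicity, and the implicit function theorem together with the already-established properties of $f$ in \Cref{lem:properties-sol}.
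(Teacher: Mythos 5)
Your proposal is correct in substance but reaches the conclusions by a different route than the paper in two places. For part \ref{lem:ridge-fixed-point-v-properties-item-v-properties}, you invoke the implicit function theorem on $F(v,\phi)=1/v-\phi\int r(1+rv)^{-1}\,\rd P(r)-\lambda$, using the sign of $\partial F/\partial v=f'(v)<0$ from \Cref{lem:properties-sol}~\ref{lem:properties-sol-item-f'-ridge}; the paper instead rearranges the fixed-point equation into $\phi^{-1}=T(v)$ with $T(t)=\tfrac{1}{1-\lambda t}\bigl(1-\int(1+rt)^{-1}\,\rd P(r)\bigr)$, shows $T$ is continuous and strictly increasing on $(0,\lambda^{-1})$, and applies the continuous inverse theorem. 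Both work; yours additionally yields differentiability of $\phi\mapsto v(-\lambda;\phi)$ (which the lemma does not claim but which is harmless), while the paper's inversion avoids any smoothness machinery. For parts \ref{lem:ridge-fixed-point-v-properties-item-vv-properties}--\ref{lem:ridge-fixed-point-v-properties-item-vb-properties}, your substitution of the fixed-point identity into $\tv_v^{-1}$ gives the exact decomposition $\tv_v^{-1}=\lambda v^{-1}+\phi\int r\,v^{-1}(1+rv)^{-2}\,\rd P(r)$, which proves positivity and the divergence $\tv_v^{-1}\to\infty$ as $\phi\to\infty$ in one stroke; the paper instead factors $\tv_v^{-1}=v^{-2}\bigl(1-v\cdot\phi v\cdot\int r^2(1+rv)^{-2}\,\rd P(r)\bigr)$ and uses boundedness of $\phi v$ (so that the bracket tends to $1$ and $\tv_v^{-1}\sim v^{-2}$), and then gets part \ref{lem:ridge-fixed-point-v-properties-item-vb-properties} essentially for free from the identity $(1+\tv_b)^{-1}=1-v\cdot\phi v\cdot\int r^2(1+rv)^{-2}\,\rd P(r)\to 1$.

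Two points where your sketch needs tightening. First, the displayed inequality $v^{-1}(1+rv)^{-1}\ge (1+rv)^{-2}v^{-1}(1+rv)$ is an identity, not an inequality; the substantive computation is $\tfrac{r}{v(1+rv)}-\tfrac{r^2}{(1+rv)^2}=\tfrac{r}{v(1+rv)^2}>0$, which does give your claimed lower bound $\tv_v^{-1}\ge\lambda v^{-1}$. Second, and more importantly, that lower bound is only $\Theta(\phi)$ as $\phi\to\infty$ (since $v=\Theta(\phi^{-1})$), while the numerator $\phi\int r^2(1+rv)^{-2}\,\rd P(r)$ of $\tv_b$ is also $\Theta(\phi)$; so the bound $\tv_v^{-1}\ge\lambda v^{-1}$ alone does not force $\tv_b\to 0$. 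You need the sharper fact $\tv_v^{-1}\sim v^{-2}=\Theta(\phi^2)$, which follows either from the paper's factorization or from the two-sided bound $\lambda+\phi a(1+b/\lambda)^{-1}\le v^{-1}\le\lambda+\phi b$ combined with $\phi\int r^2(1+rv)^{-2}\,\rd P(r)\le\phi b^2$. You gesture at this ("cancel leading terms") but should carry it out explicitly, as it is the only genuinely delicate step in the lemma.
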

\begin{proof}[Proof of Lemma \ref{lem:ridge-fixed-point-v-properties}]
Proofs for the different parts appear below.

  \paragraph{Part (1).\hspace{-2mm}} Since $P$ has positive support, we have
  \begin{align*}
        \tfrac{1}{v(-\lambda; \phi)}
         &= \lambda + \phi \int \tfrac{r}{1+rv(-\lambda; \phi)} \rd P(r) > \lambda,\\
         \tfrac{1}{v(-\lambda; \phi)}
         &= \lambda + \phi \int \tfrac{r}{1+rv(-\lambda; \phi)} \rd P(r)< \lambda+\phi b
  \end{align*}
  which implies that $0<(\lambda+\phi b)^{-1}<v(-\lambda;\phi)<\lambda^{-1}$.

    \paragraph{Part (2).\hspace{-2mm}}
  Rearranging \eqref{eq:ridge-fixed-point-gen-phi} yields
  \begin{align*}
      \tfrac{1}{\phi} &= \tfrac{1}{1-\lambda v(-\lambda;\phi)} \left(1-\int \tfrac{1}{1+rv(-\lambda;\phi)}\rd P(r)\right).
  \end{align*}
  From \citet[][Lemma S.6.13]{patil2022mitigating}, the function $$h_1:t\mapsto 1-\int \tfrac{1}{1+rt}\rd P(r)$$ is strictly increasing and continuous over $(0,\infty)$, $\lim_{t\rightarrow0}h_1(t)=0$, and $\lim_{t\rightarrow\infty}h_1(t)=1$.
  It is also positive from (1).
  Since $h_2:t\mapsto 1/(1-\lambda t)$ is positive, strictly increasing and continuous over $t\in(0,\lambda^{-1})$, we have that the function
  \begin{align*}
      T:t\mapsto \tfrac{1}{1-\lambda t}\left(1-\int \tfrac{1}{1+rt}\rd P(r)\right)
  \end{align*}
  is strictly increasing and continuous over $(0,\lambda^{-1})$.
  By the continuous inverse theorem, we have $T^{-1}$ is strictly increasing and continuous.
  Note that $v(-\lambda;\phi)=T^{-1}(\phi^{-1})$.
  Since $\phi \mapsto \phi^{-1}$ is continuous and strictly decreasing in $\phi$, 
  we have $\phi \mapsto v(-\lambda;\phi)$ is continuous and strictly decreasing over $\phi\in(0,\infty)$.
  Moreover, $\lim_{\phi\rightarrow0^+}T^{-1}(\phi^{-1})=\lambda^{-1}$ and $\lim_{\phi\rightarrow\infty}T^{-1}(\phi^{-1})=0$.

    \paragraph{Part (3).\hspace{-2mm}}
  From (2), $\phi \mapsto v(-\lambda;\phi)^{-2}$ is continuous in $\phi$ and 
  $$T_2:\phi\mapsto\phi\int \tfrac{r^2}{(1 + r v(-\lambda; \phi))^2} \, \mathrm{d}P(r)$$
  is also continuous in $\phi$.
  Thus, the function $\tv_v(-\lambda;\cdot)^{-1}$ is continuous.
  Note that
  \begin{align*}
      \tfrac{v(-\lambda; \phi)^2}{\tv_v(-\lambda;\phi)} & =     1  - \phi \int\tfrac{r^2v(-\lambda; \phi)^2}{(1+rv(-\lambda; \phi))^2}\rd P(r) > 1  - \phi \int\tfrac{rv(-\lambda; \phi)}{1+rv(-\lambda; \phi)}\rd P(r) = 0,
  \end{align*}
  where the inequality holds because $rv(-\lambda; \phi)/(1+rv(-\lambda; \phi))$ is strictly positive and $P(r)$ has positive support.
  Then we have that $\phi \mapsto \tv_v(-\lambda;\phi)^{-1}>0$ and $\tv_v(-\lambda;\cdot)$ is continuous over $(0,\infty)$.
  Since $\lim_{\phi\rightarrow0^+}v(-\lambda; \phi)=\lambda^{-1}$, it follows that $\lim_{\phi\rightarrow0^+}\tv_v(-\lambda; \phi)=\lambda^{-2}.$
  Similarly, from $\lim_{\phi\rightarrow\infty}v(-\lambda; \phi)=0$, $\lim_{\phi\rightarrow\infty}\phi v(-\lambda; \phi)=1$ and the fact that
\[
\lim_{\phi \to \infty}
\int \tfrac{r^2}{(1 + r v(-\lambda; \phi))^2} \, \mathrm{d}P(r)
\in [a^2,b^2],
\]
  it follows that $$\lim_{\phi\rightarrow\infty}\tv_v(-\lambda; \phi)=\lim_{\phi\rightarrow\infty}v(-\lambda; \phi)^2\cdot \left(
    1
    - v(-\lambda; \phi)\cdot \phi v(-\lambda; \phi) \cdot  \int r^2(1 + r v(-\lambda; \phi))^{-2} \, \mathrm{d}P(r) \right)^{-1}= 0.$$
  
    \paragraph{Part (4).\hspace{-2mm}}
  The continuity of $\tv_b(-\lambda;\cdot)$ follows from the continuity of $v(-\lambda;\cdot)$ and $\tv_v(-\lambda;\cdot)$.
  Note that
  \begin{align*}
      \tfrac{1}{1+\tv_b(-\lambda;\phi)} &= 1 - v(-\lambda;\phi)\cdot\phi v(-\lambda;\phi)\cdot \int \tfrac{r^2}{(1 + r v(-\lambda; \phi))^2} \, \mathrm{d}P(r).
  \end{align*}
  From the proof in (3), we have 
  \begin{align*}
        \lim_{\phi\rightarrow0^+}\tfrac{1}{1+\tv_b(-\lambda;\phi)} &= 1 - \lim_{\phi\rightarrow0^+}v(-\lambda;\phi)\cdot\phi v(-\lambda;\phi)\cdot \int \tfrac{r^2}{(1 + r v(-\lambda; \phi))^2} \, \mathrm{d}P(r)=1\\
      \lim_{\phi\rightarrow\infty}\tfrac{1}{1+\tv_b(-\lambda;\phi)} &= 1 - \lim_{\phi\rightarrow\infty}v(-\lambda;\phi)\cdot\phi v(-\lambda;\phi)\cdot \int \tfrac{r^2}{(1 + r v(-\lambda; \phi))^2} \, \mathrm{d}P(r)=1
  \end{align*}
  and thus, $\lim_{\phi\rightarrow0^+}\tv_b(-\lambda;\phi)=\lim_{\phi\rightarrow\infty}\tv_b(-\lambda;\phi)=0$.
\end{proof}

\begin{lemma}[Continuity properties in the aspect ratio for ridgeless regression, adapted from \citet{patil2022mitigating}]
    \label{lem:fixed-point-v-properties}
    Let $a > 0$ and $b < \infty$ be real numbers.
    Let $P$ be a probability measure supported
    on $[a, b]$.
    Consider the function $v(0; \cdot) : \phi \mapsto v(0; \phi)$,
    over $(1, \infty)$,
    where $v(0; \phi) > 0$ is the unique solution to the followinn fixed-point equation:
    \begin{equation}
       \label{eq:fixed-point-gen-phi}
        \tfrac{1}{\phi}
        = \int \tfrac{v(0; \phi) r}{1 + v(0; \phi) r} \, \mathrm{d}P(r).
    \end{equation}
    Then
    the following properties hold:
    \begin{enumerate}[label={(\arabic*)},leftmargin=7mm]
        \item 
        \label{lem:fixed-point-v-properties-item-v-properties}
        The function $v(0; \cdot)$ is continuous 
        and strictly decreasing over $(1, \infty)$.
        Furthermore, $\lim_{\phi \to 1^{+}} v(0; \phi) = \infty$,
        and $\lim_{\phi \to \infty} v(0; \phi) = 0$.
        
        \item \label{lem:fixed-point-v-properties-item-phivinverse-properties}
        The function 
        $\phi \mapsto (\phi v(0; \phi))^{-1}$
        is strictly increasing over $(1, \infty)$.
        Furthermore,
        $\lim_{\phi \to 1^{+}} (\phi v(0; \phi))^{-1} = 0$
        and $\lim_{\phi \to \infty} (\phi v(0; \phi))^{-1} = 1$.
        
        \item
        
        \label{lem:fixed-point-v-properties-item-vv-properties}
        The function 
        $\tv_v(0; \cdot) : \phi \mapsto \tv_v(0; \phi)$,
        where
        \[
           \tv_v(0; \phi)
           =
           \left(v(0; \phi)^{-2}
                - \phi
                \int r^2(1 + r v(0; \phi))^{-2} 
                \, \mathrm{d}P(r)
           \right)^{-1},
        \]
        is positive and continuous over $(1, \infty)$.
        Furthermore,
        $\lim_{\phi \to 1^{+}} \tv_v(0; \phi) = \infty$,
        and $\lim_{\phi \to \infty} \tv_v(0; \phi) = 0$.
        \item
        \label{lem:fixed-point-v-properties-item-vb-properties}
        The function 
        $\tv_b(0; \cdot) : \phi \mapsto \tv_b(0; \phi)$,
        where
        \[
            \tv_b(0; \phi)
            = \tv_v(0; \phi
            )
            \int
            r^2(1 + v(0; \phi) r)^{-2}
            \, \mathrm{d}P(r),
        \]
        is positive and continuous over $(1, \infty)$.
        Furthermore,
        $\lim_{\phi \to 1^{+}} \tv_b(0; \phi) = \infty$,
        and $\lim_{\phi \to \infty} \tv_b(0; \phi) = 0$.
    \end{enumerate}
\end{lemma}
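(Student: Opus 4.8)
This lemma is the $\lambda = 0$ counterpart of Lemma~\ref{lem:ridge-fixed-point-v-properties}, and is essentially the statement recorded in \citet{patil2022mitigating}; the plan is to give a short self-contained argument parallel to the proof of that lemma, with the only genuinely new care needed at the left endpoint $\phi \to 1^{+}$, where $v(0;\phi)$ diverges. Throughout I would use only the ridgeless fixed-point equation~\eqref{eq:fixed-point-gen-phi} and elementary monotonicity, exactly as in the proof of Lemma~\ref{lem:ridge-fixed-point-v-properties}. (Alternatively, one could deduce everything by passing $\lambda \to 0^{+}$ in Lemma~\ref{lem:ridge-fixed-point-v-properties}, using that $v(0;\phi) = \lim_{\lambda \to 0^{+}} v(-\lambda;\phi)$ for $\phi > 1$ and that $\lambda \mapsto v(-\lambda;\phi)$ is monotone, but this route handles the endpoint behavior less transparently.)

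For Parts~(1) and~(2), I would reparametrize~\eqref{eq:fixed-point-gen-phi}. Writing $g(t) := \int (1 + t r)^{-1}\,\mathrm{d}P(r)$, the equation reads $g(v(0;\phi)) = 1 - \phi^{-1}$; by the properties of $g$ already invoked in the proof of Lemma~\ref{lem:ridge-fixed-point-v-properties} (see \citet[Lemma S.6.13]{patil2022mitigating}), $g$ is a continuous, strictly decreasing bijection of $(0,\infty)$ onto $(0,1)$ with $g(0^{+}) = 1$ and $g(\infty) = 0$, so $v(0;\phi) = g^{-1}(1 - \phi^{-1})$ is uniquely defined, continuous, and strictly decreasing on $(1,\infty)$, and the limits $v(0;1^{+}) = \infty$ and $v(0;\infty) = 0$ follow because $\phi \mapsto 1 - \phi^{-1}$ maps $(1,\infty)$ continuously and strictly increasingly onto $(0,1)$. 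For Part~(2), dividing~\eqref{eq:fixed-point-gen-phi} by $\phi$ gives $(\phi v(0;\phi))^{-1} = \int r(1 + v(0;\phi) r)^{-1}\,\mathrm{d}P(r)$; since $t \mapsto \int r(1 + t r)^{-1}\,\mathrm{d}P(r)$ is strictly decreasing and $v(0;\cdot)$ is strictly decreasing by Part~(1), the composition is strictly increasing in $\phi$, and the two endpoint limits are obtained by sending $v(0;\phi) \to \infty$ and $v(0;\phi) \to 0$ inside the integral and invoking dominated convergence.

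For Parts~(3) and~(4), continuity of $\tv_v(0;\cdot)$ and $\tv_b(0;\cdot)$ on $(1,\infty)$ is immediate from continuity of $v(0;\cdot)$. For positivity I would use the identity obtained from~\eqref{eq:fixed-point-gen-phi},
\[
v(0;\phi)^{2}\, \tv_v(0;\phi)^{-1}
= 1 - \phi \int \Bigl( \tfrac{v(0;\phi) r}{1 + v(0;\phi) r} \Bigr)^{2}\, \mathrm{d}P(r)
> 1 - \phi \int \tfrac{v(0;\phi) r}{1 + v(0;\phi) r}\, \mathrm{d}P(r)
= 0,
\]
where the strict inequality uses $v(0;\phi) r / (1 + v(0;\phi) r) \in (0,1)$ together with $\mathrm{supp}(P) \subseteq [a,b]$ with $a > 0$, and the last equality is~\eqref{eq:fixed-point-gen-phi} itself; hence $\tv_v(0;\phi) > 0$, and the same identity rearranged yields a closed form for $(1 + \tv_b(0;\phi))^{-1}$ from which $\tv_b(0;\phi) > 0$. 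The limits of $\tv_v(0;\cdot)$ and $\tv_b(0;\cdot)$ as $\phi \to \infty$ then follow routinely from $v(0;\phi) \to 0$ and the asymptotic relation $\phi\, v(0;\phi) \int r\, \mathrm{d}P(r) \to 1$, both read off from~\eqref{eq:fixed-point-gen-phi}.

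The step I expect to be the main obstacle is the left-endpoint behavior $\phi \to 1^{+}$ in Parts~(3) and~(4): there $v(0;\phi) \to \infty$ while the factor $1 - \phi^{-1} \to 0$, so one must balance competing rates rather than take termwise limits. The key estimate is that, as $v \to \infty$, $\int r^{2}(1 + v r)^{-2}\, \mathrm{d}P(r)$ is sandwiched between constant multiples of $v^{-2}$ (because the support of $P$ lies in $[a,b]$ with $0 < a \le b < \infty$), equivalently $v^{2} \int r^{2}(1 + v r)^{-2}\, \mathrm{d}P(r) = \int \bigl( v r / (1 + v r) \bigr)^{2}\, \mathrm{d}P(r) \to 1$, so that the bracket in $v(0;\phi)^{2} \tv_v(0;\phi)^{-1}$ behaves like $1 - \phi \to 0$, giving $\tv_v(0;\phi)^{-1} \to 0$ and hence $\tv_v(0;\phi) \to \infty$; an entirely parallel computation, using $\phi\, v(0;\phi)^{2} \int r^{2}(1 + v(0;\phi) r)^{-2}\, \mathrm{d}P(r) \to 1$ as $\phi \to 1^{+}$, gives $(1 + \tv_b(0;\phi))^{-1} \to 0$, i.e.\ $\tv_b(0;\phi) \to \infty$. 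Making these $v \to \infty$ comparisons precise is the one place that needs genuine care; everything else is routine monotonicity bookkeeping.
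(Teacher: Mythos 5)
Your proposal is correct, and it follows essentially the same route the paper uses: the paper does not actually prove this lemma (it is imported from \citet{patil2022mitigating}), but your argument is the exact analogue of the paper's own proof of the ridge version (Lemma~\ref{lem:ridge-fixed-point-v-properties}) — reparametrize via $g(t)=\int(1+tr)^{-1}\,\mathrm{d}P(r)$, read off monotonicity and the endpoint limits, and get positivity of $\tv_v$ from the strict inequality $\bigl(vr/(1+vr)\bigr)^2 < vr/(1+vr)$ combined with the fixed-point identity. Your handling of the $\phi\to 1^{+}$ boundary, where one must track the competing rates $v\to\infty$ and $1-\phi^{-1}\to 0$ rather than take termwise limits, is the one genuinely new piece relative to the ridge case, and the sandwich $v^{2}\int r^{2}(1+vr)^{-2}\,\mathrm{d}P(r)\to 1$ (valid because $\operatorname{supp}(P)\subseteq[a,b]$ with $a>0$) does the job. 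One small point to flag: your dominated-convergence computation in Part~(2) yields $\lim_{\phi\to\infty}(\phi v(0;\phi))^{-1}=\int r\,\mathrm{d}P(r)$, which equals the stated limit $1$ only under the implicit normalization $\int r\,\mathrm{d}P(r)=1$; the paper uses the same convention in its ridge proof (where it asserts $\lim_{\phi\to\infty}\phi v(-\lambda;\phi)=1$), and nothing downstream depends on the constant being exactly $1$, but you should either state the normalization or record the limit as $\int r\,\mathrm{d}P(r)$.
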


\Cref{lem:fixed-point-v-lambda-properties}, adapted from \cite{patil2022mitigating}, confirms the continuity and differentiability of the function $\lambda\mapsto v(-\lambda;\phi)$ on the closed interval $[0,\lambda_{\max}]$ for a certain constant $\lambda_{\max}$, provided $\phi\in(1,\infty)$. 
This ensures that $v(0;\phi)=\lim_{\lambda\rightarrow0^+}v(-\lambda;\phi)$ is well-defined for $\phi>1$, and also implies that related functions are bounded.

\begin{lemma}[Differentiability properties in the regularization parameter for $\phi\in(1,\infty)$, adapted from \citet{patil2022mitigating}]
    \label{lem:fixed-point-v-lambda-properties}
    Let $0<a \leq b<\infty$ be real numbers.
    Let $P$ be a probability measure supported on $[a, b]$.
    Let $\phi \in (1, \infty)$ be a real number.
    Let $\Lambda = [0, \lambda_{\max}]$
    for some constant $\lambda_{\max}\in(0,\infty)$.
    For $\lambda \in \Lambda$,
    let $v(-\lambda; \phi) > 0$
    denote the solution
    to the fixed-point equation
    \[
        \tfrac{1}{v(-\lambda; \phi)}
        =  \lambda
        + \phi \int \tfrac{r}{v(-\lambda; \phi) r + 1} \, \mathrm{d}P(r).
    \]
    Then, the function 
    $\lambda \mapsto v(-\lambda; \phi)$
    is 
    twice differentiable over $\Lambda$.
    Furthermore, 
    over $\Lambda$,
    $v(-\lambda; \phi)$,
    $\partial / \partial \lambda [v(-\lambda; \phi)]$,
    and
    $\partial^2 / \partial \lambda^2 [v(-\lambda; \phi)]$
    are bounded.
\end{lemma}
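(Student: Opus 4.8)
The plan is to apply the implicit function theorem to the defining fixed-point relation and then extract the uniform bounds from the monotonicity of the fixed-point map in $\lambda$. Introduce
\[
F(\lambda, v) = \tfrac{1}{v} - \lambda - \phi \int \tfrac{r}{vr + 1} \, \rd P(r),
\]
viewed as a function on $\RR \times (0, \infty)$. By \Cref{lem:properties-sol}~\ref{lem:properties-sol-item-f-ridgeless},\ref{lem:properties-sol-item-f-ridge}, for each $\lambda \in \Lambda$ there is a unique $v(-\lambda; \phi) > 0$ with $F(\lambda, v(-\lambda; \phi)) = 0$, and the partial derivative $\partial F/\partial v$ evaluated along this solution equals $f'(v(-\lambda;\phi))$ in the notation of \Cref{lem:properties-sol}, which is strictly negative by parts \ref{lem:properties-sol-item-f'-ridgeless} and \ref{lem:properties-sol-item-f'-ridge} of that lemma. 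Since the integrand $r/(vr+1)$ is smooth in $v$ for $v>0$ (the support of $P$ being bounded away from the singularity), $F$ is $C^\infty$ on $\RR \times (0,\infty)$, and $\partial F/\partial v \neq 0$ on the solution curve. The implicit function theorem then gives a $C^\infty$ solution branch through each point of the curve; by global uniqueness of the positive root, these branches patch together and extend $\lambda \mapsto v(-\lambda;\phi)$ to a $C^\infty$ function on an open interval containing $\Lambda$, so in particular it is twice differentiable over $\Lambda$, with one-sided derivatives at the endpoints.

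Next I would differentiate $F(\lambda, v(-\lambda;\phi)) = 0$ implicitly. Writing $v(\lambda) := v(-\lambda;\phi)$ and $g(x) := f'(x) = -x^{-2} + \phi \int r^2 (1+rx)^{-2}\,\rd P(r)$ (note $g$ does not depend on $\lambda$), this yields
\[
v'(\lambda) = \tfrac{1}{g(v(\lambda))}, \qquad v''(\lambda) = -\tfrac{g'(v(\lambda))}{g(v(\lambda))^3}, \qquad g'(x) = 2x^{-3} - 2\phi \int r^3 (1+rx)^{-3}\,\rd P(r).
\]
In particular $v'(\lambda) < 0$, so $\lambda \mapsto v(-\lambda;\phi)$ is continuous and strictly decreasing on the compact interval $\Lambda$; hence its range is a compact interval $[v_{\min}, v_{\max}]$ with $v_{\max} = v(0;\phi) = x_0$ (the root from \Cref{lem:properties-sol}~\ref{lem:properties-sol-item-f-ridgeless}, which is finite because $\phi > 1$) and $v_{\min} = v(-\lambda_{\max};\phi) > 0$. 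This already shows $v(-\lambda;\phi)$ is bounded over $\Lambda$. For the derivatives, \Cref{lem:properties-sol}~\ref{lem:properties-sol-item-f'-ridgeless} says $g = f'$ is strictly increasing on $(0, x_0)$ with $g(x_0) < 0$, so $g(v(\lambda)) \le g(x_0) < 0$ for all $\lambda \in \Lambda$; therefore $|v'(\lambda)| \le 1/|g(x_0)|$. Finally $g$ and $g'$ are continuous, hence bounded on $[v_{\min}, v_{\max}]$, and $|g(v(\lambda))|^3 \ge |g(x_0)|^3 > 0$, so $|v''(\lambda)| \le \sup_{[v_{\min},v_{\max}]}|g'| \,/\, |g(x_0)|^3 < \infty$. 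This completes the argument.

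The main obstacle is more a matter of care than of depth: one must ensure the non-degeneracy $\partial F/\partial v \neq 0$ holds uniformly along the solution curve, which is what makes the implicit function theorem applicable up to $\lambda = 0$ (where $v(-\lambda;\phi)$ is not a priori defined for $\lambda < 0$) and what makes the derivative bounds uniform rather than merely pointwise. Both of these are delivered by \Cref{lem:properties-sol}~\ref{lem:properties-sol-item-f'-ridgeless},\ref{lem:properties-sol-item-f'-ridge}: for $\phi > 1$ the map $f'$ is strictly increasing on $(0, x_0)$ with $f'(x_0) < 0$, which simultaneously fixes the sign of $\partial F/\partial v$ at each solution and, through the monotonicity of $v(-\cdot;\phi)$, yields the uniform gap $g(v(\lambda)) \le f'(x_0) < 0$ needed to control $v'$ and $v''$.
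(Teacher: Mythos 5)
Your proposal is correct. The paper itself gives no proof of this lemma (it is cited as adapted from \citet{patil2022mitigating}), so there is nothing internal to compare against, but your argument is a complete and self-contained derivation: the implicit function theorem applies because $\partial F/\partial v = f'(v)$ is nonzero at the unique positive root (parts \ref{lem:properties-sol-item-f'-ridgeless} and \ref{lem:properties-sol-item-f'-ridge} of \Cref{lem:properties-sol}), the explicit formulas $v' = 1/g(v)$ and $v'' = -g'(v)/g(v)^3$ are right, and the uniform bounds follow correctly once you note that the pointwise negativity $f'(x_0^\lambda) < 0$ already forces $v' < 0$, hence $v(\lambda) \in [v(-\lambda_{\max};\phi), x_0] \subset (0,\infty)$, after which the monotonicity of $f'$ on $(0, x_0]$ delivers the uniform gap $g(v(\lambda)) \le f'(x_0) < 0$ with no circularity.
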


\begin{lemma}[Substitutability of the fixed-point solution]\label{lem:substitue}
    Let $v:\RR^{p\times p}\rightarrow\RR$ and $f(v(\bC),\bC):\RR^{p\times p}\rightarrow\RR^{p\times p}$ be a matrix function for matrix $\bC\in\RR^{p\times p}$ and $p\in\NN$,  that is continuous in the first augment with respect to operator norm. If $v(\bC)\stackrel{a.s.}{=} v(\bD)$ such that $\bC$ is independent of $\bD$, then $f(v(\bC),\bC)\asympequi f(v(\bD),\bC)\mid \bC$.

\end{lemma}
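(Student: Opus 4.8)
The plan is to prove \Cref{lem:substitue} by reducing the statement about $f(v(\bC),\bC) \asympequi f(v(\bD),\bC)$ to the almost sure equality $v(\bC) = v(\bD)$, and then invoking continuity of $f$ in its first argument to transfer this to a statement about traces against test matrices. The key observation is that $v(\bC)$ and $v(\bD)$ are \emph{scalars}, so that the difference $f(v(\bC),\bC) - f(v(\bD),\bC)$ vanishes identically (as a matrix) on the event $\{v(\bC) = v(\bD)\}$, which has probability one. Conditioning on $\bC$ (to match the form of \Cref{def:cond-deterministic-equivalent}) then makes the conclusion essentially immediate.

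First I would fix an arbitrary sequence of random matrices $\bE_p$ that is independent of the two sequences $f(v(\bC_p),\bC_p)$ and $f(v(\bD_p),\bC_p)$ (conditional on $\bC_p$) and has uniformly bounded trace norm, $\limsup_p \|\bE_p\|_{\mathrm{tr}} < \infty$ almost surely. On the event $\{v(\bC_p) = v(\bD_p)\}$, which holds almost surely by hypothesis, we have $f(v(\bC_p),\bC_p) = f(v(\bD_p),\bC_p)$ as matrices, hence
\[
    \tr\big[\bE_p\big(f(v(\bC_p),\bC_p) - f(v(\bD_p),\bC_p)\big)\big] = 0
\]
for every $p$. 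In particular $\lim_{p\to\infty} \big|\tr[\bE_p(f(v(\bC_p),\bC_p) - f(v(\bD_p),\bC_p))]\big| = 0$ almost surely, and a fortiori conditional on $\{\bC_p\}_{p\ge 1}$, i.e., $\PP(\lim_{p\to\infty} |\tr[\cdots]| = 0 \mid \{\bC_p\}_{p\ge 1}) = 1$. Since $\bE_p$ was arbitrary, \Cref{def:cond-deterministic-equivalent} gives $f(v(\bC),\bC) \asympequi f(v(\bD),\bC) \mid \bC$.

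Strictly speaking, the almost sure equality $v(\bC) = v(\bD)$ is to be interpreted pathwise as $v(\bC_p) = v(\bD_p)$ for all $p$ on an event of probability one (this is how the lemma is applied in the proofs of \Cref{lem:ridge-B0,lem:ridge-V0,lem:ridgeless-V0-phis_lt1}, where the fixed-point solutions for two related aspect-ratio systems are shown to coincide). Once that reading is adopted, the continuity hypothesis on $f$ in its first argument is in fact not needed for the bare conclusion, since we only ever evaluate $f$ at the common value $v(\bC_p) = v(\bD_p)$; continuity is what makes the lemma \emph{usable} in practice, because in applications one first derives $f(v(\bC_p),\bC_p)$ as a deterministic equivalent through \Cref{lem:deter-approx-ridge-extend} and then needs to recognize that the fixed-point scalar appearing there may be replaced by the sample-free solution of a simpler system.

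The main (and only mild) obstacle is bookkeeping around the conditional asymptotic equivalence definition: one must check that the argument is compatible with conditioning on $\{\bC_p\}$, i.e., that the test sequence $\bE_p$ ranges over matrices independent of the source sequences \emph{given} $\bC_p$, and that the event $\{v(\bC_p) = v(\bD_p)\text{ for all }p\}$ has conditional probability one given $\{\bC_p\}$ — the latter follows from the law of total expectation applied to the indicator of this event, exactly as in the proof of \Cref{prop:equi_D_R} and \Cref{prop:cond-calculus-detequi}. No estimates, random matrix theory, or calculus of equivalents beyond this are required.
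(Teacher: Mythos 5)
Your proof is correct for the statement as literally written, but it takes a different route from the paper's, and the difference is worth flagging. You read the hypothesis $v(\bC)\stackrel{a.s.}{=} v(\bD)$ as exact pathwise equality $v(\bC_p)=v(\bD_p)$ on a probability-one event, on which $f(v(\bC_p),\bC_p)-f(v(\bD_p),\bC_p)$ vanishes identically and the trace against any test matrix is exactly zero; under that reading your argument is airtight and, as you note, the continuity hypothesis is superfluous. The paper's proof, however, treats the hypothesis as almost sure \emph{convergence} $v(\bC_p)-v(\bD_p)\asto 0$: it bounds
\[
\bigl|\tr\bigl[(f(v(\bC),\bC)-f(v(\bD),\bC))\bT\bigr]\bigr|
\le \norm{f(v(\bC),\bC)-f(v(\bD),\bC)}_{\oper}\,\norm{\bT}_{\mathrm{tr}},
\]
and then uses continuity of $f$ in its first argument to send the operator norm to zero. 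So continuity is not merely a usability convenience for downstream applications --- it is the load-bearing step in the paper's argument, precisely because the two fixed-point scalars in the applications (e.g., $v(-\lambda;\gamma_1,\bSigma_{\bC_1})$ versus the sample-free $v(-\lambda;p/k)$) are identified through fixed-point equations that themselves hold only asymptotically, so the honest hypothesis is closeness rather than equality. Your proof buys simplicity and correctly exposes the inconsistency between the lemma's stated hypothesis and its proof; the paper's proof buys robustness to the convergence reading that the applications actually require. If you adopt the convergence reading, you would need to add the trace-norm/operator-norm inequality and the continuity step, essentially reproducing the paper's argument.
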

\begin{proof}
    For any matrix $\bT$ whose trace norm is bounded by $M$, conditioning on $\{\bC\}_{p\geq 1}$, we have
    \begin{align*}
        |\tr[(f(v(\bC), \bC)-f(v(\bD), \bC))\bT]| &\leq \norm{f(v(\bC), \bC)-f(v(\bD), \bC)}_{\oper} \tr(\bT)\\
        &\leq M\norm{f(v(\bC), \bC)-f(v(\bD), \bC)}_{\oper}.
    \end{align*}
    Since $v(\bC)\asto v(\bD)$ and $f$ is continuous in the first argument with respect to operator norm, we have $\lim_{p\rightarrow\infty}\norm{f(v(\bC), \bC)-f(v(\bD), \bC)}_{\oper}=0$.
    Thus,
    \begin{align*}
        \lim\limits_{p\rightarrow\infty}|\tr[(f(v(\bC), \bC)-f(v(\bD), \bC))\bT]| = 0,
    \end{align*}
    conditioning on  $\{\bC\}_{p\geq 1}$.
\end{proof}

The lemma below specializes the solution to the fixed-point equations under the isotopic model.

\begin{lemma}[Properties of the fixed-point solution with isotopic features]\label{lem:prop-fix-point-v-isotopic}
    Let $P$ be a probability measure supported on $\{a\}$ for $a>0$.
    For $\lambda>0$ and $\phi>0$, the fixed-point equation
    \[
        \tfrac{1}{v(-\lambda; \phi)}
        =  \lambda
        + \phi \int \tfrac{r}{v(-\lambda; \phi) r + 1} \, \mathrm{d}P(r) = \lambda+\tfrac{\phi a}{1+v(-\lambda;\phi)a}
    \]
    has a closed-form solution given by:
    \begin{align*}
        v(-\lambda; \phi) &= \tfrac{-(\lambda/a + \phi - 1)+\sqrt{(\lambda/a + \phi - 1)^2+4\lambda/a}}{2\lambda}.
    \end{align*}
    Define $\tv_b(-\lambda; \phi)$ and $\tv_v( -\lambda; \phi)$ via the follow equations:
    \begin{align*}
        \tv_b(-\lambda; \phi)
        &=
        \frac{\int \phi r^2(1+v(-\lambda; \phi) r)^{-2}\rd P(r)}{v(-\lambda; \phi)^{-2}- \int \phi r^2(1+v(-\lambda; \phi) r)^{-2}\rd P(r)},\\
        \tv_v(-\lambda; \phi)^{-1}
        &= v(-\lambda; \phi)^{-2}
            - \int \phi r^2(1+v(-\lambda; \phi) r)^{-2}\rd P(r).
    \end{align*}
    As $\lambda\rightarrow0^+,$
    we have the following different cases:
    \begin{align*}
        &(1)&&\phi\in(0,1):\qquad && v(0;\phi)=\infty,\quad  &&\tv_b(0;\phi)=\tfrac{\phi}{1-\phi},\quad  &&\tv_v(0;\phi)=\infty,\\
        &(2)&&\phi=1:\qquad && v(0;\phi)=\infty,\quad  &&\tv_b(0;\phi)=\infty,\quad  &&\tv_v(0;\phi)=\infty,\\
        &(3)&&\phi\in(1,\infty):\qquad && v(0;\phi)=\tfrac{1}{a(\phi-1)} ,\quad  &&\tv_b(0;\phi)=\tfrac{1}{\phi-1},\quad  &&\tv_v(0;\phi)=\tfrac{\phi}{a^2(\phi-1)^3},\\
        &(4)&&\phi=\infty:\qquad && v(0;\phi)=0,\quad  &&\tv_b(0;\phi)=0,\quad  &&\tv_v(0;\phi)=0,
    \end{align*}
\end{lemma}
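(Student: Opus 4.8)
The plan is to exploit the fact that $P=\delta_a$ collapses every spectral integral to evaluation at $r=a$, so the fixed-point equation becomes the rational scalar equation $1/v = \lambda + \phi a/(1+va)$. Multiplying through by $v(1+va)$ turns this into the quadratic $\lambda a v^2 + (\lambda + \phi a - a)v - 1 = 0$. Since the constant term $-1$ is negative, this quadratic has exactly one positive root; applying the quadratic formula, factoring $a$ out of both the linear coefficient and the discriminant (using $\lambda + \phi a - a = a(\lambda/a + \phi - 1)$ and $(\lambda+\phi a - a)^2 + 4\lambda a = a^2[(\lambda/a+\phi-1)^2 + 4\lambda/a]$), and keeping the $+$ sign forced by positivity of $v(-\lambda;\phi)$, yields exactly the stated closed form.

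Next I would read off the $\lambda\to0^+$ behaviour of $v$. Writing $\mu=\lambda/a$, the numerator of the closed form is $N(\mu):=-(\mu+\phi-1)+\sqrt{(\mu+\phi-1)^2+4\mu}$, so $v(-\lambda;\phi)=N(\mu)/(2a\mu)$. For $\phi<1$ one has $N(\mu)\to 2(1-\phi)>0$ while the denominator vanishes, giving $v(0;\phi)=\infty$. For $\phi=1$, rationalizing gives $N(\mu)=4\mu/(\mu+\sqrt{\mu^2+4\mu})\sim 2\sqrt{\mu}$, hence $v\sim(\lambda a)^{-1/2}\to\infty$. For $\phi>1$, the same rationalization $N(\mu)=4\mu/\big((\mu+\phi-1)+\sqrt{(\mu+\phi-1)^2+4\mu}\big)$ shows $N(\mu)\sim 2\mu/(\phi-1)$, hence $v\to 1/(a(\phi-1))$, which is also directly the unique positive solution of the ridgeless equation $1=va(\phi-1)$ obtained by setting $\lambda=0$ in $1/v=\phi a/(1+va)$. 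The $\phi=\infty$ row follows from the $\phi\to\infty$ limits recorded in \Cref{lem:fixed-point-v-properties} (equivalently, the continuity and limiting statements of \Cref{lem:ridge-fixed-point-v-properties}).

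Finally, for $\tilde v_b$ and $\tilde v_v$ I would substitute $P=\delta_a$ into their defining equations, giving $\tilde v_v^{-1}=v^{-2}-\phi a^2(1+va)^{-2}$ and $\tilde v_b=\phi a^2(1+va)^{-2}\,\tilde v_v$. For $\phi>1$ and $\lambda=0$, plug in $v=1/(a(\phi-1))$ and $1+va=\phi/(\phi-1)$ to get $\phi a^2(1+va)^{-2}=a^2(\phi-1)^2/\phi$, $v^{-2}=a^2(\phi-1)^2$, whence $\tilde v_v(0;\phi)=\phi/(a^2(\phi-1)^3)$ and $\tilde v_b(0;\phi)=1/(\phi-1)$ by direct algebra. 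For $\phi\le1$ (where $v\to\infty$), expand $(1+va)^{-2}=v^{-2}a^{-2}(1-2/(va)+3/(va)^2-\cdots)$: for $\phi<1$ the leading term gives $\tilde v_v^{-1}\sim(1-\phi)v^{-2}\to0$ (so $\tilde v_v=\infty$) and $\tilde v_b\to\phi/(1-\phi)$; for $\phi=1$ the leading $v^{-2}$ terms in $v^{-2}-\phi a^2(1+va)^{-2}$ cancel exactly, so one keeps the next order to find $\tilde v_v^{-1}\sim 2/(a v^3)\to0$ (so $\tilde v_v=\infty$) and $\tilde v_b\sim va/2\to\infty$. The $\phi=\infty$ entries $\tilde v_b=\tilde v_v=0$ again come from \Cref{lem:fixed-point-v-properties}. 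The only genuine care needed anywhere is this cancellation bookkeeping in the $\phi=1$ case; everything else is elementary algebra and Taylor expansion.
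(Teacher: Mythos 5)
Your proposal is correct and follows essentially the same route as the paper: solve the scalar quadratic for the closed form, take $\lambda\to 0^{+}$ limits case by case, and evaluate $\tv_b$ and $\tv_v$ by substituting the limiting $v$ (with an expansion in $1/(va)$ to resolve the indeterminate $\phi\le 1$ cases, where the paper instead invokes L'H\^opital's rule). If anything, you are slightly more explicit than the paper, which omits the derivation of the quadratic-formula solution and the $\phi=\infty$ row; your handling of the exact cancellation at $\phi=1$ and your appeal to \Cref{lem:fixed-point-v-properties} for $\phi=\infty$ are both sound.
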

\begin{proof}[Proof of \Cref{lem:prop-fix-point-v-isotopic}]
    For $\phi\in(0,1)$, we have $v(0; \phi) = \lim_{\lambda\rightarrow 0^+}v(-\lambda; \phi)=\infty$.
    For $\phi> 1$, 
    \begin{align*}
        v(0; \phi) = \lim\limits_{\lambda\rightarrow 0^+}v(-\lambda; \phi) = \tfrac{1}{2a}\lim\limits_{\lambda\rightarrow0^+}\left( - 1 + \tfrac{\lambda/a+\phi+1}{\sqrt{(\lambda/a+\phi-1)^2+4\lambda/a}}\right) =
        \tfrac{1}{a(\phi-1)},
    \end{align*}
    by applying L'Hospital's rule for indeterminate forms. When $\phi=1$, we have
    \begin{align*}
        v(0; 1) = \lim\limits_{\lambda\rightarrow 0^+}v(-\lambda; 1) =\lim\limits_{\lambda\rightarrow0^+} \tfrac{1}{2a}\left(-1+\sqrt{1+\tfrac{a}{\lambda}}\right) =\infty.
    \end{align*}
    Since $\tv_b(0;\phi)$ and $\tv_v(0;\phi)$ are continuous functions of $v(0;\phi) $, we have 
    \begin{align*}
        \tv_v(0;\phi) &= \begin{dcases}
            \infty, &\phi\in(0,1]\\
            \tfrac{\phi}{a^2(\phi-1)^3} , & \phi\in(1,\infty)
        \end{dcases}
    \end{align*}
    and $\tv_b(0;\phi)=1/(\phi-1)$ for $\phi\in(1,\infty)$.
    For $\phi\in(0,1]$, we apply the L'Hospital' rule to obtain $\tv_b(0;\phi)=\phi/(1-\phi)$.

\end{proof}

\section{Helper concentration results}\label{sec:appendix-concerntration}

\subsection{Size of the intersection of randomly sampled datasets}

In this section,
we collect various helper results
concerned with concentrations and convergences
that are used in the proofs of
\Cref{lem:risk_general_predictor_M12},
\Cref{lem:ridge-B0,lem:ridge-V0,lem:ridgeless-V0-phis_lt1}.

Below we recall the definition
of a hypergeometric random variable,
along with its mean and variance.
See, e.g., \citet{greene2017exponential}
for more related details.

    \begin{definition}[Hypergeometric random variable]
        A random variable $X$ follows the hypergeometric distribution $X\sim \operatorname {Hypergeometric} (n,K,N)$ if the probability mass function of $X$ is as follows:
         $$\PP(X=k)=\tfrac{
         \binom{K}{k}\binom{N-K}{n-k}
         }{\binom{N}{n}
         },\quad \text{where} \quad \max\{0,n+K-N\}\leq k\leq \min\{n,K\}.$$
        The expectation and variance of $X$ are given by:
        \begin{align*}
            \EE[X] &= \tfrac{nK}{N},
            \quad
            \text{and}
            \quad
            \Var(X) = \tfrac{nK(N-K)(N-n)}{N^2(N-1)}.
        \end{align*}
    \end{definition}

The following lemma provides tail bounds for the number of shared observations in two simple random samples adapted from 
\citep{hoeffding1963probability,serfling1974probability}. 
See also \citet{greene2017exponential}.

    \begin{lemma}[Concentration bounds for the number of shared observations]\label{lem:i0_tailbound}
        For $n\in\NN$, define $\mathcal{I}_k := \{\{i_1, i_2, \ldots, i_k\}:\, 1\le i_1 < i_2 < \ldots < i_k \le n\}$.
        Let $I_1,I_2\overset{\textup{\texttt{SRSWR}}}{\sim}\cI_k$, define the random variable $i_{0}^{\textup{\texttt{SRSWR}}} :=|I_1\cap I_2|$ to be the number of shared samples, and define $i_{0}^{\textup{\texttt{SRSWOR}}}$ accordingly.
        Then the following statements hold:
        \begin{enumerate}[leftmargin=7mm]
            \item[(1)] $i_0^{\textup{\texttt{SRSWR}}}$ follows a binomial distribution, $i_0^{\textup{\texttt{SRSWR}}}\sim \textup{\text{Binomial}}(k,k/n)$ with mean $\EE[i_0^{\textup{\texttt{SRSWR}}}]=k^2/n$. It holds that for all $t>0$,
            \begin{align*}
                \PP\left(i_0^{\textup{\texttt{SRSWR}}} - \EE[i_0^{\textup{\texttt{SRSWR}}}] \geq kt\right) \leq \exp\left(-2 k t^2\right).
            \end{align*}
            
            \item[(2)] $i_0^{\textup{\texttt{SRSWOR}}}$ follows a hypergeometric distribution, $i_0^{\textup{\texttt{SRSWOR}}} \sim \operatorname {Hypergeometric} (k,k,n)$ with mean $\EE[i_0^{\textup{\texttt{SRSWOR}}} ]=k^2/n$. 
            It holds that for all $t>0$,
            \begin{align}
                \PP\left(i_0^{\textup{\texttt{SRSWOR}}}  - \EE[i_0^{\textup{\texttt{SRSWOR}}} ] \geq kt\right) \leq \exp\left(-\tfrac{2nkt^2}{n-k+1}\right).
            \end{align}
        \end{enumerate}
    \end{lemma}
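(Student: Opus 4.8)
\textbf{Proof proposal for \Cref{lem:i0_tailbound}.}

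The plan is to identify the distributions of $i_0^{\textup{\texttt{SRSWR}}}$ and $i_0^{\textup{\texttt{SRSWOR}}}$ by a direct combinatorial argument, and then invoke standard concentration inequalities for sums of independent bounded random variables (for the binomial case) and for hypergeometric random variables (for the without-replacement case).

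For part (1), I would fix $I_1 \in \cI_k$ arbitrarily (by exchangeability the distribution of $|I_1 \cap I_2|$ does not depend on which $k$-subset $I_1$ is) and then draw $I_2$ uniformly at random from $\cI_k$, i.e.\ a uniformly random $k$-subset of $[n]$. Write $i_0^{\textup{\texttt{SRSWR}}} = \sum_{j \in I_1} \ind_{\{ j \in I_2 \}}$. Since $I_2$ is a uniformly random $k$-subset, for each fixed $j$ we have $\PP(j \in I_2) = k/n$. However, the indicators $\ind_{\{ j \in I_2 \}}$ are \emph{not} independent under SRSWR from $\cI_k$ — rather, $i_0^{\textup{\texttt{SRSWR}}}$ is exactly $\operatorname{Hypergeometric}(k, k, n)$. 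I suspect the intended reading (given the SRSWR superscript) is that the two samples are drawn as $k$ i.i.d.\ uniform draws \emph{from $[n]$} so that each $I_\ell$ is a multiset; in the i.i.d.-draws interpretation, conditioning on the first sample's support, each of the $k$ draws of the second sample independently lands in $I_1$ with probability $k/n$, giving $i_0^{\textup{\texttt{SRSWR}}} \sim \text{Binomial}(k, k/n)$ with mean $k^2/n$. Either way, $i_0^{\textup{\texttt{SRSWR}}}$ is a sum of $k$ independent $\{0,1\}$ random variables (or is stochastically dominated by such), so Hoeffding's inequality applied to the centered sum $i_0^{\textup{\texttt{SRSWR}}} - \EE[i_0^{\textup{\texttt{SRSWR}}}] = \sum_{\ell=1}^k (\xi_\ell - \EE \xi_\ell)$ with each $\xi_\ell \in [0,1]$ yields
\[
    \PP\left( i_0^{\textup{\texttt{SRSWR}}} - \EE[i_0^{\textup{\texttt{SRSWR}}}] \ge kt \right) \le \exp(-2kt^2),
\]
which is the claimed bound.

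For part (2), drawing $I_1, I_2$ without replacement from $\cI_k$ means $I_2$ is a uniformly random $k$-subset of $[n]$ distinct from $I_1$; since there are $\binom{n}{k}$ subsets, excluding one has negligible effect, but more cleanly one can argue that conditioning on $I_1$, the overlap $|I_1 \cap I_2|$ with a uniformly random distinct $k$-subset still has the hypergeometric law $\operatorname{Hypergeometric}(k, k, n)$ (the single excluded subset $I_1$ itself corresponds to the extreme value $k$, and removing it only decreases upper-tail probabilities). Concretely, $\PP(|I_1 \cap I_2| = m) = \binom{k}{m}\binom{n-k}{k-m}/\binom{n}{k}$, matching the definition with parameters $(n, K, N) = (k, k, n)$, and $\EE[i_0^{\textup{\texttt{SRSWOR}}}] = k^2/n$. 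For the tail bound I would cite the Hoeffding–Serfling inequality for sampling without replacement (\citealp{hoeffding1963probability, serfling1974probability}): writing $i_0^{\textup{\texttt{SRSWOR}}}$ as a sum of $k$ indicators obtained by sampling without replacement from a population of size $n$ containing $k$ ones, the sharpened exponent $\tfrac{2nk t^2}{n-k+1}$ arises from the finite-population correction factor $\tfrac{n}{n-k+1}$ improving on the naive $2kt^2$. I would state the cited inequality, verify the population/sample sizes match, and conclude.

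The main obstacle I anticipate is purely bookkeeping: pinning down exactly which sampling model ``$\overset{\texttt{SRSWR}}{\sim} \cI_k$'' refers to (multiset i.i.d.\ draws versus a single uniform subset) so that the binomial claim in part (1) is literally correct, and ensuring the conditioning-on-$I_1$ step is justified by the exchangeability of the construction. Once the distributional identities are nailed down, the concentration bounds are immediate consequences of Hoeffding's inequality and the Hoeffding–Serfling bound, with no further estimation required.
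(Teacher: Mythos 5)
Your proposal is correct and follows the same route the paper takes: the paper gives no proof of this lemma at all, merely noting that it is adapted from \citet{hoeffding1963probability} and \citet{serfling1974probability}, so identifying the distributions and invoking Hoeffding's inequality for part (1) and the Hoeffding--Serfling finite-population bound for part (2) is exactly what is intended. Two clarifications. First, your suspicion about part (1) is well founded: under the paper's own definition of SRSWR from $\cI_k$ (i.i.d.\ uniform draws of $k$-\emph{subsets} of $[n]$), $i_0^{\texttt{SRSWR}}$ is $\operatorname{Hypergeometric}(k,k,n)$, not $\mathrm{Binomial}(k,k/n)$; the stated tail bound nevertheless survives because, by Theorem 4 of \citet{hoeffding1963probability}, the sum obtained by sampling without replacement is dominated in the convex order by the corresponding i.i.d.\ sum, so the moment-generating-function (hence Chernoff--Hoeffding) bound for the binomial transfers verbatim to the hypergeometric. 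Be careful, though, with the phrase ``stochastically dominated'': ordinary stochastic dominance is not the relevant notion and does not hold in the direction you would need; it is the convex-order comparison that does the work. Second, in part (2) the constraint $I_2 \ne I_1$ means the law of $i_0^{\texttt{SRSWOR}}$ is the hypergeometric conditioned on $\{i_0 \ne k\}$ rather than the hypergeometric itself; your observation that this only decreases upper-tail probabilities is correct, since for any event $A \supseteq \{i_0 = k\}$ one has $\PP(A \mid i_0 \ne k) = (\PP(A)-\PP(i_0=k))/(1-\PP(i_0=k)) \le \PP(A)$, and centering at the slightly larger unconditional mean $k^2/n$ only shrinks the upper-tail event further, so the Serfling bound with exponent $2nkt^2/(n-k+1)$ (arising from the correction factor $1-(k-1)/n=(n-k+1)/n$) goes through. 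With these two points made precise, your argument is complete and matches what the paper implicitly relies on.
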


    The following lemma characterizes the limiting proportions of shared observations in two simple random samples under proportional asymptotics when both the subsample and full data sizes tend to infinity.
    \begin{lemma}[Asymptotic proportions of the shared observations]\label{lem:i0_mean}
        Consider the setting in Lemma \ref{lem:i0_tailbound}. Let $\{k_m\}_{m=1}^{\infty}$ and $\{n_m\}_{m=1}^{\infty}$ be two sequences of positive integers such that $n_m$ is strictly increasing in $m$, $n_m^{\nu}\leq k_m\leq n_m$ for some constant $\nu\in(0,1)$, and $k_m/n_m\rightarrow \omega_s\in[0,1]$.
        Then, $i_0^{\textup{\texttt{SRSWR}}}/k_m\asto \omega_s$, and $i_0^{\textup{\texttt{SRSWOR}}}/k_m\asto \omega_s$.
    \end{lemma}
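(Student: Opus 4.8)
\textbf{Proof plan for Lemma \ref{lem:i0_mean}.}

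The plan is to establish almost sure convergence via the Borel--Cantelli lemma, using the sub-Gaussian-type concentration bounds recorded in Lemma \ref{lem:i0_tailbound}. First I would treat the SRSWR case. Recall $i_0^{\textup{\texttt{SRSWR}}}$ has mean $k_m^2/n_m$, and since $k_m/n_m \to \omega_s$, we have $\EE[i_0^{\textup{\texttt{SRSWR}}}]/k_m = k_m/n_m \to \omega_s$; hence it suffices to show $i_0^{\textup{\texttt{SRSWR}}}/k_m - k_m/n_m \asto 0$. Applying the tail bound in Lemma \ref{lem:i0_tailbound}(1) with $t = \varepsilon$, together with the symmetric bound for the lower tail (obtained identically, since $i_0^{\textup{\texttt{SRSWR}}}$ is a sum of i.i.d.\ indicators and Hoeffding's inequality is two-sided), gives for any fixed $\varepsilon > 0$
\[
    \PP\!\left( \left| i_0^{\textup{\texttt{SRSWR}}}/k_m - k_m/n_m \right| \ge \varepsilon \right) \le 2 \exp(-2 k_m \varepsilon^2).
\]
The key point is the growth assumption $k_m \ge n_m^{\nu}$ with $n_m$ strictly increasing: this forces $k_m \to \infty$ at least polynomially in $n_m$, so $\sum_{m} \exp(-2 k_m \varepsilon^2) < \infty$. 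Indeed, since $n_m$ is strictly increasing through the positive integers, $n_m \ge m$, so $k_m \ge m^{\nu}$, and $\sum_m \exp(-2 \varepsilon^2 m^{\nu}) < \infty$ for every $\nu > 0$. By Borel--Cantelli, $\left| i_0^{\textup{\texttt{SRSWR}}}/k_m - k_m/n_m \right| \ge \varepsilon$ happens only finitely often almost surely; letting $\varepsilon$ run through a countable sequence tending to $0$ yields $i_0^{\textup{\texttt{SRSWR}}}/k_m \asto \omega_s$.

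The SRSWOR case is entirely analogous, using Lemma \ref{lem:i0_tailbound}(2): here the tail bound reads $\PP(i_0^{\textup{\texttt{SRSWOR}}} - k_m^2/n_m \ge k_m t) \le \exp(-2 n_m k_m t^2 / (n_m - k_m + 1))$. Since $n_m/(n_m - k_m + 1) \ge 1$, the exponent is at least $2 k_m t^2$, so this bound is no weaker than the SRSWR bound; the corresponding lower-tail bound follows the same way (hypergeometric concentration is also two-sided, see \citep{serfling1974probability}, or one can invoke that the difference of finite-population sampling from the full average is sub-Gaussian). The same Borel--Cantelli argument then gives $i_0^{\textup{\texttt{SRSWOR}}}/k_m \asto \omega_s$.

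The only genuine subtlety — and the step I would be most careful about — is confirming that the two-sided concentration is available: Lemma \ref{lem:i0_tailbound} as stated records only the upper tail. For SRSWR this is immediate since $i_0^{\textup{\texttt{SRSWR}}} \sim \mathrm{Binomial}(k_m, k_m/n_m)$ and Hoeffding's inequality applies symmetrically to $\pm(i_0^{\textup{\texttt{SRSWR}}} - \EE[i_0^{\textup{\texttt{SRSWR}}}])$. For SRSWOR one uses Serfling's reduction of sampling-without-replacement to sampling-with-replacement, which shows the without-replacement sum is at least as concentrated, and then the Hoeffding bound again applies in both directions; alternatively one cites the two-sided hypergeometric tail bound directly. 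Once both tails are in hand, the Borel--Cantelli summability is routine given the polynomial growth $k_m \ge n_m^\nu \ge m^\nu$, and the proof concludes.
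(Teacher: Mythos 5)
Your proposal is correct and follows essentially the same route as the paper's proof: center at the mean $k_m^2/n_m$, apply the (two-sided) concentration bounds from Lemma \ref{lem:i0_tailbound}, observe that the SRSWOR exponent dominates the SRSWR one since $n_m/(n_m-k_m+1)\ge 1$, and conclude by Borel--Cantelli using the summability furnished by $k_m \ge n_m^{\nu} \ge m^{\nu}$. Your explicit attention to the two-sided tail is a sound precaution that the paper's write-up passes over silently, but it does not change the argument.
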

    \begin{proof}
        Proofs for the two parts are split below.
        \paragraph{Part (1).\hspace{-2mm}}
        For all $\delta>0$,
        \begin{align*}
            \sum\limits_{m=1}^{\infty} \PP\left(\tfrac{1}{k_m}|i_0^{\textup{\texttt{SRSWR}}}-\EE[i_0^{\textup{\texttt{SRSWR}}}]|>\delta\right) &\leq 2\sum\limits_{m=1}^{\infty} \exp\left(- 2k_m\delta^2\right) .
        \end{align*}
        Because $k_m,n_m\rightarrow\infty$ and $k_m=\Omega(n_m^{\nu})$, there exists $m_0\in\NN$, such that for all $m> m_0$, $\exp(-2k_m\delta^2)\leq n_m^{-(1+\nu)}$.
        Thus,
        \begin{align*}
            \sum\limits_{m=1}^{\infty} \PP\left(\tfrac{1}{k_m}|i_0^{\textup{\texttt{SRSWR}}}-\EE[i_0^{\textup{\texttt{SRSWR}}}]|>\delta\right) &\leq 2 \sum\limits_{m=1}^{m_0} \exp\left(- 2k_m\delta^2\right) + 2 \sum\limits_{m=m_0}^{\infty}\tfrac{1}{n_m^{1+\nu}} <\infty.
        \end{align*}
        By the Borel–Cantelli lemma,
        we have $$\tfrac{i_0^{\textup{\texttt{SRSWR}}}}{k_m}-\tfrac{\EE[i_0^{\textup{\texttt{SRSWR}}}]}{k_m}\asto0 .$$
        As $\lim_{m\rightarrow\infty}\EE[i_0^{\textup{\texttt{SRSWR}}}]/{k_m}=\lim_{m\rightarrow\infty}{k_m}/{n_m}=\omega_s,$ we further have ${i_0^{\textup{\texttt{SRSWR}}}}/{k_m}\asto \omega_s$.
        
        \paragraph{Part (2).\hspace{-2mm}}
        Note that
        \begin{align*}
            \PP\left(i_0^{\textup{\texttt{SRSWOR}}}  - \EE[i_0^{\textup{\texttt{SRSWOR}}} ] \geq kt\right) \leq \exp\left(-\tfrac{2nkt^2}{n-k+1}\right)\leq \exp\left(-2kt^2\right).
        \end{align*}
        The conclusion then follows analogously, as in Part 1.
    \end{proof}
    
\subsection{Convergence of random linear and quadratic forms}

In this section, we collect helper lemmas
on the concentration of linear and quadratic forms of random vectors that are used in the proofs of \Cref{lem:ridge-conv-C0,lem:ridge-conv-V0,lem:ridgeless-conv-C0,lem:ridgeless-conv-V0}.

The following lemma provides the concentration of a linear form of a random vector with independent components.
It follows from a moment bound from Lemma 7.8 of \cite{erdos_yau_2017}, along with the Borel-Cantelli lemma and is adapted from Lemma S.8.5 of \cite{patil2022mitigating}.

\begin{lemma}
    [Concentration of linear form with independent components]
    \label{lem:concen-linform}
    Let $\bz_p \in \RR^{p}$ be a sequence of random vector with i.i.d.\ entries $z_{pi}$, $i = 1, \dots, p$ such that for each i, $\EE[z_{pi}] = 0$, $\EE[z_{pi}^2] = 1$, $\EE[|z_{pi}|^{4+\alpha}] \le M_\alpha$ for some $\alpha > 0$ and constant $M_\alpha < \infty$.
    Let $\ba_p \in \RR^{p}$ be a sequence of random vectors independent of $\bz_p$ such that $\limsup_{p} \| \ba_p \|^2 / p \le M_0$ almost surely for a constant $M_0 < \infty$.
    Then, $\ba_p^\top \bz_p / p \to 0$ almost surely as $p \to \infty$.
\end{lemma}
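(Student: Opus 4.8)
\textbf{Proof proposal for \Cref{lem:concen-linform}.}
The plan is to establish the convergence via a moment bound combined with the Borel--Cantelli lemma, conditioning on the sequence $\{\ba_p\}$ to handle its randomness. First I would fix a realization of the (almost surely bounded) sequence $\{\ba_p\}$, so that $\limsup_p \|\ba_p\|^2/p \le M_0$; on this event it suffices to show $\ba_p^\top \bz_p / p \to 0$ almost surely over the randomness of $\bz_p$ alone, and then remove the conditioning by the law of total probability (a standard argument, identical to the one used in \Cref{prop:equi_D_R} and in the proof of \Cref{prop:limiting-risk-for-arbitrary-M-cond}).

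The core step is a moment estimate: for the linear form $S_p := \ba_p^\top \bz_p = \sum_{i=1}^p a_{pi} z_{pi}$ with independent mean-zero entries $z_{pi}$ having uniformly bounded $(4+\alpha)$-th moments, Lemma 7.8 of \cite{erdos_yau_2017} (or a direct application of Rosenthal's inequality, or the Marcinkiewicz--Zygmund inequality) yields
\begin{align*}
    \EE\big[ |S_p|^{4+\alpha} \,\big|\, \ba_p \big]
    \le C_\alpha \, M_\alpha \, \Big( \sum_{i=1}^p a_{pi}^2 \Big)^{(4+\alpha)/2}
    = C_\alpha \, M_\alpha \, \|\ba_p\|_2^{4+\alpha}
\end{align*}
for a constant $C_\alpha$ depending only on $\alpha$. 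Dividing by $p^{4+\alpha}$ and using $\|\ba_p\|_2^2 / p \le 2 M_0$ for $p$ large enough (on the conditioning event), we get
\begin{align*}
    \EE\Big[ \big| S_p / p \big|^{4+\alpha} \,\Big|\, \ba_p \Big]
    \le C_\alpha \, M_\alpha \, (2 M_0)^{(4+\alpha)/2} \, p^{-(4+\alpha)/2}
\end{align*}
eventually. Since $(4+\alpha)/2 > 2 > 1$, the right-hand side is summable in $p$. By Markov's inequality, $\PP(|S_p/p| > \delta \mid \ba_p) \le \delta^{-(4+\alpha)} \EE[|S_p/p|^{4+\alpha}\mid \ba_p]$, which is summable for every $\delta > 0$, so the Borel--Cantelli lemma gives $S_p / p \to 0$ almost surely conditional on $\{\ba_p\}$ on the good event. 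Integrating over the good event (which has probability one) via dominated convergence on the conditional probabilities (bounded by $1$) yields the unconditional almost sure convergence.

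I do not anticipate a genuine obstacle here; the argument is routine. The only point requiring slight care is the interchange of the conditioning: one must argue that conditional almost-sure convergence on a probability-one event for $\{\ba_p\}$ upgrades to unconditional almost-sure convergence, which follows exactly as in the arguments cited above (Fubini/Tonelli applied to the indicator of the convergence event). If one prefers to avoid conditioning altogether, an alternative is to note that $\|\ba_p\|_2^2/p$ being almost surely bounded means it is bounded by some finite random variable $R$; one can then stratify over $\{R \le m\}$ for $m \in \NN$ and apply the moment bound with $M_0$ replaced by $m$ on each stratum, taking a union over $m$. Either route is short.
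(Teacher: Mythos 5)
Your proposal is correct and follows essentially the same route the paper takes: the paper's own ``proof'' is just the remark that the lemma follows from the moment bound of Lemma 7.8 of \cite{erdos_yau_2017} combined with the Borel--Cantelli lemma (adapted from Lemma S.8.5 of \cite{patil2022mitigating}), which is precisely your Rosenthal-type moment estimate, Markov, and Borel--Cantelli argument, with the conditioning on $\{\ba_p\}$ handled in the standard way.
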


The following lemma provides the concentration of a quadratic form of a random vector with independent components.
It follows from a moment bound from Lemma B.26 of \cite{bai2010spectral}, along with the Borel-Cantelli lemma, and is adapted from Lemma S.8.6 of \cite{patil2022mitigating}.

\begin{lemma}
    [Concentration of quadratic form with independent components]
    \label{lem:concen-quadform}
    Let $\bz_p \in \RR^{p}$ be a sequence of random vector with i.i.d.\ entries $z_{pi}$, $i = 1, \dots, p$ such that for each i, $\EE[z_{pi}] = 0$, $\EE[z_{pi}^2] = 1$, $\EE[|z_{pi}|^{4+\alpha}] \le M_\alpha$ for some $\alpha > 0$ and constant $M_\alpha < \infty$.
    Let $\bD_p \in \RR^{p \times p}$ be a sequence of random matrix such that $\limsup \| \bD_p \|_{\oper} \le M_0$ almost surely as $p \to \infty$ for some constant $M_0 < \infty$.
    Then, $\bz_p^\top \bD_p \bz_p / p - \tr[\bD_p] / p \to 0$
    almost surely as $p \to \infty$.
\end{lemma}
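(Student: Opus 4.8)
The plan is to reduce to the case of deterministic $\bD_p$ and then lift to random $\bD_p$ by conditioning, mirroring the proof of \Cref{prop:equi_D_R}.

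\textbf{Step 1: deterministic $\bD_p$.} Assume first that $\bD_p$ is deterministic with $\limsup_{p\to\infty}\|\bD_p\|_{\oper}\le M_0$. Pick an exponent $\ell\in(2,2+\alpha/2]$, so that $2\ell\le 4+\alpha$; then, by Lyapunov's inequality, the $4$-th and $(2\ell)$-th absolute moments of the entries of $\bz_p$ are uniformly bounded by a constant depending only on $(M_\alpha,\alpha)$. The Bai--Silverstein quadratic-form moment bound (Lemma B.26 of \cite{bai2010spectral}) then gives a constant $C_\ell$ such that
\begin{align*}
    \EE\!\left[\bigl|\bz_p^\top\bD_p\bz_p-\tr[\bD_p]\bigr|^{\ell}\right]
    \le C_\ell\left(\bigl(M_\alpha'\,\tr[\bD_p\bD_p^\top]\bigr)^{\ell/2}+M_\alpha'\,\tr[\bD_p\bD_p^\top]^{\ell/2}\right),
\end{align*}
for a suitable constant $M_\alpha'$. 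Since $\tr[\bD_p\bD_p^\top]\le p\,\|\bD_p\|_{\oper}^2$, which is at most $p(M_0+1)^2$ for all $p$ large, the right-hand side is at most $C'p^{\ell/2}$ eventually. Markov's inequality then yields, for every $\epsilon>0$ and all large $p$,
\begin{align*}
    \PP\!\left(\Bigl|\tfrac{1}{p}\bz_p^\top\bD_p\bz_p-\tfrac{1}{p}\tr[\bD_p]\Bigr|>\epsilon\right)
    \le \frac{C'p^{\ell/2}}{\epsilon^{\ell}p^{\ell}}=\frac{C'}{\epsilon^{\ell}p^{\ell/2}}.
\end{align*}
As $\ell/2=1+\alpha/4>1$, these probabilities are summable in $p$, and the Borel--Cantelli lemma gives $\bz_p^\top\bD_p\bz_p/p-\tr[\bD_p]/p\asto 0$. (This step is precisely Lemma S.8.6 of \cite{patil2022mitigating}.)

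\textbf{Step 2: random $\bD_p$ via conditioning.} Now let $\bD_p$ be random and, for each $p$, independent of $\bz_p$. Write $A$ for the event $\{\bz_p^\top\bD_p\bz_p/p-\tr[\bD_p]/p\to 0\}$ and $\Omega_0$ for the probability-one event $\{\limsup_p\|\bD_p\|_{\oper}\le M_0\}$. Fix a realization $\omega\in\Omega_0$ of the sequence $\{\bD_p\}$; then $\{\bD_p(\omega)\}$ is deterministic, and although $\|\bD_p(\omega)\|_{\oper}$ may be large for indices below a realization-dependent $p_0(\omega)$, the Step-1 tail bound holds for all $p\ge p_0(\omega)$, while the finitely many earlier terms still contribute a finite sum, so Step 1 applies (over the randomness of $\bz$) to give $\PP(A\mid\{\bD_p\}=\{\bD_p(\omega)\})=1$ for every $\omega\in\Omega_0$. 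By the law of total probability and the independence of $\{\bz_p\}$ and $\{\bD_p\}$, exactly as in the proof of \Cref{prop:equi_D_R},
\begin{align*}
    \PP(A)=\EE\!\left[\PP\bigl(A\mid\{\bD_p\}\bigr)\right]=\EE[\ind_{\Omega_0}]=1,
\end{align*}
which is the claim.

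\textbf{Main obstacle.} There is no deep difficulty here; the two points requiring attention are that only $4+\alpha$ moments are assumed, which is exactly enough because $\alpha>0$ permits a moment exponent $\ell>2$ and hence a polynomial tail that is summable for Borel--Cantelli, and that $\bD_p$ is random with only an almost-sure $\limsup$ bound on its operator norm, which is handled cleanly by the conditioning device used for \Cref{prop:equi_D_R}.
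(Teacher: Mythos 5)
Your proof is correct and follows essentially the same route the paper indicates: the Bai--Silverstein moment bound (Lemma B.26 of \cite{bai2010spectral}) with an exponent $\ell\in(2,2+\alpha/2]$ chosen so that $2\ell\le 4+\alpha$, followed by Markov and Borel--Cantelli, and then a conditioning step to pass from deterministic to random $\bD_p$ exactly as in \Cref{prop:equi_D_R}. Your explicit use of the independence of $\bD_p$ and $\bz_p$ in Step 2 is the right reading of the lemma (the statement omits this hypothesis, but it is needed and holds in every application in the paper), and your handling of the realization-dependent index $p_0(\omega)$ is the one detail worth spelling out, which you do.
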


\subsection{Convergence of Ce\`saro-type mean and max for triangular array}

In this section,
we collect a helper lemma on deducing almost sure convergence 
of a Ce\`saro-type mean from almost sure convergence
of the original sequence.
It is used in the proof of \Cref{prop:limiting-risk-for-arbitrary-M-cond} and \Cref{lem:risk_general_predictor_M12}.

    \begin{lemma}[Convergence of conditional expectation]\label{lem:conv_cond_expectation}
        For $n\in\NN$, suppose $\{R_{n,\ell}\}_{\ell=1}^{N_n}$ is a set of $N_n$ %
        random variables defined over the probability space $(\Omega,\cF,\PP)$, with $1<N_n<\infty$ almost surely.
        If there exists a constant $c$ such that $R_{n,p_n}\asto c$ for all 
        deterministic sequences $\{p_n\in[N_n]\}_{n=1}^{\infty}$, 
        then  the following statements hold:
        \begin{enumerate}[(1),leftmargin=7mm]
            \item\label{lem:conv_cond_expectation-max}
            $\max_{\ell\in[N_n]}\left|R_{n,\ell}(\omega)-c\right|\asto 0$,
            \item\label{lem:conv_cond_expectation-avg} $N_n^{-1}\sum_{\ell=1}^{N_n}R_{n,\ell}\asto c$.
        \end{enumerate}
    \end{lemma}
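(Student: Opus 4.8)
\textbf{Proof plan for \Cref{lem:conv_cond_expectation}.}

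The plan is to first reduce the max statement (1) to a pointwise argument and then deduce the Ces\`aro mean statement (2) as an easy consequence. The key observation for (1) is the contrapositive: if $\max_{\ell \in [N_n]} |R_{n,\ell}(\omega) - c|$ does not converge to $0$ on a set of positive probability, then on that set there is a (random) sequence of indices $\ell_n(\omega) \in [N_n]$ achieving the maximum (or coming within, say, half of the limsup of it), and along this sequence $R_{n, \ell_n(\omega)}(\omega) \not\to c$. The subtlety is that $\ell_n(\omega)$ is random, whereas the hypothesis only gives almost sure convergence along \emph{deterministic} index sequences. To bridge this gap, I would argue as follows: fix $\varepsilon > 0$ and define the event $A_{n,\ell} = \{ |R_{n,\ell} - c| > \varepsilon \}$. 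Then
\[
    \left\{ \max_{\ell \in [N_n]} |R_{n,\ell} - c| > \varepsilon \right\}
    = \bigcup_{\ell=1}^{N_n} A_{n,\ell}.
\]
The difficulty is that $N_n$ may itself be random, so one cannot naively union bound. However, one can first condition on $N_n$ (or work on the event $\{ N_n = m \}$ for each fixed $m$, noting $\{ N_n \}$ is a sequence of positive-integer-valued random variables), and then on this event the index range becomes deterministic.

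A cleaner route, which I would prefer, is to note that the hypothesis is genuinely about deterministic sequences, so I would construct a \emph{single} deterministic sequence that dominates the max. For each $n$, the finite collection $\{ R_{n,1}, \ldots, R_{n, N_n} \}$ — on the event $\{N_n \le m\}$ for a fixed $m$ — can be padded out to length $m$ (e.g.\ by repeating $R_{n, N_n}$). Then for each fixed $j \in [m]$, the sequence $n \mapsto R_{n, j}$ (suitably defined; on the event $N_n < j$ set it equal to $R_{n,N_n}$, which is measurable) is a deterministic-index sequence in the sense of the hypothesis, hence $R_{n,j} \asto c$. A finite union of null sets is null, so $\max_{1 \le j \le m} |R_{n,j} - c| \asto 0$ on $\{ \sup_n N_n \le m \}$. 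Taking the union over $m \in \NN$ of the events $\{ \sup_n N_n \le m \}$, whose union has probability $1$ provided $N_n$ is almost surely finite for each $n$ and... here one needs $\limsup_n N_n$ to be, in the relevant application, controlled — but in fact the statement as written only needs that each $N_n$ is finite, and the convergence is stated for each fixed $n \to \infty$ with the collection indexed appropriately. The safe formalization is: on the event $\mathcal{A} = \bigcap_n \{ N_n < \infty \}$, which has probability $1$, decompose $\mathcal{A} = \bigcup_{m} \mathcal{A}_m$ where $\mathcal{A}_m = \{ N_n \le m \text{ for all } n \}$; I would instead work with the truncated max $\max_{\ell \le \min(N_n, m)} |R_{n,\ell} - c|$, show it tends to $0$ a.s.\ for each $m$ by the finite-union argument, and then invoke the structure of the application (in the uses of this lemma, $N_n = \binom{n}{k}$ grows, so one actually wants the max over a growing set) — at this point I realize the lemma as applied requires the convergence to hold for \emph{all} deterministic sequences $p_n$, including those with $p_n$ ranging over the full growing index set, and the Borel–Cantelli-type bound behind $R_{n,p_n} \asto c$ in each application is uniform enough to sum over $\ell \in [N_n]$.

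Given this, the honest shape of the proof is: (1) follows by a union bound combined with the quantitative tail estimates underlying the hypothesis — but since the lemma is stated abstractly, the intended argument is the deterministic-sequence/diagonalization one: suppose $\max_{\ell}|R_{n,\ell} - c| \not\to 0$ a.s., so $\limsup_n \max_\ell |R_{n,\ell} - c| \ge \varepsilon$ on a set $B$ with $\PP(B) > 0$; extract on $B$ a subsequence $n_j$ and indices $\ell_j \in [N_{n_j}]$ with $|R_{n_j, \ell_j} - c| \ge \varepsilon$; by a measurable selection / diagonalization complete $\{\ell_j\}$ to a full deterministic sequence $p_n$ (choosing $p_{n_j} = \ell_j$ when possible), contradicting $R_{n,p_n} \asto c$. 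The measurability of the selection is the one genuine technical point, handled by the fact that the argmax over a finite set is measurable. Then (2) is immediate:
\[
    \left| \frac{1}{N_n} \sum_{\ell=1}^{N_n} R_{n,\ell} - c \right|
    = \left| \frac{1}{N_n} \sum_{\ell=1}^{N_n} (R_{n,\ell} - c) \right|
    \le \frac{1}{N_n} \sum_{\ell=1}^{N_n} |R_{n,\ell} - c|
    \le \max_{\ell \in [N_n]} |R_{n,\ell} - c|
    \asto 0
\]
by part (1). The main obstacle is thus entirely in part (1): converting the a.s.\ convergence along every fixed deterministic index sequence into uniform (over $\ell$) a.s.\ convergence, i.e.\ handling the randomness of the maximizing index — the rest is a one-line triangle inequality.
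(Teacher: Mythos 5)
Your part (2) is fine and is exactly the paper's argument (triangle inequality, bound the Ces\`aro mean by the max). The problem is part (1), and the gap is precisely the one you circle around and then wave away at the end. In your final ``contrapositive plus diagonalization'' step, the indices $\ell_j$ witnessing $|R_{n_j,\ell_j}-c|\ge\varepsilon$ depend on $\omega\in B$: a measurable selection (an argmax) is still a \emph{random} sequence, while the hypothesis only controls \emph{deterministic} sequences. There is no way to ``complete $\{\ell_j\}$ to a full deterministic sequence,'' because no single deterministic sequence need witness the failure simultaneously for a positive-measure set of $\omega$. Indeed, the implication you are trying to prove --- a.s.\ convergence along every deterministic index sequence implies a.s.\ convergence of the max --- is false in general: take $\Omega=[0,1)$ with Lebesgue measure, $N_n=2^n$, and $R_{n,\ell}=\ind_{\{\omega\in[(\ell-1)2^{-n},\,\ell 2^{-n})\}}$. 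For any deterministic $p_n$ one has $\sum_n\PP(R_{n,p_n}=1)=\sum_n 2^{-n}<\infty$, so $R_{n,p_n}\asto 0$ by Borel--Cantelli; yet $\max_{\ell\in[N_n]}R_{n,\ell}(\omega)=1$ for every $\omega$ and every $n$. So no argument that uses only the literal ``for all deterministic sequences'' hypothesis can close part (1), and your proposed route is unsalvageable as stated.

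The paper's proof takes a route you never land on: it concatenates the whole triangular array into a single sequence $W_t=R_{n,\ell}$ (enumerating all pairs $(n,\ell)$ in order), asserts that $W_t\asto c$ as $t\to\infty$, and then reads off the max statement from the tail-event characterization of almost sure convergence, $\lim_{s\to\infty}\PP\bigl(\bigcup_{t\ge s}\{|W_t-c|>\epsilon\}\bigr)=0$, since the tail union over $t$ contains $\bigcup_{\ell\in[N_n]}\{|R_{n,\ell}-c|>\epsilon\}$ for all large $n$. The load-bearing step there is the assertion $W_t\asto c$, i.e.\ that the \emph{entire array} converges jointly (for a.e.\ $\omega$, only finitely many pairs $(n,\ell)$ have $|R_{n,\ell}-c|>\epsilon$) --- a strictly stronger input than convergence along each deterministic selection, as the example above shows. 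If you want a watertight argument, either take that joint convergence as the working hypothesis (which is what the applications in the paper effectively supply, via tail bounds that are summable uniformly in $\ell$), or, as you briefly suggest, run a union bound over $\ell\in[N_n]$ using the quantitative tail estimates behind the convergence; you cannot get there from the qualitative deterministic-sequence hypothesis alone.
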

    \begin{proof}[Proof of \Cref{lem:conv_cond_expectation}]
        Proofs for the two parts are split below.
        \paragraph{Part (1).\hspace{-2mm}}
        We concatenate the sets $\{R_{n,\ell}\}_{\ell=1}^{N_n}$ for all $n\in\NN$ to form a new sequence
        $$W=(W_1,W_2,\cdots)=(R_{1,1},\cdots,R_{1,N_1},R_{2,1},\cdots,R_{2,N_2},\cdots).$$
        That is, $W_t=R_{n,\ell}$ for $t=\sum_{j=1}^{n}N_{j} + \ell$.
        See \Cref{fig:cesaro-mean-triangular-array} for an illustration.
        \begin{figure}[!ht]
            \centering
            \includegraphics[width=0.75\textwidth]{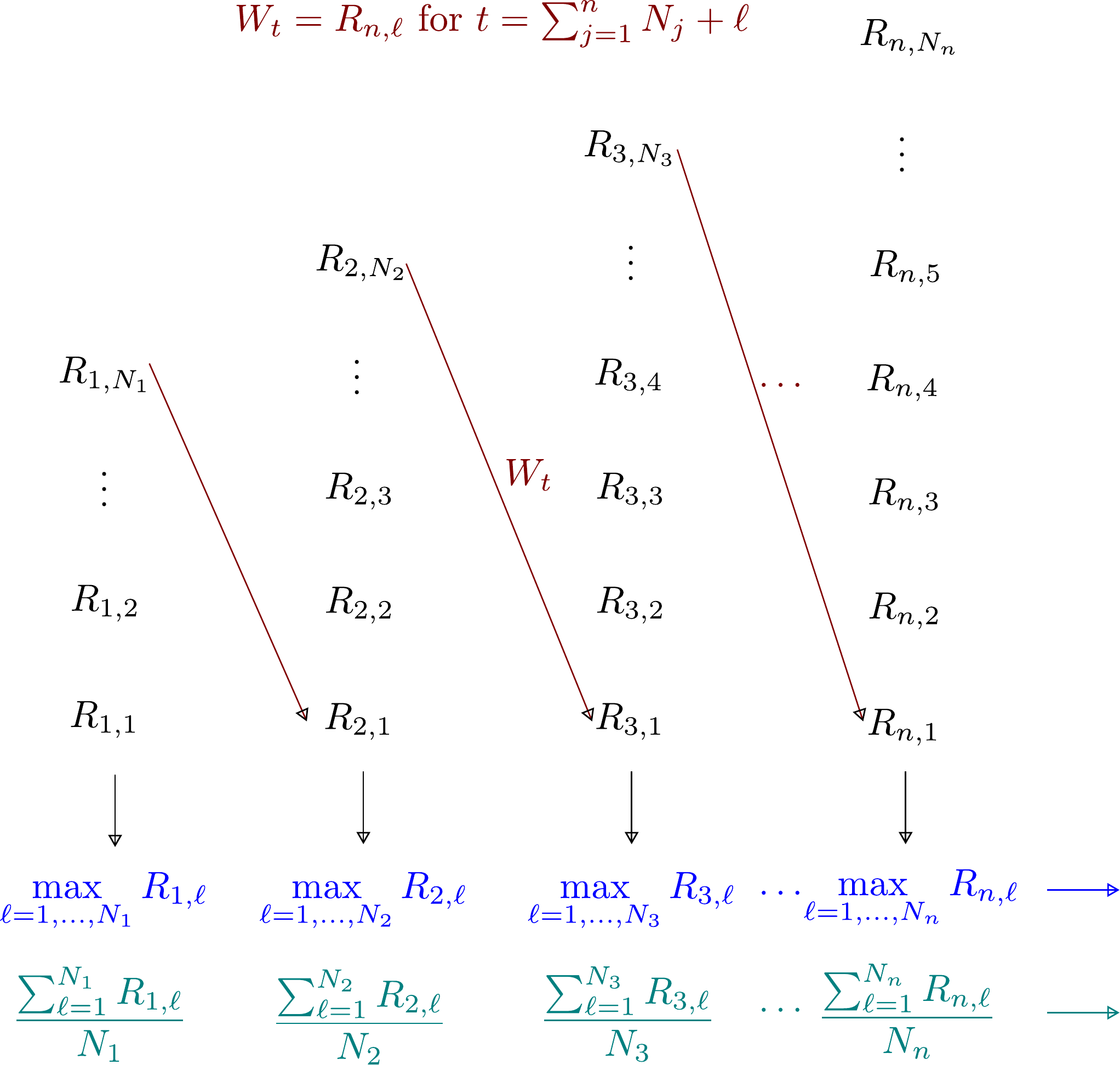}
            \caption{Illustration of the concatenated sequence $\{ W_t \}$ (in maroon) constructed from the triangle array $\{ R_{n, \ell} \}_{\ell=1}^{N_n}, n \in \NN$ (in black), used in the proof of \Cref{lem:conv_cond_expectation},
            along with the max sequence (in blue) and the average sequence (in teal).}
            \label{fig:cesaro-mean-triangular-array}
        \end{figure}
        Because $N_n\rightarrow \infty$ if and only if $n\rightarrow\infty$ if and only if $t\rightarrow\infty$,
        it holds that $W_{t}\asto c$ as $t\rightarrow\infty$.
        Then, by \citet[Chapter 2, Section 10, Theorem 1]{shiryaev2016probability}, we have that for all $\epsilon>0$, 
        \begin{align*}
            \lim_{s\rightarrow\infty}\PP\left(\bigcup_{t=s}^{\infty}\{\omega\in\Omega:|W_t(\omega)-c|> \epsilon\}\right) = 0.
        \end{align*}
        Now, for $s\in\NN$, let $m$ be the smallest natural number such that $\sum_{j=1}^mN_j\geq s$. Since
        \begin{align*}
            \bigcup_{t=s}^{\infty}\{\omega\in\Omega:|W_t(\omega)-c| > \epsilon\}&\supseteq \bigcup_{n=m}^{\infty}\bigcup_{\ell=1}^{N_n}\{\omega\in\Omega:|R_{n,\ell}(\omega)-c|> \epsilon\} \\
            &= \bigcup_{n=m}^{\infty}\left\{\omega\in\Omega:\max_{\ell\in[N_n]}\left|R_{n,\ell}(\omega)-c\right|> \epsilon\right\}.
        \end{align*}
        We further have
        \begin{align*}
            0 & \leq \lim_{m\rightarrow\infty } \PP\left(\bigcup_{n=m}^{\infty}\left\{\omega\in\Omega:\max_{\ell\in[N_n]}\left|R_{n,\ell}(\omega)-c\right|> \epsilon\right\}\right)\leq \lim_{s\rightarrow\infty }\PP\left(\bigcup_{t=s}^{\infty}\{\omega\in\Omega:|W_t(\omega)-c|> \epsilon\}\right) = 0,
        \end{align*}
        or in other words,
        $$\lim_{m\rightarrow\infty } \PP\left(\bigcup_{n=m}^{\infty}\left\{\omega\in\Omega:\max_{\ell\in[N_n]}\left|R_{n,\ell}(\omega)-c\right|> \epsilon\right\}\right) = 0.$$
        Thus, we have that $\max_{\ell\in[N_n]}\left|R_{n,\ell}(\omega)-c\right|\asto 0$ by \citet[Chapter 2, Section 10, Theorem 1]{shiryaev2016probability}.
        
        \paragraph{Part (2).\hspace{-2mm}}
        We will use the first part.
        Note that by triangle inequality, $$\left|N_n^{-1}\sum_{\ell=1}^{N_n}R_{n,\ell}- c\right|\leq N_n^{-1}\sum_{\ell=1}^{N_n}\left|R_{n,\ell}- c\right|\leq \max_{\ell\in[N_n]}\left|R_{n,\ell}(\omega)-c\right|.$$
        Invoking the first part,
        we have that
        $N_n^{-1}\sum_{\ell=1}^{N_n}R_{n,\ell}\asto c$.

    \end{proof}
    
\clearpage
\section{Additional numerical illustrations}\label{sec:appendix-additional-numerical-result}

\subsection{Additional illustrations for \Cref{thm:ver-with-replacement}}
\label{sec:empirical_varying_SNR}

\subsubsection{Prediction risk curves for subagged ridgeless and ridge predictors with varying $M$}
    \begin{figure}[!ht]
        \centering
        \includegraphics[width=0.90\textwidth]{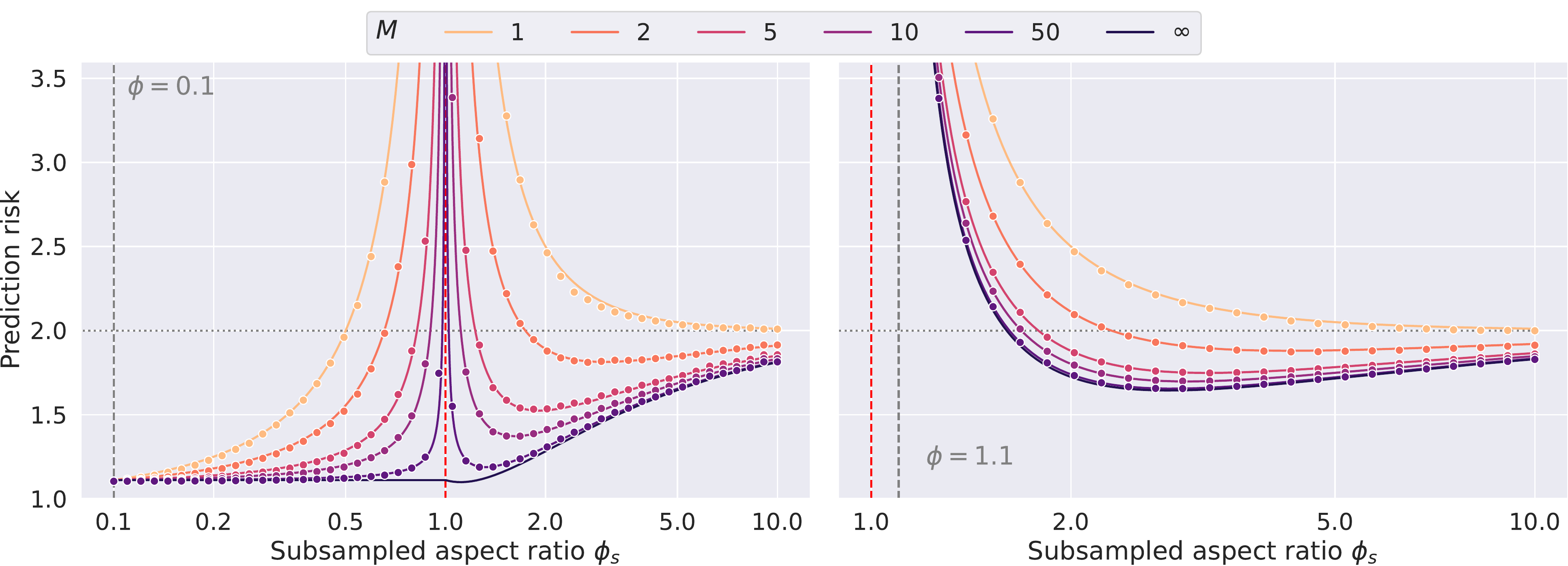}
        \caption{Asymptotic prediction risk curves in \eqref{eq:risk-det-with-replacement} for ridgeless predictors ($\lambda=0$), under model \eqref{eq:model} when $\rho^2=1$ and $\sigma^2=1$ for varying bag size $k=\lfloor p/\phi_s\rfloor$ and number of bags $M$.
         The null risk is marked as a dotted line. For each value of $M$, the points denote finite-sample risks averaged over 100 dataset repetitions, with $n=1000$ and $p=\lfloor n\phi\rfloor$.
         The left and the right panels correspond to the cases when $p<n$ ($\phi=0.1$) and $p>n$ ($\phi=1.1$), respectively.
        }
        \label{fig:ridgeless-with-replacement-varing-M-iso}
    \end{figure}

    \begin{figure}[!ht]
        \centering
        \includegraphics[width=0.90\textwidth]{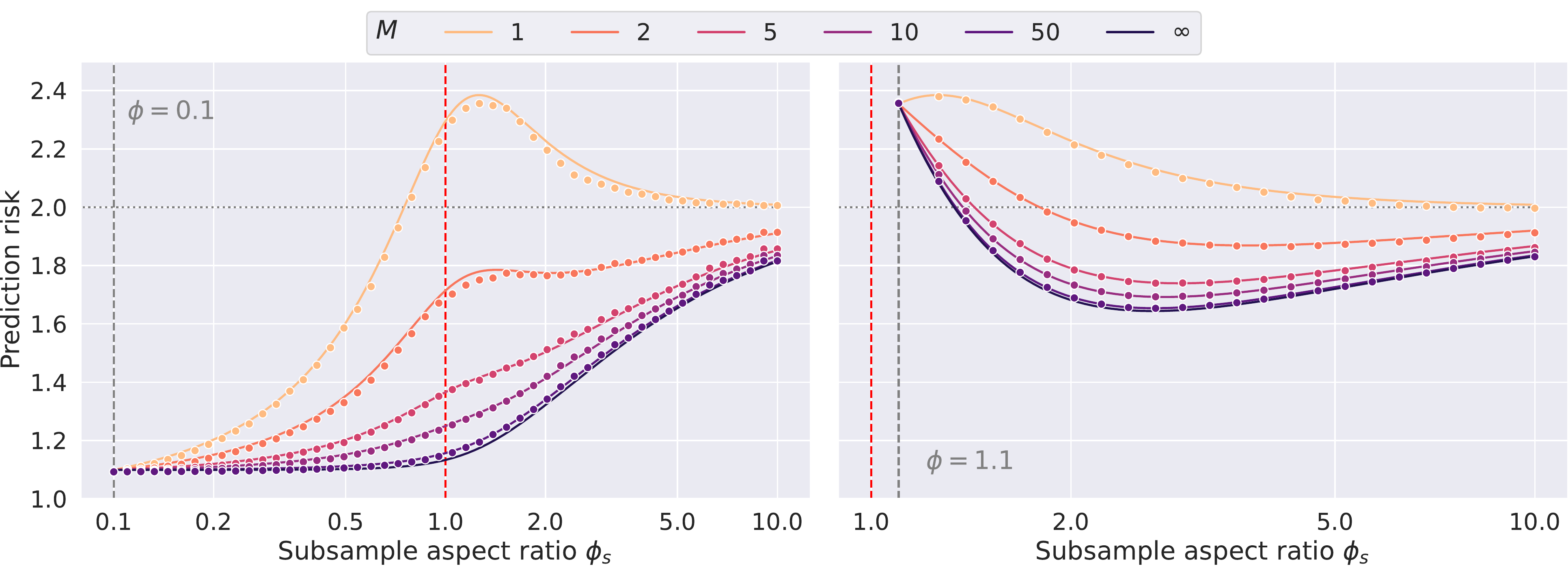}
        \caption{Asymptotic prediction risk curves in \eqref{eq:risk-det-with-replacement} for subagged ridge predictors ($\lambda=0.1$), under model \eqref{eq:model} when $\rho^2=1$ and $\sigma^2=1$ for varying bag size $k=\lfloor p/\phi_s\rfloor$ and number of bags $M$.
         The null risk is marked as a dotted line. For each value of $M$, the points denote finite-sample risks averaged over 100 dataset repetitions, with $n=1000$ and $p=\lfloor n\phi\rfloor$.
         The left and the right panels correspond to the cases when $p<n$ ($\phi=0.1$) and $p>n$ ($\phi=1.1$), respectively.}
        \label{fig:ridge-with-replacement-varing-M-iso}
    \end{figure}

\clearpage
\subsubsection{Bias-variance curves for subagged ridgeless and ridge predictors with varying $M$}
    
    \begin{figure}[!ht]
        \centering
        \includegraphics[width=0.90\textwidth]{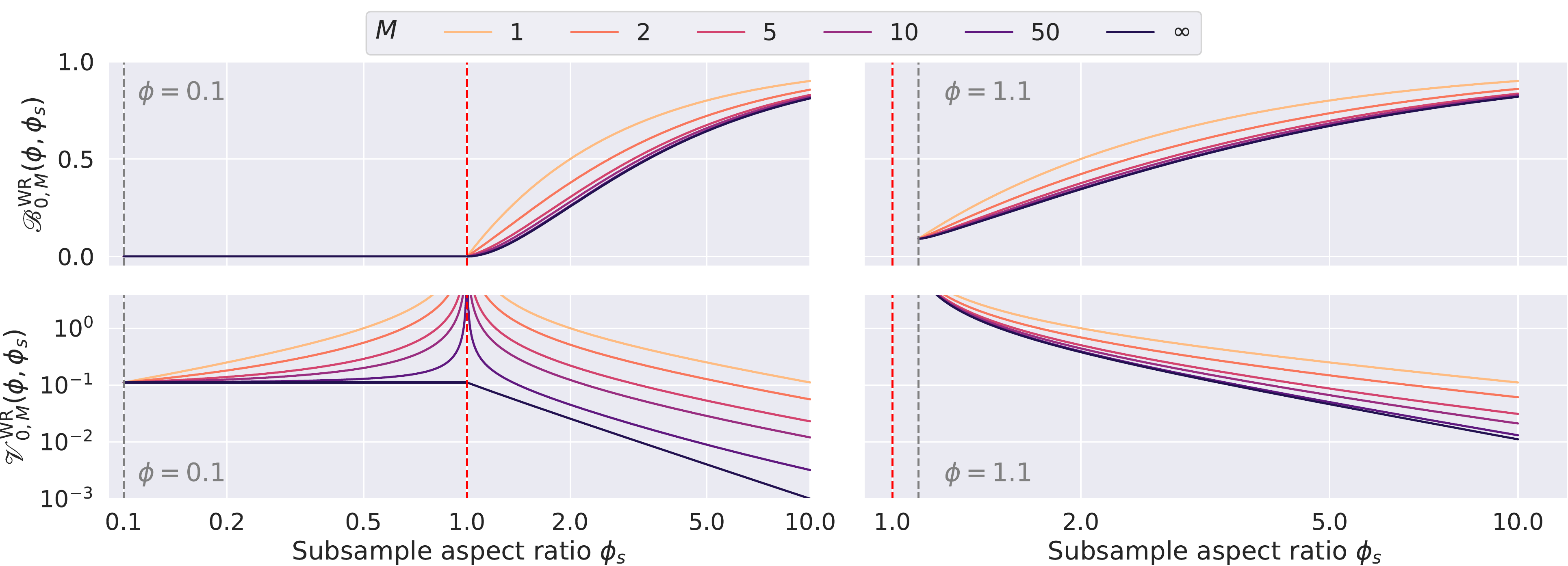}
        \caption{Asymptotic bias and variance curves in \eqref{eq:Blam_V_lam} for subagged ridgeless predictors ($\lambda=0$), under model \eqref{eq:model} when $\rho^2=1$ and $\sigma^2=0.25$ for varying bag size $k=\lfloor p/\phi_s\rfloor$ and number of bags $M$.
        The left and the right panels correspond to the cases when $p<n$ ($\phi=0.1$) and $p>n$ ($\phi=1.1$), respectively.
        The values of $\VzeroM{M}{\phi}$ are shown on a $\log$-10 scale.
        }
        \label{fig:ridgeless-bias-var-with-replacement-iso}
    \end{figure}
    
    \begin{figure}[!ht]
        \centering
        \includegraphics[width=0.90\textwidth]{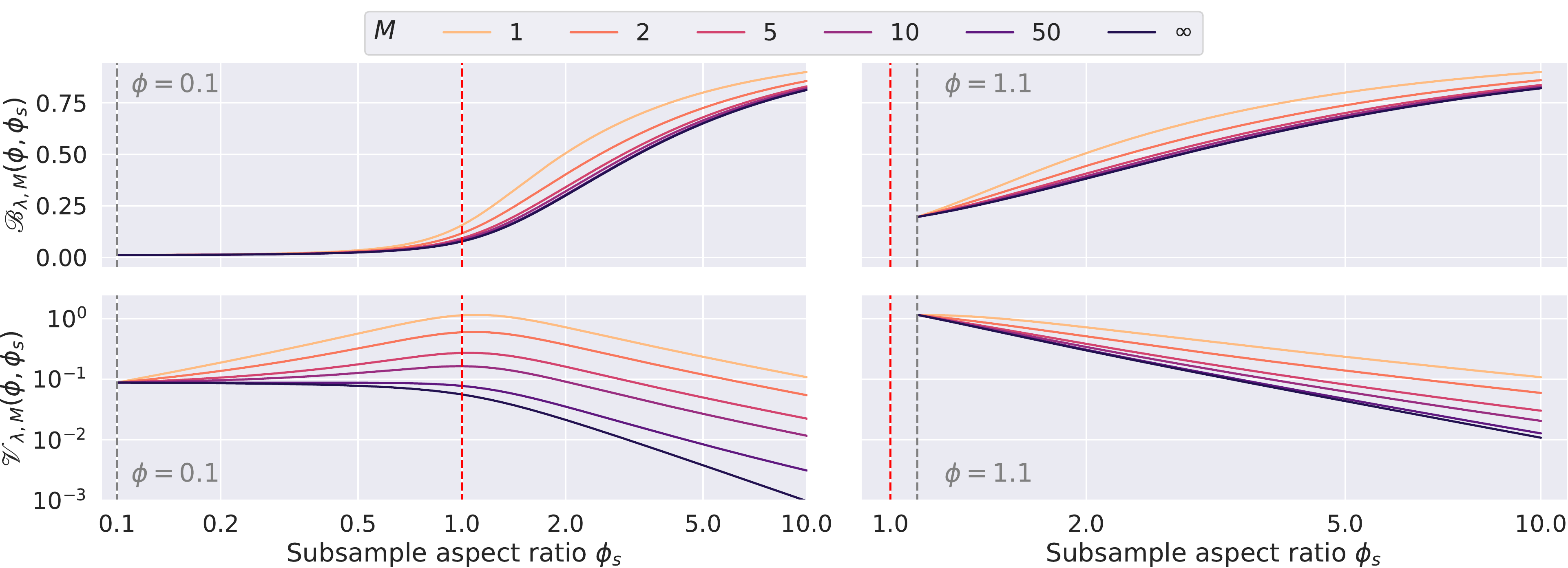}
        \caption{Asymptotic bias and variance curves in \eqref{eq:Blam_V_lam} for subagged ridge predictors ($\lambda=0.1$), under model \eqref{eq:model} when $\rho^2=1$ and $\sigma^2=1$ for varying bag size $k=\lfloor p/\phi_s\rfloor$ and number of bags $M$.
        The left and the right panels correspond to the cases when $p<n$ ($\phi=0.1$) and $p>n$ ($\phi=1.1$), respectively.
        The values of $\VzeroM{M}{\phi}$ are shown in $\log$-10 scale.
        }
        \label{fig:ridge-bias-var-with-replacement-iso}
    \end{figure}

    \begin{figure}[!ht]
        \centering
        \includegraphics[width=0.90\textwidth]{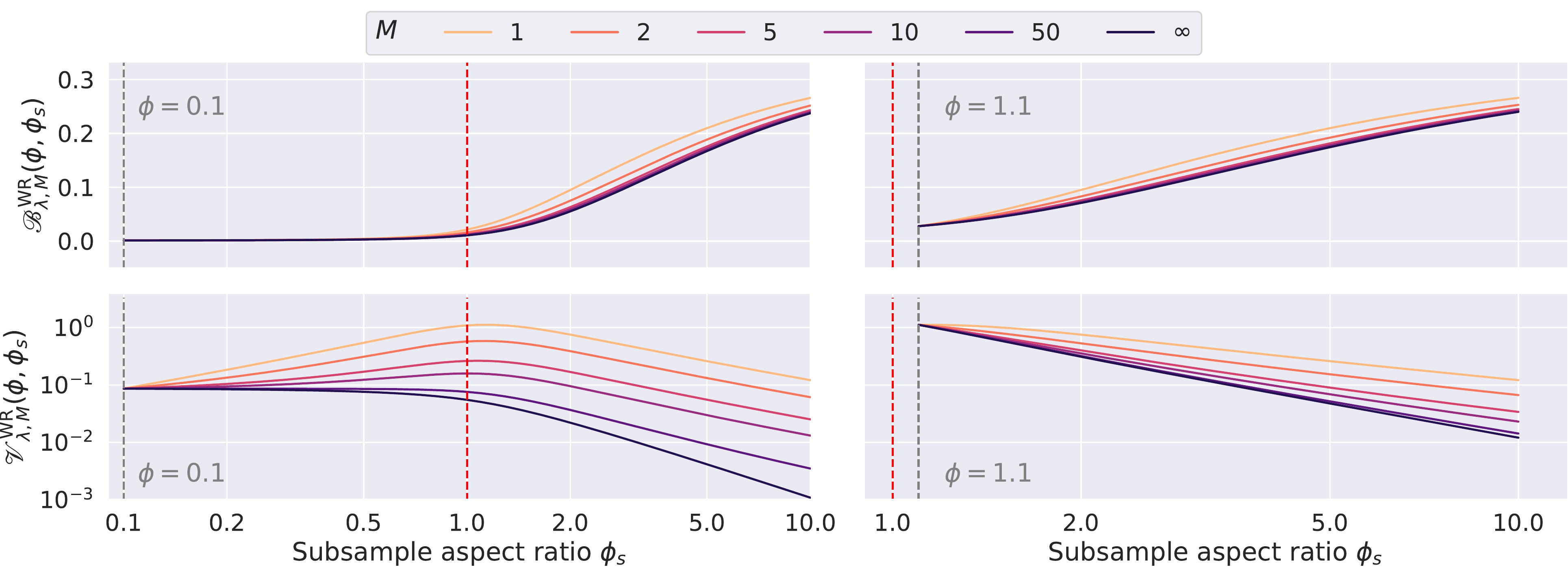}
        \caption{Asymptotic bias and variance curves in \eqref{eq:Blam_V_lam} for subagged ridge predictors ($\lambda=0.1$), under model \eqref{eq:model-ar1} when $\rho^2=1$ and $\sigma^2=1$ for varying bag size $k=\lfloor p/\phi_s\rfloor$ and number of bags $M$.
        The left and the right panels correspond to the cases when $p<n$ ($\phi=0.1$) and $p>n$ ($\phi=1.1$), respectively.
        The values of $\VzeroM{M}{\phi}$ are shown in $\log$-10 scale.
        }
        \label{fig:ridge-bias-var-with-replacement}
    \end{figure}

\clearpage    
\subsubsection{Bias-variance curves for subagged ridge predictors with varying $\lambda$ ($M = 1$)}

    \begin{figure}[!ht]
        \centering
        \includegraphics[width=0.90\textwidth]{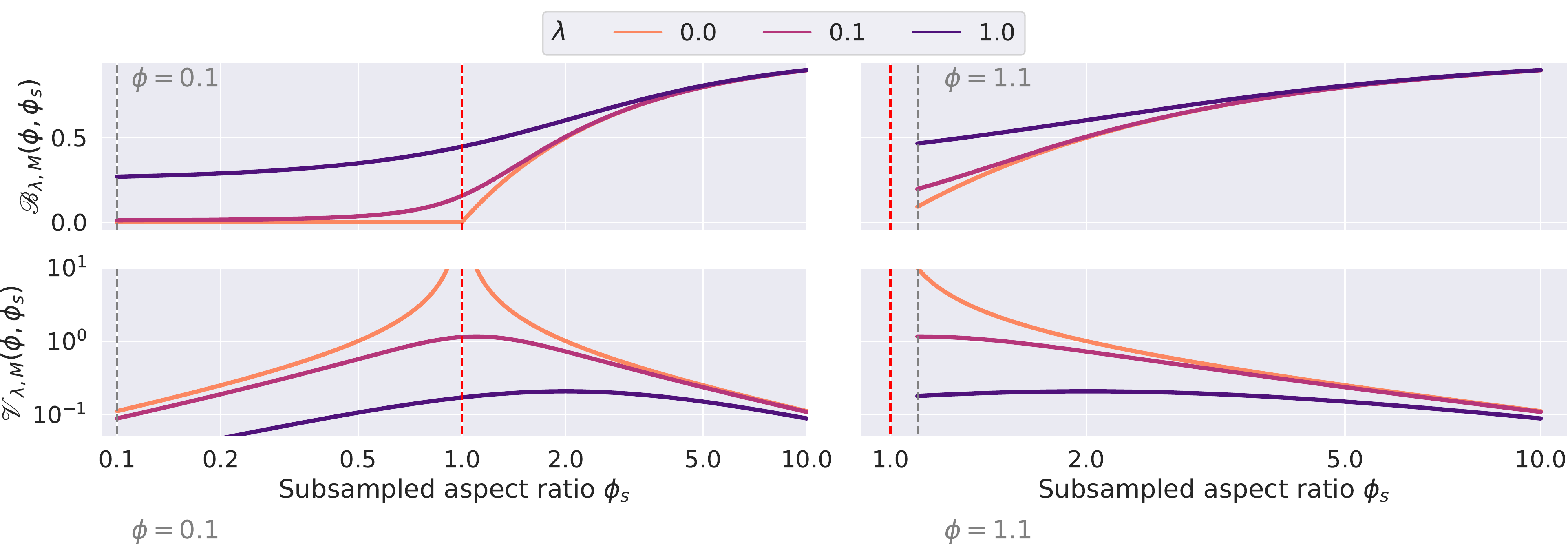}
        \caption{Asymptotic bias and variance curves in \eqref{eq:Blam_V_lam} for subagged ridge and ridgeless predictors with number of bags $M=1$, under model \eqref{eq:model} when $\rho^2=1$ and $\sigma^2=1$ for varying regularization parameter $\lambda$.
        The left and the right panels correspond to the cases when $p<n$ ($\phi=0.1$) and $p>n$ ($\phi=1.1$), respectively.
        The values of $\VzeroM{M}{\phi}$ are shown in $\log$-10 scale.
        }
        \label{fig:bias-var-with-replacement-varying_lam_M1}
    \end{figure}

\subsubsection{Bias-variance curves for subagged ridge predictors with varying $\lambda$ ($M = \infty$)} 

    \begin{figure}[!ht]
        \centering
        \includegraphics[width=0.90\textwidth]{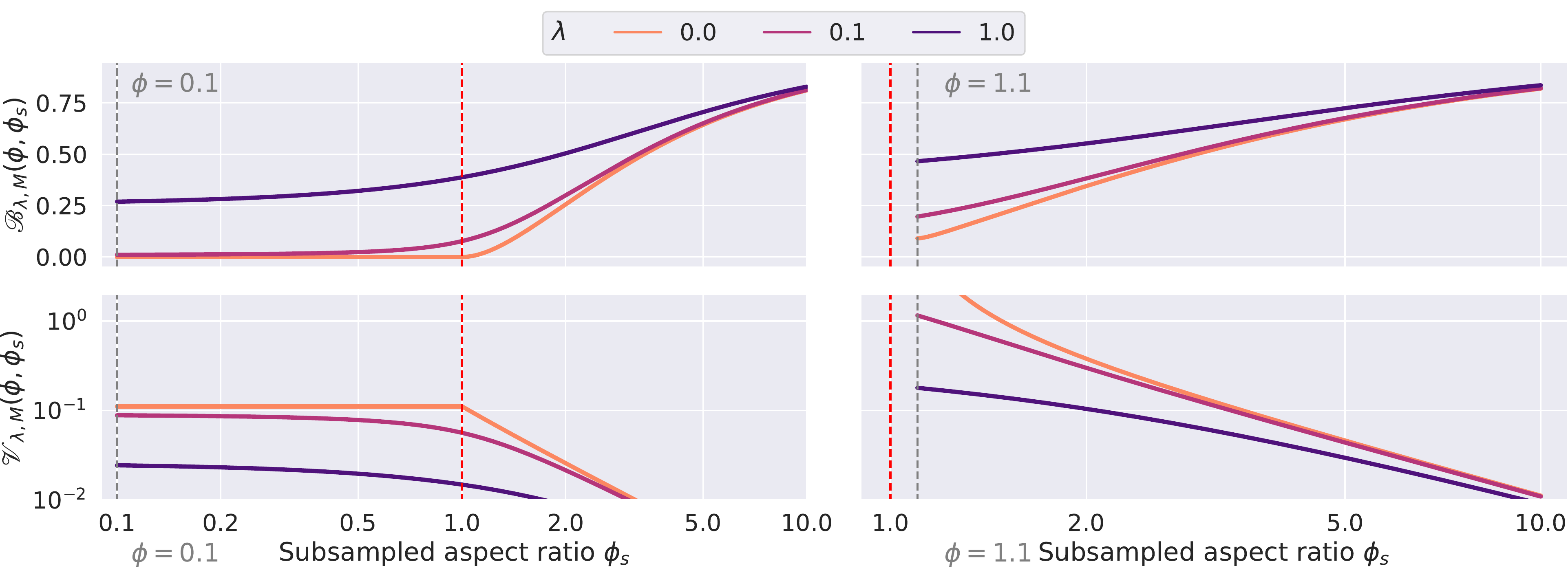}
        \caption{Asymptotic bias and variance curves in \eqref{eq:Blam_V_lam} for subagged ridge and ridgeless predictors with number of bags $M=\infty$,
        under model \eqref{eq:model} when $\rho^2=1$ and $\sigma^2=1$ for varying regularization parameter $\lambda$.
        The left and the right panels correspond to the cases when $p<n$ ($\phi=0.1$) and $p>n$ ($\phi=1.1$), respectively.
        The values of $\VzeroM{M}{\phi}$ are shown in $\log$-10 scale.
        }
        \label{fig:bias-var-with-replacement-varying_lam_Minf}
    \end{figure}

\clearpage
\subsection{Additional illustrations for \Cref{thm:ver-without-replacement}}    

\subsubsection{Prediction risk curves for \splagged ridgeless and ridge predictors with varying $M$}
    \begin{figure}[!ht]
        \centering
        \includegraphics[width=0.90\textwidth]{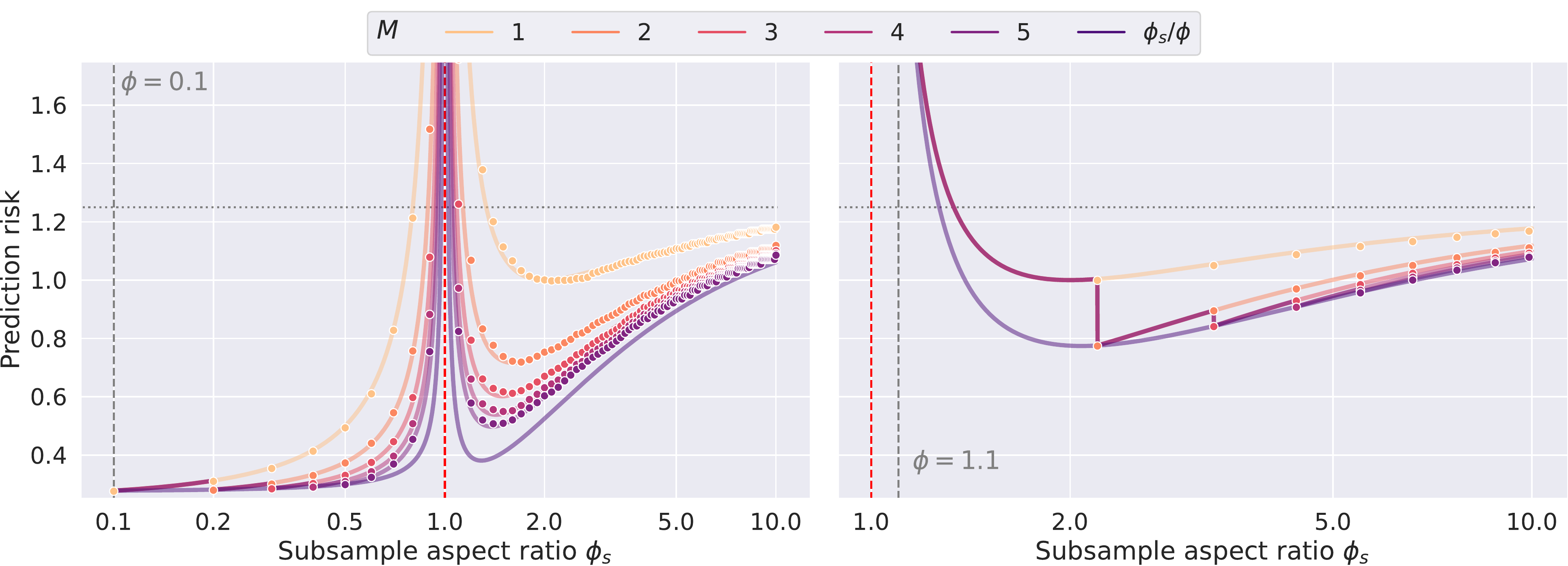}
        \caption{Asymptotic prediction risk curves in \eqref{eq:risk-det-without-replacement} for \splagged ridgeless predictors ($\lambda=0$), under model \eqref{eq:model} when $\rho^2=1$ and $\sigma^2=0.25$ for varying bag size $k=\lfloor p/\phi_s\rfloor$ and number of bags $M$ without replacement.
        The left and the right panels correspond to the cases when $p<n$ ($\phi=0.1$) and $p>n$ ($\phi=1.1$), respectively.
        The null risk is marked as a dotted line.
        For each value of $M$, the points denote finite-sample risks averaged over 100 dataset repetitions, with $n=1000$ and $p=\lfloor n\phi\rfloor$.}
        \label{fig:ridgeless-without-replacement-varing-M-iso}
    \end{figure}

    \begin{figure}[!ht]
        \centering
        \includegraphics[width=0.90\textwidth]{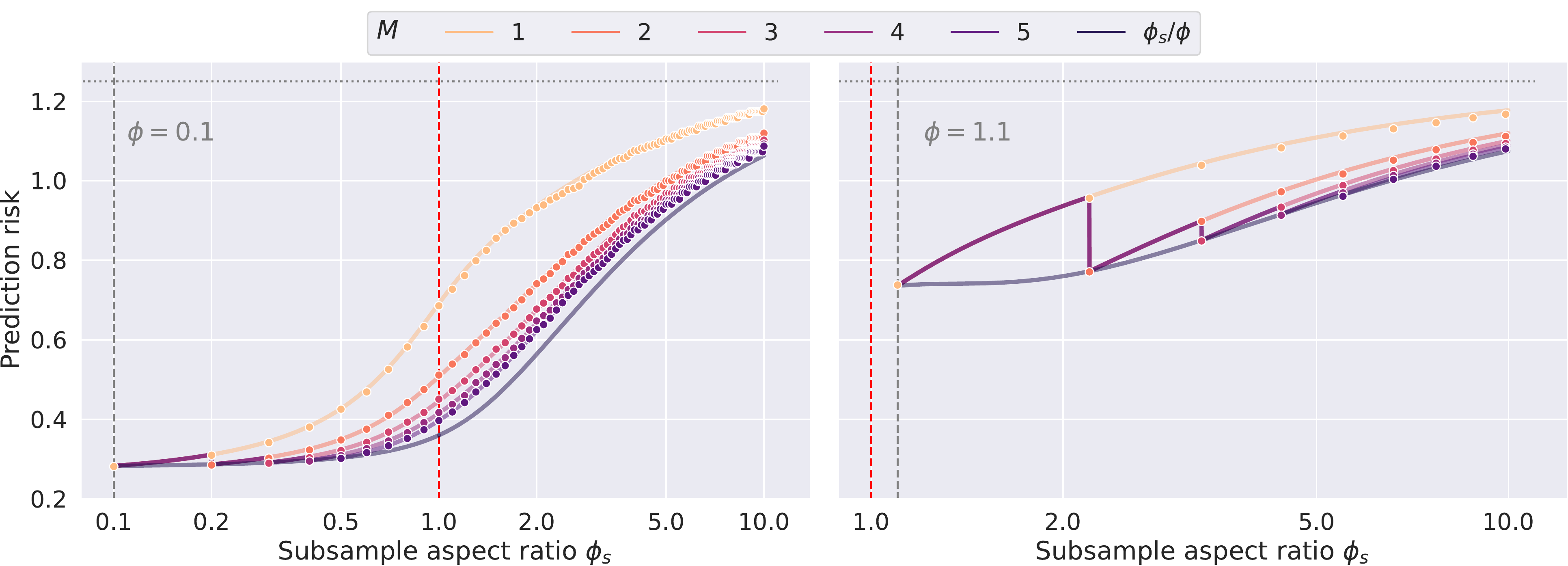}
        \caption{Asymptotic prediction risk curves in \eqref{eq:risk-det-without-replacement} for \splagged ridge predictors ($\lambda=0.1$), under model \eqref{eq:model} when $\rho^2=1$ and $\sigma^2=0.25$ for varying bag size $k=\lfloor p/\phi_s\rfloor$ and number of bags $M$ without replacement.
        The left and the right panels correspond to the cases when $p<n$ ($\phi=0.1$) and $p>n$ ($\phi=1.1$), respectively.
         The null risk is marked as a dotted line. For each value of $M$, the points denote finite-sample risks averaged over 100 dataset repetitions, with $n=1000$ and $p=\lfloor n\phi\rfloor$.}
        \label{fig:ridge-without-replacement-varing-M-iso}
\end{figure}

\clearpage
\subsubsection{Bias-variance curves for ridgeless and ridge predictors with varying $M$}

    \begin{figure}[!ht]
        \centering
        \includegraphics[width=0.90\textwidth]{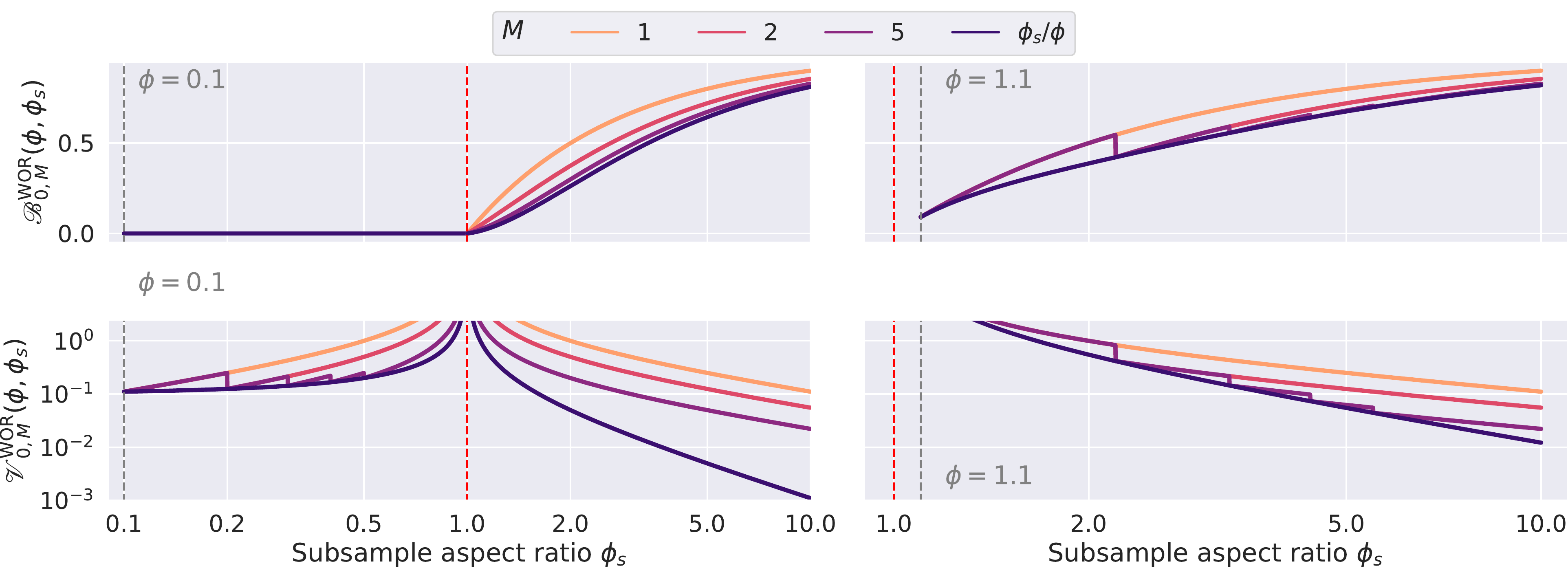}
        \caption{Asymptotic bias and variance curves in \eqref{eq:Blam_V_lam} for \splagged ridgeless predictors ($\lambda=0$), under model \eqref{eq:model} when $\rho^2=1$ and $\sigma^2=1$ for varying bag size $k=\lfloor p/\phi_s\rfloor$ and number of bags $M$ without replacement.
        The left and the right panels correspond to the cases when $p<n$ ($\phi=0.1$) and $p>n$ ($\phi=1.1$), respectively.
        The values of $\bVzeroM{M}{\phi}$ are shown in $\log$-10 scale.
        }
        \label{fig:ridgeless-bias-var-without-replacement-iso}
    \end{figure}
    
    \begin{figure}[!ht]
        \centering
        \includegraphics[width=0.90\textwidth]{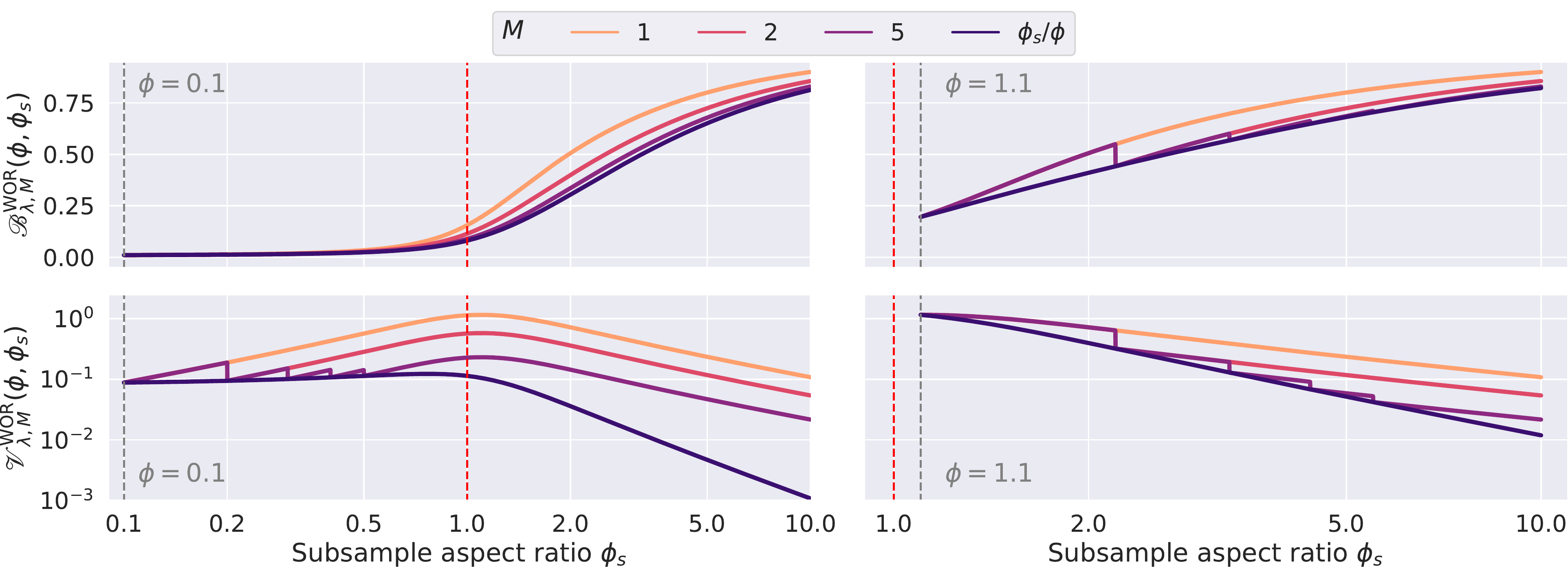}
        \caption{Asymptotic bias and variance curves in \eqref{eq:Blam_V_lam} for \splagged ridge predictors ($\lambda=0.1$), under model \eqref{eq:model} when $\rho^2=1$ and $\sigma^2=1$ for varying bag size $k=\lfloor p/\phi_s\rfloor$ and number of bags $M$ without replacement.
        The left and the right panels correspond to the cases when $p<n$ ($\phi=0.1$) and $p>n$ ($\phi=1.1$), respectively.
        The values of $\bVlamM{M}{\phi}$ are shown in $\log$-10 scale.
        }
        \label{fig:ridge-bias-var-without-replacement-iso}
    \end{figure}
    
    \begin{figure}[!ht]
        \centering
        \includegraphics[width=0.90\textwidth]{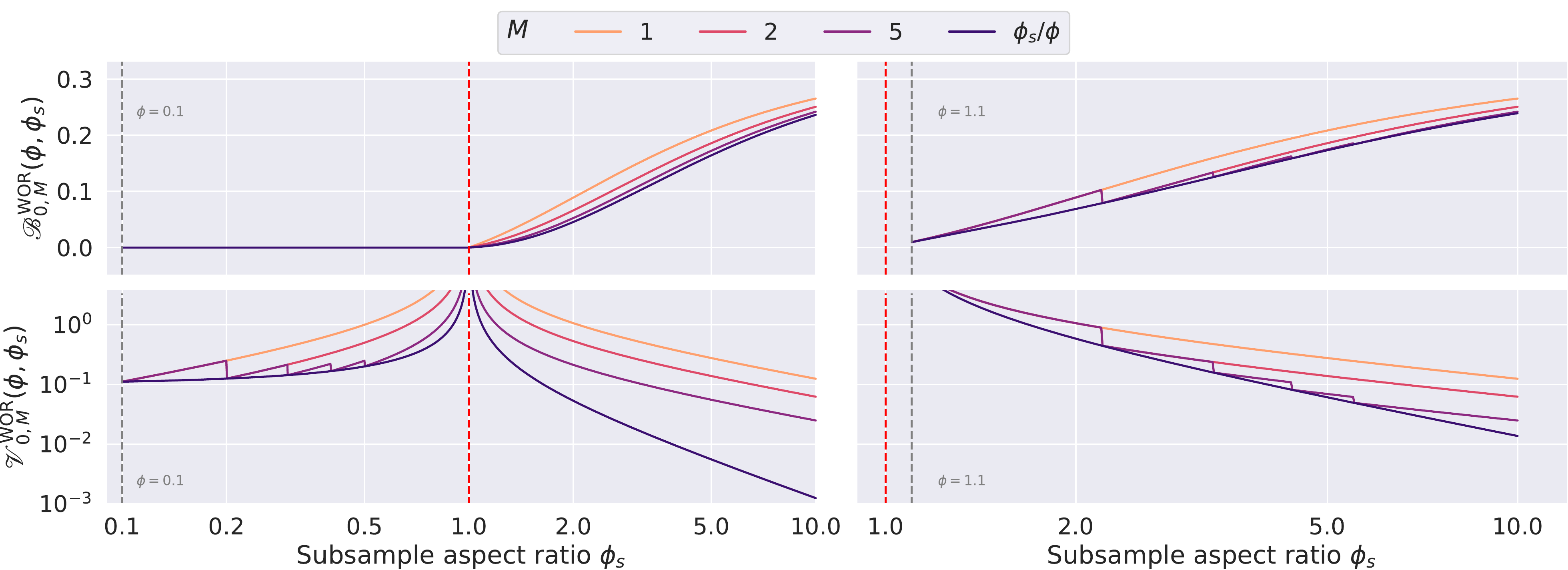}
        \caption{Asymptotic bias and variance curves in \eqref{eq:Blam_V_lam} for \splagged ridgeless predictors ($\lambda=0$), under model \eqref{eq:model-ar1} when $\rhoar=0.25$ and $\sigma^2=1$ for varying bag size $k=\lfloor p/\phi_s\rfloor$ and number of bags $M$ without replacement.
        The left and the right panels correspond to the cases when $p<n$ ($\phi=0.1$) and $p>n$ ($\phi=1.1$), respectively.
        The values of $\bVzeroM{M}{\phi}$ are shown in $\log$-10 scale.
        }
        \label{fig:ridgeless-bias-var-without-replacement}
    \end{figure}
    
    \begin{figure}[!ht]
        \centering
        \includegraphics[width=0.90\textwidth]{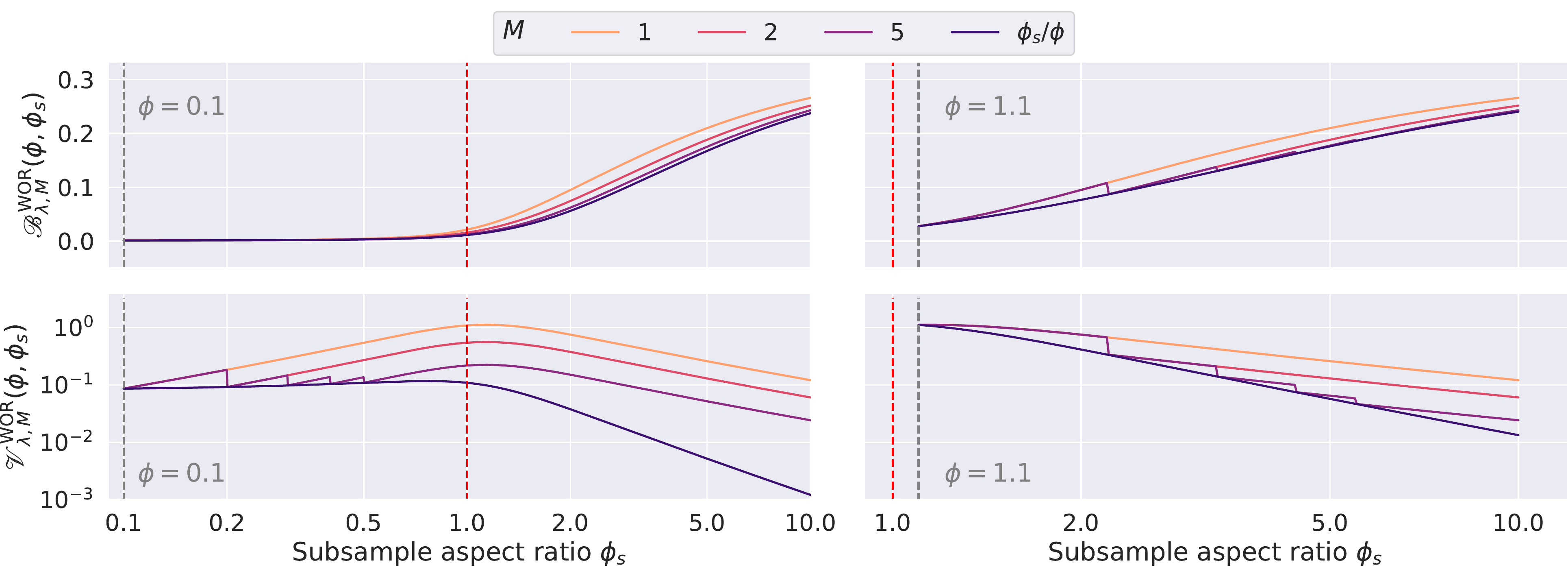}
        \caption{Asymptotic bias and variance curves in \eqref{eq:Blam_V_lam} for \splagged ridge predictors ($\lambda=0.1$), under model \eqref{eq:model-ar1} when $\rhoar=0.25$ and $\sigma^2=1$ for varying bag size $k=\lfloor p/\phi_s\rfloor$ and number of bags $M$ without replacement.
        The left and the right panels correspond to the cases when $p<n$ ($\phi=0.1$) and $p>n$ ($\phi=1.1$), respectively.
        The values of $\bVzeroM{M}{\phi}$ are shown in $\log$-10 scale.
        }
        \label{fig:ridge-bias-var-without-replacement}
    \end{figure}

\clearpage
\subsection{Additional illustrations for \Cref{thm:cv_general}}

\subsubsection{Risk monotonization for subagged ridgeless and ridge predictors 
}

    \begin{figure}[H]
        \centering
            \includegraphics[width=0.90\textwidth]{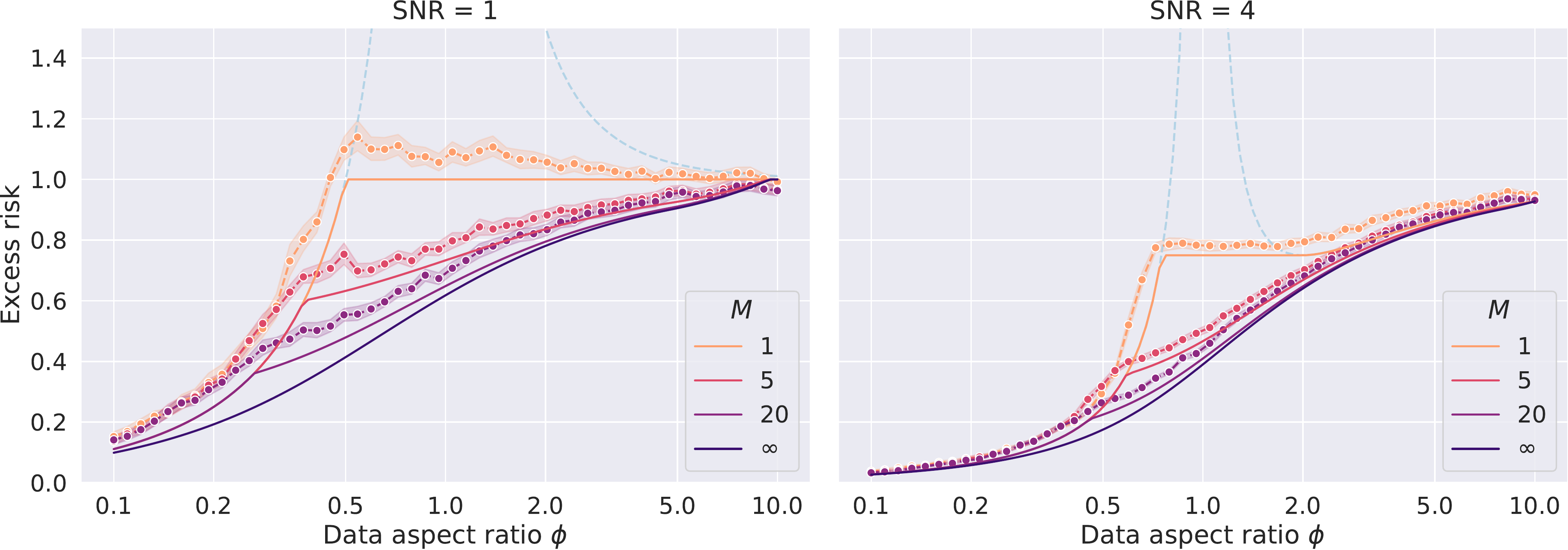}
        \caption{Asymptotic excess risk curves for cross-validated subagged ridgeless predictors ($\lambda=0$), under model \eqref{eq:model} when $\rho^2=1$ for varying 
        $\SNR$,
        subsample sizes $k=\lfloor p/\phi_s\rfloor$, and numbers of bags $M$ with replacement.
        The left and the right panels correspond to the cases when $\SNR = 1$ and $4$, respectively.
        The null risk is marked as a dotted line, and the risk for the unbagged ridgeless predictor is denoted by the dashed line. For each value of $M$, the points denote finite-sample risks, and the shaded regions denote the values within one standard deviation, with $n=1000$, $n_{\test}=63$, and $p=\lfloor n\phi\rfloor$.}
        \label{fig:ridgeless_cv_with_replacement-iso}
    \end{figure}

    \begin{figure}[H]
        \centering
        \includegraphics[width=0.90\textwidth]{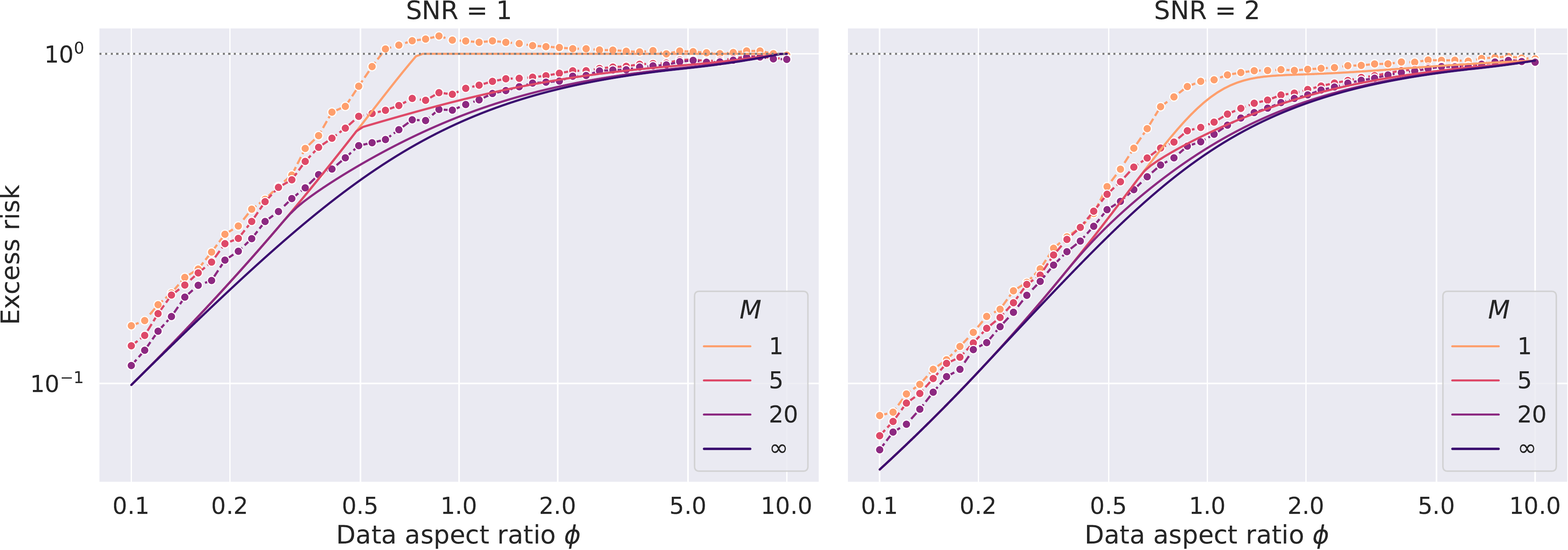}
        \caption{Asymptotic prediction risk curves for cross-validated subagged ridge predictors ($\lambda=0.1$), under model \eqref{eq:model} when $\rho^2=1$ for varying 
        $\SNR$,
        subsample sizes $k=\lfloor p/\phi_s\rfloor$ and numbers of bags $M$ with replacement.
        The left and the right panels correspond to the cases when $\SNR = 1$ and $2$, respectively.
        The null risk is marked as a dotted line. For each value of $M$, the points denote finite-sample risks averaged over 100 dataset repetitions, with $n=1000$, $n_{\test}=63$, and $p=\lfloor n\phi\rfloor$.}
        \label{fig:ridge_cv_with_replacement-iso}
    \end{figure}

    \begin{figure}[H]
        \centering
        \includegraphics[width=0.90\textwidth]{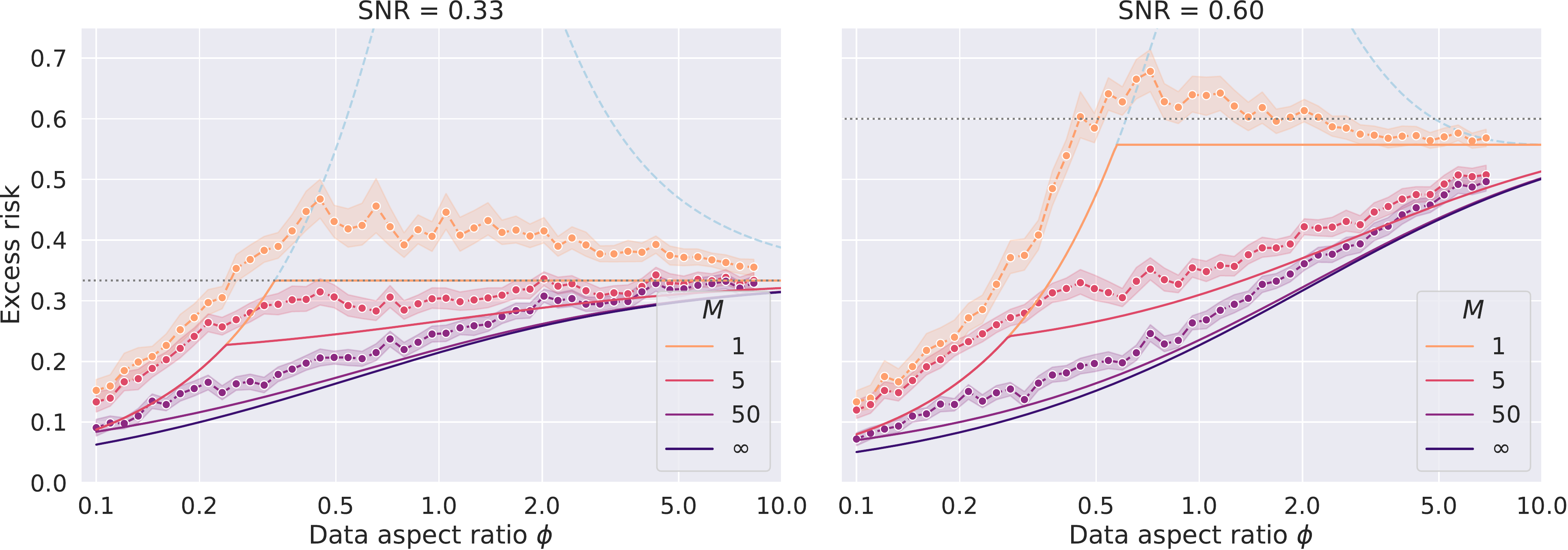}
        \caption{Asymptotic excess risk curves for cross-validated subagged ridge predictors ($\lambda=0.1$),
        under model \eqref{eq:model-ar1} when $\sigma^2=1$ for varying 
        $\SNR$,
        subsample sizes $k=\lfloor p/\phi_s\rfloor$ and numbers of bags $M$.
        The left and the right panels correspond to the cases when $\SNR = 0.33$ ($\rhoar=0.25$) and 0.6 ($\rhoar=0.5$), respectively.
        The excess null risk is marked as a dotted line, and the risk for the unbagged ridgeless predictor is denoted by the dashed line. For each value of $M$, the points denote finite-sample risks averaged over 100 dataset repetitions, and the shaded regions denote the values within one standard deviation, with $n=1000$, $n_{\test}=63$, and $p=\lfloor n\phi\rfloor$.}
    \end{figure}

\vspace*{\fill}
\subsubsection{Risk monotonization for \splagged ridgeless and ridge predictors 
}

    \begin{figure}[!ht]
        \centering
        \includegraphics[width=0.90\textwidth]{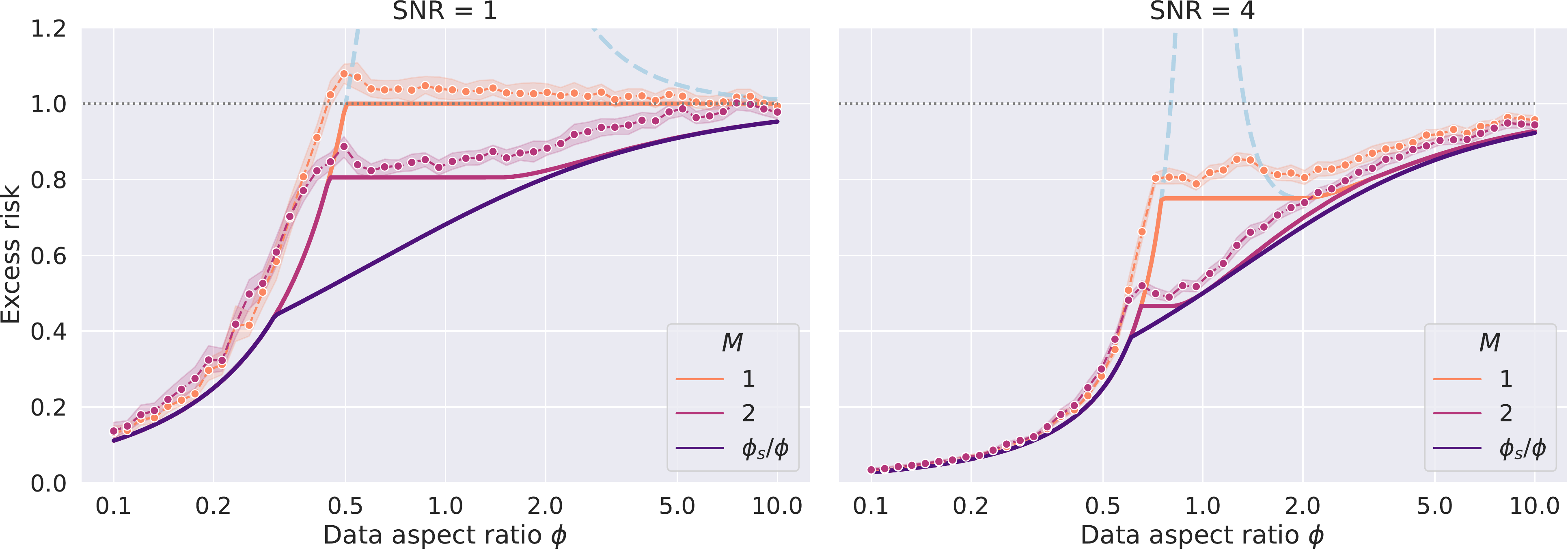}
        \caption{Asymptotic excess risk curves for cross-validated \splagged ridgeless predictors ($\lambda=0$), under model \eqref{eq:model} when $\rho^2=1$ for varying 
        $\SNR$,
        subsample sizes $k=\lfloor p/\phi_s\rfloor$, and numbers of bags $M$ without replacement.
        The left and the right panels correspond to the cases when $\SNR = 1$ and $4$, respectively.
        The null risk is marked as a dotted line, and the risk for the unbagged ridgeless predictor is denoted by the dashed line. For each value of $M$, the points denote finite-sample risks averaged over 100 dataset repetitions, and the shaded regions denote the values within one standard deviation, with $n=1000$, $n_{\test}=63$, and $p=\lfloor n\phi\rfloor$.}
        \label{fig:ridgeless_cv_without_replacement-iso}
    \end{figure}

    \begin{figure}[!ht]
        \centering
        \includegraphics[width=0.90\textwidth]{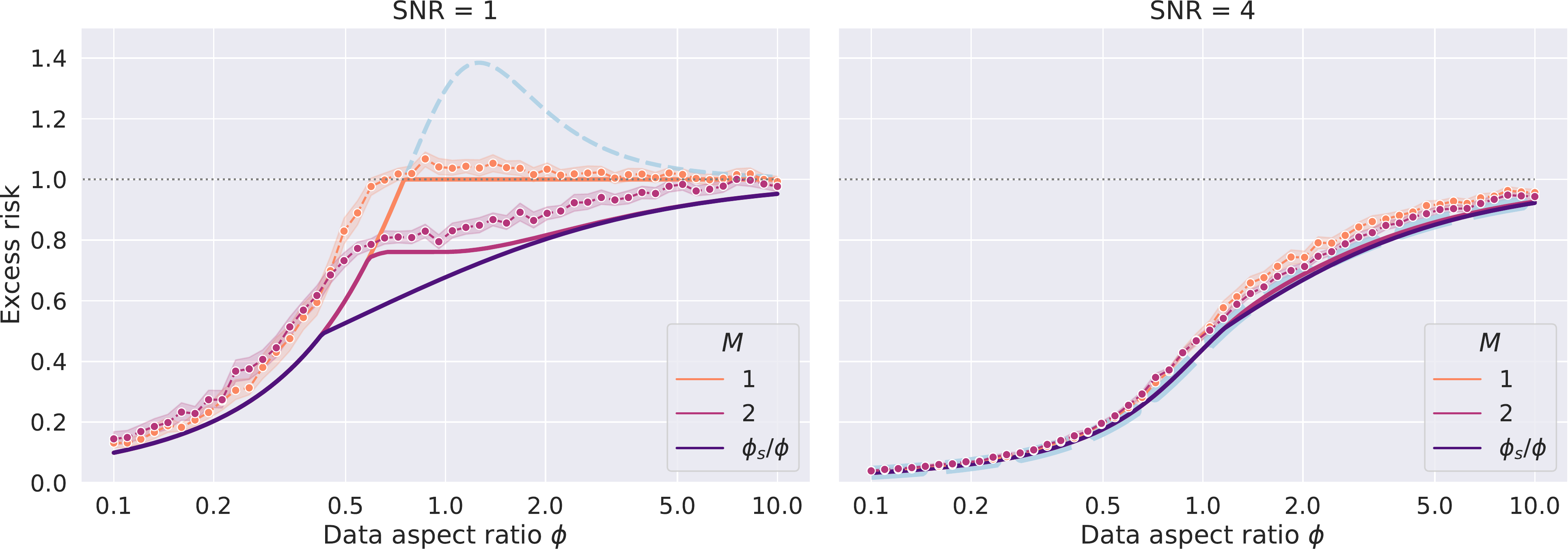}
        \caption{Asymptotic prediction risk curves for cross-validated \splagged ridge predictors ($\lambda=0.1$), under model \eqref{eq:model} when $\rho^2=1$ for varying 
        $\SNR$,
        subsample sizes $k=\lfloor p/\phi_s\rfloor$, and numbers of bags $M$ without replacement.
        The left and the right panels correspond to the cases when $\SNR = 1$ and $4$, respectively.
        The null risk is marked as a dotted line. For each value of $M$, the points denote finite-sample risks averaged over 100 dataset repetitions, with $n=1000$, $n_{\test}=63$, and $p=\lfloor n\phi\rfloor$.}
        \label{fig:ridge_cv_without_replacement-iso}
    \end{figure}
    
    \begin{figure}[H]
        \centering
        \includegraphics[width=0.90\textwidth]{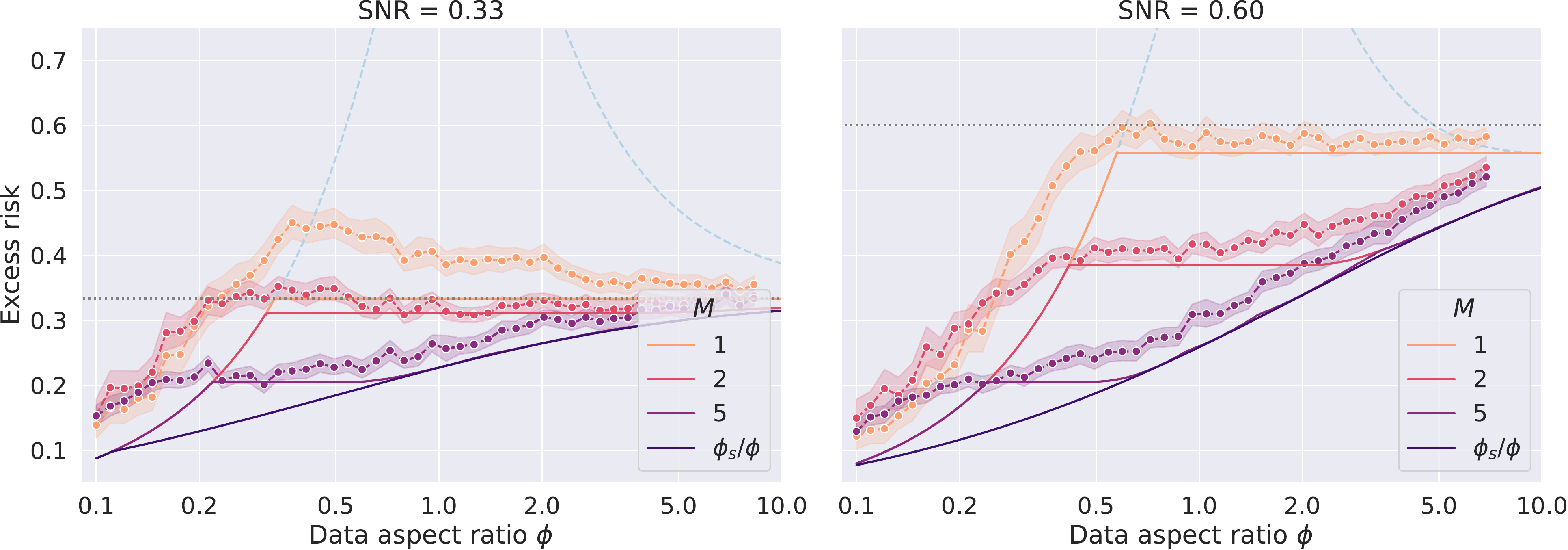}
        \caption{Asymptotic excess risk curves for cross-validated \splagged ridge predictors ($\lambda=0.1$),
        under model \eqref{eq:model-ar1} when $\sigma^2=1$ for varying 
        $\SNR$,
        subsample sizes $k=\lfloor p/\phi_s\rfloor$ and numbers of bags $M$.
        The left and the right panels correspond to the cases when $\SNR = 0.33$ ($\rhoar=0.25$) and 0.6 ($\rhoar=0.5$), respectively.
        The excess null risk is marked as a dotted line, and risk for the unbagged ridgeless predictor is denoted by the dashed line. For each value of $M$, the points denote finite-sample risks averaged over 100 dataset repetitions and the shaded regions denote the values within one standard deviation, with $n=1000$, $n_{\test}=63$, and $p=\lfloor n\phi\rfloor$.}
    \end{figure}

\clearpage
\subsection{Additional illustrations in \Cref{sec:isotropic_features}}

\subsubsection{Subagging with replacement and splagging without replacement}

    \begin{figure}[!ht]
        \centering
        \includegraphics[width=0.85\textwidth]{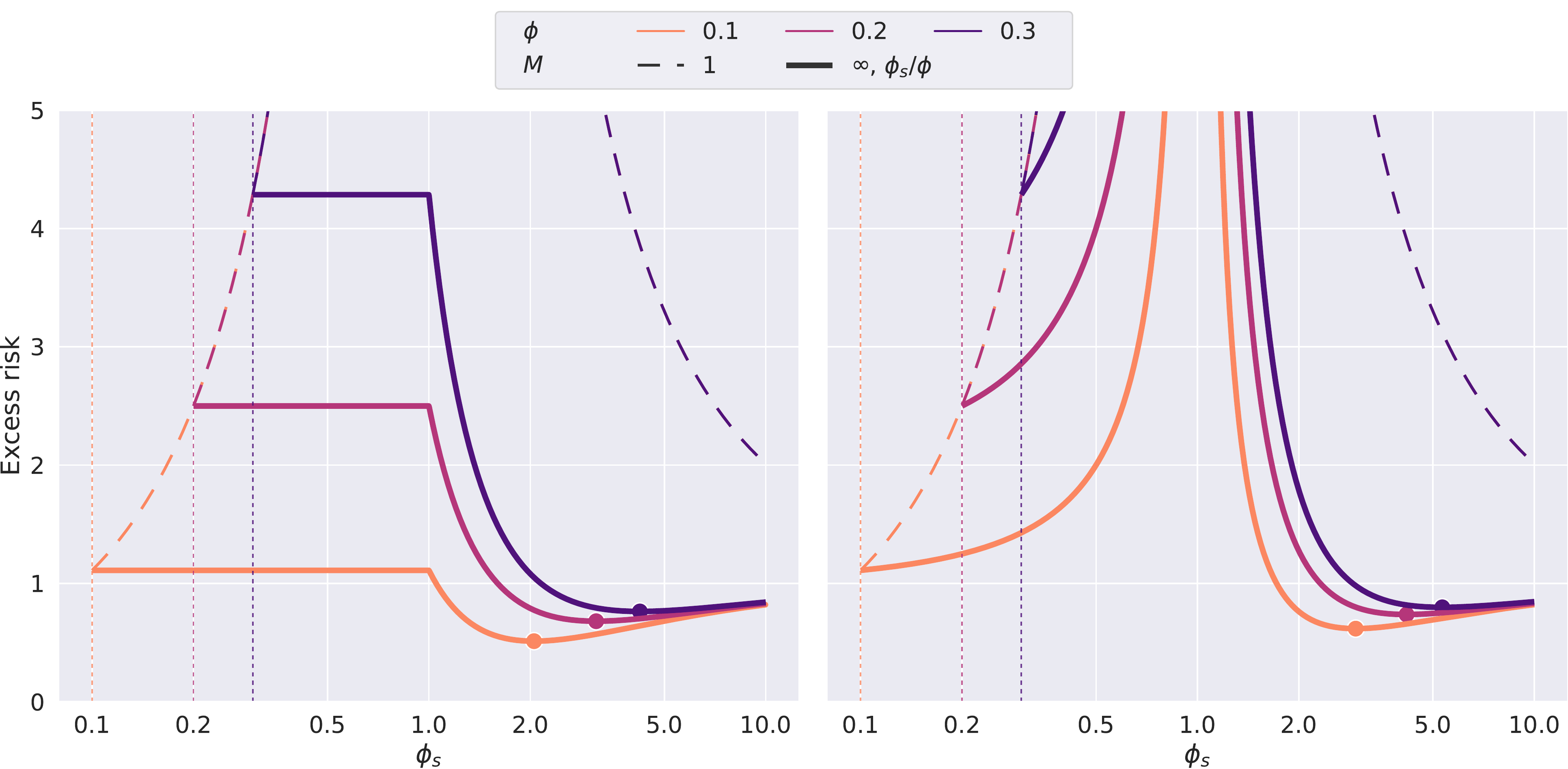}
        \caption{Asymptotic excess risk (the difference between the prediction risk and the noise level $\sigma^2$) curves of bagged ridgeless predictors ($\lambda=0$) for subagging (left panel) and \splagging (right panel), under model \eqref{eq:model} when $\rho^2=1$ and $\SNR=0.1$, for varying $\phi$ ($p<n$), bag size $k=\lfloor p/\phi_s\rfloor$ and number of bags $M$.
        The solid lines represent the optimal risks with respect to $M$ for either with replacement ($M=\infty$) or without replacement ($M=\phi_s/\phi$); 
        the dashed lines represent the risks for $M=1$;
        the dotted lines indicates the aspect ratio $\phi$ of the full dataset;
        the solid dots represent the optimal risk with respect to both $M$ and $\phi_s$.}
        \label{fig:risk_varying_phi}
    \end{figure}

    \begin{figure}[!ht]
        \centering
        \includegraphics[width=0.85\textwidth]{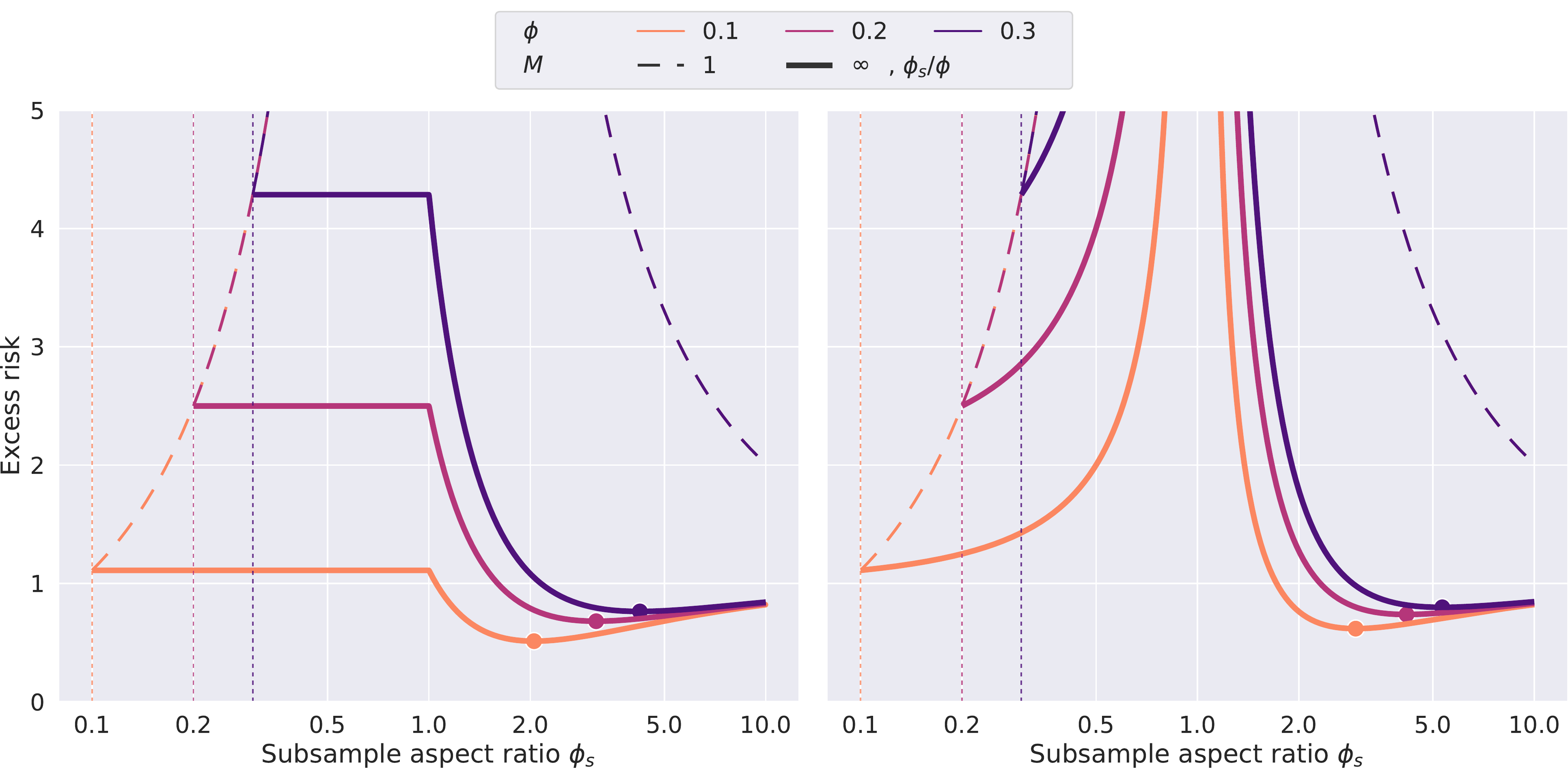}
        \caption{Asymptotic excess risk (the difference between the prediction risk and the noise level $\sigma^2$) curves of bagged ridgeless predictors ($\lambda=0$) for subagging (left panel) and \splagging (right panel), under model \eqref{eq:model} when $\rho^2=1$ and $\SNR=0.5$, for varying $\phi$ ($p\geq n$), bag size $k=\lfloor p/\phi_s\rfloor$ and number of bags $M$.
        The solid lines represent the optimal risks with respect to $M$ for either with replacement ($M=\infty$) or without replacement ($M=\phi_s/\phi$); 
        the dashed lines represent the risks for $M=1$;
        the dotted lines indicates the aspect ratio $\phi$ of the full dataset;
        the solid dots represent the optimal risk with respect to both $M$ and $\phi_s$.}
        \label{fig:risk_varying_phi_gt_1}
    \end{figure}
\end{document}